\newtheorem{theorem}{Theorem}[section]
\newtheorem{lemma}[theorem]{Lemma}
\newtheorem{corollary}[theorem]{Corollary}
\newtheorem{proposition}[theorem]{Proposition}
\newtheorem{maintheorem}{Theorem}
\def\P{\mathbb{P}}
\def\Z{\mathbb{Z}}
\def\R{\mathbb{R}}
\def\E{\mathbb{E}}
\newcommand{\cA}{\mathcal{A}}
\newcommand{\cB}{\mathcal{B}}
\newcommand{\cC}{\mathcal{C}}
\newcommand{\cD}{\mathcal{D}}
\newcommand{\cF}{\mathcal{F}}
\newcommand{\cG}{\mathcal{G}}
\newcommand{\cH}{\mathcal{H}}
\newcommand{\cI}{\mathcal{I}}
\newcommand{\cJ}{\mathcal{J}}
\newcommand{\cL}{\mathcal{L}}
\newcommand{\cO}{\mathcal{O}}
\newcommand{\cP}{\mathcal{P}}
\newcommand{\cQ}{\mathcal{Q}}
\newcommand{\cR}{\mathcal{R}}
\newcommand{\cS}{\mathcal{S}}
\newcommand{\cT}{\mathcal{T}}
\newcommand{\cU}{\mathcal{U}}
\newcommand{\cV}{\mathcal{V}}
\newcommand{\cW}{\mathcal{W}}
\newcommand{\cZ}{\mathcal{Z}}
\newcommand{\origin}{\mathbf{0}}
\newcommand{\ua}{\underline{a}}
\newcommand{\uk}{\underline{k}}
\newcommand{\bX}{\bar{X}}
\newcommand{\trans}{\mathfrak{W}}
\newcommand{\hQ}{{\widehat{Q}}}
\newcommand{\Var}{\mathrm{Var}}
\newcommand{\SD}{{\mathrm{SD}}}
\newcommand{\fH}{{\mathfrak{H}}}
\begin{document}
\title[Rotationally invariant FPP]{Rotationally invariant first passage percolation: Concentration and scaling relations}

\author{Riddhipratim Basu}
\address{Riddhipratim Basu, International Centre for Theoretical Sciences, Tata Institute of Fundamental Research, Bangalore, India} 
\email{rbasu@icts.res.in}
\author{Vladas Sidoravicius}
\address{Vladas Sidoravicius, Courant Institute of Mathematical Sciences, New York and NYU-ECNU Institute of Mathematical Sciences at NYU Shanghai}
\email{vs1138@nyu.edu}
\author{Allan Sly}
\address{Allan Sly, Department of Mathematics, Princeton University, Princeton, NJ, USA}
\email{allansly@princeton.edu}

\date{}
\maketitle

\begin{abstract}
For rotationally invariant first passage percolation (FPP) on the plane, we use a multi-scale argument to prove stretched exponential concentration of the first passage times at the scale of the standard deviation. Our results are proved under hypotheses which can be verified for many standard rotationally invariant models of first passage percolation, e.g.\ Riemannian FPP, Voronoi FPP and the Howard-Newman model. This is the first such tight concentration result known for any model that is not exactly solvable. As a consequence, we prove a version of the so called KPZ relation between the passage time fluctuations and the transversal fluctuations of geodesics as well as up to constant upper and lower bounds for the non-random fluctuations in these models. Similar results have previously been known conditionally under unproven hypotheses, but our results are the first ones that apply to some specific FPP models. Our arguments are expected to be useful in proving a number of other estimates which were hitherto only known conditionally or for exactly solvable models.  
\end{abstract}
\tableofcontents

\section{Introduction}
First passage percolation (FPP), a model of distance through a disordered landscape, introduced first in \cite{HW65} has been one of the central models in spatial probability over the last half century. Canonically, the first passage percolation model is studied on the $d$-dimensional Euclidean lattice $\Z^d$ with a random independent and identically distributed cost at each edge.  The first passage time between two vertices $u$ and $v$ is the minimum total cost over all paths joining $u$ and $v$, denoted as $X_{uv}$, and can be interpreted as a random distortion of the graph distance on $\Z^d$.  Many important results have been established such as the existence of a limit shape~\cite{CD81} and the sub-linear variance of passage times in many models~\cite{BKS04,BR08,DHS13} (see the excellent monograph~\cite{ADH15} for a comprehensive description of the state of the field).  However, understanding finer properties such as establishing the scaling exponents for the fluctuation of passage times or the fluctuations of the geodesics (optimal paths) remain major mathematical challenges. 

A key obstacle in progress for the study of the FPP is that the properties of the limit shape remain poorly understood. The connection between the properties of the limit shape (such as strong convexity and uniform curvature of the boundary), the fluctuation of the passage times and the properties of the geodesics are long since anticipated and to some extent understood. It is expected that, under mild conditions on the passage times, the limit shape is smooth with its boundary having bounded and positive curvature.  However, this is not known for any distribution of the canonical lattice FPP model. A program started in the nineties by Newman and coauthors explored understanding the properties of the lattice FPP models under unproven assumptions on the limit shape such as the ones described above. A parallel program developed several variants of FPP in continuum that are rotationally invariant and the limit shape is by symmetry a Euclidean ball in those cases. Even though many stronger results (e.g.\ improved variance lower bound or existence of semi-infinite geodesics) could be proved for such models, sharp results on fluctuations and concentration had remained out of reach so far.

In this paper we develop a general multi-scale scheme to control the passage time fluctuations of rotationally invariant FPP models on $\R^2$ and obtain several consequences of the same. The planar case is particularly interesting since the model in two dimension is believed to belong to the Kardar-Parisi-Zhang (KPZ) universality class \cite{KPZ86} and precise predictions for the large scale behaviour are available. Our scheme applies to a broad class of well-known models of rotationally invariant FPP including the Howard-Newman Model, (weighted) Voronoi FPP, Riemannian FPP and graph distances in random geometric graphs. To apply as generally as possible, our proofs are given for any FPP model that satisfies a set of conditions listed in Section~\ref{s:assumption}. We now describe our main results. 

\noindent
\textbf{Concentration of passage times:}
We let $X_n$ denote the passage time from the origin to $(n,0)$.  Our main result shows that when centred (by $\E X_n$) and normalized by its standard deviation (denoted $\SD(X_{n})$), the passage time is tight and has stretched exponential tails.
\begin{maintheorem}~\label{t:tightness}
For an FPP model on the plane satisfying the assumptions of Section~\ref{s:assumption}, there exist $\theta, C>0$ such that for all $n\geq 1$ and $x>0$
\[
\P\left[\frac{|X_n - \E X_n|}{\SD(X_n)} > x\right] \leq 2\exp(-C x^\theta).
\]
\end{maintheorem}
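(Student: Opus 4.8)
The plan is to run a multi-scale bootstrap argument. Write $\sigma_n = \SD(X_n)$ and let $\chi$ be the (conjectural but here provably well-defined up to constants, via the assumptions) fluctuation exponent, so that $\sigma_n \approx n^{\chi}$ and $\sigma_{2n}/\sigma_n$ is bounded above and below. At scale $n$ I want the statement $P[|X_n - \E X_n| > x\sigma_n] \le 2\exp(-Cx^\theta)$ for all $x$ up to some threshold, and then I double the scale and feed the scale-$n$ bound into the scale-$2n$ estimate. The geodesic from $\origin$ to $(2n,0)$ can be decomposed at a midpoint, and by the transversal fluctuation control (which the assumptions of Section~\ref{s:assumption} must supply, giving wandering of order $n^{2/3}$ up to polylog/constant factors) this midpoint lies in a window of controlled size. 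Covering that window by $O(1)$ (or polynomially many) boxes and using a union bound over a net of possible midpoints, $X_{2n}$ is sandwiched between sums and minima of passage times across two scale-$n$ sub-boxes plus lower-order bridging terms. The key point is that this decomposition expresses the scale-$2n$ deviation as (roughly) a sum of two nearly-independent scale-$n$ deviations, each of size $\sigma_n \approx 2^{-\chi}\sigma_{2n}$, so a deviation of $x\sigma_{2n}$ at the top scale forces a deviation of order $x \cdot 2^{1-\chi} \sigma_{n}$ — i.e. a \emph{larger} multiple of the standard deviation — in one of the sub-problems. That gain in the multiplier is what makes the induction close and produces the stretched exponential rather than merely exponential-in-$\log$ decay.

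The detailed steps, in order, are: (i) fix notation and record from the Section~\ref{s:assumption} hypotheses the quantitative inputs I will use — comparability of $\sigma_n$ at nearby scales, an a priori (polynomial-in-$x$, say, or weak exponential) concentration bound to seed the induction, a transversal-fluctuation / geodesic-localization estimate, and some form of approximate independence / FKG-type positive association plus a Lipschitz or bounded-increments property of passage times under local resampling; (ii) prove the upper-tail step: $X_{2n} \le X_{\text{sub-box }1} + X_{\text{sub-box }2} + (\text{bridge})$, so an upper deviation of $X_{2n}$ propagates to an upper deviation of a sum of two scale-$n$ passage times, and conclude via the induction hypothesis and the multiplier gain; (iii) prove the lower-tail step, which is the more delicate direction: here one must show $X_{2n}$ is at least the sum of the passage times of its two halves minus a controlled error, using that \emph{any} path from $\origin$ to $(2n,0)$ crosses the separating line, and combine with a union bound over the $O(\mathrm{poly}(n))$ crossing locations — this costs a polynomial factor that the stretched exponential $\exp(-Cx^\theta)$ absorbs once $x$ is past a threshold, and for small $x$ the statement is trivial by choosing $C$ appropriately; (iv) iterate from the base scale to $n$ via $O(\log n)$ doublings, tracking that the constants $C,\theta$ do not degrade — this requires that the per-step loss (union bound factors, bridge-term contributions, errors in $\sigma$-comparability) is summable or is dominated, which is exactly where the choice $\theta<1$ is forced and where the gain in the $x$-multiplier at each step must be shown to beat the accumulated losses; (v) handle scales that are not powers of $2$ by monotonicity and comparability, and assemble the two tails into the stated two-sided bound.

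The main obstacle I expect is step (iii) combined with the bookkeeping in step (iv): getting a lower bound on $X_{2n}$ in terms of independent sub-scale quantities is genuinely harder than the upper bound because subadditivity goes the wrong way, so one needs the transversal-fluctuation input to confine the geodesic and then a careful union bound over entry/exit points of each sub-box, and one must verify that the polynomial union-bound cost and the fluctuations of the "bridge" pieces (which live on a lower scale and hence are smaller than $\sigma_n$, but only by a power) do not erode the constant in the exponent across the $\log n$ iterations. A secondary subtlety is the seeding of the induction: the a priori concentration estimate coming from the assumptions is presumably much weaker than the target (e.g. only $\exp(-c(\log x)^2)$ or a polynomial bound), so the first several doublings must already be improving the tail, and one has to check the recursion is a genuine contraction in the appropriate function space of tail bounds rather than merely reproducing the input.
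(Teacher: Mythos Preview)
Your bootstrap has the multiplier going the wrong way. If $X_{2n}$ is (approximately) a sum of two scale-$n$ pieces and $\sigma_{2n}\approx 2^{\chi}\sigma_n$, then a deviation of $x\sigma_{2n}$ in the sum forces one piece to deviate by at least $\tfrac12 x\sigma_{2n}=x\cdot 2^{\chi-1}\sigma_n$, which is a \emph{smaller} multiple of $\sigma_n$ since $\chi<1$ (conjecturally $\chi=1/3$). You wrote $2^{1-\chi}$, but the correct factor is $2^{\chi-1}$; with this correction the recursion moves the tail bound in the wrong direction and the induction does not close. The same sign issue kills the lower-tail step. More fundamentally, you are assuming as input that $\sigma_{2n}/\sigma_n$ is bounded above and below and that transversal fluctuations are $O(n^{2/3})$; neither is among the hypotheses of Section~\ref{s:assumption} (Assumption~\ref{as:cars} only gives the crude local scale $n^{4/5}$, and the assumptions give no a priori comparability of $\sigma_n$ at different scales beyond $n^{1/12}\lesssim\sigma_n\lesssim n^{1/2}$). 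These are essentially the conclusions, not the inputs.

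The paper does not bootstrap concentration at scale $\sigma_n$ directly. Instead it introduces an auxiliary scale $Q_n=\sup_{m\le n}(n/m)^{\alpha}\hQ_m$, where $\hQ_m$ is by definition the smallest scale at which $X_m$ has stretched-exponential tails; thus concentration at scale $Q_n$ is automatic, and the entire content of Theorem~\ref{t:tightness} is the equivalence $Q_n\asymp\SD(X_n)$ (Proposition~\ref{p:qn.SD.equivalence}). That equivalence is obtained by: (i) a percolation argument over paths of parallelograms, where the \emph{quadratic} penalty $\sim k^2 Q_n$ for tilted parallelograms (Corollary~\ref{c:paraRect}) supplies the genuine gain that your scheme lacks, yielding $A_n\le D_1Q_n$ and concentration of side-to-side passage times $Y_n^{\pm}$ at scale $Q_n$; (ii) proving transversal fluctuations at the scale $W_n=\sqrt{nQ_n}$ from (i), not assuming them; (iii) an improved-concentration step (exponent $\theta\to\tfrac43\theta$ far in the tail) which forces $\hQ_n\asymp Q_n$ at quasi-record points; and (iv) a block version of the Newman--Piza variance lower bound, resampling $n\times W_n$ boxes along the geodesic, to show every $n$ is a quasi-record point and hence $\Var(X_n)\ge cQ_n^2$. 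Step (iv), which occupies Sections~\ref{s:varlb}--\ref{s:percevent}, is the technical heart and has no analogue in your outline; without it one cannot rule out that $\sigma_n\ll Q_n$ along some subsequence, which is precisely the obstruction to your ``$\sigma_{2n}/\sigma_n$ bounded'' hypothesis.
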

Note that the constants $C,\theta$ above will depend on the model parameters as defined in the assumptions. Based on the result for planar exactly solvable models (where the scaling limit is a scalar multiple of the GUE Tracy-Widom distribution) one also expects that the supports of the laws of $\frac{X_n - \E X_n}{\SD(X_n)}$ will not be bounded in $n$. We show this as well; see Proposition \ref{p:lowertail}. 

\medskip

\noindent
\textbf{Transversal fluctuations and the KPZ relation:} Another property of interest is the transversal fluctuation of a geodesic. This is the  maximal deviation of an optimal path in the FPP metric (called a geodesic) between $u$ and $v$  away from the straight line Euclidean path (defined formally in Section~\ref{s:outline}) joining $u$ and $v$, and is denoted $\trans_{uv}$. We will write $\trans_{n}$ for the transversal fluctuation of the geodesic from the origin to $(n,0)$. It is widely believed that there exist scaling exponents $\chi$ and $\xi$ such that the standard deviation and transversal fluctuations satisfy $\hbox{SD}(X_n)\asymp n^\chi$ and $\trans_n \asymp n^\xi$ in the sense that $n^{-\xi}\trans_{n}$ is uniformly tight with good tails and in particular $\E \trans_n \asymp n^{\xi}$. In dimension 2, it is predicted from the KPZ theory that $\chi=\frac13, \xi=\frac23$.  Even the existence of scaling exponents is unknown in any model of FPP; however, they are known rigorously in last passage percolation (LPP) models in $\Z^2$ with exponential or geometrically distributed passage times or Poissonian last passage model in $\R^2$ as these models are exactly solvable~\cite{J00,BSS14,BGZ21}.

In any dimension $d\geq 2$ the exponents are expected to satisfy the scaling relation
\begin{equation}\label{eq:scalingRelation1}
\chi = 2\xi-1
\end{equation}
or equivalently
\begin{equation}\label{eq:scalingRelation2}
\hbox{SD}(X_n) \asymp \left(\E\trans_n\right)^2 n^{-1}.
\end{equation}
Under strong assumptions (not known for any FPP model) Chatterjee proved~\cite{Cha11} the conditional result that if the scaling exponents exist in a certain sense then they satisfy~\eqref{eq:scalingRelation1} (see also \cite{Ale20}). While we do not establish the existence of the scaling exponents we prove~\eqref{eq:scalingRelation2} holds for our rotationally invariant models of FPP.
\begin{maintheorem}
\label{t:tf}
For an FPP model on the plane satisfying the assumptions of Section~\ref{s:assumption},
\begin{enumerate}
    \item[(i)] There exists $C,\theta_1>0$ such that for all $z>0$ and $n\ge 1$,
    $$\P(\trans_{n}\ge z\sqrt{n\SD(X_{n})})\le \exp(1-Cz^{\theta_1}).$$
    \item[(ii)] Given $\varepsilon>0$, there exists $\delta>0$ such that 
    $$\P(\trans_{n}\le \delta \sqrt{n\SD(X_{n})})\le \varepsilon$$
    for all $n\ge n_0(\delta)$. 
\end{enumerate}

In particular, there exist $C,C'>0$ such that
\[
C\hbox{SD}(X_n) \leq  \left(\E\trans_n\right)^2 n^{-1} \leq C' \hbox{SD}(X_n)
\]
for all $n$ {sufficiently large}. The same statement remains true if $\E\trans_n$ is replaced by the median of $\trans_n$.
\end{maintheorem}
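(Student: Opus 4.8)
The plan is to establish the two tail bounds (i) and (ii) via a standard ``one-sided'' comparison between transversal fluctuations and passage-time fluctuations, using the concentration already supplied by Theorem~\ref{t:tightness}, and then to extract the claimed two-sided relation between $\SD(X_n)$ and $(\E\trans_n)^2 n^{-1}$ by an elementary integration of the tails.

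For part (i) (upper bound on $\trans_n$), I would argue that a geodesic which wanders a transversal distance $z\sqrt{n\SD(X_n)}$ from the straight line must pay an excess length. Rotational invariance pins down the limit shape as a Euclidean ball, so the deterministic ``minimal cost'' of a path constrained to touch a point at transversal displacement $h$ from the segment $\origin$--$(n,0)$ exceeds $\mu n$ (the unconstrained limiting cost) by an amount of order $\mu h^2/n$ by convexity of the ball and a second-order Taylor expansion of the Euclidean norm. Hence, if $\trans_n \ge h$, then either the passage time along that wandering geodesic exceeds $\E X_n + \tfrac12 c\, h^2/n$, or one of the two pieces of the path (from $\origin$ to the far point, and back to $(n,0)$) has passage time anomalously below its mean. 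Setting $h = z\sqrt{n\SD(X_n)}$ makes the excess of order $z^2 \SD(X_n)$, i.e.\ $z^2$ standard deviations; a union bound over a net of possible far points (polynomially many, absorbed into the stretched exponential) together with Theorem~\ref{t:tightness} applied to $X_n$ and to the passage times of the two pieces yields $\P(\trans_n \ge z\sqrt{n\SD(X_n)}) \le \exp(1 - C z^{\theta_1})$. Care is needed to choose the net at the right scale and to control passage times between arbitrary pairs of points via the assumptions of Section~\ref{s:assumption}, but this is routine.

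For part (ii) (lower bound on $\trans_n$), the mechanism is the reverse: if the geodesic stays within transversal distance $\delta\sqrt{n\SD(X_n)}$ of the segment, then the passage time $X_n$ is determined by the edge costs in a thin tube of that width, and one shows this forces $X_n$ to be atypically concentrated --- contradicting the lower bound on $\Var(X_n)$ that must be part of (or follow from) the assumptions, together with the non-degeneracy of the passage-time fluctuations. Concretely, conditioning on the configuration outside the tube cannot change $X_n$ on the event $\{\trans_n \le \delta\sqrt{n\SD(X_n)}\}$; comparing two independent resamplings of the tube interior and using that two geodesics in disjoint thin tubes would have to be ``close'' leads, via a martingale/resampling argument in the spirit of the variance lower bounds cited in the introduction, to the conclusion that if this event had probability $\ge \varepsilon$ then $\SD(X_n)$ would be smaller than it is, for $n$ large. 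I expect \textbf{this is the main obstacle}: making the resampling argument quantitative enough to convert ``geodesic confined to a width-$\delta\sqrt{n\SD(X_n)}$ tube with probability $\ge\varepsilon$'' into a genuine contradiction with the order of $\SD(X_n)$, while only invoking the hypotheses of Section~\ref{s:assumption}, is delicate and is where the bulk of the work lies.

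Finally, to deduce the displayed two-sided bound, set $a_n = \sqrt{n\SD(X_n)}$ and write $\E\trans_n = \int_0^\infty \P(\trans_n > t)\,dt = a_n \int_0^\infty \P(\trans_n > z a_n)\,dz$. Part (i) makes the integrand integrable with a uniform bound, so $\E\trans_n \le C' a_n$, i.e.\ $(\E\trans_n)^2 n^{-1} \le (C')^2 \SD(X_n)$. Part (ii), with $\varepsilon = \tfrac12$, gives $\P(\trans_n > \delta a_n) \ge \tfrac12$ for $n$ large, hence $\E\trans_n \ge \tfrac{\delta}{2} a_n$ by Markov in the form $\E\trans_n \ge \delta a_n \,\P(\trans_n > \delta a_n)$, so $(\E\trans_n)^2 n^{-1} \ge (\delta^2/4)\SD(X_n)$. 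The same two inequalities show the median $m_n$ of $\trans_n$ satisfies $m_n \asymp a_n$ (upper bound from (i) and Markov, lower bound from (ii)), so $\E\trans_n$ may be replaced by $\mathrm{med}(\trans_n)$ throughout.
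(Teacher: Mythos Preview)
Your sketch of part (i) is on the right track but understates the difficulty. A single union bound over a net of ``far points'' does not work: the first exit point from the tube of width $zW_n$ can lie anywhere along two lines of length $n$, and absorbing this factor of $n$ into a stretched-exponential tail $\exp(-c z^{2\theta})$ fails for $z$ of order one (which is precisely the regime where the claimed bound is nontrivial). The paper handles this with a genuine dyadic chaining argument (Lemma~\ref{l:trans.events} and Theorem~\ref{t:trans.main}): one controls the deviation of the geodesic at the lines $x=i2^{-j}n$ for all scales $j$ simultaneously, so that at each scale the number of net points and the available tail bound are balanced. The underlying mechanism (curvature penalty $\sim h^2/n$ from the Euclidean limit shape, then concentration at scale $Q_n$) is exactly as you describe, but the implementation requires this multi-scale structure rather than a one-shot net.

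The real gap is in part (ii). Your resampling idea --- compare two independent copies of a thin tube and use that one will be shorter --- is indeed the engine behind Lemma~\ref{l:tflowerw}, but that lemma only yields the \emph{weak} statement $\P(\trans_n \ge \delta W_n)\ge \delta$ for some fixed $\delta>0$; it does \emph{not} give $\P(\trans_n \le \delta W_n)\le\varepsilon$ for arbitrarily small $\varepsilon$. Your proposed route to the full statement, namely ``if confinement had probability $\ge\varepsilon$ then $\SD(X_n)$ would be too small,'' is not the right contradiction: confinement to a thin tube does not obviously reduce $\Var(X_n)$. The paper's upgrade is quite different and goes through the \emph{mean}. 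From the weak lower bound one first proves $\E X_n - n\mu \ge c\,Q_n$ (Lemma~\ref{l:mean}), then strengthens this to show the minimum passage time across a thin rectangle of width $\delta W_n$ has mean at least $n\mu + cQ_n$ (Lemma~\ref{l:meanpara}). Now break $[0,n]$ into $\rho^{-1}$ segments of length $\rho n$: a path confined to width $\delta W_n$ must cross each segment inside a thin rectangle, and the sum of these crossings has mean at least $n\mu + c\rho^{-1}Q_{\rho n}$. Since $Q$ grows strictly sublinearly (Lemma~\ref{l:growth34}), $\rho^{-1}Q_{\rho n}\gg Q_n$ for small $\rho$, so by concentration the confined passage time exceeds $n\mu + CQ_n$ with high probability for any fixed $C$ --- contradicting the upper-tail concentration of $X_n$ (see Proposition~\ref{p:tflower}). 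This detour through the non-random fluctuation lower bound is the missing idea.

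Your final deduction of the two-sided bound on $(\E\trans_n)^2 n^{-1}$ and the median statement from (i) and (ii) is correct.
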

\medskip

\noindent 
\textbf{Non-random fluctuations:}
For all the models we consider it is known that there exists $\mu\in (0,\infty)$ such that $n^{-1}\E X_{n}\to \mu$. By sub-additivity, it is also known that $\E X_{n}\ge \mu n$ for all $n$. The behaviour of the non-negative sequence $\{\E X_{n}-\mu n\}$, often referred to as non-random fluctuations, has also drawn a lot of interest over the years. It is believed that the non-random fluctuations $\approx n^{\gamma}$ for some $\gamma>0$, and it is predicted that $\gamma=\chi$, the fluctuation exponent, under mild conditions. Certain conditional results concerning inequalities among these exponents were previously known~\cite{Ale11,ADH13}. Comparing with the results known for exactly solvable models (see e.g.\cite{LR10,BGHH22}), one might also predict the stronger statement $\E X_{n}-n\mu=\Theta(\SD(X_{n}))$ under mild assumptions but results along these lines have only been known under strong unproved assumptions (see e.g.\ \cite{Ale21}). We establish this for the rotationally invariant FPP models we consider. 

\begin{maintheorem}
\label{t:nr}
For an FPP model on the plane satisfying the assumptions of Section~\ref{s:assumption}, there exist $\widehat{C}, \widetilde{C}>0$ such that for all $n\geq 1$,
\[
\widehat{C}\SD(X_n) \leq  \E X_{n}-n\mu  \leq \widetilde{C} \SD(X_n).
\]
\end{maintheorem}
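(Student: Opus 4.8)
Here is how I would approach Theorem~\ref{t:nr}.

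The plan is to reduce both inequalities to matching lower and upper bounds on the \emph{midpoint gain}
\[
R_m := X_{0,(m,0)} + X_{(m,0),(2m,0)} - X_{0,(2m,0)} \geq 0 ,
\]
the nonnegativity being the triangle inequality for the FPP metric. Taking expectations and using translation invariance, $\E X_{2m} = 2\E X_m - \E R_m$; iterating this over dyadic scales and using $\E X_{2^L n}/2^L \to \mu n$ gives the identity
\[
\E X_n - \mu n \;=\; \tfrac12 \sum_{\ell \geq 0} 2^{-\ell}\, \E R_{2^\ell n},
\]
while keeping only the first term together with $\E X_{2n} \geq 2\mu n$ gives $\E X_n - \mu n \geq \tfrac12 \E R_n$. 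Thus it suffices to prove (a) $\E R_n \geq c\,\SD(X_n)$ for all $n$, and (b) $\E R_m \leq C\,\SD(X_m)$ for all $m$; for the upper bound in Theorem~\ref{t:nr} one additionally needs $\sum_{\ell\ge 0} 2^{-\ell}\SD(X_{2^\ell n}) = O(\SD(X_n))$, i.e.\ a quantitative sub-linearity of the standard deviation (for instance $\SD(X_{2m}) \le \rho\,\SD(X_m)$ with a universal $\rho<2$), which I would take from the a priori estimates produced by the multi-scale scheme of the preceding sections.

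For (a) I would bound $R_n$ below by the gain relative to one explicit competitor path. Let $r_n := \sqrt{n\,\SD(X_n)}$ be the transversal scale, fix a small constant $\alpha>0$, and set $A = X_{0,(n,0)}$, $B = X_{(n,0),(2n,0)}$, $A' = X_{0,(n,\alpha r_n)}$, $B' = X_{(n,\alpha r_n),(2n,0)}$. Since $X_{0,(2n,0)} \leq \min(A+B,\,A'+B')$ we get $R_n \geq ((A-A') + (B-B'))_+$. By rotational invariance $\E A' = \E B' = \E X_{\sqrt{n^2+\alpha^2 r_n^2}}$, and since $\sqrt{n^2+\alpha^2 r_n^2} - n = O(\alpha^2 \SD(X_n))$ while $\E X_\delta = O(\delta)$, the mean of $(A-A')+(B-B')$ is at least $-C_0\alpha^2 \SD(X_n)$. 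Its variance, on the other hand, is $\gtrsim \SD(X_n)^2$: using the reflection and $180^\circ$ symmetries of the model and a covariance estimate, this reduces to the transversal-increment bound $\Var(X_{0,(n,0)} - X_{0,(n,\alpha r_n)}) \gtrsim \SD(X_n)^2$ — moving the endpoint by a fixed fraction of the transversal scale must change the passage time on the scale of its own standard deviation — which is of exactly the same nature as, and follows from the estimates behind, the lower bound on $\trans_n$ in Theorem~\ref{t:tf}(ii). Combining this with an anti-concentration (fourth-moment) estimate for $(A-A')+(B-B')$, available from the hypotheses and Theorem~\ref{t:tightness}, and choosing $\alpha$ small enough that the negative mean is dominated by the standard deviation, yields $\E[((A-A')+(B-B'))_+] \geq c\,\SD(X_n)$, which is (a).

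For (b) I would recast $\E R_m \le C\SD(X_m)$ as an estimate on the passage time from the origin to a line. Every path from $0$ to $(2m,0)$ meets the line $\{x=m\}$, so $X_{0,(2m,0)} \geq Z_m + Z_m'$ where $Z_m := \inf_{y} X_{0,(m,y)}$ is the passage time from the origin to $\{x=m\}$ and $Z_m' \stackrel{d}{=} Z_m$; by linearity of expectation $\E X_{2m} \geq 2\E Z_m$, hence $\E R_m \leq 2(\E X_m - \E Z_m)$, and it is enough to show $\E Z_m \geq \E X_m - C\,\SD(X_m)$. Since $\E X_{0,(m,y)} = \E X_{\sqrt{m^2+y^2}} \geq \E X_m$ for every $y$, Theorem~\ref{t:tightness} controls each $X_{0,(m,y)}$ from below on the scale $\SD(X_m)$, and the upper bound on $\trans_n$ in Theorem~\ref{t:tf}(i) shows that for $|y| \gg r_m$ the mean $\E X_{0,(m,y)}$ exceeds $\E X_m$ by so much that, by concentration again, such $y$ almost surely do not realize the infimum. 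The crux — and the step where I expect the main work to lie — is to rule out a gain larger than $O(\SD(X_m))$ over the window $|y| \lesssim r_m$: a naive union bound over this window costs a spurious $\sqrt{\log m}$, and indeed (b) is via the series identity of comparable depth to the conclusion of Theorem~\ref{t:nr} itself, so one must use the control on the joint law of the transversal profile $y \mapsto X_{0,(m,y)}$ afforded by the multi-scale machinery (morally, this profile decorrelates only on the scale $r_m$, so effectively only $O(1)$ nearly independent values compete for the infimum). Granting this, together with the quantitative sub-linearity of $\SD$ mentioned above, the series identity delivers $\E X_n - \mu n \leq \widetilde C\,\SD(X_n)$, completing the proof.
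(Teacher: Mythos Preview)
Your approach differs from the paper's on both halves, and the lower bound argument (a) has a genuine gap.

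For the upper bound, the paper does not go through the telescoping identity. Lemma~\ref{l:AnBound} proves $A_n \le D_1 Q_n$ directly, by a contradiction/percolation argument: if $\Delta$ (essentially $A_n/Q_n$) were large, decomposing paths at a sub-scale and applying Corollary~\ref{c:percolation} forces $\P[X_{Mn}\le Mn\mu+\text{small}]$ to be tiny, contradicting the deterministic Lemma immediately after. The upper bound in Theorem~\ref{t:nr} then follows from Proposition~\ref{p:qn.SD.equivalence}. Your route would work but needs both the point-to-line estimate $\E Z_m\ge \E X_m - C\,\SD(X_m)$ (which is essentially the content of Lemma~\ref{l:YMinusBound} plus the transversal fluctuation bound) and the sublinearity $\SD(X_{2m})\le \rho\,\SD(X_m)$ with $\rho<2$ (which only becomes available after Lemma~\ref{l:growth34} and Proposition~\ref{p:qn.SD.equivalence}); so you end up consuming strictly more machinery than the paper's direct route.

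For the lower bound, the paper does \emph{not} bound $\E R_n$ at scale $n$. Lemma~\ref{l:mean} compares scales $n$ and $Mn$ for a \emph{large} fixed $M$: Lemma~\ref{l:tflowerw} says that with probability $\ge\delta$ every path from $\mathbf{0}$ to $(Mn,0)$ with transversal fluctuation $\le \delta W_{Mn}$ is suboptimal by $\delta Q_{Mn}$; for $M$ large the concatenation of the $M$ geodesics $\gamma_{(i-1)\mathbf{n},i\mathbf{n}}$ has transversal fluctuation $O(W_n)\ll \delta W_{Mn}$ with high probability, so $M\E X_n\ge \E X_{Mn}+c\,Q_{Mn}\ge Mn\mu+c\,M^{\alpha}Q_n$, and dividing by $M$ gives the result. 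This argument genuinely fails at $M=2$: the concatenated path then has transversal fluctuation comparable to $W_{2n}$, not small relative to it.

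The gap in your (a) is the claimed variance bound $\Var(X_{\mathbf{0},(n,0)}-X_{\mathbf{0},(n,\alpha r_n)})\gtrsim \SD(X_n)^2$. You defer this to ``the estimates behind Theorem~\ref{t:tf}(ii)'', but in the paper Theorem~\ref{t:tf}(ii) (Proposition~\ref{p:tflower}) is deduced from Lemma~\ref{l:meanpara}, which uses Lemma~\ref{l:mean}---i.e.\ the very lower bound you are trying to prove. The non-circular input is Lemma~\ref{l:tflowerw}, whose proof is a resampling argument (two independent thin strips, comparing good and bad copies) and is not a statement about the variance of endpoint increments; turning it into your decorrelation estimate would be a separate piece of work you have not sketched. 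There is also a quantitative tension you have not resolved: you need $\alpha$ small so that the mean shift $O(\alpha^2\SD(X_n))$ is dominated by $\sqrt{\Var V}$, but the variance of $A-A'$ plausibly vanishes with $\alpha$ (in the exactly solvable picture it scales like $\alpha\,\SD(X_n)^2$), and nothing in the paper's hypotheses supplies the rate you would need to balance these.
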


As mentioned before, certain conditional results along these lines were known before, but the best known unconditional results were $\E X_{n}-n\mu=O((n\log n)^{1/2})$ \cite{Ale93,Ale97}, and $\E X_{n}-n\mu=O(\psi(n)\log^{(k)}n)$ ($\log^{(k)}$ is the $k$-fold iteration of $\log$ and $\psi(n)$ is a scale such that $X_{n}$ has stretched exponential concentration at this scale, in particular, $\SD(X_{n})\le \psi(n)\le \sqrt{n}$) for certain rotationally invariant models \cite{DW16}. Theorem \ref{t:nr} improves on these results for the models we consider. A more detailed discussion regarding the relationship between our main theorems and the existing literature is presented in Section \ref{s:literature}.

\medskip

Theorem \ref{t:tightness} and the upper bounds in Theorems \ref{t:tf} and \ref{t:nr} will be proved together using a multi-scale argument. This is the heart of the paper. The lower bounds in Theorems \ref{t:tf} and \ref{t:nr} are established separately using the above results.

\subsection{Rotationally Invariant First Passage Percolation}

There are several simple ways to construct first passage percolation in $\R^2$ in such a way that it is invariant under $ISO(2)$, the group of Euclidean rigid body motions generated by translations, rotations and reflections.  Our approach will apply to a range of models that satisfy a list of properties set out in Section~\ref{s:assumption}.  In each case the first passage percolation metric will be constructed from $\omega$, a random field on $\R^2$ taking values in some probability space $\Omega$, whose distribution is invariant under $ISO(2)$.  We will write $\omega_A$ to denote the restriction of the field $\omega$ to $A\subset \R^2$.  We will assume that $\omega$ has the spatial independence property that for pairwise disjoint measurable sets $\Lambda_i\subset \R^2$ the restricted subfields $\omega_{\Lambda_i}$ are independent.  The two natural examples of this are $\omega$ distributed as a Poisson Point Process or as Gaussian white noise, and these are the only two we shall use. 

For different models we shall also define a class of admissible paths $\Upsilon$. Elements of $\Upsilon$ will be continuous functions $\gamma:[0,1]\to \R^2$ (with further restrictions depending on the model). We let $\Upsilon_{uv}$ be the be the collection of all paths $\gamma$ in $\Upsilon$ such that $\gamma(0)=u, \gamma(1)=v$ (i.e., paths from $u$ to $v$). Then the passage time (of an admissible path) will be a measurable function
\[
X:\Upsilon\times\Omega \to [0,\infty]
\]
which we will usually denote by $X_\gamma^\omega$ or by $X_\gamma$ suppressing the dependence on $\omega$ when it is unambiguous. We require the following properties:
\begin{itemize}
  \item {\bf Invariance:} For any rigid body transformation $\tau\in ISO(2)$
  \[
  X_{\tau\gamma}^{\tau \omega} = X_\gamma^\omega.
  \]
  \item {\bf Independence of parametrisation:} $X$ does not depend on the parametrisation, that is if $\gamma$ and $\tilde{\gamma}$ are {different parametrisations} of the same path, then 
  $$X_{\gamma}=X_{\tilde{\gamma}}.$$ 
  \item {\bf Time reversal:} If $\tilde{\gamma}(t)=\tilde{\gamma}(1-t)$ then $X_\gamma=X_{\gamma'}$.
\end{itemize}
For $u,v\in\R^2$ point-to-point distances (the first passage time) are then defined as the infimum of the passage time over all admissible paths
\[
X^\omega_{uv}=\inf_{\gamma\in \Upsilon_{uv}} X_\gamma^\omega.
\]
By the invariance property, the law of $X^\omega_{uv}$ depends only on the distance $|u-v|$. We shall write
\[
X_n:=X_{\origin,(n,0)}
\]
denoting the origin as $\origin$.\footnote{In general, for $r\in \R$, $\mathbf{r}$ shall denote the point $(r,0)$.}

We will at times want to resample parts of the field $\omega$.  For $\Lambda\subset\R^2$ we let $\omega^\Lambda$ denote the field $\omega$ with $\Lambda^c$ resampled.  In particular this means that
\begin{itemize}
  \item The fields $\omega$ and $\omega^\Lambda$ are equal in distribution.
  \item The fields are equal, $\omega^\Lambda_\Lambda=\omega_\Lambda$, on the set $\Lambda$ .
  \item On $\Lambda^c$ the resampled field $\omega_{\Lambda^c}^\Lambda$ is independent of $\omega$.
\end{itemize}
If $\Lambda_1,\ldots,\Lambda_m\subset \R^2$ are a collection of subsets to be resampled  we assume that each resampling is done independently, that is $\omega, \omega_{\Lambda_1^c}^{\Lambda_1},\ldots, \omega_{\Lambda_m^c}^{\Lambda_m}$ are all independent.  For compactness of notation, we will write $X^\Lambda$ to denote $X^{\omega^\Lambda}$.  Note that if $\Lambda_1,\ldots,\Lambda_m$ are pairwise disjoint then the $X^{\Lambda_i}$ are independent.

For a path $\gamma_1\in \Upsilon_{uv}$ and a path $\gamma_2\in \Upsilon_{vw}$ we shall also need to consider a concatenated path $\gamma\in \Upsilon_{uv}$. The concatenation operation will be defined differently in different models. 

\subsubsection{Models}\label{s:model}
We now describe 4 classes of FPP models already considered in the literature that fit into the above framework. There have been two different approaches of defining rotationally invariant FPP models: first, a discrete model based on a graph whose vertices are sets of a Poisson point process (the first three models we shall consider will belong to this class), and second, a model based on a continuous random field (the final model we consider will belong to this class). 

For the first three models of FPP, the underlying field $\omega$ is a (Marked) Poisson Point Process (homogeneous with rate $1$, say) with point set $\Pi$  and marks $\{M_x\}_{x\in\Pi}$. 
We define $T:\R^2 \to \Pi$ so that $T(u)$ is the element of $\Pi$ closest to $u$. In case of a tie we shall break it by using some additional randomness (e.g. we can have another set of marks $\{\tilde{M}_{x}\}_{x\in \Pi}$ of i.i.d. nonnegative continuous random variables, and for a point $u$ if $A\subset \Pi$ is the set of its closest points then we define $T(u)=x_{i}$ where $\tilde{M}_{x_i}=\min_{x\in A} \tilde{M}_{x}$. It will be clear that this tie breaking rule would not affect the rotational invariance of the model). The Voronoi cell of a point $x\in \Pi$ is the subset of the plane which is closer to $x$ than any other vertex in $\Pi$. 

For these class of models, the set of all allowed paths between $u,v\in \R^2$ will be a subclass (depending on the specific model) of piece-wise linear paths joining $x_0=u, x_1, x_2,\ldots, x_{n-1},x_{n}=v$ where each $x_{i}\in \Pi$ for $i=1,2,\ldots, n-1$. We shall denote such a path by $\gamma=\{x_0,x_1,\ldots, x_{n}\}$ when there is no scope for confusion. For $\gamma_1=\{x_0,x_1,\ldots,x_{n_1}\}\in \Upsilon_{uv}$ and $\gamma_2=\{y_0,y_1,\ldots, y_{n_2}\}\in \Upsilon_{vw}$ we define the concatenation of $\gamma_1$ and $\gamma_2$, denoted by 
$\gamma_1\gamma_2=\{x_0,x_1,\ldots, x_{n_1}, y_1,\ldots, y_{n_2}\}$ if $x_{n_1}=v=y_0\in \Pi$ and $\gamma_1\gamma_2=\{x_0,x_1,\ldots, x_{n_1-1}, y_1,\ldots, y_{n_2}$ otherwise. Also, we shall now allow paths from $u$ to $u$, and define $X_{uu}=0$ for all $u\in \R^2$. We now move on to the specific model descriptions.

{\bf (Weighted) Voronoi FPP:} A first model of FPP (this type of model was studied in \cite{VW90,VW92}) is one in which we count the number of Voronoi cells the path $\gamma$ enters. The admissible class of paths $\Upsilon_{uv}$ consists of paths $\gamma=\{u=x_0,x_1, \ldots, x_{n-1},x_{n}=v\}$ where $x_1=T(u)$, $x_{n-1}=T(v)$, $x_{i}\in \Pi$ for $i=1,2,\ldots, n-1$, and the Voronoi cells of $x_{i}$ and $x_{i+1}$ are neighbouring for all $i=1,2,\ldots, n-2$. Notice that if $u$ or $v\in \Pi$ then there can be repetitions in the above list.

For an admissible $\gamma$ as above, we define the passage time in the unweighted case by
\[
X_\gamma:=n-1
\]
whereas in the weighted version we define
\[
X_\gamma:=\sum_{i=1}^{n-1} M_{x_{i}}.
\]

Notice that if a path comes back to a Voronoi cell after leaving it once it is counted with multiplicities in the definition of the passage time.

{\bf Graph distance in Random Geometric graphs:} A random geometric graph with threshold $L$ is constructed by taking the vertex set $\Pi$, and an edge between pairs $x,x'\in \Pi$ if $|x-x'|\leq L$. It is a well known fact (see e.g.\ \cite{MR96,Pen03}) that almost surely there exists a critical value $L_c\in (0,\infty)$ such that for $L>L_c$ the model is supercritical and there is a unique infinite component which we will denote by $\Pi^{(\infty)}$.  Let $T^{(\infty)}(u)$ denote the closest point to $u$ in $\Pi^{(\infty)}$. 

The admissible class of paths $\Upsilon_{uv}$ in this case consists of paths $\gamma=\{u=x_0,x_1, \ldots, x_{n-1},x_{n}=v\}$ where $x_1=T^{(\infty)}(u)$, $x_{n-1}=T^{(\infty)}(v)$ and $x_{i}$ and $x_{i+1}$ are neighbouring vertices in $\Pi^{(\infty)}$ for all $i=1,2,\ldots, n-2$. For such a $\gamma$,
we define 
\[
X_{\gamma}:=n-2,
\]
that is the number of edges in $\gamma$ between $T^{(\infty)}(u)$ and $T^{(\infty)}(v)$.

With this definition, $X_{uv}$ is simply the graph distance between $T^{(\infty)}(u)$ and $T^{(\infty)}(v)$ and by uniqueness of the infinite component is finite almost surely. A slightly different first passage percolation model on a random geometric graph was studied in \cite{HNGS15}.

{\bf Howard-Newman Model:}  The third model with parameter $\beta>1$ is defined as follows. The admissible class of paths $\Upsilon_{uv}$ in this case consists of all paths $\gamma=\{u=x_0,x_1, \ldots, x_{n-1},x_{n}=v\}$ where $x_1=T(u)$, $x_{n-1}=T(v)$ and $x_{i}\in \Pi$ for $i=1,2,\ldots, n-1$.
The passage time of the path $\gamma$ is defined by
\[
X_\gamma=\sum_{j=1}^{n-2} |x_j-x_{j+1}|^\beta.
\]
This model was introduced by Howard and Newman \cite{HN97}, and was dubbed Euclidean FPP by them.

{\bf Riemannian FPP:} Now we define the second type of model which is based on an isotropic random field defined in the continuum. This type of model was  first considered in \cite{LW10}. Fix a radially symmetric (i.e., $K(x)$ is depends on $x$ only through $|x|$), nonnegative, smooth $(C^{\infty})$, bounded kernel $K:\R^2\to\R$ that vanishes outside the unit ball.  We will take $\omega$ to be either a Gaussian white noise or a rate $\lambda$ Poisson Point Process on $\R^2$.  Then we write
\[
\Phi(x):=\int K(x-y) \omega(d y)
\]
which means the stochastic integral
\[
\Phi(x)=\int K(x-y) dB(y)
\]
where $B(y)$ is two dimensional Brownian motion in the case that $\omega$ is white noise and means
\[
\Phi(x)=\sum_{y\in \Pi} K(x-y)
\]
where $\Pi$ is the set of points in the Poisson point Process in the case $\omega$ is a Poisson point process. To ensure that we have a bounded, positive field we fix a monotone increasing smooth function  $\psi:\R\to (d_1,d_2)$ such that $\psi(\R)$ is supported on $[d_1,d_2]$ for some $0<d_1<d_2<\infty$. Then we set $\Psi(x)=\psi(\Phi(x))$ to be the underlying environment which the paths traverse. 

The allowed class of paths $\Upsilon_{uv}$ here consists of all continuous paths $\gamma:[0,1]\to\R^2$ with $\gamma(0)=u$, $\gamma(1)=v$ that are of bounded variation. The concatenation of two paths in this case is simply defined as the concatenation of two continuous curves. 

Using the random field $\Psi$, we define passage time of a path $\gamma$, the natural continuous analogue of the standard FPP case on the lattice, by 
\[
X_{\gamma}:= \int_{\gamma} \Psi(x) dx =  \int_{0}^{1} \Psi(\gamma(t))|\dot{\gamma}(t)| dt
\]
when $\gamma$ is piece-wise $C^1$. This definition can be extended to all bounded variation paths by taking suitable limits. Finally, we define the first passage time  
$$X_{uv}:=\inf_{\gamma\in \Upsilon_{uv}} X_{\gamma}$$ which defines a random Riemannian metric on $\R^2$. 

Next we develop a general axiomatic framework containing the above four models.

\subsection{Model Assumptions}~\label{s:assumption}
In this subsection we set out a list of assumed properties for the FPP models we consider.  For the remainder of the paper, all results will assume that these properties hold.  In what follows $D_0>0, \kappa>0$ and $n_0\geq 1$\footnote{Unlike the standard convention, for us $n,n_0$ etc.\ will denote positive real numbers and not necessarily integers.} are constants that depend only on the model.

\begin{enumerate}
  \item {\bf Invariance:} The passage times $\{X^\omega_{uv}\}_{u,v\in \R^2}$, defined as the infimum of passage times $X_{\gamma}$ as $\gamma$ varies over the admissible class of paths from $u$ to $v$, are $ISO(2)$ invariant and in particular invariant under rotations, translations, reflections. Further, $X_{\gamma}$ is invariant under time reversal and change of parametrisation. 
  
  \item {\bf Optimal Paths:} For every $u,v\in \R^2$ there exists a path $\gamma_{uv}$ such that $X_{uv}=X_\gamma$, that is there exists a path which attains the infimum. {In general, there may be multiple optimal paths but we assume that $\gamma_{uv}$ denotes a canonical measurable choice.}

  \item\label{as:speed} {\bf Speed:} The limiting speed of the process
  \[
  \mu:=\frac1{n}\lim_{n\to\infty} \E X_n
  \]
  exists and $0<\mu<\infty$. 
  
  \item~\label{as:concentration}{\bf Concentration:}
  There passage times are concentrated around their mean satisfying for all $n\geq 1$ and $x>0$,
  \[
  \P[|X_n - \E X_n| > x\sqrt{n}]\leq \exp(1-D_0x^\kappa).
  \]
  
  \item~\label{as:nr} {\bf Non-random fluctuations:}
  With $A_n := \E X_n - \mu n$ we have there exists $n_0\ge 1$ such that for all $n\geq n_0$,
  \[
  0\leq A_n \leq n^{\frac12}\log^2 n.
  \]
  \item\label{as:localDist} {\bf Local Passage Times:} Distances within a local neighbourhood are not too large
  \[
    \P[\max_{u,v\in B_3(\origin)}X_{uv} > x]\leq \exp(1-D_0x^\kappa).
  \]
  \item \label{as:tri} {\bf Triangle Inequality:} The passage times satisfy the triangle inequality
  \[
    X_{uw}+X_{wv}\geq X_{uv}.
  \]
  More generally, if $\gamma$ is the concatenation of paths $\gamma_1$ and $\gamma_2$ we have 
   \[
    X_{\gamma_1}+X_{\gamma_2}\geq X_{\gamma}.
  \]
  \item\label{as:intermediate} {\bf Intermediate Points\footnote{  Note that while for some models we may have that $X_{uw}+X_{wv} = X_{uv}$ whenever $w\in\gamma_{uv}$ this will not hold for all the models we are considering, mostly due to the fact that we are imposing continuous paths on what are really discrete models.}:} For $0=t_0<t_1<t_2<\ldots<t_M<t_{M+1}=1$ and for any $n\geq n_0$,
  \[
  \P[\max_{u,v\in B_n(\origin)} \max_{M\geq 1} \max_{0<t_1<\ldots<t_M<1} \sum_{i=1}^{M+1} X_{\gamma_{uv}(t_{i-1})\gamma_{uv}(t_i)} - X_{uv} > xM \log^{1/\kappa}n] \leq  \exp(1-D_0x^\kappa),
  \]
  which says that the triangle inequality is close to tight for intermediate points. Also
  \[
  \P[\max_{u\in B_n(\origin)} \max_{M\geq 1} \max_{0<t_1<\ldots<t_M<1} \sum_{i=1}^{M+1} X_{\gamma_{uv}(t_{i-1})\gamma_{uv}(t_i)} - X_{uv} > xM \log^{1/\kappa}(n+\max_{0<i\leq M}|\gamma_{uv}(t_i)| )] \leq  \exp(1-D_0x^\kappa),
  \]
  \item\label{as:varlb} {\bf Variance Lower Bound:} The passage times have variance growing at least polynomially, and for $n\geq n_0$,
  \[
  \hbox{Var}(X_n) \geq n^{1/6}
  \]
 and $\inf_{n\geq 1} \hbox{Var}(X_n) >0$.
  \item\label{as:cars} {\bf Local Transversal Fluctuations:}  For any $n\geq 1, M\geq \frac32$ and $|y_1|,|y_2|\leq Mn$ let $v_1=(0,y_1), v_2=(Mn,y_2)$ and let $(n,H)$ denote the first intersection of $\gamma_{\origin,v}$ with the line $x=n$.  Then for $x\geq 1$,
      \[
        \P[H \geq xn^{4/5} + y_1 + (y_2-y_1)/M] \leq \exp(1-D_0(xn^{1/10})^\kappa).
      \]
  \item {\bf Resampling 1:}\label{as:resamp1} Let $\Lambda=\{(x,y)\in\R^2:0\leq x \leq n\}$.  Then,
  \[
    \P[\max_{0\leq y_1,y_2\leq n}|X_{(0,y_1),(n,y_2)}-X_{(0,y_1),(n,y_2)}^\Lambda|>x\log^{1/\kappa} n]\leq \exp(1-D_0x^\kappa).
  \]
  \item {\bf Resampling 2:}\label{as:resamp2} Let $\Lambda\subset \R^2$ be an $n\times W$ rectangle with $W\leq n$ and let
  \[
  \Lambda^- = \{w\in \Lambda:d(w,\Lambda^c)\geq \log^2 n\},\quad \Lambda^+ = \{w\in \R^2:d(w,\Lambda)\leq \log^2 n\}.
  \]
  Whenever $n\geq n_0$, then resampled passage times $X^\Lambda$ satisfy
      \[
        \P[\bigcap_{\gamma\in\Upsilon,\gamma\subset\Lambda^-} \{X_\gamma=X_\gamma^{\Lambda}\}]\geq 1-n^{-10},\qquad \P[\bigcap_{\gamma\in\Upsilon,\gamma\subset(\Lambda^+)^c} \{X_\gamma=X_\gamma^{\Lambda^c}\}]\geq 1-n^{-10}
      \]
      that is with very large probability all paths at least $\log^2 n$ away from the resampled region are unaffected by the resampling.
\end{enumerate}

We note that the assumptions listed above are not optimal. Values of several exponents can be relaxed to some degree; we have not tried to work out an optimal set of hypotheses for which our arguments will go through. One other point to note is that some of the assumptions can be deduced from the others (see below for more details) and we have not made any attempt to reduce the list to an independent set of assumptions either. We note one specific case which might be of interest. The stretched exponential tails at scale $\sqrt{n}$ is not needed for our arguments; replacing $\sqrt{n}$ in Assumption \ref{as:concentration} by $n^{1/2+\delta}$ for some small $\delta>0$ works as well. Furthermore, in that case, one can also relax Assumption \ref{as:nr} to $A_{n}\le n^{1/2+\delta}\log^2 n$ (the scale at which one has stretched exponential concentration and the bound one can get for $A_n$ are closely related; see the discussion below). 

As already mentioned, these properties hold for each of the 4 models defined in Section~\ref{s:model}. We record this fact as the next theorem.   

\begin{theorem}
    \label{t:4models}
    All the four models of rotationally invariant FPP on the plane described in Section~\ref{s:model} satisfy the assumptions in Section~\ref{s:assumption} for some choices of the parameters $D_0, \kappa, n_0>0$. 
\end{theorem}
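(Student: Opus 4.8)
The plan is to verify the twelve assumptions of Section~\ref{s:assumption} for each of the four models of Section~\ref{s:model}, one assumption at a time, assembling estimates that are either elementary consequences of the constructions or are available --- sometimes after a minor quantitative strengthening --- in the existing literature: \cite{VW90,VW92} and its successors for Voronoi FPP, \cite{HN97} for the Howard--Newman model, \cite{LW10} and its successors for Riemannian FPP, and the standard random geometric graph machinery (e.g.\ \cite{MR96,Pen03}) for graph distances. I would dispose of the structural assumptions first. The \textbf{Invariance} assumption is built into each construction, which produces $X_{uv}$ from an $ISO(2)$-invariant field by a recipe visibly invariant under rigid motions, time reversal and reparametrisation, and the \textbf{Triangle Inequality} (Assumption~\ref{as:tri}), in both forms, is immediate from the definitions of concatenation and of $X_{uv}$ as an infimum. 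For \textbf{Speed} (Assumption~\ref{as:speed}) I would apply Fekete's lemma to $\E X_n$, which is subadditive by translation invariance and the triangle inequality, together with crude linear bounds $cn\le \E X_n\le Cn$: the upper bound from a fixed reference path shadowing the straight segment from $\origin$ to $\mathbf n$ (literally for Riemannian FPP, where moreover $\mu\in[d_1,d_2]$; and by hopping between nearby Poisson points for the other three), and the lower bound from the fact that covering Euclidean distance $n$ forces length, cells, or edges of order $n$. For \textbf{Optimal Paths} I would, in the Poisson-based models, invoke Howard--Newman-type estimates \cite{HN97} showing that near-optimal $\origin$-to-$\mathbf n$ paths remain in a ball of radius $O(n)$ and use $O(n)$ points (with good tails), so the infimum is over a locally finite family and is attained; for Riemannian FPP, existence of geodesics follows from a compactness argument on bounded-variation curves (Helly selection) together with lower semicontinuity of $\int_\gamma\Psi$.

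The two genuinely probabilistic inputs are \textbf{Concentration} (Assumption~\ref{as:concentration}) and, building on it, \textbf{Non-random fluctuations} (Assumption~\ref{as:nr}). I would prove concentration of $X_n$ at scale $\sqrt n$ with stretched-exponential tails by the route familiar from lattice FPP: a bounded-differences or Talagrand-type concentration inequality applied to the Poisson points (resp.\ the white noise) in a box of side $O(n)$, bounding the influence of a single point (resp.\ of the noise in one unit cell) on $X_n$ by the local passage time near the geodesic, and using that the number of points the geodesic actually depends on is $O(n)$ with stretched-exponential tails. This is carried out in \cite{HN97} for the Howard--Newman model and has close analogues for the other three; the work here is recording the modifications needed for exactly the stated form. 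Assumption~\ref{as:nr} then follows from Assumption~\ref{as:concentration} by Alexander's approximate-subadditivity method \cite{Ale93,Ale97} (and its rotationally invariant refinement \cite{DW16}), which gives $0\le A_n=O(\sqrt{n\log n})\le n^{1/2}\log^2 n$ for $n$ large.

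The remaining assumptions --- \ref{as:localDist}, \ref{as:intermediate}, \ref{as:varlb}, \ref{as:cars}, \ref{as:resamp1}, \ref{as:resamp2} --- are local or geometric and reduce to two ingredients: (a) stretched-exponential tails for passage times within bounded regions (immediate from boundedness of the field for Riemannian FPP, and from Poisson concentration on the number of nearby cells, points, or edges otherwise), and (b) a transversal-fluctuation bound for geodesics with exponent strictly below $1$, available because the limit shape of a rotationally invariant model is a Euclidean ball and hence uniformly curved, so the Newman-type geodesic-wandering estimates of \cite{HN97} and their analogues apply. Ingredient (a) gives \textbf{Local Passage Times} (Assumption~\ref{as:localDist}) directly, and gives \textbf{Intermediate Points} (Assumption~\ref{as:intermediate}) after noting that for the continuum model splitting a geodesic at an interior point is free ($X_{uw}+X_{wv}=X_{uv}$ when $w\in\gamma_{uv}$) while for the Poisson models each split point costs only a local passage-time term, so a union bound over the $M$ split points produces the stated $M\log^{1/\kappa}$ loss. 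Ingredient (b) gives \textbf{Local Transversal Fluctuations} (Assumption~\ref{as:cars}) with room to spare since $4/5<1$, and, combined with the finite range of dependence of these models and ingredient (a), gives the resampling Assumptions~\ref{as:resamp1} and~\ref{as:resamp2}: a geodesic between two points on the sides of a strip is, with probability $1-n^{-\Omega(1)}$, either confined to the strip (hence unaffected by resampling its complement) or forced into an atypically large transversal excursion whose cost is controlled through (a) and (b), and the $\log^2 n$ collar in Assumption~\ref{as:resamp2} is the distance within which the local metric structure is determined (exactly $O(1)$ for Riemannian FPP by the support of the kernel, and $O(\log^2 n)$ with probability $1-n^{-\Omega(1)}$ for the Poisson models by Poisson concentration on cell diameters). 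Finally, the variance lower bound $\Var(X_n)\ge n^{1/6}$ of Assumption~\ref{as:varlb} is weaker than the polynomial variance lower bounds known for rotationally invariant FPP in the plane, and the uniform positivity $\inf_n\Var(X_n)>0$ is elementary from non-degeneracy of $X_1$.

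The main obstacle, I expect, is not any single assumption but the required uniformity: one must exhibit, for each model, a single triple $(D_0,\kappa,n_0)$ for which all of the tail estimates hold simultaneously, and several of the ``geodesics are well-behaved'' statements in the literature are stated only qualitatively or with suboptimal constants and must be upgraded to the precise stretched-exponential forms demanded by Assumptions~\ref{as:concentration}, \ref{as:intermediate}, \ref{as:cars}, \ref{as:resamp1} and~\ref{as:resamp2}. Among these, the heaviest single input is the concentration inequality (Assumption~\ref{as:concentration}) together with the geodesic-wandering bound (Assumption~\ref{as:cars}), on which essentially all of the local analysis rests.
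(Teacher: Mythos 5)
Your proposal matches the paper's own treatment of this statement: the paper explicitly defers the detailed verification to the companion paper \cite{BSS2} and offers only a paragraph-level roadmap identifying which arguments and references (Cox--Durrett for the shape/speed, Kesten/Talagrand for concentration, Alexander for non-random fluctuations, Newman--Piza for the variance lower bound, Newman's wandering estimate for transversal fluctuations, finite range of dependence for resampling) handle which assumptions. Your sketch lines up with that roadmap assumption by assumption, with only minor implementation variants (Fekete's lemma plus crude linear bounds in place of the shape theorem for Speed, Helly selection in place of Hopf--Rinow for Riemannian geodesics, and a somewhat vaguer invocation of ``known polynomial lower bounds'' where the paper points explicitly to the Newman--Piza argument and \cite{H00}), so at this level of detail the two approaches are the same.
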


The proof of Theorem \ref{t:4models} is standard but long and tedious. Some of the statements exist in the literature for some of the models and the rest can be verified by adapting various existing arguments, which are not necessarily formulated for these models. As such, neither the statement nor the proof of the above theorem is interesting to the experts. For this reason, and to keep this (already rather long) paper at a reasonable length, we shall not provide a detailed proof of Theorem \ref{t:4models} here, but defer it to a companion paper \cite{BSS2}. We nevertheless briefly discuss below which arguments in the literature will be used to verify the different assumptions above. 

Invariance properties are clear from the definition in each case, while existence of geodesics follows from standard compactness arguments (existence of geodesics in the Riemannian FPP case follow from an application of the Hopf-Rinow theorem in geoemetry; see \cite{LW10}). 
A canonical choice for the geodesic $\gamma_{uv}$ can be made by using some additional randomness independent of the noise field.

Existence and finiteness of $\mu$ follows from rotational invariance and variants of the standard Cox-Durrett shape theorem e.g.\ \cite{CD81}. This was shown for the Howard-Newman model in \cite{HN97}, for Voronoi FPP in \cite{VW90, VW92} (see also \cite{Ser97}) and for the Riemannian FPP in \cite{LW10}. Stretched exponential concentration at the $\sqrt{n}$ scale can be proved by a variant of the martingale argument as in \cite {Kes93} or using a Talagrand inequality argument as in \cite{Tal95}. This was shown for the Howard-Newman model in \cite{HN01}, and a slightly weaker statement was shown for the Voronoi FPP in \cite{Pim11}. The upper bound for non-random fluctuations $A_{n}$ was established for the lattice model by Alexander \cite{Ale93} (see also \cite{DW16} for an improvement for Euclidean FPP) and a variant of the same argument will work for each of our models. 

The tail bounds for local passage times as well as the triangle inequality are clear from definitions in each of the cases. The intermediate points hypothesis is also easy to establish. The variance lower bound follows from a variant the Newman-Piza argument \cite{NP95}, which requires planarity and curvature of the limit shape and the consequent upper bound on transversal fluctuations, see \cite{H00} for the case of Howard-Newman model. The local transversal fluctuations can be bounded using an argument of Newman from \cite{New95} (a similar argument was used in \cite{BSS19} for an exactly solvable model). Finally, the resampling hypotheses are rather easy to check case by case using the fact that the underlying noise field has a short range of dependence.  

We point out that all the hypotheses except Assumption \ref{as:varlb} can be shown to hold at all higher dimensions, but we shall not make use of this fact in this paper. Planarity is crucial for Assumption \ref{as:varlb}; indeed it is not known whether the variance grows polynomially for any model in dimensions $d\ge 3$. We shall make crucial use of planarity in parts of our argument (both Assumption \ref{as:varlb}, and the argument used to prove it). 

\subsection{Related literature}
\label{s:literature}
Existence of the limit shape for the first passage percolation models was established under a very general hypothesis more than forty years ago \cite{CD81}. It is expected that under mild regularity conditions on the underlying noise the limit shape has a boundary that is uniformly curved, but this is not known for any lattice model. From the general theory of the Kardar-Parisi-Zhang universality \cite{KPZ86}, it is also expected that planar first passage percolation models under very general hypothesis exhibit the exponent pair $(\chi=1/3,\xi=2/3)$ for passage time fluctuations and transversal fluctuations of the geodesics; proving this without stronger additional hypothesis has remained a central challenge in the field. A Poincar\'e inequality argument by Kesten \cite{Kes93} showed a linear upper bound on the variance which was upgraded later to an $O(n\log^{-1}n)$ bound in \cite{BKS04} (see also \cite{BR08, DHS13}). This remains the best known upper bound for general FPP models to date. 

The intricate inter-relations between understanding of the limit shape regularity, passage times fluctuations, and geodesic geometry has been clear for quite some time; however, to get control on the limit shape has proved to be rather difficult. The effort to define FPP models with suitable symmetries that will give control on the limit shape goes back to late 80s and early 90s, when FPP models based on a Poisson point process were first introduced. Voronoi FPP was introduced and studied in \cite{VW90, VW92,VW93} where the limit shape is a Euclidean ball by rotational invariance. A slightly different approach was based on studying, in absence of unconditional results, conditional behaviour of the FPP models under certain hypothesis which are expected to be true under general conditions. This approach was initiated by Newman and co-authors \cite{New95,NP95,LNP96} in the 90s where they proved a number of conditional results under some hypothesis on the limit shape (typically uniform curvature or local curvature). These results include, a one sided form of the KPZ relation $2\xi-1\le \chi$ and the consequent upper bound $\xi\le 3/4$, and a lower bound $\chi\ge 1/8$ for the passage time fluctuation exponent. As an example of a models where the hypotheses can be verified, another Poisson process based rotationally invariant model, Euclidean FPP (often referred to as Howard-Newman model) was introduced in \cite{HN97} and studied further in \cite{HN98,H00, H01, HN01}. As another candidate for a rotationally invariant FPP model, Riemannian FPP models were defined later \cite{LW10} based on a continuum random field instead. 

However, even with the additional hypotheses on the limit shape (or even rotational invariance), all of the major questions (e.g.\ showing that the fluctuation exponent $\chi=1/3$ or even the weaker bound $\chi<1/2$, non-existence of bigeodesics, or the full KPZ relation $\chi=2\xi-1$, optimal bounds for non-random fluctuations) still remained out of reach. Therefore, even stronger hypotheses were considered. In \cite{Cha11}, Chatterjee obtained a proof for the KPZ relation under some strong unverified hypothesis (essentially that the fluctuation exponent is same in all directions and passage times around their means are exponentially concentrated at the standard deviation scale). In parallel, in the exactly solvable planar last passage percolation literature (where curvature of limit shapes and $n^{1/3}$ fluctuations for passage times are rigorously known), many of the parallel questions (e.g.\ non-existence of bigeodesics, coalescence of geodesics, optimal estimates for the midpoint problem etc.) were handled \cite{BSS19,BHS18,BG18,BGZ21,BB21,BBF22,HS18,Zha19}. These works primarily used two ingredients from the exactly solvable literature as black-box estimates: curvature of limit shape, and (stretched) exponential concentration for passage times at the fluctuation scale. 

Following this, several conditional results for FPP models were proved under the assumption of curvature of limit shape (or rotational invariance) together with exponential concentration of passage times at the fluctuation scale in \cite{Ale20,Ale21}. In particular, KPZ relation, non-existence of bigeodesics, tight upper bound for the non-random fluctuations and tail estimates for distance to coalescence was proved under variants of the above hypotheses (some of these results were also obtained for all dimensions, while the rest were specific to the planar setting).

A comprehensive discussion of all related literature is beyond our current scope but we refer the reader to the excellent monograph \cite{ADH15} for a detailed account of the progress until 2015. We should also mention that some impressive progress without unverifiable hypotheses has recently been made, e.g.\ \cite{AH16} where the midpoint problem was solved (without any assumptions on the limit shape) or \cite{DEP21} where quantitative estimates 
for the midpoint problem was given (with some assumptions on the number of sides of the limit shape boundary which can be verified for some choices of passage time distributions). 

Nevertheless, as far as we are aware, prior to the current work, there was no known examples of FPP or LPP models (except the exactly solvable ones) for which a version of Theorem \ref{t:tightness}, i.e., concentration of passage times at the standard deviation scale, was known. The previous best known concentration result was an exponential concentration for $X_{n}$ at the scale $\sqrt{n/\log n}$ \cite{DHS14}. Although several conditional variants of Theorem \ref{t:tf} and Theorem \ref{t:nr} have been proved, as explained above, our results are the first ones were these theorems have been proved in some concrete (non exactly solvable) examples.  

\subsection{Companion papers and potential future applications}
\label{s:future}

This paper is the first in a series investigating properties of planar rotationally invariant FPP. As already mentioned, in a companion paper \cite{BSS2}, we shall show that the four examples of rotationally invariant FPP described above (Riemannian FPP, Voronoi FPP, Howard-Newman model and distances in random geometric graphs) do indeed satisfy the assumptions in Section \ref{s:assumption}. In another companion paper \cite{BSS3}, under the assumptions of Section \ref{s:assumption} together with the FKG inequality (which is satisfied for two of our four examples, namely Riemannian FPP and distances in
random geometric graphs), we will give a multi-scale proof establishing improved upper bounds on the passage time fluctuations, proving that $\hbox{Var}(X_n) \leq n^{1-\epsilon}$ for some $\epsilon>0$. This will be achieved by using the results of this paper together with various percolation arguments to show that the geodesic is chaotic at a large number of scales and then using the ``chaos implies superconcentration" principle (see e.g.\ \cite{Cha08}) in a multi-scale scheme. This will improve upon the best known variance upper bound $\hbox{Var}(X_n)=O(n/\log n)$ previously mentioned which was proved using hypercontractivity \cite{BKS04}, and later generalised using the log-Sobolev inequality \cite{BR08,DHS14}. This will demonstrate the first examples of non-exactly solvable models for which the strict inequality $\chi<1/2$ is known for the fluctuation exponent. The results of this paper can also allow one to establish non-existence of bigeodesics in weighted Voronoi FPP (where geodesics are almost surely unique). As mentioned above, non-existence of bigeodesics have been proved conditionally (essentially for lattice models under the assumptions of curvature of limit shape and a variant of Theorem \ref{t:tightness}), but it has not been established for any specific example so far.

Further, as mentioned above, many recent results on geometry of geodesics and passage time landscapes for exactly solvable LPP models were established using primarily two ingredients: uniform curvature of limit shapes, and stretched exponential concentration of passage times at the fluctuation scale; apart from the ones already mentioned above see also \cite{MSZ21,SSZ21,BGHH22,GZ22,GH23,BBB23}. The results of this paper open the door for proving similar results (without any unproven assumptions) for rotationally invariant planar FPP models. We believe that under the hypotheses of the current paper (and perhaps some additional assumptions such as the FKG inequality, uniqueness of geodesics etc. which are verifiable in specific cases), one can prove variants of some of the results established for exactly solvable models. Note that the exactly solvable results also sometimes use the fact that the Tracy-Widom distribution (scaling limit for the passage times in the exactly solvable LPP cases) has negative mean; in absence of weak convergence results in the non-exactly solvable cases, this input can be replaced by (the lower bound in) Theorem \ref{t:nr}. 

It is also worth investigating the robustness of our arguments. It is natural to ask if our arguments can be carried out under weaker hypothesis such as curvature of limit shapes or in higher dimensions. The current argument we have uses both rotational invariance and planarity crucially. In fact, it is not even sufficient for our arguments to assume that the limit shape is a Euclidean ball; at the very least our current argument also needs the fact that the fluctuations of passage times in different directions can not grow at different scales. One might try to check if our argument goes through under the hypothesis of curvature of limit shape together with an assumption like  $\sup_{n\ge 1}\sup_{|u|, |v|\in (n/2,2n)} \frac{\mbox{Var}(X_{\mathbf{0},u})}{\mbox{Var}(X_{\mathbf{0},v})}<\infty$. We have not tried to verify this, it might be taken up elsewhere. As for planarity, one can check that parts of our argument (Sections \ref{s:rectpara}, \ref{s:percgen}, \ref{s:trans}) works in all higher dimensions. Parts of the arguments in Sections \ref{s:impconc}, \ref{s:record}, \ref{s:ltf} and \ref{s:left} use that the variance grows at least polynomially, which is not known in higher dimensions even for rotiationally invariant models. The final piece of the argument in Section~\ref{s:varlb} uses planarity in a fundamental way; we implement a complicated block version of the Newman-Piza argument \cite{NP95} which shows polynomial lower bound of the variance of planar FPP models under curvature assumption (in higher dimensions the argument gives a lower bound that goes to $0$ as $n\to \infty$). Therefore our current argument does not appear to extend to higher dimensions even under an additional assumption of polynomial growth of the variance.

We now move towards the proofs of our main results. The next section shall describe the basic set-up of our multi-scale scheme and provide a high level overview of our argument. A more detailed description of the organisation of the remainder of the paper will be given at the end of the next section.

\subsection*{Acknowledgements}
Riddhipratim Basu is supported by a MATRICS grant (MTR/2021/000093) from SERB, Govt.~of India, DAE project no.~RTI4001 via ICTS, and the Infosys Foundation via the Infosys-Chandrasekharan Virtual Centre for Random Geometry of TIFR. Allan Sly is supported by a Simons Investigator grant and a MacArthur Fellowship.


\section{Basic set-up and proof outline}
\label{s:outline}
From now on we shall work with a rotationally invariant planar FPP model which satisfies the assumptions in Section \ref{s:assumption} for fixed parameters $\kappa, D_0>0$ and $n_0\ge 1$. We will make use of two exponents $\alpha,\theta>0$ as well as an exponent $\epsilon>0$ chosen to be small satisfying
\[
\alpha \in [\min\{\frac{1}{30},\frac12 \kappa\}, \min\{\frac{1}{15},\kappa\}] ,\quad \theta = \frac{1\wedge\kappa^2}{10000}, \quad {\epsilon= \frac{\theta}{10^6}\leq 10^{-10}}.
\]
Generally $u,v,w$ will denote points in $\R^2$.  As we will need to take many union bounds throughout the proof, we will want to discretise $\R^2$ and so for $u\in \R^2$ we let $u^\Z$ denote the closest point in $\Z^2$ to $u$ with some arbitrary rule to break ties.

While the ultimate goal of the proof is to renormalize the centred passage times by the variance, it turns out that this is not the most convenient quantity for our multiscale analysis. Instead we define a quantity $\hQ_n$ by
\[
\hQ_n := \inf\left\{Q:\sup_{x>0}\frac{\P[|X_n-\E X_n|>xQ]}{\exp(1-x^\theta)}\leq 1\right\}.
\]
By Assumption~\ref{as:concentration} since $\theta<\kappa$ we have that $\hQ_n$ is finite and
\[
\hQ_n \leq D_0^{1/\kappa}\sqrt{n}
\]
for all $n\geq 1$.  We define
\[
Q_n := \sup_{1\leq m \leq n} \left( \frac{n}{m}\right)^\alpha \hQ_m,
\]
and so we have that \footnote{recall that as mentioned before, $m$ varies over all reals in the above supremum and not just integers.}
\begin{equation}\label{eq:Q}
\P[|X_n-\E X_n|>xQ_{n}]\leq \exp(1-x^\theta).
\end{equation}
In our analysis we will inductively normalize the centred passage times by $Q_n$. Observe that since $\alpha>0$, by definition $Q_{n}$ is strictly increasing in $n$; We use $Q_n$ rather than $\hQ_n$ in order to avoid the possibility that there is a large range of $n$ for which $\hQ_n$ does not increase (or possibly decreases).

By construction we have that
\begin{equation}\label{eq:QnRootNBound}
\hQ_n \leq Q_n \leq D_0^{1/\kappa}\sqrt{n},
\end{equation}
since $\alpha<\frac12$.  Note that
\begin{align}\label{eq:varUpperQn}
\Var(X_n) &= \int_0^\infty 2x \P[|X_n-\E X_n| > x] dx \nonumber\\
&\leq \int_0^\infty 2x \exp(1-(x/\hQ_n)^\theta) dx \nonumber\\
&= \hQ_n^2 \int_0^\infty 2x \exp(1-x^\theta) dx \leq C \hQ_n^2,
\end{align}
and hence $\SD(X_n) \leq C\hQ_n \leq C Q_n$. 
Much of our proof goes to establishing the other direction of these inequalities and showing that
\begin{equation}\label{eq:QequivSD}
Q_n\asymp \hQ_n \asymp \SD(X_n)
\end{equation}
which immediately implies Theorem~\ref{t:tightness}.

\subsection{Outline of the argument}
Now we give a sketch of our argument showing \eqref{eq:QequivSD}. Observe first that since $\SD(X_n)$ grows at least as fast as $n^{1/12}$, which is faster than $n^\alpha$, there must be a sequence of $n_i$ tending to infinity such $Q_{n_i} = \hQ_{n_i}$. We shall call such $n_i$ the $\alpha$-record points, and $n\ge 1$ such that $Q_{n}=\sup_{1\le m\le n} (n/m)^{\alpha}\hQ_{m}\le C\hQ_{n}$ will be called $(C,\alpha)$-quasi record points. We shall establish \eqref{eq:QequivSD} by showing that (i) for all quasi record points $n$, we have $\Var X_{n}\ge CQ^2_{n}$, and (ii) every $n\ge 1$ is a quasi-record point. By way of establishing (i) and (ii) via a multi-scale argument, we shall establish Theorem~\ref{t:tightness} together with the upper bounds in Theorems~\ref{t:tf} and~\ref{t:nr}. Lower bounds in Theorems \ref{t:tf} and \ref{t:nr} are proved separately later; we shall give a brief outline of that argument at the end of this section. 

\noindent
\textbf{Transversal Fluctuations and Canonical rectangles.}
For points $u,v\in \R^2$ we denote by $\trans_{uv}$ the transversal fluctuation of $\gamma_{uv}$ defined as the maximal distance from the line passing through $u$ and $v$.  If $\underline{e}$ is a unit vector perpendicular to $v-u$ then this is equivalent to

\begin{equation}
 \label{e:tfdefn}
\trans_{uv} := |\max_{w\in\gamma_{uv}} \underline{e}\cdot(w-u)|.
\end{equation}

We set $\trans_n:=\trans_{\origin,(n,0)}$.  We define 
\begin{equation}
    \label{e:wndef}
    W_n=\sqrt{n Q_n}.
\end{equation}
The scaling relation of \eqref{eq:scalingRelation2} suggests we should expect transversal fluctuations of a path of length $n$ should be of the order $W_{n}$ (this is the content of Theorem \ref{t:tf}). It is expected (and known in the case of exactly solvable LPP models) that the collection of (centred and scaled) passage times between points in a rectangle of size $n\times W_{n}$ are uniformly tight. Therefore we consider such rectangles to be canonical on-scale rectangles. To move inductively from one scale to the next it is useful to control passage times from one side of a rectangle to another; it will be convenient to do this for rectangles and parallelograms that are more general than the canonical rectangles. In most cases the shorter sides will be parallel to the $y$-axis. We will generally refer to a rectangle as $\cR$ and let $L_\cR$ and $R_\cR$ denote its left and right sides (the shorter sides). Define the maximal and minimal side to side distances
\begin{align*}
Y^+_{\cR}:=\sup_{u\in L_\cR,v\in R_\cR} X_{uv},\\
Y^-_{\cR}:=\inf_{u\in L_\cR,v\in R_\cR} X_{uv}.
\end{align*}
Let $\cR_{n,W}$ denote the $n\times W$ rectangle with corners $(0,0),(n,0),(0,W)$ and $(n,W)$ ($W$ will typically vary around $W_{n}$ but can differ from it by only a factor that is a small polynomial of $n$). In the case of the canonical rectangles (i.e., when $W=W_{n}$) we shall denote it by $\cR_n$ and the corresponding passage times will be denoted $Y_n^\pm:=Y^\pm_{\cR_n}$.

\noindent
\textbf{Concatenation Bounds.}
When bounding passage times at larger scales in terms of passage times at smaller scales, we shall need to compare passage times of concatenated paths to the sum of passage times of their pieces (recall that we do not necessarily have passage times of a concatenated path to be equal to the sum of the passage times of its pieces for all our models). In order to give bounds where we concatenate paths and take union bounds over integer points in the plane we introduce the variable $\Gamma_n$ defined as
\begin{align*}
\Gamma_n:&= \max_{\substack{u,v\in B_{2n}(\origin)\\|u-v|\leq 1}} X_{uv} + \max_{u,v\in B_{2n}(\origin)} \max_{M\geq 1} \frac1{M} \max_{0<t_1<\ldots<t_M<1} \sum_{i=1}^{M+1} X_{\gamma_{uv}(t_{i-1})\gamma_{uv}(t_i)} - X_{uv}\\
&\qquad + \max_{u\in B_{2n}(\origin)}\max_{v\in\R^2} \max_{M\geq 1} \frac1{M} \max_{\substack{0<t_1<\ldots<t_M<1\\ |\gamma(t_i)| \leq 2n}} \sum_{i=1}^{M+1} X_{\gamma_{uv}(t_{i-1})\gamma_{uv}(t_i)} - X_{uv}.
\end{align*}
In the above equation and for its later usages, the notations $B_{r}(v)$ will denote the ball of radius $r$ around the point $v\in \R^2$. The following easy lemma, which follows immediately by Assumptions~\ref{as:localDist} and~\ref{as:intermediate} and a union bound, will be very useful to us. 
\begin{lemma}\label{l:Gamma}
There exists $C>0$ such that for any $n\geq 1$ and $x>0$
\[
\P[\Gamma_n > Cx\log^{C} n] \leq \exp(1-x^{\kappa}).
\]
\end{lemma}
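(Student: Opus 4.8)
The plan is to derive the bound on $\Gamma_n$ directly from Assumptions~\ref{as:localDist} and~\ref{as:intermediate} by treating the three terms in the definition of $\Gamma_n$ separately and combining them via a union bound. Recall that $\Gamma_n$ is a sum of three quantities: a maximum of local passage times over unit-distance pairs of points in $B_{2n}(\origin)$, and two ``intermediate point defect'' terms (one with both endpoints in $B_{2n}(\origin)$, and one with only the starting point constrained).

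\textbf{First term.} To bound $\max_{u,v\in B_{2n}(\origin),|u-v|\le 1} X_{uv}$, I would cover $B_{2n}(\origin)$ by $O(n^2)$ balls of radius $1$ (or, after translating, note that each such $X_{uv}$ with $|u-v|\le 1$ is stochastically dominated by $\max_{u',v'\in B_3(\origin)} X_{u'v'}$ applied to a translate of the field, using the Invariance assumption). Assumption~\ref{as:localDist} gives $\P[\max_{u',v'\in B_3(\origin)} X_{u'v'} > x]\le \exp(1-D_0 x^\kappa)$ for each such translate; a union bound over the $O(n^2)$ translates, choosing a threshold of the form $Cx\log^{1/\kappa} n$, absorbs the polynomial factor into the stretched exponential and yields a bound of the claimed form.

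\textbf{Second and third terms.} These are exactly the quantities controlled by the two displayed inequalities in Assumption~\ref{as:intermediate}, after dividing by $M$ and taking the maximum over $M\ge 1$. For the second term, Assumption~\ref{as:intermediate} already provides, for fixed $u,v\in B_n(\origin)$ (and by a rescaling/translation argument, $B_{2n}(\origin)$), that the defect exceeds $xM\log^{1/\kappa} n$ with probability at most $\exp(1-D_0 x^\kappa)$; dividing by $M$ this says $\frac1M(\text{defect}) > x\log^{1/\kappa}n$ has the same probability bound, uniformly in $M$ and the choice of intermediate times since the assumption already maxes over these. One then takes a union bound over a net of pairs $(u,v)$ in $B_{2n}(\origin)$ of size $O(n^4)$; again the polynomial is absorbed into the stretched exponential at the cost of adjusting the constant $C$ and the power of $\log n$. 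The third term is handled identically using the second display of Assumption~\ref{as:intermediate}, with the $\log^{1/\kappa}(n+\max_i|\gamma_{uv}(t_i)|)$ factor: here one additionally decomposes according to the dyadic scale of $\max_i |\gamma_{uv}(t_i)|$ and sums the resulting stretched-exponential tails, which converges. Finally, combining the three tail bounds (replacing $x$ by $x/3$ in each and taking a union bound) and collecting the powers of $\log n$ into a single $\log^C n$ gives the statement, with the constant $C$ chosen large enough to absorb $D_0^{-1/\kappa}$, the discretisation corrections, and the dyadic sum.

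\textbf{Main obstacle.} The only mildly delicate point is the passage from the ``fixed $u,v$'' formulation of Assumption~\ref{as:intermediate} to the ``maximum over $u,v$ in a ball'' formulation appearing in $\Gamma_n$: one must be careful that taking a union bound over an integer net $\{u^\Z, v^\Z\}$ genuinely controls the supremum over all real $u,v$. This is where one leans on the first term (the local passage time bound $\max_{|u-v|\le 1} X_{uv}$) together with the triangle inequality (Assumption~\ref{as:tri}) to pass from net points to nearby points, and on the robustness of the geodesic's defect under such perturbations. Since $\Gamma_n$ was defined precisely so that this first term is already included, this is essentially bookkeeping rather than a genuine difficulty, so the lemma is indeed ``easy'' as claimed; the proof amounts to a careful union bound.
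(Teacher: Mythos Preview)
Your approach is correct and is exactly what the paper does: the paper's entire proof is the sentence ``follows immediately by Assumptions~\ref{as:localDist} and~\ref{as:intermediate} and a union bound.'' One small point: your ``main obstacle'' is in fact a non-issue, since the two displays in Assumption~\ref{as:intermediate} are \emph{already} stated with $\max_{u,v\in B_n(\origin)}$ (respectively $\max_{u\in B_n(\origin)}$) inside the probability, so no discretisation or net argument is needed for the second and third terms; only the first term requires covering $B_{2n}(\origin)$ by $O(n^2)$ translates of $B_3(\origin)$ and a union bound, with the polynomial absorbed by increasing the power of $\log n$.
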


\noindent
\textbf{Percolation argument and concentration.}
The first step is to prove that the definition of $Q_{n}$ implies that $Q_n^{-1}(Y^{-}_{n}-n\mu)$ has a stretched exponentially decaying left tail (in fact we prove something more general, see Section \ref{s:rectpara}). Using this, together with a percolation estimate, we first show that the geodesic from $\mathbf{0}$ to $(Mn,0)$ behaves sufficiently regularly (i.e., it cannot fluctuate too much), and then using it show that (see Lemma~\ref{l:AnBound})
\begin{equation}
    \label{e:an}
    A_{n}\le D_1 Q_{n}. 
\end{equation}
This will eventually prove the upper bound in Theorem \ref{t:nr} once we have \eqref{eq:QequivSD}. The next step is to show that $Q_n$ cannot grow too quickly (Lemma \ref{l:Qgrowth}). Using these, we finally show that $Q_{n}^{-1}|Y^{\pm}_{n}-n\mu|$ have stretched exponential concentration with tails decaying as $\exp(-Cx^{\theta})$ (see Lemma \ref{l:YMinusBound}, Lemma~\ref{l:YPlusBound}). 

\noindent
\textbf{Transversal fluctuations.}
The next step is to prove an upper bound of the transversal fluctuation $\trans_{n}$ at scale $W_{n}=\sqrt{nQ_{n}}$ (Theorem \ref{t:trans.main}). This done by first bounding the transversal fluctuation at the line $x=n/2$ and then using a chaining argument at dyadically decreasing scales. Once \eqref{eq:QequivSD} is established this immediately implies the upper bound in Theorem \ref{t:tf}.

Up until this point we have proved several estimates at scales $Q_{n}$ and $W_{n}$, which, once \eqref{eq:QequivSD} is established will give our main results. We note that analogues of some results in Sections \ref{s:percgen} and \ref{s:trans} are known in exactly solvable set-up \cite{BSS14, BGZ21} or conditionally in FPP \cite{Ale20,Ale21} under hypotheses similar to \eqref{eq:Q} and rotational invariance/curvature of limit shape. It is also worth pointing out that the arguments so far do not use Assumption \ref{as:varlb} that the variance grows at least polynomially. Note also that this is the only one among our assumptions which required planarity so all the results up until this point could be adapted to higher dimensions as well. Now we move towards establishing \eqref{eq:QequivSD} which will require crucial use of planarity (and Assumption \ref{as:varlb}). To this end we shall use the notion of (quasi-) record points defined above.

\noindent
\textbf{Record points.}
Recall our two step strategy to establish \eqref{eq:QequivSD} described at the start of this subsection. We first improve the results of Section \ref{s:percgen} to show that the exponent $\theta$ in Lemma \ref{l:YMinusBound} can be improved to $4\theta/3$ sufficiently far in the tails, and a similar improved concentration holds for $X_{n}$ (see Lemma \ref{l:ImprovedInduction}) . 

Using the improved concentration above, one can show that for record points (in fact for quasi-record points) $n$, one has $\Var(X_{n})\ge CQ^2_{n}$ (Lemma \ref{l:varbd}), which concludes the step (i) of showing~\eqref{eq:QequivSD}.

Next, we show that for each record point there exist numerous nearby record points at a range of nearly geometric scales (Lemma \ref{l:grmain}). We also improve Lemma \ref{l:Qgrowth} to show that $Q_{n}$ cannot grow too fast, in particular that it grows locally sublinearly (Lemma \ref{l:growth34}).

\noindent
\textbf{Local transversal fluctuations.}
We also need a local version of the transversal fluctuation estimate. In Section \ref{s:ltf} We show that for a geodesic from $(0,0)$ to $(n',0)$ for $n'\gg n$, the transversal fluctuations at the line $x=n$ has stretched exponential tails at scale $W_{n}$ (see Corollary \ref{c:local.trans} for a precise statement). Unlike the global transversal fluctuation results of Section \ref{s:trans}, this requires more control on the growth of $W_{n}$ given by Lemma \ref{l:growth34}. This result was also previously known in  exactly solvable set-up \cite{BSS19}.

\noindent
\textbf{Passage time tails carry positive mass arbitrarily far away to the left.} The final ingredient needed to show that all points are quasi-record points is the following: we show that given $L>0$, the probability that $Q_{n}^{-1}(X_{n}-n\mu)\le -L$ is bounded away from $0$ (uniformly in $n$)
for all sufficiently large quasi-record points $n$ (Proposition \ref{p:left1}). This shows that there is some positive probability of having a very good path (better than typical by an arbitrarily large multiple of  $Q_n$) at the scale of quasi-record points $n$, this will be used to construct favourable events in the following percolation argument. 

\noindent
\textbf{Lower bounding the variance.}
The final piece of the argument showing \eqref{eq:QequivSD} is to show that for sufficiently large and fixed $M$, we have 
\begin{equation}
    \label{e:varlb}
    \Var (X_{Mn})\ge CM^{1/10}\Var (X_{n})
\end{equation}
for all sufficiently large record points $n$. Using the growth estimate on $Q_{n}$ (Lemma \ref{l:growth34}) and \eqref{e:varlb} we next show that there are record points $n_0, n_{1}, \ldots$ such that $\frac{n_{i+1}}{n_{i}}\in [2,M]$ for all $i$. It follows that for every $n'\ge n_0$, there exists a record point $n\in [\frac{n'}{M},n']$. By a stronger variant of Proposition~\ref{p:left1} (see Proposition \ref{p:left}) this implies that there exists $\delta>0$ such that 
$$\P(X_{n'}\le n'\mu-Q_{n'})\ge \delta$$
which immediately implies $\Var (X_{n'})\ge \delta Q^2_{n'}$ showing that $n'$ is a quasi-record point. 

Establishing \eqref{e:varlb} is the most technically challenging part of this paper. Although the precise details of the implementation of the argument is slightly different from what is described below; essentially we do the following. The idea is to implement a block version of the Newman-Piza argument from \cite{NP95}. We divide the plane into a grid of $n\times W_{n}$ sized blocks. First we show by a percolation argument that with large probability most of the blocks the geodesic from $\mathbf{0}$ to $(Mn,0)$ passes through satisfy certain regularity properties. We decompose the variance into sum of contributions from resampling the blocks one by one. 
We show that on the event that the geodesic passes through a block $B$, resampling it contributes at least $CQ_{n}^2$ with positive probability. This shows (essentially)
$$\Var (X_{Mn})\ge CQ_{n}^2\sum_{B} p^2_{B}$$
where the sum is over ``all" blocks and $p_{B}$ denotes the probability of the geodesic passing through the block $B$. By the transversal fluctuation estimates (Theorem \ref{t:trans.main}) and the fact that $W_{Mn}=O(M^{7/8}W_{n})$ (which follows from the fact that $Q_{n}$ grows locally sub-linearly, Lemma \ref{l:growth34}) it follows that one essentially only needs to sum over $M^{15/8+\phi}$ many blocks for some small $\phi>0$, and the proof is completed by the Cauchy-Schwarz inequality, observing $\sum p_{B}=\Theta(M)$ and using the fact that $n$ is a record point. We point out that implementing this is technically much more difficult than in \cite{NP95} where a single edge was resampled at each step and requires carefully designing favourable events at multiple scales.

\noindent
\textbf{Lower bounds in Theorems \ref{t:tf} and \ref{t:nr}.}
For the lower bound in Theorem \ref{t:tf} we first prove a weaker statement which shows that there is a positive probability to have transversal fluctuation at scale $W_{n}$ that is bounded away from 0 (Lemma \ref{l:tflowerw}); in fact we show that for some $\delta>0$, the geodesic is better than all paths with transversal fluctuation less than $\delta W_{n}$ by at least an amount $\delta Q_{n}$ with probability at least $\delta$. The idea is to show that with positive probability there exists a good path nearby, and by a surgery near the end points one can use this good path to do better than all paths with small transversal fluctuation (see Figure \ref{f:trans.lower}). 

For the proof of the lower bound of Theorem \ref{t:nr}, we show using the above argument that with probability at least $\delta>0$, $X_{2n}$ does better than $X_{n}+X_{\mathbf{n},2\mathbf{n}}$ by at least $\delta Q_{2n}$. This together with the sub-additive structure of the sequence $X_{n}$ shows the lower bound in Theorem \ref{t:nr} (Lemma \ref{l:mean}). 

To complete the proof of the lower bound in Theorem \ref{t:tf} we  show that the passage time of the best path from $\mathbf{0}$ and $\mathbf{n}$ that has transversal fluctuation at most $\delta W_{n}$ for some small $\delta$ can be approximated by sums of many (approximately) independent passage times of length $n'$ where $n'\le n$ is such that $W_{n'}\approx \delta W_{n}$. Using Lemma \ref{l:meanpara} (which is a strengthened version of Lemma \ref{l:mean}) we show that each of these pieces lead to a penalty in mean of the order $Q_{n'}$ it follows that the sum will experience a penalty $\frac{n}{n'}Q_{n'}\gg Q_{n}$ (the last inequality can be shown by our estimates on the growth of $Q_{n}$). This together with Theorem \ref{t:tightness} shows that it is unlikely for the geodesic to have transversal fluctuation smaller than $\delta W_{n}$, as required (see the proof of Proposition \ref{p:tflower}).  

\subsection{Auxiliary results of independent interest}
For easy reference, here we would like to record a couple of results which we establish en route our arguments proving the main theorems. We believe these results to be of importance in future works. 

The first result deals with the concentration of first passage times across the canonical rectangle at the standard deviation scale. 

\begin{proposition}
    \label{p:conc}
    There exists $C,c>0$ such that for all $n\ge 1$ and $x>0$ we have 
    \begin{enumerate}
        \item[(i)] $\P(|Y^{+}_{n}-n\mu|\ge x \mathrm{SD}(X_{n}))\le Ce^{-cx^{\theta}}$;
        \item[(ii)] $\P(|Y^{-}_{n}-n\mu|\ge x \mathrm{SD}(X_{n}))\le Ce^{-cx^{\theta}}.$
    \end{enumerate}
\end{proposition}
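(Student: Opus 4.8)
The plan is to prove the two-sided bounds for $Y^+_n$ and $Y^-_n$ directly from the tail bounds for $Q_n^{-1}(Y^\pm_n - n\mu)$ established earlier (Lemmas \ref{l:YMinusBound} and \ref{l:YPlusBound}), combined with the key equivalence \eqref{eq:QequivSD}, i.e.\ $Q_n \asymp \SD(X_n)$. Indeed, once $Q_n \asymp \SD(X_n)$ is known, a tail bound of the form $\P[|Y^\pm_n - n\mu| > x Q_n] \leq \exp(1 - c x^\theta)$ (valid for all $x > 0$, possibly after adjusting constants) translates immediately into $\P[|Y^\pm_n - n\mu| > x\,\SD(X_n)] \leq C e^{-c' x^\theta}$ by substituting $x \mapsto x\,\SD(X_n)/Q_n$ and absorbing the bounded ratio into the constants. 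So structurally the proposition is a corollary of the multi-scale machinery, and the main content is simply to have the right one-sided tail estimates for $Y^+_n$ and $Y^-_n$ in hand.

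The first step is to record the upper tail for $Y^+_n$. The natural route is to write $Y^+_n \leq X_n + (Y^+_n - X_n)$ — or more precisely to bound the side-to-side passage time by the point-to-point passage time $X_n$ plus a correction term controlled by $\Gamma_n$ (via Lemma \ref{l:Gamma}) and the local passage time Assumption \ref{as:localDist} near the two short sides $L_{\cR_n}$ and $R_{\cR_n}$, whose lengths are $W_n = \sqrt{nQ_n} = o(n)$. Since $X_n - n\mu = (X_n - \E X_n) + A_n$ with $A_n \leq D_1 Q_n$ by \eqref{e:an}, and $X_n - \E X_n$ has $\exp(1 - x^\theta)$ tails at scale $Q_n$ by \eqref{eq:Q}, the upper tail of $Y^+_n - n\mu$ at scale $Q_n$ follows. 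For the lower tail of $Y^+_n$, note $Y^+_n \geq Y^-_n$, so it suffices to lower-bound $Y^-_n - n\mu$, which is exactly the content of the "percolation argument and concentration" step (the stretched-exponential left tail of $Q_n^{-1}(Y^-_n - n\mu)$, Section \ref{s:rectpara} and Lemma \ref{l:YMinusBound}). For the upper tail of $Y^-_n$ one uses $Y^-_n \leq Y^+_n$ together with the upper tail just obtained, and for the lower tail of $Y^-_n$ one again invokes Lemma \ref{l:YMinusBound} directly. Thus both (i) and (ii) reduce to: (a) the upper and lower tail bounds for $Y^\pm_n - n\mu$ at scale $Q_n$ proved in the body of the paper, and (b) the identification $Q_n \asymp \SD(X_n)$.

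I expect the only genuine subtlety — and hence the step to be most careful about — is the passage from "tails at scale $Q_n$" to "tails at scale $\SD(X_n)$": this is not circular only because \eqref{eq:QequivSD} is established independently via the record-point and variance-lower-bound arguments (Lemmas \ref{l:varbd}, \ref{l:grmain}, \ref{l:growth34}, Propositions \ref{p:left1}, \ref{p:left}), and one must be sure the proposition is stated and used only after those are in place. A minor technical point is that the tail bounds in the body may hold only for $x$ above some threshold or only for $n \geq n_0$; to get the clean statement for \emph{all} $n \geq 1$ and \emph{all} $x > 0$ one inflates the constant $C$ to handle $x$ in a bounded range (trivially, since probabilities are at most $1$) and handles the finitely many small $n$, or the bounded range $n \leq n_0$, using $\inf_{n} \Var(X_n) > 0$ from Assumption \ref{as:varlb} together with the crude bound $\SD(X_n) \leq C Q_n$ from \eqref{eq:varUpperQn}. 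None of this is delicate; it is bookkeeping around the main inputs.
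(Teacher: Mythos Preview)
Your proposal is correct and matches the paper's one-line proof exactly: invoke Lemmas~\ref{l:YMinusBound} and~\ref{l:YPlusBound} for the tail bounds at scale $Q_n$, then convert via Proposition~\ref{p:qn.SD.equivalence} (i.e.\ \eqref{eq:QequivSD}). One caveat about your middle paragraph: the sketched route to the upper tail of $Y^+_n$ via $X_{uv}\le X_{u,\mathbf{0}}+X_n+X_{\mathbf{n},v}$ does not work, since the endpoint corrections are of order $\mu W_n\gg Q_n$ (Assumption~\ref{as:localDist} only controls balls of radius $3$, not $W_n$); the paper instead proves Lemma~\ref{l:YPlusBound} by a dyadic chaining argument (Lemma~\ref{l:YPlusFlexible}). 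This is harmless for your argument because you correctly cite the established lemma rather than re-derive it.
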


These results are shown in Lemma \ref{l:YMinusBound} and Lemma \ref{l:YPlusBound} with $\mbox{SD}(X_{n})$ replaced by $Q_{n}$; and Proposition \ref{p:conc} follows immediately from those lemmas once \eqref{eq:QequivSD} is established for all $n\ge 1$. Similar results have been established in exactly solvable models \cite{BSS14, BGZ21} and turned out to be extremely useful in different problems in that setting; this will also be important in future studies of rotationally invariant FPP models. In particular, this estimate will be useful for our upcoming work on polynomial improvement on the upper bound of variance. 

The second result deals with existence of good paths. 

\begin{proposition}
    \label{p:lowertail}
    For $L>0$, there exists $\delta=\delta(L)>0$ such that 
    $$\P(X_{n}<n\mu-L\mathrm{SD}(X_{n}))\ge \delta$$
    for all $n$ sufficiently large.
\end{proposition}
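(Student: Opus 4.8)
The plan is to leverage the equivalence $\mathrm{SD}(X_n)\asymp Q_n$ from~\eqref{eq:QequivSD} so that it suffices to produce, for every $L>0$, a $\delta=\delta(L)>0$ with $\P(X_n < n\mu - L Q_n)\geq \delta$ for all $n$ large. This is exactly the content of Proposition~\ref{p:left1} (or its stronger variant Proposition~\ref{p:left}), which is proved for all sufficiently large quasi-record points, and since by~\eqref{eq:QequivSD} \emph{every} sufficiently large $n$ is a quasi-record point, the bound holds for all large $n$. So at the level of this excerpt the proposition is essentially a repackaging: the real work has been front-loaded into the multi-scale machinery. The write-up should (i) recall that $\mathrm{SD}(X_n)\leq C Q_n$ and $Q_n \leq C\,\mathrm{SD}(X_n)$, (ii) set $L' = L\cdot (C+1)$ say, apply Proposition~\ref{p:left1} with this $L'$ to get $\delta'>0$ such that $\P(X_n < n\mu - L' Q_n)\geq \delta'$ for $n\geq n_1$, and (iii) observe $n\mu - L' Q_n \leq n\mu - L\,\mathrm{SD}(X_n)$ for such $n$, so the same event lower bound transfers.

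Should one instead want a self-contained sketch of \emph{why} Proposition~\ref{p:left1} holds (the genuinely substantive step), the idea outlined in the paper is the following. One works at a quasi-record point $n$ and decomposes the interval $[\mathbf 0, \mathbf n]$ into $\approx n/n'$ sub-blocks of length $n'$ where $n'\leq n$ is chosen with $W_{n'}\asymp \delta W_n$ for a small constant $\delta$. Restricting to the best path confined to a thin tube of width $\asymp W_n$, its passage time is well-approximated by a sum of $\approx n/n'$ \emph{approximately independent} block passage times (using the resampling Assumptions~\ref{as:resamp1},~\ref{as:resamp2} to decouple blocks, up to $\log^2$-scale boundary corrections handled by the concatenation variable $\Gamma_n$ of Lemma~\ref{l:Gamma}). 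Each block independently has a chance bounded below (by the left-tail surgery argument sketched around Lemma~\ref{l:tflowerw}) of beating its own mean by a multiple of $Q_{n'}$; summing, the total beats $n\mu$ by $\gtrsim (n/n')Q_{n'}$, and the local sub-linear growth of $Q_n$ (Lemma~\ref{l:growth34}) guarantees $(n/n')Q_{n'}\gg Q_n$, so for any fixed $L$ one can choose $\delta$ (hence $n'$) small enough to produce the required surplus $L Q_n$ with probability bounded away from $0$.

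The main obstacle, and the reason this cannot be made truly short without invoking the earlier results, is controlling the dependence between the block passage times and, simultaneously, ensuring the confined best path is not much worse than the unconstrained geodesic — i.e. that the transversal fluctuation really is $O(W_n)$ so that the tube of width $\asymp W_n$ loses nothing. This is precisely where the global transversal-fluctuation bound (Theorem~\ref{t:trans.main}) and the concentration estimates for $Y_n^\pm$ (Lemma~\ref{l:YMinusBound}, Lemma~\ref{l:YPlusBound}) enter, together with a union bound over $\approx n/n'$ blocks which forces the per-block failure probabilities to be summably small; the bookkeeping of these error terms against the $\exp(1-x^\theta)$-type tails is the technically delicate part. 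In the final text I would simply cite Proposition~\ref{p:left1} and~\eqref{eq:QequivSD} and keep the argument to the few lines of (i)--(iii) above, since a full proof of Proposition~\ref{p:left1} belongs to Section~\ref{s:left} rather than here.
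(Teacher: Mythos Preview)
Your core proposal (steps (i)--(iii)) is correct and is exactly the paper's own proof: Proposition~\ref{p:lowertail} is deduced in one line from Proposition~\ref{p:left1} together with Proposition~\ref{p:qn.SD.equivalence} (i.e.\ \eqref{eq:QequivSD}), the latter ensuring both that $\mathrm{SD}(X_n)\asymp Q_n$ and that every large $n$ is a quasi-record point.

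One small caution on your optional sketch of Proposition~\ref{p:left1}: what you describe --- choosing $n'$ via $W_{n'}\asymp \delta W_n$, confining to a thin tube, and invoking Lemma~\ref{l:tflowerw} --- is closer to the argument for the \emph{transversal-fluctuation lower bound} (Proposition~\ref{p:tflower}) in Section~\ref{s:lower} than to the actual proof of Proposition~\ref{p:left1}/\ref{p:left} in Section~\ref{s:left}. The paper's proof of Proposition~\ref{p:left} instead picks a nearby $\alpha$-record point $m$ via Lemma~\ref{l:grmain}, uses the weak left-tail input Lemma~\ref{l:leftweak} (itself proved by contradiction through Lemmas~\ref{l:contracorr}--\ref{l:contrapara}) on each of the $\lfloor n'/m\rfloor$ sub-blocks, and takes the product of these independent events; the choice of $m$ is governed by the growth of $Q$ (so that $\lfloor n'/m\rfloor\,\varepsilon Q_m \ge 2LQ_{n'}$), not by matching $W$-scales. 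Since you say you would not include this sketch in the final text, it does not affect the correctness of your proposal.
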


This is first shown for quasi-record points $n$ with $\mbox{SD}(X_{n})$ replaced by $Q_{n}$ in Proposition~\ref{p:left1}; Proposition \ref{p:lowertail} immediately follows once \eqref{eq:QequivSD} is proved for all $n\ge 1$. This result is often useful in constructing favourable events (as in Section \ref{s:varlb}). Similar estimates have been proved to be very useful in the exactly solvable set-up and we expect that this will also play an important role in future works.

\subsection*{Notational Comment} We shall use $C,c,D$ etc. to denote generic constants that can change from equation to equation and also from line to line within the same equation. Specific constants which will be used multiple times will be marked with numbered subscripts, i.e., $C_1,D_2$ etc.

\subsection*{Organization of the paper}
The rest of the paper is organised as follows. Our multi-scale argument will require control of passage times not only across the sides of a canonical rectangle, but also of parallelograms. Such estimates are obtained in Section \ref{s:rectpara} by comparing passage times across parallelograms and rectangles. Section \ref{s:percgen} employs a general percolation argument to upper bound the non-random fluctuations, and obtains concentration of $Y^{\pm}_{n}$ at scale $Q_{n}$. Section \ref{s:trans} upper bounds the (global) transversal fluctuations of the geodesics at scale $W_n$. The results up until this point do not require the variance to grow polynomially and would be valid for rotationally invariant models all dimensions. The next sections utilize the planarity assumption. Section \ref{s:impconc} obtains improved concentration estimates which are used in Section \ref{s:record} to lower bound the variance for record points. Section \ref{s:record} also establishes the existence of many record points near a record point, and shows that $Q_{n}$ grows locally sublinearly. Using the control on the growth of $Q_{n}$, Section \ref{s:ltf} bounds local transversal fluctuations of a geodesic (of length typically $\gg n$) at a distance $n$ from its starting point at scale $W_{n}$. Section \ref{s:left} proves the left tail estimate Proposition \ref{p:lowertail} for the case of quasi-record points. Section \ref{s:varlb} completes the proof of \eqref{eq:QequivSD} by showing \eqref{e:varlb}. This completes the proof of Theorem \ref{t:tightness} and also the upper bounds in Theorems \ref{t:tf} and \ref{t:nr}. Finally, the lower bounds in Theorems \ref{t:tf} and \ref{t:nr} are proved in Section \ref{s:lower}. The argument in Section \ref{s:varlb} crucially uses a percolation estimate Proposition \ref{p:percevent} whose proof is provided in Section \ref{s:percevent}. The proof of another crucial percolation estimate,  Proposition \ref{p:perc1} is provided in Section \ref{s:appa}.

\section{Passage times across rectangles and parallelograms}
\label{s:rectpara}

Our first estimate gives of comparison of passage times between points close to the sides of a rectangle to passage times of their projection onto the side. It will be useful to consider rectangles with heights different from the canonical rectangle so in general we set let $W$ be the height and assume that
\begin{equation}\label{eq:variableQW}
n^{\frac12\alpha}\leq Q \leq  n^{\frac12+2\epsilon},\qquad W=\sqrt{n Q}
\end{equation}
which implies that
\[
n^{\frac12+\frac14\alpha}\leq W \leq n^{\frac34+\epsilon}.
\]
By Pythagoras' Theorem, and the fact that $\sqrt{x}$ has negative second derivative and positive third derivative
\begin{equation}~\label{eq:Pythag}
\max\{n,n+\frac{y^2}{2n} - \frac18\frac{y^4}{n^3}\} \leq |(n,y)|:=\sqrt{n^2+y^2} \leq n + \frac{y^2}{2n}.
\end{equation}
Furthermore since the derivative of $\sqrt{n^2+y^2}$ is increasing in $y$ positive, for $y\geq n$ we have that $\sqrt{n^2+y^2} \geq n+y/3$ and so
\begin{equation}~\label{eq:Pythag2}
|(n,y)|\geq n+(\frac{y^2}{3n}\wedge |y|/3).
\end{equation}

Define the sets
\begin{align*}
L^{(k)}_{\cR_{n,W}}&:=\{(x,y):\frac{(-k^2-2k)Q}{4}\leq x \leq \frac{(-3k^2+8k)Q}{16}, 0\leq y\leq W\},\\
R^{(k)}_{\cR_{n,W}}&:=\{(x,y): \frac{(3k^2-8k)Q}{16} \leq x-n\leq \frac{(k^2+2k)Q}{4}, 0\leq y\leq W\}
\end{align*}
For points $u_1=(x_1,y_1)$ and $u_2=(x_2,y_2)$ in $\R^2$ with $0\leq y_1,y_2\leq W$ let $u^\perp_1=(0,y_1)$ and $u^\perp_2=(n,y_2)$ be their projections onto $L_{\cR_{n,W}}$ and $R_{\cR_{n,W}}$ respectively (see Figure \ref{f:rect.ext1}).  We will compare $X_{u_1 u_2}$ and $X_{u^\perp_1 u^\perp_2}$. For convenience of taking union bounds, recall that we defined, for $u\in \R^2$, $u^{\Z}\in \Z^2$ which is the closest point to $u$ with some arbitrary rule for tie-breaking. 

\begin{lemma}\label{l:rectangleApprox}
There exist a constant $C>0$ such that for all $n$ sufficiently large and all $Q,W$ satisfying~\eqref{eq:variableQW} and all $k\in [1,\leq n/W]$ the following holds.  With $u_i,u^\perp_i$ defined as above,
\[
\P\Big[\max_{u_1\in L^{(k)}_{\cR_{n,W}}}\max_{u_2\in R^{(k)}_{\cR_{n,W}}} \left| X_{u_1 u_2}-X_{u^\perp_1 u^\perp_2} - (x_2-x_1-n)\mu \Big| >  (k^2 Q)^{9/10} \right] \leq \exp(1-C Q^{\kappa/10}).
\]
\end{lemma}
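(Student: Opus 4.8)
The idea is to estimate $X_{u_1 u_2}$ from above and below in terms of $X_{u_1^\perp u_2^\perp}$, paying a correction that accounts for the horizontal displacement of $u_1,u_2$ from the sides of the rectangle and controlling the error by concatenating with short local paths near the endpoints. For the upper bound, one concatenates: a path from $u_1$ to $u_1^\perp$, the optimal path from $u_1^\perp$ to $u_2^\perp$, and a path from $u_2^\perp$ to $u_2$. By the triangle inequality (Assumption~\ref{as:tri}) this gives $X_{u_1 u_2}\le X_{u_1 u_1^\perp}+X_{u_1^\perp u_2^\perp}+X_{u_2^\perp u_2}$. Since $u_1,u_1^\perp$ (resp. $u_2,u_2^\perp$) lie within distance $O(k^2 Q/W \cdot W) = O(k^2 Q)$ horizontally and $O(W)$ vertically, the connecting segments have Euclidean length $O(k^2 Q + W)$; one covers them by $O(k^2 Q/3 + W/3)$ unit balls and uses Assumption~\ref{as:localDist} (local passage times) together with $\Gamma_n$-type bounds (Lemma~\ref{l:Gamma}) to get that $X_{u_1 u_1^\perp}$ and $X_{u_2^\perp u_2}$ are each at most $(\text{length})\cdot\mu + O((k^2 Q)^{9/10})$ with the stated failure probability. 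Crucially, $\text{length}\approx |x_1| + |x_2-n|$ because the vertical contribution, being only $O(W)=O(\sqrt{nQ})$, is lower order; summing the two gives a linear term $\approx (x_2 - x_1 - n)\mu$ plus lower-order corrections absorbed into $(k^2 Q)^{9/10}$.

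**Lower bound.** For the matching lower bound one runs the reverse surgery: take the optimal path $\gamma_{u_1 u_2}$, and append optimal paths from $u_1^\perp$ to $u_1$ and from $u_2$ to $u_2^\perp$; by Assumption~\ref{as:tri} this yields $X_{u_1^\perp u_2^\perp}\le X_{u_1^\perp u_1}+X_{u_1 u_2}+X_{u_2 u_2^\perp}$, i.e.\ $X_{u_1 u_2}\ge X_{u_1^\perp u_2^\perp} - X_{u_1^\perp u_1} - X_{u_2 u_2^\perp}$, and the same local estimates bound the two subtracted terms by $(\text{length})\cdot\mu + O((k^2 Q)^{9/10})$ with high probability. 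Combining the two directions gives $|X_{u_1 u_2} - X_{u_1^\perp u_2^\perp} - (x_2 - x_1 - n)\mu| = O((k^2 Q)^{9/10})$ off an event of probability $\le \exp(1 - C Q^{\kappa/10})$. The appearance of $Q^{\kappa/10}$ rather than $Q^\kappa$ is to accommodate the union bound: one takes a union over all $u_1^\Z \in L^{(k)}_{\cR_{n,W}} \cap \Z^2$ and $u_2^\Z \in R^{(k)}_{\cR_{n,W}} \cap \Z^2$, which is at most $\mathrm{poly}(n) \le \exp(Q^{\kappa/20})$ many points (using $Q \ge n^{\alpha/2}$ and $\alpha \le \kappa$, so $Q^\kappa$ is at least a small power of $n$), and since the local error bounds have failure probability $\exp(1 - D_0 x^\kappa)$ with $x$ of order $(k^2 Q)^{1/10}$, setting the threshold at $x \sim (k^2 Q)^{9/10}$ over a connecting path of $O(k^2 Q)$ balls gives per-ball error $O((k^2 Q)^{-1/10})$... more carefully, one uses Assumption~\ref{as:intermediate} and Lemma~\ref{l:Gamma} to bound the discretization-to-continuum passage along the connecting segments, which produces errors scaling like (number of pieces)$\times \log^{1/\kappa}$(scale), and the $\log$ factors and the polynomial count are all swallowed by choosing the exponent $9/10 < 1$ with room to spare.

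**Handling the discretization.** A technical point: $u_1, u_2$ range over a continuum, so one first reduces to the lattice points $u_1^\Z, u_2^\Z$ by noting $X_{u_1 u_1^\Z}$ and $X_{u_2 u_2^\Z}$ are $O(\Gamma_n)$ since $|u_i - u_i^\Z| \le 1$ (using the first term in the definition of $\Gamma_n$ and Assumption~\ref{as:localDist}), and similarly the linear correction changes by $O(1)$; these are trivially absorbed. Then the union bound is genuinely over a polynomial-size lattice set. The projection points $u_i^\perp$ may need a similar lattice rounding, but this is harmless for the same reason.

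**Main obstacle.** The genuine difficulty is bookkeeping the local estimates so that the connecting-path contributions reproduce exactly the linear term $(x_2 - x_1 - n)\mu$ with error only $O((k^2 Q)^{9/10})$: one must verify that replacing the true FPP distance from $u_i$ to $u_i^\perp$ by (Euclidean-length)$\cdot \mu$ is valid at this precision. Since these connecting segments have length $\ell_i := |x_i| + O(W)$ with $\ell_i$ potentially as large as $k^2 Q \le n^{1/2 + 2\epsilon} \cdot k^2$, one cannot simply use Assumption~\ref{as:localDist} on a single ball; instead one tiles the segment into $\lceil \ell_i \rceil$ unit-spaced balls, applies Assumption~\ref{as:localDist} to each with threshold $x_0 \sim (k^2 Q)^{1/10}$ (failure probability $\exp(1 - D_0 (k^2Q)^{\kappa/10})$ per ball), union-bounds over $O(\ell_i) \le \mathrm{poly}(n)$ balls, and sums — giving a total $\le \ell_i (\mu + O(1))$... but this loses the sharp $\mu$ constant. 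The fix is to instead invoke that along the straight segment the optimal passage time is within $O(\ell_i^{9/10})$ of $\ell_i \mu$ — which itself follows from Assumption~\ref{as:concentration} applied in the appropriate direction via $ISO(2)$-invariance (Assumption 1), since $X$ restricted to a segment of length $\ell$ has mean $\mu \ell + A_\ell = \mu\ell + O(\sqrt\ell \log^2 \ell)$ and stretched-exponential fluctuations of order $Q_\ell = O(\sqrt\ell) = O(\ell^{9/10})$ when $\ell \le k^2 Q$. Matching all the exponents — that $\sqrt{\ell} \log^2 \ell \ll (k^2 Q)^{9/10}$ and that the union-bound factor $\mathrm{poly}(n)$ is beaten by $\exp(C Q^{\kappa/10})$, which needs $Q^{\kappa/10} \gtrsim \log n$, true since $Q \ge n^{\alpha/2}$ — is where the care lies, but no single step is deep.
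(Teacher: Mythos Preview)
Your upper bound works, but the lower bound has a genuine gap. Consider the typical case $x_1<0$ and $x_2>n$ (which is forced for $k\ge 3$ by the definition of $L^{(k)},R^{(k)}$). Then $|x_1|+|x_2-n|=x_2-x_1-n$, and your reverse surgery gives
\[
X_{u_1 u_2}\ \ge\ X_{u_1^\perp u_2^\perp}-X_{u_1^\perp u_1}-X_{u_2 u_2^\perp}\ \approx\ X_{u_1^\perp u_2^\perp}-(x_2-x_1-n)\mu,
\]
so
\[
J_{u_1u_2}=X_{u_1u_2}-X_{u_1^\perp u_2^\perp}-(x_2-x_1-n)\mu\ \ge\ -2(x_2-x_1-n)\mu+\text{errors}.
\]
This is of order $-k^2Q\mu$, not $-(k^2Q)^{9/10}$. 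The reason is geometric: the path $u_1^\perp\to u_1\to u_2\to u_2^\perp$ first goes \emph{leftward} from $u_1^\perp$ to $u_1$, then forward past $u_2^\perp$ to $u_2$, then leftward again to $u_2^\perp$; it backtracks by $2|x_1|+2|x_2-n|$, so the triangle inequality is loose by exactly the main term you are trying to capture.

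The paper avoids this by a different surgery. It picks intermediate points $w_1,w_2$ on the geodesic $\gamma_{u_1^\Z u_2^\Z}$ where it crosses the vertical lines $x=2Q$ and $x=n-2Q$, well inside the rectangle. The crucial ingredient you are missing is Assumption~\ref{as:cars} (Local Transversal Fluctuations), which forces $w_1$ to lie at height within $(k^2Q)^{9/10}$ of $y_1$. Now both $u_1$ and $u_1^\perp$ lie to the \emph{left} of $w_1$, so the segments $u_1\to w_1$ and $u_1^\perp\to w_1$ both go forward, with Euclidean lengths $R_1\approx 2Q-x_1$ and $S_1\approx 2Q$ differing by $-x_1$ up to the small vertical correction. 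One then writes $X_{u_1u_2}\approx X_{u_1 w_1}+X_{w_1w_2}+X_{w_2 u_2}$ via Assumption~\ref{as:intermediate}, replaces $X_{u_i w_i}$ by $X_{u_i^\perp w_i}+(\text{length difference})\mu$ using Assumption~\ref{as:concentration} on the short segments, and finally uses the triangle inequality $X_{u_1^\perp w_1}+X_{w_1w_2}+X_{w_2 u_2^\perp}\ge X_{u_1^\perp u_2^\perp}$ in the non-backtracking direction. The reverse inequality is obtained by taking $w_1',w_2'$ on $\gamma_{u_1^{\perp\Z}u_2^{\perp\Z}}$ instead and swapping the roles of $u_i$ and $u_i^\perp$.
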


\begin{center}
\begin{figure}
\includegraphics[width=5in]{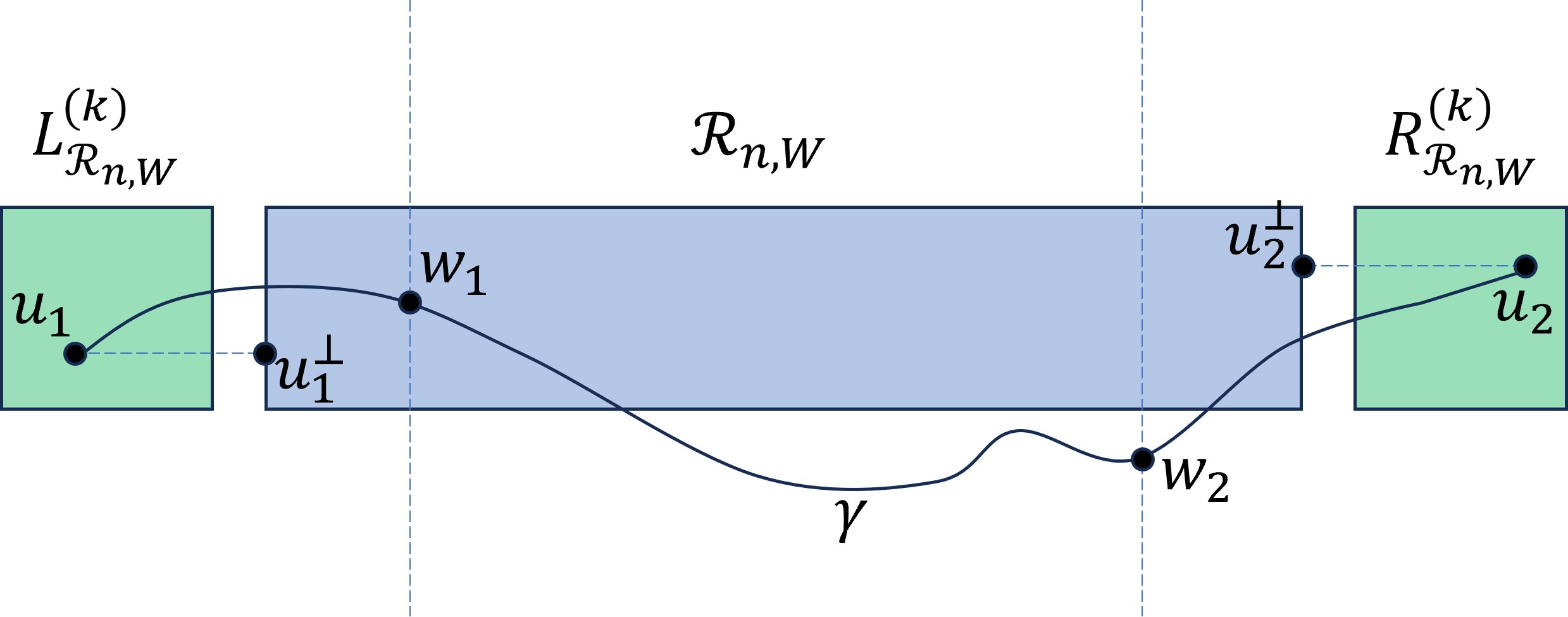}
\caption{For the rectangle $\cR_{n,W}$, Lemma \ref{l:rectangleApprox} compares the passage time $X_{u_1u_2}$
from a point $u_1$ slightly to its left to a point $u_2$ slightly to its right to the passage time $X_{u^\perp_1 u^\perp_2}$; it shows that this maximum of the difference (appropriately) as $u_1$ and $u_2$ vary over the green rectangles is unlikely to be large. This is achieved by considering the intermediate points $w_1$ and $w_2$ where the geodesic $\gamma_{u_1u_2}$ intersects intersects the line $x=2Q$ and $x=n-2Q$ respectively.}
\label{f:rect.ext1}
\end{figure}
\end{center}

\begin{proof}
Define
\[
J_{u_1 u_2}:=X_{u_1 u_2}-X_{u_1^\perp u_2^\perp} - (x_2-x_1-n)\mu.
\]
Let $\ell_1,\ell_2$ be the vertical lines $x=\lfloor 2 Q\rfloor$ and $x=\lfloor n- 2 Q\rfloor$ respectively.  Let $w_1=(\lfloor 2 Q \rfloor ,H_1)$ be the first intersection of $\gamma_{u_1^\Z u_2^\Z}$ with $\ell_1$ and let $w_2=(\lfloor n-2 Q\rfloor ,H_2)$ be its last intersection with $\ell_2$, see Figure~\ref{f:rect.ext1}.  Let $\cH_i$ be the event that for every choice of $u_1\in L^{(k)}_{\cR_{n,W}}$ and $u_2\in R^{(k)}_{\cR_{n,W}}$ we have that $|H_i-y_i^\Z| \leq (k^2Q)^{9/10}$.  By Assumption~\ref{as:cars} on local transversal fluctuations and taking a union bound over the $O(n^4)$ choices of $(u_1^\Z,u_2^\Z)$ we have that
\begin{equation}\label{eq:cHbound}
\P[\cH_i^c] \leq n^4\exp\Bigg(1-\bigg(\frac{(k^2Q)^{9/10}}{\big(2 Q+\frac{ ( (3k^2-8k)Q}{16}\big)^{4/5}}\bigg)^\kappa\Bigg) \leq \frac1{10}\exp(1-CQ^{\kappa/10}).
\end{equation}
Let $R_i=|u_i^\Z - w_i^\Z|$ and $S_i=|u_i^{\perp\Z} - w_i^\Z|$.
On the event $\cH_i$, we have that
\[
|R_i|,|S_i| \leq \frac{(k^2+2k)Q}{4} + 2Q + (k^2Q)^{9/10} \leq n \wedge 4k^2 Q,
\]
and $u_i,v_i,w_i,w_i^\Z\in B_{2n}(\origin)$.  Next let
\[
\Delta_1(u_1,u_2) := x_1\mu + X_{u_1^{\perp\Z} w_1^\Z} - X_{u_1^\Z w_1^\Z},\qquad \Delta_2(u_1,u_2) := (x_2-n)\mu + X_{u_2^{\perp\Z} w_2^\Z} - X_{u_2^\Z w_2^\Z}.
\]
By Assumption~\ref{as:intermediate},
\begin{align}\label{eq:JBound}
J_{u_1 u_2} &\geq X_{u_1^\Z u_2^\Z}-X_{u_1^{\perp\Z} u_2^{\perp\Z}} - (x_2-x_1-n)\mu - 4\Gamma_{2n}\nonumber\\
&\geq X_{u_1^\Z w_1}+X_{w_1 w_2} + X_{w_2 u_2^\Z} -X_{u_1^{\perp\Z} u_2^{\perp\Z}} - (x_2-x_1-n)\mu - 6\Gamma_{2n}\nonumber\\
&\geq X_{u_1^{\perp\Z} w_1^\Z}+X_{u_1^\Z w_1}-X_{u_1^\Z w_1^\Z}+X_{w_1 w_2} + X_{w_2 u_2^{\perp\Z} } +X_{w_2 u_2^\Z}-X_{w_2^\Z u_2^\Z} -X_{u_1^{\perp\Z} u_2^{\perp\Z}} - 6\Gamma_{2n}-\Delta_1-\Delta_2\nonumber\\
&\geq X_{u_1^{\perp\Z} w_1^\Z}+X_{w_1^\Z w_2^\Z} + X_{w_2^\Z u_2^{\perp\Z} }  -X_{u_1^{\perp\Z} u_2^{\perp\Z}} - 10\Gamma_{2n}-\Delta_1-\Delta_2\nonumber\\
&\geq - 10\Gamma_{2n}-\Delta_1-\Delta_2.
\end{align}
We bound $\Delta_1$ by
\begin{equation}\label{eq:DeltaBound}
|\Delta_1| \leq |X_{u_1^{\perp\Z} w_1^\Z}-\mu S_1 - A_{S_1}| + |X_{u_1^\Z w_1^\Z}- \mu R_1 - A_{S_1}| + |\mu R_1 - \mu S_1 + x_1 \mu| + |A_{S_1} +  A_{R_1}|,
\end{equation}
recalling that $A_n=\E X_n -n\mu$.  On the event $\cH_1$ we have that
\begin{equation}\label{eq:DeltaBound2}
|\mu R_1 - \mu S_1 + x_1 \mu| \leq 2\mu (k^2 Q)^{9/10}.
\end{equation}
By Assumption~\ref{as:speed},
\begin{align}\label{eq:DeltaBound3}
A_{S_1},A_{R_1} \leq \Big(\frac{(k^2+2k)Q}{4} + 2Q + (k^2Q)^{9/10}\Big)^{\frac12+\epsilon}\leq C(k^2Q)^{\frac12+\epsilon}
\end{align}
Noting that the points $u_1^\Z, u_1^{\perp\Z}, w_1^\Z$ are contained in $B_{2n}(\origin)$ on the event  $\cH_1$, by Assumption~\ref{as:concentration} on the concentration of passage times and taking a union bound we have that
\begin{align}\label{eq:concentrationUnion}
&\P[\cH_1,\max_{u_1,u_2} |X_{u_1^{\perp\Z} w_1^\Z}-\mu S_1 - A_{S_1}| + |X_{u_1^\Z w_1^\Z}- \mu R_1 - A_{R_1}| > 2(k^2 Q)^{7/8}]\nonumber\\
& \qquad \leq  \P[\max_{\substack{ u,u'\in B_{2n}(\origin)\cap\Z^2\\ |u-v|\leq 4k^2 Q}} |X_{uu'}-\E X_{uu'}|> (k^2 Q)^{7/8}]\nonumber\\
& \qquad \leq C n^4 \exp(1-C((k^2Q)^{1/3})^\kappa) \leq \exp(1-C(k^2 Q)^{\kappa/3}).
\end{align}
Altogether, combining equations~\eqref{eq:cHbound}, \eqref{eq:DeltaBound}, \eqref{eq:DeltaBound2}, \eqref{eq:DeltaBound3}, and~\eqref{eq:concentrationUnion} we have that
\[
\P[\max_{u_1,u_2}|\Delta_1(u_1,u_2)|\geq \frac1{5}(k^2 Q)^{9/10}] \leq \exp(1-C' Q^{\kappa/10}).
\]
Similarly
\[
\P[\max_{u_1,u_2}|\Delta_2(u_1,u_2)| \geq\frac1{5}(k^2 Q)^{9/10}] \leq \exp(1-C' Q^{\kappa/10}).
\]
By Lemma~\ref{l:Gamma}
\[
\P[\Gamma_{n} \geq \frac1{100}(k^2 Q)^{9/10}]\leq \exp(1-Q^{\kappa/2}).
\]
Combining the last three estimates with equation~\eqref{eq:JBound} we have that
\[
\P[\max_{u_1,u_2}J_{u_1 u_2}\geq  \frac1{2}(k^2 Q)^{9/10} ]\leq \exp(1-CQ^{\kappa/10}).
\]
By setting $w_i'$ as the (first for $i=1$, last for $i=2$) intersection of $\gamma_{u_1^{\perp\Z} u_2^{\perp\Z}}$ with $\ell_i$ and reversing the role of $u_i$ and $u^\perp_i$ we similarly have the reverse inequality that
\[
\P[\min_{u_1,u_2} -J_{u_1 u_2}\leq - \frac1{2}(k^2 Q)^{9/10} ]\leq \exp(1-CQ^{\kappa/10}),
\]
which completes the proof.
\end{proof}

\subsection{Parallelograms}

As the path can meander up outside of the bounds of a rectangle we also consider passage times across parallelograms which we will denote with $\cP$.  In general the parallelograms we construct will have left and right sides parallel to the $y$-axis.
We will let $\cP_{i,k,k',n,W}$ denote the parallelogram with left side
\[
L_{\cP_{i,k,k',n,W}} := \{((i-1)n,y):y\in [kW,(k+1)W]\}
\]
and right side
\[
R_{\cP_{i,k,k',n,W}} := \{(in,y):y\in [k'W,(k'+1)W]\}
\]
When the values of $n$ and $W$ are clear we will just write $\cP_{i,k,k'}$.  Observe that the parallelogram $\cP_{1,0,0,n,W}$ is simply the $n\times W$ rectangle $\cR_{n,W}$.  We now use Lemma~\ref{l:rectangleApprox} to relate crossing probabilities of parallelograms with rectangles.  First, analogously to $Y^\pm_n$ we denote
\[
Z^+_{i,k,k'}:=\sup_{u_1\in L_{\cP_{i,k,k'}}}\sup_{u_2\in R_{\cP_{i,k,k'}}} X_{u_1 u_2}, \qquad Z^-_{i,k,k'}:=\inf_{u_1\in L_{\cP_{i,k,k'}}}\inf_{u_2\in R_{\cP_{i,k,k'}}} X_{u_1 u_2}.
\]
By translation and reflection symmetry,
\[
Z^\pm_{i,k,k'}\stackrel{d}{=} Z^+_{1,0,|k-k'|}
\]
and therefore it only suffices to study $Z^{+}_{1,0,k}$ for $k>0$. Now given a parallelogram $\cP$ with dimensions $n$ and $W$ we will construct an $n\times W$ rectangle $\cR_{\cP}$, called the embedded rectangle, in the middle of $\cP$.  We define $\cR_{\cP}$ to be the rectangle that has the same center as $\cP$ and whose edges of length $n$ are parallel to the parallelograms top and bottom edges.  When $\cP=\cP_{1,0,k}$ we will write $\cR_{1,0,k}=\cR_{\cP_{1,0,k}}$ for the embedded rectangle. More formally $\cR_{1,0,k}$ is the rectangle with centre at $(n/2,(k+1)W/2)$ whose length $n$ edge is parallel to the line $y=\frac{kW}{n} x$.  Alternatively let $\tau\in ISO(2)$ be the composition of a translation of $(0,kW)$ followed by a rotation of $\tan^{-1} \frac{kW}{n}$ around $(n/2,(k+1)W/2)$.  Then $\cR_{1,0,k}=\tau \cR_{n,W}$; see Figure \ref{f:para.ext}.

Using basic geometry we will show that $L_{\cP_{1,0,k}}\subset\tau L^{(k)}_{\cR_{n,W}}$ and $R_{\cP_{1,0,k}}\subset\tau R^{(k)}_{\cR_{n,W}}$.
For $u_1\in L_{\cP_{1,0,k}}$ let $u^\perp$ be the orthogonal projection onto $L_{\cR_{1,0,k}}$.  Similarly for $u_2\in R_{\cP_{1,0,k}}$ let $u_2^\perp$ be the orthogonal projection onto $R_{\cR_{1,0,k}}$.
Let $\vec{v}=(n,kW)/|(n,kW)|$ be a unit  vector in the direction parallel to the long edge of $\cP_{1,0,k}$.
For $u_*=(0,W/2)$, the midpoint of $L_{\cP_{1,0,k}}$, its orthogonal projection is the midpoint of $L_{\cR_{1,0,k}}$ and so measuring the distance to the centre of the rectangle we have that by~\eqref{eq:Pythag}
\begin{align}~\label{eq:pythagPara}
(u_*^\perp-u_*)\cdot\vec{v} &=|u_* - (n/2,(k+1)W/2)| - \frac{n}{2}  = \sqrt{(\frac{n}{2})^2+(\frac{kW}{2})^2}-\frac{n}{2}\leq \frac{(kW)^2}{4n}=\frac{k^2 Q}{4}\nonumber\\
(u_*^\perp-u_*)\cdot\vec{v} &\geq \frac{k^2 Q}{4} - k^2 Q \frac{(kW)^2}{16n^2}
\end{align}
since $Q=\frac{W^2}{n}$.  
For $u_1=(0,\frac{W}{2}+y) \in L_{\cP_{1,0,k}}$, the difference between $|u-u^\perp|$ and $|u_*-u_*^\perp|$ is $|(0,y)\cdot \vec{v}|$, the distance between $u_*$ and the projection of $u_1$ onto the line joining $u_*$ to the centre of the rectangle; see Figure~\ref{f:para.ext}.  Thus by trigonometry,
\[
(u_1^\perp-u_1)\cdot\vec{v} - (u_*^\perp-u_*)\cdot\vec{v} = -(0,y)\cdot \vec{v}
\]
and
\[
|(0,y)\cdot \vec{v}|=|y \sin(\tan^{-1}(\frac{kW}{n}))|\leq \frac{W}{2} \cdot \frac{kW}{n}=\frac{Qk}{2}.
\]
Hence if $k\leq n/W$,
\begin{equation}~\label{eq:perpDistance}
\frac{(3k^2-8k)Q}{16} \leq \frac{k^2 Q}{4} - k^2 Q \frac{(kW)^2}{16n^2} - \frac{Qk}{2} \leq (u_1^\perp-u_1)\cdot\vec{v} \leq \frac{k^2 Q}{4} + \frac{Qk}{2}
\end{equation}
so $L_{\cP_{1,0,k}}\subset\tau L^{(k)}_{\cR_{n,W}}$ and similarly $R_{\cP_{1,0,k}}\subset\tau R^{(k)}_{\cR_{n,W}}$.  Then by Lemma~\ref{l:rectangleApprox} we have the following immediate corollary.
\begin{center}
\begin{figure}
\includegraphics[width=5in]{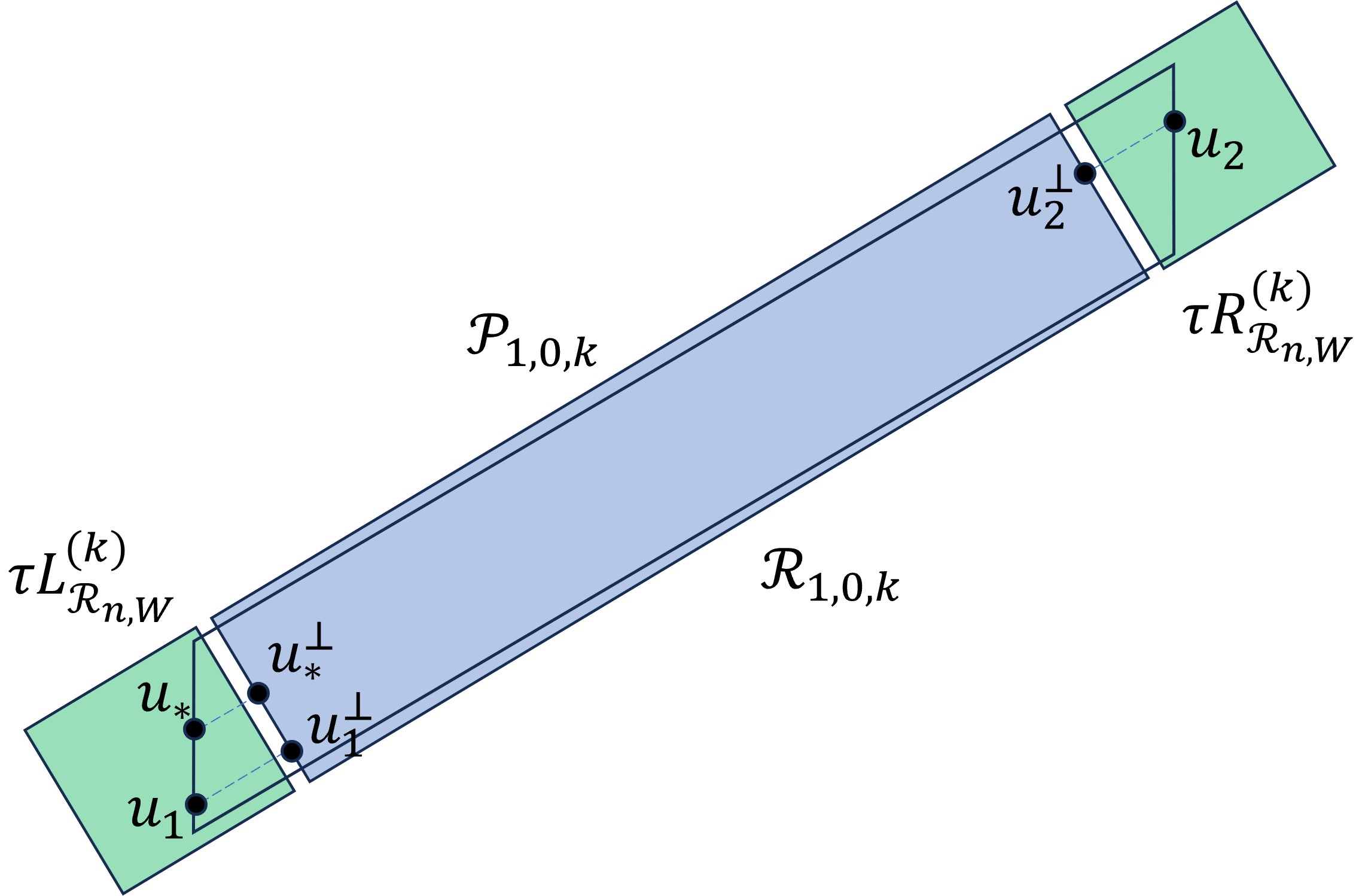}
\caption{Comparing passage times across parallelograms by projecting the endpoints on to the sides of an appropriate rectangle (the embedded rectangle $\cR_{1,0,k}$) and using Lemma \ref{l:rectangleApprox}: proof of Corollary \ref{c:paraRect}.}
\label{f:para.ext}
\end{figure}
\end{center}

\begin{corollary}\label{c:paraRect}
There exists $C>0$ such that for $Q,W$ satisfying~\eqref{eq:variableQW} and $0\leq k\leq n/W$ we have that for large enough $n$,
\begin{equation}\label{eq:paraRectA}
\P\Big[\max_{u_1\in L_{\cP_{1,0,k}}}\max_{u_2\in R_{\cP_{1,0,k}}} \left| X_{u_1 u_2}-X_{u^\perp_1 u^\perp_2} - ((u_2-u_1)\cdot \vec{v}-n)\mu \right| >  (k^2 Q)^{9/10} \Big] \leq \exp(1-C Q^{\kappa/10}).
\end{equation}
Moreover, for $z\in \R$
\begin{align}\label{eq:paraRectB}
\P[Z^-_{1,0,k}<n\mu - z Q] &\leq \P\Big[Y^-_{\cR_{n,W}} < n\mu -\frac{(k^2-8k)Q}{16} -zQ\Big] +\exp(1-CQ^{\kappa/10}).
\end{align}
\end{corollary}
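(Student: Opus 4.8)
The plan is to obtain \eqref{eq:paraRectA} by transporting Lemma~\ref{l:rectangleApprox} to the embedded rectangle, and then to derive \eqref{eq:paraRectB} from \eqref{eq:paraRectA} together with the Pythagorean estimate \eqref{eq:Pythag}.

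For \eqref{eq:paraRectA}, recall that $\cR_{1,0,k}=\tau\cR_{n,W}$ for the rigid motion $\tau\in ISO(2)$ fixed above, that the computation leading to \eqref{eq:perpDistance} establishes $L_{\cP_{1,0,k}}\subset\tau L^{(k)}_{\cR_{n,W}}$ and $R_{\cP_{1,0,k}}\subset\tau R^{(k)}_{\cR_{n,W}}$, and that the orthogonal projections $u_1^\perp,u_2^\perp$ onto $L_{\cR_{1,0,k}},R_{\cR_{1,0,k}}$ correspond under $\tau$ to the projections onto $L_{\cR_{n,W}},R_{\cR_{n,W}}$ used in Lemma~\ref{l:rectangleApprox} (orthogonal projection commutes with isometries). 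Since the passage times are invariant under $ISO(2)$, one applies Lemma~\ref{l:rectangleApprox} in the untilted frame and pushes the estimate forward by $\tau$: the horizontal displacement $x_2-x_1-n$ appearing there becomes $(u_2-u_1)\cdot\vec v-n$ in the tilted frame, since $\vec v$ is the long-axis direction of $\cR_{1,0,k}$ and $(u_2^\perp-u_1^\perp)\cdot\vec v=n$. As the maxima over $u_1\in L_{\cP_{1,0,k}},u_2\in R_{\cP_{1,0,k}}$ are dominated by the maxima over the larger sets $\tau L^{(k)}_{\cR_{n,W}},\tau R^{(k)}_{\cR_{n,W}}$, \eqref{eq:paraRectA} follows; the case $k=0$ is trivial since then $\cP_{1,0,0}=\cR_{n,W}$.

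For \eqref{eq:paraRectB}, let $\cE$ be the event that the maximum in \eqref{eq:paraRectA} is at most $(k^2Q)^{9/10}$, so $\P[\cE^c]\le\exp(1-CQ^{\kappa/10})$. On $\cE$, for every $u_1\in L_{\cP_{1,0,k}},u_2\in R_{\cP_{1,0,k}}$,
\[
X_{u_1u_2}\ \ge\ X_{u_1^\perp u_2^\perp}+\big((u_2-u_1)\cdot\vec v-n\big)\mu-(k^2Q)^{9/10}\ \ge\ Y^-_{\cR_{1,0,k}}+\big((u_2-u_1)\cdot\vec v-n\big)\mu-(k^2Q)^{9/10},
\]
using $\mu>0$ and $X_{u_1^\perp u_2^\perp}\ge Y^-_{\cR_{1,0,k}}$ (both projected endpoints lie on the sides of $\cR_{1,0,k}$). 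Writing $(u_2-u_1)\cdot\vec v-n=(u_1^\perp-u_1)\cdot\vec v+(u_2-u_2^\perp)\cdot\vec v$ and inserting the lower bound $(u_1^\perp-u_1)\cdot\vec v\ge\frac{(3k^2-8k)Q}{16}$ of \eqref{eq:perpDistance} together with its mirror for the right side, then using $k^2Q\le n$ (i.e.\ $k\le n/W$) and $n$ large to absorb the $(k^2Q)^{9/10}$ term and the remaining lower-order corrections, one gets $Z^-_{1,0,k}=\inf_{u_1,u_2}X_{u_1u_2}\ge Y^-_{\cR_{1,0,k}}+\frac{(k^2-8k)Q}{16}$ on $\cE$. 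Hence $\{Z^-_{1,0,k}<n\mu-zQ\}\subset\{Y^-_{\cR_{1,0,k}}<n\mu-zQ-\frac{(k^2-8k)Q}{16}\}\cup\cE^c$, and since $Y^-_{\cR_{1,0,k}}\stackrel{d}{=}Y^-_{\cR_{n,W}}$ by invariance, \eqref{eq:paraRectB} follows.

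The only genuinely non-mechanical point is the constant-chasing in that last step: one must check that the Pythagorean surplus in passage-time units, whose worst case after the decomposition above is $\mu(3k^2-8k)Q/8$, dominates the $(k^2Q)^{9/10}$ slack from \eqref{eq:paraRectA}, the quartic correction in \eqref{eq:Pythag}, and the deviation of the endpoint heights from exact multiples of $W$, uniformly over $1\le k\le n/W$; this accounting is precisely what forces the shift from the heuristic surplus $\frac{k^2Q}{2}$ down to the stated $\frac{(k^2-8k)Q}{16}$. Everything else is a verbatim transfer of Lemma~\ref{l:rectangleApprox} and a union bound.
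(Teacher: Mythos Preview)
Your proposal is correct and follows essentially the same route as the paper. Both derive \eqref{eq:paraRectA} from Lemma~\ref{l:rectangleApprox} via the inclusions $L_{\cP_{1,0,k}}\subset\tau L^{(k)}_{\cR_{n,W}}$, $R_{\cP_{1,0,k}}\subset\tau R^{(k)}_{\cR_{n,W}}$, and then obtain \eqref{eq:paraRectB} by combining \eqref{eq:paraRectA} with the lower bound \eqref{eq:perpDistance2} on $(u_2-u_1)\cdot\vec v-n$ and the identity $Y^-_{\cR_{1,0,k}}\stackrel{d}{=}Y^-_{\cR_{n,W}}$; the paper phrases the second step as a two-term probability decomposition rather than working on the good event $\cE$, but this is cosmetic.
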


\begin{proof}
Equation~\eqref{eq:paraRectA} follows from the fact that $L_{\cP_{1,0,k}}\subset\tau L^{(k)}_{\cR_{n,W}}$ and $R_{\cP_{1,0,k}}\subset\tau R^{(k)}_{\cR_{n,W}}$ and Lemma~\ref{l:rectangleApprox}.
By \eqref{eq:perpDistance},
\begin{equation}\label{eq:perpDistance2}
(u_2-u_1)\cdot \vec{v}-n\geq \frac{(3k^2-8k)Q}{16},
\end{equation}
and so for large enough $n$,
\begin{align*}
\P[Z^-_{1,0,k}<n\mu - z Q] &\leq \P\Big[Y^-_{\cR_{1,0,k}} < n\mu - \frac{(k^2-8k)Q}{16}  - zQ \Big]\\
&\qquad +\P\Big[\max_{u_1,u_2} (X_{u_1 u_2}-X_{u^\perp_1 u^\perp_2}) < \frac{(3k^2-8k)Q}{16}-\frac1{8} k^2 Q \Big]\\
&\leq \P\Big[Y^-_{\cR_{1,0,k}} < n\mu - \frac{(k^2-8k)Q}{16} - zQ \Big]\\
&\qquad +\P\Big[\max_{u_1,u_2}  X_{u_1 u_2}-X_{u^\perp_1 u^\perp_2} - ((u_2-u_1)\cdot \vec{v}-n)\mu < -\frac1{8} k^2 Q\Big]\\
&\leq \P\Big[Y^-_{\cR_{n,W}} < n\mu  - \frac{(k^2-8k)Q}{16} - zQ \Big] + \exp(1-CQ^{\kappa/10}),
\end{align*}
where the maximum is over $u_1\in L_{\cP_{1,0,k}},u_2\in R_{\cP_{1,0,k}}$, and the second inequality uses ~\eqref{eq:perpDistance2} and the final inequality uses \eqref{eq:paraRectA} and the fact that $Y^-_{\cR_{n,W}}\stackrel{d}{=}Y^-_{\cR_{1,0,k}}$.
\end{proof}

We now give another variant of this corollary which will be useful when we need to resample part of the field.

\begin{lemma}\label{l:paraChange}
There exists $C>0$ such that for $Q,W$ satisfying~\eqref{eq:variableQW} and $0\leq k\leq n/W$ we have that for large enough $n$ and $z>0$,
\begin{align}
\P\Big[\max_{u_1\in L_{\cP_{1,0,k}}}\max_{u_2\in R_{\cP_{1,0,k}}} \big| X_{u_1 u_2}- |u_2-u_1|\mu \big| \geq  (k^2 Q)^{9/10} +  (\mu + z) Q \Big] \nonumber\\
\leq \exp(1-C Q^{\kappa/10}) + \P[ |Y^+_{\cR_{n,W}} - n\mu|+ |Y^-_{\cR_{n,W}} -n\mu| \geq  z Q].
\end{align}
\end{lemma}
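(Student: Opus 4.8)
The plan is to derive this from Corollary~\ref{c:paraRect} by handling the ``main linear term'' $((u_2-u_1)\cdot\vec v - n)\mu$ and the difference between $(u_2-u_1)\cdot\vec v$ and $|u_2-u_1|$, and then absorbing the passage time $X_{u_1^\perp u_2^\perp}$ across the embedded rectangle into the fluctuations of $Y^\pm_{\cR_{n,W}}$. First I would write
\[
X_{u_1 u_2} - |u_2-u_1|\mu = \big(X_{u_1 u_2} - X_{u_1^\perp u_2^\perp} - ((u_2-u_1)\cdot\vec v - n)\mu\big) + \big(X_{u_1^\perp u_2^\perp} - n\mu\big) + \big((u_2-u_1)\cdot\vec v - |u_2-u_1|\big)\mu.
\]
The first bracket is controlled by \eqref{eq:paraRectA}: on an event of probability at least $1-\exp(1-CQ^{\kappa/10})$ its absolute value is at most $(k^2Q)^{9/10}$, uniformly over $u_1\in L_{\cP_{1,0,k}}, u_2\in R_{\cP_{1,0,k}}$. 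The third bracket is a deterministic geometric quantity: both $u_1^\perp$ and $u_2^\perp$ lie on the two parallel short sides of $\cR_{1,0,k}$, which are at distance exactly $n$ along $\vec v$, while $u_1,u_2$ are offset vertically by at most $W/2$ each; by \eqref{eq:perpDistance} (or directly by Pythagoras as in \eqref{eq:Pythag}) one has $0\le (u_2-u_1)\cdot\vec v - n \le \tfrac{k^2Q}{4}+Qk \le C k^2 Q$ and likewise $\big|(u_2-u_1)\cdot\vec v - |u_2-u_1|\big| \le C k^2 Q$, so this term contributes at most $C\mu k^2 Q$, which for large $n$ is dominated by $(k^2Q)^{9/10}+\mu Q$ — here one should check the constants work out, possibly by being slightly more careful and noting $|u_2-u_1|\ge (u_2-u_1)\cdot\vec v$ always, combined with the reverse bound from \eqref{eq:Pythag2}; in any case the relevant excess is $O(\mu k^2 Q)$ and must be shown $\le$ the claimed bound (this is where one must make sure the statement's right-hand side, which has $\mu Q$ rather than $\mu k^2 Q$, is actually what one gets — I suspect the intended reading is that $(k^2 Q)^{9/10}$ absorbs a $k^2 Q$-sized term when $Q$ is large, since $9/10<1$ forces $(k^2Q)^{9/10}\ge k^2 Q$ is false, so more likely the geometric term is genuinely only $O(\mu Q)$, using that $(u_2-u_1)\cdot\vec v - |u_2-u_1|$ is second order in the transverse displacement; this is the point to be careful about).

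For the middle bracket $X_{u_1^\perp u_2^\perp} - n\mu$, I would note that $u_1^\perp$ ranges over $L_{\cR_{1,0,k}}$ and $u_2^\perp$ over $R_{\cR_{1,0,k}}$, so $X_{u_1^\perp u_2^\perp}$ lies between $Y^-_{\cR_{1,0,k}}$ and $Y^+_{\cR_{1,0,k}}$, hence $|X_{u_1^\perp u_2^\perp}-n\mu| \le |Y^+_{\cR_{1,0,k}}-n\mu| + |Y^-_{\cR_{1,0,k}}-n\mu|$. By the translation/rotation invariance of the model (Assumption~\ref{as:assumption} Invariance), $Y^\pm_{\cR_{1,0,k}} \stackrel{d}{=} Y^\pm_{\cR_{n,W}}$ jointly, so $\P[|Y^+_{\cR_{1,0,k}}-n\mu| + |Y^-_{\cR_{1,0,k}}-n\mu| \ge zQ] = \P[|Y^+_{\cR_{n,W}}-n\mu| + |Y^-_{\cR_{n,W}}-n\mu| \ge zQ]$. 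Combining the three pieces with a union bound gives exactly the asserted inequality, with the $\exp(1-CQ^{\kappa/10})$ term coming from the failure of the event in \eqref{eq:paraRectA}.

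The main obstacle is the bookkeeping around the deterministic geometric term: one has to verify that $((u_2-u_1)\cdot\vec v - n)\mu$, which appeared in \eqref{eq:paraRectA}, together with $((u_2-u_1)\cdot\vec v - |u_2-u_1|)\mu$, can genuinely be absorbed into $(k^2Q)^{9/10} + \mu Q$ rather than something of order $\mu k^2 Q$. The resolution should be that $(u_2-u_1)\cdot\vec v - |u_2-u_1|$ is $O(W^2/(k^2 Q \cdot \text{length}))$-type small because $\vec v$ is chosen parallel to the parallelogram's long axis and the transverse offsets are at most $W$, making this difference genuinely $O(Q)$ uniformly in $k\le n/W$; meanwhile the $((u_2-u_1)\cdot\vec v - n)$ term from \eqref{eq:paraRectA} is what produces the $\mu Q$ (after noting $(u_2-u_1)\cdot\vec v - n$ is itself $O(Q)$ once one uses that the short sides are distance $n$ apart along $\vec v$ and the endpoints are within the short sides — wait, this is $O(k^2 Q)$ as written). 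So in fact I would expect the honest statement to require either re-examining whether \eqref{eq:paraRectA} should be applied with the linear term kept, or accepting a bound of the form $(k^2 Q)^{9/10} + C\mu k^2 Q + zQ$ and observing that the lemma as stated is used only in regimes where this is absorbed; I would state and prove the cleanest true version and check it suffices for the later applications. Everything else is a routine union bound and invariance argument.
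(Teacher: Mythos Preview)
Your decomposition is exactly the one the paper uses, and your handling of the first bracket (via \eqref{eq:paraRectA}) and the second bracket (via invariance and $Y^\pm_{\cR_{n,W}}$) is correct. The only place you go astray is in the third bracket, where you talk yourself in circles between $O(\mu Q)$ and $O(\mu k^2 Q)$ and end up unsure whether the lemma as stated is even true.

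The resolution is a one-line Pythagoras computation that you gesture at but never carry out. Write $u_2-u_1$ in the orthonormal frame $(\vec v,\vec v^{\perp})$: its component along $\vec v$ is $a=(u_2-u_1)\cdot\vec v$, and its component along $\vec v^{\perp}$ is $b$. Since $u_1\in L_{\cP_{1,0,k}}$ and $u_2\in R_{\cP_{1,0,k}}$ both lie within perpendicular distance $W$ of the line through the parallelogram in direction $\vec v$, we have $|b|\le W$. Also $a\ge n/2$ (say). Hence by \eqref{eq:Pythag},
\[
0\ \le\ |u_2-u_1|-(u_2-u_1)\cdot\vec v\ =\ \sqrt{a^2+b^2}-a\ \le\ \frac{b^2}{2a}\ \le\ \frac{W^2}{n}\ =\ Q,
\]
uniformly in $k\le n/W$. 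This is exactly the $\mu Q$ in the lemma's right-hand side; there is no $k^2 Q$ term here. Your confusion came from conflating this quantity with the offset $(u_i^\perp-u_i)\cdot\vec v$ from \eqref{eq:perpDistance}, which \emph{is} of order $k^2 Q$ but is a completely different object and plays no role in this lemma. There is also no separate ``$((u_2-u_1)\cdot\vec v-n)\mu$'' term to account for: in your own decomposition it appears only inside the first bracket, where Corollary~\ref{c:paraRect} already handles it.
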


\begin{proof}
By Pythagoras' theorem (~\eqref{eq:Pythag}) and our choice of $Q,W$ we have that
\[
0\leq |u_1 - u_2| - (u_2-u_1)\cdot \vec{v}\leq Q,
\]
and so
\begin{align*}
&\P\Big[\max_{u_1\in L_{\cP_{1,0,k}}}\max_{u_2\in R_{\cP_{1,0,k}}} \big| X_{u_1 u_2}- |u_2-u_1|\mu \big| \geq  (k^2 Q)^{9/10} + (\mu + z) Q \Big] \nonumber\\
&\quad\leq\P\Big[\max_{u_1\in L_{\cP_{1,0,k}}}\max_{u_2\in R_{\cP_{1,0,k}}} \left| X_{u_1 u_2}-X_{u^\perp_1 u^\perp_2} - ((u_2-u_1)\cdot \vec{v}-n)\mu \right| >  (k^2 Q)^{9/10} \Big]\nonumber\\
&\qquad+\P\Big[\max_{v_1\in L_{\cR_{n,W}}}\max_{v_2\in R_{\cR_{n,W}}} \left|X_{v_1 v_2} - n\mu \right| >  zQ \Big]\nonumber\\
&\quad\leq \exp(1-C Q^{\kappa/10}) + \P[ |Y^+_{\cR_{n,W}} - n\mu|+ |Y^-_{\cR_{n,W}} -n\mu| \geq  z Q],
\end{align*}
completing the proof.
\end{proof}

Our next objective is to show that $Q^{-1}(Z^{-}_{1,0,k}-n\mu)-k^{2}$ is unlikely to be too small. For this we shall relate the minimal side to side distance $Y^-_{\cR_{1,0,k}}$ to a slightly longer point to point distance.  Choose $\delta$ small enough that $2\delta^\alpha<\frac12$. We set $Q=Q_{(1+\delta)n}$ and $W=W_{(1+\delta)n}$.

Define 
$$\Xi=A_{n+\delta n}-2\max_{1\leq m \leq \delta n} A_m$$
and set 

\begin{equation}
    \label{l:Deltadefn}
    \Delta=Q^{-1}[\Xi-(\frac{\mu}{\delta}+3)Q] = Q^{-1}[A_{n+\delta n}-2\max_{1\leq m \leq \frac{\delta}{2}n} A_m -(\frac{\mu}{\delta}+3)Q].
\end{equation}

We shall show later in Section \ref{s:percgen} that $\Delta$ is bounded above uniformly in $n$. Now we show the following result which will be used in the next section to show that $A_{n}\le CQ_{n}$. 

\begin{lemma}
    \label{l:parastatement}
    For $n$ sufficiently large and $z\ge 0$ we have 
    \begin{equation}\label{eq:paraBound2}
\P[Z^-_{1,0,k} -n\mu - \frac{k^2 Q}{32} -(\Delta-2) Q < - 3z Q] \leq 7\exp(1-z^\theta).
\end{equation}
\end{lemma}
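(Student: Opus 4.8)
The plan is to bound $Z^-_{1,0,k}$ from below by comparing the side-to-side distance across the parallelogram $\cP_{1,0,k}$ to a genuine point-to-point passage time, for which the definition of $Q_n$ (via \eqref{eq:Q}) directly supplies a stretched-exponential left tail. First I would invoke Corollary \ref{c:paraRect}, specifically \eqref{eq:paraRectB} with $Q=Q_{(1+\delta)n}$, $W=W_{(1+\delta)n}$, to reduce the estimate on $Z^-_{1,0,k}$ to a left-tail estimate for $Y^-_{\cR_{n,W}}$ at shifted level; the $k^2$-dependence is already extracted there (the $\tfrac{(k^2-8k)Q}{16}\ge \tfrac{k^2Q}{32}$ type bound absorbs the linear-in-$k$ correction for $k\ge 1$, and the $k=0$ case is handled directly). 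So it remains to show $\P[Y^-_{\cR_{n,W}} < n\mu + (\Delta-2)Q - 3zQ]$ is at most a constant multiple of $\exp(1-z^\theta)$.

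Next I would relate $Y^-_{\cR_{n,W}}$ — the infimum of $X_{u_1u_2}$ over $u_1\in L_{\cR_{n,W}}$, $u_2\in R_{\cR_{n,W}}$ — to the single point-to-point time $X_{(0,W/2),(n,W/2)}$, or more precisely chain through intermediate points: any side-to-side optimal path can be compared, using the triangle inequality (Assumption \ref{as:tri}) and the intermediate-points Assumption \ref{as:intermediate} (controlled via $\Gamma_n$ and Lemma \ref{l:Gamma}), to a concatenation that runs along a vertical segment of $L_{\cR_{n,W}}$ of length at most $W$, then across at the correct height, then along $R_{\cR_{n,W}}$. The vertical pieces have length $\le W\le n^{3/4+\epsilon}$, and since $W=\sqrt{nQ}$ with $Q=Q_{(1+\delta)n}$, a short vertical crossing of length $\le W$ has mean $\le \mu W + A_W$ and fluctuations governed by $Q_W \le C\sqrt{W}$; crucially $\sqrt{W} = o(Q)$ by \eqref{eq:variableQW}, so the vertical-segment contributions are absorbed into the $O(Q)$ slack already present. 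The horizontal crossing at height $W/2$ is exactly (a translate of) $X_n$ plus a length-correction, and here I use \eqref{eq:Q}: $\P[X_n < n\mu + A_n - xQ_n]\le \exp(1-x^\theta)$. One then rewrites $n\mu + A_n$ in terms of $\Xi = A_{n+\delta n} - 2\max_{1\le m\le \delta n} A_m$ and the $(\tfrac\mu\delta+3)Q$ offset appearing in the definition \eqref{l:Deltadefn} of $\Delta$; the monotonicity $Q_n\le Q_{(1+\delta)n}=Q$ lets us replace $Q_n$ by $Q$ at the cost of nothing, and the choice $2\delta^\alpha<1/2$ keeps $Q_n$ and $Q$ comparable so the tail exponent $\theta$ survives with only a change in the leading constant (hence the factor $7$).

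The main obstacle I anticipate is bookkeeping the various additive $O(Q)$ and $O((k^2Q)^{9/10})$ error terms — from Corollary \ref{c:paraRect}, from $\Gamma_n$, from the non-random fluctuations $A_W, A_n$ of the auxiliary segments, and from the length corrections $|u_2-u_1| - n = O(k^2 Q)$ — and checking that after collecting them all the surviving shift is no worse than $(\Delta-2)Q - 3zQ + \tfrac{k^2Q}{32}$, i.e. that the constants $2$ (absorbing the deterministic errors) and $3$ (the factor on $zQ$, to split the probability budget across the $\le 7$ events via union bound) are genuinely large enough. The other delicate point is that $\Delta$ itself depends on $n$ through the $A_m$'s, but at this stage we do not yet know $\Delta$ is bounded (that comes later in Section \ref{s:percgen}); the statement is formulated so that $\Delta$ appears only on the favourable side of the inequality, so no upper bound on $\Delta$ is needed here — one just has to make sure the $A_n$-term in \eqref{eq:Q} is correctly matched against the $A_{n+\delta n} - 2\max A_m$ in $\Xi$, using subadditivity ($A_{n+\delta n} \ge A_n$, roughly, and $A_m\ge 0$) to go in the right direction.
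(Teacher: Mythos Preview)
Your reduction via Corollary~\ref{c:paraRect} to a left-tail bound on $Y^-_{\cR_{n,W}}$ is correct and matches the paper. The gap is in how you propose to lower-bound $Y^-_{\cR_{n,W}}$.

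Your plan is to write, for the minimizing pair $v_1\in L_{\cR_{n,W}}, v_2\in R_{\cR_{n,W}}$,
\[
X_{v_1v_2}\ \ge\ X_{(0,W/2),(n,W/2)} - X_{(0,W/2),v_1} - X_{v_2,(n,W/2)},
\]
and absorb the two vertical pieces into the $O(Q)$ slack. This does not work: the vertical segments have length up to $W/2$, so their \emph{mean} contribution is of order $\mu W=\mu\sqrt{nQ}$, which is $\gg Q$ whenever $Q=o(n)$ --- always the case here. (Your claim ``$\sqrt{W}=o(Q)$ by \eqref{eq:variableQW}'' is also false in general: $\sqrt{W}=(nQ)^{1/4}$, and with $Q$ as small as $n^{\alpha/2}$ this is $\gg Q$. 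But the mean is the fatal term, not the fluctuation.) So the vertical-segment geometry cannot produce a bound of the form $Y^-_{\cR_{n,W}}\ge n\mu + \Delta Q - O(z)Q$.

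The paper's geometry is different and this difference is the point. Instead of moving vertically along the sides, it extends \emph{horizontally} to $u_1=(-\tfrac{\delta}{2}n,0)$ and $u_2=((1+\tfrac{\delta}{2})n,0)$ and uses
\[
X_{v_1v_2}\ \ge\ X_{u_1u_2} - X_{u_1v_1} - X_{v_2u_2}.
\]
Now $|u_i-v_i|=\sqrt{(\delta n/2)^2+y_i^2}\le \tfrac{\delta}{2}n+\tfrac{W^2}{\delta n}=\tfrac{\delta}{2}n+\tfrac{Q}{\delta}$, so the short segments have length $\tfrac{\delta}{2}n+O(Q/\delta)$; the leading $\tfrac{\delta}{2}n$ parts cancel against the extra $\delta n$ in $|u_1-u_2|=(1+\delta)n$, leaving a genuine $O(Q)$ remainder --- this is exactly where the $(\mu/\delta)Q$ in the definition of $\Delta$ comes from, and why $Q$ is taken to be $Q_{(1+\delta)n}$.

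There is a second missing ingredient. The minimizers $v_1,v_2$ are random and depend on the environment in the middle strip, so one cannot directly apply \eqref{eq:Q} to $X_{u_1v_1}$. The paper handles this by resampling (Assumption~\ref{as:resamp1}): after replacing $X_{u_iv_i}$ by $X^{\Lambda_a}_{u_1v_1},X^{\Lambda_c}_{u_2v_2}$ (resampling the outer columns), the $v_i$ become independent of the resampled environment and can be treated as deterministic. You do not mention resampling, and your proposed use of Assumption~\ref{as:intermediate} does not address this dependence.

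Finally, a small correction: subadditivity gives $A_{n+m}\le A_n+A_m$, not $A_{n+\delta n}\ge A_n$; the non-random fluctuations enter the argument only through the explicit $\Xi=A_{(1+\delta)n}-2\max_{m\le \delta n}A_m$ in $\Delta$, with no monotonicity needed.
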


\begin{center}
\begin{figure}
\includegraphics[width=4in]{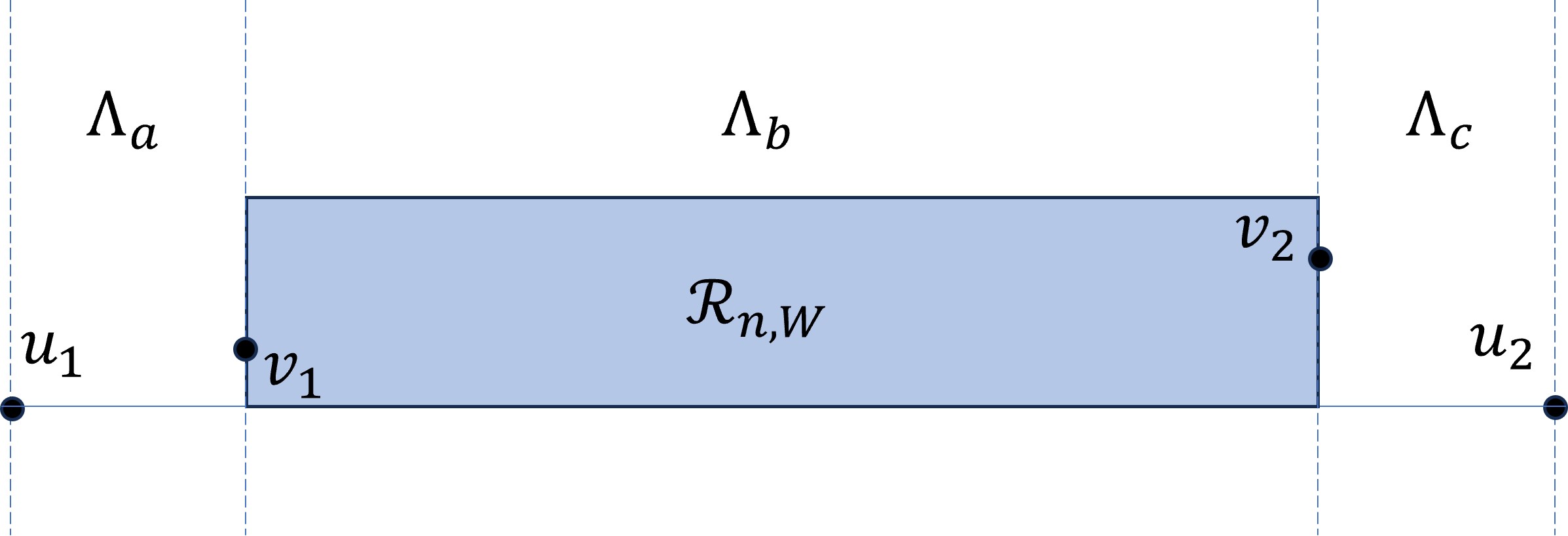}
\caption{Construction in the proof of Lemma \ref{l:parastatement}: for small $\delta$ (where $\delta n/2$ is the width of the columns $\Lambda_a$) we show the passage time $Y^{-}_{\cR_{n,W}}=X_{v_1v_2}$ can be approximated in an appropriate sense by $X_{u_1u_2}$.}
\label{f:XYcomp}
\end{figure}
\end{center}

\begin{proof}
Let $u_1=(-\frac{\delta}{2}n,0)$ and $u_2=((1+\frac{\delta}{2})n,0)$.
Let us define columns in $\R^2$ by
\begin{align*}
\Lambda_a &:= \{(x,y)\in\R^2:-\frac{\delta}{2}n< x \le 0\},\quad \Lambda_b:= \{(x,y)\in\R^2:0< x < n\},\\
  \Lambda_c &:= \{(x,y)\in\R^2:n\le  x < (1+\frac{\delta}{2})n\}.
\end{align*}
Let $v_1\in L_{\cR_{n,W}},v_2\in R_{\cR_{n,W}}$ be points minimizing the passage time across $\cR_{n,W}$ such that
\begin{equation}\label{eq:contMeanA}
X_{v_1,v_2}^{\Lambda_b} = Y_{\cR_{n,W}}^{-,\Lambda_b}.
\end{equation}

By Assumption~\ref{as:resamp1} (the first resampling hypothesis), we have that
\begin{align}\label{eq:contMeanB}
\P[|X_{u_1 v_1}^{\Lambda_a}-X_{u_1 v_1}|\geq Q] &\leq \exp(-Q^{\kappa/2})\nonumber\\
\P[|X_{v_1 v_2}^{\Lambda_b}-X_{v_1 v_2}|\geq Q] &\leq \exp(-Q^{\kappa/2})\nonumber\\
\P[|X_{u_2 v_2}^{\Lambda_c}-X_{u_2 v_2}|\geq Q] &\leq \exp(-Q^{\kappa/2})
\end{align}
Let $R_i=|u_i-v_i|$ and note that by~\eqref{eq:Pythag}
\begin{equation}\label{eq:RiLength}
\frac{\delta}{2} n\leq R_i\leq \frac{\delta}{2} n + \frac{W^2}{2\delta n}\leq \delta n.
\end{equation}
Observe that $v_i$ and $R_i$ are independent of $\omega^{\Lambda_2}$.
Now setting $\Xi=A_{n+\delta n}-2\max_{1\leq m \leq \delta n} A_m$ we have that
\begin{align*}
&\P[Y^-_{\cR_{n,W}}\leq n\mu +\Xi-\frac{\mu}{\delta}Q-3Q -zQ]\\
&=\P[Y^{-,\Lambda_b}_{\cR_{n,W}}\leq n\mu + \Xi-\frac{\mu}{\delta}Q-3Q -zQ]\\
&=\P[X_{v_1 v_2}^{\Lambda_b}\leq n\mu + \Xi-\frac{\mu}{\delta}Q-3Q -zQ]\\
&\leq\P[X_{v_1 v_2}\leq n\mu + \Xi-\frac{\mu}{\delta}Q-2Q -zQ]+\P[|X_{v_1 v_2}^{\Lambda_b}-X_{v_1 v_2}|\geq Q]\\
&\leq\P[X_{v_1 v_2}\leq n\mu + \Xi-\frac{\mu}{\delta}Q-2Q -zQ]+\exp(-Q^{\kappa/2}),
\end{align*}
where the first equality is by the exchangeability of $Y^-_{\cR_{n,W}}$ and $Y^{-,\Lambda_2}_{\cR_{n,W}}$, the second is by~\eqref{eq:contMeanA} and the second inequality is by~\eqref{eq:contMeanB}.  Now by the triangle inequality and equation~\eqref{eq:RiLength},
\begin{align*}
&\P[X_{v_1 v_2}\leq n\mu + \Xi-\frac{\mu}{\delta}Q-2Q -zQ]\\
&\leq\P[X_{u_1 u_2} - X_{u_1v_1} - X_{v_2 u_2} \leq n\mu + \Xi-\frac{\mu}{\delta}Q-2Q -zQ]\\
&\leq \P[X_{u_1 u_2} -(n+\delta n)\mu - A_{n+\delta n} \leq  -\frac{z}{3}Q]\\
&\qquad + \P[- X_{u_1v_1} + R_1\mu + A_{R_1} \leq  -\frac{z}{3}Q-Q]  + \P[- X_{u_2 v_2} + R_2\mu + A_{R_2} \leq  -\frac{z}{3}Q-Q]
\end{align*}
and by ~\eqref{eq:Q},and the choice $Q=Q_{(1+\delta)n}$, we have that
\begin{align*}
\P[X_{u_1 u_2} -(n+\delta n)\mu - A_{n+\delta n} \leq  -\frac{z}{3}Q]=\P[X_{u_1 u_2} -\E X_{u_1 u_2} \leq  -\frac{z}{3}Q]\leq \exp(1-(z/3)^\theta).
\end{align*}
Next we have that
\begin{align*}
\P[- X_{u_1v_1} + R_1\mu + A_{R_1} \leq  -\frac{z}{3}Q-Q]&\leq \P[- X^{\Lambda_a}_{u_1v_1} + R_1\mu + A_{R_1} \leq  -\frac{z}{3}Q] + \P[|X_{u_1 v_1}^{\Lambda_a}-X_{u_1 v_1}|\geq Q]\\
&=\P[- X^{\Lambda_a}_{u_1v_1} + R_1\mu + A_{R_1} \leq  -\frac{z}{3}Q] + \P[|X_{u_1 v_1}^{\Lambda_a}-X_{u_1 v_1}|\geq Q]\\
&\leq \P[- (X^{\Lambda_a}_{u_1v_1} - R_1\mu - A_{R_1}) \leq  -\frac{z}{3}Q]+\exp(-Q^{\kappa/2}),
\end{align*}
where we used~\eqref{eq:contMeanB}.  Furthermore, note that $v_1$ is independent of $\omega^{\Lambda_a}$ so we may treat $v_1$ and $R_1$ as a deterministic when considering $X^{\Lambda_a}_{u_1v_1}$.  Now since $Q_{R_1} \leq Q_{n+\delta n}=Q$, by applying ~\eqref{eq:Q} we have that
\[
\P[- (X^{\Lambda_a}_{u_1v_1} - R_1\mu - A_{R_1}) \leq  -\frac{z}{3}Q]\leq \exp(1-(z/3)^\theta).
\]
Altogether this gives that
\[
\P[- X_{u_1v_1} + R_1\mu + A_{R_1} \leq  -\frac{z}{3}Q-Q]\leq  \exp(1-(z/3)^\theta) + \exp(-Q^{\kappa/2}),
\]
and similarly
\[
\P[- X_{u_2v_2} + R_2\mu + A_{R_2} \leq  -\frac{z}{3}Q-Q]\leq  \exp(1-(z/3)^\theta) + \exp(-Q^{\kappa/2}).
\]
Combining the above estimates gives
\begin{align}\label{eq:contMeanC}
\P[Y^-_{\cR_{n,W}}\leq n\mu + A_{n+\delta n}-A_{R_1}-A_{R_2}-\frac{\mu}{\delta}Q-3Q -zQ]\leq 3\exp(1-(z/3)^\theta) + 3\exp(-Q^{\kappa/2}).
\end{align}
Since $n\mu + A_{n+\delta n}-A_{R_1}-A_{R_2}-\frac{\mu}{\delta}Q-3Q\leq 2n\mu$ the left hand side of~\eqref{eq:contMeanC} is 0 when $z\geq 2n\mu Q^{-1}$ so the inequality holds trivially.  When $z\leq 2n\mu Q^{-1}\leq 3n$ we have that
\[
(z/3)^\theta\leq n^{\theta} \leq n^{\alpha\kappa/2}\leq Q^{\kappa/2},
\]
and so
\[
\exp(-Q^{\kappa/2})\leq \exp(1-(z/3)^\theta).
\]
Now setting
\[
\Delta=Q^{-1}[\Xi-(\frac{\mu}{\delta}+3)Q] = Q^{-1}[A_{n+\delta n}-2\max_{1\leq m \leq \frac{\delta}{2} n} A_m -(\frac{\mu}{\delta}+3)Q]
\]
we may rewrite~\eqref{eq:contMeanC} as
\begin{equation}\label{eq:YwideMinus}
\P[Y^-_{\cR_{n,W}}-n\mu-\Delta Q \leq -3z Q]\leq 6\exp(1-z^\theta).
\end{equation}
Now combining this with Corollary~\ref{c:paraRect} we have that
\begin{align*}
\P[Z^-_{1,0,k} -n\mu - \frac{(k^2-8k)Q}{16} -\Delta Q < - 3z Q] &\leq \P\Big[Y^-_{\cR_{n,W}} -n\mu -\Delta Q<  -3zQ\Big] +\exp(1-CQ^{\kappa/10})\\
&\leq 6\exp(1-z^\theta) +\exp(1-CQ^{\kappa/10})
\end{align*}
and so since $k^2-8k \geq \frac12 k^2 -32$,
\begin{equation}\label{eq:paraBound}
\P[Z^-_{1,0,k} -n\mu - \frac{k^2 Q}{32} -(\Delta-2) Q < - z Q] \leq 6\exp(1-z^\theta) +\exp(1-CQ^{\kappa/10}).
\end{equation}
Now $n\mu + \frac{k^2 Q}{32} +(\Delta-2) Q \leq (2\mu+1) n$, so the left hand side of~\eqref{eq:paraBound} is 0 if $z\geq n\geq (2\mu+1) n Q^{-1}$.  If $z\leq n$ then for sufficiently large $n$.
\[
z^\theta\leq n^{\theta} \leq C n^{\alpha\kappa/10}\leq C Q^{\kappa/10}.
\]
Hence we may write
\begin{equation}
\P[Z^-_{1,0,k} -n\mu - \frac{k^2 Q}{32} -(\Delta-2) Q < - 3z Q] \leq 7\exp(1-z^\theta),
\end{equation}
completing the proof.
\end{proof}

\section{Estimates at scale $Q_n$}
\label{s:percgen}
In this section our objective is to prove Proposition \ref{p:conc} and the upper bound in Theorem \ref{t:nr} with $\SD(X_{n})$ replaced by $Q_n$ using a multi-scale argument. 

\subsection{A general percolation estimate}
A key piece of our multi-scale argument is to break a path crossing a rectangle of length $Mn$ into a sequence of paths of length approximately $n$.  We divide the plane into strips $\Lambda_i=\{(x,y):(i-1)n< x < in\}$ and view a path $\gamma$ from $\origin$ to $(Mn,0)$ as a series of crossings of parallelograms $\cP_{i,k,k'}$ as in Figure~\ref{f:grid}.

We will write $\gamma(t)=(\gamma_1(t),\gamma_2(t))$ to indicate its co-ordinates. For $1\leq i \leq M-1$ let
\begin{equation}\label{eq:tiDefn}
t_i:=\inf\{t\in[0,1]: \gamma_1(t)=in\},
\end{equation}
the first time $\gamma$ hits the line $x=in$ which we denote $\ell_i$.  We set $t_0=0$ and $t_M=1$.  As in the previous section we will rescale heights by length $W$ and we write
\[
k^\gamma_i = \lfloor \gamma_2(t_i)/W\rfloor
\]
to be the normalized height of the first hitting time of $\ell_i$.  With this notation, the path $\gamma$ makes a crossing of $\cP_{i,k^\gamma_{i-1},k^\gamma_i}$ for each $1\leq i \leq M$. Since we have $k^\gamma_0=k^\gamma_M=0$ we have that $\uk^\gamma\in\mathfrak{K}_M$ where
\[
\mathfrak{K}_M=\{(k_0,\ldots,k_M)\in\Z^{M+1}: k_0=0\}.
\]
It will also be useful for use to measure the fluctuations such a path and so we write
\[
\tau_1(\uk):=\sum_{i=1}^M |k_i-k_{i-1}|.
\]
Then
\begin{align}\label{eq:NBDecomposition}
X_{Mn} &\geq \sum_{i=1}^M X_{\gamma(t_{i-1})\gamma(t_{i})} - M\Gamma_{Mn}\nonumber\\
&\geq \sum_{i=1}^M X^{\Lambda_i}_{\gamma(t_{i-1})\gamma(t_{i})} - |X^{\Lambda_i}_{\gamma(t_{i-1})\gamma(t_{i})} - X_{\gamma(t_{i-1})\gamma(t_{i})}| - M\Gamma_{Mn}\nonumber\\
&\geq \sum_{i=1}^M Z^{-,\Lambda_i}_{i,k^\gamma_{i-1},k^\gamma_i} - |X^{\Lambda_i}_{\gamma(t_{i-1})\gamma(t_{i})} - X_{\gamma(t_{i-1})\gamma(t_{i})}| - M\Gamma_{Mn}\nonumber\\
&\geq \min_{\uk\in\mathfrak{K}_M}\sum_{i=1}^M Z^{-,\Lambda_i}_{i,k_{i-1},k_i} - \sum_{i=1}^M |X^{\Lambda_i}_{\gamma(t_{i-1})\gamma(t_{i})} - X_{\gamma(t_{i-1})\gamma(t_{i})}| - M\Gamma_{Mn}.
\end{align}
Note that in the sum $\sum_{i=1}^M Z^{-,\Lambda_i}_{i,k_{i-1},k_i}$ each of the summands are independent which motivates the following general proposition.

\begin{center}
\begin{figure}
\includegraphics[width=5in]{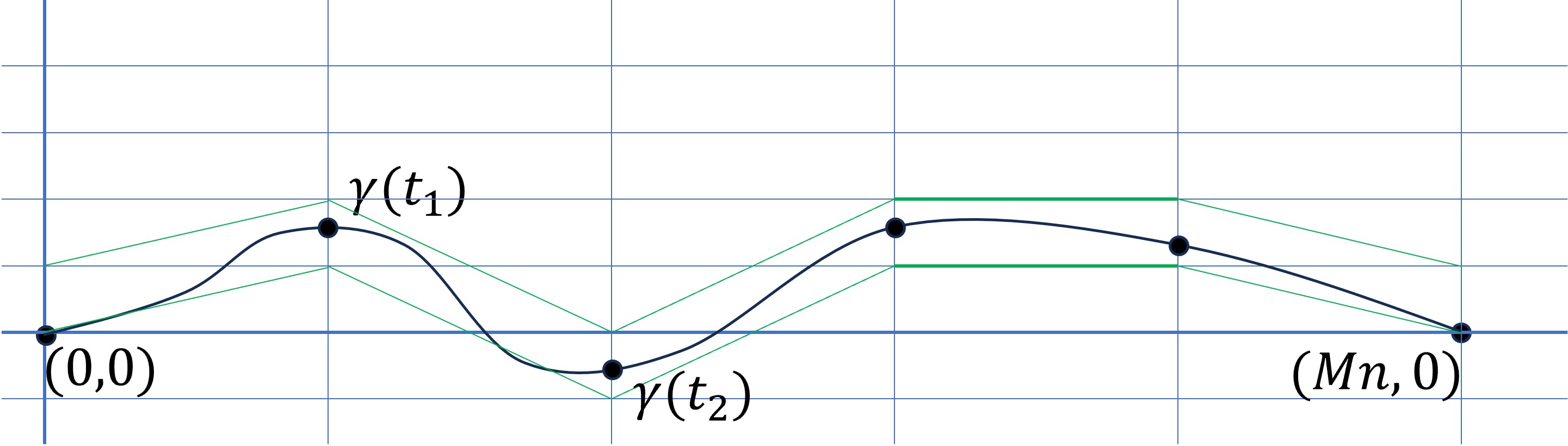}
\caption{The geodesic from $(0,0)$ to $(Mn,0)$ is decomposed into a series of crossings of parallelograms. Using this we can control $X_{Mn}$ using \eqref{eq:NBDecomposition} and Proposition \ref{p:perc1} with the latter result providing control on the minimum over (all possible choices of the crossings) of sums of the side-to-side passage times across the parallelograms.}
\label{f:grid}
\end{figure}
\end{center}

\begin{proposition}
\label{p:perc1}
For $i,k,k'\in \Z$, let $\cZ_{i,k,k'}$ be random variables such that for some $0<\beta<1$,
\[
\P[\cZ_{i,k,k'} \geq z]\leq C_1 \exp(-C_2 z^{\beta})
\]
and
\[
\P[\cZ_{i,k,k'} \leq z_{\max}]=1.
\]
Assume further that the collections of variables $\mathfrak{Z}_i=\{\cZ_{i,k',k'}\}_{k,k'\in \Z}$ are independent for different~$i$. Then there exist $C_3,C_4,C_5,C_6$ depending on $C_1,C_2$ and $\beta$ but not depending on $z_{\max}$ such that for all $R\geq 1$ and $z>0$ we have
\[
\P\left[\max_{\substack{\uk\in \mathfrak{K}_{M}\\ \tau_1(\uk)\leq R M}} \sum_{i=1}^M \cZ_{i,k_{i-1}, k_{i}} \geq (C_3+ \log^{C_4} (R))M + z \right] \leq C_5\exp\left(-C_6 z^{\beta}-C_6 (z/z_{\max})z_{\max}^{\beta}\right ).
\]
\end{proposition}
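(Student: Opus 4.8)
The plan is to pair a union bound over the relevant lattice paths $\uk$ with a large‑deviation estimate for each fixed $\uk$, choosing $C_3,C_4$ so that the per‑step ``budget'' $C_3+\log^{C_4}(R)$ absorbs the per‑step entropy of the union bound. First two reductions: replacing each $\cZ_{i,k,k'}$ by $\cZ_{i,k,k'}^+=\max(\cZ_{i,k,k'},0)$ only increases $\sum_i\cZ_{i,k_{i-1},k_i}$ and preserves all hypotheses, so we may assume $\cZ_{i,k,k'}\ge 0$, whence $\E\cZ_{i,k,k'}\le\int_0^\infty C_1e^{-C_2u^\beta}\,du=:C_0$, a constant depending only on $C_1,C_2,\beta$; and we may assume $z$ exceeds a large constant $z_0(C_5,C_6)$, else the right‑hand side is $\ge 1$. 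Next, an element $\uk\in\mathfrak K_M$ with $\tau_1(\uk)\le RM$ is determined by its increments $d_i=k_i-k_{i-1}\in\Z$ with $\sum_{i=1}^M|d_i|\le RM$; counting these (distribute $\le RM$ over $M$ slots, then choose signs) gives at most $e^{c_1M\log(e(R+1))}$ such paths for a constant $c_1$. So up to the ``one anomalously large transition'' contribution treated below, it suffices to bound, for each such $\uk$, $\P[\sum_i\cZ_{i,k_{i-1},k_i}\ge(C_3+\log^{C_4}R)M+z]$ by something which, multiplied by $e^{c_1M\log(e(R+1))}$, is $\le\tfrac12C_5e^{-C_6z^\beta-C_6(z/z_{\max})z_{\max}^\beta}$.

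\textbf{The truncated (typical‑size) part.} Fix a truncation level $A=\min\{\max(z,1),\,z_{\max},\,c_*(\log(e(R+1)))^{q}\}$ with $c_*$ small, $q=(C_4-1)/(1-\beta)$, and $C_4>1/\beta$; write $S_{\uk}=\sum_i(\cZ_{i,k_{i-1},k_i}\wedge A)+\sum_i(\cZ_{i,k_{i-1},k_i}-A)^+$. An exponential‑moment computation gives $\E[e^{\lambda(\cZ_{i,k,k'}\wedge A)}]\le e^{C''\lambda}$ for $\lambda\le\frac{C_2}{2}A^{-(1-\beta)}$ and a constant $C''$, so with $\lambda=\frac{C_2}{2}A^{-(1-\beta)}$, $C_3\ge 2C''$, a Chernoff bound and the path count give
\[
\P\Big[\max_{\uk}\sum_i(\cZ_{i,k_{i-1},k_i}\wedge A)\ge(C_3+\log^{C_4}R)M+\tfrac{z}{2}\Big]\le e^{-c_1M\log(e(R+1))}\,e^{-\frac{C_2}{4}zA^{-(1-\beta)}},
\]
because $A\le c_*(\log(e(R+1)))^{q}$ makes $(C_3+\log^{C_4}R)A^{-(1-\beta)}\ge 8c_1\log(e(R+1))$ — here one uses $C_4>1/\beta$, which forces $\log^{C_4}(R)/(\log(e(R+1)))^{C_4-1}\gtrsim\log R$. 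A short case check, according to which of $\max(z,1)$, $z_{\max}$, $c_*(\log(e(R+1)))^{q}$ achieves the minimum defining $A$, shows $zA^{-(1-\beta)}\ge c\big(z^\beta+(z/z_{\max})z_{\max}^\beta\big)$ for a constant $c>0$; fixing $C_6\le\frac{C_2c}{4}$ this part is absorbed into the target.

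\textbf{The part above the truncation level.} Here the path‑union bound is too wasteful (one transition of size up to $z_{\max}$ contributes to exponentially many paths), so one switches to a union bound over the $O(M(RM)^2)$ transitions $(i,k,k')$ with $|k|,|k'|\le RM+1$. Split again at $B=K_1(\log(e(RM+1)))^{1/\beta}$ with $K_1$ large enough that $(2RM{+}3)^2C_1e^{-C_2B^\beta}\le(e(RM{+}1))^{-5}$. (i) The transitions of size $>B$ contribute $\max_{\uk}\sum_i(\cZ_{i,k_{i-1},k_i}-B)^+\le\sum_i\max_{|k|,|k'|\le RM+1}(\cZ_{i,k,k'}-B)^+$, a sum of independent variables with tiny means, semi‑exponential tails and the a.s.\ cap $z_{\max}$; a Chernoff bound with $\lambda\asymp z_{\max}^{-(1-\beta)}$ bounds $\P[\,\cdot\ge z/2]$ by a constant times $e^{-c(z/z_{\max})z_{\max}^\beta}$. (ii) A single transition of size $\ge z/2$ (possible only if $z/2\le z_{\max}$) — and likewise one of size $\ge(C_3+\log^{C_4}R)M+z$ — has probability $\le M(2RM{+}3)^2C_1e^{-C_2(z/2)^\beta}$, which is absorbed by choosing $C_5$ large, since $M^\beta$ (resp. $((C_3+\log^{C_4}R)M)^\beta\ge\log^{C_4\beta}R$, using $C_4>1/\beta$) kills the polynomial factors $M(RM)^2$ while $e^{-cz^\beta}$ survives. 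Assembling the (at most four) pieces and re‑choosing the constants gives the claim.

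\textbf{The main obstacle.} The real difficulty is uniformity in $z_{\max}\in(0,\infty)$: the path‑union bound controls only the ``many moderate transitions'' scenario and fails for ``one exceptionally large transition,'' which must instead be handled by the much cheaper transition‑union bound together with the hard cap $z_{\max}$ (this is precisely what produces the $(z/z_{\max})z_{\max}^\beta$ term), while the transition‑union bound by itself loses all the path structure. Reconciling these forces the two‑level truncation above and a somewhat delicate case analysis in the regimes $z\lessgtr z_{\max}$ and $z_{\max}\lessgtr\log^{q}(e(R+1))$ when deciding how to split the deviation $z$ between the truncated and exceptional parts; the bookkeeping is substantial, but every individual estimate is a routine Chernoff or binomial‑tail computation.
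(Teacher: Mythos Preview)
Your outline has a genuine gap in the ``part above the truncation level''. The difficulty is exactly the one you flag at the end, but your two-level truncation does not close it.

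Concretely: after truncating at $A$ you pass from $\max_{\uk}\sum_i(\cZ_{i,k_{i-1},k_i}-A)^+$ to $\sum_i\max_{|k|,|k'|\le RM+1}(\cZ_{i,k,k'}-A)^+$. But the inner maximum ranges over $N\asymp(RM)^2$ variables with stretched-exponential tails, so it is typically of order $(\log N)^{1/\beta}\asymp B$, and the expected sum is $\asymp MB=M(\log(RM))^{1/\beta}$, which for generic parameters (e.g.\ $R=1$, $z=M^{1/2}$) already exceeds $z/2$. Your split at $B$ does not cure this: part~(i) treats only the contribution above $B$, and part~(ii) treats only a \emph{single} transition exceeding $z/2$; the entire range $(A,B]$---and more generally the scenario of several transitions of intermediate size, each below $z/2$ but above $A$, summing to more than $z/2$---is never controlled. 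Even within part~(i), the Chernoff bound with $\lambda\asymp z_{\max}^{-(1-\beta)}$ yields only $\exp\bigl(-c(z/z_{\max})z_{\max}^\beta\bigr)$, which when $z\ll z_{\max}$ is strictly weaker than the required $\exp(-cz^\beta)$; a single huge $Y_i$ near $z_{\max}$ makes the moment generating function blow up if you try to raise $\lambda$.

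The paper's argument resolves this by replacing the single cutoff $A$ with a full dyadic decomposition $\cZ^j_{i,k,k'}=2^{j+1}I(\cZ_{i,k,k'}\in[2^j,2^{j+1}])$ and, crucially, by pairing each dyadic level $j$ with its own \emph{mesoscopic coarsening} of the path $\uk$ at a scale $w_j=\exp(c_0 2^{j\beta})$ that grows with $j$. The point is that at level $j$ one needs to know $\uk$ only up to error $\sim w_j$ (since $\P[\cZ>2^j]\lesssim w_j^{-10}$), which cuts the path entropy from $e^{cM\log R}$ down to $e^{cRM(\log w_j)/w_j}$. This interpolates continuously between your path-union regime (small $j$, fine coarsening) and your transition-union regime (large $j$, very coarse), and it is precisely this $j$-dependent entropy reduction that makes the union bound close at every scale simultaneously. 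A fixed truncation level cannot achieve this interpolation.
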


The proof of this abstract proposition, although quite important, is not directly related to the arguments that follow. This proof therefore is given later in {Section~\ref{s:appa}}. We now give the following corollary.
\begin{corollary}\label{c:percolation}
For any $\cZ_{i,k,k'}$ satisfying the hypothesis of Proposition~\ref{p:perc1} there exists $C_7, C_8, C_9$ such that for and any $\lambda>0$,
\[
\P\left[\max_{\uk\in \mathfrak{K}_{M}} \sum_{i=1}^M \cZ_{i,k_{i-1}, k_{i}} -  \lambda\tau_1(\uk)\geq C_7 M\log^{C_7}(3+\frac1{\lambda})  + z \right] \leq C_8\exp\left(-C_9 z^{\beta}-C_9 (z/z_{\max})z_{\max}^{\beta}\right ).
\]
\end{corollary}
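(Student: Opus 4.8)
The plan is to remove the constraint $\tau_1(\uk)\le RM$ in Proposition~\ref{p:perc1} by a dyadic decomposition of $\tau_1(\uk)$ followed by a union bound. Write $T:=C_7 M\log^{C_7}(3+\tfrac1\lambda)+z$ for the threshold appearing in the corollary. First I would record the deterministic inclusion: if some $\uk\in\mathfrak{K}_M$ has $\sum_{i=1}^M\cZ_{i,k_{i-1},k_i}-\lambda\tau_1(\uk)\ge T$, then either $\tau_1(\uk)\le M$, so that $\max_{\uk'\in\mathfrak{K}_M,\,\tau_1(\uk')\le M}\sum_i\cZ_{i,k'_{i-1},k'_i}\ge T$, or $\tau_1(\uk)\in(2^jM,2^{j+1}M]$ for some integer $j\ge 0$, in which case $\sum_i\cZ_{i,k_{i-1},k_i}\ge T+\lambda\tau_1(\uk)>T+\lambda 2^jM$ while $\tau_1(\uk)\le 2^{j+1}M$. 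Hence the event of the corollary is contained in the union of one event controllable by Proposition~\ref{p:perc1} with $R=1$ and countably many controllable with $R=2^{j+1}$, and I would estimate each and sum.

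For the $j$-th term the surplus over the ``free'' term $(C_3+\log^{C_4}(2^{j+1}))M$ of Proposition~\ref{p:perc1} is
\[
\xi_j:=C_7 M\log^{C_7}(3+\tfrac1\lambda)+z+\lambda 2^jM-\big(C_3+((j+1)\log 2)^{C_4}\big)M .
\]
I would fix $C_7$ to be a large constant depending only on $C_3,C_4,\beta$ (hence, via Proposition~\ref{p:perc1}, only on $C_1,C_2,\beta$) and split the indices $j\ge 0$ into two regimes. In the penalty-dominated regime $\lambda 2^j\ge 4C_3+4((j+1)\log 2)^{C_4}$ one gets $\xi_j\ge z+\tfrac12\lambda 2^jM\ge z+\tfrac12\lambda 2^j$, and moreover $\lambda 2^j\ge 4C_3$ forces $j\ge\log_2(4C_3/\lambda)$. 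In the complementary main-term-dominated regime the exponential $2^j$ dominates the degree-$C_4$ polynomial in $j$, so only $O(\log(3+\tfrac1\lambda))$ values of $j$ occur and there $((j+1)\log 2)^{C_4}=O(\log^{C_4}(3+\tfrac1\lambda))$; taking $C_7$ large enough that $C_7\log^{C_7}(3+\tfrac1\lambda)\ge 2\big(C_3+((j+1)\log 2)^{C_4}\big)$ throughout this regime then gives $\xi_j\ge z+\tfrac12 C_7 M\log^{C_7}(3+\tfrac1\lambda)\ge z+\tfrac12 C_7\log^{C_7}(3+\tfrac1\lambda)$. In particular $\xi_j\ge z>0$ in both regimes, so Proposition~\ref{p:perc1} bounds the $j$-th term by $C_5\exp(-C_6\xi_j^\beta-C_6(\xi_j/z_{\max})z_{\max}^\beta)\le C_5\exp(-C_6\xi_j^\beta-C_6(z/z_{\max})z_{\max}^\beta)$, and using $(a+b)^\beta\ge\tfrac12(a^\beta+b^\beta)$ for $a,b\ge 0$ I would extract from $\xi_j^\beta$ a factor $\exp(-\tfrac{C_6}{2}z^\beta)$ together with the extra decay $\exp(-c(\lambda 2^j)^\beta)$ in the first regime, respectively $\exp(-c\log^{\beta C_7}(3+\tfrac1\lambda))$ in the second.

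It then remains to sum. Over the penalty-dominated $j$ the quantities $\lambda 2^j$ start above $4C_3$ and grow geometrically, so $\sum_j\exp(-c(\lambda 2^j)^\beta)\le\sum_{l\ge 0}\exp(-c(2C_3)^\beta 2^{l\beta})$, a finite constant depending only on $C_3,\beta$. Over the main-term-dominated $j$ there are only $O(\log(3+\tfrac1\lambda))$ indices, each carrying the same factor $\exp(-c\log^{\beta C_7}(3+\tfrac1\lambda))$; choosing $C_7>1/\beta$ makes $\beta C_7>1$, and since $t\mapsto\log(t)\exp(-c\log^{\beta C_7}t)$ is bounded on $[3,\infty)$ this contribution is again at most a universal constant times $\exp(-\tfrac{C_6}{2}z^\beta-C_6(z/z_{\max})z_{\max}^\beta)$. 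The $\tau_1(\uk)\le M$ term is handled directly by Proposition~\ref{p:perc1} with $R=1$: since $\log(3+\tfrac1\lambda)>1$ always, taking $C_7\ge C_3$ makes its surplus at least $z$. Combining gives $\P[\,\cdot\,]\le C_8\exp(-C_9 z^\beta-C_9(z/z_{\max})z_{\max}^\beta)$ with, say, $C_9=C_6/2$ and $C_8$ absorbing these bounded sums.

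The genuinely delicate point — and the reason I would carry the exponent $C_7$ rather than a fixed power of $\log(3+\tfrac1\lambda)$ — is that $C_7,C_8,C_9$ must be \emph{independent of $\lambda$}: a naive union bound over the $\sim\log(3+\tfrac1\lambda)$ dyadic scales on which the penalty is still weak would leave a spurious $\log(3+\tfrac1\lambda)$ prefactor. Choosing $C_7$ large enough (relative to $C_3,C_4$ and $1/\beta$) simultaneously makes the main term dominate the $\log^{C_4}(R)$ loss of Proposition~\ref{p:perc1} uniformly over those scales and produces a super-logarithmic decay $\exp(-c\log^{\beta C_7}(3+\tfrac1\lambda))$ that absorbs the prefactor; this bookkeeping is the only part that needs care, the rest being routine.
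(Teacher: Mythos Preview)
Your proof is correct and follows the same strategy as the paper: decompose $\tau_1(\uk)$ dyadically into ranges $(2^{j-1}M,2^jM]$, apply Proposition~\ref{p:perc1} with $R=2^j$ to each range, and sum the resulting bounds. Your bookkeeping is in fact more explicit than the paper's about the uniformity of $C_8,C_9$ in $\lambda$---the paper absorbs the $O(\log(3+\tfrac1\lambda))$ many ``small surplus'' terms into the final bound without isolating them, whereas you correctly extract the extra factor $\exp(-c\log^{\beta C_7}(3+\tfrac1\lambda))$ to kill the prefactor---but the core argument is identical.
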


\begin{proof}
If $\lambda\geq 1$ then for large enough $C_7$ we have that,
\[
C_7 + \frac14 \lambda x \geq C_7 + \frac14  x > C_3 +\log^{C_4} (x)
\]
for all $x\geq 1$. For $0<\lambda<1$, if $x\geq \lambda^{-2}>1$ then provided $C_7$ is large enough,
\[
C_7 + \frac14 \lambda x \geq C_7 + \frac14  x^{1/2} > C_3 +\log^{C_4} (x)
\]
while if $1\leq x\leq \lambda^{-2}$ then again provided $C_7$ is large enough,
\[
C_7 \log^{C_7}(3+\frac1{\lambda}) > C_3 + \log^{C_4} (\lambda^{-2}) \geq  C_3 +\log^{C_4} (x).
\]
Thus, by using these equations with $x=2^j$, we may pick $C_7>0$ large enough depending only on $C_3$ and $C_4$ such that for all $\lambda>0$ and $j\geq 0$,
\[
C_7 \log^{C_7}(3+\frac1{\lambda}) + \lambda 2^{j-2} > C_3+ \log^{C_4} (2^j).
\]
For $j\geq 1$ let $\cH_j$ be the event that there exists $\uk\in \mathfrak{K}_{M}$ with $2^{j-1} M \leq \tau_1(\uk)\leq 2^j M$ such that
\[
\sum_{i=1}^M \cZ_{i,k_{i-1}, k_{i}} > (C_3+ \log^{C_4} (2^j))M + \lambda 2^{j-2}M + z
\]
and let $\cH_0$ be the event that there exists $\uk\in \mathfrak{K}_{M}$ with $0\leq \tau_1(\uk)\leq M$ such that
\[
\sum_{i=1}^M \cZ_{i,k_{i-1}, k_{i}} > C_3 M + z.
\]
By our choice of $C_7$, if
\[
\sum_{i=1}^M \cZ_{i,k_{i-1}, k_{i}} - \lambda \tau_1(\uk)\geq C_7 M \log^{C_7}(3+\frac1{\lambda}) + z
\]
holds for some $\uk$ then $\cH_j$ holds for $j=\lceil\log_2 (\tau_1(\uk)/M)\rceil\vee 0$.  Hence by Proposition~\ref{p:perc1},
\begin{align*}
\P[\cup_{j=0}^\infty \cH_j] &\leq \sum_{j=0}^\infty  C_5\exp\left(-C_6 (\lambda 2^{j-2}M + z)^{\beta}-C_6 ((\lambda 2^{j-2}M + z)/z_{\max})z_{\max}^{\beta}\right )\\
&\leq C_8\exp\left(-C_9 z^{\beta}-C_9 (z/z_{\max})z_{\max}^{\beta}\right ),
\end{align*}
which completes the proof.
\end{proof}

\subsection{Bounding $A_n$}
Recall the set-up in Lemma \ref{l:parastatement}. We shall continue working with $Q=Q_{(1+\delta)n}$ and $W=W_{(1+\delta)n}$. We now show that \eqref{eq:paraBound2} is contradicted if $A_n$ is too large.
Let $\cJ$ be the event that none of the $k_i^\gamma$ from the path $\gamma=\gamma_{\origin,(Mn,0)}$ are too big
\[
\cJ=\{\max_{1\leq i \leq M-1} |k_i^\gamma| \leq \frac14 n/W\}.
\]
If $M\leq n^{1/10}$ then by Assumption~\ref{as:cars}
\begin{equation}\label{eq:cJBound}
\P[\cJ^c]\leq M\exp(1-D(\frac{n/4}{(Mn)^{8/10}}n^{1/10})^\kappa)\leq \exp(-Cn^{\kappa/5}).
\end{equation}

Recall $\Delta$ from \eqref{l:Deltadefn}. Now let
\[
\cZ_{i,k,k'}:=\left(-(Z^{-,\Lambda_i}_{i,k,k'} -n\mu)/Q + \frac{(k-k')^2}{32} +(\Delta-2) \right)I(|k|\vee |k'|\leq n/W).
\]
By ~\eqref{eq:paraBound2} and the independence of the $\omega^\Lambda_i$, the $\cZ_{i,k,k'}$ satisfy the hypothesis of Corollary~\ref{c:percolation} (withe $\beta=\theta$).
Suppose, for the sake of contradiction,  that
\begin{equation}\label{eq:DeltaContBound}
\Delta \geq 6+C_7 \log^{C_7}(35)
\end{equation}
where $C_7$ is the constant from Corollary~\ref{c:percolation} (where we take $\lambda=\frac1{32}$ and $\beta=\theta$).
Then by Corollary~\ref{c:percolation} we have that
\begin{align}\label{eq:percApplication1}
\P[\cJ,\sum_{i=1}^M Z^{-,\Lambda_i}_{i,k^\gamma_{i-1},k^\gamma_i} - nM\mu \leq  (4M -z) Q]&\leq \P[\max_{\uk\in\mathfrak{K}_M}\sum_{i=1}^M \cZ_{i,k_{i-1},k_i} - \frac{(k_{i-1}-k_i)^2}{32} \geq (\Delta-6)M  +z ]\nonumber\\
& \leq C_8\exp\left(-C_9 (M+z)^{\theta}\right ),
\end{align}
and hence by equation~\eqref{eq:NBDecomposition} and taking $M=n^{1/10}$, we have that
\begin{align}\label{eq:lowerTailContBound}
\P[X_{Mn}\leq Mn\mu + 2(Mn)^{1/20}]&\leq \P[\cJ^c] + \P[\cJ,\sum_{i=1}^M Z^{-,\Lambda_i}_{i,k^\gamma_{i-1},k^\gamma_i} - n\mu \leq  4M Q]\nonumber\\
&\quad + \P[ \sum_{i=1}^M |X^{\Lambda_i}_{\gamma(t_{i-1})\gamma(t_{i})} - X_{\gamma(t_{i-1})\gamma(t_{i})}| > MQ] +\P[M\Gamma_{Mn}> MQ]\nonumber\\
&\leq \exp(-Cn^{\kappa/5}) + C_8\exp\left(-C_9 M^{\theta}\right ) + 2M\exp\bigg(1-C\Big(\frac{Mn}{\log^C (Mn)}\Big)^{\kappa}\bigg)\nonumber\\
&\leq \exp(-2(Mn)^{2\epsilon})
\end{align}
where we applied equations~\eqref{eq:cJBound} and~\eqref{eq:percApplication1}, Lemma~\ref{l:Gamma} and Assumption~\ref{as:cars}.
With the following lemma we will derive a contradiction.
\begin{lemma}
For all large enough $n$, the passage times satisfy
\begin{equation}\label{eq:lowerTailContBound2}
\P[X_{n}\leq n\mu + 2n^{1/20}] > \exp(-2n^\epsilon).
\end{equation}
\end{lemma}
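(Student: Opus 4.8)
The plan is to produce, on an event of probability at least $\exp(-2n^\epsilon)$, a single path from $\origin$ to $(n,0)$ whose passage time is at most $n\mu+2n^{1/20}$. Since $\epsilon=\theta/10^6$ is tiny, the probability we must exhibit is only very mildly small, so it is enough to concatenate of order $n^\epsilon$ independent ``good'' blocks, each good with at least a constant probability.

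Concretely, I would fix $K=\lceil c_0 n^\epsilon\rceil$ and $m=n/K$ (so $m\approx n^{1-\epsilon}\ge n_0$), cut $\R^2$ into the width‑$m$ vertical strips $\Lambda_i=\{(x,y):(i-1)m\le x\le im\}$, and use the triangle inequality (Assumption~\ref{as:tri}), the Resampling~1 estimate (Assumption~\ref{as:resamp1}) and Lemma~\ref{l:Gamma} to write
\[
X_n\ \le\ \sum_{i=1}^{K} X^{\Lambda_i}_{((i-1)m,0),(im,0)}\ +\ E,
\]
where the $K$ summands are independent, each distributed as $X^{\Lambda}_{\origin,(m,0)}$ for the fixed strip $\Lambda=\{0\le x\le m\}$, with $\E X^{\Lambda}_{\origin,(m,0)}\le m\mu+A_m+O(\log^{1/\kappa}m)$, and $E$ collects the concatenation and resampling errors and satisfies $\P[E>x\log^{C}n]\le\exp(1-x^{\kappa})$. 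Declaring block $i$ ``good'' if $X^{\Lambda_i}_{((i-1)m,0),(im,0)}\le \E X^{\Lambda}_{\origin,(m,0)}-c_1(A_m+\log^{1/\kappa}m)$, independence gives that all $K$ blocks are simultaneously good with probability at least $c_2^K$, and choosing $c_0$ small enough (depending on $c_2$) makes $c_2^K\ge\exp(-2n^\epsilon)$; on that event, intersected with $\{E\le n^{1/20}\}$ (whose complement has much smaller probability), one obtains $X_n\le K\E X^{\Lambda}_{\origin,(m,0)}-c_1K(A_m+\log^{1/\kappa}m)+n^{1/20}\le n\mu+O(n^\epsilon\log^{1/\kappa}n)+n^{1/20}\le n\mu+2n^{1/20}$.

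The crux — and the step I expect to be the main obstacle — is the block‑level anti‑concentration estimate $\P[X^{\Lambda}_{\origin,(m,0)}\le \E X^{\Lambda}_{\origin,(m,0)}-c_1(A_m+\log^{1/\kappa}m)]\ge c_2$ for universal $c_1,c_2>0$: it asserts that $X_m$ dips below its mean by a full $\sim A_m$, not merely by the standard‑deviation scale, with constant probability, whereas a priori $A_m$ could be much larger than $\SD(X_m)$. This is exactly where planarity enters, through both Assumptions~\ref{as:varlb} and~\ref{as:nr}. The variance lower bound $\Var(X_m)\ge m^{1/6}$, combined with the fourth‑moment bound $\E(X_m-\E X_m)^4\le CQ_m^4$ following from~\eqref{eq:varUpperQn} and the deterministic bound $(X_m-\E X_m)^-\le\E X_m\le 2m\mu$, forces (via a reverse‑Markov/Paley–Zygmund estimate) a polynomial‑in‑$m$ amount of mass strictly below the mean; and the bound $A_m\le m^{1/2}\log^2 m$ on the non‑random fluctuations, together with the choice $m\approx n^{1-\epsilon}$, keeps $A_m$ within the range such downward fluctuations can reach. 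Matching the deviation scale precisely to $A_m$ — that is, upgrading ``below the mean'' to ``below the mean by $\sim A_m$'' while keeping the probability constant — is the delicate point, and I expect it to require a short bootstrap over the scales $m\mapsto m^{1-\epsilon}\mapsto\cdots$ and careful use of the interplay between the two planarity assumptions; it is also the reason the precise exponents ($1/20$ in the threshold, $\epsilon$ in the probability) have been chosen as they are.

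The remaining ingredients are routine: bounding $E$ and the difference $|X^{\Lambda_i}_{((i-1)m,0),(im,0)}-X_{((i-1)m,0),(im,0)}|$ by polylogarithmic terms via Lemma~\ref{l:Gamma} and Assumptions~\ref{as:localDist}, \ref{as:intermediate}, \ref{as:resamp1}; checking that the within‑strip geodesics can be concatenated with only a $K\Gamma_n$ correction (Assumptions~\ref{as:tri} and~\ref{as:intermediate}); and taking a union bound over the $O(n^2)$ choices of integer block endpoints. Finally one notes that for $n$ in any fixed compact range the inequality is trivial since $\P[X_n\le n\mu+2n^{1/20}]>0$, so the statement for all large $n$ follows.
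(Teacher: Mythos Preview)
Your approach has a genuine gap at exactly the point you flag as the ``crux'': the block-level anti-concentration estimate that $X_m$ (or its strip-resampled version) falls below $\E X_m$ by an amount of order $A_m$ with probability bounded below by a universal constant. At this stage of the paper nothing close to this is available. All you know is $\Var(X_m)\ge m^{1/6}$ and $A_m\le m^{1/2}\log^2 m$; so a priori $A_m$ could exceed $\SD(X_m)$ by a factor $m^{5/12}$, and Paley--Zygmund/fourth-moment arguments only give you deviations of order $\SD(X_m)$, not of order $A_m$. The hoped-for ``short bootstrap'' is in fact a circularity: proving the lemma at scale $n$ is being reduced to a strictly stronger statement (constant probability rather than $\exp(-2m^\epsilon)$) at the smaller scale $m=n^{1-\epsilon}$. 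Establishing that $X_m\le m\mu-cQ_m$ with constant probability is essentially Proposition~\ref{p:left1}, one of the hardest results in the paper, proved only much later and only for quasi-record points after the full multi-scale machinery has been developed; it certainly cannot be used as an input here, since the present lemma is itself a step towards Lemma~\ref{l:AnBound} (the first control on $A_n$ in terms of $Q_n$).

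The paper's argument is completely different and avoids anti-concentration altogether: it is a contrapositive superadditivity argument going \emph{up} in scale. Suppose $\P[X_n\le n\mu+2n^{1/20}]\le\exp(-2n^\epsilon)$ for some large $n$. A union bound over integer pairs in $B_{2n^2}(\origin)$ shows that with probability at least $1-Cn^4\exp(-2n^\epsilon)$ every passage time between integer points at distance in $[n-2,n+2]$ exceeds $n\mu+n^{1/20}$. Now take the geodesic from $\origin$ to $(n^2,0)$ and chop it into $n$ consecutive pieces of Euclidean length exactly $n$; each piece contributes at least $n\mu+n^{1/20}$ (up to a $\Gamma_{n^2}$ correction), so $X_{n^2}\ge n(n\mu+\tfrac12 n^{1/20})$ with overwhelming probability, forcing $\E X_{n^2}\ge n^2\mu+\tfrac14 n^{21/20}=n^2\mu+(n^2)^{21/40}$. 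Since $21/40>1/2$ this contradicts Assumption~\ref{as:nr}. The key point is that the lemma asks only for $X_n\le n\mu+2n^{1/20}$, which is \emph{above} $n\mu$, so one never needs to push $X_n$ below its super-additive lower bound; instead, failure of the lemma means $X_n$ is uniformly too large, and that propagates upward to violate the assumed bound on non-random fluctuations.
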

\begin{proof}
Suppose that~\eqref{eq:lowerTailContBound2} is false for some $n\geq n_0$.  Then by Lemma~\ref{l:Gamma} for all $n-2\leq m \leq n+2$,
\begin{equation}\label{eq:approxLength}
\P[X_{m}\leq n\mu + n^{1/20}] \leq \P[X_{n}\leq n\mu + 2n^{1/20}] +\P[2 \Gamma_n \geq n^{1/20}] \leq 2\exp(-2n^\epsilon).
\end{equation}
Let $\gamma=\gamma_{\origin,(n^2,0)}$ be the optimal path from the origin to $(n^2,0)$ and define $t_i$ such that $t_0=0$ and for $1\leq i\leq n$,
\[
t_{i} = \inf\{t>t_{i-1}:|\gamma(t)-\gamma(t_{i-1})|\geq n\}.
\]
Define $t_{n+1}=n+1$ and set $a_i=\gamma(t_{i})$.  Then we have that
\begin{align}\label{eq:pathDecomp}
X_{n^2} &\geq \sum_{i=1}^{n+1} X_{a_{i-1}a_i}-n\Gamma_{n^2}\geq \sum_{i=1}^{n} X_{a^\Z_{i-1}a^\Z_i}-3n\Gamma_{n^2}
\end{align}
Let $\cC$ be the event
\[
\cC=\{\forall u,v\in \Z^2\cap B_{2n^2}(\origin), n-2\leq |u-v| \leq n+2: X_{uv} \geq n\mu + n^{1/20}\}.
\]
Then by equation~\eqref{eq:approxLength},
\[
\P[\cC^c]\leq C n^4  \exp(-2n^\epsilon).
\]
By construction $n-2\leq|a^\Z_{i-1}-a^\Z_i| \leq n+2$ and $a^\Z_i\in B_{2n^2}(\origin)$ for $1\leq i \leq n$ and so
\[
\P[X_{n^2}< n(n \mu + \frac12 n^{1/20})] \leq \P[\cC^c] + \P[3n\Gamma_{n^2} \geq \frac12 n^{21/20}] \leq \exp(-n^\epsilon).
\]
Hence
\[
\E[X_{n^2}] \geq n(n \mu + \frac12 n^{1/20}) (1-\exp(-n^\epsilon)) \geq n^2 \mu + \frac14 n^{21/20} > n^2\mu + (n^2)^{\frac{51}{100}}
\]
which contradicts Assumption~\ref{as:speed} and so establishes~\eqref{eq:lowerTailContBound2}.
\end{proof}
Since~\eqref{eq:lowerTailContBound2} contradicts equation~\eqref{eq:lowerTailContBound} for $n$ large we have a contradiction to equation~\eqref{eq:DeltaContBound} and so $\Delta \leq  6+C_7 \log^{C_7}(35)$ and hence with $C'= 6+C_7 \log^{C_7}(35)$ for all large enough $n$,
\begin{equation}\label{eq:AnDeltaBound}
A_{n+\delta n}\leq 2\max_{1\leq m \leq \delta n} A_m + (\frac{\mu}{\delta}+3+C')Q_{n+\delta n}.
\end{equation}
\begin{lemma}\label{l:AnBound}
There exists $D_1$ such that for all $n\geq 1$ we have that
\[
A_n \leq D_1 Q_n.
\]
\end{lemma}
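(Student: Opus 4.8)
The plan is to bootstrap the inequality~\eqref{eq:AnDeltaBound} into the linear bound $A_n \leq D_1 Q_n$. Equation~\eqref{eq:AnDeltaBound} says that for a fixed small $\delta>0$ and all large $n$,
\[
A_{(1+\delta)n}\leq 2\max_{1\leq m \leq \delta n} A_m + C'' Q_{(1+\delta)n},
\]
where $C''=\frac{\mu}{\delta}+3+C'$. The key point is that $Q_n$ is increasing and grows at least like $n^\alpha$ (so $Q_{(1+\delta)n} \geq (1+\delta)^\alpha Q_n$ gives room), and that the ``memory'' term $\max_{m\leq \delta n}A_m$ is evaluated at a much smaller scale $\delta n$. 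First I would set $D_1$ large enough that $A_m \leq D_1 Q_m$ holds for all $m$ below some threshold $n_1$ (possible since $A_m/Q_m \leq m^{1/2}\log^2 m / m^\alpha$ is bounded on any bounded range by Assumption~\ref{as:nr}, and $Q_m$ is bounded below). Then I would run an induction on dyadic-type scales: writing $n_{j+1}=(1+\delta)n_j$, I assume $A_m \leq D_1 Q_m$ for all $m\leq n_j$ and deduce it for $m\in(n_j,n_{j+1}]$.

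**The inductive step.** For $n\in (n_j, n_{j+1}]$, write $n=(1+\delta)n'$ with $n'\leq n_j$. Applying~\eqref{eq:AnDeltaBound},
\[
A_n \leq 2\max_{1\leq m\leq \delta n'} A_m + C'' Q_n \leq 2 D_1 \max_{m\leq \delta n'} Q_m + C'' Q_n = 2D_1 Q_{\delta n'} + C'' Q_n,
\]
using the induction hypothesis (valid since $\delta n' < n'\le n_j$) and monotonicity of $Q$. Now I want to compare $Q_{\delta n'}$ to $Q_n$. Since $Q_n = \sup_{1\le m\le n}(n/m)^\alpha \hQ_m$, for any $m\le \delta n'$ we have $(n/m)^\alpha \hQ_m = (n/(\delta n'))^\alpha \cdot (\delta n'/m)^\alpha \hQ_m$, and $n/(\delta n') = (1+\delta)/\delta$, so $Q_{\delta n'} \le (\delta/(1+\delta))^\alpha Q_n$. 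Hence
\[
A_n \leq \left(2D_1 \left(\tfrac{\delta}{1+\delta}\right)^\alpha + C''\right) Q_n.
\]
For this to close the induction I need $2D_1(\delta/(1+\delta))^\alpha + C'' \leq D_1$, i.e. $D_1 \geq C''/(1 - 2(\delta/(1+\delta))^\alpha)$. The obstacle is that $2(\delta/(1+\delta))^\alpha$ must be strictly less than $1$; since $\alpha$ is a fixed small constant (at most $1/15$), $(\delta/(1+\delta))^\alpha$ is close to $1$ for small $\delta$ and the factor $2$ is fatal. I would fix this by iterating~\eqref{eq:AnDeltaBound} a bounded number $K$ of times before comparing scales: applying the recursion $K$ times peels off a factor $2^K$ multiplying $\max_{m\le \delta^K n'}A_m$, against a scale ratio $(\delta^K)^\alpha$ — still not enough. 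The correct fix is to choose $\delta$ itself depending on $\alpha$: one can instead re-derive~\eqref{eq:AnDeltaBound} with $\delta$ replaced by a small fixed constant and note the coefficient $2$ in front of $\max_m A_m$ is an artifact of the two endpoint columns $\Lambda_a,\Lambda_c$ in Lemma~\ref{l:parastatement}; taking $\delta$ small makes those columns narrow but the coefficient stays $2$. So instead I would iterate down geometrically: applying~\eqref{eq:AnDeltaBound} repeatedly gives $A_n \le \sum_{\ell\ge 0} 2^{\ell+1} C'' Q_{\delta^\ell n} \cdot \mathbf{1}[\delta^\ell n \ge n_0] + 2^{L} A_{\text{base}}$ where after $L\approx \log n/\log(1/\delta)$ steps we hit the base scale; and $Q_{\delta^\ell n}\le (\delta^\ell (1+\delta)^{-1})^\alpha Q_n$ wait — more carefully $Q_{\delta^\ell n}/Q_n \le \delta^{\ell\alpha}$, so the sum is $C'' Q_n \sum_\ell 2^{\ell+1}\delta^{\ell\alpha}$, which converges iff $2\delta^\alpha<1$, i.e. iff $\delta < 2^{-1/\alpha}$.

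**Conclusion and the main obstacle.** So the clean argument is: choose $\delta = \delta(\alpha) < 2^{-1/\alpha}$ (a legitimate choice since in Lemma~\ref{l:parastatement} $\delta$ is only required small, and~\eqref{eq:AnDeltaBound} holds for that $\delta$), so that $r:=2\delta^\alpha < 1$. Iterating~\eqref{eq:AnDeltaBound} — each application replacing $A_{(1+\delta)m}$ by $2\max_{k\le\delta m}A_k + C'' Q_{(1+\delta)m}$ — and using $Q_{\delta^\ell n}\le \delta^{\ell\alpha}Q_n$ together with monotonicity of $Q$, one gets after finitely many steps (stopping once the argument drops below the threshold $n_0$, or $n_1$, where $A_\cdot \le D_1 Q_\cdot$ holds by base case)
\[
A_n \leq C'' Q_n \sum_{\ell\ge 0} 2^{\ell+1}\delta^{\ell\alpha} \;+\; 2^{L}\max_{m\le n_0} A_m \leq \frac{2C''}{1-r} Q_n + (\text{bounded term}).
\]
The bounded remainder is absorbed into $D_1 Q_n$ using $\inf_n Q_n>0$ (from $\Var(X_n)\ge$ const and~\eqref{eq:varUpperQn}) — though one must check $2^L$ does not blow up faster than $Q_n$; since $L\asymp\log n$ and $2^L\asymp n^{\log 2/\log(1/\delta)}$ while $Q_n\gtrsim n^\alpha$ need not dominate this, I would instead terminate the iteration as soon as the scale reaches $n_1$ (a fixed constant, not growing), so $L$ is $O(\log n)$ but the accumulated geometric sum $\sum_{\ell=0}^{L}2^{\ell+1}\delta^{\ell\alpha}$ is still bounded by the convergent series, and the leftover $2^{L+1}\max_{m\le n_1}A_m$ is paired with $\delta^{(L+1)\alpha}$ relative scaling — wait, that term sits at scale $\delta^L n \approx n_1$, so it equals $2^{L+1}\max_{m\le n_1}A_m$ with no $Q_n$ factor, which is $\asymp n^{c}$ for some $c>0$: this does not obviously fit under $D_1 Q_n$. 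The genuine resolution, and the part I expect to be delicate, is that $Q_n$ is \emph{increasing} with $Q_n\ge Q_{n_1}$, combined with using the sharper bound $Q_{\delta^\ell n}\le (\delta^\ell)^\alpha Q_n$ at \emph{every} level including the last, so that the terminal term is $\le 2^{L+1}(\delta^L)^\alpha Q_n \max(\dots) = (2\delta^\alpha)^{L}\cdot 2\max_{m\le n_1}(A_m/Q_{n_1}) Q_n \to 0$ since $r=2\delta^\alpha<1$. This is the crux: the geometric decay $r^L\to 0$ must beat the exponential growth $2^L$ of the branching, which it does precisely because we forced $2\delta^\alpha<1$. The main obstacle throughout is thus purely the bookkeeping of this competition — ensuring the constant $\delta$ can be taken as small as $2^{-1/\alpha}$ in all the upstream lemmas (Lemma~\ref{l:parastatement}, Corollary~\ref{c:paraRect}) and that the error terms $\exp(1-CQ^{\kappa/10})$ etc.\ remain negligible for that fixed $\delta$ — which they do, since they do not depend on $\delta$ in any bad way.
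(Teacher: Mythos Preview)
Your final argument is correct and hinges on the right condition $2\delta^\alpha<1$, but you missed that the paper already fixed $\delta$ with $2\delta^\alpha<\tfrac12$ just before Lemma~\ref{l:parastatement} (``Choose $\delta$ small enough that $2\delta^\alpha<\frac12$''). So your closing worry about whether the upstream lemmas tolerate such a small $\delta$ is moot: that choice was built in from the start, and the constant $C''=\mu/\delta+3+C'$ is simply large but fixed.

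The paper's proof is a one--step minimal--counterexample argument rather than your geometric iteration. Set
\[
D_1 = 4C'' \ \vee \ \max_{1\le m\le (1+\delta)n_*} \frac{2A_m}{Q_m},
\]
where $n_*$ is the threshold past which~\eqref{eq:AnDeltaBound} holds. If the lemma fails, choose $n'$ (near) minimal with $A_{n'}\ge D_1 Q_{n'}$; then $n'\ge(1+\delta)n_*$ and $A_m\le D_1 Q_m$ for all $m\le\delta n'$. One application of~\eqref{eq:AnDeltaBound} with $n'=(1+\delta)n$ gives
\[
D_1 Q_{n'}\le A_{n'}\le 2D_1 Q_{\delta n'}+C'' Q_{n'}\le\bigl(2\delta^\alpha D_1+\tfrac14 D_1\bigr)Q_{n'}\le\tfrac34 D_1 Q_{n'},
\]
a contradiction. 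The inequality $Q_{\delta n'}\le\delta^\alpha Q_{n'}$ is exactly the one you derived, and $C''\le D_1/4$ by construction.

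Your iteration reaches the same endpoint but pays for it in bookkeeping: you must sum $\sum_\ell 2^\ell Q_{(\delta/(1+\delta))^\ell n}$ and separately control the terminal remainder $2^L\max_{m\le n_1}A_m$ across $L\asymp\log n$ steps, then argue it is $O(Q_n)$ via $(2\delta^\alpha)^L<1$. This is sound, but the minimal--counterexample framing collapses the whole iteration into a single use of the induction hypothesis at scale $\delta n'$, which is why the paper's proof is four lines.
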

\begin{proof}
Let $n_*$ be large such that equation~\eqref{eq:AnDeltaBound} holds for all $n\geq n_*$.  Set
\[
D_1=4\left(\frac{\mu}{\delta} +C'+3\right)\vee \bigg(\max_{1\leq m \leq n_*(1+\delta)}2\frac{A_m}{Q_m}\bigg).
\]
Assume that the statement of the lemma fails for some $n$ in which case $n'\geq (1+\delta) n_*$ by construction of $D_1$.  We can choose $n'$ such that $A_{n'} \geq D_1 Q_{n'}$ and $A_m \leq D_1 Q_m$ for all $m\leq \delta n'$. Recall that we chose $\delta$ such that $2\delta^\alpha<\frac12$.  Then if $n'=(1+\delta)n$ then by equation~\eqref{eq:AnDeltaBound} 
\begin{align*}
D_1 Q_{n'}\le  A_{n'} &\leq 2\max_{1\leq m \leq \delta n'} A_m + (\frac{\mu}{\delta}+3 +C')Q_{n'}\\
&\leq 2 D_1 Q_{\delta n'} + (\frac{\mu}{\delta}+3+C')Q_{n'}\\
&\leq \left(2 D_1 \delta^\alpha +\frac14 D_1\right) Q_{n'} \leq \frac34 D_1 Q_{n'},
\end{align*}
which gives a contradiction.  Hence $A_n \leq D_1 Q_n$ for all $n\geq 1$.
\end{proof}

\subsection{Growth of $Q_n$}
Having controlled the size of $A_n$ in terms of $Q_n$ we will now show that $Q_n$ cannot grow too quickly.
\begin{lemma}\label{l:Qgrowth}
There exists $D_{6}>0$ such that $Q_{\frac32 m} \leq D_{6} Q_m$ for all $m\geq 1$.
\end{lemma}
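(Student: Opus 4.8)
The plan is to reduce Lemma~\ref{l:Qgrowth} to a concentration estimate for $X_\ell$ at scale $Q_m$, valid for $\ell$ in the range $m\le\ell\le\tfrac32 m$. Indeed, splitting the supremum defining $Q_{3m/2}$ at $\ell=m$: for $\ell\le m$ one has $(\tfrac{3m/2}{\ell})^\alpha\hQ_\ell=(\tfrac32)^\alpha(m/\ell)^\alpha\hQ_\ell\le(\tfrac32)^\alpha Q_m$, while for $m<\ell\le\tfrac32 m$ one has $(\tfrac{3m/2}{\ell})^\alpha\hQ_\ell\le(\tfrac32)^\alpha\hQ_\ell$. So it suffices to produce a universal constant $C$ with $\hQ_\ell\le C Q_m$ for all $m\ge1$ and all $\ell\in[m,\tfrac32 m]$; then $D_6=(\tfrac32)^\alpha\max(1,C)$ works. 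Moreover, by the standard conversion between stretched-exponential tails and $\hQ_\ell$ (if $\P[|X_\ell-\E X_\ell|>x]\le C_1\exp(-c_1(x/Q')^\theta)$ for all $x>0$ then $\hQ_\ell\le C_2 Q'$ with $C_2=C_2(C_1,c_1,\theta)$, since for $y\le1$ the bound $\P[|X_\ell-\E X_\ell|>yC_2Q']\le\exp(1-y^\theta)$ is automatic as $\exp(1-y^\theta)\ge1$, and for $y>1$ it holds once $C_2$ is large; here $\hQ_\ell\le D_0^{1/\kappa}\sqrt\ell<\infty$ guarantees $\hQ_\ell$ is well defined), it is enough to find constants $C_1,c_1>0$ depending only on the model such that $\P[|X_\ell-\E X_\ell|>xQ_m]\le C_1\exp(-c_1x^\theta)$ for all $x>0$.

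For the \emph{upper} tail, use the triangle inequality (Assumption~\ref{as:tri}) in the form $X_\ell=X_{\origin,(\ell,0)}\le X_{\origin,(m,0)}+X_{(m,0),(\ell,0)}=X_m+X'$, where $X':=X_{(m,0),(\ell,0)}$ has the law of $X_{\ell-m}$ by Assumption~1. Writing $\E X_m+\E X_{\ell-m}-\E X_\ell=A_m+A_{\ell-m}-A_\ell$ and using $A_\ell\ge0$ (Assumption~\ref{as:nr}) together with Lemma~\ref{l:AnBound} gives $A_m+A_{\ell-m}-A_\ell\le D_1(Q_m+Q_{\ell-m})\le 2D_1Q_m$, since $\ell-m\le\tfrac12 m<m$ and $Q_n$ is increasing. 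Hence $X_\ell-\E X_\ell\le(X_m-\E X_m)+(X'-\E X_{\ell-m})+2D_1Q_m$, and a union bound together with~\eqref{eq:Q} (for $X_m$ at scale $Q_m$ and for $X_{\ell-m}$ at scale $Q_{\ell-m}\le Q_m$) gives $\P[X_\ell-\E X_\ell>xQ_m]\le2\exp(1-(x/4)^\theta)$ once $x\ge4D_1$, which is of the required form. For the \emph{lower} tail, use the reverse inequality $X_m\le X_{\origin,(\ell,0)}+X_{(\ell,0),(m,0)}$, i.e. $X_\ell\ge X_m-X''$ with $X'':=X_{(\ell,0),(m,0)}\stackrel{d}{=}X_{\ell-m}$, together with subadditivity $\E X_\ell\le\E X_m+\E X_{\ell-m}$; on $\{X_\ell<\E X_\ell-xQ_m\}$ one gets $(X_m-\E X_m)-(X''-\E X_{\ell-m})<-xQ_m$, so one of the two terms exceeds $\tfrac12 xQ_m$ in absolute value, and~\eqref{eq:Q} again closes the bound (no independence is needed). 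The boundary case $0\le\ell-m<1$, where $Q_{\ell-m}$ is undefined, is handled the same way after bounding $X'$ and $X''$ stochastically by $\max_{u,v\in B_3(\origin)}X_{uv}$ (Assumption~\ref{as:localDist} and invariance), which has finite mean and stretched-exponential tails; here one uses that $Q_m\ge\hQ_m$ is bounded below by a positive constant uniformly in $m$, by~\eqref{eq:varUpperQn} and Assumption~\ref{as:varlb}. Finally, for $m$ in any bounded range the inequality $Q_{3m/2}\le D_0^{1/\kappa}\sqrt{3m/2}$ together with the lower bound on $Q_m$ makes the claim trivial, so only large $m$ needs the above estimate.

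The argument is essentially routine; the one point worth emphasizing is the role of Lemma~\ref{l:AnBound}. The naive bound $A_m\le\sqrt m\,\log^2 m$ from Assumption~\ref{as:nr} is far too weak to absorb into the scale $Q_m$ (which may be as small as $m^{1/12}$), so it is precisely the previously established $A_n\le D_1Q_n$ that makes the cancellation $\E X_m+\E X_{\ell-m}-\E X_\ell$ controllable at scale $Q_m$. Everything else uses only the triangle inequality, subadditivity of $n\mapsto\E X_n$, the defining estimate~\eqref{eq:Q}, and monotonicity of $Q_n$.
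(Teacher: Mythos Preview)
Your reduction to bounding $\hQ_\ell$ for $\ell\in[m,\tfrac32 m]$ and your upper-tail argument are fine, but the lower-tail step is wrong. From $X_m\le X_\ell+X''$ with $X''\stackrel{d}{=}X_{\ell-m}$ you obtain, on $\{X_\ell<\E X_\ell-xQ_m\}$,
\[
(X_m-\E X_m)-(X''-\E X_{\ell-m})\;<\;\bigl(\E X_\ell-\E X_m+\E X_{\ell-m}\bigr)-xQ_m,
\]
not $<-xQ_m$. The residual equals $2\mu(\ell-m)+A_\ell-A_m+A_{\ell-m}$, which for $\ell-m$ close to $m/2$ is of order $\mu m$, vastly larger than $Q_m\le D_0^{1/\kappa}\sqrt m$. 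Subadditivity gives $\E X_\ell-\E X_m-\E X_{\ell-m}\le 0$, with a \emph{minus} in front of $\E X_{\ell-m}$; it says nothing about $\E X_\ell-\E X_m+\E X_{\ell-m}$. The detour through $(\ell,0)$ back to $(m,0)$ costs an honest $\mu(\ell-m)$ in expectation each way, and no cancellation at scale $Q_m$ is available.

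This is exactly why the paper's proof of the lower tail is not a one-line triangle inequality: one needs to control $X_\ell$ from below by splitting the geodesic at the line $x=n$ and bounding the resulting parallelogram crossing times via Corollary~\ref{c:paraRect} and the bound~\eqref{eq:paraBound3} (which in turn rests on Lemma~\ref{l:parastatement} and the newly established $\Delta$-bound from Lemma~\ref{l:AnBound}). The percolation input handles the fact that the crossing point has random height; a bare triangle inequality with fixed points cannot. So your argument establishes~\eqref{eq:xTwoNUpperBound} but not~\eqref{eq:xTwoNLowerBound}, and the latter is the nontrivial half.
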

\begin{proof}
Fix $n\geq 1$ and set $Q=Q_{(1+\delta)n}$.  We will bound the deviation of $X_{2n}$ from $\E X_{2n}$ starting with the positive deviation.
Since $\E X_{2n}\geq 2\mu n$, by the triangle inequality,
\begin{align*}
\P[X_{2n}\geq 2n\mu  + xQ]&\leq \P[X_{\origin,(n,0)}\geq n\mu + \frac12 xQ] + \P[X_{(n,0),(2n,0)}\geq n\mu + \frac12 xQ]\\
&\leq \P[X_{\origin,(n,0)}\geq \E X_n + (\frac12 x-D_1)Q] + \P[X_{(n,0),(2n,0)}\geq \E X_n + (\frac12 x-D_1)Q]\\
&\leq 2\exp(1-Cx^\theta)
\end{align*}
where the second inequality follows by Lemma~\ref{l:AnBound} and the final inequality holds for some small $C>0$.  Hence, since $\E[X_{2n}]\geq 2n\mu$, we have that
\begin{equation}\label{eq:xTwoNUpperBound}
\P[X_{2n}\geq \E[X_{2n}]  + xQ] \leq \exp(1-Cx^\theta).
\end{equation}
We can also use this to bound the mean of $\E[X_{2n}]$,
\begin{align}\label{eq:xTwoNMean}
\E [X_{2n}] &\leq 2n\mu + \E [(X_{2n}-2n\mu)^+]\nonumber\\
&=2n\mu + Q\int_0^\infty \P[X_{2n}> 2n\mu  + xQ] dx\nonumber\\
&\leq 2n\mu + Q \int_0^\infty 2\exp(1-Cx^\theta) dx \leq 2n\mu + \tilde{C}Q.
\end{align}
By equation~\eqref{eq:paraBound2} and Lemma~\ref{l:AnBound} and the definition of $\Delta$ there exists $C'>0$ such that for all $|k|< n/W$,
\begin{equation}\label{eq:paraBound3}
\P[Z^-_{1,0,k} \leq n\mu - ( C' + x) Q] \leq 7\exp(1-c(x+ \frac{k^2}{32})^\theta).
\end{equation}
Repeating the analysis of equation~\eqref{eq:lowerTailContBound}, for large enough $n$, and all $x>0$,
\begin{align}
&\P[X_{2n}< 2n\mu -2(1+C'+x)Q]\nonumber\\
&\quad\leq \P[\cJ^c] + \P[ \sum_{i=1}^2 |X^{\Lambda_i}_{\gamma(t_{i-1})\gamma(t_{i})} - X_{\gamma(t_{i-1})\gamma(t_{i})}| > Q] +\P[2\Gamma_{2n}> Q]\nonumber\\
&\qquad + \P[\cJ,Z^{-,\Lambda_1}_{1,0,k^\gamma_1} + Z^{-,\Lambda_2}_{2,k^\gamma_{1},0}\leq 2n\mu -2(C'+x)Q]\nonumber\\
&\quad\leq  \exp(-Cn^{\kappa/5}) + 4\exp\bigg(1-C\Big(\frac{n}{\log^C n}\Big)^{\kappa}\bigg)\nonumber\\
&\qquad +  \sum_{\ell=-n/(2W)}^{n/(2W)}\P[Z^{-,\Lambda_1}_{1,0,\ell}  \leq n\mu -(\tilde{C}+x)Q] + \P[Z^{-,\Lambda_2}_{2,\ell,0} \leq n\mu -(C'+x)Q]\nonumber\\
&\quad\leq 2\exp(-Cn^{\kappa/5}) + 14 \sum_{\ell=-n/(2W)}^{n/(2W)} \exp(1-c(x+ \frac{\ell^2}{32})^\theta). \nonumber\\
&\quad\leq 2\exp(-Cn^{\kappa/5}) + C\exp(1-cx^\theta).
\end{align}
Since the above equation holds trivially when $x>n\mu$ and since $n^{\kappa/5} > (n\mu)^\theta$ we have that
\begin{equation*}
  \P[X_{2n}< 2n\mu -2(1+C'+x)Q] \leq C\exp(1-cx^\theta)
\end{equation*}
and hence by equation~\eqref{eq:xTwoNMean}
\begin{equation}\label{eq:xTwoNLowerBound}
  \P[X_{2n}-\E X_{2n}\leq -x Q] \leq C\exp(1-Cx^\theta).
\end{equation}
Combining~\eqref{eq:xTwoNUpperBound} and~\eqref{eq:xTwoNLowerBound} we have that for some $\hat{C}>0$,
\[
\P[|X_{2n}-\E X_{2n}|\geq \hat{C} x Q]\leq \exp(1- x^\theta),
\]
and hence
\begin{equation}\label{eq:hQTwoN}
\hQ_{2n} \leq \hat{C} Q_{(1+\delta)n}
\end{equation}
for all $n\geq n'$ for some large constant $n'$.  Now set
\[
D_6=\max\{2 \hat{C} \left(\tfrac32\right)^\alpha,2 Q_{3 n'}/Q_1\},
\]
suppose that
\[
m'=\inf\{m\geq 1:Q_{\frac32 m} \geq D_6 Q_m\}<\infty.
\]
By our choice of $D_6$, we have that $m'>2n'$ and so,
\begin{align*}
Q_{\frac32 m'}&\leq \left(\tfrac32\right)^\alpha \max\{ Q_{m'}, \max_{m'\leq n \leq \frac32m'} \hQ_{n}\}\\
&\leq \left(\tfrac32\right)^\alpha \max\{ Q_{m'}, \max_{m'\leq n \leq \frac32m'} C_3  Q_{\frac12(1+\delta)n},\}\\
&\leq \hat{C} \left(\tfrac32\right)^\alpha Q_{m'} \leq \frac12 D_6 Q_m,
\end{align*}
where the second inequality follows by~\eqref{eq:hQTwoN}.  This contradicts our choice of $m'$ so for all $m\geq 1$,
$Q_{\frac32 m} \leq D_6 Q_m$.
\end{proof}

\subsection{Concentration of $Y_n^-$}

Having established that $Q_n$ does not grow too quickly we now bound the fluctuations of $Y_n^-$ in terms of $Q_n$.

\begin{lemma}\label{l:YMinusBound}
There exist $D_2,D_3$ such that for all $n\geq 1$ we have that
\begin{align*}
|\E Y_n^{-} - n\mu| &\leq D_2 Q_n,\\
\P[|Y_n^- - \E Y_n^-|> x D_3 Q_n]  &\leq \exp(1-x^\theta).
\end{align*}
\end{lemma}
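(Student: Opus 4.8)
The plan is to prove the two statements essentially simultaneously, exploiting the fact that $Y_n^-$ is squeezed between $X_n$ (which concentrates at scale $Q_n$ by~\eqref{eq:Q}) and a nearby point-to-point passage time. For the \textbf{upper tail} of $Y_n^- - n\mu$, note that $Y_n^-\le X_{\origin,(n,0)} = X_n$, so by~\eqref{eq:Q} and Lemma~\ref{l:AnBound} (which gives $\E X_n - n\mu = A_n \le D_1 Q_n$) we immediately get $\P[Y_n^- - n\mu > x Q_n]\le \exp(1-c x^\theta)$ for all $x$ and suitable $c$; in particular $\E[(Y_n^- - n\mu)^+]\le C Q_n$. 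For the \textbf{lower tail}, the key input is~\eqref{eq:YwideMinus} from the proof of Lemma~\ref{l:parastatement}, which (with $Q = Q_{(1+\delta)n}$, $W = W_{(1+\delta)n}$) states $\P[Y^-_{\cR_{n,W}} - n\mu - \Delta Q \le -3zQ]\le 6\exp(1-z^\theta)$; combined with the bound $\Delta \le 6 + C_7\log^{C_7}(35)$ established in Section~\ref{s:percgen} via Lemma~\ref{l:AnBound}, and with the growth estimate $Q_{(1+\delta)n}\le Q_{3n/2}\le D_6 Q_n$ from Lemma~\ref{l:Qgrowth}, this gives $\P[Y^-_{\cR_{n,W}} - n\mu \le -xQ_n]\le C\exp(1-c x^\theta)$ for all large $n$, after absorbing the $\exp(1-CQ^{\kappa/10})$ error as before (it is dominated once $x\le O(n)$, and the event is empty for $x$ larger).

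Next I would \textbf{upgrade from $Y^-_{\cR_{n,W}}$ (the wide rectangle of height $W_{(1+\delta)n}$) to $Y_n^-$ (the canonical rectangle of height $W_n$)}. Since $W_n\le W_{(1+\delta)n}$, any pair of points on the short sides of $\cR_n$ is also a valid pair for the wider rectangle $\cR_{n,W}$ up to a horizontal shift of order $\delta n$; so $Y_n^-$ is comparable to $Y^-_{\cR_{n,W}}$ plus passage times across two thin columns of width $O(\delta n)$, each of which concentrates at scale $Q_{\delta n}\le Q_n$ by~\eqref{eq:Q} and Lemma~\ref{l:AnBound}, together with $\Gamma$-type error terms controlled by Lemma~\ref{l:Gamma}. (Alternatively, one argues directly with $Z^-_{1,0,0}=Y^-_{\cR_{n,W}}$ and uses Corollary~\ref{c:paraRect} with $k=0$ to pass to the canonical rectangle.) This yields $\P[Y_n^- - n\mu \le -xQ_n]\le C\exp(1-cx^\theta)$. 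Handling the finitely many small $n$ (below the threshold $n'$ from Lemma~\ref{l:Qgrowth}) is trivial since $Y_n^-$ and $Q_n$ are both bounded away from $0$ and $\infty$ there by Assumptions~\ref{as:localDist} and~\ref{as:varlb}.

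Combining the two tails gives $\P[|Y_n^- - n\mu| \le -xQ_n]$... more precisely $\P[|Y_n^- - n\mu| > xQ_n]\le C\exp(1-cx^\theta)$; rescaling $x$ by a constant absorbs the prefactor $C$ and the constant $c$, giving the stated form $\exp(1-x^\theta)$ with $D_3$ chosen accordingly. Finally, integrating this tail bound, $|\E Y_n^- - n\mu| \le \E|Y_n^- - n\mu| \le D_3 Q_n\int_0^\infty \exp(1-x^\theta)\,dx \le D_2 Q_n$, which is the first claim. The \textbf{main obstacle} I anticipate is bookkeeping the passage from the auxiliary wide rectangle $\cR_{n,W}$ with $W = W_{(1+\delta)n}$ (in which~\eqref{eq:YwideMinus} was proved) back to the canonical rectangle $\cR_n$ of height $W_n$: one must make sure that the horizontal offset $\delta n$ and the height discrepancy only introduce correction terms of order $Q_n$ (not larger), which is where Lemma~\ref{l:Qgrowth} and Corollary~\ref{c:paraRect} are essential, and where care is needed that the various $\exp(-Q^{\kappa/10})$ errors remain negligible relative to $\exp(1-x^\theta)$ across the whole relevant range of $x$.
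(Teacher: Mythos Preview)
Your overall approach matches the paper's: upper tail via $Y_n^-\le X_n$, lower tail via~\eqref{eq:YwideMinus} together with the bound on $\Delta$ and Lemma~\ref{l:Qgrowth}, then integrate the two-sided tail to get the mean bound. The ingredients and their roles are exactly right.

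However, the step you flag as the \emph{main obstacle} is in fact a one-line monotonicity, and your description of it suggests a geometric confusion. The rectangle $\cR_{n,W}$ with $W=W_{(1+\delta)n}$ has corners $(0,0),(n,0),(0,W),(n,W)$: it has the \emph{same horizontal extent} $[0,n]$ as the canonical rectangle $\cR_n=\cR_{n,W_n}$, and is merely \emph{taller} (since $W_{(1+\delta)n}\ge W_n$). Consequently $L_{\cR_n}\subset L_{\cR_{n,W}}$ and $R_{\cR_n}\subset R_{\cR_{n,W}}$, so the infimum defining $Y^-_{\cR_{n,W}}$ is over a larger set of endpoint pairs, giving
\[
Y_n^- \;=\; Y^-_{\cR_{n,W_n}} \;\ge\; Y^-_{\cR_{n,W_{(1+\delta)n}}}
\]
immediately. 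There is no ``horizontal shift of order $\delta n$'' and no need to add passage times across thin columns or to invoke Corollary~\ref{c:paraRect}; the $\delta n$ you may be remembering enters only in the auxiliary points $u_1=(-\tfrac{\delta}{2}n,0)$, $u_2=((1+\tfrac{\delta}{2})n,0)$ used \emph{inside} the proof of Lemma~\ref{l:parastatement} to derive~\eqref{eq:YwideMinus}, not in the rectangle itself. With this monotonicity in hand, the lower-tail bound for $Y_n^-$ follows directly from~\eqref{eq:YwideMinus}, Lemma~\ref{l:AnBound} (to bound $\Delta$), and Lemma~\ref{l:Qgrowth} (to replace $Q_{(1+\delta)n}$ by $D_6 Q_n$), exactly as you outlined for $Y^-_{\cR_{n,W}}$.
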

\begin{proof}
Notice that by Assumption \ref{as:concentration} it suffices to prove this for $n$ sufficiently large. We will apply ~\eqref{eq:YwideMinus} which is in terms of  $Q=Q_{(1+\delta)n}$ and $W=W_{(1+\delta)n}$.
Note that since $\cR_{n,W_{(1+\delta)n}}$ is taller than $\cR_{n}=\cR_{n,W_{n}}$ we have that $Y_n^-\geq Y^-_{\cR_{n,W_{(1+\delta)n}}}$.  By Lemma~\ref{l:Qgrowth} we have that $Q_{(1+\delta) m} \leq D_6 Q_m$. Hence by equation~\eqref{eq:YwideMinus}, for some $C>0$ and for $x\ge 0$
\begin{equation}\label{eq:YminusLowerTail}
\P[Y^-_{n}-n\mu \leq -C x Q_n]\leq \exp(1-x^\theta).
\end{equation}
Furthermore,
\begin{align*}
\E [Y^-_{n}] &\geq n\mu - \E [(n\mu-Y^-_{n})^+]\\
&=2n\mu - C Q_n\int_0^\infty \P[Y^-_{n}-n\mu \leq -C x Q_n] dx\\
&\geq n\mu - C Q_n \int_0^\infty \exp(1-x^\theta) dx \geq n\mu - D_2Q_n
\end{align*}
and so since $\E Y^-_{n} \leq \E X_n$,
\begin{equation}\label{eq:YminusMean}
- D_2 Q_n \leq \E [Y^-_{n}]-n\mu \leq A_n \leq D_1 Q_n
\end{equation}
which establishes the first part of the lemma, by increasing the value of $D_2$, if necessary.  The second part follows from combining~\eqref{eq:YminusLowerTail} and~\eqref{eq:YminusMean} which gives that for some $D_3$
\begin{equation}\label{eq:YnLowerBoundA}
\P[Y^-_{n}-\E Y_n^- \leq -D_3 x Q_n]\leq \frac{1}{2}\exp(1-x^\theta).
\end{equation}
For the upper tail, since $X_n\geq Y^-_{n}$,
\[
\P[Y^-_{n}-\E Y_n^- \geq x Q_n]\leq \P[X_n-\E X_n \geq x Q_n +\E Y_n^- - \E X_n]
\]
and since $|Y_n^- - \E X_n|\leq (D_1 + D_2)Q_n$
by increasing the value of $D_3$ if necessary we get from \eqref{eq:Q} 
\[
\P[Y^-_{n}-\E Y_n^- \ge D_3 x Q_n]\leq \frac{1}{2}\exp(1-x^\theta).
\]
which together with~\eqref{eq:YnLowerBoundA} completes the second part of the lemma.
\end{proof}

\subsection{Bounds on $Y^+_n$.}
Next we prove the tail bounds on $Y_n^+$ via a chaining argument.

\begin{lemma}\label{l:YPlusFlexible}
There exists $D$ such  for all $n\geq 1$ we have that for all $x>0$
\begin{align*}
\P[Y_n^+ - n\mu > x D Q_n]  \leq \exp(1-x^{\theta}).
\end{align*}
\end{lemma}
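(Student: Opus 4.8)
The plan is to bound $Y_n^+ = \sup_{u_1 \in L_{\cR_n}, v_2 \in R_{\cR_n}} X_{u_1 v_2}$ by a chaining argument over the $O(W_n^2)$ pairs of endpoints on the left and right sides of the canonical rectangle. First I would fix a reference passage time, say $X_{(0,0),(n,0)}$, whose fluctuations around $n\mu$ are controlled by \eqref{eq:Q} and Lemma~\ref{l:AnBound} (which gives $\E X_n - n\mu \le D_1 Q_n$), so that $\P[X_n > n\mu + xQ_n] \le \exp(1-(x-D_1)^\theta)$. The main work is then to show that for a fixed pair $(u_1, v_2)$ with $u_1 = (0,y_1)$, $v_2 = (n,y_2)$ and $0 \le y_1, y_2 \le W_n$, the passage time $X_{u_1 v_2}$ exceeds $X_n$ by at most $O(Q_n \cdot \mathrm{polylog})$ with stretched-exponential-in-$x$ failure probability. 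This is done via the triangle inequality: $X_{u_1 v_2} \le X_{u_1, (0,0)} + X_n + X_{(n,0), v_2}$, and each of $X_{u_1,(0,0)}$, $X_{(n,0),v_2}$ is a point-to-point passage time over a distance at most $W_n \le n^{3/4+\epsilon}$, so \eqref{eq:Q} applies to it with $Q_{W_n} \le D_0^{1/\kappa} W_n^{1/2} \ll Q_n$ (using $W_n = \sqrt{n Q_n}$ and $Q_n \ge n^{\alpha/2}$, one checks $Q_{W_n} = o(Q_n)$, or at worst $Q_{W_n} \le C Q_n$), together with the bound on $A_{W_n}$ from Lemma~\ref{l:AnBound}. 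One must also replace $u_1, v_2$ by their lattice approximations $u_1^\Z, v_2^\Z$ at a cost controlled by $\Gamma_n$ via Lemma~\ref{l:Gamma}.

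Next I would union-bound over the $O(n^2)$ lattice pairs: since each pair fails with probability at most $\exp(1 - c x^\theta)$ (uniformly, because the marginal law of the point-to-point passage time depends only on the distance, which is $\le 2n$), and $n^2 \le \exp(C\log n)$ is absorbed by taking $x$ to dominate $\log^{1/\theta} n$, the union bound survives. More precisely, for $x \le \log^{C} n$ the bound $\P[Y_n^+ - n\mu > xDQ_n] \le 1 \le e \cdot \exp(-x^\theta)$ is trivial once $D$ is large (using $Q_n \ge n^{\alpha/2}$ and $Y_n^+ \le \max_{u,v \in B_{2n}} X_{uv}$ which has a stretched exponential tail by Assumption~\ref{as:localDist} and a union bound), while for $x \ge \log^C n$ the polylogarithmic losses from $\Gamma_n$, from $Q_{W_n}$, and from the $n^2$ union bound are all swamped by $xQ_n$. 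Alternatively, and perhaps more cleanly, one chains dyadically over scales of $|y_1 - y_1'|$ and $|y_2 - y_2'|$ rather than union-bounding all pairs at once, which avoids paying $\log n$ twice; but the crude union bound suffices here since we only need tails decaying like $\exp(-x^\theta)$ and have polynomially many points.

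The main obstacle I anticipate is the bookkeeping of constants and polylog factors to ensure the final tail is genuinely of the clean form $\exp(1-x^\theta)$ rather than $\exp(1 - c x^\theta)$ or $\exp(1-(x-C\log^C n)^\theta)$: one needs the elementary fact that for $x \ge 2C\log^C n$ one has $(x - C\log^C n)^\theta \ge c' x^\theta$ and then reabsorb $c'$ by enlarging $D$, handling the complementary range $x \le 2C\log^C n$ by triviality as above. A secondary technical point is verifying $Q_{W_n} \le C Q_n$ (or $o(Q_n)$): by definition $Q_{W_n} = \sup_{m \le W_n} (W_n/m)^\alpha \hQ_m \le D_0^{1/\kappa} W_n^{1/2+\alpha/2} \wedge (\text{terms involving } Q_n)$; since $W_n \le n^{3/4+\epsilon}$ this gives $Q_{W_n} \le C n^{3/8 + \epsilon'}$ while $Q_n \ge n^{\alpha/2} \ge n^{1/60}$ — these don't immediately compare, so the correct argument is instead to note $W_n \le n$ so $Q_{W_n} \le D_6^{O(\log(n/W_n))} Q_{W_n}$... cleaner: use that $Q$ is increasing and $W_n \le n$, hence $Q_{W_n} \le Q_n$ directly, which is all that is needed. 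With that observation the proof is routine.
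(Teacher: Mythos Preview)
Your approach has a genuine gap: routing through a fixed corner via the triangle inequality
\[
X_{u_1 v_2}\le X_{u_1,(0,0)}+X_n+X_{(n,0),v_2}
\]
does \emph{not} control $Y_n^+-n\mu$ at scale $Q_n$. The point-to-point times $X_{u_1,(0,0)}$ and $X_{(n,0),v_2}$ have distances up to $W_n$, so their \emph{means} are of order $\mu W_n$, not $O(Q_n)$. Lemma~\ref{l:AnBound} only bounds the nonrandom fluctuation $A_m=\E X_m-\mu m$ by $D_1 Q_m$; it does not make $\E X_{u_1,(0,0)}$ itself small. Thus your decomposition yields at best
\[
Y_n^+-n\mu \;\le\; (X_n-n\mu) \;+\; 2\mu W_n \;+\; O(Q_{W_n}),
\]
and since $W_n/Q_n=\sqrt{n/Q_n}\ge c\,n^{1/4}$ (using $Q_n\le C\sqrt n$), the $2\mu W_n$ term swamps $xQ_n$ for all bounded $x$. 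The dyadic chaining you mention as an alternative, if done along the vertical side, suffers from the same defect: the sum of segment means is still $\sim \mu W_n$.

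The paper avoids this by chaining \emph{to the centre} $u_{0,0}=(n/2,W_n/2)$ along a binary tree whose level-$\ell$ edges go from $x$-coordinate $2^{-\ell-1}n$ to $2^{-\ell}n$ (Figure~\ref{f:yplus.tree}). Each edge is nearly horizontal with length $K_\ell\le 2^{-\ell-1}(n+Q_n/2)$, so the edge means sum telescopically to $\tfrac12\mu n+O(Q_n)$ rather than $\tfrac12\mu n+O(W_n)$; the Pythagorean correction $W_n^2/n=Q_n$ is exactly what makes this work. Moreover $Q_{K_\ell}\le 2^{-\tfrac34\ell\alpha}Q_n$ decays geometrically, so the $2^\ell$ fluctuation terms at level $\ell$ survive the union bound and sum to $O(xQ_n)$. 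This diagonal-tree chaining is the missing idea.
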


\begin{center}
\begin{figure}
\includegraphics[width=5in]{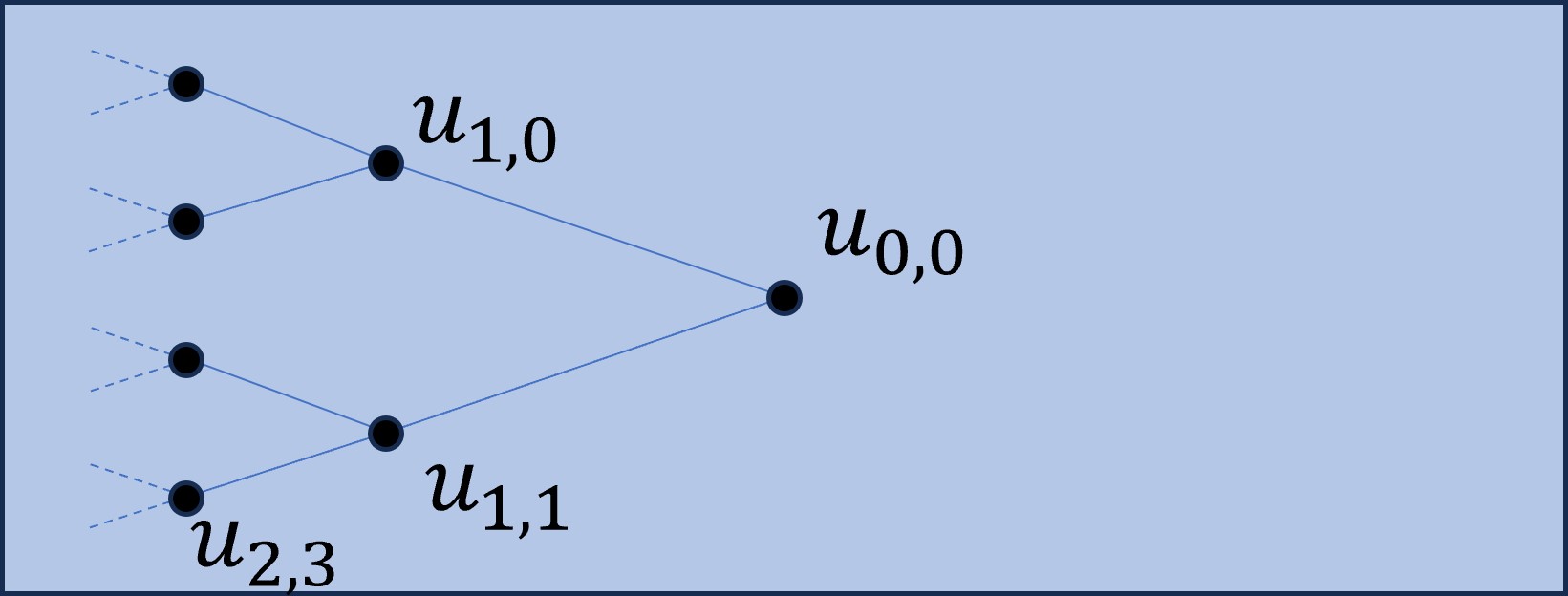}
\caption{The argument is Lemma \ref{l:YPlusFlexible}: the tree depicted above gives defines a collection of paths from the center of the tree to its side and provided that the passage time between the endpoints of each edge in the tree is not too large (and a similar event on the right side of the rectangle), $Y^{+}_{n}$ cannot be too large.}
\label{f:yplus.tree}
\end{figure}
\end{center}

\begin{proof}
Notice again that it suffices to prove this for all $n$ sufficiently large. We define a net of points forming a tree that joins $L_{\cR_n}$ to $u_{0,0}=(\tfrac{1}{2}n,\tfrac12 W_n)$, the centre of $\cR_n$.  Let $L=\lfloor \log_2 n\rfloor$ and for integers $1\leq \ell\leq L$ and $0\leq j \leq 2^\ell-1$ define
\[
u_{\ell,j}=\big(2^{-\ell-1}n, (j+\tfrac12)2^{-\ell}W_n\big).
\]
For $u=u_{\ell,j}$ we denote it's parent $u^+=(\ell-1,\lfloor j/2\rfloor)$ and let $K_\ell=|u-u^+|$ which does not depend on $j$.  This tree of points stretching from the center of the rectangle to its left side is illustrated in Figure~\ref{f:yplus.tree}. By~\eqref{eq:Pythag},
\[
K_\ell=|u-u^+|=\sqrt{(2^{-\ell-1}n)^2+(2^{-\ell-1}W_n)^2}\leq 2^{-\ell-1}(n+\frac{W^2_n}{2n})\leq 2^{-\ell-1}(n+Q_n/2).
\]
and so
\begin{equation}\label{eq:QKellBound}
Q_{K_\ell} \leq \left(\frac{2^{-\ell-1}(n+Q_n/2)}{n}\right)^{\alpha} Q_n \leq 2^{-\tfrac34 \ell\alpha} Q_n
\end{equation}
for all $1\leq \ell \leq L$ {provided $n$ is large enough.} Define the events
\[
\cD_{\ell,j,x}=\left\{X_{u_{\ell,j},u_{\ell,j}^+} - \E X_{u_{\ell,j},u_{\ell,j}^+} >x 2^{-\ell\alpha/2} Q_n \right\}
\]
and $\cD_x=\bigcup_{\ell=1}^L \bigcup_{j=0}^{2^\ell-1} \cD_{\ell,j,x}$.  Hence we have that
\begin{align*}
\P[\cD_{\ell,j,x}]&=\P[X_{K_\ell} - \E X_{K_\ell} >x 2^{-\ell\alpha/2} Q_n]\leq \P[X_{K_\ell} - \E X_{K_\ell} >x 2^{\ell\alpha/4} Q_{K_\ell}]\\
&\leq\exp(1-(x 2^{\ell\alpha/4} )^{\theta}).
\end{align*}
So for some $C>0$, by a union bound
\begin{equation}\label{eq:cDbound}
\P[\cD_x]\leq 1\wedge\sum_{\ell=1}^L 2^\ell\exp(1-(x 2^{\ell\alpha/4} )^{\theta})\leq \exp(1-C x^{\theta}).
\end{equation}

When $\cD_x^c$ holds, for any $u_{L,j}$, by adding up the passage times to its parents and ascendant up to $u_{0,0}$ we have that for large enough constants $C',C''>0$,
\begin{align}
|X_{u_{L,j}, u_{0,0}}| &\leq \sum_{\ell=1}^L |X_{u_{\ell,\lfloor 2^{\ell-L}j \rfloor }, u_{\ell-1,\lfloor 2^{\ell-1-L}j \rfloor}}|\nonumber\\
&\leq \sum_{\ell=1}^L \E X_{K_\ell} +  x 2^{-\ell\alpha/2} Q_n\nonumber\\
&\leq \sum_{\ell=1}^L \mu 2^{-\ell-1}(n+Q_n/2) + D_1 Q_{K_\ell} + x 2^{-\ell\alpha/2}  Q_n\nonumber\\
&\leq \frac12\mu n + (C'+2+C'' x) Q_n,
\end{align}
where the first inequality is by the triangle inequality, second used the event $\cD_x^c$, the third used Lemma~\ref{l:AnBound} and the final inequality used equation~\eqref{eq:QKellBound}.  Now every point in $L_{\cR_n}$ is within distance 2 of some $u_{L,j}$ so
\begin{align}\label{eq:leftToCentre}
\P[\max_{u\in L_{\cR_n}}X_{u, u_{0,0}} > \frac12\mu n + (C'+2+(C''+1) x) Q_n]&\leq \P[\Gamma_n> x Q_n] + \P[\cD_{x}]\nonumber\\
&\leq  2\exp(1-C x^{\theta}).
\end{align}
Similarly we have that
\begin{equation}\label{eq:rightToCentre}
\P[\max_{u\in R_{\cR_n}}X_{u, u_{0,0}} > \frac12\mu n + (C'+2+(C''+1) x) Q_n]\leq   2\exp(1-Cx^{\theta}).
\end{equation}
Combining~\eqref{eq:leftToCentre} and~\eqref{eq:rightToCentre} we have that
\begin{equation}
\P[Y_n^+ > \mu n + 2(C'+2+(C''+1) x) Q_n]\leq   4\exp(1-C x^{\theta}),
\end{equation}
which completes the proof, by taking $D$ sufficiently large.
\end{proof}

\begin{lemma}\label{l:YPlusBound}
There exists $D_2,D_3$ such that for all $n\geq 1$ we have that
\begin{align*}
|\E Y_n^{+} - n\mu| \leq D_2 Q_n,\\
\P[|Y_n^+ - \E Y_n^+|> x D_3 Q_n]  \leq \exp(1-x^\theta).
\end{align*}
\end{lemma}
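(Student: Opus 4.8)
The statement will follow by combining the one–sided upper tail of Lemma~\ref{l:YPlusFlexible} with the two–sided control of $Y_n^-$ supplied by Lemma~\ref{l:YMinusBound}, using only the elementary monotonicity $Y_n^- \le Y_n^+$ (the left and right sides of $\cR_n$ are the same in both definitions, and an infimum over side-to-side passage times is dominated by the corresponding supremum). As in the proofs of Lemmas~\ref{l:YPlusFlexible} and~\ref{l:YMinusBound}, it suffices to treat $n$ sufficiently large: for $n$ in a bounded range, $Y_n^+$ has stretched exponential tails by Assumption~\ref{as:localDist} while $Q_n$ is bounded below, so both assertions hold after enlarging $D_2,D_3$.

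\textbf{The mean bound.} First I would bound $\E Y_n^+$. For the upper bound, write $\E Y_n^+ \le n\mu + \E[(Y_n^+-n\mu)^+]$ and integrate the tail from Lemma~\ref{l:YPlusFlexible}:
\[
\E[(Y_n^+-n\mu)^+] = Q_n\int_0^\infty \P[Y_n^+ - n\mu > xQ_n]\,dx \;\le\; Q_n\int_0^\infty \exp\!\big(1-(x/D)^\theta\big)\,dx \;\le\; C Q_n ,
\]
so $\E Y_n^+ \le n\mu + CQ_n$. For the lower bound, monotonicity and the first part of Lemma~\ref{l:YMinusBound} give $\E Y_n^+ \ge \E Y_n^- \ge n\mu - D_2 Q_n$. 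Together these yield $|\E Y_n^+ - n\mu|\le D_2 Q_n$ after possibly increasing $D_2$.

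\textbf{Concentration.} Given the mean bound, the upper tail is immediate: for $x\ge 1$,
\[
\P[Y_n^+-\E Y_n^+ > x D_3 Q_n] \le \P\big[Y_n^+ - n\mu > (xD_3 - D_2)Q_n\big] \le \exp\!\Big(1-\big((xD_3-D_2)/D\big)^\theta\Big) \le \tfrac12\exp(1-x^\theta)
\]
once $D_3$ is taken large enough in terms of $D$ and $D_2$ (for $x\le 1$ the bound is trivial since $\exp(1-x^\theta)\ge1$). For the lower tail I would route through $Y_n^-$: since $Y_n^+\ge Y_n^-$ and both $\E Y_n^+$ and $\E Y_n^-$ lie within $D_2 Q_n$ of $n\mu$, we have $Y_n^+ - \E Y_n^+ \ge (Y_n^- - \E Y_n^-) - 2D_2 Q_n$, hence $\P[Y_n^+-\E Y_n^+ < -xD_3 Q_n] \le \P[Y_n^- - \E Y_n^- < -(xD_3-2D_2)Q_n]$, which by the second part of Lemma~\ref{l:YMinusBound} is at most $\tfrac12\exp(1-x^\theta)$ once $D_3$ is chosen large relative to $D_2$ and the constants in that lemma (again using triviality for $x\le1$). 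Adding the two tail estimates gives $\P[|Y_n^+-\E Y_n^+|>xD_3 Q_n]\le \exp(1-x^\theta)$.

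\textbf{Main difficulty.} There is no genuine obstacle here; this is an assembly of the two previous lemmas. The only point that needs care is the lower tail of $Y_n^+$ around its own mean, which cannot be read off from Lemma~\ref{l:YPlusFlexible} and must be obtained by comparison with $Y_n^-$; one must check that the detour costs only $O(Q_n)$, i.e.\ that $|\E Y_n^+-\E Y_n^-|=O(Q_n)$, which is exactly what the mean bounds for $Y_n^\pm$ provide, so the extra loss is absorbed by enlarging $D_3$.
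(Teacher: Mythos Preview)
Your proposal is correct and follows essentially the same approach as the paper: integrate the upper tail of Lemma~\ref{l:YPlusFlexible} to control the mean from above, use monotonicity to control the mean from below and handle the lower tail, and combine. The only minor difference is the choice of comparison variable for the lower side: the paper uses $X_n\le Y_n^+$ together with $\E X_n\ge n\mu$ and the basic concentration~\eqref{eq:Q}, whereas you route through $Y_n^-\le Y_n^+$ and invoke Lemma~\ref{l:YMinusBound}; both work equally well and yield the same conclusion after adjusting $D_2,D_3$.
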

Note that by increasing the values if necessary we can choose these constants to be the same as those in Lemma \ref{l:YMinusBound}. 
\begin{proof}
Integrating the probability of Lemma~\ref{l:YPlusFlexible}  similarly to~\eqref{eq:xTwoNMean} we have that for some $C>0$,
\begin{equation}\label{eq:YnPlusMean}
\E[Y_n^+] \leq \mu n + C Q_n.
\end{equation}
Since $n\mu\leq \E[X_n]\leq \E[Y_n^+]\leq \mu n + C Q_n$, this completes the first part of the lemma.  Substituting~\eqref{eq:YnPlusMean} into~Lemma~\ref{l:YPlusFlexible}  we have that for $D_3$ sufficiently large,
\begin{equation}\label{eq:YnPlusUpperA}
\P[Y_n^+ -\E Y_n^+ \geq x D_3 Q_n]\leq   \frac{1}{2}\exp(1- x^\theta).
\end{equation}
Since $X_n\leq Y_n^+$, for $x>2D_2$ we have that
\begin{align}\label{eq:YnPlusUpperB}
\P[Y_n^+ -\E Y_n^+ < - x Q_n] &\leq \P[X_n -\E Y_n^+ < - x Q_n]\nonumber\\
& \leq \P[X_n -\E X_n^+ < - (x-D_2) Q_n]\leq \exp(1- (x/2)^\theta).
\end{align}
Combining~\eqref{eq:YnPlusUpperA} and ~\eqref{eq:YnPlusUpperB} and increasing the value of $D_3$ if necessary completes the lemma.
\end{proof}

Finally, let us state the following useful result which bounds the passage times across parallelograms. This follows immediately from Lemmas \ref{l:YMinusBound}, \ref{l:YPlusBound} together with Lemma \ref{l:paraChange} and hence we shall omit the proof. 

\begin{lemma}
    \label{l:paraplusminus}
    There exists $C>0$ such that for $0\leq k\leq n/W_{n}$ and $z\ge 0$ we have for all $n$ sufficiently large
    \begin{align}
\P\Big[\max_{u_1\in L_{\cP_{1,0,k,n,W_{n}}}}\max_{u_2\in R_{\cP_{1,0,k,n,W_{n}}}} \big| X_{u_1 u_2}- |u_2-u_1|\mu \big| \geq  zQ_{n} \Big] \leq \exp(1-Cz^{\theta}).
\end{align}
\end{lemma}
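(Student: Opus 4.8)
The plan is to reduce the claim, via Lemma~\ref{l:paraChange} applied with the canonical parameters $Q=Q_n$ and $W=W_n$, to the concentration of $Y^\pm_n$ already established in Lemmas~\ref{l:YMinusBound} and~\ref{l:YPlusBound}. First one checks that $(Q_n,W_n)$ satisfies the running hypothesis~\eqref{eq:variableQW} for all large $n$: since $Q_n$ is increasing one has $Q_n\ge n^\alpha\hQ_1\ge n^{\alpha/2}$, and $Q_n\le D_0^{1/\kappa}\sqrt n\le n^{1/2+2\epsilon}$ by~\eqref{eq:QnRootNBound}, while $W_n=\sqrt{nQ_n}$ by definition; the hypothesis $0\le k\le n/W_n$ is exactly the one in Lemma~\ref{l:paraChange}. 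Given a target level $zQ_n$, one feeds Lemma~\ref{l:paraChange} the shifted level $z':=z-\mu-k^{9/5}Q_n^{-1/10}$, chosen so that $(k^2Q_n)^{9/10}+(\mu+z')Q_n=zQ_n$; provided $z'>0$ this yields
\[
\P\Big[\max_{u_1\in L_{\cP_{1,0,k,n,W_n}}}\max_{u_2\in R_{\cP_{1,0,k,n,W_n}}}\big|X_{u_1u_2}-|u_2-u_1|\mu\big|\ge zQ_n\Big]\le \exp(1-CQ_n^{\kappa/10})+\P\big[|Y^+_n-n\mu|+|Y^-_n-n\mu|\ge z'Q_n\big].
\]

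The second probability is controlled by Lemmas~\ref{l:YMinusBound} and~\ref{l:YPlusBound}: writing $|Y^\pm_n-n\mu|\le|Y^\pm_n-\E Y^\pm_n|+D_2Q_n$, it is at most $2\exp\big(1-((z'/2-D_2)/D_3)^\theta\big)$, and once $z\ge 2\mu+2k^{9/5}Q_n^{-1/10}$ we have $z'\ge z/2$, so this is $\le 2\exp(1-cz^\theta)$. The term $\exp(1-CQ_n^{\kappa/10})$ is at most $\exp(-n^{c'})$ for some $c'>0$ because $Q_n\ge n^{\alpha/2}$ and $\theta\ll\kappa$, hence is negligible against $\exp(1-z^\theta)$ whenever $z$ is below a fixed power of $n$; adjusting constants absorbs both into $\exp(1-Cz^\theta)$. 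The two extreme ranges of $z$ are immediate: if $z$ is below a fixed constant the asserted bound exceeds $1$ once $C$ is taken small, so it holds trivially; if $z$ exceeds a fixed power of $\log n$ one bypasses Lemma~\ref{l:paraChange} and uses a direct union bound over the $O(W_n^2)$ integer endpoint pairs, writing $X_{uv}-|u-v|\mu=A_{|u-v|}+(X_{uv}-\E X_{uv})$, bounding $A_{|u-v|}$ by Lemma~\ref{l:AnBound}, comparing $Q_{|u-v|}$ with $Q_n$ by Lemma~\ref{l:Qgrowth}, applying~\eqref{eq:Q}, and passing from integer to arbitrary endpoints with Lemma~\ref{l:Gamma}; on the lower-tail side one also uses $X_{uv}\ge 0$ and $|u-v|\le\sqrt5\,n$ to see the event is empty for $z$ a sufficiently large power of $n$.

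The one real subtlety is the dependence on $k$. When $k$ is bounded — which is the regime relevant to transversal fluctuations at scale $W_n$ — the comparison loss $(k^2Q_n)^{9/10}$ is itself $O(Q_n)$, so the shift $z'$ is positive for all $z$ above a constant and the deduction above is immediate; but to cover the full range $0\le k\le n/W_n$ at moderate $z$ (between a constant and a power of $\log n$) one must absorb a term of size $(k^2Q_n)^{9/10}$, which can be as large as $n^{9/10}$. For $z$ past a power of $\log n$ the endpoint-pair union bound of the previous paragraph already does this uniformly in $k$, so only a bounded-by-polylog window with $k$ large remains; for that window I would re-run the crossing-point decomposition behind Lemma~\ref{l:rectangleApprox} but at the finer scale $Q_n$, or equivalently adapt the dyadic chaining tree of Lemma~\ref{l:YPlusFlexible} to the parallelogram $\cP_{1,0,k,n,W_n}$, carrying the $\mu$-weighted lengths $|u_2-u_1|\mu$ through the tree. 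I expect this reconciliation of the $k$-dependence to be the main obstacle; everything else is bookkeeping of constants and polylogarithmic factors inside $\exp(1-Cz^\theta)$.
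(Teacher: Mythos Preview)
Your core reduction—apply Lemma~\ref{l:paraChange} with $Q=Q_n$, $W=W_n$, then feed the resulting $|Y^\pm_n-n\mu|$ term into Lemmas~\ref{l:YMinusBound} and~\ref{l:YPlusBound}—is exactly the paper's approach; the paper simply asserts the result follows immediately from those three lemmas and omits all details. Your scruple about the $(k^2Q_n)^{9/10}$ loss when $k$ is near $n/W_n$ is legitimate and the paper does not address it, but the lemma is only ever invoked (in Lemma~\ref{l:d3bound}) with $k$ bounded by a fixed constant, where that loss is $o(Q_n)$ and the deduction really is immediate; your union-bound and re-chaining workarounds for the full stated range of $k$ are plausible but not needed for anything the paper actually does.
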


\section{Transversal fluctuations}
\label{s:trans}
The aim of this section is to control transversal fluctuation of the geodesics between points at distance $n$ at the scale $W_{n}$. In particular, we shall prove Theorem \ref{t:tf}, (i) with $\SD(X_{n})$ replaced by $Q_{n}$ (i.e., $\sqrt{n\SD(X_{n})}$ replaced by $W_{n}$). Using a chaining argument, together with the concentration estimates from Section \ref{s:percgen} we construct a large probability event on which any path from $(0,0)$ to $(n,0)$ having large transversal fluctuation at scale $W_{n}$ will have length significant longer than $X_{n}$. The argument here uses ideas similar to the ones used in \cite[Theorem 11.1]{BSS14} or \cite[Proposition C.8]{BGZ21} where similar estimates were proved for exactly solvable LPP models. 

We write 
$\ell_{x,w,y}=\{(x,y'):w\leq y'\leq y\}$ and define 
\begin{align*}
\fH((x_1,w_1,y_1),(x_2,w_2,y_2)):=\sup_{ u_i\in \ell_{x_i,w_i,y_i}} \Big|X_{u_1u_2} -\mu|u_1-u_2|\Big|.
\end{align*}
and let
\[
\cH^{(n)}_{x,y,y',z}=\Big\{ \fH((xn,y W_n,(y+1)W_n),((x+1)n,y' W_n,(y'+1)W_n)) \leq (\frac{\theta^2}{1000}|y-y'| + z)Q_n \Big\}
\]
and
\[
{\cH^{(n)}_{x,y,z}=\bigcap_{y'=y-n^{1-\epsilon}/W_n}^{y+n^{1-\epsilon}/W_n} (\cH^{(n)}_{x,y,y',z} \cap \cH^{(n)}_{x,y',y,z})}
\]
Notice that the intersection above is over integers from $y-\lfloor n^{1-\epsilon}/W_n \rfloor$ to $y+\lfloor n^{1-\epsilon}/W_n \rfloor$; to keep notations simpler we shall drop the floor signs here and all similar intersections in the subsequent text. 
\begin{lemma}
\label{l:paraBounds}
There exists $D>0$ such that for all $x,y,y'$ with $W_n|y-y'|\leq n$ and $z>0$,
\[
\P[\cH^{(n)}_{x,y,y',z}]\geq 1-\exp(1-D(|z|+|y-y'|)^\theta), \qquad \P[\cH^{(n)}_{x,y,z}]\geq 1-\exp(1-D|z|^\theta)
\]
\end{lemma}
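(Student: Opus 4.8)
The plan is to deduce both estimates from Lemma~\ref{l:paraplusminus} using the invariance of the model, with the second estimate requiring an additional summation over heights. First I would fix $x,y,y'$ with $W_n|y-y'|\le n$ and set $k:=|y-y'|$, so that $0\le k\le n/W_n$. The two vertical segments $\ell_{xn,yW_n,(y+1)W_n}$ and $\ell_{(x+1)n,y'W_n,(y'+1)W_n}$ over which the supremum defining $\fH\big((xn,yW_n,(y+1)W_n),((x+1)n,y'W_n,(y'+1)W_n)\big)$ is taken are carried onto the sides $L_{\cP_{1,0,k,n,W_n}}$ and $R_{\cP_{1,0,k,n,W_n}}$ by a translation, composed (when $y'<y$) with a reflection across a horizontal line; hence by the invariance of the passage times under translations, reflections and time reversal,
\[
\fH\big((xn,yW_n,(y+1)W_n),((x+1)n,y'W_n,(y'+1)W_n)\big)\ \stackrel{d}{=}\ \max_{u_1\in L_{\cP_{1,0,k,n,W_n}}}\max_{u_2\in R_{\cP_{1,0,k,n,W_n}}}\big|X_{u_1u_2}-|u_2-u_1|\mu\big|.
\]
Applying Lemma~\ref{l:paraplusminus} with its parameter equal to $\tfrac{\theta^2}{1000}k+z>0$, and using that $\tfrac{\theta^2}{1000}k+z\ge\tfrac{\theta^2}{1000}(k+z)$ (since $\tfrac{\theta^2}{1000}\le1$), so that $\big(\tfrac{\theta^2}{1000}k+z\big)^\theta\ge\big(\tfrac{\theta^2}{1000}\big)^\theta(k+z)^\theta$, one gets
\[
\P\big[(\cH^{(n)}_{x,y,y',z})^c\big]\ \le\ \exp\!\Big(1-C\big(\tfrac{\theta^2}{1000}k+z\big)^\theta\Big)\ \le\ \exp\!\big(1-D(|z|+|y-y'|)^\theta\big)
\]
with $D=C(\theta^2/1000)^\theta$, which is the first bound; for the bounded range of $n$ not covered by Lemma~\ref{l:paraplusminus} one enlarges $D$ and uses Assumption~\ref{as:localDist} (together with the triangle inequality and $Q_n\ge Q_1>0$) to get a crude stretched-exponential tail with exponent $\kappa>\theta$.

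For the second bound, recall that $\cH^{(n)}_{x,y,z}$ is the intersection of the events $\cH^{(n)}_{x,y,y',z}$ and $\cH^{(n)}_{x,y',y,z}$ over the integers $y'$ with $|y'-y|\le n^{1-\epsilon}/W_n$. Every such $y'$ satisfies $W_n|y'-y|\le n^{1-\epsilon}\le n$, so the first bound applies to each of the (at most two) events attached to each value $j:=|y'-y|$, and a union bound gives
\[
\P\big[(\cH^{(n)}_{x,y,z})^c\big]\ \le\ 4\sum_{j\ge0}\exp\!\big(1-D(|z|+j)^\theta\big).
\]
Since $\exp(-D(|z|+j)^\theta)$ is eventually dominated by $j^{-2}$ uniformly in $|z|$, comparison with $\int_{|z|}^{\infty}e^{-Ds^\theta}\,ds$ shows $\sum_{j\ge0}\exp(-D(|z|+j)^\theta)\le C_\theta(1+|z|^{1-\theta})e^{-D|z|^\theta}$. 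Absorbing the polynomial factor and the constants into the exponent at the cost of a smaller rate---legitimate because the asserted inequality is vacuous whenever $|z|$ is bounded, i.e.\ whenever $e^{1-D|z|^\theta}\ge1$---one concludes $\P[(\cH^{(n)}_{x,y,z})^c]\le\exp(1-D|z|^\theta)$ after replacing $D$ by a suitably small constant, which finishes the proof.

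I do not expect a genuine obstacle: granted Lemma~\ref{l:paraplusminus}, this is bookkeeping. The two places needing a little care are the geometric identification of the segments in the definition of $\fH$ with the sides of the canonical parallelogram $\cP_{1,0,k,n,W_n}$ (so that translation and reflection invariance may be applied, and noting the extra factor $\tfrac{\theta^2}{1000}|y-y'|$ in the threshold exactly accommodates the shift $k$ in the parallelogram), and checking that the height window in $\cH^{(n)}_{x,y,z}$ is narrow enough, namely $W_n|y'-y|\le n^{1-\epsilon}\le n$, that Lemma~\ref{l:paraplusminus} can be invoked term by term while the resulting tail sum still decays at the claimed stretched-exponential rate.
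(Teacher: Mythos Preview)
Your proof is correct and takes essentially the same approach as the paper: you invoke Lemma~\ref{l:paraplusminus} (which packages Lemma~\ref{l:paraChange} with Lemmas~\ref{l:YMinusBound} and~\ref{l:YPlusBound}) as a black box, whereas the paper's own proof re-derives that content in place via an explicit case split on whether $|y'|+z$ lies below or above $n^\theta$, using Lemma~\ref{l:paraChange} in the small case and crude point-to-point concentration in the large case. The union bound over $y'$ for the second inequality is handled identically in both proofs.
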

\begin{proof}
We being with $\cH^{(n)}_{x,y,y',z}$.
By translation invariance we can assume that $x=y=0$.  For the first case assume that $|y'|,z \leq n^\theta$.  Applying Lemma~\ref{l:paraChange} to the parallelogram with sides $\ell_{0,0,W_n}$ and $\ell_{n,y'W_n,(y'+1)W_n}$ we always have $k=|y|\leq n^{\theta}$ and so $(k^2 Q_n)^{9/10}\leq C_1 Q_n$.  Hence
\begin{align*}
\P[(\cH^{(n)}_{x,y,y',z})^c] &\leq \exp(1-C Q_n^{\kappa/10}) + \P[ |Y^+_{n} - n\mu|+ |Y^-_{n} -n\mu| \geq  (z+\frac{\theta^2}{1000}|y-y'|-\mu-1) Q_n]\\
&\leq \exp(1-D(|z|+|y-y'|)^\theta),
\end{align*}
for small enough $D$ where the first inequality is by  Lemma~\ref{l:paraChange} and the second is by Lemma~\ref{l:YMinusBound} and Lemma~\ref{l:YPlusBound}.

Next assume that $|y'|+z \geq n^\theta$.  Since $|y'|\leq n/W_n$, the diameter of the parallelogram is at most $2n$ and by Lemma~\ref{l:growth34}, $Q_{2n+|y'|W_n} \leq C Q_n$.  
Hence we have that,
\begin{align*}
\P[(\cH^{(n)}_{0,0,y',z})^c] 
&\leq \P\Big[\sup_{u \in \ell_{0,0,W_n}} \sup_{u' \in \ell_{n,y'W_n,(y'+1)W_n}} \big|X_{u u'} -\mu|u -u'|\big|  \geq (\frac{\theta^2}{1000}|y'| + z)Q_n\Big] \\
&\leq \P\Big[\sup_{u \in \ell_{0,0,W_n}} \sup_{u' \in \ell_{n,y'W_n,(y'+1)W_n}} \big|X_{u^\Z u^{'\Z}} -\mu|u^\Z -u^{'\Z}|\big| \geq \frac12 (\frac{\theta^2}{1000}|y'| + z)Q_n\Big]\\
&\qquad +\P[\Gamma_{n}\geq \frac12(\frac{\theta^2}{1000}|y'| + z)Q_n]\\
&\leq W_n^2  \exp\big(1-((\frac{\theta^2}{1000}|y'| + z)\frac{Q_n}{Q_{2n}})^\kappa\big) + \exp(1-(\frac12(\frac{\theta^2}{1000}|y'| + z)Q_n/\log^C n)^\kappa) \\
&\leq  \exp(1-D(|z|+|y'|)^\theta),
\end{align*}
which completes the proof of the first part of the lemma.  The second inequality follows by a union bound over $y'$.
\end{proof}

 We define the event $\cS^{(n)}_{z,u}$ to require that all passage times in $B_{3n}(u)$ are somewhat well behaved.
\begin{align*}
\cS^{(n)}_{z,u}&=
\begin{cases}
\Big\{\Gamma_{3n}\leq z^{2\theta/\kappa}n^\epsilon\Big\} \cap \Big\{\sup_{v,v'\in B_{3n}(u)} \big|X_{vv'}-\mu|v-v'|\big| \leq \frac1{10}\mu n^\epsilon Q_{3n} \Big\} &\hbox{ when } zW_n \leq n^{1-10\epsilon},\\
\Big\{\Gamma_{3zn}\leq \frac1{10} \mu {z^{1/10}Q_n} \Big\} \cap \Big\{\sup_{v,v'\in B_{3zn}(u)} \big|X_{vv'}-\mu|v-v'|\big| \leq \frac1{10} \mu {z^{1/10}Q_n}\Big\} &\hbox{ when } zW_n > n^{1-10\epsilon}.
\end{cases}
\end{align*}
For $u=(in,tW_{n})$, we also define the event $\cO^{(n)}_{z,u}$ which is designed to implement a chaining argument for transversal fluctuations.
\[
\cO^{(n)}_{z,u}=\bigcap_{j=0}^{{3\epsilon\log_2 n}} \bigcap_{x=0}^{2^j-1} \bigcap_{y=-z W_{n}/W_{n2^{-j}}}^{z W_{n}/W_{n2^{-j}}} {\cH^{(n2^{-j})}_{i2^{j}+x,tW_{n}/W_{n2^{-j}}+y,z2^{\theta j}\theta^2/1000}}.
\]

\begin{lemma}\label{l:trans.SOGam}
There exists $D>0$ such that for any $n\geq 1$ we have that for all $z\geq 10$,
\[
\P[\cO^{(n)}_{z,u}]\geq 1-\exp(1-Dz^{\theta}),\quad \P[\cS^{(n)}_{z,u}]\geq 1-\exp(1-D(zn)^{\frac12\epsilon\theta}).
\]
\end{lemma}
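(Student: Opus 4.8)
Both estimates are union bounds over the (at most sub-polynomially, resp.\ polynomially, many) sub-events appearing in the two intersections, into which I would feed the per-event tail bounds already at our disposal. For $\cO^{(n)}_{z,u}$ I would union bound over the index triple $(j,x,y)$. By Lemma~\ref{l:paraBounds}, each sub-event $\cH^{(n2^{-j})}_{i2^{j}+x,\,tW_{n}/W_{n2^{-j}}+y,\,z2^{\theta j}\theta^{2}/1000}$ fails with probability at most $\exp\!\big(1-D(z2^{\theta j}\theta^{2}/1000)^{\theta}\big)\le\exp(1-D'z^{\theta}2^{\theta^{2}j})$ for some $D'>0$, since $(2^{\theta j})^{\theta}=2^{\theta^{2}j}$. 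The number of admissible $(x,y)$ at scale $j$ is $O\big(2^{j}\,zW_{n}/W_{n2^{-j}}\big)$, and writing $W_{n}/W_{n2^{-j}}=\sqrt{2^{j}Q_{n}/Q_{n2^{-j}}}$ and iterating the polynomial growth bound of Lemma~\ref{l:Qgrowth} this is $O(z\,2^{Cj})$ for a fixed $C$ independent of $n$. Because $\theta^{2}>0$, the factor $\exp(-D'z^{\theta}2^{\theta^{2}j})$ dominates $2^{Cj}$ once $z$ exceeds a constant $z_{0}=z_{0}(C,D',\theta)$, so the sum over $0\le j\le 3\epsilon\log_{2}n$ is bounded by a geometric series and hence by $C''z\exp(1-D'z^{\theta})$; slightly shrinking the constant in the exponent absorbs the prefactor $C''z$, giving $\P[(\cO^{(n)}_{z,u})^{c}]\le\exp(1-Dz^{\theta})$ for $z\ge z_{0}$. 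For $10\le z<z_{0}$ one chooses $D$ small enough that $1-\exp(1-Dz^{\theta})\le 0$, so the claimed inequality is vacuous there.

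\textbf{The bound on $\cS^{(n)}_{z,u}$.} I would treat the two cases of the definition separately. In each, the $\Gamma$-term ($\Gamma_{3n}\le z^{2\theta/\kappa}n^{\epsilon}$, resp.\ $\Gamma_{3zn}\le\tfrac1{10}\mu z^{1/10}Q_{n}$) is controlled directly by Lemma~\ref{l:Gamma}. For the supremum term $\sup_{v,v'\in B_{r}(u)}|X_{vv'}-\mu|v-v'||$ (with $r=3n$ or $r=3zn$) I would union bound over the $O(r^{2})$ pairs $(v^{\Z},v'^{\Z})$ of nearby integer points: the deviation $|X_{v^{\Z}v'^{\Z}}-\E X_{v^{\Z}v'^{\Z}}|$ is controlled by~\eqref{eq:Q} at scale $Q_{|v^{\Z}-v'^{\Z}|}$, the non-random part $A_{|v^{\Z}-v'^{\Z}|}=\E X_{v^{\Z}v'^{\Z}}-\mu|v^{\Z}-v'^{\Z}|$ by Lemma~\ref{l:AnBound}, and the passage from integer pairs back to all of $B_{r}(u)$ costs only $O(\Gamma_{r})$, again via Lemma~\ref{l:Gamma}. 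Using the growth control $Q_{2r}\le C(r/n)^{C'}Q_{n}$ (Lemmas~\ref{l:Qgrowth} and~\ref{l:growth34}) and, crucially in Case~2, that $zW_{n}>n^{1-10\epsilon}$ forces $z$ to be polynomially large in $n$ (so that the factor $z^{1/10}$ dominates the polynomial-in-$(r/n)$ losses), one checks that both failure probabilities are at most $\tfrac12\exp(1-D(zn)^{\frac12\epsilon\theta})$; here the facts that $\theta<\kappa$ and $\epsilon$ is tiny ensure that the powers of $n$ (and of $z$) produced by Lemma~\ref{l:Gamma} and~\eqref{eq:Q} beat $(zn)^{\epsilon\theta/2}$.

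\textbf{Main obstacle.} The real work is the bookkeeping: confirming that the thresholds built into $\cO^{(n)}_{z,u}$ and $\cS^{(n)}_{z,u}$ genuinely dominate the worst-case fluctuation and non-random scales, given only the coarse polynomial control on $Q_{n}$, and tracking the joint $(z,n)$-dependence accurately. In the $\cO$ estimate the delicate point is that the exponential gain $2^{\theta^{2}j}$ must outrun the geometric growth $2^{Cj}$ of the number of sub-events at scale $j$, which only works for $z$ above a (harmless) constant; in the $\cS$ estimate the delicate points are that Case~2's large-$z$ regime must compensate for the enlarged ball $B_{3zn}(u)$, and that the two separate failure contributions (the $\Gamma$-term and the supremum term) must combine without degrading the exponent $\tfrac12\epsilon\theta$.
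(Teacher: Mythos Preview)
Your proposal is correct and follows essentially the same approach as the paper: a union bound over $(j,x,y)$ using Lemma~\ref{l:paraBounds} together with the geometric growth $W_n/W_{n2^{-j}}\le C^{j}$ for $\cO^{(n)}_{z,u}$, and Lemma~\ref{l:Gamma} plus a union bound over lattice pairs (controlling the non-random part via Lemma~\ref{l:AnBound}) for $\cS^{(n)}_{z,u}$. One minor point: your appeal to Lemma~\ref{l:growth34} is premature, since that result is only established later in Section~\ref{s:record}; at this stage Lemma~\ref{l:Qgrowth} already yields $Q_{Kn}\le D_6^{O(\log K)}Q_n=K^{O(1)}Q_n$, which is all the growth control you need here.
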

\begin{proof}
We start with $\cO^{(n)}_{z,u}$.  By Lemma~\ref{l:paraBounds}
\begin{align*}
\P[\cO^{(n)}_{z,u}] & \geq 1 - \sum_{j=0}^{3\epsilon\log_2 n} 2^j (2z  W_{n}/W_{n2^{-j}}) \exp\Big(1-D \big(z2^{\theta j}\theta^2/1000\big)^\theta\Big)\\
& \geq 1 - \sum_{j=0}^{3\epsilon\log_2 n} z C^{j+1} \exp\Big(1-D \big(z2^{\theta j}\theta^2/1000\big)^\theta\Big) \geq 1-\exp(1-Dz^{\theta})
\end{align*}

Next we consider $\cS^{(n)}_{z,u}$ when $zW_n \leq n^{1-10\epsilon}$ and assume $u=\origin$.
By Lemma~\ref{l:Gamma}
\[
\P[\Gamma_{3n}\leq z^{2\theta/\kappa}n^\epsilon]\geq 1-\frac13\exp(1-D(zn)^{\theta}).
\]
Then
\begin{align*}
&\P\Big[\sup_{v,v'\in B_{3n}(u)} \big|X_{vv'}-\mu|v-v'|\big| \geq \frac1{10}\mu n^\epsilon Q_{3n},\Gamma_{zn}\leq z^{2\theta/\kappa}n^\epsilon \Big]\\
&\qquad\leq \sum_{v,v'\in v,v'\in B_{3n}(u)\cap \Z^2}  \P\Big[\sup_{v,v'\in B_{3n}(u)} \big|X_{vv'}-\mu|v-v'|\big| \leq \frac1{10}\mu n^\epsilon Q_{3n}]\\
&\qquad\leq Cn^2 \exp(-C' n^{\epsilon\theta})\leq Cn^4 \exp(-C' (zn)^{\frac12\epsilon\theta})
\end{align*}
and so $\P[\cS^{(n)}_{z,u}]\geq 1-\exp(1-D(zn)^{\frac12\epsilon\theta})$. {The case of $zW_n > n^{1-10\epsilon}$ follows similarly.}
\end{proof}

We will rule out paths with large transversal fluctuations by showing that they have passage times significantly larger than the optimal path.  For $v_1\in\ell_{0,0,W_n},v_2\in\ell_{n,0,W_n}$
\[
\Upsilon_{n,v_1,v_2,z}=\{\gamma':\gamma'(0)=v_1,\gamma'(1)=v_2, \sup_t\inf_{x\in[0,n]} |\gamma'(t) - (x,0)|\geq zW_n\}
\]
denote the set of paths that travel at least distance $zW_n$ from the line segment joining $(0,0)$ and $(n,0)$; see Figure~\ref{f:upsilon}.  We let $\Upsilon_{n,v_1,v_2,z,u}$ denote the set of paths in $\Upsilon_{n,v_1-u,v_2-u,z}$ translated by $u\in \Z^2$.
For $v_1=\origin,v_2=(n,0)$ this contains the paths that have transversal fluctuations at least $zW_n$.
\begin{center}
\begin{figure}
\includegraphics[width=5in]{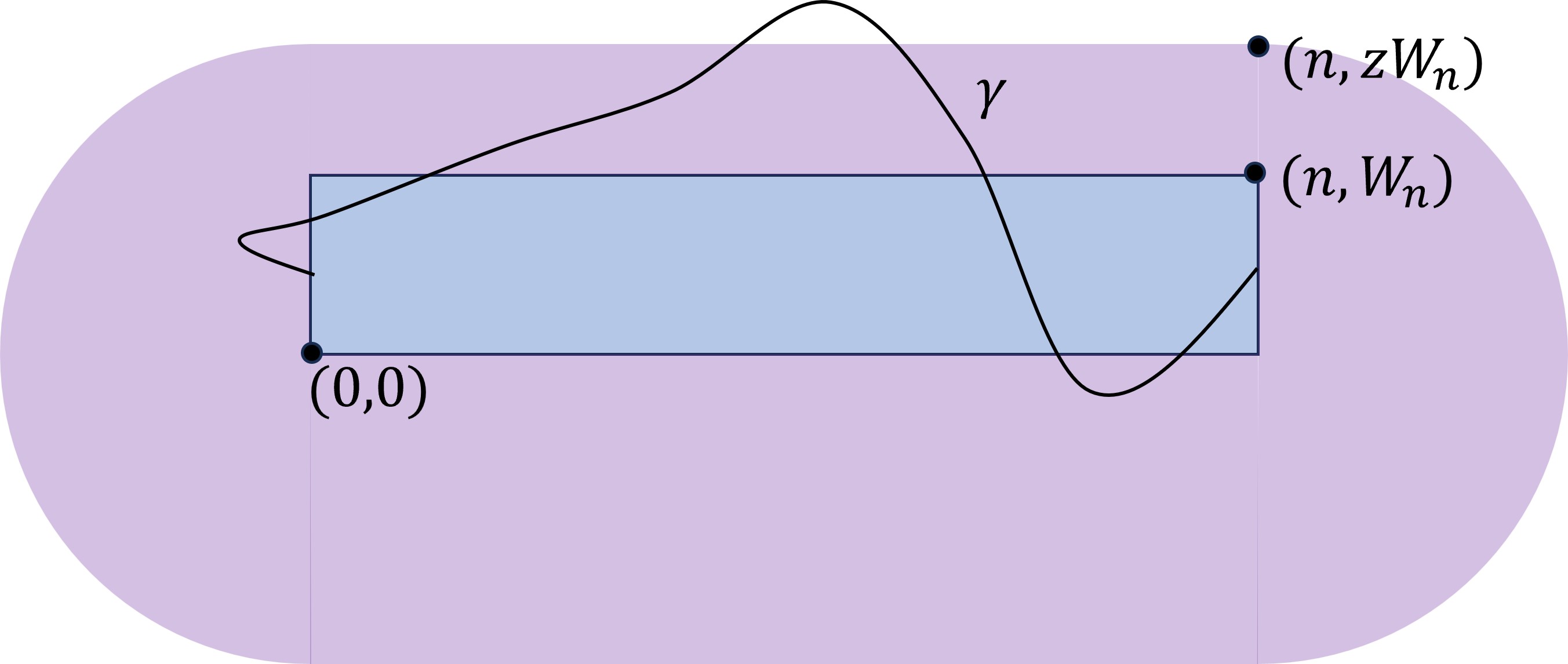}
\caption{The set of paths $\Upsilon_{n,v_1,v_2,z}$ are those that leave the region purple region, the set of points distance $zW_n$ from the line segment $(0,0)$ to $(n,0)$. Lemma \ref{l:trans.events} shows that for large $z$, on the large probability event $\cS^{(n)}_{z,\origin}\cap \cO^{(n)}_{z,\origin}$,  the lengths of all such paths exceed $X_{n}$ by a positive amount (at scale $Q_{n}$).}
\label{f:upsilon}
\end{figure}
\end{center}

\begin{lemma}\label{l:trans.events}
Suppose that the event $\cS^{(n)}_{z,\origin}\cap \cO^{(n)}_{z,\origin}$  holds for some $z\geq 4$. Then
\[
\inf_{v_1\in\ell_{0,0,W_n},v_2\in\ell_{n,0,W_n}} \inf \{X_{\gamma'} - X_{v_1,v_2}:\gamma'\in \Upsilon_{n,v_1,v_2,z}\} \geq \frac{z\theta^2 Q_{n}}{1000}.
\]
\end{lemma}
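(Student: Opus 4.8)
The argument is a deterministic multi-scale (chaining) argument on the event $\cS^{(n)}_{z,\origin}\cap\cO^{(n)}_{z,\origin}$; all probabilistic content has already been spent in Lemma~\ref{l:trans.SOGam}. Throughout I work in the regime $zW_n\le n^{1-10\epsilon}$, the complementary one being handled identically after replacing $B_{3n}$ by $B_{3zn}$ and $n^\epsilon Q_{3n}$ by $z^{1/10}Q_n$ in the crude estimates below. Fix $v_1\in\ell_{0,0,W_n}$, $v_2\in\ell_{n,0,W_n}$, $\gamma'\in\Upsilon_{n,v_1,v_2,z}$, let $t^*$ be the first time $\gamma'$ lies at distance exactly $zW_n$ from the chord $\{(x,0):x\in[0,n]\}$ (well defined since $v_1,v_2$ are within distance $W_n<zW_n$ of it), and set $w^*=\gamma'(t^*)$; by the symmetry of both events under reflection in $\{y=0\}$ we may take $w^*$ above the chord. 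Since $\cO^{(n)}_{z,\origin}$ in particular includes $\cH^{(n)}_{0,0,0,z\theta^2/1000}=\{\fH((0,0,W_n),(n,0,W_n))\le \tfrac{z\theta^2}{1000}Q_n\}$, on the event we have the upper bound $X_{v_1,v_2}\le\mu|v_1-v_2|+\tfrac{z\theta^2}{1000}Q_n\le\mu n+\mu Q_n+\tfrac{z\theta^2}{1000}Q_n$, using \eqref{eq:Pythag} and $W_n^2=nQ_n$.

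The basic mechanism is: split $\gamma'$ at the hitting points of a finite family of vertical lines, bound each resulting sub-path's passage time below by the point-to-point passage time of its endpoints (immediate, as first passage times are infima) — each split costing at most $O(\Gamma_{3n})$, hence $o(Q_n)$ in total over the $O(\log n)$ splits on $\cS^{(n)}_{z,\origin}$, and nothing at all for Riemannian FPP — and then estimate each point-to-point passage time between column boundaries from below via the relevant $\cH$-event inside $\cO^{(n)}_{z,\origin}$. If $\gamma'$ ever leaves $B_{3n}(\origin)$, applying this at the first exit point together with the crude bound $X_{uv}\ge\mu|u-v|-\tfrac1{10}\mu n^\epsilon Q_{3n}$ valid on $\cS^{(n)}_{z,\origin}$ already forces a horizontal detour of order $n$, so $X_{\gamma'}-X_{v_1,v_2}$ is huge; hence we may assume $\gamma'\subset B_{3n}(\origin)$, so that $\gamma'$ stays inside the $zW_n$-neighbourhood of the chord up to time $t^*$.

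To produce the gain we binary-search over scales $j=0,1,\dots,J$ (with $J=3\epsilon\log_2 n$), localizing the horizontal coordinate of $w^*$ to a nested sequence of columns of widths $n,n/2,\dots,n2^{-J}$, and record the heights at which $\gamma'$ crosses the corresponding column boundaries (first-hitting times of vertical lines are non-decreasing in $x$, which keeps these crossing points ordered along $\gamma'$). For the crossing at scale $j$, whose endpoints lie on two boundaries $n2^{-j}$ apart with vertical block-separation $\Delta_j$, the events $\cH^{(n2^{-j})}_{\cdot,\cdot,z2^{\theta j}\theta^2/1000}$ in $\cO^{(n)}_{z,\origin}$ give, provided the heights stay within the controlled band $|y|\le zW_n$,
\[
X_{\text{crossing }j}\ \ge\ \mu\sqrt{(n2^{-j})^2+(\Delta_jW_{n2^{-j}})^2}\;-\;\Big(\tfrac{\theta^2}{1000}|\Delta_j|+z2^{\theta j}\tfrac{\theta^2}{1000}\Big)Q_{n2^{-j}}.
\]
Summing over $j$ and using \eqref{eq:Pythag}, the leading terms add up to $\mu$ times the length of a broken line from $v_1$ through (a point near) $w^*$ to $v_2$, which by the triangle inequality and Pythagoras — the detour being smallest when $w^*$ is halfway along the chord at height exactly $zW_n$, where it equals $2\sqrt{(n/2)^2+z^2W_n^2}-n\ge z^2Q_n$ — is at least $\mu n+\mu z^2Q_n$. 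On the error side, $Q_{n2^{-j}}\le2^{-j\alpha}Q_n$ and $\theta<\alpha$ make the $z2^{\theta j}\tfrac{\theta^2}{1000}Q_{n2^{-j}}$ terms geometrically summable to $O(\tfrac{z\theta^2}{1000}Q_n)$ (this is why only $O(1)$ crossings per scale may be used), while the $\tfrac{\theta^2}{1000}|\Delta_j|Q_{n2^{-j}}$ term at scale $j$ is dominated by the Pythagorean gain $\tfrac12\mu(\Delta_j)^2Q_{n2^{-j}}$ it accompanies — this is where the smallness of the constant $\tfrac{\theta^2}{1000}$ enters. Subtracting the upper bound for $X_{v_1,v_2}$ from the first paragraph, $X_{\gamma'}-X_{v_1,v_2}\ge\mu z^2Q_n-\mu Q_n-O(\tfrac{z\theta^2}{1000}Q_n)-o(Q_n)\ge\tfrac{z\theta^2}{1000}Q_n$, since $z\ge4$ and $\theta$ is tiny, so the quadratic gain dominates.

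The main obstacle is the bookkeeping. One must (i) handle the possible horizontal backtracking of $\gamma'$ so that the chosen crossing points genuinely partition a sub-path of $\gamma'$ that still contains the excursion to $w^*$; (ii) terminate the binary search and switch to the crude $\cS^{(n)}_{z,\origin}$-bound as soon as the climb to height $zW_n$ becomes concentrated in a single column of width $n2^{-j}$ — such a steep excursion has excess length $\gtrsim z^2W_n^2/(n2^{-j})=z^2Q_n2^{j}\gg z^2Q_n$, which comfortably beats the crude error $\tfrac1{10}\mu n^\epsilon Q_{3n}$; and (iii) verify at each scale that all crossings used stay inside the band $|y|\le zW_n$ on which $\cO^{(n)}_{z,\origin}$ supplies the $\cH$-estimates, excursions outside the band again being diverted to the crude bound. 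Arranging these cases so the per-scale error count stays $O(1)$ — which is exactly what makes the error sum $O(\tfrac{z\theta^2}{1000}Q_n)$ rather than $O(n^{3\epsilon}\tfrac{z\theta^2}{1000}Q_n)$ — is the one genuinely delicate point; the remainder is the Pythagorean computation sketched above.
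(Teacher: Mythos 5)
Your proposal correctly identifies the high-level ingredients (the $\cH$-events to compare passage times to Euclidean lengths, the Pythagorean excess from the excursion, the case split according to $zW_n\lessgtr n^{1-10\epsilon}$, and the need for dyadic chaining with a per-scale error budget), but the proof is incomplete at exactly the point you flag as ``delicate,'' and the paper's route around that difficulty is materially different from yours.

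You propose to \emph{sum} the $\cH$-lower bounds across all scales $j$ along a broken-line refinement through (a point near) $w^*$, extracting the full geometric gain $\approx\mu z^2 Q_n$, and you assert the errors $\tfrac{\theta^2}{1000}\bigl(|\Delta_j| + z2^{\theta j}\bigr)Q_{n2^{-j}}$ can be kept summable ``provided only $O(1)$ crossings per scale are used.'' But you never specify a partition that achieves this: a generic dyadic partition of $[0,n]$ has $2^j$ cells at scale $j$, so the raw error diverges, while an adaptive partition concentrated near $w^*$ still needs the $|\Delta_j|$ terms controlled, and $|\Delta_j|$ can grow like $zW_n/W_{n2^{-j}}\gtrsim z 2^{j/2}$, which against $Q_{n2^{-j}}\leq 2^{-\alpha j}Q_n$ gives a contribution $\gtrsim z2^{(1/2-\alpha)j}Q_n$ that \emph{blows up} with $j$ since $\alpha<1/2$. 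So the naive ``sum over scales'' step is not merely a bookkeeping nuisance — it fails as stated.

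The paper avoids this by not summing at all. It runs a contradiction argument with a calibrated family of height thresholds $r_j = \bigl(1+\tfrac12(1-2^{-\theta(j+1)})z\bigr)W_n$, slowly increasing in $j$, and looks for the \emph{minimal} scale $j$ at which the chaining event $A_j$ (that the first-hits $f(i2^{-j}n)$ stay below $r_j$ at all dyadic grid points) fails. At that single critical scale, three points $u_1,u_2,u_3$ are extracted and the Pythagorean excess $\tfrac1{20}(|f|-r_{j-1})^2/(2^{-j}n)$ is shown, by an explicit derivative comparison, to dominate the single $\cH$-error $\tfrac{\theta^2}{1000}(|y-y'|+z2^{\theta j})Q_{n2^{-j}}$ — the slowly-growing thresholds $r_j-r_{j-1}\asymp z\theta 2^{-\theta j}W_n$ are chosen precisely so that gain and error have the same $j$-dependence and the $\theta^2/1000$ prefactor wins. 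This produces the (much weaker, but sufficient) gain $\tfrac{z\theta^2}{500}Q_n$ from one triple. If no $A_j$ fails through $j_\star=3\epsilon\log_2 n$, the whole excursion to height $zW_n$ is forced inside a single column of width $n^{1-3\epsilon}$ and the detour $\gtrsim z^2n^{3\epsilon}Q_n$ alone contradicts the $\cS$-event — no summation over scales is ever required. If you want to rescue your direct lower-bound formulation, you would need to reproduce exactly this calibration: an adaptive threshold sequence that keeps $|\Delta_j|$ bounded in a way that beats the $\cH$-error scale-by-scale, which amounts to re-deriving the paper's $r_j$. Your outline omits this and therefore has a genuine gap.
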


\begin{center}
\begin{figure}
\includegraphics[width=5in]{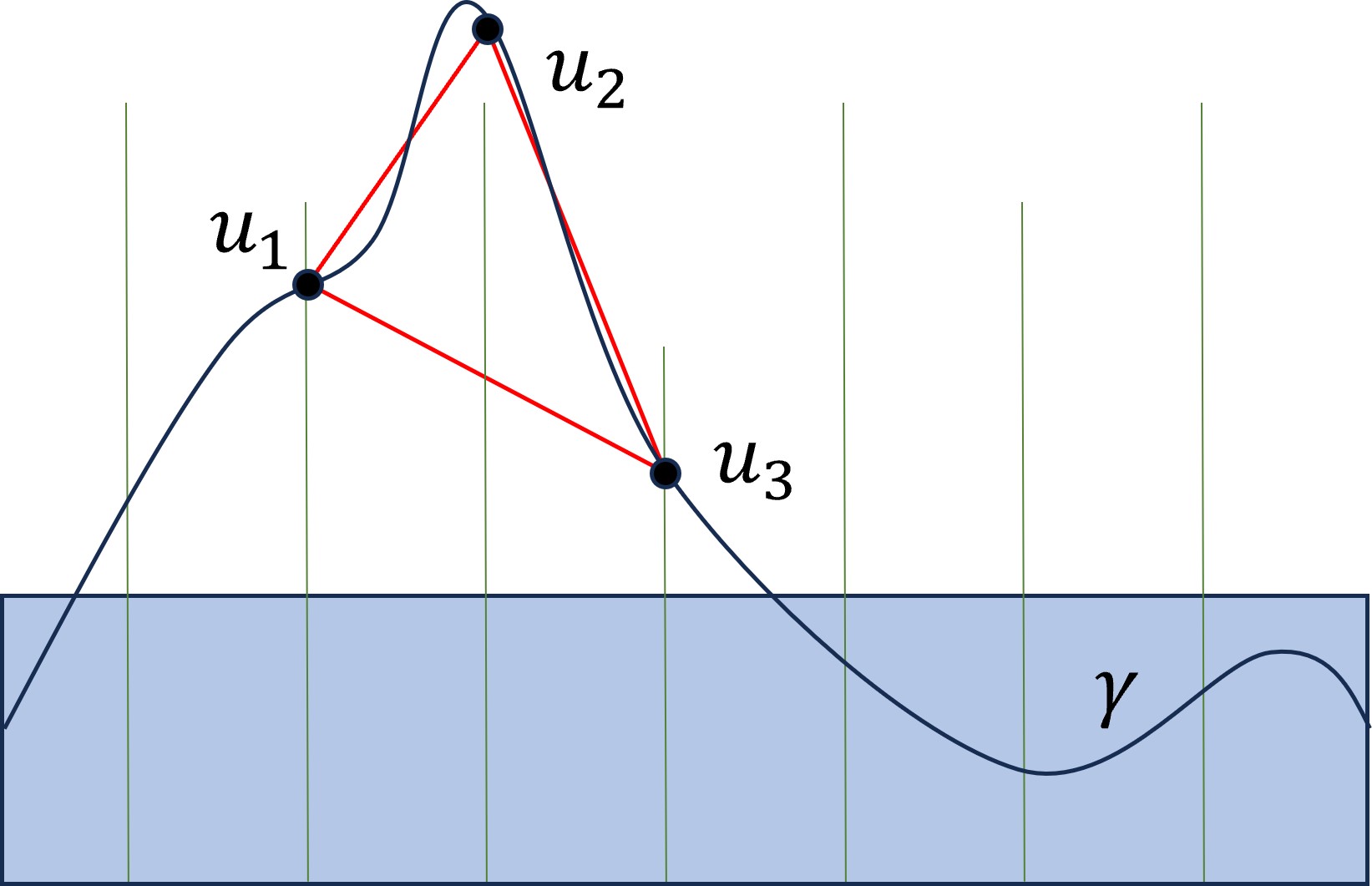}
\caption{The chaining argument in the proof of Lemma \ref{l:trans.events}: vertical lines represent intervals $\gamma$ must pass through to satisfy $A_j$ for different dyadic scales.  Here $A_j$ fails for $j=3$ at $i=3$.  The extra length required to traverse from $u_1$ to $u_3$ via $u_2$ rather than directly is used to rule out such paths under the event $\cS^{(n)}_{z,\origin}\cap \cO^{(n)}_{z,\origin}$.}
\label{f:chaining.trans}
\end{figure}
\end{center}

\begin{proof}
Suppose that we can find $\gamma'\in \Upsilon_{n,v_1,v_2,z}$ such that $X_{\gamma'} -X_{v_1 v_2}\leq \frac{z \theta^2 Q_{n}}{1000}$.  First we deal with the simple case when $zW_n \geq n^{1-10\epsilon}$.  Let $u$ be the first point in $\gamma'$ such that $\inf_{x\in[0,n]} |u - (x,0)|\geq zW_n$.  Then geometrically the point $u$ that will make the smallest difference in lengths is $(n/2,zW_n)$ so
\begin{align*}
|v_1-u|+|v_2-u|-|v_1-v_2|&\geq |v_1-(n/2,zW_n)|+|v_2-(n/2,zW_n)|-|v_1-v_2|\\
&\geq 2\sqrt{(n/2)^2 + ((z-1)W_n)^2} - n \\
&\geq n^{-10\epsilon}zW_n
\end{align*}
for large $n$.  Then by $\cS^{(n)}_{z,\origin}$
\begin{align*}
X_{\gamma'} -X_{v_1 v_2}&\geq \mu(|v_1-u|+|v_2-u|-|v_1-v_2|)\\
&\qquad -\Big(\big|X_{v_1 u} - \mu|v_1-u|\big| + \big|X_{v_2 u}  - \mu|v_2-u|\big| +  \big|X_{v_1 v_2} - \mu|v_1-v_2|\big|\Big) - 3 \Gamma_{zn}\\
&\geq \mu n^{-10\epsilon}z W_n - \frac6{10} \mu z^{1/10}Q_n \geq \frac4{10} \mu n^{-10\epsilon} zW_n > \frac{z \theta^2 Q_{n}}{1000},
\end{align*}
which contradicts our assumption.

So we continue to the case $zW_n \leq n^{1-10\epsilon}$. First suppose that there exists $0\leq s_1< s_2 < s_3\leq 1$ such that with $u_i=\gamma'(s_i)$ and
\begin{equation}\label{eq:big.fluctuation.pts}
|u_1-u_2| + |u_2-u_3| - |u_1-u_3| \geq n^\epsilon Q_{3n}.
\end{equation}
Then by $\cS^{(n)}_{z,\origin}$,
\begin{align*}
X_{\gamma'} -X_{v_1 v_2}&\geq \mu(|v_1-u_1|+ |u_1-u_2| + |u_2-u_3| +|v_2-u_3|-|v_1-v_2|)\\
&\qquad -\Big(\big|X_{v_1 u_1} - \mu|v_1-u_1|\big| + \big|X_{u_1 u_2} - \mu|u_1-u_2|\big| + \big|X_{u_2 u_3}  - \mu|u_2-u_3|\big| \\
&\qquad \quad + \big|X_{v_2 u_3}  - \mu|v_2-u_3|\big| +  \big|X_{v_1 v_2} - \mu|v_1-v_2|\big|\Big) + 5 \Gamma_{zn}\\
&\geq \mu n^\epsilon Q_{3n} - \frac5{10} \mu n^\epsilon Q_{3n} -5n^\theta > \frac{z \theta^2 Q_{n}}{1000},
\end{align*}
which contradicts our assumption.

So we may assume that~\eqref{eq:big.fluctuation.pts} fails for all triples of points along the path $\gamma'$.  This in particular implies that $\gamma'$ must be contained in $B_{3n}(\origin)$.
For $x'\in(0,n)$ define $f(x')$ such that $(x',f(x'))$ is the first point $\gamma'$ hits the line $x=x'$ and set $f(0)=v_1,f(n)=v_2$.  Since~\eqref{eq:big.fluctuation.pts} fails by geometry we can assume that
\[
\sup_{x\in[0,1]}|f(x)| \leq n^{9/10}.
\]
For $j\geq 0$, let $$r_j=\Big(1+\tfrac12(1-2^{-\theta(j+1)})z\Big)W_n\leq \frac12 zW_n$$ and let $A_j$ denote the event
\[
A_j=\Big\{\max_{i\in[0,2^j]\cap\Z} |f(i2^{-j}n)| \leq r_j)  \Big\}.
\]
Suppose that $A_j$ fails to holds for some $0\leq j \leq j_\star=3\epsilon\log_2 n$, illustrated in Figure~\ref{f:chaining.trans}.  Let $j$ be the smallest such value.  Since we set $f(0)=v_1,f(n)=v_2$ we immediately have $A_0$.  Since $A_j$ fails there is some odd $i\in[0,2^j]$ such that $f(i2^{-j}n) > r_j$.  So write $i=2i'+1$ and let $u_1 = (2i'2^{-j}n,f(2i'2^{-j}n))$, $u_2 = ((2i'+1)2^{-j}n,f((2i'+1)2^{-j}n))$ and $u_3 = ((2i'+2)2^{-j}n,f((2i'+2)2^{-j}n))$.  If $f(i2^{-j}n) > r_j$ then
\[
|f(i2^{-j}n)| > r_j > r_{j-1}\geq \max\{|f(2i'2^{-j}n))|,|f((2i'+2)2^{-j}n)|\}.
\]
To condense notation we will write $f$ in place of $f(i2^{-j}n)$ and so
\begin{align}\label{eq:trans.extra.length}
|u_1-u_2| + |u_2-u_3| - |u_1-u_3| &\geq |(2i'2^{-j}n,r_{j-1})-u_2|+|u_2-((2i'+2)2^{-j}n,r_{j-1})| - 2^{-(j-1)}n \nonumber\\
&\geq \frac13\frac{(|f| -r_{j-1})^2}{2^{-j}n}
\end{align}
where the first inequality holds by geometry and the second by ~\eqref{eq:Pythag2}.
Let us consider the case of $X_{u_1 u_2}$.  For some integers $y,y'$  we have that $u_1\in \ell_{(i-1)n2^{-j},yW_{n2^{-j}},(y+1)W_{n2^{-j}}}$ and $u_2\in \ell_{n2^{-j},y'W_{n2^{-j}},(y'+1)W_{n2^{-j}}}$.  Furthermore  $|y|\leq zW_n/W_{n2^{-j}}$ and 
\[
|y-y'| \leq \frac{|f|+zW_n}{W_{n2^{-j}}}
\]

For $x\geq r_j$, since  $W_{n2^{-j}}=\sqrt{n2^{-j} Q_{n2^{-j}}}$  and $\frac{r_{j}-r_{j-1}}{W_{n2^{-j}}}=\frac12 z(1-2^{-\theta})2^{-\theta j}\frac{W_n}{W_{n2^{-j}}}\geq  \frac{1}{5}z\theta 2^{-\theta j}$ and $\frac{W_n}{W_{n2^{-j}}}\geq 2^{j/2}$ we have that,
\begin{align*}
\frac{d}{dx} \frac1{20} \frac{(x-r_{j-1})^2}{2^{-j}n} - \frac{\theta^2}{1000}(\frac{x+zW_n}{W_{n2^{-j}}} Q_{n2^{-j}}+ z2^{\theta j}Q_{n}) 
&= \frac{x-r_{j-1}}{10\cdot 2^{-j}n} - \frac{\theta^2}{1000}\frac{Q_{n2^{-j}}}{W_{n2^{-j}}}\\
&\geq  \bigg(\frac{r_j-r_{j-1}}{10W_{n2^{-j}}} - \frac{\theta^2}{1000}\bigg)\frac{Q_{n2^{-j}}}{W_{n2^{-j}}} \geq 0.
\end{align*}
Also
\begin{align*}
\frac1{20} \frac{(r_j-r_{j-1})^2}{2^{-j}n} 
&\geq  \frac{z^2 2^{-\theta j}\theta^2 W_n^2}{500 \cdot 2^{-j}n}  \\
&\geq \frac{2^{-\theta j} \theta^2 W_n^2}{1000 \cdot 2^{-j}n}\bigg(2z 2^{\theta j-j/2}+ z 2^{2\theta j-j} \bigg)\\
&\geq \frac{\theta^2}{1000}\bigg(\frac{2zW_n W_{n 2^{-j}}}{n2^{-j}}+ \frac{z2^{\theta j} W_{n }^2}{n}\bigg)\\
&\geq \frac{\theta^2}{1000}\bigg(\frac{r_j+zW_n}{W_{n2^{-j}}}+ z2^{\theta j}\bigg )Q_{n2^{-j}} .
\end{align*}
Hence combining the last two equations we have that
\[
\frac1{20} \frac{(|f|-r_{j-1})^2}{2^{-j}n} \geq \frac{\theta^2}{1000} (\frac{|f|+zW_n}{W_{n2^{-j}}}Q_{n2^{-j}} + z2^{\theta j}Q_{2^{-j}n} ) \geq \frac{\theta^2}{1000}(|y-y'| + z2^{\theta j})Q_{n2^{-j}}
\]
and so
\begin{align*}
\left\{\big|X_{u_1 u_{2}} -\mu|u_1-u_{2}|\big| \leq \frac1{20} \frac{(|f|-r_{j-1})^2}{2^{-j}n}\right\}
&\supseteq\left\{\big|X_{u_1 u_{2}} -\mu|u_1-u_{2}|\big| \leq \frac1{1000\theta^2}(|y-y'| + z2^{\theta j})Q_{n2^{-j}}\right\}\\
&\supseteq \cH^{(n2^{-j})}_{(i-1),y,y',z 2^{\theta j}\theta^2/1000} \supseteq \cO^{(n)}_{z,\mathbf{0}}.
\end{align*}
We similarly have 
\begin{align*}
\left\{\big|X_{u_1 u_{3}} -\mu|u_1-u_{3}|\big| \leq \frac1{20}\frac{(|f| -r_{j-1})^2}{2^{-j}n}\right\} \supseteq \cO^{(n)}_{z,\mathbf{0}},\\
\left\{\big|X_{u_2 u_{3}} -\mu|u_2-u_{3}|\big| \leq \frac1{20}\frac{(|f| -r_{j-1})^2}{2^{-j}n}\right\} \supseteq \cO^{(n)}_{z,\mathbf{0}}.
\end{align*}
Thus on the event $\cO^{(n)}_{z,\mathbf{0}}$
\begin{align}
X_{u_1u_2} + X_{u_2 u_3} - X_{u_1 u_3} \geq \frac15 \frac{(|f| -r_{j-1})^2}{2^{-j}n}\geq \frac{z \theta^2 Q_{n}}{500}.
\end{align}
Combining with  $\cS^{(n)}_{z,\origin}$ we have that
\begin{align*}
X_{\gamma'} &\geq X_{v_1u_1} + X_{u_1u_2} + X_{u_2 u_3} + X_{u_3v_2}-4z^{2\theta/\kappa}n^\epsilon\\
&\geq X_{v_1u_1} + X_{u_1  u_3} + X_{u_3v_2}+\frac{z\theta^2 Q_{n}}{500}-4z^{2\theta/\kappa}n^\epsilon \\
&\geq X_{v_1v_2} +\frac{z \theta^2 Q_{n}}{1000}.
\end{align*}
Hence we have $A_j$ for all $j\leq j_{\star}$.  Now set $t$ as the first time such that $\inf_{x\in[0,n]} |\gamma'(t) - (x,0)|\geq zW_n$.  For some $i\in[2^{j_\star}]:=\{1,2,\ldots, 2^{j}\}$ we have that $t$ occurs after the first time $\gamma'$ hits the line $x=(i-1)2^{-j_\star}n$ and before it hits the line $x=i 2^{-j_\star}n$.  Let $u_1=((i-1)2^{-j_\star}n,f((i-1)2^{-j_\star}n))$ and $u_3 = (i 2^{-j_\star}n,f(i 2^{-j_\star}n))$ and $u_2=\gamma'(t)$.  On the event $A_{j_\star}$ we have that
\[
|u_1-u_2| + |u_2-u_3| - |u_1-u_3|\geq 2\sqrt{(n2^{-j_\star-1})^2+(zW_n/2)^2} - n2^{-j_\star} ,
\]
where in the first inequality we chose the points that geometrically minimize the left hand side given the definition of the $u_i$ and $A_{j_\star}$.  Since $n2^{-j_\star}=n^{1-3\epsilon} \gg n^{1-10\epsilon} \geq zW_n$ by Taylor Series we have that
\[
|u_1-u_2| + |u_2-u_3| - |u_1-u_3|\geq \frac{(zW_n)^2}{n2^{-j_\star-2}}\geq z^2 n^{3\epsilon}Q_n \geq  \mu n^\epsilon Q_{3n}> \frac{z \theta^2 Q_{n}}{1000},
\]
for large $n$ and so contradicts our assumption that~\eqref{eq:big.fluctuation.pts} fails which completes the proof.
\end{proof}

The following lemma now follows immediately combining Lemma~\ref{l:trans.SOGam} and~\ref{l:trans.events}.
\begin{theorem}\label{t:trans.main}
There exists $D>0$ such that for any $n\geq 1$ we have that for all $z\geq 0$,
\[
\P[\trans_n \geq z W_n]\leq \exp(1-Dz^{\frac12 \epsilon\theta}).
\]
\end{theorem}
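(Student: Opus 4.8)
The plan is to derive this immediately from Lemma~\ref{l:trans.SOGam} and Lemma~\ref{l:trans.events}, with only minor bookkeeping of constants. First I would dispose of small $z$: since the right-hand side $\exp(1-Dz^{\frac12\epsilon\theta})$ exceeds $1$ as soon as $Dz^{\frac12\epsilon\theta}\le 1$, the asserted inequality is vacuous for $z$ below a fixed threshold (in particular for $z<10$), provided $D$ is chosen small enough. Hence it suffices to treat $z\ge 10$.

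For $z\ge 10$ I would consider the event $\cE_z:=\cS^{(n)}_{z,\origin}\cap\cO^{(n)}_{z,\origin}$. On $\cE_z$, Lemma~\ref{l:trans.events} (applicable since $z\ge 4$), applied with $v_1=\origin$ and $v_2=(n,0)$, shows that every path $\gamma'$ from $\origin$ to $(n,0)$ with transversal fluctuation at least $zW_n$, i.e.\ every $\gamma'\in\Upsilon_{n,\origin,(n,0),z}$, satisfies $X_{\gamma'}\ge X_{\origin,(n,0)}+\frac{z\theta^2 Q_n}{1000}>X_n$. In particular the geodesic $\gamma_{\origin,(n,0)}$, having the minimal passage time $X_n$, cannot lie in $\Upsilon_{n,\origin,(n,0),z}$, so $\trans_n<zW_n$ on $\cE_z$. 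Consequently
\[
\P[\trans_n\ge zW_n]\le \P[(\cS^{(n)}_{z,\origin})^c]+\P[(\cO^{(n)}_{z,\origin})^c]\le \exp(1-Dz^{\theta})+\exp(1-D(zn)^{\frac12\epsilon\theta}),
\]
using Lemma~\ref{l:trans.SOGam}. Since $n\ge 1$ and $\tfrac12\epsilon\theta<\theta$, both summands are at most $\exp(1-Dz^{\frac12\epsilon\theta})$, and absorbing the factor $2$ into a slightly smaller constant gives the claimed bound.

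There is essentially no hard step here; the substance is already contained in the two preceding lemmas. The only points requiring care are: (i) conditioning on exactly the event for which Lemma~\ref{l:trans.events} is stated, so that the conclusion applies to the genuine geodesic — this needs only specialising $v_1,v_2$ to $\origin$ and $(n,0)$; and (ii) identifying which of the two exponents $\theta$ and $\tfrac12\epsilon\theta$ is the bottleneck. It is the latter, arising from the bound on $\cS^{(n)}_{z,\origin}$, and this is precisely what fixes the exponent $\tfrac12\epsilon\theta$ appearing in the statement. One may optionally remark that running the same argument with $\cS^{(n)}_{z,u}$ and $\cO^{(n)}_{z,u}$ centred at an arbitrary lattice point $u$ yields, after a union bound, the corresponding tail estimate for geodesics between any pair of points at distance $n$, which is the form in which the result will be invoked in later sections.
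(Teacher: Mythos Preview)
Your proof is correct and is exactly the argument the paper intends: the paper itself simply states that the theorem ``follows immediately combining Lemma~\ref{l:trans.SOGam} and~\ref{l:trans.events}'', and you have spelled out precisely that combination, including the disposal of small $z$ and the identification of $\tfrac12\epsilon\theta$ as the bottleneck exponent coming from the $\cS^{(n)}_{z,\origin}$ bound.
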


\section{Improved concentration}
\label{s:impconc}

In this section we establish improved concentration bounds for passage times. We say that $\cL(n,R)$ holds if for all $x>0$
\begin{align}\label{eq:ImprovedBounds}
\P[|X_n-n\mu|>x R Q_n]&\leq \exp(1-x^{\tfrac43 \theta}),\nonumber\\
\P[|Y^-_n-n\mu|>x R Q_n]&\leq \exp(1-x^{\tfrac43\theta}).
\end{align}
We will establish inductively that for some sufficiently large $R$ that $\cL(n,R)$ always holds.  The following lemma establishes the base case of the induction.
\begin{lemma}\label{l:ImprovedBaseCase}
For sufficiently large $R$, we have that $\cL(n,R)$ for all $n\leq R$.
\end{lemma}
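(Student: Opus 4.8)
The goal is to establish the base case $\cL(n,R)$ for all $n \le R$ when $R$ is large, where $\cL(n,R)$ asks for the improved stretched-exponential tails with exponent $\tfrac43\theta$ at scale $RQ_n$. The key observation is that $\cL(n,R)$ is a statement about *fixed* $n$ in a *bounded* range $[1,R]$, so we only need crude control that can be absorbed by taking $R$ large enough. Let me think about what we have available.

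For the $X_n$ bound: we know from $\eqref{eq:Q}$ that $\P[|X_n - \E X_n| > xQ_n] \le \exp(1-x^\theta)$, and from Lemma~\ref{l:AnBound} that $|A_n| = |\E X_n - n\mu| \le D_1 Q_n$. So $\P[|X_n - n\mu| > x Q_n] \le \P[|X_n - \E X_n| > (x - D_1)Q_n] \le \exp(1 - (x-D_1)^\theta)$ for $x > D_1$. Now I want to replace $Q_n$ by $RQ_n$ and the exponent $\theta$ by $\tfrac43\theta$. The point is that for $n \le R$, since we only care about $x$ in a controlled regime, I can trade constants.

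For the $Y^-_n$ bound: Lemma~\ref{l:YMinusBound} gives $|\E Y_n^- - n\mu| \le D_2 Q_n$ and $\P[|Y_n^- - \E Y_n^-| > x D_3 Q_n] \le \exp(1-x^\theta)$, so exactly the same argument applies: $\P[|Y_n^- - n\mu| > xQ_n] \le \exp(1 - c(x - C)^\theta)$ for suitable constants.

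**The mechanism for boosting.** Here is the actual calculation. We have (for both quantities, say generically $V_n$) a bound of the form $\P[|V_n - n\mu| > xQ_n] \le \exp(1 - c(x-C_0)^\theta)$ valid for all $x > 0$ (trivially when $x \le C_0$ since the RHS is $\ge 1$ after adjusting, and nontrivially for $x > C_0$). I want $\P[|V_n - n\mu| > xRQ_n] \le \exp(1 - x^{\frac43\theta})$. Substituting, the left side is at most $\exp(1 - c(xR - C_0)^\theta)$. So it suffices that $c(xR-C_0)^\theta \ge 1 + x^{\frac43\theta}$, or equivalently after using $(xR - C_0)^\theta \ge (xR/2)^\theta$ once $xR \ge 2C_0$, that $c (R/2)^\theta x^\theta \ge 1 + x^{\frac43\theta}$. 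This fails for large $x$ because $x^{\frac43\theta}$ eventually beats $x^\theta$ — so I cannot get it for *all* $x$ from this estimate alone. The resolution: for $n \le R$, the passage time $X_n$ (and $Y_n^-$) is bounded by a sum of $O(R)$ local passage times via the triangle inequality and Assumption~\ref{as:localDist}, so $X_n$ has tails no heavier than $\exp(1 - c'x^\kappa)$ at scale $1$ — in particular, for $x$ large (say $xRQ_n \ge R^{10}$, using $Q_n \ge 1$), the event $\{|X_n - n\mu| > xRQ_n\}$ has probability at most $\exp(-c'(xR)^\kappa) \le \exp(-x^{\frac43\theta})$ since $\kappa > \tfrac43\theta$ and $R$ is large. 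For the intermediate range of $x$, the $Q_n$-scale bound above suffices once $R \ge R_0$. Patching the two regimes gives the claim.

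**Main obstacle and remaining care.** The main subtlety is precisely this two-regime split: the $\eqref{eq:Q}$-type bound is good for moderate deviations but has the "wrong" (too small) exponent $\theta$ for very large deviations, while the crude union-bound-over-local-boxes estimate has a good exponent $\kappa$ but only kicks in once the deviation is genuinely large relative to $n$. One must check that the threshold separating the regimes can be chosen uniformly over $n \le R$ — this works because $n \le R$ bounds everything in sight (the number of local boxes, the value of $n\mu$, etc.) polynomially in $R$, whereas the gain from taking $R$ large in the scale $RQ_n$ is a genuine power of $R$. A clean way to organize it: first show $\P[\max_{u,v \in B_{R+1}(\origin)} X_{uv} > y] \le \exp(1 - cy^\kappa)$ for $y \ge \log^C R$ by tiling $B_{R+1}$ with $O(R^2)$ unit balls, applying Assumption~\ref{as:localDist} and the triangle inequality, and union-bounding; this dominates the tail of $X_n$ and $Y_n^-$ for all $n \le R$ in the large-deviation regime. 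Then handle $x \le $ (threshold) using $\eqref{eq:Q}$ and Lemmas~\ref{l:AnBound},~\ref{l:YMinusBound} together with the elementary inequality that $c(R/2)^\theta x^\theta \ge 1 + x^{\frac43\theta}$ whenever $x \le R^{c''}$ and $R$ is large. Choosing $R$ large enough that the two regimes overlap completes the proof.
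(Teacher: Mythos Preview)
Your two-regime strategy is essentially correct but unnecessarily complicated, and the intermediate bound you state is not quite right. The displayed claim $\P[\max_{u,v \in B_{R+1}(\origin)} X_{uv} > y] \le \exp(1 - cy^\kappa)$ with an \emph{absolute} $c$ is false: take $y = \tfrac12 \mu R$, for which the left side is close to $1$ (since $\E X_{\origin,(R,0)} \ge \mu R$) while the right side tends to $0$. What the tiling argument actually gives is $\P[X_{uv}>y]\le CR\exp\bigl(1-D_0(y/(CR))^\kappa\bigr)$, and similarly your $\exp(-c'(xR)^\kappa)$ should be $\exp(-c''x^\kappa)$ up to polynomial-in-$R$ prefactors. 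This weaker (correct) bound still suffices to close regime~2 once $x \ge c'R^3$, so your proof can be repaired; but it is worth being precise here.

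More to the point, the paper avoids the regime split entirely. You reached first for the $Q_n$-scale bound~\eqref{eq:Q}, which carries only the exponent~$\theta$ and therefore forces you to patch the large-$x$ tail separately. The paper instead goes straight back to Assumption~\ref{as:concentration}, which already gives exponent $\kappa > \tfrac43\theta$ at scale $\sqrt{n}$: since $n \le R$ and $Q_n \ge Q_1 > 0$, one has $xRQ_n/\sqrt{n} \ge xR^{1/2}Q_1$, so
\[
\P\bigl[|X_n - \E X_n| > \tfrac12 x R Q_n\bigr] \le \exp\bigl(1 - D_0(\tfrac12 x R^{1/2}Q_1)^\kappa\bigr) \le \exp(1 - x^{\tfrac43\theta})
\]
for all $x \ge 1$ once $R$ is large, in a single step. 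The centering shift from $\E X_n$ to $n\mu$ costs at most $A_n$, which is $o(RQ_n)$ uniformly in $n\le R$. For $Y_n^-$ the paper discretizes the endpoints (at most $O(n^2)$ lattice pairs), controls the rounding by $\Gamma_n$, and applies the same Assumption~\ref{as:concentration} bound with a union cost absorbed by $R^{\kappa/2}$. So the whole lemma is a one-line consequence of the $\kappa$-exponent concentration assumption, and neither~\eqref{eq:Q} nor Lemma~\ref{l:YMinusBound} nor any tiling is needed.
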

\begin{proof}
The bound for $X_n$ follows from Assumptions~\ref{as:speed} and~\ref{as:concentration} since $R \geq n$ and $\tfrac43\theta<\kappa$.  The side to side distance of $\cR_n$ ranges from $n$ to $\sqrt{n+W_n^2}$ by~\eqref{eq:Pythag} hence we have that
\begin{align}
|Y_n^- -n\mu|&\leq \sup_{u\in L_{\cR_n},v\in R_{\cR_n}} |X_{uv}-\mu|u-v|]+\frac12\mu Q_n\\
&\leq \sup_{u\in L_{\cR_n},v\in R_{\cR_n}} |X_{u^\Z v^\Z}-\mu|u^\Z-v^\Z|]+\frac12\mu Q_n + 2 + 2\Gamma_n.
\end{align}
Then for $x\geq 1$,
\begin{align*}
&\P[|Y^-_n-n\mu|\geq x R Q_n]\\
 &\qquad\leq \P[\sup_{u\in L_{\cR_n},v\in R_{\cR_n}} |X_{u^\Z v^\Z}-\mu|u^\Z-v^\Z|]\geq \frac14 x R Q_n] + \P[\Gamma_n \geq \frac14 x R Q_n]\\
&\qquad \leq n^2 \exp\left(1-\Big(\frac14 R^{1/2}x \Big)^\kappa\right) + \exp\bigg(1-\Big(\frac{R Q_{n}}{4\log^C n} x\Big)^\kappa\bigg)\\
&\qquad \leq \exp(1-x^{\tfrac43\theta})
\end{align*}
where the second inequality follows by a union bound over the at most $n^2$ choices of $u^\Z, v^\Z$, Assumption~\ref{as:concentration} and Lemma~\ref{l:Gamma}.
\end{proof}

The following two lemmas will complete the inductive step.

\begin{lemma}\label{l:ImprovedYmin}
For $M$ sufficiently large we have that if $n_\star\geq M^{100/\alpha}$ and $M^{10}\leq R$ then if $\cL(n',R)$ holds for all $n'\leq n_\star$ then
\begin{equation}\label{eq:ImprovedYmin}
\P[Y^-_{n''}-n''\mu \leq -x R Q_{n''}]\leq  \frac12\exp(1-x^{\tfrac43\theta}).
\end{equation}
holds for all $n_\star\leq n''\leq \frac{M}{2}n_\star$
\end{lemma}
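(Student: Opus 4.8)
The plan is to split the range of $x$ at a threshold $x_\star$ that depends only on $M$ and $R$ --- crucially \emph{not} on $n''$ --- and to dispatch $x\le x_\star$ using the already proven (unimproved) concentration of Lemma~\ref{l:YMinusBound} together with the fact that $R$ is enormous, and $x\ge x_\star$ by a fresh multi-scale decomposition that feeds on the inductive hypothesis $\cL(n',R)$ for the pieces. For the \emph{moderate range} $x\le x_\star$: by Lemma~\ref{l:YMinusBound}, $|\E Y^-_{n''}-n''\mu|\le D_2Q_{n''}$ and $\P[Y^-_{n''}-\E Y^-_{n''}\le -tQ_{n''}]\le \tfrac12\exp(1-(t/D_3)^\theta)$, so for $x\ge 2D_2/R$,
\[
\P[Y^-_{n''}-n''\mu\le -xRQ_{n''}]\le \exp\!\big(1-(xR/(2D_3))^\theta\big).
\]
Since $R\ge M^{10}$ dwarfs any fixed power of $x_\star$, one has $(xR/(2D_3))^\theta\ge x^{\tfrac43\theta}+1$ throughout $[2D_2/R,\,x_\star]$, which is exactly \eqref{eq:ImprovedYmin}; and for $x$ below a small absolute constant (in particular for $x\le 2D_2/R$) the right-hand side of \eqref{eq:ImprovedYmin} exceeds $1$, so the bound is trivial. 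Thus only $x\ge x_\star$ needs work.

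For the \emph{large range} $x\ge x_\star$ I would run the decomposition. Pick an integer $M'$ with $M_0\le M'\le M$, where $M_0$ is a large constant fixed below, and $n':=n''/M'\le n_\star$ --- e.g.\ $M'=\max(M_0,\lceil n''/n_\star\rceil)$ works, and then $n'\ge n''/M\ge n_\star/M\ge M^{100/\alpha-1}$ is still huge, so $\cL(n',R)$ holds by hypothesis. Cut $\R^2$ into the vertical strips $\Lambda_i=\{(x,y):(i-1)n'<x<in'\}$, $1\le i\le M'$, resampled independently, and decompose the side-to-side geodesic $\gamma$ of $\cR_{n''}$ exactly as in \eqref{eq:NBDecomposition}: with $t_i$ its first hitting time of $x=in'$ and $k^\gamma_i=\lfloor\gamma_2(t_i)/W_{n'}\rfloor$, on the event $\cJ'$ that $|k^\gamma_i|\le n'/W_{n'}$ for all $i$ one gets
\[
Y^-_{n''}\ \ge\ \sum_{i=1}^{M'}Z^{-,\Lambda_i}_{i,k^\gamma_{i-1},k^\gamma_i}\ -\ \sum_{i=1}^{M'}\big|X^{\Lambda_i}_{\gamma(t_{i-1})\gamma(t_i)}-X_{\gamma(t_{i-1})\gamma(t_i)}\big|\ -\ M'\Gamma_{n''}.
\]
Set $\cZ_{i,k,k'}:=-(Z^{-,\Lambda_i}_{i,k,k'}-n'\mu)(RQ_{n'})^{-1}\,I(|k|\vee|k'|\le n'/W_{n'})$. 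By Corollary~\ref{c:paraRect} together with $\cL(n',R)$ (using $\tfrac{m^2-8m}{16}\ge-1$ to absorb the $k$-dependence), the families $\{\cZ_{i,k,k'}\}_{k,k'}$ are independent across $i$, bounded above, and satisfy $\P[\cZ_{i,k,k'}\ge z]\le C\exp(-cz^{\tfrac43\theta})$ uniformly in $k,k'$. Corollary~\ref{c:percolation} (with $\beta=\tfrac43\theta$ and $\lambda=1$), after a union bound over the at most $2\lceil W_{n''}/W_{n'}\rceil$ values of the endpoint heights $k^\gamma_0,k^\gamma_{M'}$ (using vertical translation invariance of the strips; by Lemma~\ref{l:Qgrowth} this is a fixed power of $M'$, hence harmless as $R\ge M^{10}$), then gives that off an event of probability $\le C\exp(-cz^{\tfrac43\theta})$,
\[
\sum_{i=1}^{M'}Z^{-,\Lambda_i}_{i,k^\gamma_{i-1},k^\gamma_i}\ \ge\ n''\mu\ -\ RQ_{n'}\big(\tau_1(\uk^\gamma)+CM'\log^{C}R+z\big).
\]

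To close, $\tau_1(\uk^\gamma)\le 2M'\max_i|k^\gamma_i|$, and on $\cJ'$ Theorem~\ref{t:trans.main} together with Lemma~\ref{l:Qgrowth} (which bounds $W_{n''}/W_{n'}$ by a fixed power of $M'$) gives $\tau_1(\uk^\gamma)\le (M')^{C}$; choosing $z=\tfrac13 x(M')^\alpha$ and using $Q_{n''}\ge (M')^\alpha Q_{n'}$ (immediate from the definition of $Q$), the additive term $RQ_{n'}\big(\tau_1(\uk^\gamma)+CM'\log^{C}R\big)$ is at most $\tfrac13 xRQ_{n''}$ provided $x\ge x_\star:=M^{C}$ (a fixed power of $M$), the error terms $\sum_i|X^{\Lambda_i}-X|$ and $M'\Gamma_{n''}$ and $\P[(\cJ')^c]$ are each $\le\tfrac18\exp(1-x^{\tfrac43\theta})$ by Assumption~\ref{as:resamp1}, Lemma~\ref{l:Gamma} and Theorem~\ref{t:trans.main} (here one repeatedly uses $n''\ge n_\star\ge M^{100/\alpha}$ to beat the $\log$'s and small powers of $n''$ in those bounds), and finally $C\exp\!\big(-c(\tfrac13x(M')^\alpha)^{\tfrac43\theta}\big)\le\tfrac18\exp(1-x^{\tfrac43\theta})$ once $M_0$ is large enough that $c\,3^{-\tfrac43\theta}(M')^{\tfrac43\alpha\theta}\ge 2$ and $x_\star=M^{C}$ is large. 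Summing these estimates yields \eqref{eq:ImprovedYmin} for $x\ge x_\star$ as well, completing the proof.

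The hard part is precisely this last bookkeeping rather than any single inequality: one must fix $M_0$, then $M$, then $R$ (in that order) so that the exponent gain $(M')^{\tfrac43\alpha\theta}$ produced by having $M'$ independent pieces each enjoying the \emph{improved} exponent $\tfrac43\theta$ survives against the constant coming out of Proposition~\ref{p:perc1}, while the additive $O(M'\log^{C}R)$ loss and all the resampling, $\Gamma$, and transversal-fluctuation error terms are swept below the $n''$-independent threshold $x_\star$, on which stretch the crude Lemma~\ref{l:YMinusBound} bound --- made sharp by the sheer size of $R$ --- already delivers \eqref{eq:ImprovedYmin}.
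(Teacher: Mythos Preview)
Your overall strategy---split at a threshold $x_\star$ depending only on $M,R$, use the crude Lemma~\ref{l:YMinusBound} bound below $x_\star$ (exploiting that $R$ is huge), and decompose into $M'$ pieces above $x_\star$---matches the paper's, but there is a genuine gap in the decomposition step.

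The problem is your control of $\tau_1(\uk^\gamma)$. By defining $\cZ_{i,k,k'}$ \emph{without} the parabolic penalty and applying Corollary~\ref{c:percolation} with $\lambda=1$, you leave yourself needing $\tau_1(\uk^\gamma)\le (M')^C$ on an event whose complement has probability $\le\tfrac18\exp(1-x^{\frac43\theta})$ for \emph{every} $x\ge x_\star$. But Theorem~\ref{t:trans.main} (your stated tool) has tail exponent only $\tfrac12\epsilon\theta$, so the best it yields is a bound like $\P[\tau_1>(M')^C]\le\exp(-c(M')^{c'\epsilon\theta})$---a fixed positive number independent of $x$, which cannot beat $\exp(1-x^{\frac43\theta})$ once $x$ exceeds a suitable power of $M$. (And $\cJ'$ alone gives only $\tau_1\le 2M'n'/W_{n'}$, polynomial in $n'$, not $(M')^C$.) Letting the transversal cutoff grow with $x$ does not help either: since $\tfrac12\epsilon\theta<\tfrac43\theta$, the exponent comparison goes the wrong way for large $x$.

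The paper avoids this by building the geometry into $\cZ$: it sets
\[
\cZ_{i,k,k'}=\Big(-\frac{Z^{-,\Lambda_i}_{i,k,k'}-n\mu}{Q_n}+\frac{(k-k')^2}{32}\Big)\,I\big(|k|\vee|k'|\le n/W_n\big),
\]
so that Corollary~\ref{c:paraRect} gives uniform $\tfrac43\theta$-tails for $\tfrac1R\cZ_{i,k,k'}$ via $\cL(n,R)$, and then applies Corollary~\ref{c:percolation} with $\lambda=\tfrac1{32R}$ (whence the $\log^{C_7}(3+32R)$ term). Upon unwinding, the quadratic increments $\sum_i(k_{i-1}-k_i)^2\ge\tau_1(\uk)$ cancel against the corollary's $\lambda\tau_1$, so no separate bound on $\tau_1(\uk^\gamma)$ is ever needed---only the weak event $\cJ$ (failure probability $\exp(-Cn^{\kappa/5})$) to activate the indicators. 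The paper also inserts a third regime $x>n^{2/3}$, handled by a crude union bound via Assumption~\ref{as:concentration}, which you would likewise need once the $\exp(-Cn^{c})$-type errors stop dominating $\exp(1-x^{\frac43\theta})$.
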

\begin{proof}
For $n''\in[n_\star,\frac{M}{2}n_\star]$ we set $n=\frac1{M}n''$.  We first give a coarse bound for large $x$.  By Assumptions~\ref{as:speed} and~\ref{as:concentration} and a union bound over starting and ending points we have that for $x>n^{2/3}$
\begin{align}\label{eq:YminRbound1}
\P[Y_{Mn}^- - Mn\mu \leq -xRQ_{nM}] &\leq (2nM)^4 \P[ \min_{u,v\in B_{2nM}(\mathbf{0})}X_{u^\Z v^\Z} - \E X_{u^\Z v^\Z} \leq -xRQ_{nM}/2] \nonumber\\
&\qquad + 2\P[\Gamma_{2Mn} \geq xRQ_{nM}/2]\nonumber\\
& \leq \exp(1-x^{\kappa/10}) \leq \frac{1}{2}\exp(1-x^{\frac43 \theta}).
\end{align}

We will establish the improved tail bounds by following the percolation method from Section~\ref{s:percgen}.   We define $\ell_i$ to be the line $x=in$.  Let $\gamma$ be the optimal path for $Y^-_{nM}$ and let $t_i:=\inf\{t\in[0,1]: \gamma_1(t)=in\}$ and $k^\gamma_i = \lfloor \gamma_2(t_i)/W_n\rfloor$ where $\gamma_1$ and $\gamma_2$ denote the two coordinates of the path $\gamma$.  By Lemma~\ref{l:Qgrowth} we have that
\[
Q_{nM}\leq D_6^{\lceil \log_{3/2} M\rceil}Q_n
\]
and so $W_{Mn}/W_n \leq M^\zeta$ for some $\zeta>0$. It follows that $0 \leq k^\gamma_0 \leq M^\zeta$.
We denote
\[
\mathfrak{K}^+_M=\{(k_0,\ldots,k_M)\in\Z^{M+1}: 0\leq k_0 \leq M^\zeta\}
\]
so we have that $\uk^\gamma\in \mathfrak{K}^+_M$.  Next let $\cJ$ be the event
\[
\cJ=\{\max_{1\leq i \leq M-1} |k_i^\gamma| \leq \frac14 n/W_n\}
\]
and similarly to equation~\eqref{eq:cJBound} have that $\P[\cJ^c]\leq \exp(-Cn^{\kappa/5})$.
Setting
\[
\cZ_{i,k,k'}:=\left(-(Z^{-,\Lambda_i}_{i,k,k'} -n\mu)/Q_n + \frac{k^2}{32} \right)I(|k|\vee |k'|\leq n/W_n).
\]
we have by Corollary~\ref{c:paraRect} that,
\begin{align}\label{eq:ZtildeBound}
\P[\cZ_{i,k,k'} > z] &\leq \P\Big[Y^-_{n} < n\mu -\frac{(k^2-16k+C)Q}{32} -zQ\Big] +\exp(1-CQ^{\kappa/10})
\end{align}
and so
\[
\P[\frac1{R}\cZ_{i,k,k'}I(\cZ_{i,k,k'}\leq n) > z] \leq \exp(1-C'z^{\tfrac43\theta})
\]
and hence satisfies the hypothesis of Corollary~\ref{c:percolation}.
Let $\cG$ be the event 
\[
\cG=\{\max_{i,|k|,|k'|\leq \tfrac{n}{W_n}} \cZ_{i,k,k'}\leq n\}.
\]
By~\eqref{eq:ZtildeBound} we have that $\P[\cG^c]\leq n^5 \exp(1-CQ^{\kappa/10})$.
Then by Corollary~\ref{c:percolation}, similarly to ~\eqref{eq:percApplication1}, we have that
\begin{align}\label{eq:percApplication2}
&\P[\cJ,\cG,\sum_{i=1}^M Z^{-,\Lambda_i}_{i,k^\gamma_{i-1},k^\gamma_i} - nM\mu \leq  -(C_7 M \log^{C_7}(3+32R)  + x)R Q_n]\\
&\quad\leq \P[\max_{\uk\in\mathfrak{K}_M^+}\sum_{i=1}^M \frac1{R}\cZ_{i,k,k'}I(\cZ_{i,k,k'}\leq n) + \frac{(k_{i-1}-k_i)^2}{32R} \geq C_7 M \log^{C_7}(3+32R)  + x]\nonumber\\
&\quad \leq  C_8M^\zeta\exp\left(-C_9 x^{\tfrac43\theta}\right ).\nonumber
\end{align}
If $M^2 R\leq x\leq n^{2/3}$ and $M$ is sufficiently large then since $Q_{Mn}\geq M^\alpha Q_n$,
\begin{align}\label{eq:YminRbound2}
&\P[Y^-_{nM} - nM\mu \leq  -x R Q_{nM}]\\
&\quad\leq \P\left[\max_{\uk\in\mathfrak{K}^+_M}\sum_{i=1}^M \frac1{R}\cZ_{i,k,k'}I(\cZ_{i,k,k'}\leq n) - \frac{(k_{i-1}-k_i)^2}{32R} \geq (C_7 M \log^{C_7}(3+32R)  + \frac{x}{2})M^\alpha\right]\nonumber\\
&\qquad + \P[\cJ^c\cup\cG^c] + \P[\cJ,\cG,|X_{nM} - \sum_{i=1}^M X^{\Lambda_i}_{\gamma(t_{i-1}),t(\gamma_i)}| > Q_n]\nonumber\\
&\quad \leq  C_8M^\zeta\exp\left(-C_9 (M^\alpha x/2)^{\tfrac43\theta}\right ) + 2n^4\exp(1-CQ_n^{\kappa/10})\nonumber\\
&\quad \leq \exp\left(1-C' M^{\tfrac43\alpha\theta} x^{\tfrac43\theta}\right )\leq \frac12 \exp(1-x^{\tfrac43\theta})
\end{align}
for some $C'>0$ and large enough $M$.
Finally for $1\leq x \leq M^2 R$ since by Lemma \ref{l:YMinusBound} $\E |Y^-_{nM} - nM\mu| \leq D_2 Q_{nM} \leq \frac12 x R Q_{nM}$, we have by Lemma \ref{l:YMinusBound}, for some $D>0$,
\begin{align}\label{eq:YminRbound3}
\P[Y^-_{nM} - nM\mu \leq  -x R Q_{nM}] &\leq \P[Y^-_{nM} - \E Y^-_{nM} \leq  -\frac12 x R Q_{nM}]\nonumber\\
& \leq \exp(1-D(\frac12 x R)^{\theta})\nonumber\\
&\leq \exp(1-2x^{\tfrac43\theta})\leq  \frac12\exp(1-x^{\tfrac43\theta}).
\end{align}
since for large enough $M$ we have that $D(\frac12 R)^{\theta} \geq 2(M^2R)^{\tfrac13 \theta} \geq 2x^{\tfrac13 \theta}$.  Combining \eqref{eq:YminRbound1}, \eqref{eq:YminRbound2} and~\eqref{eq:YminRbound3} establishes equation~\eqref{eq:ImprovedYmin} when $M$ is large enough.
\end{proof}

\begin{lemma}\label{l:ImprovedX}
For $M$ sufficiently large we have that if $n_\star\geq M^{100/\alpha}$ and $M^{10}\leq R$ then if $\cL(n',R)$ holds for all $n'\leq n_\star$ then
\begin{equation}\label{eq:ImprovedX}
\P[X_{n''}-n''\mu \geq x R Q_{n''}]\leq \frac12\exp(1-x^{\tfrac43\theta})
\end{equation}
holds for all $n_\star\leq n''\leq \frac{M}{2}n_\star$.
\end{lemma}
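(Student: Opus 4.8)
The plan is to follow the template of the proof of Lemma~\ref{l:ImprovedYmin}, establishing the bound separately on three ranges of $x$ and patching the easy range $x\in(0,1)$ at the end. Set $n:=n''/M$, so that $n\le n_\star/2\le n_\star$ and $\cL(n,R)$ is available. Throughout I take $M$ — hence also $n_\star\ge M^{100/\alpha}$, $n''\ge n_\star$ and $n$ — large, and freely use $A_m\le D_1Q_m$ (Lemma~\ref{l:AnBound}), $m^{1/6}\le\Var(X_m)\le CQ_m^2$ (Assumption~\ref{as:varlb} and \eqref{eq:varUpperQn}, so $Q_m\ge cm^{1/12}$), the inequality $Q_{Mn}\ge M^\alpha Q_n$ (immediate from the definition of $Q$), and $\tfrac43\theta<\kappa/10<\kappa$.

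In the range $1\le x\le M^2R$ the old bound suffices: since $A_{n''}\le D_1Q_{n''}$ and $xR-D_1\ge\tfrac12 xR$, \eqref{eq:Q} gives $\P[X_{n''}-n''\mu>xRQ_{n''}]\le\P[X_{n''}-\E X_{n''}>\tfrac12 xRQ_{n''}]\le\exp(1-(\tfrac12 R)^\theta x^\theta)$, and because $R\ge M^{10}$ we have $(\tfrac12 R)^\theta\ge 2(M^2R)^{\tfrac13\theta}\ge 2x^{\tfrac13\theta}$, so the right side is $\le\exp(1-2x^{\tfrac43\theta})\le\tfrac12\exp(1-x^{\tfrac43\theta})$. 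In the range $x>n^{2/3}$ the crude bound works: by Assumption~\ref{as:concentration}, $\P[X_{n''}-n''\mu>xRQ_{n''}]\le\P[X_{n''}-\E X_{n''}>\tfrac12 xRQ_{n''}]\le\exp\big(1-D_0(\tfrac12 xQ_1/\sqrt{n''})^\kappa\big)$ using $R,Q_{n''}\ge$ constants, and writing $x^\kappa=x^{\kappa/10}x^{9\kappa/10}$ and bounding $x^{9\kappa/10}\ge(n''/M)^{3\kappa/5}$ extracts a factor $n''^{\kappa/10}$ dominating $\log n''$, so this is $\le\exp(1-2x^{\kappa/10})\le\tfrac12\exp(1-x^{\tfrac43\theta})$. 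These two ranges already cover $[1,\infty)$ whenever $M^2R\ge n^{2/3}$.

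The substantive range is $M^2R\le x\le n^{2/3}$, handled by the block decomposition of Section~\ref{s:percgen}. Iterating the triangle inequality (Assumption~\ref{as:tri}) gives $X_{n''}\le\sum_{i=1}^M X_{((i-1)n,0),(in,0)}$, and with $\Lambda_i=\{(x,y):(i-1)n<x<in\}$ we write $X_{((i-1)n,0),(in,0)}\le X^{\Lambda_i}_{((i-1)n,0),(in,0)}+E_i$ where $E_i:=|X_{((i-1)n,0),(in,0)}-X^{\Lambda_i}_{((i-1)n,0),(in,0)}|$; the variables $X^{\Lambda_i}_{((i-1)n,0),(in,0)}$ are independent (disjoint resampled regions) and each is distributed as $X_n$. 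By Assumption~\ref{as:resamp1}, translation invariance and a union bound, $\P[\sum_iE_i>\tfrac14 xRQ_{n''}]\le M\exp\big(1-D_0(xRQ_{n''}/(4M\log^{1/\kappa}n))^\kappa\big)\le\tfrac14\exp(1-x^{\tfrac43\theta})$, using $Q_{n''}\ge cn''^{1/12}$, $R\ge M^{10}$ and $\kappa>\tfrac43\theta$. For the main term set $\cZ_i:=(X^{\Lambda_i}_{((i-1)n,0),(in,0)}-n\mu)/(RQ_n)$; by $\cL(n,R)$, $\P[\cZ_i\ge s]\le\exp(1-s^{\tfrac43\theta})$, and on $\{\forall i:\cZ_i\le n\}$ (whose complement has probability $\le M\exp(1-n^{\tfrac43\theta})\le\tfrac1{10}\exp(1-x^{\tfrac43\theta})$ since $x\le n^{2/3}$) Corollary~\ref{c:percolation} applied to $\cZ_{i,k,k'}:=\cZ_iI(\cZ_i\le n)$ with $\lambda=1$ and to the constant sequence $\uk\equiv\mathbf{0}$ gives
\[
\P\Big[\textstyle\sum_{i=1}^M\cZ_iI(\cZ_i\le n)\ge C_7M\log^{C_7}4+z\Big]\le C_8\exp(-C_9 z^{\tfrac43\theta}).
\]
Since $\sum_iX^{\Lambda_i}_{((i-1)n,0),(in,0)}-n''\mu=RQ_n\sum_i\cZ_i$ and $Q_{n''}/Q_n\ge M^\alpha$, the event that the left side exceeds $\tfrac34 xRQ_{n''}$, intersected with $\{\forall i:\cZ_i\le n\}$, is contained in $\{\sum_i\cZ_iI(\cZ_i\le n)>\tfrac34 xM^\alpha\}$; and because $x\ge M^2R\gg M^{1-\alpha}$ we may take $z=\tfrac34 xM^\alpha-C_7M\log^{C_7}4\ge\tfrac12 xM^\alpha$, whence $C_9 z^{\tfrac43\theta}\ge C_9(\tfrac12)^{\tfrac43\theta}M^{\tfrac43\alpha\theta}x^{\tfrac43\theta}\ge2x^{\tfrac43\theta}$ for $M$ large. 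A union bound over the three contributions gives $\P[X_{n''}-n''\mu>xRQ_{n''}]\le\tfrac12\exp(1-x^{\tfrac43\theta})$ in this range as well. Finally, for $x\in(0,1)$ the assertion is trivial when $\tfrac12\exp(1-x^{\tfrac43\theta})\ge1$, and otherwise follows from the first range's estimate since $(\tfrac12 R)^\theta$ is an enormous constant.

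The main obstacle is the range $M^2R\le x\le n^{2/3}$: one must ensure that Assumption~\ref{as:resamp1} genuinely decouples the $M$ subpaths, and — exactly as in Lemma~\ref{l:ImprovedYmin} — that the unavoidable $\Theta(M)$ term of Corollary~\ref{c:percolation} (which reflects a worst-case order-one mean per block) is dominated by the target deviation $\tfrac34 xM^\alpha$, which is the case precisely because $x\ge M^2R\gg M^{1-\alpha}$. Beyond that, the remaining work is bookkeeping: one has to check that a single choice of large $M$, and the induced large $n_\star$, $n''$ and $n=n''/M$, make all the competing quantities line up simultaneously in the three ranges. Since in each comparison a genuine power of $M$ is pitted against logarithms or fixed constants, this is routine but must be tracked with some care.
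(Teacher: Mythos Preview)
Your proof is correct and follows essentially the same three-range strategy as the paper's own argument: the coarse bound from Assumption~\ref{as:concentration} for $x>n^{2/3}$, the direct use of \eqref{eq:Q} for $1\le x\le M^2R$, and the block decomposition via the triangle inequality together with Assumption~\ref{as:resamp1} and the percolation estimate for the intermediate range. The only cosmetic differences are that the paper invokes Proposition~\ref{p:perc1} rather than Corollary~\ref{c:percolation} and bounds each resampling error by $Q_n$ individually rather than their sum by $\tfrac14 xRQ_{n''}$; since only the constant sequence $\uk\equiv\mathbf{0}$ is needed here, both formulations collapse to the same i.i.d.\ sum bound.
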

\begin{proof}
For $n''\in[n_\star,\frac{M}{2}n_\star]$ we set $n=\frac1{M}n''$.  We first give a coarse bound for large $x$.  By Assumptions~\ref{as:speed} and~\ref{as:concentration} and a union bound over starting and ending points we have that for $x>n^{2/3}$
\begin{align}\label{eq:XRbound1}
\P[X_{Mn}^- - Mn\mu \geq xRQ_{nM}] &\leq \exp(1-x^{\kappa/10}) \leq \exp(1-x^{\tfrac43\theta}).
\end{align}

If $M^2 R\leq x\leq n^{2/3}$ and $M$ is sufficiently large then since $Q_{Mn}\geq M^\alpha Q_n$, by Proposition \ref{p:perc1} 
\begin{align}\label{eq:XRbound2}
\P[X_{nM} - nM\mu \geq  x R Q_{nM}]
&\leq \P[\sum_{i=1}^M X^{\Lambda_i}_{((i-1)n,0),(in,0)}-n\mu \geq (C_3 M R  + \frac{x}{2})M^\alpha Q_{n}]\nonumber\\
&\quad + \sum_{i=1}^M \P[|X_{((i-1)n,0),(in,0)} - X^{\Lambda^i}_{((i-1)n,0),(in,0)}| > Q_n]\nonumber\\
& \leq  C_4 \exp\left(-C_5 (M^\alpha x/2)^{\tfrac43\theta}\right ) + M\exp(1-CQ_n^{\kappa/10})\nonumber\\
& \leq  \frac12\exp(1-x^{\tfrac43\theta})
\end{align}
for large enough $M$.
Finally for $1\leq x \leq M^2 R$ since by Lemmas \ref{l:YMinusBound} and \ref{l:YPlusBound}, $\E |X_{nM} - nM\mu| \leq D_2 Q_{nM} \leq \frac12 x R Q_{nM}$, so we have by \eqref{eq:Q}
\begin{align}\label{eq:XRbound3}
\P[X_{nM} - nM\mu \geq  x R Q_{nM}] &\leq \P[X_{nM} - \E X_{nM} \leq  \frac12 x R Q_{nM}]\nonumber\\
& \leq \exp(1-(\frac12 x R)^{\theta})\leq  \exp(1-2x^{\tfrac43\theta})\leq  \frac12\exp(1-x^{\tfrac43\theta})
\end{align}
since for large enough $M$ we have that $(\frac12 R)^{\theta} \geq 2(M^2R)^{\tfrac13 \theta} \geq 2x^{\tfrac13 \theta}$.  Combining \eqref{eq:XRbound1}, \eqref{eq:XRbound2} and~\eqref{eq:XRbound3} establishes equation~\eqref{eq:ImprovedX} when $M$ is large enough.
\end{proof}

\begin{lemma}\label{l:ImprovedInduction}
For large enough $R$ we have that $\cL(n,R)$ holds for all $n\geq 1$.
\end{lemma}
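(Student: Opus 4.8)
The plan is a bootstrap over scales: glue the base case (Lemma~\ref{l:ImprovedBaseCase}) to the two one-sided inductive steps (Lemmas~\ref{l:ImprovedYmin} and~\ref{l:ImprovedX}), and recover the two tails those lemmas do not cover by a deterministic sandwiching inequality.

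First I would fix $M$ large enough that both Lemma~\ref{l:ImprovedYmin} and Lemma~\ref{l:ImprovedX} apply, and in addition $M>2$; only then fix $R$ large enough that $R\ge M^{100/\alpha}\vee M^{10}$ and that Lemma~\ref{l:ImprovedBaseCase} holds with this value of $R$. (Getting the quantifier order right here is the one point requiring a little care: each constraint on $R$ is a fixed threshold depending only on the model constants and on the already-fixed $M$, so a single $R$ works for all of them simultaneously.) The statement to prove by induction is: $\cL(n',R)$ holds for all $n'\le n_\star$, where $n_\star$ ranges over a geometrically growing sequence of thresholds. The base case $n_\star=R$ is exactly Lemma~\ref{l:ImprovedBaseCase}.

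For the inductive step, assume $\cL(n',R)$ holds for all $n'\le n_\star$ for some $n_\star\ge R$. Since $n_\star\ge R\ge M^{100/\alpha}$ and $M^{10}\le R$, Lemma~\ref{l:ImprovedYmin} gives $\P[Y^-_{n''}-n''\mu\le -xRQ_{n''}]\le\tfrac12\exp(1-x^{\frac43\theta})$ and Lemma~\ref{l:ImprovedX} gives $\P[X_{n''}-n''\mu\ge xRQ_{n''}]\le\tfrac12\exp(1-x^{\frac43\theta})$, both for every $n_\star\le n''\le\tfrac{M}{2}n_\star$. The two missing tails follow from the deterministic inequality $Y^-_{n}\le X_{n}$ (valid because $(0,0)\in L_{\cR_n}$ and $(n,0)\in R_{\cR_n}$, so the side-to-side infimum is at most $X_{(0,0),(n,0)}=X_n$): we get the inclusions $\{X_{n''}-n''\mu\le -xRQ_{n''}\}\subseteq\{Y^-_{n''}-n''\mu\le -xRQ_{n''}\}$ and $\{Y^-_{n''}-n''\mu\ge xRQ_{n''}\}\subseteq\{X_{n''}-n''\mu\ge xRQ_{n''}\}$, so the lower tail of $X_{n''}$ and the upper tail of $Y^-_{n''}$ are controlled by the above. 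Adding the upper and lower tail bounds for each of $X_{n''}$ and $Y^-_{n''}$ (the bound $\exp(1-x^{\frac43\theta})$ is vacuous unless $x\ge1$, and for such $x$ the two halves $\tfrac12\exp(1-x^{\frac43\theta})$ sum to $\exp(1-x^{\frac43\theta})$) yields $\cL(n'',R)$ for all $n_\star\le n''\le\tfrac{M}{2}n_\star$, hence $\cL(n',R)$ for all $n'\le\tfrac{M}{2}n_\star$.

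Finally I would iterate: with $n_\star^{(0)}=R$ and $n_\star^{(j+1)}=\tfrac{M}{2}n_\star^{(j)}$, the fact $M>2$ forces $n_\star^{(j)}\to\infty$, so every $n\ge1$ lies below some $n_\star^{(j)}$ and the induction delivers $\cL(n,R)$; for $n\le R$ it is the base case directly. I do not expect any genuine obstacle here — all the analytic content is already inside the preceding lemmas — the only things to be careful about are the quantifier order for $M$ and $R$, the direction of the sandwich $Y^-_n\le X_n$, and the bookkeeping that combines the two one-sided $\tfrac12$-bounds into the two-sided bound defining $\cL(n,R)$.
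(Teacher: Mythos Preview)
Your proposal is correct and follows essentially the same approach as the paper: fix $M$, then $R$ large enough for the base case and the hypotheses of the inductive lemmas, use $Y^-_n\le X_n$ to supply the missing tails, and iterate geometrically. The only cosmetic difference is that you take the initial threshold $n_\star=R$ (forcing $R\ge M^{100/\alpha}$), while the paper first picks a large $n_\star$ and then picks $R$ so that the base case covers $[1,n_\star]$; both orderings work.
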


\begin{proof}
Pick $M$ and $n_\star$ large enough so that Lemmas~\ref{l:ImprovedYmin} and~\ref{l:ImprovedX} hold.  By Lemma~\ref{l:ImprovedBaseCase} we can find $R$ large enough such that $\cL(n,R)$ holds for all $n\leq n_\star$ and $R\geq M^{10}$.  Then since $Y^-_n\leq X_n$  Lemmas~\ref{l:ImprovedYmin} and~\ref{l:ImprovedX} imply that $\cL(n,R)$ holds for all $n\leq \frac12 M  n_\star$.  Applying this inductively implies that $\cL(n,R)$ holds for all $n\geq 1$.
\end{proof}

\section{Record points and growth of $Q_n$}
\label{s:record}

Note that in our choice of parameters, a range of values of $\alpha$ are possible and that all our results thus far apply for any $\alpha$ in this range.  With this in mind let

\begin{equation}
    \label{e:alpharange}
\alpha<\alpha_+<\alpha_\star \in [\min\{\frac{1}{30},\frac12 \kappa\}, \min\{\frac{1}{15},\kappa\}]
\end{equation}
be three parameters in the range.  Recalling that $Q_n= \sup_{1\leq m \leq n} \left( \frac{n}{m}\right)^\alpha \hQ_m$ and we say that $n$ is an $\alpha$-record point if $Q_n= \hQ_n$.  Clearly if $\alpha'>\alpha$ then $\alpha'$-record points are also $\alpha$-record points.  We say that $n$ is a $(C,\alpha')$-quasi record point if 
\[
\sup_{1\leq m \leq n} \left( \frac{n}{m}\right)^{\alpha'} \hQ_m \leq C  \hQ_n
\]
Ultimately we will show that there exists a $C$ and $\alpha'>\alpha$ such that all $n$ are  $(C,\alpha')$-quasi record points.  

\begin{lemma}
\label{l:varbd}
For $C>0$, there exists $\hat{C},\tilde{C}>0$ such that if $n\geq 1$ is a $(C,\alpha)$ quasi-record point then
\[
\hat{C} Q_n^2\leq \Var(X_n)\leq \tilde{C} Q_n^2.
\]
\end{lemma}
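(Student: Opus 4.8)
The upper bound is the easy direction: by \eqref{eq:varUpperQn} we have $\Var(X_n)\le C\hQ_n^2\le C Q_n^2$ unconditionally (no quasi-record hypothesis needed), so take $\tilde C$ to be the constant there. The content is the lower bound $\Var(X_n)\ge \hat C Q_n^2$ for quasi-record points. The plan is to exploit the improved concentration of Lemma \ref{l:ImprovedInduction}, which says $\cL(n,R)$ holds for all $n$ with a fixed $R$, i.e. $\P[|X_n-n\mu|>xRQ_n]\le\exp(1-x^{4\theta/3})$. The key point is that for a quasi-record point $n$ we also have a matching \emph{lower bound} on the typical fluctuation scale: by definition $\hQ_n\ge$ a fixed multiple of $Q_n$ (since $Q_n\le C\hQ_n$ for a $(C,\alpha)$-quasi record point), and by the very definition of $\hQ_n$ as an infimum, for every $Q<\hQ_n$ the bound $\P[|X_n-\E X_n|>xQ]\le\exp(1-x^\theta)$ must \emph{fail} for some $x>0$; equivalently there is a scale comparable to $\hQ_n$ (hence to $Q_n$) at which $X_n-\E X_n$ puts non-negligible mass.

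Concretely, I would argue as follows. Fix the $R$ from Lemma \ref{l:ImprovedInduction}. Since $n$ is a $(C,\alpha)$-quasi record point, $\hQ_n\ge C^{-1}Q_n$. By the definition of $\hQ_n$, for any $\eta\in(0,1)$ there exists $x_0=x_0(n)>0$ with
\[
\P[|X_n-\E X_n|>x_0(1-\eta)\hQ_n]>\exp(1-x_0^\theta).
\]
This forces $x_0$ to be bounded above by an absolute constant: indeed by $\cL(n,R)$ and $\E X_n\ge n\mu$, $\E X_n\le n\mu+D_1Q_n$ (Lemma \ref{l:AnBound}), so $\P[|X_n-\E X_n|>yQ_n]\le\exp(1-c y^{4\theta/3})$ for $y\ge y_1$ with absolute constants; comparing the two tail bounds at the scale $\hQ_n\asymp Q_n$ shows $x_0\le x_{\max}$ for some absolute $x_{\max}$. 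Hence there is an absolute constant $c_1>0$ and a threshold $t\asymp Q_n$ (namely $t=x_0(1-\eta)\hQ_n\ge c_1 Q_n$ but also $t\le x_{\max}\hQ_n\le C x_{\max}Q_n$) with $\P[|X_n-\E X_n|>t]\ge\exp(1-x_{\max}^\theta)=:c_2>0$. Then
\[
\Var(X_n)=\E[(X_n-\E X_n)^2]\ge t^2\,\P[|X_n-\E X_n|>t]\ge c_2 c_1^2 Q_n^2,
\]
giving $\hat C=c_2 c_1^2$.

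The main obstacle is making the two-sided comparison at scale $Q_n$ airtight: one must check that the tail bound failing at scale $(1-\eta)\hQ_n$ (definition of the infimum) together with the improved upper tail at scale $Q_n$ really pins the relevant deviation $t$ to within constant factors of $Q_n$ — this uses $\hQ_n\le Q_n\le D_0^{1/\kappa}\sqrt n$ and $\hQ_n\ge C^{-1}Q_n$, plus that $4\theta/3>\theta$ so the improved tail eventually beats $\exp(1-x^\theta)$, forcing $x_0$ bounded. A minor subtlety is the centering: $\hQ_n$ is defined with centering $\E X_n$ while $Q_n$-bounds in \eqref{eq:Q} are also centered at $\E X_n$ (via $A_n$), so one should phrase everything with $\E X_n$ and only invoke Lemma \ref{l:AnBound} to relate to $n\mu$ where needed. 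Everything else is a one-line second-moment (Paley–Zygmund-type) bound.
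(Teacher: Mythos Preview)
Your proposal is correct and follows essentially the same route as the paper: the upper bound via \eqref{eq:varUpperQn}, then for the lower bound using the definition of $\hQ_n$ to locate an $x_0$ at which the $\exp(1-x^\theta)$ tail bound is saturated, the improved $\tfrac43\theta$ concentration of Lemma~\ref{l:ImprovedInduction} (together with Lemma~\ref{l:AnBound} to switch centering) to bound $x_0$ from above by an absolute constant, the quasi-record hypothesis to convert $\hQ_n$ to $Q_n$, and finally Chebyshev. One small point you leave implicit: the lower bound $t\ge c_1 Q_n$ requires $x_0>1$, which holds automatically since $\exp(1-x^\theta)\ge 1$ for $x\le 1$ and a probability cannot strictly exceed $1$; the paper states this explicitly, and you should too.
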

\begin{proof}
The upper bound was established in ~\eqref{eq:varUpperQn}.  For the lower bound, observe that there exists $x_{\star}\ge 1$ such that 
\[
\P[|X_n-\E X_n|> x_\star \hQ_n]=\exp(1-x_\star^\theta).
\]
It is easy to see that such an $x_{\star}$ must be at least 1. To see that such a (finite, but possibly $n$-dependent) $x_{\star}$ exists one can either use Assumption \ref{as:concentration} and the fact that $\kappa>\theta$ or appeal to Lemma \ref{l:ImprovedInduction}. Since $n$ is a quasi-record point it follows that there exists an $x_\star\geq 1$ such that
\[
\P[|X_n-\E X_n|> x_\star Q_n]=\exp(1-cx_\star^\theta)
\]
for some $c>0$ ($c$ does not depend on $n$). 
By Lemma~\ref{l:ImprovedInduction} we have that
\[
\P[|X_n-n\mu|> x_\star Q_n] \leq \exp(1-(\frac1{R}x_\star)^{\tfrac43\theta}).
\]
Since $\E X_n - n\mu\leq D_1 Q_n$ (Lemma \ref{l:AnBound}) we have that
\[
\exp(1-cx_\star^\theta)=\P[|X_n-\E X_n|> x_\star Q_n]\leq \exp(1-(\frac1{R}(x_\star-D_1))^{\tfrac43\theta}).
\]
Then since  $Cy^\theta<(\frac1{R}(y-D_1))^{\tfrac43\theta}$ for all $y>c^{-\theta}(4D_1 R)^4$ so we must have that
\begin{equation}\label{eq:recordXstar}
1\leq x_\star\leq c^{-\theta}(4D_1 R)^4.
\end{equation}
Hence by Chebyshev's Inequality
\begin{align*}
\Var(X_n) \geq (x_{\star}Q_n)^2 \exp(1-x_{\star}^\theta) \geq \hat{C}Q_n^2.
\end{align*}
\end{proof}

Next we show how to establish the existence of numerous nearby record points on a range of geometric scales.

\begin{lemma}
\label{l:grmain}
Given $C\geq 1$ and $\alpha'>\alpha$ there exists $C_{10},C_{11}$ such that, if $n\geq 1$ is a  $(C,\alpha')$-quasi record point then for all $L\in [C_{10},n^{-1/(2C_{11})}]$ there exists $m\in [nL^{-C_9},nL^{-1}]$ with $m$ an $\alpha$-record point.
\end{lemma}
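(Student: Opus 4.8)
The plan is to exhibit the record point directly. Let $a$ be the largest $\alpha$-record point with $a\le nL^{-1}$; this is well defined because $m=1$ is always an $\alpha$-record point and $nL^{-1}\ge 1$ throughout the relevant range of $L$. I will show that $a\ge nL^{-C_{11}}$ for a suitable $C_{11}$ depending only on the model parameters and on $\alpha,\alpha',C$; since $a\le nL^{-1}$ by construction, $a$ is then an $\alpha$-record point in $[nL^{-C_{11}},nL^{-1}]$, as required. The one structural input needed is the elementary fact that $Q_m=m^{\alpha}\sup_{1\le m'\le m}\hQ_{m'}m'^{-\alpha}$ is, across any record-free interval, exactly a constant times $m^{\alpha}$ (the running supremum is frozen there); applied on $[a,nL^{-1}]$ this gives the identity $Q_{nL^{-1}}=(nL^{-1})^{\alpha}\,\hQ_a a^{-\alpha}$.

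Next I would sandwich $Q_{nL^{-1}}$ between an upper and a lower bound. For the upper bound, since $a\le n$ and $n$ is a $(C,\alpha')$-quasi record point, the quasi record inequality evaluated at scale $a$ gives $\hQ_a\le C\hQ_n(a/n)^{\alpha'}$, so
\[
Q_{nL^{-1}}\ \le\ C\,\hQ_n\,n^{-\alpha'}\,a^{\alpha'-\alpha}\,(nL^{-1})^{\alpha}.
\]
For the lower bound I would use Lemma~\ref{l:Qgrowth}: iterating $Q_{3m/2}\le D_6Q_m$ yields $Q_{\lambda m}\le D_6\lambda^{\beta}Q_m$ for all $\lambda\ge 1$, with $\beta:=\log_{3/2}D_6$, whence $Q_n\le D_6L^{\beta}Q_{nL^{-1}}$ and therefore $Q_{nL^{-1}}\ge D_6^{-1}L^{-\beta}Q_n\ge D_6^{-1}L^{-\beta}\hQ_n$ (using only $Q_n\ge\hQ_n$). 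Comparing the two bounds and cancelling the common factor $\hQ_n$ gives $a^{\alpha'-\alpha}\ge(CD_6)^{-1}n^{\alpha'-\alpha}L^{\alpha-\beta}$, that is
\[
a\ \ge\ (CD_6)^{-1/(\alpha'-\alpha)}\,n\,L^{-(\beta-\alpha)^{+}/(\alpha'-\alpha)} .
\]
Taking $C_{11}:=1+(\beta-\alpha)^{+}/(\alpha'-\alpha)$ and $C_{10}:=\max\{2,(CD_6)^{1/(\alpha'-\alpha)}\}$, a one-line check shows $a\ge nL^{-C_{11}}$ for every $L\ge C_{10}$, which completes the argument. The upper cutoff on $L$ plays no role beyond guaranteeing $nL^{-1}\ge 1$ (so that $a$ exists) and keeping the window below scale $n$.

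Note that the hypothesis $\alpha'>\alpha$ enters only through the upper bound above and is exactly what produces the factor $a^{\alpha'-\alpha}$ there; with the weaker exponent $\alpha$ this factor would cancel and no constraint on $a$ would survive. There is no serious conceptual obstacle here: the most delicate part is merely bookkeeping the exponents in the sandwich estimate. The routine items are the record-structure facts used in the first paragraph (existence of a largest $\alpha$-record point below a given scale, and the exact $m^{\alpha}$ behaviour of $Q$ on record-free intervals, both immediate from the definitions of $\hQ_n$ and $Q_n$ once one notes the relevant functions are regular enough), and the degenerate case $\beta\le\alpha$, where $Q$ grows slowly enough that $Q_{nL^{-1}}\ge D_6^{-1}Q_n$ already confines $a$ to within a bounded factor of $n$ and $C_{11}$ may be taken to be any constant exceeding $1$. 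It is worth recording that $\beta=\log_{3/2}D_6$ may be large, so $C_{11}$ may be large; this is harmless for the present lemma but is one reason the growth of $Q_n$ is sharpened subsequently in Lemma~\ref{l:growth34}.
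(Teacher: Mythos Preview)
Your argument is correct and rests on the same two inputs as the paper's proof—the $(C,\alpha')$-quasi-record upper bound on $\hQ_m$ and the growth bound from Lemma~\ref{l:Qgrowth}—but is organized more directly. The paper works in log-space with $f(x)=\log Q_{e^x}$, defines a crossing time $x_\star$ as the first $x$ at which $f$ meets a particular slope-$\alpha$ line (anchored using the growth bound), then verifies separately that $e^{x_\star}$ lies in $[nL^{-C_{11}},nL^{-1}]$ and that it is an $\alpha$-record point. By taking $a$ to be the \emph{largest} record point below $nL^{-1}$ you collapse those two verifications into one, and the identity $Q_{nL^{-1}}=(nL^{-1})^{\alpha}\hQ_a a^{-\alpha}$ reduces the sandwich to a single line rather than the paper's chain of affine inequalities in $f$. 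Your constants $C_{10},C_{11}$ match the paper's up to relabeling ($\beta=\log D_6/\log(3/2)$ is exactly the exponent the paper carries). Both arguments tacitly assume enough regularity of $\hQ$ for the supremum defining $Q_m$ to be attained (so that a ``last record point'' exists and the running maximum freezes there); the paper is equally informal on this point, so your hedge about ``regular enough'' is appropriate.
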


\begin{proof}
Let $x_0=\log n$ and let $f(x)=\log(Q_{e^x})$.
Then since $n$ is a  $(C,\alpha')$-quasi record point we have that for $0\leq x \leq x_0$,
\begin{equation}\label{eq:recordA}
f(x) \leq f(x_0)-\alpha'(x_0-x)+\log C.
\end{equation}
By Lemma~\ref{l:Qgrowth} we have that $Q_n \leq D_6^{\lceil\log_{3/2}(n/n')\rceil} Q_{n'}$ and so
\begin{equation}\label{eq:recordB}
f(x) \geq  f(x_0)-\frac{\log(D_6)}{\log(3/2)}(x_0-x)-\log(D_6).
\end{equation}
Now define
\[
x_\star=\inf\Big\{x:f(x)\geq f(x_0)-\frac{\log(D_6)}{\log(3/2)}\log (L)-\log(D_6) -\alpha (x_0-\log (L)-x)\Big\}.
\]
By equation~\eqref{eq:recordB} we have that
\begin{align*}
f(x_0-\log L)&\geq f(x_0)-\frac{\log(D_6)}{\log(3/2)}\big(x_0-(x_0-\log (L))\big)-\log(D_6)\\
&= f(x_0)-\frac{\log(D_6)}{\log(3/2)}\log (L)-\log(D_6) -\alpha \big(x_0-\log (L)-(x_0-\log (L))\big)
\end{align*}
so $x_\star \leq x_0-\log L$.  If for some $z>0$,
\[
x=x_0 - \frac{\frac{\log(D_6)}{\log(3/2)}\log (L)+\log(D_6)-\alpha\log L+\log(C)}{\alpha'-\alpha} - z
\]
then by equation~\eqref{eq:recordA}
\begin{align*}
f(x) &\leq f(x_0) - \alpha'(x_0-x) +\log(C)\\
&\leq f(x_0) - (\alpha' - \alpha)(x_0-x) -\alpha(x_0-x)+\log(C)\\
&= f(x_0) - \frac{\log(D_6)}{\log(3/2)}\log (L)-\log(D_6)+\alpha\log L - (\alpha'-\alpha)z -\alpha(x_0-x)\\
&= f(x_0)-\frac{\log(D_6)}{\log(3/2)}\log (L)-\log(D_6) -\alpha (x_0-\log (L)-x) - (\alpha'-\alpha)z\\
&< f(x_0)-\frac{\log(D_6)}{\log(3/2)}\log (L)-\log(D_6) -\alpha (x_0-\log (L)-x)
\end{align*}
and so $x_\star \geq x_0 - \frac{\frac{\log(D_6)}{\log(3/2)}\log (L)+\log(D_6)-\alpha\log L+\log(C)}{\alpha'-\alpha}$.  If we set $m=e^{x_\star}$ then
\[
m \in [n (CD_6)^{-\frac1{\alpha'-\alpha}}L^{-\frac1{\alpha'-\alpha}(\frac{\log(D_6)}{\log(3/2)}-\alpha)}, nL^{-1}]
\]
For $C_{11}$ and $L\geq C_{10}$ large enough,
\[
(CD_6)^{-\frac1{\alpha'-\alpha}}L^{-\frac1{\alpha'-\alpha}(\frac{\log(D_6)}{\log(3/2)}-\alpha)} \leq L^{-C_{11}}
\]
and so $m\in [nL^{-C_2},nL^{-1}]$ as required.  It remains to check that $m$ is an $\alpha$-record point.  If it was not a record point then there must be some $m'<m$ such that $Q_{m'}\left(\frac{m}{m'}\right)^\alpha > Q_m$ and so if $x'=\log m'$ then
\[
f(x') > f(x_\star) - \alpha(x_\star - x').
\]
But then we would have
\begin{align*}
f(x') &> f(x_\star) - \alpha(x_\star - x')\\
&\geq f(x_0)-\frac{\log(D_6)}{\log(3/2)}\log (L)-\log(D_6) -\alpha (x_0-\log (L)-x_\star) - \alpha(x_\star - x')\\
&= f(x_0)-\frac{\log(D_6)}{\log(3/2)}\log (L)-\log(D_6) -\alpha (x_0-\log (L)-x')
\end{align*}
which contradicts the defining property of $x_\star$.  Hence $m$ is a record point satisfying the required properties.
\end{proof}

\subsection{Growth of $Q_n$}

We next establish improved control over the growth rate of $Q_n$, in particular that it is locally sub-linear.

\begin{lemma}
\label{l:growth34}
There exists $D_5>0$ and $n_\star$ such that for all $n_\star\leq n\leq n'$ we have that $Q_{n'}\leq D_{5}\left(\frac{n'}{n}\right)^{3/4}Q_n$.
\end{lemma}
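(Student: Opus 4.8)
The plan is to reduce the claim to a statement about how fast $\hQ$ can jump above a record point, and then prove that by a concatenation/percolation estimate fed into a bootstrap. First note that if $\tilde n\le n$ is the $\alpha$-record point attaining $Q_n=(n/\tilde n)^\alpha\hQ_{\tilde n}$, then $Q_n=(n/\tilde n)^\alpha Q_{\tilde n}$, so between a record point and any larger argument $Q$ grows only like the $\alpha$-th power, which is harmless since $\alpha\le \tfrac1{15}<\tfrac34$. Moreover, since $\hQ_{\tilde n}\le D_0^{1/\kappa}\sqrt{\tilde n}$ while $Q_n\ge\hQ_n\ge c\,\SD(X_n)\ge c\,n^{1/12}$ (the first inequality from $\Var(X_n)\le C\hQ_n^2$, the last from Assumption~\ref{as:varlb}), the record point of every sufficiently large $n$ is itself $\ge n_\star$. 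Chasing these inequalities through, it suffices to prove: \emph{for every $\alpha$-record point $r\ge n_\star$ and every $m\ge r$, $\hQ_m\le D\,(m/r)^{3/4}Q_r$.} Furthermore the coarse regime is immediate: if $m\ge C' r^{8/3}$ then $\hQ_m\le D_0^{1/\kappa}\sqrt m$ and $Q_r\ge c\,r^{1/12}$ already give $\hQ_m\le D(m/r)^{3/4}Q_r$, so it remains to treat $r\le m\le C' r^{8/3}$.

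\textbf{The concatenation/percolation bound.} For such $m$, and an auxiliary scale $\ell$ with $r\le\ell\le m$ (which we will eventually take to be a record point), write $m\approx N\ell$ and decompose a path crossing $\cR_{m,W_m}$ into $N$ crossings of the parallelograms $\cP_{i,k_{i-1},k_i}$ of width $\ell$ and height $W_\ell$, as in Section~\ref{s:percgen}. For the upper tail of $X_m-\E X_m$ use $X_m\le\sum_{i=1}^N X^{\Lambda_i}_{((i-1)\ell,0),(i\ell,0)}$ with resampling-induced independence and $\Gamma$-corrections, together with $A_\ell\le D_1 Q_\ell$ (Lemma~\ref{l:AnBound}) and a Bernstein-type bound for the sum of $N$ independent mean-zero variables each concentrated at scale $Q_\ell$ with the improved exponent $\tfrac43\theta$ from Lemma~\ref{l:ImprovedInduction}. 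For the lower tail apply Corollary~\ref{c:percolation} to $\cZ_{i,k,k'}\sim -(Z^{-,\Lambda_i}_{i,k,k'}-\ell\mu)/Q_\ell+(k-k')^2/32$ (which satisfies the hypotheses of Corollary~\ref{c:percolation} by Corollary~\ref{c:paraRect} and Lemma~\ref{l:YMinusBound}), using Assumption~\ref{as:cars} (or Theorem~\ref{t:trans.main}) to restrict the admissible crossing-height sequences $\uk^\gamma$ and hence control $\tau_1(\uk^\gamma)$. This yields a bound of the shape $\hQ_m\le C\bigl(\sqrt N + N\,A_\ell/Q_\ell + \log^{C}(\cdot)\,N\bigr)Q_\ell$.

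\textbf{Bootstrap and assembly.} We run the above with the inductive hypothesis that $Q_{n_2}\le C_\rho (n_2/n_1)^\rho Q_{n_1}$ for all $n_\star\le n_1\le n_2$, valid initially with $\rho_0=\log_{3/2}D_6$ by Lemma~\ref{l:Qgrowth}. Choosing $\ell$ well (and, crucially, an $\ell$ that is itself a record point, so that $\Var(X_\ell)\asymp Q_\ell^2$ by Lemma~\ref{l:varbd} and the per-segment fluctuations genuinely combine at the $\sqrt N$ scale rather than just additively, with such $\ell$ available by Lemma~\ref{l:grmain}), the estimate of the previous paragraph improves the exponent to $\rho'=\Phi(\rho)<\rho$ whenever $\rho>\tfrac34$, where $\Phi$ is a contraction with fixed point $\le\tfrac34$; iterating finitely many times brings the exponent to (or below) $\tfrac34$, and undoing the reduction of the first paragraph gives the stated bound $Q_{n'}\le D_5(n'/n)^{3/4}Q_n$. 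The main obstacle is exactly the term $N\,A_\ell/Q_\ell$ in Step~2: since a priori $A_\ell$ may be of order $Q_\ell$, the naive concatenation only yields the useless exponent $1$, and beating it requires using the freedom to choose the intermediate scale $\ell$ (taking it to be a record point), the genuinely sub-additive combination of fluctuations this buys, the improved concentration exponent $\tfrac43\theta>\theta$, and the fact that the percolation estimate of Corollary~\ref{c:percolation} produces a lower tail for $X_m$ whose width below $m\mu$ is only $\log^{C}(\cdot)\,NQ_\ell$ — it is the interplay of these inputs through the bootstrap that pins the exponent at $\tfrac34$.
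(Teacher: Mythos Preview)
Your proposal has a genuine gap in Step~3, and it is exactly the obstacle you flag but do not overcome. The concatenation/percolation machinery you invoke gives a lower tail for $X_m-m\mu$ whose \emph{threshold} is linear in the number of segments: Proposition~\ref{p:perc1} and Corollary~\ref{c:percolation} produce a bound of the form $C_7 N\log^{C_7}(3+1/\lambda)+z$, i.e.\ width $\asymp NQ_\ell$, not $\sqrt N\,Q_\ell$. Together with the centering shift $N A_\ell-A_m$ on the upper tail (which can be as large as $ND_1Q_\ell$ since you only know $A_m\ge 0$), the best you can extract is $\hQ_m\le C\,NQ_\ell$, i.e.\ exponent $1$. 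Your bootstrap then has no traction: if the input is $\hQ_m\le C(m/\ell)Q_\ell$, choosing any intermediate $\ell$ and iterating returns exponent $1$ as the fixed point, not $3/4$. Neither taking $\ell$ to be a record point (which gives $\Var(X_\ell)\asymp Q_\ell^2$ but says nothing about $A_\ell/Q_\ell$ being small, nor about the percolation threshold), nor the improved exponent $\tfrac43\theta$ (which sharpens the \emph{decay rate} past the threshold but leaves the threshold itself linear in $N$), provides the missing mechanism. The ``contraction $\Phi$'' is asserted, not constructed.

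The paper's argument is structurally different and sidesteps this obstacle. It does not try to upper bound $\hQ_{Mn}$ directly via decomposition. Instead it argues by contradiction at a record point $n'=Mn$: (i)~at a record point one has \emph{anti-concentration}, namely $\max_y\P[|X_{n'}-y|\le \tfrac13 Q_{n'}]\le 1-\delta$ (this uses Lemma~\ref{l:ImprovedInduction} only to pin down that the ``active'' deviation level $x_\star$ is bounded); (ii)~on the other hand, by \emph{truncating} the segment passage times at $M^\epsilon Q_n$ and applying Azuma--Hoeffding to the resulting bounded-difference minimization problem $\Phi_n=\min_{\ua}\sum \bX^{\Lambda_i}$, one gets $\P[|\Phi_n-\E\Phi_n|>M^{2/3}Q_n]\le 2\exp(-\tfrac18 M^{1/3-2\epsilon})$, and the truncation error is controlled since all relevant parallelogram passage times lie within $M^\epsilon Q_n$ of their means with high probability. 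Hence $X_{Mn}$ is concentrated at scale $(1+M^{2/3})Q_n$ around a deterministic point. If $Q_{Mn}>M^{3/4}Q_n$, then $(1+M^{2/3})Q_n<\tfrac13 Q_{Mn}$, contradicting the anti-concentration in (i). The key technical idea you are missing is precisely this truncation-plus-Azuma step, which converts the $N$ independent pieces into a bounded-difference functional with fluctuation scale $\sqrt{N}\cdot M^\epsilon Q_n$ rather than the linear-in-$N$ scale that the raw percolation bound delivers.
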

\begin{proof}
Given Lemma~\ref{l:Qgrowth}, it suffices to show that for some $M$ large enough that
\begin{equation}\label{eq:growth34recond}
Q_{n M} \leq M^{3/4} Q_n,
\end{equation}
whenever $n'=nM$ is an $\alpha$-record point.  By equation~\eqref{eq:recordXstar} this implies that
\begin{equation}\label{eq:growth34deviationBound}
\exp(1-x_\star^\theta)=\P[|X_{n'}-\E X_{n'}|> x_\star Q_{n'}]
\end{equation}
for some $x_\star\in[1,(4D_1 R)^4]$.  Set $\delta>0$ small enough such that $\delta < \frac12\exp(1-(4D_1 R)^{4\theta})$ and
\[
\int_{0}^\infty \delta\wedge\exp(1-t^\theta)dt < \frac13\exp(1-(4D_1 R)^{4\theta}).
\]
We claim that for all record points $n'$,
\begin{equation}\label{eq:notTooConcentrated}
\max_{y}\P[|X_{n'}-y|\leq \frac13 Q_{n'}] \leq 1-\delta.
\end{equation}
By~\eqref{eq:growth34deviationBound} we know that either,
\[
\P[X_{n'}-\E X_{n'}> Q_{n'}] \geq \frac12\exp(1-(4D_1 R)^{4\theta})\hbox{ or } \P[X_{n'}-\E X_{n'} < - Q_{n'}] \geq \frac12\exp(1-(4D_1 R)^{4\theta}).
\]
Assume that the latter holds.  Then
\[
\E[(X_{n'}-\E X_{n'})I(X_{n'}-\E X_{n'}< 0)] \leq -\frac12\exp(1-(4D_1 R)^{4\theta}) Q_{n'}.
\]
If $\P[X_{n'}-\E X_{n'}>0]<\delta$ then
\begin{align*}
\E[(X_{n'}-\E X_{n'})I(X_{n'}-\E X_{n'}> 0)] &= \int_0^\infty \P[X_{n'}-\E X_{n'}>t] dt\\
& \leq \int_0^\infty \delta\wedge\exp(1-t^\theta)dt < \frac13\exp(1-(4D_1 R)^{4\theta})
\end{align*}
which contradicts the fact that
\[
\E[(X_{n'}-\E X_{n'})I(X_{n'}-\E X_{n'}> 0)] + \E[(X_{n'}-\E X_{n'})I(X_{n'}-\E X_{n'}< 0)]=0.
\]
Hence we have that
\[
\P[X_{n'}-\E X_{n'} < - Q_{n'}]>\delta, \quad \P[X_{n'}-\E X_{n'} >0]>\delta,
\]
and so there can be no $y$ with $\P[|X_{n'}-y|\leq \frac13 Q_{n'}] > 1-\delta$.  The case when $\P[X_{n'}-\E X_{n'}> Q_{n'}] \geq \frac12\exp(1-(4D_1 R)^{4\theta})$ follows similarly which establishes~\eqref{eq:notTooConcentrated}.  We will complete the lemma by showing that there is a $y$ such that $\P[|X_{n'} - y|>\frac13 M^{3/4}Q_n]\leq \delta$.

Following the percolation method from Section~\ref{s:percgen}, we define $\ell_i$ to be the line $x=in$ and let $\gamma$ be the optimal path for $X_{nM}$ and let $t_i:=\inf\{t\in[0,1]: \gamma_1(t)=in\}$ and $k^\gamma_i = \lfloor \gamma_2(t_i)/W_n\rfloor$
Let $\cJ=\{\max_{1\leq i \leq M} |k_i^\gamma| \leq \frac14 n/W_n\}$ and $\cA=\{\max_{1\leq i \leq M} |k_i^\gamma| \leq M^4\}$.
Similarly to equation~\eqref{eq:cJBound} have that $\P[\cJ^c]\leq \exp(-Cn^{\kappa/5})$.

Next define
\[
\bX^{\Lambda_i}_{a_{i-1}a_i}:= \min\Big\{\max \Big \{X^{\Lambda_i}_{a_{i-1}a_i},\mu|a_i-a_{i-1}|+ M^\epsilon Q_n \Big\}, \E \mu|a_i-a_{i-1}| -  M^\epsilon Q_n \Big\}
\]
which thresholds values of $X^{\Lambda_i}_{a_{i-1}a_i}$ to be at most $(M^{\epsilon}+O(1))Q_n$ from its mean in either the positive or the negative direction.
Our first approximation to the passage time from $\origin$ to $(Mn,0)$ will be
\[
\Phi_{n}=\inf_{\ua\in \cA^*} \sum_{i=1}^M \bX^{\Lambda_i}_{a_{i-1}a_i}
\]
where $\cA_{*}$ is the set of all $\ua=(a_{i})$ satisfying the condition in $\cA$. 
Note that by construction
\begin{equation}\label{eq:PhiBound}
Mn\mu - M^{1+\epsilon}Q_n \leq \Phi_{n}\leq Mn\mu + M^{1+\epsilon}Q_n.
\end{equation}
To compare $X^{\Lambda_i}_{a_{i-1}a_i}$ and $\bX^{\Lambda_i}_{a_{i-1}a_i}$, we estimate the maximum deviation from its expected value of point to point distances on our canonical parallelograms.  Let
\[
\cV =\left\{\max_{1\leq i \leq M}\max_{-M^4\leq k,k'\leq M^4}\max_{u_1\in L_{\cP_{i,k,k'}}}\max_{u_2\in R_{\cP_{i,k,k'}}} \big| X^{\Lambda_i}_{u_1 u_2}- |u_2-u_1|\mu \big| \leq M^\epsilon Q_n\right\}.
\]
By Lemma~\ref{l:paraChange} and a union bound over $i,k,k'$ we have that for large enough $M$,
\begin{equation}\label{eq:Vbound1}
\P[\cV^c]\leq M^{10}\left(\exp(1-C Q_n^{\kappa/10}) + \exp(1-(\tfrac12 M^\epsilon)^\theta)\right) \leq \exp(1-(\tfrac13 M^\epsilon)^\theta).
\end{equation}
Let $\cB$ be the event
\[
\cB:=\{|X_{nM} -  \sum_{i=1}^M X_{\gamma(t_{i-1}),t(\gamma_i)}| +\max_{1\leq i \leq M}\max_{|y|,|y'|\leq M^4 W_n} |X_{((i-1)n,y),(in,y')} - X^{\Lambda_i}_{((i-1)n,y),(in,y')}|\leq Q_n\}.
\]
Then by Lemma~\ref{l:Gamma} and Assumption~\ref{as:resamp1} we have that
\[
\P[\cJ\cap \cB^c]\leq \exp(1-n^{\alpha\kappa/2}).
\]
As before we set
\[
\cZ_{i,k,k'}:=\left(-(Z^{-,\Lambda_i}_{i,k,k'} -n\mu)/Q_n + \frac{k^2}{32} \right)I(|k|\vee |k'|\leq n/W_n).
\]
Now if $\cJ,\cV,\cB$ all holds and $\cA$ fails then $\gamma$ had a large transversal fluctuation and $\tau_1(\uk^\gamma)\geq M^4$.  Moreover, the length of the path must be at most $\Phi^{(1)}_{n}+Q_n$ so by equation~\eqref{eq:PhiBound} and Corollary~\ref{c:percolation} we have that
\begin{align}\label{eq:growth34Perc}
\P[\cA^c,\cJ,\cV,\cB]&\leq \P[\cA^c,\cJ,\sum_{i=1}^M X^{\Lambda_i}_{((i-1)n,y),(in,y')} -Mn\mu \leq (1+M^{1+\epsilon})Q_n]\\
&\quad\leq \P[\max_{\uk\in\mathfrak{K}_M,\tau(\uk)\geq M^4}\sum_{i=1}^M \cZ_{i,k,k'} - \frac{(k_{i-1}-k_i)^2}{32} \geq - (1+M^{1+\epsilon})]\nonumber\\
&\quad\leq \P[\max_{\uk\in\mathfrak{K}_M}\sum_{i=1}^M \cZ_{i,k,k'} \geq \frac{M^4}{32} - (1+M^{1+\epsilon})]\nonumber\\
&\quad \leq  \exp(1-CM^{4\theta})
\end{align}
for some $C>0$.  Combining the above estimates we have that
\begin{align*}
\P[\cA,\cJ,\cV,\cB] &\geq 1-\exp(1-(\tfrac13 M^\epsilon)^\theta) - \exp(1-CM^{4\theta}) - \exp(1-n^{\alpha\kappa/2})-\exp(-Cn^{\kappa/5})\\
&\geq 1-\exp(1-(\tfrac14 M^\epsilon)^\theta)
\end{align*}
provided $M$ and $n$ are large enough.  Now if $\cA,\cJ,\cV,\cB$ all hold it means that $\max |k_i^\gamma | \leq M^4$ and that $|X_{Mn} - \Phi|\leq Q_n$.  Now note that the $X^{\Lambda_i}_{a_{i-1}a_i}$ are independent and vary by at most $2M^{\epsilon}Q_n$ so by the Azuma-Hoeffding Inequality,
\[
\P[|\Phi-\E\Phi| > M^{2/3}Q_n] \leq 2\exp(-\frac1{8}M^{\frac13-2\epsilon}).
\]
Hence we have that
\[
\P[|X_{Mn}-\Phi|> (1+M^{2/3})Q_n]\leq   2\exp(-\frac1{8}M^{\frac13-2\epsilon}) + \exp(1-(\tfrac14 M^\epsilon)^\theta)
\]
which is less that $\delta$ if $M$ is large enough.  Hence by equation~\eqref{eq:notTooConcentrated} we must have
\[
\frac13 Q_{Mn} \leq (1+M^{2/3})Q_n
\]
for $M$ large enough which establishes~\eqref{eq:growth34recond} and completes the lemma.
\end{proof}

The exponent $\frac34$ is not tight and in fact any exponent strictly greater than $\frac12$ would suffice.  In a followup paper \cite{BSS3} {we will show in fact that one can take the exponent to be exactly $\frac12$}.

\section{Local transversal fluctuations}
\label{s:ltf}
The objective of this section is to control transversal fluctuation of a geodesic of length $n'\gg n$ at a distance $n$ from the starting point. In Section \ref{s:trans} we showed that the maximum transversal fluctuation of such a geodesic is typically $O(W_{n'})$ but one expects the transversal fluctuations at distance $n$ to be much smaller. Indeed, comparing to the results known in exactly solvable models one would expect the local transversal fluctuations at scale $n$ to be $O(W_{n})$ typically and this is what we shall prove in this section. In fact, we show something stronger: as in Section \ref{s:trans}, we show that with large probability, any path having large local transversal fluctuation will have significantly longer length than the first passage time between its endpoints. The arguments in this section might appear notation heavy, but the ideas are similar to the exactly solvable LPP cases; in fact we essentially do a quantitative version of the arguments in \cite[Theorem 3]{BSS19}, \cite[Proposition 2.1]{BBB23}; with the additional difficulty coming from the fact that we do not have precise control on growth of $W_{n}$ (for the exactly solvable cases we can simply take $W_{n}=n^{2/3}$, we use Lemma \ref{l:growth34} as a proxy for this) and also from the fact that we need to deal with very large deviations as well as backtracks due to the undirected nature of the FPP models. As we use Lemma \ref{l:growth34} (which in turn depends on results about the record points), the results in this section require the polynomial growth of $\SD(X_{n})$ (and hence planarity), and unlike the global transversal fluctuation results in Section \ref{s:trans} shall not apply to higher dimensional model without the polynomial growth assumption on $\SD(X_{n})$. 

to make things formal, we define the following set of paths. For integers $k,x,y$ with $k\geq 0$ and $x+2^k < x'$ define
\begin{align*}
\Xi^{(n),R}_{x,x',y,k,z}:=
\Big\{\gamma':&\gamma'(0)\in\ell_{xn,yW_n,(y+1)W_n},\gamma'(1)\in\ell_{x'n,(y-\lceil (x'-x)^{{89/100}}\rceil)W_n,(y+\lceil (x'-x)^{{89/100}}\rceil)W_n},\\ &\sup\{|w|:((x+2^k)n,yW_n+w) \in \gamma'\}\geq z2^{9k/10}W_n\Big\}.
\end{align*}
\begin{center}
\begin{figure}
\includegraphics[width=5in]{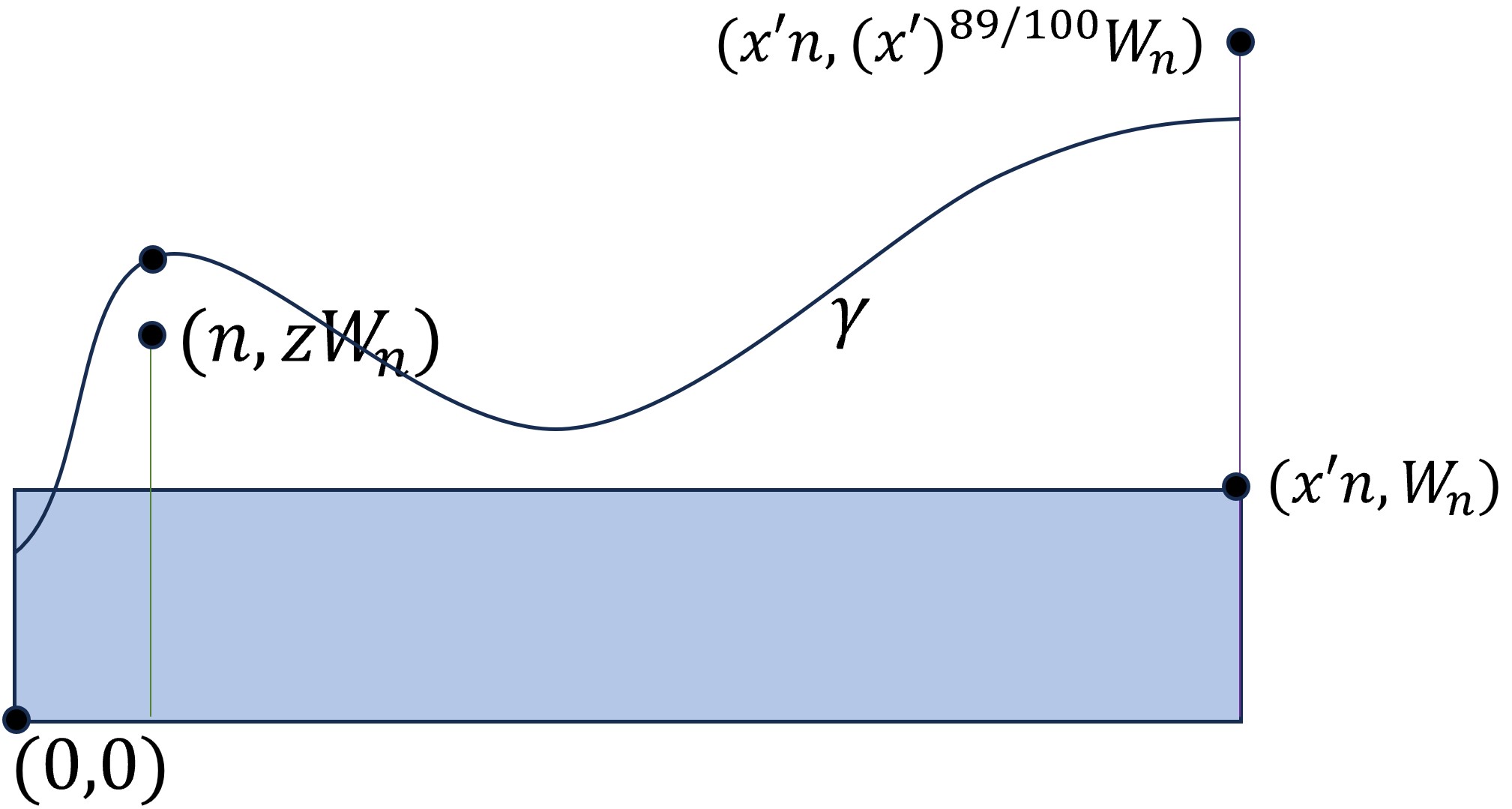}\label{f:xi.local.trans}
\caption{The set of paths $\Xi^{(n),R}_{0,x',0,0,z}$ are those that intersect the line $y=n$ above $zW_n$ or below $-zW_n$. Lemma \ref{l:local.trans.proof} shows that for large $z$, on the large probability event $\cL^{(n),R}_{0,x',0,z}$ all such paths have length significantly larger (at scale $Q_{n}$) than the first passage time between its endpoints. }
\end{figure}
\end{center}
This is illustrated in Figure~\ref{f:xi.local.trans}.  Similarly if $x'+2^k < x$ define
\begin{align*}
\Xi^{(n),L}_{x,x',y,k,z}:=
\Big\{\gamma':&\gamma'(0)\in\ell_{xn,yW_n,(y+1)W_n},\gamma'(1)\in\ell_{x'n,(y-\lceil (x-x')^{89/100}\rceil)W_n,(y+\lceil (x-x')^{89/100}\rceil)W_n},\\ &\sup\{|w|:((x-2^k)n,yW_n+w) \in \gamma'\}\geq z2^{9k/10}W_n\Big\}.
\end{align*}

For $x'>x$, $y\in \Z$ and $z\ge 0$, define the event
\begin{align*}
\cL^{(n),R}_{x,x',y,z}&=\bigcap_{j=0}^{j_{\max}} \bigcap_{w=-z2^jW_n/W_{n2^{j}}}^{z2^jW_n/W_{n2^{j}}} \cH^{(2^j n)}_{x2^{-j}-1,yW_n/W_{n2^{j}}+w,\frac{z2^{j/20}}{10000} } \\
&\qquad  \bigcap \bigcap_{w=-z2^{j_{\max}}W_n/W_{n2^{j_{\max}}}}^{z2^{j_{\max}}W_n/W_{n2^{j_{\max}}}} \cH^{((x'-x-2^{j_{\max}}) n)}_{\frac{x+2^{j_{\max}}}{x'-x-2^{j_{\max}}},yW_n/W_{(x'-x-2^{j_{\max}})n}+w,\frac{z2^{j_{\max}/20}}{10000} } \\
&\qquad \bigcap_{j=0}^{j_{\max}} \cS^{(n2^j)}_{z,(xn,yW_n)}
\end{align*}
where $j_{\max} = \lfloor \log_2 (x'-x)-1\rfloor$.
Similarly, for $x'<x$, define 
\begin{align*}
\cL^{(n),L}_{x,x',y,z}&=\bigcap_{j=0}^{j_{\max}} \bigcap_{w=-z2^jW_n/W_{n2^{j}}}^{z2^jW_n/W_{n2^{j}}} \cH^{(2^j n)}_{x2^{-j},yW_n/W_{n2^{j}}+w,\frac{z2^{j/20}}{10000} } \\
&\qquad  \bigcap \bigcap_{w=-z2^{j_{\max}}W_n/W_{n2^{j_{\max}}}}^{z2^{j_{\max}}W_n/W_{n2^{j_{\max}}}} \cH^{((x-2^{j_{\max}}-x') n)}_{\frac{x'}{x-2^{j_{\max}}-x'},yW_n/W_{(x-2^{j_{\max}}-x')n}+w,\frac{z2^{j_{\max}/20}}{10000} } \\
&\qquad \bigcap_{j=0}^{j_{\max}} \cS^{(n2^j)}_{z,(xn,yW_n)}
\end{align*}
where $j_{\max} = \lfloor \log_2 (x-x')-1\rfloor$.

\begin{lemma}\label{l:cL.bound}
There exists $D>0$ such that for all $x,y,y'$ and $z>0$, 
\[
\P[\cL^{(n),R}_{x,x',y,z}] \geq 1-\exp(1-Dz^{\frac12 \epsilon\theta}).
\]
\end{lemma}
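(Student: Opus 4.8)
The plan is to establish Lemma~\ref{l:cL.bound} by a straightforward union bound over the three families of events that make up $\cL^{(n),R}_{x,x',y,z}$, using the tail estimates already proved in Lemma~\ref{l:paraBounds} (for the $\cH$-events) and Lemma~\ref{l:trans.SOGam} (for the $\cS$-events). The only genuinely nontrivial point is bookkeeping: one must count how many events enter each intersection and check that the sum of their failure probabilities, which individually decay like $\exp(1-D(\cdot)^\theta)$ or $\exp(1-D(\cdot)^{\frac12\epsilon\theta})$ in a parameter growing with $z$ (and $j$), still sums to something of order $\exp(1-D'z^{\frac12\epsilon\theta})$.

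First I would handle the $\cS$-events. The intersection $\bigcap_{j=0}^{j_{\max}} \cS^{(n2^j)}_{z,(xn,yW_n)}$ has at most $j_{\max}+1 \le \log_2(x'-x)$ terms, and by Lemma~\ref{l:trans.SOGam} each fails with probability at most $\exp(1-D(z n 2^j)^{\frac12\epsilon\theta})\le \exp(1-D(zn)^{\frac12\epsilon\theta})$. Since the number of terms is logarithmic in $x'-x$ while $n$ (hence $(zn)^{\frac12\epsilon\theta}$) absorbs it — more carefully, $x'-x$ is at most polynomial in $n$ in all our applications, so $\log(x'-x)\le C\log n$, which is dwarfed by $\exp((zn)^{\frac12\epsilon\theta})$ — the total contribution is at most $\exp(1-D'(zn)^{\frac12\epsilon\theta})\le \exp(1-D'z^{\frac12\epsilon\theta})$. (If one does not wish to assume a polynomial bound on $x'-x$, one can instead note that the number of events in the whole construction is finite for any fixed configuration and that each individual tail already beats any fixed power; I would phrase the statement so that the implicit polynomial-in-$n$ bound on $x'-x$, which holds in every later use, is available.)

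Next I would handle the two blocks of $\cH$-events. For the block $\bigcap_{j=0}^{j_{\max}}\bigcap_{w} \cH^{(2^jn)}_{x2^{-j}-1,\,yW_n/W_{n2^j}+w,\,z2^{j/20}\theta^2/10000}$, for each fixed $j$ the index $w$ ranges over an interval of length $O(z2^jW_n/W_{n2^j})$; by Lemma~\ref{l:growth34} (local sub-linearity of $Q_n$, hence $W_n/W_{n2^j}\le D_5 2^{-j/8}$ for large $n$) this is $O(z 2^{7j/8})$, so the total number of $(j,w)$-pairs is at most $C z \sum_j 2^{7j/8}\le Cz(x'-x)^{7/8}$, again polynomial. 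Each such $\cH$-event, by the second bound in Lemma~\ref{l:paraBounds} (the $\cH^{(\cdot)}_{x,y,z}$ form), fails with probability at most $\exp(1-D(z2^{j/20}\theta^2/10000)^\theta)\le \exp(1-D'(z2^{j/20})^\theta)$. Summing over $w$ for fixed $j$ gives at most $Cz2^{7j/8}\exp(1-D'(z2^{j/20})^\theta)$, and summing the geometric-type series over $j$ (the exponential decay in $2^{j\theta/20}$ beats the polynomial growth $2^{7j/8}$) yields $\le \exp(1-D''z^\theta)\le\exp(1-D''z^{\frac12\epsilon\theta})$. The terminal block $\bigcap_w \cH^{((x'-x-2^{j_{\max}})n)}_{\ldots}$ is a single value of the ``scale'' with $w$ ranging over $O(z2^{j_{\max}}W_n/W_{(x'-x-2^{j_{\max}})n})=O(z(x'-x)^{7/8})$ values, each failing with probability at most $\exp(1-D'(z2^{j_{\max}/20})^\theta)$, contributing at most $\exp(1-D''z^\theta)$ by the same reasoning. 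Adding the three contributions and absorbing constants (using $\frac12\epsilon\theta<\theta$, so the weakest of the three bounds dominates) gives $\P[(\cL^{(n),R}_{x,x',y,z})^c]\le \exp(1-Dz^{\frac12\epsilon\theta})$ for a suitable $D>0$, which is the claim.

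The main obstacle, such as it is, is purely the combinatorial accounting described above: making sure the number of events at scale $2^j$ is controlled by a fixed power of $2^j$ (this is exactly where Lemma~\ref{l:growth34} is essential, as the heuristic $W_n=n^{2/3}$ of the solvable case is not available), and checking that the resulting sums $\sum_j (\text{poly in }2^j)\cdot \exp(-c(z2^{cj})^\theta)$ converge with a bound of the advertised form; there is no conceptual difficulty beyond that. I would also remark in passing that the statement and proof of the $L$-version $\cL^{(n),L}_{x,x',y,z}$ are identical by the reflection symmetry $x\mapsto -x$, so it suffices to treat the $R$-case.
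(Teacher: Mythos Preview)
Your approach is correct and essentially the same as the paper's: union-bound over the $\cH$- and $\cS$-events using Lemmas~\ref{l:paraBounds} and~\ref{l:trans.SOGam}, then sum the resulting geometric-type series in $j$. One small simplification: you do not need Lemma~\ref{l:growth34} to bound the number of $w$-terms (and in fact that lemma bounds $Q_{n2^j}$ from \emph{above}, hence $W_n/W_{n2^j}$ from \emph{below}); the trivial monotonicity $W_n\le W_{n2^j}$ already gives $\#\{w\}\le Cz2^j$, which is what the paper uses.
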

\begin{proof}
By Lemmas~\ref{l:paraBounds} and~\ref{l:trans.SOGam},
\begin{align*}
\P[(\cL^{(n),R}_{x,x',y,z})^c]&=\sum_{j=0}^{j_{\max}} \sum_{w=-z2^jW_n/W_{n2^{j}}}^{z2^jW_n/W_{n2^{j}}} \exp(1-C(\frac{z2^{j/20}}{10000})^\theta) \\
&\qquad  + \sum_{w=-z2^{j_{\max}}W_n/W_{n2^{j_{\max}}}}^{z2^{j_{\max}}W_n/W_{n2^{j_{\max}}}} \exp(1-C(\frac{z2^{j_{\max}/20}}{10000} )^\theta) \\
&\qquad +\sum_{j=0}^{j_{\max}} \exp(1-C(zn2^j)^{\frac12\epsilon\theta})\\
&\leq \sum_{j=0}^{j_{\max}} z4^j \exp(1-C(\frac{z2^{j/20}}{10000})^\theta) + z4^j \exp(1-C(\frac{z2^{j/20}}{10000})^\theta) +\sum_{j=0}^{j_{\max}} \exp(1-C(zn2^j)^{\frac12\epsilon\theta})\\
&\leq \exp(1-Dz^{\frac12 \epsilon\theta}).
\end{align*}
\end{proof}

Observe that, by reflection symmetry the same bound holds for $\P[\cL^{(n),L}_{x,x',y,z}]$; we shall not state this separately. 

\begin{center}
\begin{figure}
\includegraphics[width=5in]{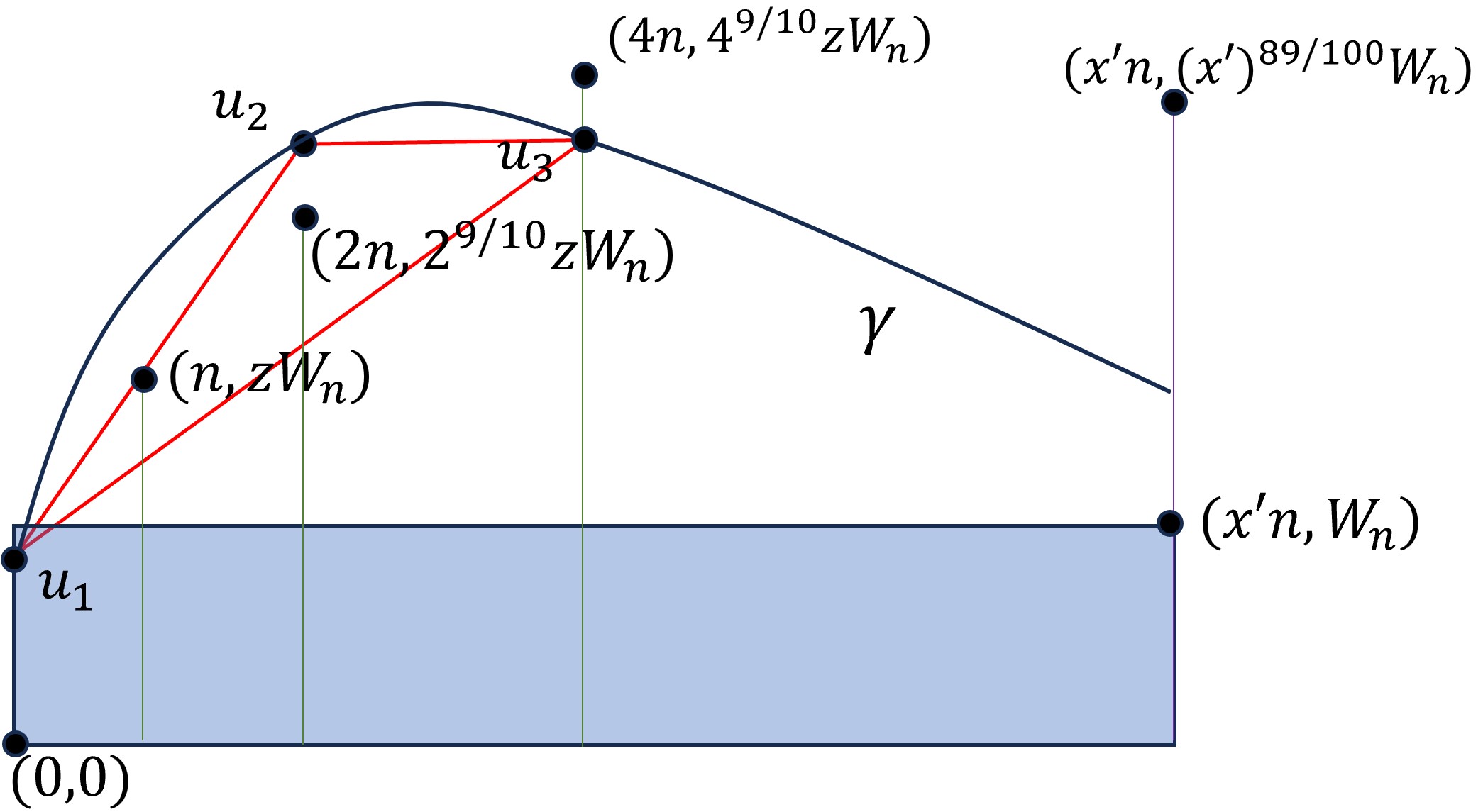}
\caption{The chaining argument in the proof of Lemma \ref{l:local.trans.proof}: the vertical lines represent intervals $\gamma$ must pass through to satisfy $A_j$ for different dyadic scales.  Here $A_j$ fails for $j=3$ at $i=3$.  The extra length required to traverse from $u_1$ to $u_3$ via $u_2$ rather than directly is used to rule out such paths under the event $\cL^{(n),R}_{x,x',y,z}$.}
\label{f:chaining.local.trans}
\end{figure}
\end{center}

\begin{lemma}\label{l:local.trans.proof}
There exists an absolute constant $z_0$ such that for all $z\geq z_0$, on the event $\cL^{(n),R}_{x,x',y,z}$ we have that for all $k\in [1,\lfloor \log_2 (x'-x)-1\rfloor]$,
\begin{equation}\label{eq:local.trans.proof}
\inf_{\gamma' \in \Xi^{(n),R}_{x,x',y,k,z}} X_{\gamma'}-X_{\gamma'(0),\gamma'(1)} \geq \frac{{z^{1/3}}\mu}{4000} Q_{n}.
\end{equation}
\end{lemma}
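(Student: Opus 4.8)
The plan is to run a multi-scale chaining argument entirely analogous to the one in Lemma~\ref{l:trans.events} (the global transversal fluctuation bound), but now tracking the geodesic only over a dyadically growing window of horizontal extent $2^k n$ centred around the line $x = (x+2^k)n$, instead of the whole horizontal span. Fix $z \geq z_0$ and $k$, suppose for contradiction that some $\gamma' \in \Xi^{(n),R}_{x,x',y,k,z}$ satisfies $X_{\gamma'} - X_{\gamma'(0),\gamma'(1)} < \tfrac{z^{1/3}\mu}{4000} Q_n$. For $x'' \in [xn, x'n]$ let $f(x'')$ be the (first) height at which $\gamma'$ crosses the vertical line $x = x''$; as in Lemma~\ref{l:trans.events}, the failure of a three-point ``excess length'' inequality like \eqref{eq:big.fluctuation.pts} at appropriate scales forces $\gamma'$ to stay in a thin tube, so we may assume $f$ is not too wild. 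Introduce dyadic scales $j = 0, 1, \ldots, j_{\max}$ with $j_{\max} = \lfloor \log_2(x'-x) - 1\rfloor$ and radii $r_j = \big(1 + \tfrac12(1 - 2^{-\theta(j+1)})z\,2^{j/?}\big)W_{n2^j}$ chosen so that the events $A_j = \{ |f(\cdot)| \leq r_j$ at the relevant $2^{-j}$-grid of vertical lines$\}$ telescope: $A_0$ holds by the constraint $\gamma'(0) \in \ell_{xn, yW_n, (y+1)W_n}$, and if $A_j$ first fails at some column, the corresponding three consecutive grid points $u_1, u_2, u_3$ incur an extra Euclidean length at least $\tfrac13 (|f| - r_{j-1})^2 / (2^{j} n)$ by \eqref{eq:Pythag2}. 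The key quantitative point is that this extra length, multiplied by $\mu$, dominates the sum of the fluctuation terms $\big|X_{u_a u_b} - \mu|u_a - u_b|\big|$ controlled by the events $\cH^{(2^j n)}_{\cdot,\cdot,z 2^{j/20}/10000}$ (built into $\cL^{(n),R}_{x,x',y,z}$), because the defining slack $\tfrac{\theta^2}{1000}|y-y'| + z 2^{j/20}$ scales sub-quadratically in the fluctuation while the geometric gain scales quadratically; this is exactly the derivative computation in Lemma~\ref{l:trans.events} and it needs the growth bound $W_{n2^j}/W_n \geq 2^{j/2}$, which follows from Lemma~\ref{l:Qgrowth}, and the sub-linear growth $W_{n2^j} \leq C 2^{3j/4} W_n$ from Lemma~\ref{l:growth34} for the upper bounds.

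Concretely, I would carry out the steps in this order. First, reduce to the ``thin tube'' regime: if \eqref{eq:big.fluctuation.pts}-type inequalities hold for some triple of points on $\gamma'$ (at the appropriate scale $2^j n$), the event $\cS^{(n2^j)}_{z,(xn,yW_n)}$ in $\cL^{(n),R}_{x,x',y,z}$ immediately gives $X_{\gamma'} - X_{\gamma'(0),\gamma'(1)} \geq \tfrac{z^{1/3}\mu}{4000}Q_n$, a contradiction; so assume all such inequalities fail, which bounds the oscillation of $f$ over the window. Second, handle the ``large $z$ / near the boundary of scales'' degenerate case (the analogue of $zW_n > n^{1-10\epsilon}$ in Lemma~\ref{l:trans.events}), where the geometric excess is computed directly from a single apex point $(x+2^k)n, \cdot)$ at height $\geq z 2^{9k/10} W_n$ and compared against $\cS$. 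Third, for the main case, run the telescoping chaining: let $j$ be the smallest index with $A_j$ failing; locate the bad column; form $u_1, u_2, u_3$; use $A_{j-1}$ (so $|f(u_1)|, |f(u_3)| \leq r_{j-1}$) together with $|f(u_2)| > r_j$; bound the geometric excess below via \eqref{eq:Pythag2}; bound the three fluctuation terms above via the appropriate $\cH^{(2^j n)}$ event (noting $|y - y'| \leq (|f| + z 2^{9k/10} W_n)/W_{n2^j}$, exactly as in the displayed derivative estimate); conclude $X_{u_1 u_2} + X_{u_2 u_3} - X_{u_1 u_3} \geq \tfrac15 (|f| - r_{j-1})^2/(2^j n) \geq c z^{1/3} Q_n$ and hence, after adding back the pieces $X_{\gamma'(0) u_1}$ and $X_{u_3 \gamma'(1)}$ and subtracting $5\Gamma$-type error terms from $\cS$, that $X_{\gamma'} \geq X_{\gamma'(0),\gamma'(1)} + \tfrac{z^{1/3}\mu}{4000}Q_n$. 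Fourth, if \emph{all} $A_j$ hold up to $j_{\max}$, use the definition of $\Xi^{(n),R}_{x,x',y,k,z}$: $\gamma'$ must reach height $\geq z 2^{9k/10}W_n$ at column $(x+2^k)n$, so in the finest grid $A_{j_{\max}}$ there is still a triple of points with $O((z 2^{9k/10}W_n)^2/(2^{j_{\max}} n))$ geometric excess, which for $k \leq j_{\max}$ and $z \geq z_0$ exceeds $\tfrac{z^{1/3}\mu}{4000}Q_n$ — giving a final contradiction. A symmetric statement for $\Xi^{(n),L}$ and $\cL^{(n),L}$ follows by reflection.

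The main obstacle I anticipate is bookkeeping the dependence of the radii $r_j$ and the slack parameters on the three separate scales in play simultaneously: the base scale $n$, the dyadic scale $2^j n$ of the crossing parallelograms, and the terminal scale $(x'-x-2^{j_{\max}})n$ of the last block, all while $W_\bullet$ is only known to grow polynomially between $\tfrac12$ and $\tfrac34$ in the exponent (Lemmas~\ref{l:Qgrowth} and~\ref{l:growth34}) rather than exactly $2/3$. Getting the inequality $\tfrac{r_j - r_{j-1}}{W_{n2^j}} \geq \tfrac{\theta^2}{1000} \cdot (\text{something} \geq 1)$ to hold with the correct $z$- and $k$-dependence — so that the derivative of (geometric gain) $-$ (fluctuation slack) stays nonnegative uniformly in $j \leq j_{\max}$ — is the delicate computation; it forces the precise choice of the exponent $9/10$ in the definition of $\Xi$ and $1/3$ in the conclusion, and of $1/20$ in the slack inside $\cL^{(n),R}$. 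The rest is a faithful, if notation-heavy, transcription of the chaining scheme of Lemma~\ref{l:trans.events} with the vertical intervals now widening like $(x'-x)^{89/100}$ rather than being pinned at a single point, so the union bounds in Lemma~\ref{l:cL.bound} still converge. Everything else — the $\Gamma$ error terms, the union bounds over $\Z^2$-discretised endpoints, the reduction to integer points — is routine given Lemma~\ref{l:Gamma} and Assumption~\ref{as:intermediate}.
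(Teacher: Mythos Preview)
Your high-level plan (chaining, geometric excess via~\eqref{eq:Pythag2} versus $\cH$-controlled fluctuations, with $\cS$ as a crude fallback) is right, but the chaining structure you propose does not fit the event $\cL^{(n),R}_{x,x',y,z}$ you must work on, and is internally inconsistent as written. You describe a subdivision-type scheme (``the $2^{-j}$-grid'', ``smallest $j$ with $A_j$ failing, use $A_{j-1}$ to bound both $u_1$ and $u_3$'') lifted from Lemma~\ref{l:trans.events}, yet you equip it with radii at scale $W_{n2^j}$, which corresponds to \emph{growing} blocks. Subdivision of $[xn,x'n]$ would give blocks of width $(x'-x)n/2^j$; but $\cL$ contains only $\cH^{(2^jn)}$ events, controlling crossings of width $2^jn$ anchored at the starting column, not crossings of generic sub-intervals. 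So the transplant of Lemma~\ref{l:trans.events} cannot be run on $\cL$ as defined.

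The paper uses an asymmetric, outward-looking chaining dictated by the asymmetry in $\Xi$ (the left endpoint $\gamma'(0)$ sits in a window of height $W_n$, whereas $\gamma'(1)$ may be as high as $(x'-x)^{89/100}W_n$). One looks from $\gamma'(0)$ outward at the single columns $x=2^jn$, $j=0,\ldots,j_{\max}$, and takes $j$ to be the \emph{maximal} index in $[k,j_{\max}]$ with $\sup\{|w|:(2^jn,w)\in\gamma'\}\geq z2^{9j/10}W_n$; then $u_1=\gamma'(0)$, $u_2$ is the extreme point on column $2^jn$, and $u_3$ is the last intersection with $x=2^{j+1}n$. Maximality of $j$ is precisely what bounds the height of $u_3$ (your ``smallest $j$'' gives no information at the \emph{next} column), while $u_1$ is bounded trivially. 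The three distances $X_{u_1u_2},X_{u_2u_3},X_{u_1u_3}$ are then crossings of width $2^jn,2^jn,2^{j+1}n$, exactly matching the $\cH^{(2^jn)}$ and $\cH^{(2^{j+1}n)}$ pieces of $\cL$. The two case splits you anticipate do occur ($zW_n\le n^{1-10\epsilon}$ versus $>$, and within the first a further split on $|f|\le (2^jn)^{1-10\epsilon}/W_n$), separating the $\cH$-driven regime from the $\cS$-fallback; but the core chaining must be restructured around maximal-$j$ with $u_1=\gamma'(0)$ rather than subdivision.
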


\begin{proof}First assume without loss of generality that $x=y=0$.

{\bf Case 1:} $zW_n \leq n^{1-10\epsilon}$.

Suppose that $\gamma' \in \Xi^{(n),R}_{0,x',0,k}$ and $X_{\gamma'}-X_{\gamma'(0),\gamma'(1)} \leq \frac{z\mu}{4000} Q_{n}$.  Let $j$ be the maximal integer in $[k,j_{\max}]$ such that
\[
\sup\{|w|:(2^jn,w) \in \gamma'\}\geq z2^{9j/10}W_n.
\]
By definition of $\Xi^{(n),R}_{x,x',y,k,z}$ we have $j\geq k$.  Let us consider the case $j<j_{\max}$.  Then set $u_1=\gamma'(0)$ and let $u_2=(2^j n,w_2)\in \gamma'$ be intersection of $\gamma'$ with the line $x=2^j n$ that maximize $|w_2|$.  And set $u_3=(2^{j+1}n,w_3)$ as the last point of intersection of $\gamma'$ with the line $x=2^{j+1} n$.  See Figure~\ref{f:chaining.local.trans}.

Let us denote $f =w_2/W_n$ and $g=w_3/W_n$.  
Note by construction of $j$ we have that $|f|\geq z2^{9j/10}$ and $|g|\leq z2^{9(j+1)/10}$ and so $|f|-|g|/2 \geq 2^{-4}|f|$.  First let us consider the case that $|f|\leq (2^j n)^{1-10\epsilon}/W_n$.
Then by ~\eqref{eq:Pythag2},
\begin{align*}
&|u_1-u_2|+|u_2-u_3| -|u_1-u_3| \\
&\quad\geq\sqrt{(n2^j)^2 + ((|f|-1)W_n)^2} + \sqrt{(n2^j)^2 + ((f-g)W_n)^2}\\
&\quad\qquad - \sqrt{(n2^{j+1})^2 + (g W_n)^2}\\
&\quad\geq\frac{((|f|-1)W_n)^2+((f-g)W_n)^2 - \frac12(g W_n)^2}{n2^{j+1}} - o(2^{-j}|f|^2 Q_n)\\
&\quad\geq 2^{-(j+1)}((|f|-|g|/2)^2-2|f|)Q_n - o(2^{-j}|f|^2 Q_n)\\
&\quad\geq 2^{-(j+10)}|f|^2Q_n .
\end{align*}

Now 
\begin{align*}
X_{\gamma'}-X_{v_1 v_2} &\geq X_{u_1 u_2 } + X_{u_2 u_3} - X_{u1 u_3} - 3\Gamma_{n^2}\\
&\geq \mu(|u_1-u_2|+|u_2-u_3| -|u_1-u_3|) - 3\Gamma_{n^2}\\
&\qquad - \Big| X_{u_1 u_2 } -\mu|u_1-u_2|\Big|- \Big| X_{u_2 u_3 } -\mu|u_2-u_3|\Big|- \Big| X_{u_1 u_3 } -\mu|u_1-u_3|\Big|\\
&\geq \mu 2^{-(j+10)}|f|^2Q_n - 3z 2^{(j+1)/20}Q_{2^{j+1}n}- 3\Gamma_{n^2}\\
&\qquad- \frac{\theta^2}{1000}\Big(\frac{(2|f|+|g|)W_n}{W_{2^j n}} Q_{2^j n} + \frac{|g|W_n}{W_{2^{j+1} n}} Q_{2^{j+1} n}\Big)\\
&\geq \mu 2^{-(j+10)}|f|^2Q_n - \bigg(3\sqrt{D_5}z 2^{(j+1)4/5}+ \frac{5\sqrt{D_5}\theta^2|f|2^{-j/8}}{1000}\bigg)Q_{n}- 3\Gamma_{n2^j}.
\end{align*}
where the first inequality is by the definition of $\Gamma$, the second by the triangle inequality, the third by $\cL$ and the  fourth inequality follows by Lemma~\ref{l:growth34}.  For large enough $z\geq z_0$ we have that
\begin{align*}
\mu 2^{-(j+12)}|f|^2Q_n &\geq \mu z^2 2^{4j/5-12} Q_n \geq 3\sqrt{D_5}z 2^{\tfrac45(j+1)} Q_{n}- 3\Gamma_{n2^j},\\
\mu 2^{-(j+12)}|f|^2Q_n &\geq \mu z 2^{-(j/10+12)}|f|Q_n \geq \frac{5\sqrt{D_5}\theta^2|f|2^{-j/8}}{1000}Q_{n}.
\end{align*}
Hence combining the last two equations we have that
\begin{align*}
X_{\gamma'}-X_{v_1 v_2} &\geq  \mu 2^{-(j+11)}|f|^2Q_n\geq 2^{4j/5-11}z^2 Q_n \geq \frac{z\mu}{4000}Q_n,
\end{align*}
for all large enough $z$ and $n$ which contradicts the assumption.  

Next suppose that $|f|\geq (2^j n)^{1-10\epsilon}/W_n$.  Then
\begin{align*}
|u_1-u_2|+|u_2-u_3| -|u_1-u_3| &\geq 2^{-(j+10)}((2^j n)^{1-10\epsilon}/W_n)^2 Q_n\\
&\geq C z n^{1-10\epsilon-\frac34} 2^{j-10} Q_n\\
&\geq z 2^{j/5} \mu (2^jn)^\epsilon Q_{3n2^j}.
\end{align*}
Let $u_2'$ be the first point on the path $\gamma'$ from $u_1$ to $u_3$ such that 
\[
|u_1-u_2'|+|u_2'-u_3| -|u_1-u_3| \geq z 2^{j/5} \mu (2^jn)^\epsilon Q_{3n2^j}.
\]
We have that $u_1,u_3\in B_{n2^{j+1}}(\origin)$ so $u_2'\in B_{3n2^{j}}(\origin)$. 
Then by $\cS^{(n2^j)}_{z,(0,0)}$,
\begin{align*}
X_{\gamma'}-X_{v_1 v_2} &\geq X_{u_1 u_2' } + X_{u_2' u_3} - X_{u1 u_3} - 3\Gamma_{n 2^j}\\
&\geq \mu(|u_1-u_2|+|u_2-u_3| -|u_1-u_3|) - \frac3{10}\mu (n2^j)^\epsilon Q_{3n2^j} - 3\Gamma_{n 2^j}\\
&\geq \frac{z\mu}{4000} Q_{n}.
\end{align*}
The case for $j=j_\star$ follows similarly but with $u_3=v_2$.

{\bf Case 2:} $zW_n > n^{1-10\epsilon}$.

Define $j_*=j_{\max}\wedge \max\{j: zW_n \leq 2^{j-5} n\}$ and set
\[
a_j=\begin{cases}
zW_n - j(zW_n)^{9/10} &0\leq j < j_*,\\
n 2^{j_* -5 + \frac9{10}(j-j^*)} &j_* \leq j\leq j_{\max}.
\end{cases}
\]
Note that $\frac12 zW_n \leq a_j\leq z2^{9j/10}W_n$.  Let $j$ be the maximal integer in $[k,j_{\max}]$ such that
\[
\sup\{|w|:(2^j n,w) \in \gamma'\}\geq a_j.
\]
If $j<j_*$ then set $u_1=\gamma'(0)$ and $u_2=(2^j n,w_2),u_3=(2^{j+1} n,w_3)$ similarly to the first case.  We will assume that $w_2>0$, the case $w_2<0$ follows similarly.  Note that by construction in this case $a_j-a_{j+1} \geq (zW_n)^{9/10}$.  Then
\begin{align*}
|u_1-u_2|+|u_2-u_3| -|u_1-u_3| 
&\quad\geq |u_2|+|u_2-u_3| -|u_3|-2W_n\\
&\quad = |(2^j n , w_2)|+|(2^j n , w_2-w_3)| -|(2^{j+1} n , w_3)|-2W_n\\
&\quad\geq |(2^j n , a_j)|+|(2^j n , a_j-a_{j+1})| -|(2^{j+1} n , a_{j+1})|-2W_n\\
&\quad\geq |(2^{j+1} n , 2 a_j-a_{j+1})| -|(2^{j+1} n , a_{j+1})|-2W_n\\
&\quad\geq \frac1{10} |a_j-a_{j+1}| -2W_n \geq \frac1{40} (zW_n)^{9/10}.
\end{align*}
where the first inequality used the fact that $|u_1|\leq W_n$, the second that the expression is increasing in $w_2$ and decreasing in $w_3$, the third by the triangle inequality.  The third inequality follows by the fact that $a_{j+1} \geq 2^{j-4} n$ and
\[
\inf_{w\geq 2^{j-4} n} \frac{d}{dw} \sqrt{(2^{j+1} n )^2 + w^2} \geq \frac1{40}
\]
we have that
\[
|(2^{j+1} n , 2 a_j-a_{j+1})| -|(2^{j+1} n , a_{j+1})|-2W_n \geq \frac{|a_j-a_{j+1}|}{20}  -2W_n \geq \frac1{40} (zW_n)^{9/10}
\]
and the last from the fact that  $\frac1{40} (zW_n)^{9/10}\geq \frac1{40}  n^{\frac9{10}(1-10\epsilon)} \geq 2W_n$ for large $n$.  It may be that $u_2$ is outside $B_{3zn}(\origin)$ so let $u_2'$ be the first point on $\gamma'$ such that
\[
|u_1-u'_2|+|u'_2-u_3| -|u_1-u_3| \geq  \frac1{40} (zW_n)^{9/10}.
\]
Any point $u'$ on the boundary of $B_{3zn}(\mathbf{0})$ would satisfy the above equation so we have that $u_2'\in B_{3zn}(\mathbf{0})$ and by construction comes before $u_3$.  Then, since $Q_{n2^j} \leq \sqrt{n2^j} \leq 2^{-5}(zW_n)^{1/2}$
\begin{align*}
X_{\gamma'}-X_{v_1 v_2} &\geq X_{u_1 u_2' } + X_{u_2' u_3} - X_{u1 u_3} - 3\Gamma_{n 2^j}\\
&\geq \mu(|u_1-u'_2|+|u'_2-u_3| -|u_1-u_3|) -\frac6{10} \mu z^{1/10}Q_{n2^j}\\
&\geq \frac{\mu}{50} (zW_n)^{9/10}\geq \frac{z^{1/3}\mu}{4000} Q_{n}.
\end{align*}
Suppose instead that $j\in[j_*,j_{\max})$.  Then with $u_i$ as before, using~\eqref{eq:Pythag}
\begin{align*}
&|u_1-u_2|+|u_2-u_3| -|u_1-u_3| \\
&\quad\geq |(2^j n , a_j)|+|(2^j n , a_j-a_{j+1})| -|(2^{j+1} n , a_{j+1})|-2W_n\\
&\quad\geq \frac{a_j^2}{2^{j+1} n} -\frac18 \frac{a_j^4}{(2^{j+1} n)^3} + \frac{(a_{j+1} -a_j)^2}{2^{j+1} n} -\frac18 \frac{(a_{j+1} -a_j)^4}{(2^{j+1} n)^3} - \frac{a_{j+1}^2}{2^{j+2} n}-2W_n\\
&\quad \geq \frac{(2a_{j} -a_{j+1})^2}{2^{j+2} n}-\frac14 \frac{a_j^4}{(2^{j+1} n)^3}-2W_n\\
&\quad \geq \frac{(2-2^{9/10})^2a_j^2}{2^{j+2} n}-\frac14 \frac{a_j^2 2^{2j-10}n^{2}}{(2^{j+1} n)^3}-2W_n\\
&\quad \geq \frac{a_j^2}{2^{j+5} n}-\frac{a_j^2 }{2^{3j+13} n}-2W_n\\
&\quad \geq \frac{a_j^2}{2^{j+6} n}-2W_n \geq \frac{n^2 2^{2j_*-10+\frac{9}{5}(j-j_*)}}{2^{j+6} n}-2W_n \geq 2^{-20} (n2^j)^{4/5}.
\end{align*}
We choose 
$u_2'$ be the first point on $\gamma'$ such that
\[
|u_1-u_2|+|u'_2-u_3| -|u_1-u_3| \geq 2^{-20} (n2^j)^{4/5}.
\]
and have that 
\begin{align*}
X_{\gamma'}-X_{v_1 v_2} &\geq X_{u_1 u_2' } + X_{u_2' u_3} - X_{u1 u_3} - 3\Gamma_{n 2^j}\\
&\geq \mu(|u_1-u_2|+|u_2-u_3| -|u_1-u_3|)  - \frac6{10} \mu z^{1/10}Q_{n2^j}\\
&\geq  \mu 2^{-21} (n2^j)^{4/5}\\
&\geq  \mu 2^{-21} z^{5/15}W_n^{5/15}(n2^j)^{7/15}\\
&\geq  \mu 2^{-21} z^{1/3}W_n^{4/5}\geq  \frac{z^{1/3}\mu}{4000} Q_{n}.
\end{align*}
The case for $j=j_{\max}$ follows similarly but with $u_3=v_2$.  This establishes~\eqref{eq:local.trans.proof} in all the cases.
\end{proof}

Combining Lemmas~\ref{l:cL.bound} and~\ref{l:local.trans.proof} we have the following corollary.
\begin{corollary}\label{c:local.trans}
There exist absolute constants $D,z_0$ such that for all $z\geq z_0$ and all $k\in [1,\lfloor \log_2 (x-x')-1\rfloor]$,
\begin{equation}\label{eq:local.trans.proof.corol}
\P\Big[\inf_{\gamma' \in \Xi^{(n),R}_{x,x',y,k,z}} X_{\gamma'}-X_{\gamma'(0),\gamma'(1)} \geq \frac{{z^{1/3}}\mu}{4000} Q_{n}\Big] \leq \exp(1-Dz^{\frac12\epsilon\theta}).
\end{equation}
\end{corollary}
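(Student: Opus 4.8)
The plan is to obtain Corollary~\ref{c:local.trans} as an immediate consequence of the two preceding lemmas, with no new estimates required. Lemma~\ref{l:local.trans.proof} is purely deterministic: for $z \ge z_0$ it asserts that, on the event $\cL^{(n),R}_{x,x',y,z}$, one has $\inf_{\gamma'\in\Xi^{(n),R}_{x,x',y,k,z}}\big(X_{\gamma'} - X_{\gamma'(0),\gamma'(1)}\big) \ge \tfrac{z^{1/3}\mu}{4000} Q_n$, and this holds simultaneously for every admissible scale $k \in [1,\lfloor \log_2(x'-x)-1\rfloor]$. Thus, for each fixed such $k$, the unfavourable event — that some path $\gamma'$ with large local transversal fluctuation (i.e.\ $\gamma'\in\Xi^{(n),R}_{x,x',y,k,z}$) has excess length $X_{\gamma'} - X_{\gamma'(0),\gamma'(1)} < \tfrac{z^{1/3}\mu}{4000} Q_n$ — is contained in $(\cL^{(n),R}_{x,x',y,z})^c$. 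Applying Lemma~\ref{l:cL.bound}, which furnishes an absolute constant $D$ with $\P[(\cL^{(n),R}_{x,x',y,z})^c] \le \exp(1 - D z^{\frac12\epsilon\theta})$, then yields the bound of~\eqref{eq:local.trans.proof.corol}, the relation displayed there being understood as bounding the probability of this unfavourable event (equivalently, with overwhelming probability every large-fluctuation path is longer than the geodesic between its endpoints by at least $\tfrac{z^{1/3}\mu}{4000}Q_n$).

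A few bookkeeping points are worth recording when writing this out. The threshold $z_0$ is the absolute constant produced by Lemma~\ref{l:local.trans.proof}, and the constant $D$ is the one from Lemma~\ref{l:cL.bound}; neither depends on $n$, $x$, $x'$ or $y$, so the corollary holds uniformly. The union over $k$ costs nothing, because Lemma~\ref{l:local.trans.proof} already delivers the bound for all admissible $k$ on the single event $\cL^{(n),R}_{x,x',y,z}$, which does not depend on $k$. Finally, the symmetric statement for left-going paths — with $\Xi^{(n),L}_{x,x',y,k,z}$, $\cL^{(n),L}_{x,x',y,z}$, $x > x'$ and $k\in[1,\lfloor\log_2(x-x')-1\rfloor]$ — follows in the same way from the left-going analogues of Lemmas~\ref{l:cL.bound} and~\ref{l:local.trans.proof} together with the reflection symmetry already noted after Lemma~\ref{l:cL.bound}.

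I do not expect a genuine obstacle here: both of the difficult inputs are already in place, namely the deterministic chaining-and-geometry estimate of Lemma~\ref{l:local.trans.proof} (handling separately the regimes $zW_n \le n^{1-10\epsilon}$ and $zW_n > n^{1-10\epsilon}$ and using~\eqref{eq:Pythag},~\eqref{eq:Pythag2} together with the local sub-linear growth bound Lemma~\ref{l:growth34}), and the dyadic union bound producing the stretched-exponential tail in Lemma~\ref{l:cL.bound} (itself assembled from Lemmas~\ref{l:paraBounds} and~\ref{l:trans.SOGam}). The only care needed is to verify that no $n$-dependence leaks into $z_0$ or $D$; this is transparent, since every sum over the dyadic index $j$ in the proof of Lemma~\ref{l:cL.bound} converges with constants depending only on $\epsilon$, $\theta$ and the model constants, and the geometric inequalities in Lemma~\ref{l:local.trans.proof} involve only $\mu$, $\epsilon$, $\theta$, $D_5$ and $D_6$.
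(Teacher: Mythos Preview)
Your proposal is correct and matches the paper's approach exactly: the paper's entire proof is the single sentence ``Combining Lemmas~\ref{l:cL.bound} and~\ref{l:local.trans.proof} we have the following corollary,'' which is precisely the containment-then-probability-bound argument you spell out. You also correctly identify and handle the evident typos in the statement (the inequality in~\eqref{eq:local.trans.proof.corol} should bound the probability of the complementary event, and the range of $k$ should involve $x'-x$ for the right-going case), and your remark on the left-going version via reflection symmetry mirrors the paper's own comment following the corollary.
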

By reflection symmetry an identical bound holds for paths in $\Xi^{(n),L}_{x,x',y,k,z}$.

\section{Left tail lower bound}
\label{s:left}

The purpose of this section is to prove that for quasi-record points $n$, $X_{n}-n\mu$ has positive mass arbitrarily far in the left tail at scale $Q_{n}$. Recall \eqref{e:alpharange}; throughout this section we shall work with $\alpha'\in (\alpha, \alpha_{*})$ such that all our estimates are valid for this $\alpha'$ (i.e., when $Q_{n}=\sup_{1\le m\le n} (n/m)^{\alpha'}\hat{Q}_{m}$). We have the following proposition.

\begin{proposition}
\label{p:left1}
Let $\alpha'\in (\alpha, \alpha_*)$ be fixed. For each $L,C>0$, there exists $\delta>0$ such that for all $(C,\alpha')$-quasi record point $n$ sufficiently large we have 
$$\P(X_{n}<n\mu-LQ_{n})\ge \delta.$$
\end{proposition}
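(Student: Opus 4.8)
Fix $C\ge 1$, $\alpha'\in(\alpha,\alpha_*)$ as in \eqref{e:alpharange}, and $L>0$; throughout let $n$ be a sufficiently large $(C,\alpha')$-quasi record point. The strategy is to exhibit, with probability bounded below by a constant $\delta=\delta(L,C)>0$ \emph{independent of $n$}, a path from $\mathbf 0$ to $(n,0)$ of passage time $<n\mu-LQ_n$, obtained by concatenating many \emph{independent} "atypically good" crossings of narrow vertical strips at a much smaller scale. Since a single scale cannot produce a deviation of order $LQ_n$ below the mean (the variance lower bound only forces constant-order deviations), the gain must be accumulated over $\Theta(K)$ scales with $K=K(L,C)$ large, and the point is that each scale contributes a positive amount while the accumulated suboptimality and the errors stay of order $Q_n$.

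The first ingredient is an anti-concentration estimate at the smaller scale: for a (quasi-)record point $m$ one has $\Var(X_m)\ge \hat C Q_m^2$ by Lemma~\ref{l:varbd}, and combining this with the improved concentration of Lemma~\ref{l:ImprovedInduction} and with $A_m\ge 0$, a second–moment argument exactly of the type used in the proof of Lemma~\ref{l:growth34} (leading to \eqref{eq:notTooConcentrated}) yields absolute constants $c_1,\delta_0>0$ such that $\P[X_m\le \E X_m-c_1Q_m]\ge\delta_0$ for all large record points $m$, and similarly for side-to-side and point-to-line passage times across an $m$-scale strip. The second ingredient is Lemma~\ref{l:grmain}: since $n$ is a $(C,\alpha')$-quasi record point, $\alpha$-record points occur at every scale of a nearly geometric grid descending from $n$, so for suitable $K=K(L,C)$ we may tile $[0,n]\times\R$ by $K'=\Theta(K)$ vertical strips $\Lambda_1,\dots,\Lambda_{K'}$ whose widths $m_i$ are $\alpha$-record points with $m_i=\Theta(n/K)$, to which both the anti-concentration estimate and the variance bound apply.

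Resample the field independently inside each strip, so that the restricted fields $\omega^{\Lambda_i}$ are independent. Building a path greedily from $v_0=\mathbf 0$ by letting $v_i$ be the exit point of the $\omega^{\Lambda_i}$-geodesic from $v_{i-1}$ across strip $i$ (confined to a thin enlargement of the strip so that it is unaffected by the other resamplings, by Assumptions~\ref{as:resamp1}--\ref{as:resamp2}), then correcting near $(n,0)$, the triangle inequality together with the concatenation bound of Lemma~\ref{l:Gamma} gives
\[
X_n\ \le\ \sum_{i=1}^{K'} X^{\Lambda_i}_{v_{i-1}v_i}\ +\ X_{v_{K'},(n,0)}\ +\ O(K)\,\Gamma_n .
\]
Conditionally on $\omega^{\Lambda_1},\dots,\omega^{\Lambda_{i-1}}$ the summand $X^{\Lambda_i}_{v_{i-1}v_i}$ depends only on $\omega^{\Lambda_i}$, so the summands are conditionally independent, each atypically small (below its conditional mean by $c_1Q_{m_i}$) with conditional probability $\ge\delta_0$; hence every strip is "good" with probability $\ge \delta_0^{K'}=:\delta(L,C)>0$. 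On this event the sum drops by at least $\sum_i c_1Q_{m_i}$. Comparing $\sum_i X^{\Lambda_i}_{v_{i-1}v_i}$ to a single crossing of the full $n$-scale rectangle bounds its unconditional mean by $n\mu+O(Q_n)$ (so the accumulated suboptimality does not grow with $K$), the landing correction $X_{v_{K'},(n,0)}$ is $O(Q_n)$ by the global and local transversal fluctuation estimates of Sections~\ref{s:trans}--\ref{s:ltf}, and $O(K)\Gamma_n=o(Q_n)$ by Lemma~\ref{l:Gamma}; meanwhile Lemma~\ref{l:growth34} gives $Q_{m_i}\ge D_5^{-1}(n/m_i)^{-3/4}Q_n$, so $\sum_i c_1Q_{m_i}\ge c_1D_5^{-1}(K')^{1/4}Q_n$. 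Taking $K$ large (depending on $L$, $c_1$, $D_1$, $D_5$) makes this exceed $(L+O(1))Q_n$, and we obtain $X_n<n\mu-LQ_n$ on the good event, which has probability $\ge\delta(L,C)>0$ uniformly in all large $(C,\alpha')$-quasi record points $n$.

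The hard part is the bookkeeping in the last step. One must choose the strip crossings (and the surgery at the strip junctions and near $(n,0)$) so that simultaneously: (i) the crossings remain genuinely independent after resampling, which forces them away from strip boundaries and requires care at the junctions; (ii) the \emph{sum} of the conditional means of the crossings stays within $O(Q_n)$ of $n\mu$ rather than acquiring a term of order $(K')^{1/4}Q_n$ — here one exploits that a concatenation of independent free-endpoint strip crossings is cheaper than a single long crossing, whose mean is at most $\E X_n\le n\mu+D_1Q_n$ (Lemma~\ref{l:AnBound}); and (iii) the per-scale savings $\sum_i c_1Q_{m_i}$, the transversal-fluctuation landing cost, and the concatenation errors are correctly balanced through the growth estimate of Lemma~\ref{l:growth34}. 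Once these are arranged the construction above goes through.
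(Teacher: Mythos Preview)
Your overall framework---tile $[0,n]$ into $K'$ strips at a smaller $\alpha$-record scale $m$ found via Lemma~\ref{l:grmain}, resample each strip independently, and ask that every strip crossing be ``atypically good''---is exactly the skeleton of the paper's proof of Proposition~\ref{p:left}. The gap is at the centering of the per-strip anti-concentration, i.e.\ your point~(ii).

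The second-moment argument you invoke (Lemma~\ref{l:varbd} plus the improved tails of Lemma~\ref{l:ImprovedInduction}, as in \eqref{eq:notTooConcentrated}) only yields $\P[X_m\le \E X_m-c_1Q_m]\ge\delta_0$, centered at $\E X_m$. On the good event your bound becomes
\[
X_n\ \le\ \sum_i \E_{\text{cond}}[Z_i]\ -\ c_1\sum_i Q_{m_i}\ +\ o(Q_n),
\]
and you then need $\sum_i \E_{\text{cond}}[Z_i]\le n\mu+O(Q_n)$. Your justification, ``a concatenation of free-endpoint strip crossings is cheaper than a single long crossing,'' has the inequality reversed: any concatenation is a specific path and hence is \emph{longer} than the geodesic, so this gives only a lower bound on $\sum_i Z_i$. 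The only available upper bound is $\E_{\text{cond}}[Z_i]\le \E X_{m_i}=m_i\mu+A_{m_i}$, whence $\sum_i \E_{\text{cond}}[Z_i]\le n\mu+\sum_i A_{m_i}$; by Lemma~\ref{l:AnBound} the accumulated bias $\sum_i A_{m_i}$ can be as large as $D_1\sum_i Q_{m_i}$, the \emph{same} order as your gain $c_1\sum_i Q_{m_i}$. Nothing in the second-moment argument forces $c_1>D_1$, so the net deviation can have the wrong sign and the scheme fails. (In fact Lemma~\ref{l:mean}, proved later, shows $A_m\ge D_4Q_m$, so this obstruction is genuine, not just a gap in the bookkeeping.)

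This is precisely why the paper does not try to deduce the needed input from variance/anti-concentration alone. Instead it first proves Lemma~\ref{l:leftweak}, which gives the correctly centered statement $\P(X_m<m\mu-\varepsilon Q_m)\ge\varepsilon$ at record points $m$, so that on the all-good event one gets $\sum_i X^{\Lambda_i}\le K m\mu-K\varepsilon Q_m\le n\mu-K\varepsilon Q_m$ with no accumulated $A_m$ to cancel. The proof of Lemma~\ref{l:leftweak} is by contradiction (Lemmas~\ref{l:thin}, \ref{l:rtail}, \ref{l:contra}--\ref{l:contrapara}): if $\P(X_m<m\mu-\varepsilon Q_m)<\varepsilon$ for all small $\varepsilon$ and arbitrarily large record $m$, thin-rectangle estimates and the right-tail mass of Lemma~\ref{l:rtail} force $Y^-_{mL}\ge mL\mu+c Q_{mL}$ with high probability, and bootstrapping to scale $n'=mL^6$ gives $\E X_{n'}\ge n'\mu+L^{1/8}Q_{n'}$, contradicting Lemma~\ref{l:AnBound}. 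That argument is the missing ingredient in your proposal.
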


We shall actually prove a slightly stronger result. 
{\begin{proposition}
\label{p:left}
Let $M\geq 1$ and $\alpha'\in (\alpha, \alpha_*)$ be fixed. For each $L,C>0$, there exists $\delta>0$ such that for all $(C,\alpha')$-record point $n$ sufficiently large and $n'\in[{\frac12}n,Mn]$ we have 
$$\P(X_{n'}<n'\mu-LQ_{n'})\ge \delta.$$
\end{proposition}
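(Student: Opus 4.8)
The plan is to prove the stronger Proposition~\ref{p:left} in three stages. First I would reduce everything to the case where the endpoint itself is an $\alpha$-record point. Given the $(C,\alpha')$-record point $n$ (which is in particular an $\alpha$-record point) and $n'\in[\tfrac12 n,Mn]$: if $n'\ge n$, concatenate an anomalously short path realizing $\{X_n<n'\mu-L'Q_n\}$ (for a large $L'=L'(L,M)$, obtained from the record-point case below) with an independent, typical crossing from $(n,0)$ to $(n',0)$ of length at most $(n'-n)\mu+tQ_{n'-n}$, decoupling via Assumptions~\ref{as:resamp1}--\ref{as:resamp2}; since $n'-n\le (M-1)n$ and $Q_{n'},Q_{n'-n}\le D_6^{O_M(1)}Q_n$ by Lemma~\ref{l:Qgrowth}, this produces a path of length $<n'\mu-LQ_{n'}$ on an event of positive probability. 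If $\tfrac12 n\le n'<n$, use Lemma~\ref{l:grmain} (applied to $n$ with $L$ replaced by $2$) to find an $\alpha$-record point $\tilde m\le n'$ with $n'/\tilde m$ bounded by an absolute constant, and run the same concatenation from $\tilde m$. So it suffices to prove: for each $L>0$ there is $\delta>0$ with $\P[X_m<m\mu-LQ_m]\ge\delta$ for all large $\alpha$-record points $m$.

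The engine for this is an amplification step. Suppose we have a \emph{base estimate}: absolute constants $\ell_0,\delta_0>0$ such that $\P[X_m<m\mu-\ell_0 Q_m]\ge\delta_0$ for all large $\alpha$-record points $m$. Fix $L$ and choose $K=K(L)$ with $\ell_0 K^{1/4}/D_5\ge 2L$. For a large $\alpha$-record point $n$, Lemma~\ref{l:grmain} gives an $\alpha$-record point $m$ with $K\le n/m\le K^{C_{11}}$; set $K'=\lfloor n/m\rfloor$ and $p_j=(jm,0)$. By the triangle inequality (Assumption~\ref{as:tri}), $X_n\le\sum_{j=1}^{K'}X_{p_{j-1}p_j}+X_{p_{K'},(n,0)}$. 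By Theorem~\ref{t:trans.main} each geodesic $\gamma_{p_{j-1}p_j}$ stays, off an event of probability $o(K'^{-1})$, inside the $j$-th vertical strip $\Lambda_j$ (thickened by a polylogarithmic multiple of $W_m$, which is still $\ll m$), so by the spatial independence of $\omega$ together with Assumptions~\ref{as:resamp1}--\ref{as:resamp2} the passage times $X_{p_{j-1}p_j}$ agree, off an event of probability $o(1)$, with independent copies of $X_m$. On the event that all $K'$ of these are below $m\mu-\ell_0 Q_m$ --- an event of probability at least $(\delta_0/3)^{K^{C_{11}}}$ for $n$ large, a positive constant depending only on $L$ --- and using $X_{p_{K'},(n,0)}\le m\mu+\mathrm{polylog}(n)$ (Lemma~\ref{l:Gamma}), we get $X_n\le n\mu-K'\ell_0 Q_m+\mathrm{polylog}(n)$. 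Since $m$ is a record point $Q_m\ge c\,m^{1/12}\gg\mathrm{polylog}(n)$, and by Lemma~\ref{l:growth34} $Q_m\ge Q_n/(D_5(n/m)^{3/4})\ge Q_n/(D_5 K'^{3/4})$; hence $K'\ell_0 Q_m\ge \ell_0 K'^{1/4}Q_n/D_5\ge 2LQ_n$, so $X_n<n\mu-LQ_n$ on this event. This completes the amplification.

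The remaining, and genuinely hard, step is the base estimate: producing positive mass \emph{strictly to the left of} $m\mu$ at a record point. Lemma~\ref{l:varbd} gives $\Var(X_m)\asymp Q_m^2$, and Lemmas~\ref{l:AnBound} and~\ref{l:ImprovedInduction} control the upper tail of $X_m-m\mu$; combining these with a Paley--Zygmund argument on the negative part of $X_m-\E X_m$ and the non-concentration dichotomy established inside the proof of Lemma~\ref{l:growth34}, one obtains order-one mass below $\E X_m-cQ_m$. The obstruction is that $\E X_m=m\mu+A_m$ with $A_m$ only known to be at most $D_1 Q_m$, so ``below $\E X_m-cQ_m$'' need not mean ``below $m\mu$''. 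To overcome this I would exploit the subadditive structure: $A_{2m'}\le 2A_{m'}$, and by Lemma~\ref{l:grmain} there are $\alpha$-record points at every nearly geometric scale, so one can pass to a record scale $m'$ at which the per-doubling gain $2A_{m'}-A_{2m'}$ is provably of order $Q_{m'}$; at such a scale the short path realizing that gain, spliced together with typical crossings as above, lies strictly below $m'\mu$ by a fixed multiple of $Q_{m'}$ with positive probability, which feeds the amplification at all record scales via Lemma~\ref{l:grmain}. This interplay between the variance lower bound, the growth control on $Q$, and the subadditivity of $\{\E X_n\}$ is where the real work lies.
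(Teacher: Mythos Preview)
Your amplification step (Stage~2) is essentially the paper's proof of Proposition~\ref{p:left} from Lemma~\ref{l:leftweak}: find an $\alpha$-record point $m$ well below $n$ via Lemma~\ref{l:grmain}, concatenate $K=\lfloor n'/m\rfloor$ independent copies of a short $X_m$, and use $Q_m\ge D_5^{-1}(m/n')^{3/4}Q_{n'}$ from Lemma~\ref{l:growth34} so that $K\ell_0 Q_m\gg LQ_{n'}$. Two minor simplifications relative to your write-up: the transversal-fluctuation confinement is unnecessary, since the paper just compares $X_{(j-1)\mathbf{m},j\mathbf{m}}$ with $X^{\Lambda_j}_{(j-1)\mathbf{m},j\mathbf{m}}$ via Assumption~\ref{as:resamp1} to manufacture independence; and your Stage~1 reduction is redundant, since once $m$ is chosen with $n/m\in[L_1,L_1^{C_{11}}]$ the concatenation works uniformly for every $n'\in[\tfrac12 n,Mn]$.

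The genuine gap is in Stage~3, your sketch of the base estimate (the paper's Lemma~\ref{l:leftweak}). Paley--Zygmund gives mass below $\E X_m-cQ_m=m\mu+A_m-cQ_m$, and you correctly identify that $A_m$ may be as large as $D_1 Q_m$. But your proposed cure---passing to a record scale $m'$ where the per-doubling gain $2A_{m'}-A_{2m'}$ is of order $Q_{m'}$---does not yield a path below $m'\mu$. That gain says only that $\E X_{2m'}$ is strictly smaller than $2\E X_{m'}$; it does not force $X_{2m'}<2m'\mu$ (or $X_{m'}<m'\mu$) with positive probability, which is exactly the conclusion you need. There is no mechanism in your sketch that transfers mass from ``below $\E X$'' to ``below $\mu\cdot(\text{length})$'', so the argument is circular.

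The paper proves Lemma~\ref{l:leftweak} by contradiction along a quite different route. Assuming $\P(X_m\le m\mu-\varepsilon Q_m)\le\varepsilon$ at a record point $m$, the thin-rectangle estimate (Lemma~\ref{l:thin}) shows $Y^{\delta,-}_m$ is close to $X_m$ and hence also rarely below $m\mu$. Lemma~\ref{l:contra} then builds an $L$-column grid of width $m$ and shows---using the right-tail input $\P(X_m\ge m\mu+\varepsilon_3 Q_m)\ge\varepsilon_3$ from Lemma~\ref{l:rtail}---that in every row a positive fraction of columns have $Y^-\ge m\mu+\tfrac{\varepsilon_3}{2}Q_m$, while any path avoiding all such boxes must make large vertical jumps and pay a parabolic cost (Corollary~\ref{c:paraRect}). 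This forces $Y^-_{Lm}\ge Lm\mu+cQ_{Lm}$ with very high probability; extending to parallelograms (Lemma~\ref{l:contrapara}) and amplifying to scale $nL^5$ then gives $\E X_{n'}\ge n'\mu+L^{1/8}Q_{n'}$, contradicting Lemma~\ref{l:AnBound}. The ingredient missing from your plan is this conversion of ``no left tail'' into ``$Y^-$ is systematically above $n\mu$'', which rests on the thin-rectangle comparison and the grid percolation argument rather than on subadditivity of $A_n$.
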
}

First we shall need the following lemma which shows that with positive probability there is some mass on the left tail.  

\begin{lemma}
\label{l:leftweak}
There exists $\varepsilon>0$ such that for all $\alpha$-record points $m$ sufficiently large we have 
$$\P(X_{m}<m\mu-\varepsilon Q_{m})\ge \varepsilon.$$
\end{lemma}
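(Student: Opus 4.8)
Fix a large $\alpha$-record point $m$ and argue by contradiction: suppose that for the eventual choice of $\varepsilon$ we have $\P[X_m<m\mu-\varepsilon Q_m]<\varepsilon$. First I would record the reduction that it suffices to produce a universal $c>0$ with $\E[(m\mu-X_m)^+]\ge cQ_m$ for all large record points $m$: indeed, since $A_m:=\E X_m-m\mu\ge0$, \eqref{eq:Q} gives $\P[X_m<m\mu-tQ_m]\le\exp(1-t^\theta)$, and integrating this against the bound $\P[X_m<m\mu-tQ_m]\le \P[X_m<m\mu-\varepsilon Q_m]$ shows that a lower bound $cQ_m$ on $\E[(m\mu-X_m)^+]$ forces $\P[X_m<m\mu-\varepsilon Q_m]\ge\varepsilon$ for a suitable (universal) $\varepsilon=\varepsilon(c)>0$.

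The first substantive step uses the variance lower bound for record points. By Lemma~\ref{l:varbd}, $\Var(X_m)\ge\hat C Q_m^2$, while by \eqref{eq:varUpperQn} and \eqref{eq:Q} the contribution of $\{|X_m-\E X_m|>KQ_m\}$ to the second moment is at most $\rho(K)Q_m^2$ with $\rho(K)\to0$; choosing $K$ with $\rho(K)<\hat C/2$ yields $\E[(X_m-\E X_m)^2 I(|X_m-\E X_m|\le KQ_m)]\ge\tfrac12\hat C Q_m^2$, hence universal $c_1,p_1>0$ with $\P[|X_m-\E X_m|\ge c_1 Q_m]\ge p_1$. Splitting this deviation event into its left and right halves and using $0\le A_m\le D_1 Q_m$ (Lemma~\ref{l:AnBound}): if $\P[X_m\le\E X_m-c_1 Q_m]\ge p_1/2$ and $A_m\le\tfrac12 c_1 Q_m$, then $\P[X_m\le m\mu-\tfrac12 c_1 Q_m]\ge p_1/2$ and we are done. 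In the remaining cases, bounding the terms $|\E[(X_m-m\mu)I(\cdot)]|$ by Cauchy--Schwarz against the second moment bound $\E[(X_m-m\mu)^2]\le CQ_m^2$ (from \eqref{eq:Q}, \eqref{eq:varUpperQn} and $A_m\le D_1 Q_m$) and discarding the far left tail via the standing hypothesis, one finds $A_m\ge c_2 Q_m$ for a universal $c_2>0$. So the whole problem reduces to ruling out, for large record points, the scenario that $A_m\ge c_2 Q_m$ while $\P[X_m<m\mu-\varepsilon Q_m]<\varepsilon$.

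For that last scenario I would invoke the sub-additive structure together with the machinery of Sections~\ref{s:rectpara}--\ref{s:ltf}. Since $n\mapsto\E X_n/n$ is sub-additive with limit $\mu$, one can pick $K=K(m)$ large with $A_{Km}\le\tfrac{c_2}{10}KQ_m$. Decomposing the geodesic from $\origin$ to $(Km,0)$ into its crossings of the strips $\{(i-1)m\le x\le im\}$ and using the resampling hypotheses \ref{as:resamp1}--\ref{as:resamp2} to compare those crossing times with independent copies $\tilde X^{(i)}\stackrel d=X_m$, one has $X_{Km}\le\sum_{i=1}^K \tilde X^{(i)}+o(KQ_m)$; while the percolation lower bound of Section~\ref{s:percgen} (Corollary~\ref{c:percolation} with Lemmas~\ref{l:parastatement} and~\ref{l:paraplusminus}) gives $X_{Km}\ge Km\mu-C''KQ_m$ with overwhelming probability. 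The point is then to exploit that $A_m\ge c_2 Q_m$ is large \emph{relative to the concatenation gain}: using the transversal-fluctuation estimates of Sections~\ref{s:trans} and~\ref{s:ltf} to control how much the free-endpoint geodesic from $\origin$ to $(Km,0)$ can beat the straight concatenation through the points $(im,0)$, together with Part~1 applied at the sub-scale $m$ (valid since $m$ is a record point), one produces with probability bounded away from $0$ a path from $\origin$ to $(m,0)$ of length below $m\mu-\varepsilon Q_m$, contradicting the hypothesis.

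The main obstacle is precisely this last step: the variance bound alone only yields a deviation below $\E X_m$, and converting it to a deviation below $m\mu$ requires controlling the size of $A_m/Q_m$ for record points, which I do not see how to do without the full strength of the local transversal-fluctuation and parallelogram-comparison estimates; that bookkeeping (distributing and bounding the concatenation gain across the $K$ crossings) is where essentially all of the work lies, and it is the part I would expect to take the most care to make rigorous.
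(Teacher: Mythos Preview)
Your reduction to the scenario ``$A_m\ge c_2Q_m$ while $\P[X_m<m\mu-\varepsilon Q_m]<\varepsilon$'' is reasonable, but the proposed resolution of that scenario has a genuine gap. The sub-additivity gap $KA_m-A_{Km}\asymp KQ_m$ only measures the difference between the straight concatenation through the points $(im,0)$ and the free geodesic to $(Km,0)$; it does not localize to any particular column, and the geodesic's crossings go between random points $(im,y_i)$, not between the points $(im,0)$, so they are not copies of $X_m$. Even with the local transversal-fluctuation control of Section~\ref{s:ltf} you cannot convert a short side-to-side crossing at random height into the statement $X_m<m\mu-\varepsilon Q_m$ for the \emph{fixed} point-to-point time. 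In short, there is no mechanism in your outline that produces a path from $\mathbf{0}$ to $(m,0)$ of length below $m\mu-\varepsilon Q_m$; the ``bookkeeping'' you flag is not merely bookkeeping but the entire missing idea.

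The paper's argument goes in the opposite direction: rather than extracting something short, it shows that the contradiction hypothesis forces passage times to be systematically \emph{too long}. The two ingredients you are missing are the right-tail lower bound $\P[X_m\ge m\mu+\varepsilon_3Q_m]\ge\varepsilon_3$ at record points (Lemma~\ref{l:rtail}) and the thin-rectangle estimate (Lemma~\ref{l:thin}), which says that across a rectangle of height $\delta W_m$ the maximum and minimum side-to-side times differ by $o(Q_m)$. Together these imply that, under the hypothesis, $Y^{\delta,-}_m\ge m\mu-o(Q_m)$ always and $Y^{\delta,-}_m\ge m\mu+\varepsilon_3Q_m/2$ with probability $\ge\varepsilon_3/2$. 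A percolation argument at scale $n=mL$ then gives $\P[Y^-_{mL}\ge mL\mu+L^{-1000}Q_{mL}]\ge 1-L^{-20000}$ (Lemma~\ref{l:contra}); extending to parallelograms (Lemma~\ref{l:contrapara}) and iterating to scale $n'=nL^5$ yields $\P[X_{n'}\ge n'\mu+L^{1/8}Q_{n'}]\ge\tfrac12$, which for large $L$ contradicts Lemma~\ref{l:AnBound}. The amplification runs through lower bounds on $Y^-$, not through short segments, and this is what makes the argument close.
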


We shall first prove Proposition \ref{p:left} assuming Lemma \ref{l:leftweak}. 

\begin{proof}[Proof of Proposition \ref{p:left}]
We only need to prove this proposition for $L$ sufficiently large. Recall from Lemma \ref{l:growth34} that for $m\le n$ sufficiently large, 
$Q_{m}\ge D_5^{-1}(\frac{m}{n})^{3/4}Q_{n}$. Given $L$ from the statement of the proposition and $\varepsilon$ from Lemma \ref{l:leftweak}, define 
$L_1:=(\frac{5D_5L}{\varepsilon})^{4}$. It is easy to see that for any $m\le nL^{-1}$ and $n'\ge \frac{1}{2}n$, we have  $\lfloor \frac{n'}{m}\rfloor \varepsilon Q_{m}\ge 2LQ_{n'}$. Now, increasing the value of $L_1$ if necessary, fix an $\alpha$-record point $m\in [nL_1^{-C_{11}},nL_1^{-1}]$ where $C_{11}$ is as in Lemma \ref{l:grmain}; such a record point exists by Lemma \ref{l:grmain}.

For $n'\in [\frac{1}{2}n,Mn]$, for convenience of notation, let us locally set $K=\lfloor \frac{n'}{m}\rfloor$. Then we have 

$$X_{n'}\le \sum_{j=1}^{K} X_{(j-1)\mathbf{m},j\mathbf{m}}+ X_{K\mathbf{m}, \mathbf{n}'}\le \sum_{j=1}^{K} (X^{\Lambda_{j}}_{(j-1)\mathbf{m},j\mathbf{m}}+Y_{j})+X^{\Lambda_{K+1}}_{K\mathbf{m}, \mathbf{n}'}+Y_{K+1}$$
where $\Lambda_{j}$ denotes the strip $\{(x,y): x\in ((j-1)m,jm]\}$ for $1\le j\le K$, $\Lambda_{K+1}$ denotes the strip $\{(x,y): x\in (Km,n']\}$
$$Y_{j}:=X_{(j-1)\mathbf{m},j\mathbf{m}}- X^{\Lambda_{j}}_{(j-1)\mathbf{m},j\mathbf{m}}$$
for $1\le j \le K$ and 
$$Y_{K+1}:=X_{K\mathbf{m},n'}- X^{\Lambda_{K+1}}_{K\mathbf{m},n'}.$$
By the independence (and identical distribution) of  $X^{\Lambda_{j}}_{(j-1)\mathbf{m},j\mathbf{m}}$ and Lemma \ref{l:leftweak} (together with our choice of $m$) implies 
$$\P\left(\sum_{j=1}^{K\rfloor} X^{\Lambda_{j}}_{(j-1)\mathbf{m},j\mathbf{m}}\le \lfloor K\rfloor m\mu-2LQ_{n'}
\right)\ge \left(\P(X_{m}\le m\mu-\varepsilon Q_{m}\right)^{\lfloor n/m \rfloor} \ge (\varepsilon)^{ML_1^{C_2}}:=4\delta.$$
Further, 
$$\P(X^{\Lambda_{K+1}}_{K\mathbf{m}, \mathbf{n}'}\le (n'-Km)\mu+\frac{L}{Q_{n'}}/2)\ge \frac{1}{2}$$
by definition of $Q_{\cdot}$, the fact that $Q_{n'}\ge Q_{n'-Km}$ and since $L$ is sufficiently large. 
Therefore, 
$$\P(\sum_{j=1}^{K} (X^{\Lambda_{j}}_{(j-1)\mathbf{m},j\mathbf{m}}+X^{\Lambda_{K+1}}_{K\mathbf{m}, \mathbf{n}'}\le n'\mu-\frac{3L}{2}Q_{n'})\ge 2\delta.$$
Finally, by Assumption \ref{as:resamp1} it follows that 
$$\P\left(\sum_{j=1}^{K+1}Y_{j}\le -LQ_{n'}/2\right)\le (K+1)\exp(1-D_0(LQ_{n'}/2K\log^{1/\kappa}n')^{\kappa})\le \delta$$
for $n$ sufficiently large since $Q_{n'}\ge C(n')^{1/12}$ and $K\le ML_1^{C_{11}}$. 
Combining the above two estimates we get the desired result. 
\end{proof}

\subsection{Thin rectangle estimates}
\label{s:thin}
For the proof of Lemma \ref{l:leftweak}, we need the following lemma about passage times across thin rectangles. Let $Y^{\delta,\pm}_{n}:=Y^{\pm}_{\mathcal{R}_{n,\delta W_{n}}}$ be the passage time across a rectangle or width $n$ and height $\delta W_{n}$. For this proof we shall use the shorthand $\mathcal{R}_{\delta}$ for the rectangle $\mathcal{R}_{n,\delta W_{n}}$.
 We have the following lemma which says that the best and worst passage times across a thin rectangle are close.

\begin{lemma}
\label{l:thin}
Let $\delta>0$ be fixed and sufficiently small. Then for all $n$ sufficiently large we have and all $x\ge \delta^{3\alpha/7}$
$$\P(Y^{\delta,+}_{n}-Y^{\delta,-}_{n}\ge xQ_{n})\le C\left(\exp(-\delta^{-\epsilon\theta/4})+\delta^{-1/2}\exp(-cx^{\theta}\delta^{-3\alpha \theta/7})+\exp(-n^{\kappa/100})\right).$$
In particular, the same estimate holds for $X_{n}-Y^{\delta,-}_{n}$.  
\end{lemma}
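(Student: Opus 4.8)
The plan is to prove the bound for all $n$ large; for bounded $n$ it is immediate from Assumption~\ref{as:localDist} and the trivial comparison $Y^{\delta,+}_{n}-Y^{\delta,-}_{n}\le Y^{+}_{n}-Y^{-}_{n}=O(Q_{n})$ coming from Lemmas~\ref{l:YMinusBound} and~\ref{l:YPlusBound}. Write $L,R$ for the left and right sides of $\cR_{\delta}$, both intervals of height $\delta W_{n}<W_{n}$. Since $(\origin,\mathbf n)\in L\times R$ we have $Y^{\delta,-}_{n}\le X_{n}\le Y^{\delta,+}_{n}$, so the ``in particular'' assertion for $X_{n}-Y^{\delta,-}_{n}$ follows from the bound for $Y^{\delta,+}_{n}-Y^{\delta,-}_{n}$. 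Fix the geodesic $g$ realising $Y^{\delta,-}_{n}$, say from $a_1\in L$ to $a_2\in R$. Discretising $L$ and $R$ to $\Z$-nets of spacing one and absorbing the discretisation error into $\Gamma_{n}$ via Lemma~\ref{l:Gamma}, it suffices to show that for each fixed $u_1\in L\cap\Z^2$, $u_2\in R\cap\Z^2$ one has $X_{u_1u_2}\le Y^{\delta,-}_{n}+\tfrac12 xQ_{n}$ off an event of the stated probability; we produce an explicit competitor obtained from $g$ by surgery at an intermediate scale.

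Set $n'=\delta^{3/7}n$ and $z_0=\delta^{-1/2}$. By the definition of $Q$ one has $Q_{n'}\le (n'/n)^{\alpha}Q_{n}=\delta^{3\alpha/7}Q_{n}$, and $W_{n'}=\sqrt{n'Q_{n'}}$ satisfies $\delta W_{n}\le W_{n'}\le W_{n}$ for $n$ large. Let $w_1$ (resp.\ $w_2$) be the first (resp.\ last) point where $g$ meets the line $x=n'$ (resp.\ $x=n-n'$). Work on the intersection of three events. First, the heights of $w_1,w_2$ are at most $z_0W_{n'}$: by Corollary~\ref{c:local.trans} applied at scale $n'$ (together with Theorem~\ref{t:trans.main} to keep $g\subset B_{Cn}(\origin)$) this holds with probability at least $1-\exp(1-D z_0^{\frac12\epsilon\theta})=1-\exp(1-D'\delta^{-\epsilon\theta/4})$, the first error term. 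Second, a resampling coupling of the two end-columns $\{0\le x\le n'\}$ and $\{n-n'\le x\le n\}$ (via Assumptions~\ref{as:resamp1} and~\ref{as:resamp2}) succeeds, decoupling $w_1,w_2$ from the fields used to traverse those columns, and all $O(\delta^{-1/2})$ relevant passage times $X_{u_1w_1},X_{a_1w_1},X_{w_2u_2},X_{w_2a_2}$ across a column of length $\approx n'$ lie within $\tfrac1{16} xQ_{n}$ of $\mu(\cdot)+A_{(\cdot)}$; by \eqref{eq:Q} and Lemma~\ref{l:paraplusminus} at scale $n'$, with $\tfrac1{16}xQ_{n}/Q_{n'}\ge \tfrac1{16} x\delta^{-3\alpha/7}$, this holds with probability at least $1-\delta^{-1/2}\exp(1-cx^{\theta}\delta^{-3\alpha\theta/7})$, the second error term. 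Third, $\Gamma_{2n}\le \tfrac1{8} xQ_{n}$ and the resampling exceptional sets are avoided: by Lemma~\ref{l:Gamma} (using $Q_{n}\ge c n^{1/12}$) and Assumption~\ref{as:resamp1} this holds with probability at least $1-\exp(-n^{\kappa/100})$, the third error term.

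On this event form the competitor $u_1\to w_1\to w_2\to u_2$ whose middle piece is the restriction of $g$. By the triangle inequality and the near-additivity of $g$ at the intermediate points $w_1,w_2$ (absorbed into $\Gamma_{2n}$),
\[
X_{u_1u_2}\le Y^{\delta,-}_{n}+\big(X_{u_1w_1}-X_{a_1w_1}\big)+\big(X_{w_2u_2}-X_{w_2a_2}\big)+O(\Gamma_{2n}).
\]
For the first bracket, $X_{u_1w_1}-X_{a_1w_1}\le |X_{u_1w_1}-\mu|u_1-w_1||+|X_{a_1w_1}-\mu|a_1-w_1||+\mu\big||u_1-w_1|-|a_1-w_1|\big|$; the first two terms are each $\le\tfrac1{16} xQ_{n}$ by the second event (writing $|X-\mu\ell|\le |X-\E X|+|A_\ell|$), while by the concavity estimates \eqref{eq:Pythag}--\eqref{eq:Pythag2}, using that $u_1,a_1$ lie at vertical distance $\le\delta W_{n}$ and $w_1$ at height $\le z_0W_{n'}$ over horizontal span $n'$, one gets $\big||u_1-w_1|-|a_1-w_1|\big|\le \tfrac{2z_0\,\delta W_{n}\,W_{n'}}{n'}\le 2Q_{n}\,\delta^{(4+3\alpha)/14}$ (using $W_{n'}^2=n'Q_{n'}\le n'\delta^{3\alpha/7}Q_{n}$ and $n'=\delta^{3/7}n$, $z_0=\delta^{-1/2}$), which is $\le \tfrac1{4\mu}xQ_{n}$ for $\delta$ small since $x\ge\delta^{3\alpha/7}$ and $(4+3\alpha)/14>3\alpha/7$. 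The second bracket is bounded identically by left--right symmetry, and with the numerical constants allocated so that the two brackets plus $O(\Gamma_{2n})$ and the discretisation error sum to at most $xQ_{n}$, we obtain $X_{u_1u_2}\le Y^{\delta,-}_{n}+xQ_{n}$. Combining the three error terms proves the lemma; the bound for $X_{n}-Y^{\delta,-}_{n}$ is then immediate.

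The step I expect to be the main obstacle is making this surgery genuinely cheap. A geodesic between two points of a strip of height $\delta W_{n}$ is not confined to that strip: by Theorem~\ref{t:trans.main} it wanders transversally by order $W_{n}=\delta^{-1}W_{n'}$, a full factor $\delta^{-1}$ beyond the strip, so splicing the generic endpoints $u_1,u_2$ onto $g$ ``at the ends'' costs of order $\delta W_{n}\gg Q_{n}$, dwarfing the target $xQ_{n}$. The resolution is to splice at the intermediate scale $n'$, where the geodesic's typical height $W_{n'}$ enters only through the parabolic correction $\sim z_0^2(\delta W_{n})^2/n'$; this forces a tight accounting in which the choice $n'=\delta^{3/7}n$ and the cutoff $z_0=\delta^{-1/2}$ are dictated precisely by the interplay of the hypothesis $x\ge\delta^{3\alpha/7}$, the bound $Q_{n'}\le(n'/n)^{\alpha}Q_{n}$, and the sublinear growth estimate of Lemma~\ref{l:growth34} (which one also needs to control $W_{2^jn'}/W_{n'}$ in the local transversal fluctuation input). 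A secondary, more routine difficulty is the lack of independence between the random geodesic $g$ and the local fields in the end-columns, which is why the resampling hypotheses are invoked and is the source of the $\exp(-n^{\kappa/100})$ error.
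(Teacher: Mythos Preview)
Your approach is the paper's: the same intermediate scale $n'=\delta^{3/7}n$ (the paper writes $\varepsilon=\delta^{3/7}$), the same transversal cutoff $z_0=\delta^{-1/2}$ (the paper writes $L=\delta^{-1/2}$), the same splice $u_1\to w_1\to w_2\to u_2$ onto the $Y^{\delta,-}$-geodesic through its intersections with $x=n'$ and $x=n-n'$, the same use of Corollary~\ref{c:local.trans} to confine $w_1,w_2$, and essentially the same geometric accounting. Two points of execution need correction, however.

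First, the ``for each fixed $u_1,u_2$'' reduction fails as written: there are $O((\delta W_n)^2)$ integer pairs in $L\times R$, polynomially many in $n$, and a union bound over them destroys the first two error terms (which are constants in $n$). The paper instead defines a single bad event, independent of $u_1,u_2$, by partitioning the height-$2z_0W_{n'}$ segments $\mathbb L_1,\mathbb L_2$ into $2z_0=O(\delta^{-1/2})$ subintervals of length $W_{n'}$ and applying Lemma~\ref{l:paraChange} (together with Lemmas~\ref{l:YMinusBound} and~\ref{l:YPlusBound}) to each resulting parallelogram to control $\sup_{u,u'\in L_{\cR_\delta},\,v\in\mathbb L_{1,i}}|X_{u,v}-X_{u',v}|$ uniformly. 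That union over subintervals is exactly the source of the $\delta^{-1/2}$ prefactor in the second error term; your parenthetical ``$O(\delta^{-1/2})$ relevant passage times'' suggests you have this in mind, but the write-up does not carry it out.

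Second, the resampling of the end-columns is unnecessary and wrongly described: you cannot ``decouple $w_1,w_2$ from the fields used to traverse those columns'' by resampling those columns, because $w_1,w_2$ are determined by the geodesic $g$, which itself traverses (and hence depends on) the end-column fields. The paper uses no resampling in this proof at all; once the union over the $O(\delta^{-1/2})$ subsegments is taken, every passage time in the bad event has deterministic endpoints and the parallelogram estimates apply directly. Drop the resampling; the $\exp(-n^{\kappa/100})$ term comes only from the $\exp(1-CQ_{n'}^{\kappa/10})$ remainders in Lemma~\ref{l:paraChange} and from the $\Gamma_n$ bound via Lemma~\ref{l:Gamma}.
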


\begin{center}
\begin{figure}
\includegraphics[width=5in]{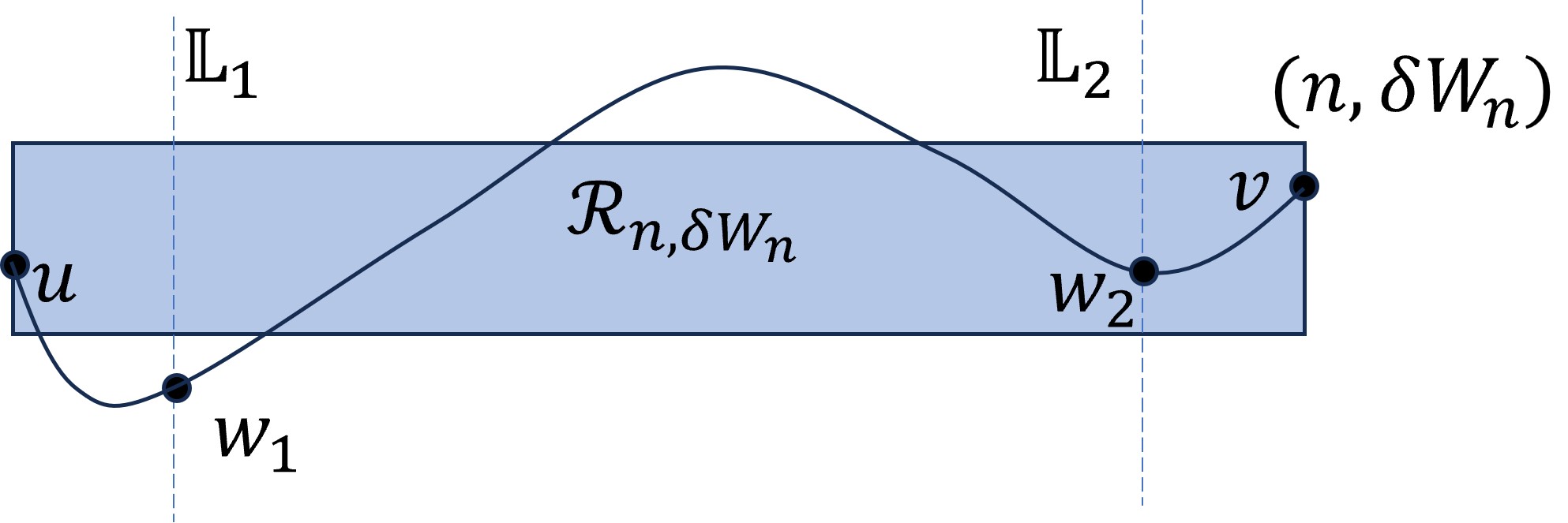}
\caption{Illustration of proof of Lemma~\ref{l:thin}. We control $Y^{\delta, +}_{n}-Y_{n}^{\delta,-}$ by looking at points $w_1$ and $w_{2}$ where the geodesic attaining $Y^{\delta,-}_{n}$ intersects the lines $\mathbb{L}_1$ and $\mathbb{L}_2$ and considering paths from the left side to the right side of the rectangle which are concatenation of geodesic segments up to $w_1$, between $w_1$ and $w_2$ and from $w_2$. }
\label{f:thin.rect}
\end{figure}
\end{center}

\begin{proof}
We shall fix parameters $L=\delta^{-1/2}$ and $\varepsilon=\delta^{3/7}$. Consider now the line segments $\mathbb{L}_1:=\{(\varepsilon n, y): |y|\le LW_{\epsilon n}\}$ and $\mathbb{L}_2:=\{(n-\epsilon n, y): |y|\le LW_{\varepsilon n}\}$. For $u\in L_{\mathcal{R}_{\delta}}$ and $v\in R_{\mathcal{R}_{\delta}}$, Let $w^1_{u,v}$ (resp.\ $w^2_{u,v}$) denote the first (resp.\ last) intersection of $\gamma_{uv}$ with the line $x=\varepsilon n$ (resp.\ $x=n-\varepsilon n$), see Figure~\ref{f:thin.rect}. Consider the event 
\[
\mathcal{A}=\left\{w^1_{u,v}\in \mathbb{L}_1,w^2_{u,v}\in \mathbb{L}_2 \forall u,v\right\}.
\]

Let $\mathcal{B}_1$ denote the event that 
$$\mathcal{B}_1= \left\{\sup_{u,u'\in L_{\mathcal{R}_{\delta}},v\in \mathbb{L}_1}|X_{u,v}-X_{u',v}| \le xQ_{\varepsilon n}/3\right\}$$
and let $\mathcal{B}_2$ denote the event that
$$\mathcal{B}_2= \left\{\sup_{v,v'\in R_{\mathcal{R}_{\delta}},u\in \mathbb{L}_2}|X_{u,v}-X_{u,v'}| \le xQ_{\varepsilon n}/3\right\}.$$
Observe that for $w_1\in \mathbb{L}_1$ and $w_2\in \mathbb{L}_2$, 
$$Y^{\delta,+}_{n}\le \sup_{u\in L_{\mathcal{R}_{\delta}}}X_{u,w_1}+X_{w_1,w_2}+\sup_{v\in R_{\mathcal{R}_{\delta}}}X_{w_2,v}+3\Gamma_{n}$$ and hence on 
$\mathcal{A}\cap \mathcal{B}_1\cap \mathcal{B}_2\cap\{\Gamma_{n}\le xQ_{n\varepsilon}/9\}$ we have 
$$Y^{\delta,+}_{n}\le Y^{\delta,-}_{n}+xQ_{n\varepsilon}.$$
By Lemma \ref{l:growth34} we have that $W_{\varepsilon n} \ge \delta W_{n}$.
Now, by  Corollary~\ref{c:local.trans} we get that 
$$\P(\mathcal{A}^c)\le C\exp(-L^{\frac12\epsilon\theta}).$$
To upper bound $\P(\mathcal{B}^c_1)$ we partition $\mathbb{L}_1$ into $2L$ segments, each of length $W_{\varepsilon n}$. Further, choose $\varepsilon$ such that $\delta W_{n}\ll L^{-1}W_{\varepsilon n}$. By Pythagoras' theorem we know that this implies that for $u, u'\in L_{\mathcal{R}_{\delta}}, v\in \mathbb{L}_1$, $||u-v|-|u'-v||\le Q_{\varepsilon n}$.
Next, choose $n$ sufficiently large such that $L^2\ll Q_{\varepsilon n}^{1/9}$. Now for a fixed sub-segment of $\mathbb{L}_1$ as above, (call it $\mathbb{L}_{1,i}$) using Lemma \ref{l:paraChange}, Lemma \ref{l:YMinusBound} and Lemma \ref{l:YPlusBound}, it follows that 
$$\P\left(\sup_{u,u'\in L_{\mathcal{R}_{\delta}},v\in \mathbb{L}_{1,i}}|X_{u,v}-X_{u',v}| \ge xQ_{\varepsilon n}/3\right)\le C\exp(-cx^{\theta})+\exp(1-c(Q_{\varepsilon n})^{\kappa/10}).$$
By taking a union bound over all sub-segments we get 
$$\P(\mathcal{B}_1^{c})\le CL\exp(-cx^{\theta})+2L\exp(1-c(Q_{\varepsilon n})^{\kappa/10}). $$
As identical argument leads to 
$$\P(\mathcal{B}_2^{c})\le CL\exp(-cx^{\theta})+2L\exp(1-c(Q_{\varepsilon n})^{\kappa/10}). $$
Finally by Lemma \ref{l:Gamma}, we also know that 
$$\P(\Gamma_{n}\ge xQ_{n\varepsilon}/9)\le  \exp(1-(xQ_{\varepsilon n}/\log^{C}n)^{\kappa}).$$
It therefore follows that for $x\ge 1$ and all $n$ sufficiently large
$$\P\left(Y^{\delta,+}_{n}\ge Y_n^{\delta,-}+xQ_{\varepsilon n}\right)\le C\left(\exp(-\delta^{-\epsilon\theta/4})+\delta^{-1/2}e^{-cx^{\theta}}+e^{-n^{\kappa/100}}\right).$$
Since $Q_{\varepsilon n}\le \varepsilon^{-\alpha}Q_{n}$ it follows that for all $x\ge \delta^{3\alpha/7}$ and for all $n$ sufficiently large we have 
$$\P\left(Y^{\delta,+}_{n}\ge Y_n^{\delta,-}+xQ_{n}\right)\le C\left(\exp(-\delta^{-2\theta})+\delta^{-1}\exp(-x^{\theta}\delta^{-3\alpha \theta/7})+\exp(-n^{\kappa/100})\right)$$
as desired. 
\end{proof}

\subsection{Proof of Lemma \ref{l:leftweak}}
We shall prove Lemma \ref{l:leftweak} by contradiction. Let us therefore assume that for each $\varepsilon>0$, there exists an $\alpha$-record point $m$ sufficiently large such that $\P(X_{m}-m\mu \le -\varepsilon Q_{n})\le \varepsilon$ for all $\varepsilon>0$. We first need the following basic bound.

\begin{lemma}
\label{l:rtail}
There exists $\varepsilon_3>0$ such that for all record points $m$ sufficiently large we have $\P(X_{m}-m\mu\ge \varepsilon_3 Q_{m})\ge \varepsilon_3$. 
\end{lemma}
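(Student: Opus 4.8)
The plan is to argue by contradiction, combining the working hypothesis (that $X_m$ has almost no mass more than $\varepsilon Q_m$ below $m\mu$ for suitable record points $m$) with the already-established facts that $\E X_m - m\mu = A_m \geq 0$ and that $Q_m \asymp \SD(X_m)$ at record points (Lemma \ref{l:varbd}), hence $\Var(X_m) \geq \hat C Q_m^2$. The point is that a random variable which is bounded below (after centering by $m\mu$, which is a lower bound for $X_m$ by subadditivity) by $-\varepsilon Q_m$ with probability $1-\varepsilon$, and which has mean at most $m\mu + D_1 Q_m$ (Lemma \ref{l:AnBound}), cannot simultaneously have variance of order $Q_m^2$ unless it also puts a non-negligible amount of mass a fixed multiple of $Q_m$ above its mean; otherwise it would be too concentrated.

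First I would fix the constant $\hat C$ from Lemma \ref{l:varbd} (applied with, say, $C = 1$, since $\alpha$-record points are in particular $(1,\alpha)$-quasi record points), so that $\Var(X_m) \geq \hat C Q_m^2$ for all record points $m$ sufficiently large. I would also record the coarse upper tail bound $\P[X_m - m\mu \geq xQ_m] \leq \exp(1 - x^\theta)$ from \eqref{eq:Q} together with $A_m \leq D_1 Q_m$, which gives $\P[X_m - \E X_m \geq x Q_m] \leq \exp(1-(x - D_1)_+^\theta)$, so in particular the positive part of $(X_m - \E X_m)/Q_m$ has a uniformly (in large record $m$) integrable square. Now suppose, for contradiction, that for every $\varepsilon > 0$ there is an arbitrarily large record point $m$ with $\P[X_m - m\mu \leq -\varepsilon Q_m] \leq \varepsilon$. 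Write $Z = (X_m - m\mu)/Q_m \geq 0$; then $\E Z \leq D_1$, $\Var(Z) = \Var(X_m)/Q_m^2 \geq \hat C$, and $\P[Z \leq \varepsilon] \geq 1 - \varepsilon$. Decompose
\[
\Var(X_m)/Q_m^2 = \Var(Z) = \E[(Z - \E Z)^2 \mathbf 1_{Z \leq \varepsilon}] + \E[(Z-\E Z)^2 \mathbf 1_{Z > \varepsilon}].
\]
On $\{Z \leq \varepsilon\}$ we have $|Z - \E Z| \leq \E Z + \varepsilon \leq D_1 + 1$, but this term cannot be too large either: one wants instead to note that $\E Z \leq \E[Z \mathbf 1_{Z \leq \varepsilon}] + \E[Z \mathbf 1_{Z > \varepsilon}] \leq \varepsilon + \E[Z\mathbf 1_{Z>\varepsilon}]$, and by Cauchy–Schwarz plus the uniform square-integrability of $Z^+$ (from the tail bound above), $\E[Z \mathbf 1_{Z > \varepsilon}] \leq \|Z\|_2 \, \P[Z>\varepsilon]^{1/2} \leq C\varepsilon^{1/2}$ for a constant $C$ independent of $m$. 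Hence $\E Z \leq C' \varepsilon^{1/2}$, and then $\E[Z^2] = \E[Z^2 \mathbf 1_{Z \leq \varepsilon}] + \E[Z^2 \mathbf 1_{Z > \varepsilon}] \leq \varepsilon^2 + \|Z\|_2 \cdot \|Z\|_3 \cdot \P[Z > \varepsilon]^{1/6}$ — using a higher moment bound, again uniform in $m$ from \eqref{eq:Q} — so $\E[Z^2] \leq \varepsilon^2 + C'' \varepsilon^{1/6}$. Therefore $\Var(Z) = \E[Z^2] - (\E Z)^2 \leq \varepsilon^2 + C'' \varepsilon^{1/6}$, which for $\varepsilon$ small enough is strictly less than $\hat C$, contradicting $\Var(Z) \geq \hat C$. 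This forces the existence of $\varepsilon_3 > 0$ with $\P[X_m - m\mu \leq -\varepsilon_3 Q_m] > \varepsilon_3$ for all large record points — but wait, that is Lemma \ref{l:leftweak}, not Lemma \ref{l:rtail}.

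Let me restate the actual target. Lemma \ref{l:rtail} asserts a \emph{right} tail lower bound: $\P[X_m - m\mu \geq \varepsilon_3 Q_m] \geq \varepsilon_3$ for all large record points $m$, and it is stated in the subsection whose running hypothesis (for the contradiction proof of Lemma \ref{l:leftweak}) is precisely that the left tail is negligible. So here one works \emph{with} that hypothesis rather than contradicting it: fixing $\varepsilon$ small (to be chosen), take a large record point $m$ with $\P[X_m - m\mu \leq -\varepsilon Q_m] \leq \varepsilon$. Since $\Var(X_m) \geq \hat C Q_m^2$ (Lemma \ref{l:varbd}) and $X_m - m\mu \geq 0$ has mean $A_m \leq D_1 Q_m$, the variance must come from the upper side: write $V = (X_m - \E X_m)/Q_m$, so $\E V = 0$, $\E[V^2] \geq \hat C$. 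The contribution of the event $\{X_m - m\mu \leq -\varepsilon Q_m\}$, which has probability at most $\varepsilon$, to $\E[V^2]$ is at most $\|V\|_3^2 \varepsilon^{1/3} \leq C \varepsilon^{1/3}$ by Hölder and the uniform $L^3$ bound on $V$ from \eqref{eq:Q}. On the complement, $X_m - m\mu \in (-\varepsilon Q_m, \infty)$, so $V > -\varepsilon - D_1$; thus $\E[V^2 \mathbf 1_{V \in (-\varepsilon - D_1, \varepsilon_3]}]\leq (\varepsilon + D_1 + \varepsilon_3)^2$ and the remaining mass must satisfy $\E[V^2 \mathbf 1_{V > \varepsilon_3}] \geq \hat C - C\varepsilon^{1/3} - (\varepsilon + D_1 + \varepsilon_3)^2 > 0$ once $\varepsilon, \varepsilon_3$ are small. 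By Hölder once more, $\E[V^2 \mathbf 1_{V > \varepsilon_3}] \leq \|V\|_3^2 \, \P[V > \varepsilon_3]^{1/3}$, so $\P[V > \varepsilon_3] \geq c > 0$ uniformly; translating back, since $\E X_m - m\mu = A_m \geq 0$, the event $\{V > \varepsilon_3\}$ implies $X_m - m\mu > \varepsilon_3 Q_m$, giving the claim after shrinking $\varepsilon_3$ to also be below this $c$. The main obstacle is organizing the moment bookkeeping so that all constants ($\|V\|_2, \|V\|_3$, the implied constants from Hölder) are genuinely uniform over the relevant record points — this is exactly what the improved concentration of Section \ref{s:impconc} and \eqref{eq:Q} are for — and choosing $\varepsilon_3$ small enough relative to $\hat C$ and $D_1$ so that the "middle interval" term $(\varepsilon + D_1 + \varepsilon_3)^2$ does not swallow the variance; note $D_1$ is a fixed constant so this is not automatic and may require also using that $A_m/Q_m$ is bounded to keep the interval width controlled.
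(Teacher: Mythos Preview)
Your second-moment decomposition has a genuine gap that you correctly flag but do not resolve. Centering at $\E X_m$ and writing $V=(X_m-\E X_m)/Q_m$, the ``middle interval'' on which you bound $V^2$ pointwise is $(-\varepsilon-D_1,\varepsilon_3]$, because the contradiction hypothesis controls $X_m-m\mu$ while $V$ is shifted by $A_m/Q_m\in[0,D_1]$. Sending $\varepsilon,\varepsilon_3\to 0$ the bound on that piece is $D_1^2$, not $0$, and nothing guarantees $\hat C>D_1^2$. Your proposed remedy (``use that $A_m/Q_m$ is bounded'') is exactly the bound $A_m\le D_1 Q_m$ you have already used; it cannot shrink the interval further.

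There is an easy local repair: center at $m\mu$ instead. With $W=(X_m-m\mu)/Q_m$ one still has $\E[W^2]\ge \Var(W)=\Var(V)\ge \hat C$, uniform higher moments from \eqref{eq:Q}, and now the middle interval is $(-\varepsilon,\varepsilon_3]$, whose contribution is at most $\max(\varepsilon,\varepsilon_3)^2$. Then $\E[W^2\mathbf 1_{W>\varepsilon_3}]\ge \hat C/2$ for small $\varepsilon,\varepsilon_3$, and H\"older finishes.

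The paper takes a different and slightly cleaner route that avoids the contradiction hypothesis entirely (so the lemma really holds for \emph{all} large record points, as stated). Set $Z_m=(X_m-\E X_m)/Q_m$; from $\Var(Z_m)\ge \hat C$ and the uniform bound on $\E Z_m^4$ one gets (Paley--Zygmund) $\P(|Z_m|\ge\epsilon)\ge\epsilon$ for some $\epsilon>0$, hence $\E|Z_m|\ge\epsilon^2$. Since $\E Z_m=0$ this gives $\E Z_m^+=\tfrac12\E|Z_m|\ge\epsilon^2/2$, and another uniform-integrability step yields $\P(Z_m^+\ge\varepsilon_3)\ge\varepsilon_3$. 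Finally $\E X_m\ge m\mu$ converts $\{Z_m^+\ge\varepsilon_3\}$ into $\{X_m-m\mu\ge\varepsilon_3 Q_m\}$. The identity $\E Z_m^+=\tfrac12\E|Z_m|$ is what sidesteps the $D_1$-width problem: it transfers mass from the two tails symmetrically without ever comparing $m\mu$ and $\E X_m$ until the last line.
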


\begin{proof}
Recall from Lemma \ref{l:varbd} that ${\rm Var}(X_{m})\ge CQ^2_{m}$ for some $C>0$ independent of $m$. Now, define $Z_{m}=Q_{m}^{-1}(X_{m}-\E X_{m})$. Since $|Z_{m}|$ has stretched exponential tails uniformly in $m$ (or more specifically $\E |Z_{m}|^{4}$ is bounded in $m$), it follows from the above that there exists $\epsilon>0$ (sufficiently small depending on $C$) independent of $m$ such that
$$\P(|Z_{m}|\ge \epsilon)\ge \epsilon$$
and hence $\E|Z_{m}|\ge \epsilon^2.$ Let $Z_m^+=Z_m\vee 0$ and so $\E Z^{+}_{m}=\frac12\E |Z_{m}|\ge 2^{-1}\epsilon^2$. Again, using that $Z^{+}_{m}$ has stretched exponential tails uniformly in $m$ there exists $\varepsilon_3$ sufficiently small depending on $\epsilon$ such that $\P(Z^{+}_{m}\ge \varepsilon_3)\ge \varepsilon_3$. The proof is completed by noticing that $\E X_{m}\ge m\mu$ by sub-additivity. 
\end{proof}

We now come to the main result leading to a contradiction. 

\begin{lemma}
\label{l:contracorr}
If the conclusion of Lemma \ref{l:leftweak} is false then for any $L_0>0$ there exist $L>L_0$ and $n$ sufficiently large ($n\ge n_0(L)$) with 
$$\P(Y^{-}_{n}\le n\mu-L^{-20}Q_{n})\le L^{-20}.$$
\end{lemma}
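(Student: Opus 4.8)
I assume the conclusion of Lemma~\ref{l:leftweak} fails, so for every $\varepsilon>0$ there is an arbitrarily large $\alpha$-record point $m$ with $\P(X_m<m\mu-\varepsilon Q_m)\le\varepsilon$. The plan is to show this forces, for a suitably chosen large $L$, a record point $n$ at which $Y^-_n$ cannot dip even $L^{-20}Q_n$ below $n\mu$ except with probability at most $L^{-20}$. The mechanism is: the hypothesis says $X_m$ (and hence, by comparison with $Y^-$, also $Y^-_m$) almost never goes much below $m\mu$; I want to propagate this from the scale $m$ up to a scale $n=Km$ with $K$ a large fixed integer, using the thin-rectangle estimate Lemma~\ref{l:thin} and the percolation machinery of Section~\ref{s:percgen} in the form of Corollary~\ref{c:percolation}. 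The value $L^{-20}$ is just a bookkeeping choice; any fixed negative power works, and the point of the contradiction (to be used back in the proof of Lemma~\ref{l:leftweak}) is that such over-concentration of $Y^-_n$ on its left is incompatible with the variance lower bound $\Var(X_n)\ge \hat C Q_n^2$ for record points (Lemma~\ref{l:varbd}) together with Lemma~\ref{l:rtail}.

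\textbf{Main steps.} First I fix the free parameters: given a target $L_0$, choose a large integer $K=K(L)$ and a small $\delta=\delta(L)$ with $L=\delta^{-1}$, say, and then pick (using the hypothesis) an $\alpha$-record point $m$, as large as needed, with $\P(X_m<m\mu-\varepsilon Q_m)\le\varepsilon$ for $\varepsilon$ as small as we like in terms of $L,K$. Set $n=Km$; by Lemma~\ref{l:Qgrowth} and Lemma~\ref{l:growth34}, $Q_n\le D_6^{\lceil\log_{3/2}K\rceil}Q_m$ and $W_n\le C K^{3/4} W_m$, so the heights match up to a fixed polynomial factor in $K$. Next, following the decomposition~\eqref{eq:NBDecomposition}, I write a side-to-side crossing of $\cR_n=\cR_{n,W_n}$ as a concatenation of $K$ crossings of parallelograms $\cP_{i,k_{i-1},k_i}$ of length $m$; by Corollary~\ref{c:paraRect} each such crossing's passage time is lower-bounded in terms of $Y^-_{\cR_{m,W_m}}$ plus a $\frac{(k-k')^2}{16}Q_m$ term, and by the hypothesis these $Y^-$ terms are, with probability $1-\varepsilon$ each, at least $m\mu-\varepsilon Q_m$. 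A union bound over the $K$ strips (which are independent after resampling) shows that with probability $\ge 1-K\varepsilon$ every on-scale crossing is at least $m\mu-\varepsilon Q_m$, and the percolation estimate Corollary~\ref{c:percolation} with $\lambda=\tfrac1{16}$ controls the contribution of transversal wandering $\sum(k_i-k_{i-1})^2$, so that the minimum over all admissible $\uk$ of $\sum_i Z^{-,\Lambda_i}_{i,k_{i-1},k_i}$ is at least $n\mu - CK\varepsilon Q_m - C'M\log^{C'}M\cdot Q_m$; absorbing the $\Gamma$ and resampling error terms as in~\eqref{eq:lowerTailContBound}, and using $Q_n\ge K^\alpha Q_m$, this gives $\P(Y^-_n<n\mu - c K^{-(1-\alpha)}\log^{C'}K\cdot Q_n)\le K\varepsilon + \exp(-m^{c})$. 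Finally I choose $K$ large enough (in terms of $L$) so that $cK^{-(1-\alpha)}\log^{C'}K\ge L^{-20}$ fails — wait, I need the opposite; I choose $K$ so that the displacement $\ge L^{-20}Q_n$, i.e. I want the crossing to be \emph{expensive} to go low, so actually I take $K$ moderate and $\varepsilon$ tiny, giving $\P(Y^-_n<n\mu-L^{-20}Q_n)\le K\varepsilon+\exp(-m^c)\le L^{-20}$ once $\varepsilon\le L^{-20}/(2K)$ and $m$ is large. The point $n=Km$ need not itself be a record point, but $m$ is, which is all the statement requires to then be fed into Lemma~\ref{l:leftweak}'s argument (there one additionally uses that $Y^-_n\le X_n$, the variance bound, and Lemma~\ref{l:rtail} to contradict over-concentration).

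\textbf{The main obstacle.} The delicate point is the bookkeeping of scales: the hypothesis only gives me control of $X_m$ for \emph{some} large record point $m$, with an $\varepsilon$ that I may take as small as I wish but only after having committed to $K$ (and hence to $L$). I must be careful that the penalty terms coming from the percolation bound Corollary~\ref{c:percolation} — namely the $C_7 M\log^{C_7}(3+\tfrac1\lambda)M$ type contribution and the transversal-fluctuation truncation $I(|k|\vee|k'|\le n/W)$ — do not by themselves push $Y^-_n$ a full $\Omega(Q_n)$ below $n\mu$ irrespective of the hypothesis; this is why I keep $\lambda$ a fixed constant (so the $\log$ term is $O(M)$ in $Q_m$-units, hence $O(K^{1-\alpha}Q_n\cdot K^{-1})$... no — it is $O(M Q_m)=O(K^{1-\alpha}Q_n)$, which is \emph{larger} than $Q_n$, not smaller). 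Resolving this requires replacing the crude $\lambda$-penalty bound by the sharper statement that the \emph{minimal} (not maximal) crossing, summed over $K$ strips each centred near $m\mu$, concentrates around $Km\mu$ with fluctuations only $O(\sqrt{K}\,Q_m)=O(K^{1/2-\alpha}Q_n)=o(Q_n)$ — i.e. I should run an Azuma–Hoeffding argument on the independent truncated crossings (as in the proof of Lemma~\ref{l:growth34}, the thresholded $\bar X^{\Lambda_i}$) rather than a worst-case bound, and use Lemma~\ref{l:thin} to say each thresholded crossing is within $o(Q_m)$ of $m\mu$ with overwhelming probability. Getting these two competing error budgets ($K\varepsilon$ from the hypothesis versus $K^{1/2-\alpha}$ from the CLT-scale fluctuations) to both be $\le \tfrac12 L^{-20}$ simultaneously, by first fixing $K$ large, then $\delta$/$\varepsilon$ tiny, then $m$ huge, is the crux of the argument.
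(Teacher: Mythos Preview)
Your identified obstacle is real, but your proposed resolution contains a fatal arithmetic error. You write that the Azuma--Hoeffding fluctuations are ``$O(\sqrt{K}\,Q_m)=O(K^{1/2-\alpha}Q_n)=o(Q_n)$'', but $\alpha\le\min\{1/15,\kappa\}<1/2$ in this paper, so $K^{1/2-\alpha}\to\infty$ as $K\to\infty$, not to zero. The CLT-scale fluctuation $\sqrt{K}Q_m$ can be as large as $K^{1/2-\alpha}Q_n\gg Q_n$ (this upper bound is attained whenever $Q_n\approx K^\alpha Q_m$, which is exactly the case when $n$ is far from being a record point). Your concentration argument therefore cannot pin $Y^-_n$ within $L^{-20}Q_n$ of $n\mu$; the summed crossings simply fluctuate too much on the $Q_n$ scale.

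The paper's approach is genuinely different and does not rely on concentration at all. It proves the much stronger Lemma~\ref{l:contra}, namely $\P(Y^-_{mL}\ge mL\mu+L^{-1000}Q_{mL})\ge 1-L^{-20000}$, from which Lemma~\ref{l:contracorr} follows immediately. The key missing ingredient in your sketch is Lemma~\ref{l:rtail}: at a record point $m$ one has $\P(X_m\ge m\mu+\varepsilon_3 Q_m)\ge\varepsilon_3$ for an absolute $\varepsilon_3>0$. Combined with the hypothesis $\P(X_m<m\mu-\varepsilon Q_m)\le\varepsilon$ (transferred to thin-rectangle crossings via Lemma~\ref{l:thin}), a Chernoff bound shows that among the $L$ independent column crossings, a fraction $\ge\varepsilon_3/10$ are expensive by $\varepsilon_3 Q_m/2$ while essentially none are cheap. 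This produces a \emph{positive drift} of order $L\varepsilon_3^2 Q_m$, which dominates everything else and pushes $Y^-_{mL}$ strictly \emph{above} $mL\mu$. No appeal to the size of $Q_{mL}/Q_m$ is needed. Your decomposition and percolation framework are the right scaffolding, but without the right-tail input from Lemma~\ref{l:rtail} the argument cannot close.
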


Lemma \ref{l:contracorr} will follow immediately from the following lemma. 

\begin{lemma}
\label{l:contra}
Let $L>0$ be a fixed sufficiently large integer. There exists $\varepsilon>0$ and $m_0$ (depending on $L$) such that for all  $\alpha$-record point $m\geq m_0$ that satisfy $\P(X_{m}\le m\mu-\varepsilon Q_m)\le \varepsilon$, we have that 
\[
\P(Y^{-}_{mL}\ge mL\mu+L^{-1000}Q_{mL})\ge 1-L^{-20000}.
\]
\end{lemma}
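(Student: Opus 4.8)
The plan is to prove Lemma~\ref{l:contra} by a multi-scale argument that builds up a good crossing of the tall rectangle $\cR_{mL, W_{mL}}$ from many independent good crossings of sub-rectangles at scale $m$. The key idea: we work under the (toward-contradiction) hypothesis that $\P(X_m \le m\mu - \varepsilon Q_m) \le \varepsilon$, which, combined with Lemma~\ref{l:rtail} (there is a constant-order probability that $X_m - m\mu \ge \varepsilon_3 Q_m$), forces $X_m$ to be essentially a \emph{lower bound concentrated} quantity: with overwhelming probability $X_m \ge m\mu - \varepsilon Q_m$. First I would transfer this from point-to-point distances $X_m$ to the side-to-side minimal passage time $Y^-_{\cR_{m,\delta W_m}}$ across a \emph{thin} rectangle of height $\delta W_m$. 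This is exactly where Lemma~\ref{l:thin} enters: it says $X_m - Y^{\delta,-}_m$ is small (at scale $xQ_m$ with $x$ not too small) with high probability, so $\P(Y^{\delta,-}_m \le m\mu - 2\varepsilon Q_m)$ is still small provided $\delta$ is chosen appropriately relative to $\varepsilon$ and $\alpha,\theta,\kappa$. Thus under our hypothesis, crossing a single on-scale thin rectangle costs at least $m\mu - O(\varepsilon Q_m)$ with probability $\ge 1 - \text{(small)}$.

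**The concatenation / union over heights.** Next I would tile the rectangle $\cR_{mL, W_{mL}}$ of width $mL$ and height $W_{mL}$. Since by Lemma~\ref{l:growth34} (and Lemma~\ref{l:Qgrowth}) $W_{mL} = O(L^{7/8} W_m)$ while each thin rectangle has height $\delta W_m$, there are at most $O(L^{7/8}/\delta) = L^{O(1)}$ many horizontal strips to stack, and exactly $L$ strips in the horizontal direction, so a path from $L_{\cR_{mL,W_{mL}}}$ to $R_{\cR_{mL,W_{mL}}}$ passes through a bounded (polynomial in $L$) number of thin sub-rectangles $\cP_{i,k,k'}$ — here one must use parallelograms rather than rectangles to account for vertical wandering, invoking Corollary~\ref{c:paraRect} and Lemma~\ref{l:paraChange} to compare side-to-side passage times across parallelograms with those across the embedded on-scale rectangles, picking up only $O((k^2 Q_m)^{9/10})$ and linear-in-$L$ errors. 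The lower bound $Y^-_{mL} \ge mL\mu + L^{-1000}Q_{mL}$ then follows: $Y^-_{mL}$ is at least the minimum, over all admissible sequences of parallelogram crossings $\uk \in \mathfrak{K}_L$, of the sum of the (independent) side-to-side distances, each of which is $\ge m\mu - O(\varepsilon Q_m)$ with high probability; summing $L$ of them gives $\ge mL\mu - O(L\varepsilon Q_m)$, and since $Q_{mL} \ge L^\alpha Q_m$, choosing $\varepsilon = \varepsilon(L)$ small enough (say $\varepsilon \le L^{-10^6}$) makes the deficit $O(L\varepsilon Q_m)$ negligible compared to $L^{-1000}Q_{mL}$ — in fact one needs the \emph{positive} slack $A_{mL} \ge 0$ plus the concentration to push the crossing above $mL\mu + L^{-1000}Q_{mL}$; here the fact that $m$ is a record point and that $\E Y^-_{m} \ge m\mu$ (no upward penalty, from sub-additivity) is what lets us close the gap on the right side. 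The union bound over the $L^{O(1)}$ choices of strips and the $O(n/W_m)$-type union over heights $k,k'$ (controlled via Assumption~\ref{as:cars} to restrict $|k|,|k'| \le mL/W_m$, as in \eqref{eq:cJBound}) contributes only $\text{poly}(L)\cdot\exp(-c\,\text{stuff})$, which is $\le L^{-20000}$ for $m$ large.

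**The main obstacle.** The delicate point — and what I expect to be the hard part — is the bookkeeping of \emph{scales and error terms}: one must choose $\delta$, $\varepsilon$, and the exponents so that (a) Lemma~\ref{l:thin} applies (it requires $x \ge \delta^{3\alpha/7}$ and the right-hand side involves $\exp(-\delta^{-\epsilon\theta/4})$, so $\delta$ must be small but fixed independent of $m$), (b) the parallelogram-to-rectangle comparison errors $O((k^2 Q_m)^{9/10})$ summed over the $\le L^{O(1)}$ crossings stay below $L^{-1000}Q_{mL} \asymp L^{-1000+\alpha}Q_m$, which forces $L$ to be large relative to $1/\delta$, and (c) the accumulated multiplicative constants from $W_{mL}/W_m = O(L^{7/8})$ and $Q_{mL}/Q_m \in [L^\alpha, L^{3/4}D_5]$ do not swamp the $L^{-1000}$ slack. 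Since all of these are powers of $L$ versus powers of $L$ with the exponents on our side once $L$ is large, it works, but the inequalities must be arranged in the right order: first fix $L$ large, then fix $\delta = \delta(L)$ small enough for the parallelogram errors, then fix $\varepsilon = \varepsilon(L,\delta)$ small enough for Lemma~\ref{l:thin} and the deficit bound, then take $m \ge m_0(L,\delta,\varepsilon)$ large enough to kill the $\exp(-n^{\kappa/100})$-type tails and the $\exp(1-c Q_m^{\kappa/10})$ terms. The contradiction with Lemma~\ref{l:contracorr} (hence with the earlier-established facts about record points, e.g. $\Var(X_m) \ge \hat C Q_m^2$ from Lemma~\ref{l:varbd}, which rules out $Y^-_{n}$ being this concentrated) then finishes the proof of Lemma~\ref{l:leftweak}, but that final contradiction is downstream of this lemma and not part of its proof.
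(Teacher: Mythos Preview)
Your proposal has a genuine gap at the heart of the argument. You correctly identify that the hypothesis $\P(X_m\le m\mu-\varepsilon Q_m)\le \varepsilon$ together with Lemma~\ref{l:thin} forces each thin-rectangle crossing to satisfy $Z^-\ge m\mu - O(\varepsilon')Q_m$ with high probability. But summing $L$ such lower bounds across columns only yields $Y^-_{mL}\ge mL\mu - O(L\varepsilon')Q_m$, which lies \emph{below} $mL\mu$. The conclusion you need is $Y^-_{mL}\ge mL\mu + L^{-1000}Q_{mL}$, a strictly \emph{positive} excess. Your attempt to close this with ``the positive slack $A_{mL}\ge 0$'' and ``$\E Y^-_m\ge m\mu$ from sub-additivity'' does not work: sub-additivity gives $\E X_m\ge m\mu$, not $\E Y^-_m\ge m\mu$ (indeed $Y^-_m\le X_m$), and in any case a statement about expectations cannot be summed into a pointwise lower bound strictly above $mL\mu$.

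The missing mechanism is the one you mention but do not deploy: Lemma~\ref{l:rtail}. The paper uses it not merely to note that $X_m$ is ``lower-bound concentrated'', but to extract a \emph{positive} gain. Specifically, via Lemma~\ref{l:thin} one shows that each (resampled, hence independent) column crossing satisfies $Z^{-,\Lambda_i}_{i,j,\delta}\ge m\mu + \tfrac{\varepsilon_3}{2}Q_m$ with probability at least $\tfrac{\varepsilon_3}{2}$. A Chernoff bound then gives that at least $\tfrac{L\varepsilon_3}{10}$ of the $L$ columns carry this surplus, with failure probability $e^{-cL}$. On the remaining columns the hypothesis (at a much finer grid of height $\delta^K W_m$, $K=2000/\alpha$) bounds the deficit by $O(\delta^{K\alpha/4})Q_m$ each, which is negligible against $\varepsilon_3 Q_m$. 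This two-scale grid ($\delta$ versus $\delta^K$) is also how the paper handles geodesics that wander between rows: if a crossing jumps by more than $\sim\delta^2 W_m$ at the fine scale, the parallelogram curvature penalty (event $\cB'$) already supplies the needed surplus $\gtrsim \delta^4 Q_m$. Your single-scale tiling does not capture either the positive-excess mechanism or this dichotomy.
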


\begin{proof}
For the proof, we shall write $n=mL$ to reduce the notational overhead. We define parameters $\delta=L^{-100}$, $K=2000/\alpha$ and $\varepsilon=\exp(-L)$. The proof is long and technical so we divide it into several steps for the convenience of the reader. 

\noindent
\textbf{Step 1: Constructions of favourable events.} We shall first construct a number of events on whose intersection, the large probability events in the statement of the lemma will be shown to hold. We first set-up some basic notations. 

For $i=0,1,2,\ldots, L$ and $j\in \Z$, let $L_{i,j,\delta}$ denote the line segments 
$im\times [j\delta W_{m}, (j+2)\delta W_{m}]$ (notice that in our previous notation this would be denoted as $\ell_{im,j\delta W_{m}, (j+2)\delta W_{m}}$, but to simplify notation we are locally going to use the $L_{i,j,\delta}$ notation). We shall also use the following shorthands (these are used locally and are slightly different from the notations defined in Section \ref{s:rectpara}):
$$Z^{-}_{i,j,\delta}:=\inf_{u\in L_{i-1,j,\delta}, v\in L_{i,j,\delta}} X_{uv};$$
$$Z^{-}_{i,j,j',\delta}:=\inf_{u\in L_{i-1,j,\delta}, v\in L_{i,j',\delta}} X_{uv}.$$

\noindent
\textbf{Event $\cA$ for paths across thin rectangles:}
For $j\in \Z$ with $|j|\le \frac{W_{n}}{\delta W_{m}}$, let $\mathcal{A}_{j}$ denote the event that 
$$\min_{u\in L_{0,j,\delta}, v\in L_{L,j,\delta}} X_{u,v}\ge n\mu+ 10^{-5}\delta^{4}Q_{m}.$$
Let $\mathcal{A}:=\cap_{j}\mathcal{A}_{j}$. 

\noindent
\textbf{Parallelogram events $\cB'$ and $\cB''$:}
Let $\cB''$ denote the event 
$$\mathcal{B}'':=\left\{Z^{-}_{i,j,j',\delta^{K}}\ge m\mu- 2\delta^{K\alpha/4-10}Q_{m}~~ \forall i \forall j,j'~\text{with }|j|, |j'|\le L\frac{W_{n}}{\delta^{K} W_{m}}\right\}.$$
%
Consider next the event $\cB'$ given by
$$\mathcal{B}':=\left\{Z^{-}_{i,j,j',\delta^{K}}\ge m\mu+ 10^{-4}Q'~~ \forall i \forall j,j'~\text{with }|j|, |j'|\le L\frac{W_{n}}{\delta^{K} W_{m}}, |j-j'|\ge \frac{1}{10}\delta^{2-K}\right\}$$
where $Q':=\frac{(\delta^2 W_{m})^2}{m}=\delta^4 Q_{m}$.

\noindent
Let $\mathcal{T}$ denote the event that for all $u\in L_{\mathcal{R}_{n,W_{n}}}, v\in R_{\mathcal{R}_{n,W_{n}}}$, $\gamma_{uv}\in 
\Upsilon_{n,u,v,L}$ (recall that this is the class of paths having global transversal fluctuation bounded by $LW_{n}$). Let $\mathcal{T}'$ denote the event  
$\{\Gamma_{n}\le L^{-1}\delta^{K\alpha/4} Q_{m}\}$. 
We shall next prove some consequences of these events. 

\noindent
\textbf{Claim:}  On $\cB''\cap \cB' \cap \mathcal{T}\cap \mathcal{T}'\cap \mathcal{A}$, we have 
$$Y^{-}_{n}\ge n\mu+ 10^{-5}\delta^{4}Q_{m}\ge n\mu+ D_5^{-1}10^{-5}\delta^{4}L^{-1/4}Q_{n}$$
where $D_5$ is as in Lemma \ref{l:growth34} and the final inequality follows from the same lemma. 

By definition of $\cA$, we only need to consider $X_{uv}$ such that there does not exist any $j$ such that $u\in L_{0,j,\delta}$ and $v\in L_{L,j,\delta}$. In that case, it follows by taking $\delta$ small enough such that $L\delta^2 \ll \delta$, there must exist $i,j,j'$ such that for some points $u_1$ and $v_1$ on $\Gamma_{uv}$ with $u_{1}\in L_{i-1,j,\delta^{K}}$, $v_1\in L_{i,j',\delta^{K}}$ with $|j-j'|\ge 
\frac{\delta^{2-K}}{10}$. Therefore the minimum of $X_{uv}$ over all such pairs is lower bounded by 
$$\min_{\{j_{i}\}}\left( \sum_{i=1}^{L} Z^{-}_{i,j_{i-1},j_{i},\delta^K}\right) -L\Gamma_{n}$$
where the minimum is taken over all sequences $\{j_{i}\}_{0\le i\le  L}$ with $|j_{i}|\le \frac{L W_{n}}{\delta^{K}W_{m}}$ such that $|j_{i-1}-j_{i}|\ge \delta^{2-K}/10$ for some $i$. For any such fixed $\{j_{i}\}$ at least one summand above is lower bounded by $m\mu+10^{-4}\delta^{4}Q_{m}$ (by $\cB'$) whereas all other terms are lower bounded by $m\mu-2\delta^{K\alpha/4-10}Q_{m}$ (by $\cB''$). By using the definition of $\mathcal{T}'$, we get that the above minimum is lower bounded by 
$$n\mu+10^{-4}\delta^{4}Q_{m}-2L\delta^{\alpha K/4-10}Q_{m}-L10^{-5}\delta^{5} Q_{m}.$$
Since $\alpha K>100$ and $\delta=L^{-100}$ it follows that $10^{-4}\delta^{4}-2L\delta^{\alpha K/4-10}-L10^{-5}\delta^{5}\ge 10^{-5}\delta^4$ and this completes the proof of the claim.  
 
It remains to bound the probabilities of the events constructed in the above argument. 

\noindent
\textbf{Step 2: Estimating the probabilities of favourable events.}
We shall upper bound the probabilities of the complements of the favourable events defined above. 

\noindent
\textbf{Estimating the probabilities of $\cB'$ and $\cB''$:}
Let us again fix $i,j,j'$ as in the definition of $\cB''$. We shall apply Corollary \ref{c:paraRect} with $n=m$, $W=\delta^{K}W_{m}$, $Q=m^{-1}W^2=\delta^{2K}Q_{m}$ and $k=j'-j$. Clearly these choices satisfy the hypotheses in Corollary \ref{c:paraRect}. By \eqref{eq:paraRectB},
$$\P(Z^{-}_{i,j,j',\delta^{K}}\le m\mu-2\delta^{K\alpha/4-10}Q_{m})\le \P\left(Y^{-}_{\mathcal{R}_{m},W}\le m\mu-\frac{(k^2-8k)Q}{16}-2\delta^{K\alpha/4-10}Q_{m}\right)+\exp(1-CQ^{\kappa/10}).$$
Clearly, $\min_{k\in \Z} \frac{k^2-8k}{16}\ge -4$ and since $Q\ll \delta^{K\alpha/4} Q_{m}$ for small $\delta$, and $Q\gg \delta^{2K}m^{\alpha/2}$, it follows that for $m$ sufficiently large we have 
$$\P(Z^{-}_{i,j,j',\delta^{K}}\le m\mu-2\delta^{K\alpha/4-10}Q_{m})\le \P\left(Y^{-}_{\mathcal{R}_{m},W}\le m\mu-\frac{3}{2}\delta^{K\alpha/4-10}Q_{m}\right)+\delta^{1000+2K}.$$
To bound the first term above we use Lemma \ref{l:thin} with $m$ in place of $m$, $\delta^{K}$ in place of $\delta$ and $x=\delta^{K\alpha/4-10}$. By choosing $\delta$ sufficiently small and $m$ sufficiently large we get 

$$\P\left(Y^{-}_{\mathcal{R}_{m},W}\le m\mu-\frac{3}{2}\delta^{K\alpha/4-10}Q_{m}\right)\le \P(X_{m}\le m\mu-\frac{1}{2}\delta^{K\alpha/4-10} Q_{m})+\delta^{1000+2K}.$$
Since $\varepsilon=e^{-L}\le \min (\delta^{1000+2K},\frac{1}{2}\delta^{K\alpha/4-10})$  and using the hypothesis $\P(X_{m}\le m\mu-\varepsilon Q_{m})\le \varepsilon$ we get that 
$$\P(Z^{-}_{i,j,j',\delta^{K}}\le m\mu-2\delta^{K\alpha/4-10}Q_{m})\le 3\delta^{1000+2K}. $$

It is easy to see that the total number of triplets of $i,j,j'$ in this case is bounded by $L(\frac{LW_{n}}{\delta^{K}W_{m}})^2=O(\delta^{-2K}L^{15/4})$. Taking a union bound with $\delta$ to be sufficiently small we get 
$\P((\cB'')^{c})\le \delta^{900}$.

The bound for $\P((\cB')^{c})$ is very similar. If $|j-j'|\ge \frac{1}{10}\delta^{2-K}$ then for $k=j'-j$ we get $\frac{(k^2-8k)Q}{16}\ge 2\times 10^{-4}\delta^{4}Q_{m}$ and hence in this case, using \eqref{eq:paraRectB} again,
$$\P(Z^{-}_{i,j,j',\delta^{K}}\le m\mu+10^{-4}\delta^{4}Q_{m})\le \P\left(Y^{-}_{\mathcal{R}_{m},W}\le m\mu-10^{-4}\delta^{4}Q_{m}\right)+\exp(1-CQ^{\kappa/10}).$$
Using Lemma \ref{l:thin} again using $K\alpha\ge 100$, for $m$ sufficiently large 
$$\P(Z^{-}_{i,j,j',\delta^{K}}\le m\mu+10^{-4}\delta^{4}Q_{m})\le 2\delta^{1000+2K}.$$
By taking a union bound over all triplets $i,j,j'$ with $|j-j'|\ge \frac{1}{10}\delta^{2-K}$ we get, for $\delta$ sufficiently small that $\P((\cB')^c)\le \delta^{900}$. 

\begin{center}
\begin{figure}
\includegraphics[width=5in]{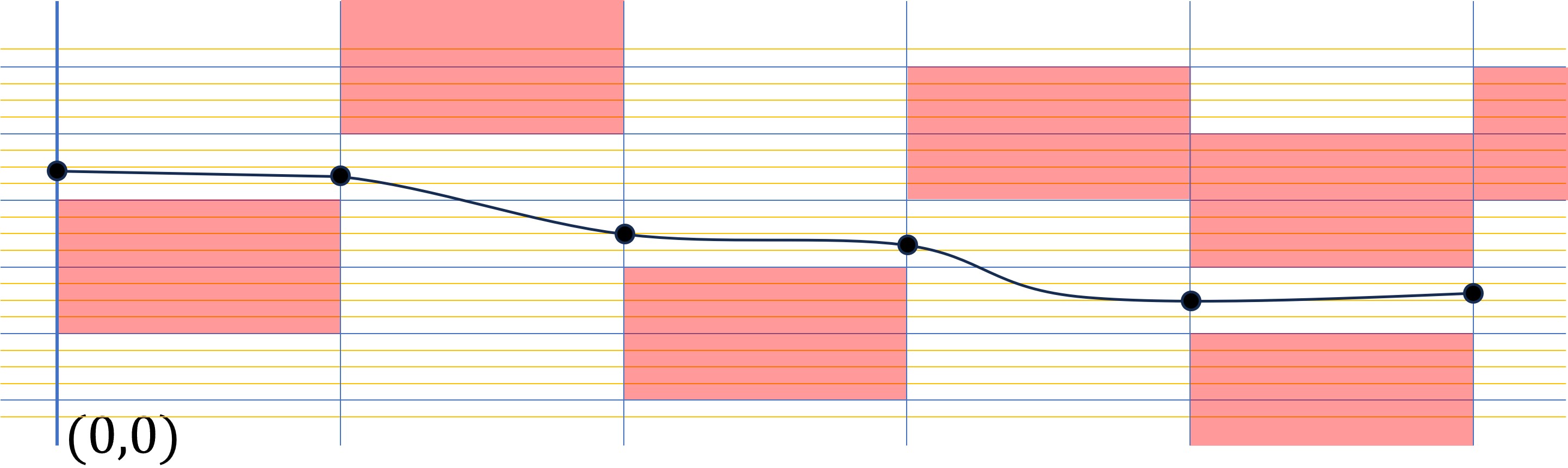}
\caption{The figure displays the grid with blue horizontal lines spaced distance $\delta W_n$ and orange horizontal lines space by $\delta^K W_n$. Red boxes indicate where $Y^{\Lambda_{i+1},-}_{i,j,\delta}\ge m\mu+\frac{\varepsilon_3}{2} Q_{m}$.  With very high probability every row will have at least $\frac{L\epsilon_3}{10}$ red boxes. Any $\gamma$ that avoids side to side crossings of red boxes must have transversal fluctuations across many orange lines and so on $\cB'\cap\cB''$ have an increased passage time.}
\label{f:bad.row}
\end{figure}
\end{center}
\noindent
\textbf{Bounding the probability of $\cA^{c}\cap \cB' \cap \cB''\cap \mathcal{T}\cap  \mathcal{T}'$:} See Figure~\ref{f:bad.row} for illustration of this argument. Let $u\in L_{0,j',\delta}$ and $v\in L_{L,j',\delta}$ be such that $X_{uv}$ is a witness of the event $\cA_{j'}^{c}$. We claim that on $\cB'\cap \cB''\cap \mathcal{T}\cap \mathcal{T}'$ there exists $j\in\{j'-1,j',j'+1\}$  such that the event
$$\widetilde{\cA}^c_{j}:=\left\{\sum_{i=1}^{L} Z^{-}_{i,j,\delta}\le n\mu+10^{-6}\delta^4Q_{m}\right\}$$
holds (here we use the facts that the intervals $L_{i,j,\delta}$ are overlapping in $j$). Indeed, otherwise there will be $i$ such that $\gamma_{uv}$ has a large (in the sense of $\cB'$) vertical deviation from the left to the right of some column $\Lambda_{i}=\{(x,y):(i-1)m \le x\le im\}$ and arguing as in the proof of the claim, we can lower bound $X_{uv}$ on $\cB'\cap \cB''\cap \mathcal{T}\cap \mathcal{T}'$ to show that it cannot be a witness to $\cA^{c}_{j'}$.

It therefore remains to bound $\P(\widetilde{\cA}_j^c)$. Using translation invariance and a union bound we get that
$$\P(\widetilde{\cA}_{j}^{c})\le \P\left(\sum_{i=1}^{L} Z^{\Lambda_{i},-}_{i,j,\delta}\le n\mu+10^{-7}\delta^4 Q_{m}\right)+ L\P\left(|Y^{-}_{0,j,\delta}-Y^{\Lambda_{1},-}_{0,j,\delta}|\ge L^{-1}10^{-7}\delta^4 Q_{m}\right).$$
By Assumption \ref{as:resamp1}, (since $Q_{m}$ grows polynomially in $m$), we have that the second term above is upper bounded by $\delta^{1000}$ for $m$ sufficiently large. As for the first term, notice that the sequence of random variables $Z^{\Lambda_{i},-}_{i,j,\delta}$ are i.i.d.\ and let $N$ denote the number of indices $i$ for which $Z^{\Lambda_{i},-}_{i,j,\delta}\ge m\mu+\frac{\varepsilon_3}{2} Q_{m}$ where $\varepsilon_3$ is as in Lemma \ref{l:rtail}. Using Lemma \ref{l:rtail} and Lemma \ref{l:thin} and choosing $\delta$ sufficiently small it follows that $N$ stochastically dominates a $\mbox{Bin}(L,\frac{\varepsilon_3}{2})$ random variable. 

Let $\cB_{i,j}$ denote the event that $Z^{\Lambda_{i},-}_{i,j,\delta}\ge m\mu- 2\delta^{\alpha/4}Q_{m}$ and set $\cB_{j}:=\cap_{i}\cB_{i,j}$. It follows that, on $\cB_{j}$, 
$$\sum_{i=1}^{L} Z^{\Lambda_{i},-}_{i,j,\delta}\ge n\mu +  
(N\varepsilon_3/2-2L\delta^{\alpha/4})Q_{m}.$$ Now, choose $\delta$ sufficiently small so that $\delta^{\alpha/4}\le \varepsilon^{3/2}/100$ and $10^{-7}\delta^{4}\le L\varepsilon^{2}/100$. By the above discussion, 

\begin{equation}
\label{eq:atbound}
\P(\widetilde{\cA}_{j}^{c})\le \delta^{100}+\P(\cB^c_j)+\P\left(\mbox{Bin}(L,\varepsilon_3/2)\le \frac{L\varepsilon_3}{10}\right).
\end{equation}

By Lemma \ref{l:thin} for $n=m$ and $x=\delta^{\alpha/4}$ it follows that for $\delta$ sufficiently small, 
$$\P(\cB^c_{i,j})\le \delta^{1000}+\P(X_{m}\le m\mu-\delta^{\alpha/4}Q_{m})\le 2\delta^{1000}.$$
By taking a union bound over all $i$,
$\P(\cB^c_{j})\le \delta^{900}$. Also, the last term in \eqref{eq:atbound} is upper bounded (by  a Chernoff bound) by $e^{-cL}$ for some $c>0$ (depending only on $\varepsilon_3$). Since $\delta$ is only a polynomial of $L^{-1}$, for $L$ sufficiently large this is upper bounded by $\delta^{100}$ as well. Combining all these, we get from \eqref{eq:atbound} that $\P(\widetilde{\cA}^{c}_{j})\le \delta^{850}$. Taking a union bound over all $j$, and choosing $\delta$ sufficiently small, we get 
$$\P(\cA^{c}\cap \cB' \cap \cB''\cap \mathcal{T}\cap  \mathcal{T}')\le \delta^{810}.$$

\noindent
\textbf{Probabilities of $\mathcal{T}$ and $\mathcal{T}'$:}
Using Lemma \ref{l:trans.SOGam} and Lemma \ref{l:trans.events} we get $\P(\mathcal{T}^c)\le \delta^{1000}$ for $L$ sufficiently large. By Lemma \ref{l:Gamma}, $\P((\mathcal{T}')^c)\le \delta^{1000}$ for $m$ sufficiently large. 

It follows from the claim and above bounds that
$$\P(Y_{n}^{-}\ge n\mu+L^{-1000}Q_{n}) \ge 1-L^{-20000},$$
completing the proof. 

\end{proof}

Our next step for deriving a contradiction is to extend the estimates from Lemma \ref{l:contracorr} to the case of parallelograms. We define some notations, these will only be locally used so we do not care about overlap with previously defined ones. For $\delta>0$ and $k\in \Z$, let
$A_{\delta,n}$ (resp.\ $B_{\delta,n,k}$) denote the line segment $\{0\}\times [-\delta W_{n},\delta W_{n}]$ (resp.\ $\{n\}\times [(k-1)\delta W_{n}, (k+1)\delta W_{n}]$). Let $Y_{n,k,\delta}^{-}:=\inf_{u\in A_{\delta,n},v\in B_{\delta,n,k}} X_{u,v}$. We have the following lemma. 

\begin{lemma}
\label{l:contrapara}
Suppose $n$ and $L$ are such that the conclusion of Lemma \ref{l:contracorr} holds. Then for $|k|<\frac{nL^{3/4}}{W_{n}}$ we have 
$$\P(Y^{-}_{n,k,L^{-3/4}}\ge n\mu+\frac{1}{2L}Q_{n}) \ge 1-2L^{-20}.$$
\end{lemma}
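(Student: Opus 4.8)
The plan is to split on the size of the slant $k$; by the reflection symmetry $y\mapsto -y$ (an element of $ISO(2)$) we may assume $k\ge 0$. Put $W:=2L^{-3/4}W_n$ and $Q:=W^2/n=4L^{-3/2}Q_n$; for $n$ large (depending on $L$) the pair $(Q,W)$ satisfies \eqref{eq:variableQW} since $Q\ge cL^{-3/2}n^{1/12}\ge n^{\alpha/2}$ and $Q\le 4Q_n\le n^{1/2+2\epsilon}$, and $W=\sqrt{nQ}$. Translating vertically by $W/2$, the parallelogram defining $Y^-_{n,k,L^{-3/4}}$ becomes $\cP_{1,0,k/2,n,W}$, so $Y^-_{n,k,L^{-3/4}}\stackrel d= Z^-_{1,0,k/2,n,W}$, and $k/2\le n/W$ is exactly the hypothesis $|k|<nL^{3/4}/W_n$. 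Finally, $W<W_n$ for $L$ large, so $L_{\cR_{n,W}}\subseteq L_{\cR_n}$ and $R_{\cR_{n,W}}\subseteq R_{\cR_n}$, whence $Y^-_{\cR_{n,W}}\ge Y^-_{\cR_n}=Y^-_n$.

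First regime: $k\ge C_0L^{1/4}$ for a suitable absolute constant $C_0$ (large enough that also $k/2\ge 16$ for $L$ large). Here the slant already forces enough extra Euclidean length. Apply \eqref{eq:paraRectB} of Corollary~\ref{c:paraRect} with $z$ chosen (negative values are permitted) so that $n\mu-zQ=n\mu+\tfrac1{2L}Q_n$: then the event $\{Y^-_{n,k,L^{-3/4}}<n\mu+\tfrac1{2L}Q_n\}$ is, up to probability $\exp(1-CQ^{\kappa/10})$, contained in $\{Y^-_{\cR_{n,W}}<n\mu-\tfrac{((k/2)^2-8(k/2))Q}{16}+\tfrac1{2L}Q_n\}$. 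For $k\ge C_0L^{1/4}$ one has $\tfrac{((k/2)^2-8(k/2))Q}{16}\ge\tfrac{k^2Q}{128}=\tfrac{k^2L^{-3/2}}{32}Q_n\ge\tfrac2LQ_n$, so this is contained in $\{Y^-_{\cR_{n,W}}<n\mu-L^{-20}Q_n\}\subseteq\{Y^-_n<n\mu-L^{-20}Q_n\}$, of probability $\le L^{-20}$ by the hypothesis; and $\exp(1-CQ^{\kappa/10})\le L^{-20}$ for $n$ large, so this regime is done with room to spare.

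Second regime: $0\le k<C_0L^{1/4}$. Now the slant gain is negligible and we must exhibit a genuine $\Theta(L^{1/4}Q_n)$ excess coming from confining a path to a slab much thinner than the canonical rectangle. The hypothesis ``$n$ and $L$ are such that the conclusion of Lemma~\ref{l:contracorr} holds'' places us in the setting of Lemma~\ref{l:contra}: $n=mL$ with $m$ a large $\alpha$-record point satisfying $\P(X_m\le m\mu-\varepsilon Q_m)\le\varepsilon$, $\varepsilon=e^{-L}$. The engine is that this forces $A_m$ to be large: by Lemma~\ref{l:rtail}, $\E[(X_m-m\mu)^+]\ge\varepsilon_3^2Q_m$, while the hypothesis together with the stretched-exponential left tail (Lemma~\ref{l:ImprovedInduction}, via \eqref{eq:Q} and $A_m\le D_1Q_m$) gives $\E[(X_m-m\mu)^-]\le e^{-L}\mathrm{poly}(L)\,Q_m\le\tfrac12\varepsilon_3^2Q_m$ for $L$ large, so $A_m=\E X_m-m\mu\ge\tfrac12\varepsilon_3^2Q_m$. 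Decompose the parallelogram into the $L$ vertical strips $\Lambda_i=\{(i-1)m\le x\le im\}$; after resampling $\omega^{\Lambda_i}$ the strip-crossings are independent. Each strip restricted to the slab is a thin, only mildly slanted ($kL^{-7/4}W_n<C_0L^{-3/2}W_n$ per strip) rectangle of length $m$; using Lemma~\ref{l:thin} when this strip is thin at scale $m$, or a band-refinement and ``red box'' argument exactly as in the proof of Lemma~\ref{l:contra} (with the mild per-strip slant absorbed via Corollary~\ref{c:paraRect}/Lemma~\ref{l:paraChange} at scale $m$) when the growth of $Q$ makes the slab comparable to or taller than $W_m$, one shows that with probability $\ge 1-L^{-20}$: (a) every strip-crossing is $\ge m\mu-o(1)\,Q_m$, and (b) at least $\tfrac14\varepsilon_3L$ of the strips have crossing $\ge m\mu+\tfrac12\varepsilon_3Q_m$ (a Chernoff bound on a $\mathrm{Bin}(L,\varepsilon_3)$-dominated count), while any crossing that changes bands within a strip pays the corresponding vertical-deviation surplus. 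On this event, using Lemma~\ref{l:Gamma} and the resampling hypotheses to control the intermediate-point and resampling errors, $Y^-_{n,k,L^{-3/4}}\ge n\mu+(\tfrac18\varepsilon_3^2-o(1))L\,Q_m\ge n\mu+\tfrac1{16}\varepsilon_3^2LQ_m$ for $n$ large. Finally $Q_m\ge D_5^{-1}L^{-3/4}Q_n$ by Lemma~\ref{l:growth34}, so $\tfrac1{16}\varepsilon_3^2LQ_m\ge\tfrac{\varepsilon_3^2}{16D_5}L^{1/4}Q_n\gg\tfrac1{2L}Q_n$, which finishes the second regime. Combining the two regimes proves the lemma.

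The hard part is the second regime, and within it the honest comparison of a side-to-side crossing of a thin (and, depending on how fast $Q$ grows between $m$ and $n=mL$, not uniformly ``thin at scale $m$'') slightly slanted strip to the point-to-point time $X_m$: this is where Lemma~\ref{l:thin} and the multi-scale band/``red box'' construction of Lemma~\ref{l:contra} must be redeployed, and one has to be careful that every union-bound error (over strips, over bands, over the $O(L^{1/4})$ values of $k$) stays comfortably below $L^{-20}$ for all $n$ sufficiently large. The first regime, by contrast, is a short consequence of Corollary~\ref{c:paraRect} once the parametrization is in place.
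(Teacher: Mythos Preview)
The paper's proof is much shorter: it applies Corollary~\ref{c:paraRect} once, uniformly in $k$, with no case split. The key point is that $\min_{k\in\Z}\tfrac{(k^2-8k)Q}{16}\ge -Q\sim -L^{-3/2}Q_n$, which is negligible next to $\tfrac1{2L}Q_n$. So for \emph{every} admissible $k$, \eqref{eq:paraRectB} together with $Y^-_{\cR_{n,W}}\ge Y^-_n$ (since $W<W_n$) reduces the lemma to $\P[Y^-_n<n\mu+L^{-1}Q_n]\le L^{-20}$, which the paper takes as the hypothesis. Your split on $|k|$ arises only because you read the hypothesis literally as the negative-threshold display in Lemma~\ref{l:contracorr}; the paper intends the positive-excess output of Lemma~\ref{l:contra} (of which that display is an immediate but much weaker corollary). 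Your first regime is exactly the paper's argument restricted to large $k$.

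Your second regime, besides being unnecessary under the intended hypothesis, has a real gap. You claim an excess $\Theta(LQ_m)$ by counting $\asymp\varepsilon_3 L$ good strips and asserting ``every strip-crossing is $\ge m\mu-o(1)Q_m$''. But the strip is the full slab restricted to width $m$, of height up to $\sim L^{1/8}W_m$ as you yourself note, and the side-to-side minimum of such a tall rectangle is only $\ge m\mu-O((\log L)^{1/\theta})Q_m$ from the available concentration --- this loss is not $o(1)$ and eventually swamps the fixed $\varepsilon_3^2$ gain. Falling back on the band/red-box machinery of Lemma~\ref{l:contra} does not help: its bottleneck is the vertical-deviation case, which pays only $\sim\delta^4 Q_m$ with $\delta\ll L^{-1}$ forced, far short of $\Theta(LQ_m)$. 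You are in effect re-deriving in parallelogram form what the hypothesis already supplies, and inheriting the same exponent loss that the paper avoids by separating the two steps (rectangle excess first, then cheap transfer via Corollary~\ref{c:paraRect}).
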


\begin{proof}
We shall again use Corollary \ref{c:paraRect} with $W=L^{-3/4}W_{n}$; clearly this satisfies the hypotheses for $n$ sufficiently large. By our choice for $W$, and for $Q:=\frac{W^2}{n}=L^{-3/2}Q_{n}$ it follows that $\min_{k\in \Z} \frac{(k^2-8k)Q}{16}\ge -50L^{-3/2}Q_{n}$. From \eqref{eq:paraRectB} it follows that for $L$ sufficiently large 
$$\P(Y^{-}_{n,k,L^{-3/4}}\ge n\mu+\frac{1}{2L}Q_{n}) \ge \P(Y^{-}_{n}\ge n\mu+L^{-1}Q_{n})+\exp(1-CQ^{\kappa/10}).$$
For $n$ sufficiently large, $\exp(1-CQ^{\kappa/10})\le L^{-20}$ and we get the desired result by the hypothesis.  
\end{proof}

We are finally ready to prove Lemma \ref{l:leftweak}. 

\begin{proof}[Proof of Lemma \ref{l:leftweak}]
We argue by contradiction. Assuming the negation of the statement of the lemma, Lemma \ref{l:contracorr} and Lemma \ref{l:contrapara} implies that for arbitrarily large $L$ there exists $n$ sufficiently large for which the conclusion of Lemma \ref{l:contrapara} holds. That is, for $i\in \Z_{\ge 0}$, and $j\in \Z$
denoting (locally) by $L_{i,j}$ the line segments $\{in\}\times [(j-1)L^{-3/4}W_{n},(j+1)L^{-3/4}W_{n}]$ and setting $Z_{i,j,j'}=\inf_{u\in L_{i-1,j},v\in L_{i,j'}}X_{uv}$, we have 
\begin{equation}\label{eq:para.right.tail.contra}
\P(Z_{i,j,j'}\ge n\mu+\frac{1}{2L}Q_{n})\ge 1-2L^{-20}.
\end{equation}
for $|j'-j|\le nL^{3/4}/{W_{n}}$. 
We shall show that under this hypothesis, for $n'=nL^{5}$ we have that 
\begin{equation}
\label{e:meancontra}
\P(X_{n'}\ge n'\mu+L^{1/8}Q_{n'})\ge \frac{1}{2}. 
\end{equation}
Before proving \eqref{e:meancontra}, let us show how this leads to a contradiction. Indeed, since $Q^{-1}_{n'}|X_{n'}-\E X_{n'}|$ has stretched exponential tails it follows that for $L$ sufficiently large $\E X_{n'}\ge n\mu+\frac{1}{2}L^{1/8}Q_{n'}$; this contradicts Lemma~\ref{l:AnBound} for $L$ sufficiently large.

We now come to the proof of \eqref{e:meancontra}. Let $n$ now be sufficiently large so that $L^{5}W_{n}\ll n$ (this can be done because $Q_{n}$ and hence $W_{n}$ grows sub-linearly). Let $\mathcal{T}$ denote the event that $\gamma_{\mathbf{0},\mathbf{n'}}\in \Upsilon_{n',\mathbf{0},\mathbf{n'},L^{5}W_{n}/W_{n'}}$. Denote $\Lambda_{i}:=\{(x,y):x\in [in,(i+1)n]\}$, and let $\cB$ denote the event 
\[
\bigcap_{0\le i< L^{5}}\bigcap_{|j|, |j'| \le L^{23/4}} \bigg\{Z_{i,j,j'}\ge n\mu+\frac{1}{2L}Q_{n}, |Z_{i,j,j'}-Z^{\Lambda_{i}}_{i,j,j'}|\le L^{-100}Q_{n'}\bigg\}
\]
Finally, let $\mathcal{T}'$ denote the event $\{\Gamma_{n'}\le L^{-100}Q_{n'}\}$.
Observing that on $\mathcal{T}\cap\mathcal{T}'\cap\mathcal{B}$,
$$X_{n'}\ge \min_{\{j_i\}} \sum_{i=1}^{L^5}Z_{i,j_{i-1},j_{i}}-L^{5}\Gamma_{n'}\geq L^5(n\mu+\frac{1}{2L}Q_{n} - L^{-100}Q_{n'}) \geq n'\mu+L^{1/8}Q_{n'}$$
where the minimum is taken over all sequences $\{j_{i}\}_{i}$ with $|j_i|\le L^{23/4}$.  It follows that for $L$ sufficiently large
\begin{equation}
\label{eq:sumboud}
\P(X_{n'}< n'\mu+L^{1/8}Q_{n'})\le \P(\cB^c)+\P(\cT^{c})+\P((\cT')^{c}).
\end{equation}

By equation~\eqref{eq:para.right.tail.contra} and a union bound it follows that $\P(\cB^c)\le 2L^{5+46/4-20}\leq L^{-2}$. Since, by Lemma \ref{l:growth34}, 
$$L^{5}W_{n}=L^{5}\sqrt{nQ_{n}}\ge D_5^{-1/2}L^{25/8}\sqrt{nQ_{n'}}\ge D_5^{-1/2}L^{5/8}W_{n'},$$ it follows from Lemma \ref{l:trans.SOGam} and Lemma \ref{l:trans.events} that $\P(\cT^c)\le L^{-100}$ for $L$ sufficiently large. Finally, it follows from Lemma \ref{l:Gamma} that $\P((\mathcal{T}')^{c})\le L^{-100}$ for $n$ sufficiently large.  Combining these estimates with \eqref{eq:sumboud} we establish equation~\eqref{e:meancontra} which completes the result.
\end{proof}

\section{Lower bounding the variance}
\label{s:varlb}
Recall that our strategy is to show that for $M$ sufficiently large we have
\begin{equation}
\label{e:varlb1}
    \hbox{Var}(X_{Mn})\geq C M^{1/10} \hbox{Var}(X_n)
\end{equation}
for all sufficiently large record points $n$, and use it to show \eqref{eq:QequivSD}. This section and the next one is devoted to this. As mentioned at the beginning, our approach will be a block version of the proof of variance lower bound in~\cite{NP95} where we reveal blocks of size $n\times W_n$ one at a time and understand the variance contributed by each one. This is rather more complicated than in~\cite{NP95} where a single edge was revealed at a time.  When resampling an edge if it decreases then all passage times that use that edge will decease. Resampling a block on the other hand may cause some passage times to increase and others to decrease.  We will construct events that guarantee that the passage time from $0$ to $Mn$ have a positive probability of decreasing by a certain amount if $\gamma$ passes through that block. Our objective in this section is to construct such events and show that these events occur at a large fraction of locations along the geodesic $\gamma$ from $(0,0)$ to $(Mn,0)$ with large probability.

We first introduce the basic geometric set-up. Let $R$ be a large integer to be chosen later. We shall fix $M$ large enough depending on $R$ afterwards.  We will consider a block $$\Lambda_{i,j}=[in,(i+1)n]\times [jW_n,(j+1)W_n]$$ and define its enlargements by
\begin{align*}
\Lambda^+_{i,j}&=[(i-2\Theta)n,(i+1+2\Theta)n]\times [(j-2\Theta)W_n,(j+1+2\Theta)W_n],\\
\Lambda^{++}_{i,j}&=[(i-3\Theta)n,(i+1+3\Theta)n]\times [(j-3\Theta)W_n,(j+1+3\Theta)W_n]
\end{align*}
where $\Theta$ is the smallest power of 2 greater than $\lceil\log^{1000/\epsilon^2} M\rceil$. Define the following points in $\Lambda^+_{i,j}$ (see Figure~\ref{f:VarianceDiagram})
\begin{align*}
a_1^\pm &=((i-\Theta)n, (j+1\pm\Theta^{88/100})W_n), \quad a_2^\pm =((i+\Theta)n, (j+1\pm\Theta^{88/100})W_n),\\
h_1^\pm &=((i-1)n, (j\pm 2R) W_n), \quad h_2^\pm =((i+2)n, (j\pm 2R) W_n),\\
s_1^\pm &=(i n, (j\pm R) W_n), \quad s_2^\pm =((i+1)n, (j\pm R) W_n).
\end{align*}
\begin{center}
\begin{figure}
\includegraphics[width=5in]{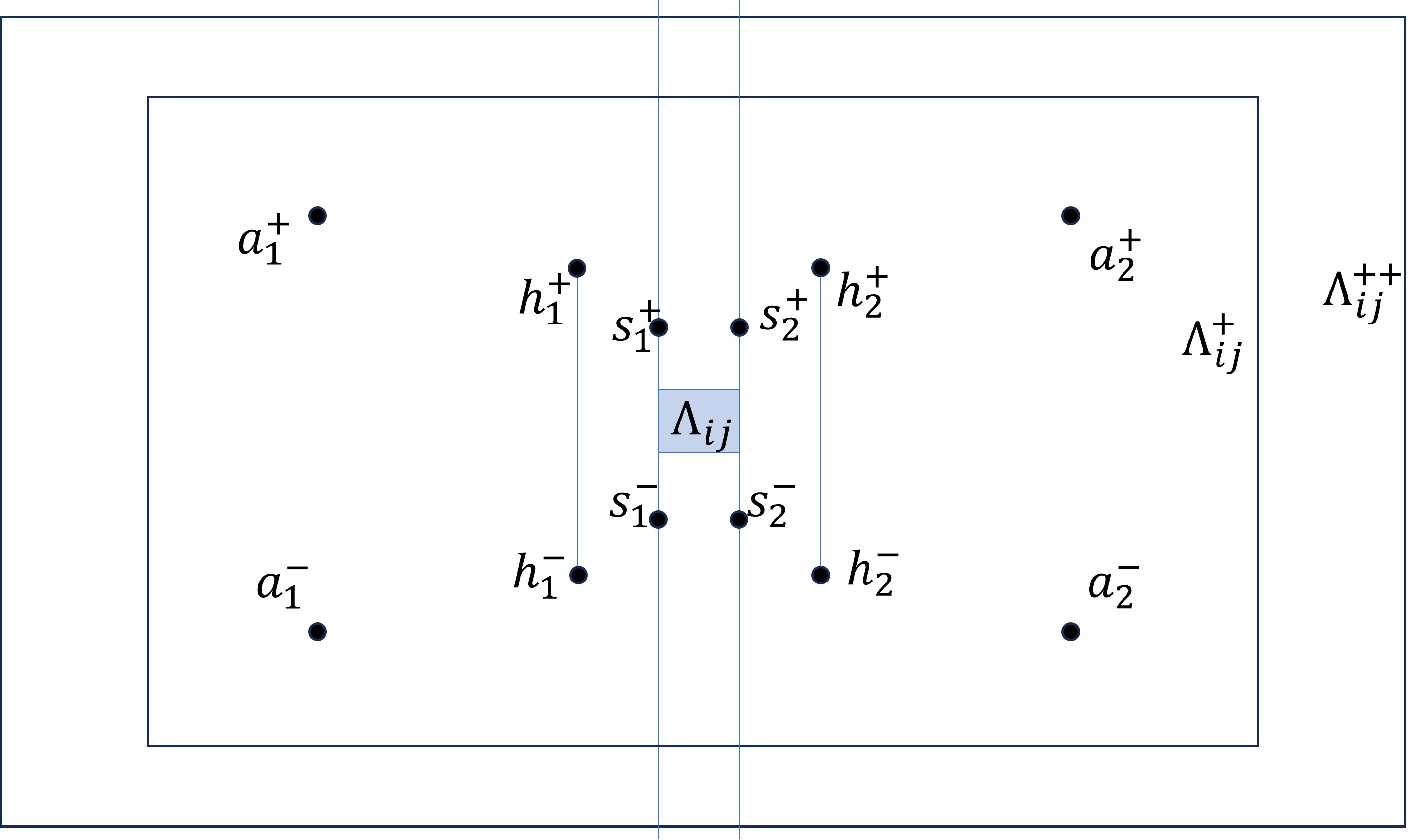}
\caption{Basic construction of points and rectangles around the block $\Lambda_{i,j}$. The points $a^{\pm}_i$ are $\Theta$ many columns (of width $n$) way from $\Lambda_{i,j}$ whereas the points $h^{\pm}_{i}$ are only one column away.}
\label{f:VarianceDiagram}
\end{figure}
\end{center}

We are now ready to define the favourable events. 

\subsection{Favourable events}
Define the first event $\cC^{(1)}_{i,j}$ as 
\begin{align*}
\cC^{(1)}_{i,j}&=\bigg\{\inf_{\substack{v_1\in\ell_{in,(j- R) W_n,(j- R+1) W_n}\\v_2\in\ell_{(i+1)n,(j- R) W_n,(j- R+1) W_n}}} \inf \{X_{\gamma'} - X_{v_1,v_2}:\gamma'\in \Upsilon_{n,v_1,v_2,\frac12 R,(in,(j- R) W_n)}\} \geq Q_n\bigg\}\\
&\quad \cap \bigg\{\inf_{\substack{v_1\in\ell_{in,(j+ R-1) W_n,(j+ R) W_n}\\v_2\in\ell_{(i+1)n,(j+ R-1) W_n,(j+ R) W_n}}} \inf \{X_{\gamma'} - X_{v_1,v_2}:\gamma'\in \Upsilon_{n,v_1,v_2,\frac12 R,(in,(j+ R-1) W_n)}\} \geq Q_n\bigg\}.
\end{align*}

This event essentially stipulates that the transversal fluctuation for geodesics from points around $s_1^{\pm}$ to points around $s_2^{\pm}$ are typical; see Figure \ref{f:VarianceC1}.

\begin{center}
\begin{figure}[t]
\includegraphics[width=2.5in]{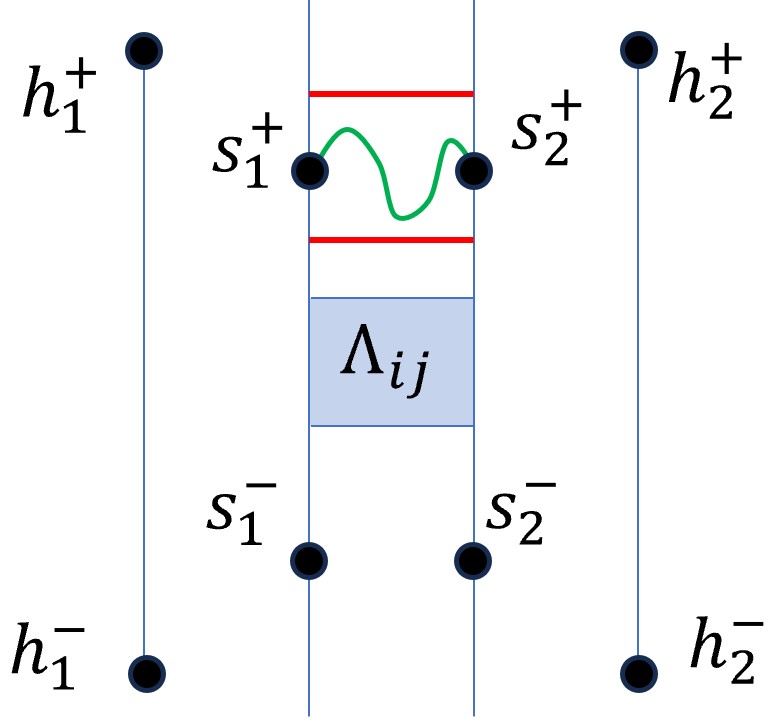}
\caption{Illustration of the event $\cC^{(1)}_{i,j}$: it essentially asks that the geodesics from points near $s^{\pm}_{1}$ to points near $s^{\pm}_2$ have typical transversal fluctuations, in particular these geodesics stay away from $\Lambda_{i,j}$.}
\label{f:VarianceC1}
\end{figure}
\end{center}

Define the event $\cC^{(2)}_{i,j}$ as
\begin{align*}
\cC^{(2)}_{i,j}&=\bigcap_{i'=i}^{i+1}\bigcap_{j'=j-R{-1}}^{j+R{+1}}\bigcap_{k=0}^1\bigg\{\inf_{\gamma' \in \Xi^{(n),R}_{i',i+\Theta,j',k,R/10}} X_{\gamma'}-X_{\gamma'(0),\gamma'(1)} \geq Q_n\bigg\}\\
&\quad \cap \bigcap_{i'=i}^{i+1}\bigcap_{j'=j-R{-1}}^{j+R{+1}}\bigcap_{k=0}^1\bigg\{\inf_{\gamma' \in \Xi^{(n),L}_{i',i-\Theta,j',k,R/10}} X_{\gamma'}-X_{\gamma'(0),\gamma'(1)} \geq Q_n\bigg\}.
\end{align*}

This event asks for control of local transversal fluctuations for geodesics between $s^{\pm}_{1},s_2^{\pm}$ to $a_1^{\pm}, a_2^{\pm}$; see Figure \ref{f:VarianceC2}.

\begin{center}
\begin{figure}
\includegraphics[width=5in]{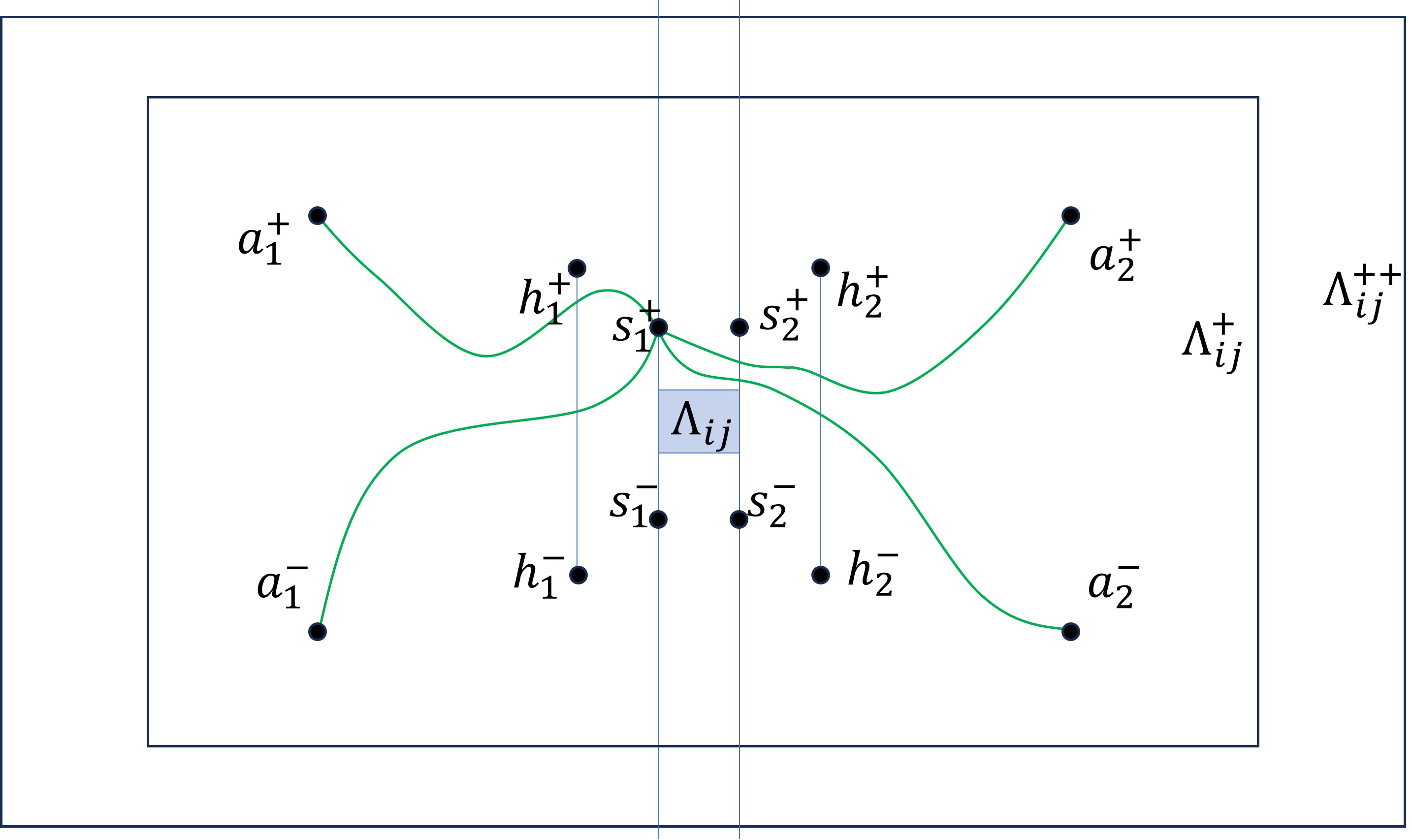}
\caption{Illustration of the event $\cC^{(2)}_{i,j}$: it essentially asks that the geodesics from points $s^{\pm}_1, s_{2}^{\pm}$ to points $a^{\pm}_1, a_2^{\pm}$ have typical local transversal fluctuation one column away from $\lambda_{i,j}$, these geodesics pass through the line segment joining $h^{+}_{1}$ and $h^{-}_{1}$ as well as the one joining $h^{+}_{2}$ and $h^{-}_{2}$.}
\label{f:VarianceC2}
\end{figure}
\end{center}

Finally, let
\begin{align*}
\cC^{(3)}_{i,j}&=\bigcap_{i'=i-1}^{i+1}\bigg\{\sup_{\substack{v_1\in\ell_{i'n,(j- 2R) W_n,(j+2R) W_n}\\v_2\in\ell_{(i'+1)n,(j- 2R) W_n,(j+2R) W_n}}} \Big|X_{v_1 v_2} - \mu|v_1-v_2| \Big| \leq R Q_n\bigg\}\\
&\qquad\bigcap\bigg\{\sup_{\substack{v_1\in\ell_{(i-1)n,(j- 2R) W_n,(j+2R) W_n}\\v_2\in\ell_{(i+2)n,(j- 2R) W_n,(j+2R) W_n}}} \Big|X_{v_1 v_2} - \mu|v_1-v_2| \Big| \leq R Q_n\bigg\}.
\end{align*}

This event asks that passage times across the three columns of width $n$, with the column containing $\Lambda_{i,j}$ in the middle are typical (fluctuations at scale $Q_{n}$) in the region around $\Lambda_{i,j}$.

Let $\cC_{i,j}=\cC^{(1)}_{i,j}\cap\cC^{(2)}_{i,j}\cap \cC^{(3)}_{i,j}$. The main result in this section is to show that the event $\cC_{i,j}$ occurs at a large fraction of the locations along the geodesic from $(0,0)$ to $(Mn,0)$ with large probability. Let $\gamma$ be the optimal path from $\mathbf{0}$ to $(Mn,0)$ and let $(in,y_i )$  be the location of its first intersection with the line $x=in$. Define $J_i=\lfloor y_i/W_n\rfloor$. We have the following proposition. 

\begin{proposition}
\label{p:percevent}
There exists $R$ such that for all large enough $M$ and large enough $n$
\[
\P\bigg[ \sum_{i=1}^{M}  I(\cC_{i,J_i}) \geq \frac{9}{10} M \bigg ] \geq 1-M^{-10}.
\]
\end{proposition}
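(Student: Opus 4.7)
I would proceed in three main steps.

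\textbf{Pointwise estimate.} For each fixed pair $(i,j)$, I would bound the three constituent events of $\cC_{i,j}^c$ using the results of prior sections. The event $\cC^{(1)c}_{i,j}$ is a transversal-fluctuation-too-large event at scale $R/2$, which Theorem~\ref{t:trans.main} bounds by $2\exp(1-D(R/2)^{\epsilon\theta/2})$. The event $\cC^{(2)c}_{i,j}$ is a union of $O(R)$ local-transversal-fluctuation events at scale $R/10$ over length $\Theta n$, controlled by Corollary~\ref{c:local.trans}. The event $\cC^{(3)c}_{i,j}$ is a concentration event for passage times across three unit columns at scale $R Q_n$, controlled by Lemma~\ref{l:paraplusminus}. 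A union bound yields $\P(\cC_{i,j}^c)\leq\epsilon(R)$ with $\epsilon(R)\to 0$ as $R\to\infty$, uniformly in $M,n,i,j$.

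\textbf{Control of the range of $J_i$.} By Theorem~\ref{t:trans.main} and Lemma~\ref{l:growth34}, the global transversal fluctuation of $\gamma_{\mathbf{0},(Mn,0)}$ is at most $M^{7/8+\epsilon}W_n$ with probability at least $1-M^{-20}$; more sharply, Corollary~\ref{c:local.trans} gives that at column $in$ one has $|J_i|\leq r_i := C(\log M)^{O(1)}(\min(i,M-i))^{7/8}$, with total failure probability at most $M^{-20}$ after a union bound over $i\in[1,M]$. Let $\cT$ denote this large-probability event; outside $\cT$ we absorb the contribution into the $M^{-10}$ error term.

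\textbf{Per-column bound and Chernoff concentration.} The key step is to sharpen the pointwise bound into a per-column estimate $\P(\cC_{i,J_i}^c)\leq\epsilon'(R)$; the naive union bound over $j\in[-r_i,r_i]$ only gives $r_i\epsilon(R)$, which is too large since $r_i$ can grow like $M^{7/8}$ in the middle of the interval. I would effect this decoupling as follows: conditional on $\omega$ outside $\Lambda^{++}_{i,j}$, Assumption~\ref{as:resamp2} fixes the geodesic's behaviour outside $\Lambda^+_{i,j}$, and combined with Corollary~\ref{c:local.trans} and Lemma~\ref{l:paraplusminus} this restricts the possible values of $J_i$ to $O(R)$ candidate heights given the exterior. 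A union bound over only these candidates then yields $\P(\cC_{i,J_i}^c)\leq C R\epsilon(R)$. Now $\cC_{i,j}$ is measurable with respect to $\omega|_{\Lambda^+_{i,j}}$, so events for column indices separated by more than $4\Theta$ are independent. A standard Chernoff bound applied at the scale of column-groups of size $5\Theta$ then yields $\#\{i:\cC_{i,J_i}^c\}\leq M/10$ with probability at least $1-\exp(-cM/\Theta)\geq 1-M^{-10}$ for $M$ sufficiently large, provided $R$ is chosen so that $C R\epsilon(R)$ is a small constant.

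\textbf{Main obstacle.} The technical heart of the argument is the decoupling used to obtain the per-column bound. Because $J_i$ is a nonlocal function of $\omega$ while $\cC_{i,j}$ is local, a naive union bound is inadequate. Resolving this requires combining the resampling stability of the geodesic (Assumption~\ref{as:resamp2}) with the local transversal fluctuation estimates to show that given the exterior field, the geodesic's height at column $in$ is restricted to an $R$-dependent set of candidate values. Once this reduction is achieved, the remaining Chernoff concentration at the scale of column-groups of size comparable to $\Theta$ is comparatively standard.
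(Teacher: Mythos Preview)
Your proposal has two genuine gaps that prevent the Chernoff argument from going through.

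\textbf{Gap 1: the events $\cC_{i,j}$ are not local.} You assert that ``$\cC_{i,j}$ is measurable with respect to $\omega|_{\Lambda^+_{i,j}}$'', but this is false. The event $\cC^{(2)}_{i,j}$ involves quantities of the form $X_{\gamma'(0),\gamma'(1)}$, which are global infima over all paths and depend on the whole environment. The paper handles this by replacing $\cC_{i,j}$ by proxy events $\cD_{i,j}\subset\cC_{i,j}$, and then by resampled versions $\widetilde\cD_{i,j}$ in which the relevant passage times are computed in environments $\omega^{\Lambda}$ restricted to strips, so that the events really are independent for columns separated by the appropriate range. Showing $\widetilde\cD$ approximates $\cD$ requires Assumption~\ref{as:resamp1}, not just Assumption~\ref{as:resamp2}. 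Moreover $\cD^{(2)}$ is an intersection over dyadic scales $\ell=0,\ldots,\ell_{\max}\approx\log_2\Theta$, and the component at scale $\ell$ has dependence range $2^\ell$ in the column index; this forces a genuinely multi-scale argument (Lemma~\ref{l:d21l} in the paper) that your fixed-range scheme with separation $4\Theta$ does not capture.

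\textbf{Gap 2: the indicators $I(\cC_{i,J_i})$ are not independent even after localisation, and your decoupling is circular.} The random index $J_i$ depends on the entire geodesic, hence on all of $\omega$, so $I(\cC_{i,J_i})$ and $I(\cC_{i',J_{i'}})$ are correlated no matter how far apart $i,i'$ are. Your attempt to restrict $J_i$ to $O(R)$ candidate heights ``conditional on $\omega$ outside $\Lambda^{++}_{i,j}$'' is circular: the region $\Lambda^{++}_{i,j}$ already depends on the unknown $j=J_i$, and in any case varying $\omega$ inside such a box can move the geodesic globally. The paper does not try to pin down $J_i$. Instead it controls the \emph{total variation} of the sequence $(J_i)$: Lemma~\ref{l:tauboundgeo} shows $\sum_i|J_i-J_{i-1}|\le H_0M$ with probability $1-e^{-cM^{\theta/4}}$, and the number of integer sequences with this constraint is at most $C^M$. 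One then fixes an arbitrary deterministic sequence $\{j_i\}$ from this class, applies Chernoff to the now genuinely independent (after localisation) indicators $I(\widetilde\cD_{i,j_i})$, obtaining failure probability $(C\varepsilon)^{cM}$, and finally takes a union bound over the $C^M$ sequences. This entropy-versus-Chernoff balance is the heart of the argument and is absent from your outline.
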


Postponing the proof of this proposition for now, we define a local proxy of the event $\cC_{i,j}$ which will be useful in our variance decomposition later. 

\subsubsection{Localized events}
The events $\cC^{(k)}_{i,j}$  depend on all of  $\omega$ but we will show that they can be well approximated by $\omega_{\Lambda^{++}_{ij}}$.  Let $\omega^{loc}$ be an independent environment and define $\omega^{i,j,loc}$ as
\[
\omega^{i,j,loc}_{\Lambda^{++}_{ij}} = \omega_{\Lambda^{++}_{ij}}, \qquad \omega^{i,j,loc}_{(\Lambda^{++}_{ij})^c} = \omega^{loc}_{(\Lambda^{++}_{ij})^c}.
\]
We define the events $\cC^{(k),loc}_{i,j}$ to be the same as $\cC^{(k)}_{i,j}$ except with $X$ replaced by $X^{\omega^{i,j,loc}}$ (i.e., the passage times are computed in the environment $\omega^{i,j,loc}$). 

\begin{lemma}\label{l:CC.local.approx}
We have that
\[
\P[\cC_{a,b} \cap (\cC^{loc}_{a,b} )^c] \leq M^{-100}.
\]
\end{lemma}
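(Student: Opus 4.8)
The plan is to show that on the event $\cC_{a,b}$ the environment outside $\Lambda_{a,b}^{++}$ is irrelevant to each of the three sub-events, so that replacing $\omega$ by $\omega^{a,b,loc}$ changes nothing with overwhelming probability. The key observation is that every path $\gamma'$ appearing in the definitions of $\cC^{(1)}_{a,b}$, $\cC^{(2)}_{a,b}$, $\cC^{(3)}_{a,b}$ has both endpoints well inside $\Lambda_{a,b}^+$ (the construction places all the relevant line segments $\ell_{\cdot}$ and all the points $s_i^\pm, h_i^\pm, a_i^\pm$ within $\Lambda^+_{a,b}$, which is separated from $(\Lambda^{++}_{a,b})^c$ by a corridor of width $\Theta n \wedge \Theta W_n \gg \log^2$ of the relevant scale), and the transversal fluctuation events $\Upsilon_{\cdot}$ and $\Xi^{(n),R}_{\cdot}$, $\Xi^{(n),L}_{\cdot}$ concern paths whose prescribed transversal range ($O(R W_n)$ or $O(\Theta^{9/10} W_n)$) also keeps them inside $\Lambda^+_{a,b}$. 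Hence, \emph{if} the geodesics realizing the relevant infima/suprema stay inside $\Lambda^+_{a,b}$, then Assumption~\ref{as:resamp2} (Resampling~2, applied with the rectangle $\Lambda^{++}_{a,b}$, whose dimensions are $O(\Theta n)\times O(\Theta W_n)$, noting $\Lambda^+_{a,b}\subset (\Lambda^{++}_{a,b})^-$ once $\Theta n\ge \log^2$ of the relevant scale) guarantees that with probability at least $1-(\text{size})^{-10}$ every path $\gamma\subset \Lambda^+_{a,b}$ has $X_\gamma = X_\gamma^{\omega^{a,b,loc}}$. On that resampling-good event, all the passage times entering the definitions of the three $\cC^{(k)}_{a,b}$ — and of their local counterparts — agree, so $\cC_{a,b}$ and $\cC^{loc}_{a,b}$ coincide.

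First I would make precise the containment claims: verify that $L_{\cP}$'s, the $\ell$-segments, and the endpoints $\gamma'(0),\gamma'(1)$ in each of $\cC^{(1)},\cC^{(2)},\cC^{(3)}$ lie in $[(a-\Theta)n,(a+1+\Theta)n]\times[(b-2R)W_n,(b+2R)W_n]\subset \Lambda^+_{a,b}$, using $\Theta\ge \lceil\log^{1000/\epsilon^2}M\rceil\gg R$ for large $M$. Second, I would argue that on $\cC^{(3)}_{a,b}$ the geodesics between the prescribed endpoints cannot exit $\Lambda^+_{a,b}$: a path leaving $\Lambda^+_{a,b}$ must travel transversal distance $\ge \Theta W_n$ or longitudinal distance $\ge \Theta n$ beyond its endpoints, and by the argument in the proof of Lemma~\ref{l:trans.events}/Corollary~\ref{c:local.trans} (quantitative geometry via~\eqref{eq:Pythag2}) this forces extra length $\gg \Theta^{1/2} W_n \gg R Q_n$, contradicting the $\cC^{(3)}_{a,b}$ bound on the side-to-side passage times. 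The same applies in the resampled environment $\omega^{a,b,loc}$ provided the event $\cC^{(3),loc}_{a,b}$ (or just the resampling-goodness of $\Lambda^{++}_{a,b}$ together with $\cC^{(3)}_{a,b}$) holds, since the two environments agree on $\Lambda^{++}_{a,b}$ which contains a neighborhood of $\Lambda^+_{a,b}$.

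Third, I would invoke Assumption~\ref{as:resamp2}: with $\cG$ the event $\{\forall\gamma\subset \Lambda^+_{a,b}:\ X_\gamma=X_\gamma^{\omega^{a,b,loc}}\}$ (which is $\{\forall \gamma\subset (\Lambda^{++}_{a,b})^-:\ X_\gamma = X_\gamma^{(\Lambda^{++}_{a,b})^c}\}$ applied in reverse, using that $\omega$ and $\omega^{a,b,loc}$ are coupled to agree on $\Lambda^{++}_{a,b}$), we have $\P[\cG^c]\le (\Theta n)^{-10}\le M^{-100}$ for $M$ large (since $\Theta$ is a power of $2$ exceeding $\log^{1000/\epsilon^2}M$, so $(\Theta n)^{10}\ge \Theta^{10}\ge M^{100}$ for large $M$). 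On $\cC_{a,b}\cap\cG$, all the relevant geodesics stay inside $\Lambda^+_{a,b}$ by the second step, hence all passage times in the definition of each $\cC^{(k)}_{a,b}$ equal the corresponding ones computed in $\omega^{a,b,loc}$, so $\cC^{loc}_{a,b}$ holds; therefore $\cC_{a,b}\cap(\cC^{loc}_{a,b})^c\subset \cG^c$ and the bound follows. The main obstacle I anticipate is the bookkeeping in the second step — carefully checking, for each of the (many) path classes $\Upsilon_{\cdot}$, $\Xi^{(n),R/L}_{\cdot}$ appearing in $\cC^{(1)}$ and $\cC^{(2)}$, that the event $\cC^{(3)}_{a,b}$ (giving $O(RQ_n)$ control on crossings of all three middle columns, in a strip of height $4RW_n$) really does confine the relevant optimizing geodesics to $\Lambda^+_{a,b}$, including in the resampled environment; this is a routine but somewhat tedious repetition of the confinement estimates already developed in Sections~\ref{s:trans} and~\ref{s:ltf}, and the slack $\Theta\gg R$ is what makes it go through comfortably.
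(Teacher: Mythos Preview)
There is a genuine gap in your second step, and it stems from a misreading of the path classes $\Upsilon_{n,v_1,v_2,z}$ and $\Xi^{(n),R/L}_{\cdot}$. These are sets of paths with transversal fluctuation \emph{at least} $zW_n$ (respectively at least $z2^{9k/10}W_n$), not at most. So nothing in their definition keeps $\gamma'$ inside $\Lambda^+_{a,b}$; on the contrary, the whole point of $\cC^{(1)},\cC^{(2)}$ is to assert that such wandering paths are penalised. Consequently $X_{\gamma'}$ and $X^{\omega^{a,b,loc}}_{\gamma'}$ can genuinely differ, and your resampling event $\cG$ alone cannot force $\cC^{(k)}_{a,b}\Rightarrow\cC^{(k),loc}_{a,b}$.

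The attempt to confine the geodesics $\gamma_{\gamma'(0),\gamma'(1)}$ via $\cC^{(3)}_{a,b}$ also fails on two counts. First, $\cC^{(3)}_{a,b}$ only controls passage times across columns of width $n$ and $3n$, whereas the endpoints in $\cC^{(2)}_{a,b}$ are $\Theta n$ apart; $\cC^{(3)}$ gives you nothing at that scale. Second, even at scale $n$, the bound $|X_{v_1v_2}-\mu|v_1-v_2||\le RQ_n$ does not by itself confine the geodesic: extra Euclidean length does not imply extra passage time without an additional event (of the $\cS^{(\cdot)}$ type) lower-bounding \emph{all} passage times by $\mu\times$length. Your appeal to $\cC^{(3),loc}$ in the resampled environment is then circular, since $\cC^{(3),loc}$ is part of what you are trying to establish.

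The paper resolves this by introducing two \emph{new} high-probability events $\cQ^{(1)},\cQ^{(2)}$ (one per environment, symmetric in $\omega$ and $\omega^{a,b,loc}$) built from barrier geodesics $\gamma^\pm$ joining points $b_1^\pm,b_2^\pm$ at height $\pm\tfrac43\Theta W_n$. Transversal fluctuation control at scale $\tfrac83\Theta n$ (Lemmas~\ref{l:trans.SOGam},~\ref{l:trans.events}) forces $\gamma^\pm\subset\Lambda^+_{a,b}$, and then a path-surgery argument shows that any path with endpoints in the inner region that exits $\Lambda^+_{a,b}$ must cross $\gamma^\pm$ twice and thereby pick up cost $\ge 2Q_n$. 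This simultaneously handles both obstacles: it confines the geodesics between the relevant endpoints (giving $\cQ^{(4)}$), and for paths $\gamma'\not\subset\Lambda^+_{a,b}$ it shows $X^{\omega^{a,b,loc}}_{\gamma'}-X^{\omega^{a,b,loc}}_{\gamma'(0),\gamma'(1)}\ge 2Q_n>Q_n$ directly, so the $\cC^{(k),loc}$ inequality is vacuous for them.
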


\begin{proof}
The probability is invariant in $a$ and $b$ so consider $\cC_{0,0}$.  Define the set of paths
\[
\Psi_{i,i'}=\bigg\{\gamma': \gamma'(0)\in\ell_{in,- \Theta W_n,\Theta W_n}, \gamma'(1)\in\ell_{i'n,- \Theta W_n,\Theta W_n}, \gamma'\not\subset \Lambda_{0,0}^{+}\bigg\}
\]
and define the events
\begin{align*}
\cQ^{(1)}&= \bigcap_{i= -\Theta}^{\Theta} \bigcap_{i'= i+1}^{\Theta}
\bigg\{\inf_{\gamma' \in \Psi_{i,i'}} X_{\gamma'}-X_{\gamma'(0),\gamma'(1)} \geq 2 Q_n\bigg\},\\
\cQ^{(2)}&= \bigcap_{i= -\Theta}^{\Theta} \bigcap_{i'= i+1}^{\Theta}
\bigg\{\inf_{\gamma' \in \Psi_{i,i'}} X^{\omega^{0,0,loc}}_{\gamma'}-X^{\omega^{0,0,loc}}_{\gamma'(0),\gamma'(1)} \geq 2 Q_n\bigg\},\\
\cQ^{(3)}&= \Big\{\sup_{\gamma' \subset \Lambda^{+}_{ij}} |X_{\gamma'} - X^{\omega^{0,0,loc}}_{\gamma'}|=0 \Big\}.
\end{align*}
We will show that $\P[\cQ^{(1)}\cap \cQ^{(2)}\cap \cQ^{(3)}] \geq 1-M^{-100}$ and $\cC_{0,0} \cap (\cC^{loc}_{0,0} )^c \subset (\cQ^{(1)}\cap \cQ^{(2)}\cap \cQ^{(3)})^c$.  By Assumption~\ref{as:resamp2} we have that
\[
\P[\cQ^{(3)}]\geq 1-n^{-10} \geq 1- M^{-200}.
\]
Let $\gamma^+$ be the geodesic joining $b_1^+=(-\frac43\Theta n,\frac43 \Theta W_n)$ and $b_2^+=(\frac43\Theta n,\frac43 \Theta W_n)$ and let $\gamma^-$ be the geodesic joining $b_1^-=(-\frac43\Theta n,\frac43 \Theta W_n)$ and $b_2^-=(\frac43\Theta n,\frac43 \Theta W_n)$.
Suppose that the event
\begin{align*}
\cA = &\Big\{\inf_{\gamma^* \in \Upsilon_{\frac83\Theta n,b_1^+,b_2^+,\Theta^{1/10},b_1^+}} X_{\gamma^*} - X_{b_1^+,b_2^+} \geq Q_{\frac83 \Theta n}\Big\} \\
&\qquad\bigcap \Big\{\inf_{\gamma^* \in \Upsilon_{\frac83\Theta n,b_1^-,b_2^-,\Theta^{1/10},b_1^-}} X_{\gamma^*} - X_{b_1^-,b_2^-} \geq Q_{\frac83 \Theta n}\Big\}
\bigcap  \cS_{n^\epsilon,\origin}^{Mn}
\end{align*}
holds.  This implies that 
\[
\gamma^+ \not \in \Upsilon_{\frac83\Theta n,b_1^+,b_2^+,\Theta^{1/10},b_1^+}, \quad \gamma^- \not \in \Upsilon_{\frac83\Theta n,b_1^-,b_2^-,\Theta^{1/10},b_1^-}.
\]
and so the transversal fluctuations of $\gamma^\pm$ are not too large.  In particular, since by Lemma~\ref{l:growth34} we have that $Q_{\frac83 \Theta n} \Theta^{1/10} \leq \frac1{10} \Theta Q_n$ it follows that
\[
\gamma^+,\gamma^- \subset [-\frac32\Theta n,\frac32\Theta n]\times [-\frac32\Theta W_n,\frac32\Theta W_n] \subset \Lambda^+_{0,0}.
\]
The lines $\ell_{-\frac32 \Theta n,-\frac32 \Theta W_n,\frac32 \Theta W_n},\ell_{-\frac32 \Theta n,-\frac32 \Theta W_n,\frac32 \Theta W_n}$ and the curves $\gamma^+,\gamma^-$ enclose a region $\mathfrak{O}$ such that
\begin{equation}\label{eq:frak.o}
[-\Theta n,\Theta n]\times [-\Theta W_n,\Theta W_n] \subset \mathfrak{O} \subset \Lambda^+_{0,0}.
\end{equation}
\begin{center}
\begin{figure}
\includegraphics[width=5in]{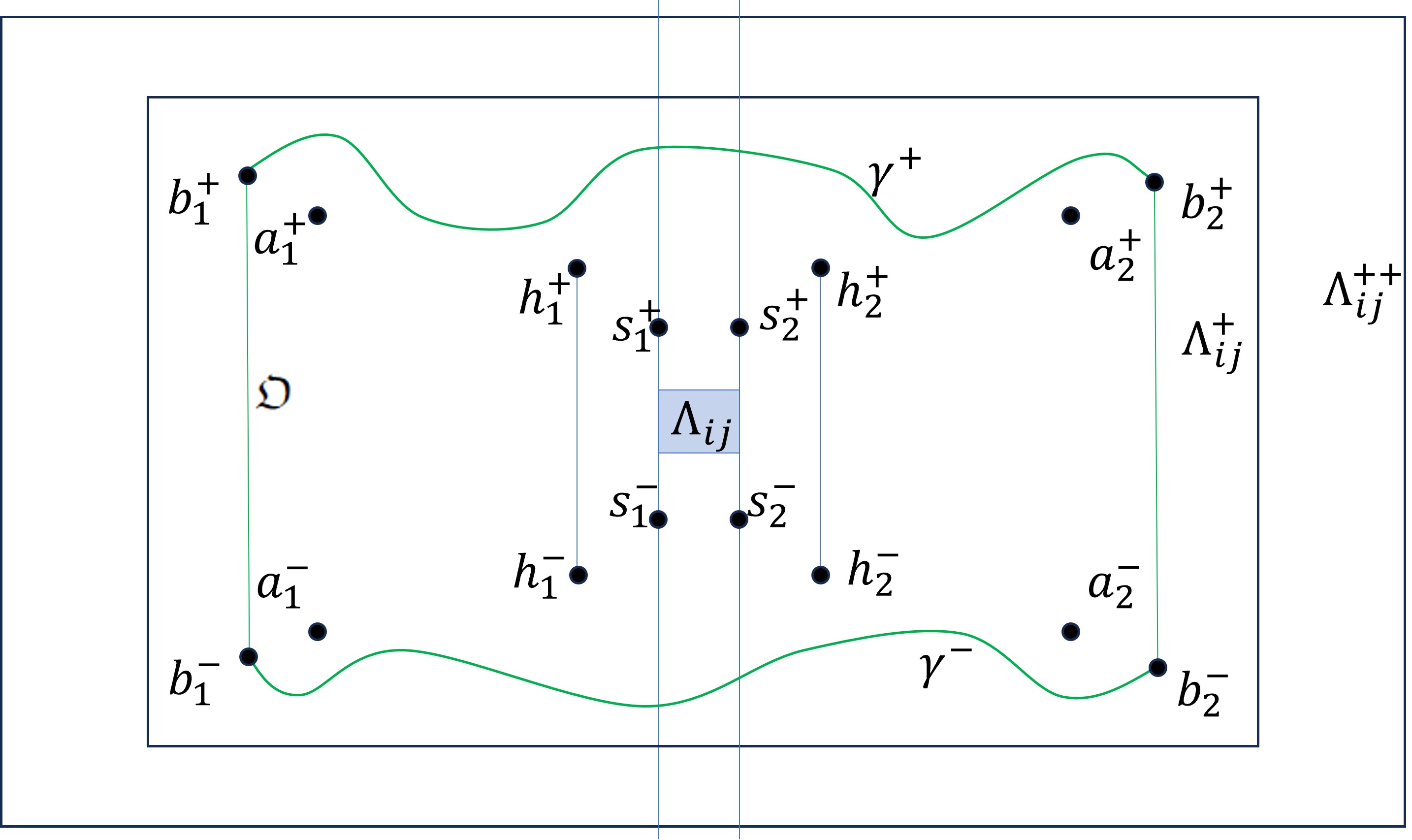}
\caption{Construction in the proof of Lemma \ref{l:CC.local.approx}: localizing the events $\cC_{i,j}$. It is shown that by resampling the environment outside $\lambda^{++}_{i,j}$, the event $\cC_{i,j}$ barely changes.}
\label{f:VarianceLocal}
\end{figure}
\end{center}
See Figure~\ref{f:VarianceLocal}.  Let $\gamma'\in\Psi_{i,i'}$ for some $-\Theta\leq i < i'\leq \Theta$.  Then $\gamma'(0),\gamma'(1)\in \mathfrak{O}$ but the path exists $\Lambda^+_{0,0}$ so it leaves $\mathfrak{O}$.  If it hits the line $y=-\frac32\Theta n$ or $y=\frac32 \Theta n$ then we can find {$u\in \gamma' \cap B_{Mn}(\origin)$} such that
\[
|\gamma'(0) - u| + |u - \gamma'(1)| \geq n + |\gamma'(0)  - \gamma'(1)|.
\]
Then by $\cS_{n^\epsilon,\origin}^{Mn}$ we have that
\begin{align*}
X_{\gamma'} &\geq X_{\gamma'(0), u} + X_{u,\gamma'(1)} - 2\Gamma_{3Mn}\\
&\geq \mu(|\gamma'(0) - u| + |u - \gamma'(1)| ) -\frac{2}{10} \mu n^\epsilon Q_{3n} -2 n^{2\epsilon\theta/\kappa + \epsilon}\\
&\geq \mu n + \mu |\gamma'(0)  - \gamma'(1)|-\frac{2}{10} \mu n^\epsilon Q_{3n} -2 n^{2\epsilon\theta/\kappa + \epsilon}\\
&\geq X_{\gamma'(0) , \gamma'(1)} + \mu n-\frac{3}{10} \mu n^\epsilon Q_{3n} -2 n^{2\epsilon\theta/\kappa + \epsilon}\\
&> X_{\gamma'(0) , \gamma'(1)} +2 Q_n.
\end{align*}
So suppose $\gamma'$ avoids the lines $y=-\frac32\Theta n$ and $y=\frac32 \Theta n$.  If $\gamma'$ exits $\mathfrak{O}$ through $\gamma^+$ and then exits $\Lambda^+_{0,0}$ it must return through $\gamma^+$ (see Figure~\ref{f:VarianceLocal}).  Let $w_1,w_2$ be the entry and exit points.  The path $\gamma^*$ that follows $\gamma^+$ from $b_1^+$ to $w_1$ then $\gamma'$ from $w_1$ to $w_2$ and then $\gamma^+$ from $w_2$ to $b_2^+$ is in $\Upsilon_{\frac83\Theta n,b_1^+,b_2^+,\Theta^{1/10},b_1^+}$ and so
\begin{align*}
X_{b_1^+,w_1}^{\gamma^+} +X_{w_1,w_2}^{\gamma'} + X_{w_2,b_2^+}^{\gamma^+} \geq   X_{\gamma^*}  \geq Q_{\frac83 \Theta n} + X_{b_1^+,b_2^+} \geq Q_{\frac83 \Theta n} + X_{b_1^+,w_1}^{\gamma^+} +X_{w_1,w_2}^{\gamma^+} + X_{w_2,b_2^+}^{\gamma^+} - 3\Gamma_{Mn}
\end{align*}
and so
\[
X_{w_1,w_2}^{\gamma'} - X_{w_1,w_2}^{\gamma^+}  \geq   Q_{\frac83 \Theta n}  - 3\Gamma_{Mn}.
\]
Now let $\hat\gamma$  be the path that follows $\gamma'$ from $\gamma'(0)$ to $w_1$ then $\gamma^+$ from $w_1$ to $w_2$ and then $\gamma'$ from $w_2$ to $\gamma'(1)$.  We have that 
\begin{align*}
X_{\gamma'(0) , \gamma'(1)}& \leq X_{\hat\gamma}\\
&\leq X_{\gamma'(0),w_1}^{\gamma'} +X_{w_1,w_2}^{\gamma^+} + X_{w_2,\gamma'(1)}^{\gamma'}\\
&\leq X_{\gamma'(0),w_1}^{\gamma'} +X_{w_1,w_2}^{\gamma'} + X_{w_2,\gamma'(1)}^{\gamma'} + Q_{\frac83 \Theta n}  - 3\Gamma_{Mn}\\
&\leq X_{\gamma'}  + Q_{\frac83 \Theta n}  - 6\Gamma_{Mn}\\
&\leq X_{\gamma'}  + 3 Q_{\frac83 \Theta n}
\end{align*}
where the last inequality follows by Lemma~\ref{l:growth34} and $\cS_{n^\epsilon,\origin}^{Mn}$.  We get same inequality if $\gamma'$ exits through $\gamma^-$ instead.  Hence $\cQ^{(1)}$ holds.  So by Lemmas~\ref{l:trans.SOGam} and~\ref{l:trans.events}
\[
\P[\cQ^{(1)}] = \P[\cQ^{(2)}] \geq \P[\cA] \geq 1 - M^{-200}.
\]
Hence we have that
\[
\P[\cQ^{(1)}\cap \cQ^{(2)}\cap \cQ^{(3)}] \geq 1-M^{-100}.
\]
Now observe that we define the event that all passage times defined by starting and ending points in $\Psi_{i,i'}$ across are equal under environments
\[
\cQ^{(4)}= \bigcap_{i= -\Theta}^{\Theta} \bigcap_{i'= i+1}^{\Theta} \Big\{\forall u\in\ell_{in,- \Theta W_n,\Theta W_n}, v\in\ell_{i'n,- \Theta W_n,\Theta W_n} \quad  X_{u,v}=X^{\omega^{0,0,loc}}_{u,v} \Big\}
\]
then we have that
\begin{equation}\label{eq:equal.passage.Cs}
\cQ^{(1)}\cap \cQ^{(2)}\cap \cQ^{(3)}  \subset \cQ^{(4)}
\end{equation}
since under $\cQ^{(1)}\cap \cQ^{(2)}\cap \cQ^{(3)}$ both optimal geodesics between $u$ and $v$ must remain in $\Lambda^+_{0,0}$ by $\cQ^{(1)}\cap \cQ^{(2)}$ and so the equality follows by $\cQ^{(3)}$.
It remains to check 
\[
\cC_{0,0} \cap (\cC^{loc}_{0,0} )^c \subset (\cQ^{(1)}\cap \cQ^{(2)}\cap \cQ^{(3)})^c = (\cQ^{(1)}\cap \cQ^{(2)}\cap \cQ^{(3)}\cap \cQ^{(4)})^c.
\]
Suppose that $\cC_{0,0}^{(2)} \cap (\cC^{(2),loc}_{0,0})^c$ holds.  
We must have for some $\gamma' \in \Xi^{(n),R}_{i',\Theta,j',k,R/10}$
\[
X^{\omega^{0,0,loc}}_{\gamma'} - X^{\omega^{0,0,loc}}_{\gamma'(0),\gamma'(1)} < Q_n, \quad X_{\gamma'}-X_{\gamma'(0),\gamma'(1)}  \geq Q_n,
\]
where $i',j',k$ are as in $\cC^{(2)}_{0,0}$.  If $X^{\omega^{0,0,loc}}_{\gamma'(0),\gamma'(1)}\neq X_{\gamma'(0),\gamma'(1)}$ then $\cQ^{(4)}$ fails.  If $X^{\omega^{0,0,loc}}_{\gamma'(0),\gamma'(1)}= X_{\gamma'(0),\gamma'(1)}$ and  $\gamma'\subset \Lambda^+_{0,0}$ then $\cQ^{(3)}$ must fail since $X^{\omega^{0,0,loc}}_{\gamma'} \neq X_{\gamma'}$.  Finally if $X^{\omega^{0,0,loc}}_{\gamma'(0),\gamma'(1)}= X_{\gamma'(0),\gamma'(1)}$ and $\gamma' \not\subset \Lambda^+_{0,0}$ then $\gamma\in \Psi_{i',\Theta}$ and so $\cQ^{(2)}$ fails since $X^{\omega^{0,0,loc}}_{\gamma'} - X^{\omega^{0,0,loc}}_{\gamma'(0),\gamma'(1)} < Q_n$.  Hence $\cC^{(2)}_{0,0} \cap (\cC^{(2),loc}_{0,0} )^c \subset (\cQ^{(1)}\cap \cQ^{(2)}\cap \cQ^{(3)})^c$.  The argument for $\cC^{(1)}_{0,0} \cap (\cC^{(1),loc}_{0,0} )^c$ and $\cC^{(3)}_{0,0} \cap (\cC^{(3),loc}_{0,0} )^c$ follows similarly so
\[
\P[\cC_{0,0} \cap (\cC^{loc}_{0,0} )^c] \leq \P[(\cQ^{(1)}\cap \cQ^{(2)}\cap \cQ^{(3)})^c] \leq M^{-100}.
\]
\end{proof}

\begin{corollary}\label{c:path.c.loc.bound}
There exists $R$ such that for all large enough $n$,
\[
\P\bigg[ \sum_{i=1}^M  I(\cC^{loc}_{i,J_i}) \geq \frac{9}{10} M \bigg ] \geq 1-M^{-9}.
\]
\end{corollary}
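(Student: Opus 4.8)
\textbf{Proof proposal for Corollary~\ref{c:path.c.loc.bound}.}
The plan is to combine Proposition~\ref{p:percevent}, which controls the number of indices $i$ with $\cC_{i,J_i}$ holding, with Lemma~\ref{l:CC.local.approx}, which says that each individual event $\cC_{a,b}$ and its localized counterpart $\cC^{loc}_{a,b}$ differ with probability at most $M^{-100}$. The subtlety is that the indices $J_i$ along the geodesic $\gamma$ are random and depend on $\omega$, so one cannot naively replace $\cC_{i,J_i}$ by $\cC^{loc}_{i,J_i}$ for the random label $J_i$; instead the replacement has to be done by a union bound over \emph{all} admissible values of the label, using the a priori control on $|J_i|$ coming from the global transversal fluctuation bound (Theorem~\ref{t:trans.main}).

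First I would introduce the event $\cT_\gamma$ that the geodesic $\gamma$ from $\mathbf{0}$ to $(Mn,0)$ has transversal fluctuation at most $W_{Mn}\log M \le C M^{7/8} W_n \log M$ (using Lemma~\ref{l:growth34} to bound $W_{Mn}/W_n$), so that on $\cT_\gamma$ we have $|J_i| \le M^{9/10}$ for all $1\le i\le M$, say, once $M$ is large. By Theorem~\ref{t:trans.main}, $\P[\cT_\gamma^c]\le \exp(1-D(\log M)^{\frac12\epsilon\theta})\le M^{-10}$ for $M$ large. Next, let $\cE$ be the event $\bigcap_{i=1}^M\bigcap_{|b|\le M^{9/10}} (\cC_{i,b} \triangle \cC^{loc}_{i,b})^c$, i.e.\ that for every column $i\le M$ and every admissible height $b$ the two events agree. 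By Lemma~\ref{l:CC.local.approx} and a union bound over the at most $M\cdot(2M^{9/10}+1)\le M^3$ pairs $(i,b)$, we get $\P[\cE^c]\le M^3\cdot M^{-100}\le M^{-90}$. On $\cT_\gamma\cap\cE$, for each $i$ the label $J_i$ satisfies $|J_i|\le M^{9/10}$, hence $\cC_{i,J_i}$ holds if and only if $\cC^{loc}_{i,J_i}$ holds; consequently $\sum_{i=1}^M I(\cC^{loc}_{i,J_i}) = \sum_{i=1}^M I(\cC_{i,J_i})$ on this event.

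Finally I would assemble the estimate: writing $\cG = \{\sum_{i=1}^M I(\cC_{i,J_i})\ge \frac9{10}M\}$ for the event from Proposition~\ref{p:percevent}, we have on $\cG\cap\cT_\gamma\cap\cE$ that $\sum_{i=1}^M I(\cC^{loc}_{i,J_i})\ge\frac9{10}M$ as well, so
\[
\P\bigg[\sum_{i=1}^M I(\cC^{loc}_{i,J_i})\ge\tfrac9{10}M\bigg] \ge \P[\cG\cap\cT_\gamma\cap\cE] \ge 1 - \P[\cG^c] - \P[\cT_\gamma^c] - \P[\cE^c] \ge 1 - M^{-10} - M^{-10} - M^{-90} \ge 1 - M^{-9}
\]
for $M$ (hence $n$, since the statement is for large $n$ and $M$ is a fixed large constant) large enough; the same $R$ from Proposition~\ref{p:percevent} works. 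The main obstacle, such as it is, is purely bookkeeping: making sure the a priori bound on $|J_i|$ from Theorem~\ref{t:trans.main} is strong enough (polynomially many labels) that the union bound against the $M^{-100}$ from Lemma~\ref{l:CC.local.approx} still closes with room to spare, and making sure the $\log M$-type loss in passing from $W_n$ to $W_{Mn}$ is absorbed; neither is a real difficulty. One small point to be careful about is that $\cC^{loc}_{i,j}$ is defined using an \emph{independent} auxiliary field $\omega^{loc}$, so the event $\cE$ must be measurable with respect to the joint space; this is harmless since Lemma~\ref{l:CC.local.approx} is already stated on that joint space.
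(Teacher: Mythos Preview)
Your overall strategy is exactly the paper's: bound $|J_i|$ using Theorem~\ref{t:trans.main}, then take a union bound of Lemma~\ref{l:CC.local.approx} over all admissible $(i,b)$, and combine with Proposition~\ref{p:percevent}.

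There is, however, one quantitative slip. You bound the transversal fluctuation by $W_{Mn}\log M$ and then claim
\[
\P[\cT_\gamma^c]\le \exp\bigl(1-D(\log M)^{\frac12\epsilon\theta}\bigr)\le M^{-10}.
\]
Since $\frac12\epsilon\theta<1$ (indeed it is tiny), the quantity $(\log M)^{\frac12\epsilon\theta}$ grows \emph{slower} than $\log M$, so $\exp\bigl(1-D(\log M)^{\frac12\epsilon\theta}\bigr)$ is not $\le M^{-10}$; it decays slower than any inverse polynomial. The fix is immediate and you already have the ingredient: using your bound $W_{Mn}\le C M^{7/8}W_n$, the event $\{\max_i|J_i|\ge M\}$ forces $\trans_{Mn}\ge MW_n\ge C^{-1}M^{1/8}W_{Mn}$, so Theorem~\ref{t:trans.main} with $z=C^{-1}M^{1/8}$ gives
\[
\P\bigl[\max_i|J_i|\ge M\bigr]\le \exp\bigl(1-D'M^{\epsilon\theta/16}\bigr)\le M^{-100},
\]
which is what the paper uses. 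With this correction your union bound closes with the same arithmetic you wrote.

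A minor remark: Lemma~\ref{l:CC.local.approx} as stated only bounds $\P[\cC_{a,b}\cap(\cC^{loc}_{a,b})^c]$, not the full symmetric difference. This one-sided version is all you need (on $\cE$ you get $I(\cC^{loc}_{i,J_i})\ge I(\cC_{i,J_i})$, which suffices), so you can either restrict $\cE$ to that one direction or note that the other direction follows by exchangeability of $\omega$ and $\omega^{i,j,loc}$.
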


\begin{proof}
    From Theorem \ref{t:trans.main}, it follows that $\P(\max_{i}|J_{i}|\ge M)\le M^{-100}$. The result now follows from Proposition \ref{p:percevent}, Lemma \ref{l:CC.local.approx} and taking a union bound over all $a,b$ with $1\le a \le M$ and $|b|\le M$.  
\end{proof}

Using the favourable events constructed above, Proposition \ref{p:percevent} and Corollary \ref{c:path.c.loc.bound}, we shall next establish \eqref{e:varlb1} in this section and use it to complete the proof of Theorem~\ref{t:tightness}. As mentioned above to lower bound the variance we shall decompose it into sums contributions coming from resampling $n\times W_n$ blocks one by one. We first study the contribution of resampling a single block. 

\subsection{Fluctuations from resampling a single block}

We begin by defining a collection of unlikely events that are rare enough that we may take a union bound over all the blocks to rule these out.  Let
\[
\cB^{(1)} =\left\{\max_{0\leq i \leq  M} |J_i| \geq \frac12 M^{3/5}\right\}.
\]
Let
\[
\cB^{(2)} =\left\{\max_{\frac14 M\leq i \leq \frac34 M} |J_i-J_{i+\Theta}| \geq \frac16 \Theta^{88/100}\right\}.
\]
With $\gamma_{i,y,i',y'}$ the optimal path from $(in,y)$ to $(i'n,y')$
\[
\cB^{(3)} =\left\{\max_{\substack{0 \leq i,i',i'' \leq M\\ y,y'\in[-MW_n, MW_n]}} \max_{\substack{(x_1,y_1), (x_2,y_2)\in \gamma_{i,y,i',y'}\\ |x_1-i''n| \leq \log^3 n\\ |x_2-i''n| \leq \log^3 n}} |y_1-y_2|  \geq \frac14 W_n \right\}.
\]
Let
\[
\cB^{(4)} =\bigcup_{i=0}^M \bigcup_{j=-M}^M \{\cC_{i,j} \cap (\cC^{loc}_{i,j} )^c\}.
\]
Let
\[
\cB^{(5)} =\{\Gamma_{10Mn}\geq n^\epsilon\}.
\]
Finally set $\cB=\bigcup_{j=1}^5 \cB^{(j)}$.
\begin{lemma}\label{l:cB.bound}
For all large $n$ we have that, if $n$ is such that $Q_{Mn} \leq M^{1/10} Q_n$ then
\[
\P[\cB] \leq M^{-90}.
\]
\end{lemma}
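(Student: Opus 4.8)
It suffices to show $\P[\cB^{(j)}]\le\frac15 M^{-90}$ for each $j\in\{1,\dots,5\}$, the conclusion then following by a union bound. Throughout we use that the hypothesis $Q_{Mn}\le M^{1/10}Q_n$ gives $W_{Mn}=\sqrt{Mn\,Q_{Mn}}\le M^{11/20}W_n$, that Lemma~\ref{l:growth34} gives $W_{\Theta n}\le \sqrt{D_5}\,\Theta^{7/8}W_n$, and that at the much smaller scale $W_{\log^3 n}\le(\log^3 n)^{3/4}\ll W_n$. Two of the events are immediate. For $\cB^{(4)}$ we union bound Lemma~\ref{l:CC.local.approx} over the at most $3M^2$ pairs $(i,j)$, giving $\P[\cB^{(4)}]\le 3M^{-98}\le\frac15 M^{-90}$ for $M$ large. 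For $\cB^{(5)}$, Lemma~\ref{l:Gamma} with $N=10Mn$ and $x$ chosen so that $Cx\log^C N=n^\epsilon$ (hence $x\ge n^{\epsilon/2}$ for $n$ large and $M$ fixed) gives $\P[\cB^{(5)}]\le\exp(1-n^{\epsilon\kappa/2})$, which is negligible once $n\ge n_0(M)$. For $\cB^{(1)}$, since $(in,y_i)$ is the first intersection of $\gamma=\gamma_{\origin,(Mn,0)}$ with the line $x=in$ we have $|y_i|\le\trans_{Mn}$, so $|J_i|\le\trans_{Mn}/W_n+1$ and hence, for $M\ge 4$, $\cB^{(1)}\subseteq\{\trans_{Mn}\ge\tfrac14 M^{3/5}W_n\}\subseteq\{\trans_{Mn}\ge\tfrac14 M^{1/20}W_{Mn}\}$, using $W_n\ge M^{-11/20}W_{Mn}$ and $3/5-11/20=1/20$. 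Theorem~\ref{t:trans.main} then bounds $\P[\cB^{(1)}]$ by a stretched exponential in $M^{1/20}$, which is below $\tfrac15 M^{-90}$ once $M$ is large.

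The heart of the matter is $\cB^{(2)}$. The subpath of $\gamma$ between its first crossings of the lines $x=in$ and $x=(i+\Theta)n$ is a geodesic (subpaths of geodesics being geodesics, up to the usual $\Gamma$-errors, controlled on $(\cB^{(5)})^c$), and on $(\cB^{(1)})^c$ the height of $\gamma$ at $x=in$ differs from its height at $x=Mn$ by at most $M^{3/5}W_n$, which is smaller than $\lceil(x'-x)^{89/100}\rceil W_{\Theta n}$ when $x'-x\asymp M/\Theta$ and $M$ is large. Hence this subpath falls within the scope of the local transversal fluctuation estimate Corollary~\ref{c:local.trans} applied at base scale $\asymp\Theta n$ (centring the dyadic grid on the line $x=in$ via the translation invariance Assumption~(1)): being a geodesic between its endpoints it cannot lie in the relevant family $\Xi$, so its deviation at $x=(i+\Theta)n$ from its position at $x=in$ is $O(W_{\Theta n})\le O(\Theta^{7/8}W_n)$, which — the exponent $88/100$ strictly exceeding $7/8$ — is below $\tfrac16\Theta^{88/100}W_n$ once $\Theta$ is large. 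This fails only on an event of probability $\le\exp(1-D\Theta^{c})$ for some $c>0$; summing over the $O(M)$ values $i\in[\tfrac14 M,\tfrac34 M]$ and using $\Theta\ge\log^{1000/\epsilon^2}M$ gives $\P[\cB^{(2)}]\le\tfrac15 M^{-90}$ for $M$ large.

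For $\cB^{(3)}$: this asserts that no point-to-point geodesic between points in a ball of radius $O(Mn)$ varies vertically by as much as $\tfrac14 W_n$ within a vertical strip of width $2\log^3 n$. Since $\log^3 n\ll W_n$, a geodesic doing so would have to either backtrack a large distance or travel nearly vertically over a distance $\gg W_{\log^3 n}$, both excluded on the good events built in Sections~\ref{s:trans}--\ref{s:ltf} (the events $\cS^{(\cdot)}_{\cdot,\cdot}$ together with the transversal and local transversal fluctuation bounds) applied at the relevant small scales. After discretising the endpoints to $\Z^2$ (at most $O((MW_n)^2)$ choices) and the $O(M^3)$ triples $(i,i',i'')$, and absorbing the discretisation into $\Gamma_{10Mn}\le n^\epsilon$, each single-configuration failure probability is stretched-exponentially small in a positive power of $n$, beating the polynomial-in-$n$ size of the union bound once $n\ge n_0(M)$; hence $\P[\cB^{(3)}]\le\tfrac15 M^{-90}$.

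The routine parts are thus $\cB^{(1)},\cB^{(4)},\cB^{(5)}$. The main obstacle is to run the transversal-fluctuation machinery of Sections~\ref{s:trans}--\ref{s:ltf} at scales other than $n$ — namely $\asymp\Theta n$ for $\cB^{(2)}$ and $\asymp\log^3 n$ for $\cB^{(3)}$ — controlling carefully how $W$ at those scales compares with $W_n$ (via Lemma~\ref{l:growth34}) and how far a geodesic can backtrack, so that the specific exponents hard-wired into the definitions ($88/100$ against $7/8$; $\tfrac14 W_n$ against $\log^3 n$) leave exactly the slack required, and so that the per-configuration estimates decay in $n$ fast enough to absorb the polynomially large union bounds.
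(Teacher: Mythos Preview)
Your handling of $\cB^{(1)},\cB^{(4)},\cB^{(5)}$ matches the paper, and your approach to $\cB^{(2)}$ is the paper's --- feed the geodesic into the local transversal fluctuation framework (Lemma~\ref{l:local.trans.proof}, Lemma~\ref{l:cL.bound}) at base scale $\Theta n$ --- though your exposition is garbled: the path placed in $\Xi^{(\Theta n),L}$ is not ``the subpath between $x=in$ and $x=(i+\Theta)n$'' but the long segment of $\gamma$ from $\origin$ to $(in,y_i)$. This is what makes $x-x'\asymp i/\Theta$ and lets you check the $89/100$ endpoint constraint; the one-step segment has $x-x'=1$ in those units, which does not even admit $k=0$ in the definition of $\Xi$. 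The paper phrases it as: on $(\cB^{(1)})^c\cap(\cB^{(5)})^c$, a violation at index $i$ with $J_i=j$ places $\gamma|_{[\origin,(in,y_i)]}$ in $\Xi^{(\Theta n),L}$ with $z\asymp\Theta^{1/200}$, and since this segment's excess over $X_{\origin,(in,y_i)}$ is $\le 2\Gamma_{Mn}\le 2n^\epsilon$, Lemma~\ref{l:local.trans.proof} forces $(\cL^{(\Theta n),L}_{\cdot})^c$; Lemma~\ref{l:cL.bound} plus a union bound over $i,j$ finishes.

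For $\cB^{(3)}$ the paper takes a cleaner route than you sketch. There is no need to discretise endpoints or invoke small-scale transversal machinery: the paper shows \emph{deterministically} that $\cS^{(Mn)}_{n^\epsilon,\origin}\cap\cB^{(3)}=\emptyset$. If a geodesic $\gamma'$ (between any two points in the relevant range) contains $(x_1,y_1),(x_2,y_2)$ with $|x_1-x_2|\le2\log^3 n$ and $|y_1-y_2|\ge\tfrac14W_n$, then choosing $0=t_0<t_1<t_2<t_3=1$ with $\gamma'(t_1),\gamma'(t_2)$ at these two points yields $\sum_i|\gamma'(t_i)-\gamma'(t_{i-1})|\ge|\gamma'(1)-\gamma'(0)|+\tfrac15W_n$. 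The passage-time control built into $\cS^{(Mn)}_{n^\epsilon,\origin}$ then forces $3\Gamma_{3Mn}+\tfrac4{10}\mu(Mn)^\epsilon Q_{3Mn}\ge\tfrac\mu5 W_n$, which is impossible since the right side is $\ge n^{1/2+\alpha/2}$ while the left is $\le n^{1/2+2\epsilon}$ by Lemma~\ref{l:growth34}. Hence $\P[\cB^{(3)}]\le\P[(\cS^{(Mn)}_{n^\epsilon,\origin})^c]$, a single event bounded via Lemma~\ref{l:trans.SOGam}. Your proposal could be made to work, but it carries the burden of a union bound over $O(M^3(MW_n)^2)$ endpoint configurations that the paper's argument sidesteps entirely.
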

\begin{proof}
By Lemma~\ref{l:Gamma} we have that
\[
\P[\cB^{(5)}] \leq M^{-100}.
\]
By Theorem~\ref{t:trans.main}, if $Q_{Mn} \leq M^{1/10} Q_n$ then
\[
\P[\cB^{(1)} ]\leq \P[\trans_{Mn} \geq \frac12 M^{3/5} W_n]\leq \P[\trans_{Mn} \geq \frac12 M^{1/20} W_{Mn}]\leq \exp(1-DM^{\epsilon\theta/40})\leq M^{-100}.
\]
Now if $(\cB^{(1)})^c\cap (\cB^{(5)})^c$ holds and $|J_i-J_{i-\Theta}|\geq \frac16 \Theta^{88/100}$ and $J_i=j$ then the geodesics $\gamma$ from $\origin$ to $(Mn,0)$ restricted between $\mathbf{0}$ and $(in,y_i W_n)$ is a path in the set
\[
\Xi^{(\Theta n),L}_{\frac{i}{\Theta},0,j\frac{W_n}{W_{\Theta n}},0,\frac16\Theta^{88/100}\frac{W_n}{W_{\Theta n}}}.
\]
Note that by Lemma~\ref{l:growth34},
\[
{\frac16\Theta^{88/100}\frac{W_n}{W_{\Theta n}} \geq \frac1{6D_5} \Theta^{1/200}}
\]
Furthermore we have
\begin{align*}
X^\gamma_{\origin, (in,y_i W_n)} + X_{(in,y_i W_n),(Mn,0)} - 2\Gamma_{Mn}&\leq X^\gamma_{\origin, (in,y_i W_n)} + X^\gamma_{(in,y_i W_n),(Mn,0)} - 2\Gamma_{Mn}\\
&\leq X_{\origin,(Mn,0)} \leq X_{\origin, (in,y_i W_n)} + X_{(in,y_i W_n),(Mn,0)} 
\end{align*}
and so
\[
X^\gamma_{\origin, (in,y_i W_n)} -X_{\origin, (in,y_i W_n)}  \leq 2\Gamma_{Mn}\leq n^\epsilon < \frac{(\frac1{6D_5} \Theta^{1/100})\mu}{4000} Q_{\Theta n}.
\]
By Lemma~\ref{l:local.trans.proof} it follows that the event 
\[
\left(\cL^{(n),R}_{\frac{i}{\Theta},0,j\frac{W_n}{W_{\Theta n}},\frac16\Theta^{88/100}\frac{W_n}{W_{\Theta n}}}\right)^c
\]
holds and so
\begin{align*}
\P[(\cB^{(1)})^c\cap (\cB^{(5)})^c \cap |J_i-J_{i+\Theta}|,J_i=j] &\leq \P[(\cL^{(n),R}_{\frac{i}{\Theta},0,j\frac{W_n}{W_{\Theta n}},\frac16\Theta^{88/100}\frac{W_n}{W_{\Theta n}}})^c]\\
&\leq \exp\Big(1-D\big(\frac1{6D_5} \Theta^{1/200}\big)^\theta\Big)\leq M^{-100}.
\end{align*}
By a union bound over $i$ and $j$
\[
\P[(\cB^{(1)})^c\cap (\cB^{(5)})^c \cap \cB^{(2)}] \leq M^{-97}.
\]
We will show that the event $\cS_{n^\epsilon,\origin}^{(Mn)}\cap\cB^{(3)}$ is empty.  Suppose that it holds for some $\gamma'=\gamma_{i,y,i',y'}$.  If $\gamma'$ exits $B_{3Mn}(\origin)$ then we can find $0=t_0\leq t_1<t_2\leq t_3 =1$ such that $\gamma'(t_i) \in B_{3Mn}(\origin)$ and 
\[
\sum_{i=1}^3 |\gamma'(t_i)-\gamma'(t_{i-1})| \geq |\gamma'(1)-\gamma'(0)| + \frac15 M n  \geq |\gamma'(1)-\gamma'(0)| + \frac15 M n.
\]
The condition of $\cB^{(3)}$ means $(x_1,y_1),(x_2,y_2)\in \gamma'$ 
such that 
\[
|(x_1,y_1)-(x_2,y_2)| \geq |x_1 - x_2|+ \frac15 W_n.
\]
Hence we can find $0=t_0\leq t_1<t_2\leq t_3 =1$ such that
\[
\sum_{i=1}^3 |\gamma'(t_i)-\gamma'(t_{i-1})| \geq |\gamma'(1)-\gamma'(0)| + \frac15 W_n
\]
even if $\gamma'$ stays within $B_{3Mn}(\origin)$.  In both cases $\gamma'(t_i) \in B_{3Mn}(\origin)$.  Then by $\cS_{n^\epsilon,\origin}^{(Mn)}$ we have that
\[
\sum_{i=1}^3 X_{\gamma'(t_i),\gamma'(t_{i-1})} \geq \mu |\gamma'(1)-\gamma'(0)| + \mu \frac15 W_n - \frac3{10}\mu (Mn)^\epsilon Q_{3Mn}.
\]
On the other hand we also have
\[
\sum_{i=1}^3 X_{\gamma'(t_i),\gamma'(t_{i-1})} \leq X_{\gamma'(1),\gamma'(0)}+3\Gamma_{3Mn} \leq \mu |\gamma'(1)-\gamma'(0)| +{3\Gamma_{3Mn}} + \frac3{10}\mu (Mn)^\epsilon Q_{3Mn}
\]
and so combining the last two equations we have that
\begin{equation}\label{eq:b4.contradiction}
3\Gamma_{3Mn} + \frac4{10}\mu (3Mn)^\epsilon Q_{3Mn} \geq \mu \frac15 W_n.
\end{equation}
Then for large $n$, by Lemma~\ref{l:growth34} and the definition of $\cS_{n^\epsilon,\origin}^{(Mn)}$ the left hand side of~\eqref{eq:b4.contradiction} is at most
\[
3(Mn)^{\epsilon (2\theta/\kappa+1)} + \frac{4D\mu}{10} (Mn)^\epsilon (3M)^{3/4} Q_n \leq  n^{\frac12+2\epsilon}
\]
while the right hand side of~\eqref{eq:b4.contradiction} is at least
\[
\frac{\mu}5 \sqrt{n Q_n} \geq n^{\frac12 +\frac12\alpha}
\]
which is a contradiction so $\cS_{n^\epsilon,\origin}^{(Mn)}\cap\cB^{(3)}$ is empty.  Hence
\[
\P[\cB^{(3)}]\leq \P [(\cS_{n^\epsilon,\origin}^{(Mn)})^c] \leq M^{-100}.
\]
Finally by Lemma~\ref{l:CC.local.approx} and a union bound
\[
\P[\cB^{(4)}]\leq M^{-98}.
\]
The result follows from the above estimates by a union bound.
\end{proof}

Define the event that $\gamma$ comes close to  $\Lambda_{i,j}$
\[
\cI_{i,j}=\Big\{\inf \{\|x-y\|_\infty :x\in\gamma,y\in\Lambda_{i,j})\}\leq \log^2 n\Big\}
\]
The event $\cB^c$ together with $\cC^{loc}_{i,j}$ will serve to constrain the geodesic $\gamma$ from $0$ to $Mn$.  Note that by the definition of $\cB^{(4)}$ we have $\cB^c \cap \cC^{loc}_{i,j}\subset \cC_{i,j}$.  The following proposition says that if $\cI_{i,j}$ holds then $\gamma$ must also pass between $h_1^-$ and $h_1^+$ and between $h_2^-$ and $h_2^+$.

\begin{proposition}\label{p:path.hit.H}
For $\frac13M\leq i \leq \frac23 M$ and $|j| \leq M^{3/5}$, on the event $\cB^c \cap \cC^{loc}_{i,j} \cap \cI_{i,j}$ we have that,
\[
j-2R < J_{i-1} < j+ 2R, \quad j-2R < J_{i+2} < j+ 2R.
\]
\end{proposition}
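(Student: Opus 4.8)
The strategy is a contradiction argument using the favourable events $\cC^{(1)}_{i,j},\cC^{(2)}_{i,j}$ together with the comparison-of-passage-times estimates coming from $\cC^{(3)}_{i,j}$ and the regularity encoded in $\cB^c$ (in particular $\cB^{(5)}=\{\Gamma_{10Mn}\ge n^\epsilon\}$ and the event $\cS^{(Mn)}_{n^\epsilon,\origin}$ which is implied by $(\cB^{(5)})^c$ at the relevant scales). Suppose for contradiction that on $\cB^c\cap\cC^{loc}_{i,j}\cap\cI_{i,j}$ we have, say, $J_{i-1}\ge j+2R$ (the other three cases being symmetric by reflection in the $x$- and $y$-axes). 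Since $\cB^c\cap\cC^{loc}_{i,j}\subset\cC_{i,j}$, all three events $\cC^{(1)}_{i,j},\cC^{(2)}_{i,j},\cC^{(3)}_{i,j}$ hold. We use $\cI_{i,j}$ to locate a point $w$ on $\gamma$ within $\ell^\infty$-distance $\log^2 n$ of $\Lambda_{i,j}$; by $\cB^{(3)c}$ the portion of $\gamma$ near the lines $x=in$ and $x=(i+1)n$ cannot fluctuate vertically by more than $\frac14 W_n$, so $\gamma$ actually crosses the block $\Lambda_{i,j}$ (or its immediate vertical neighbours) near height $jW_n$ — in particular $\gamma$ has a point in $\ell_{(i+1)n,(j-1)W_n,(j+2)W_n}$, hence $J_{i+1}\in\{j-1,j,j+1\}$.

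The heart of the argument is then a length comparison: $\gamma$ passes through $(i-1)n$ at height $J_{i-1}W_n\ge (j+2R)W_n$, comes down to near height $jW_n$ at column $x\in[in,(i+1)n]$, and leaves column $(i+1)n$ at height $J_{i+2}W_n$. I would compare the passage time of this detour with the straight side-to-side passage time across the three columns $x\in[(i-1)n,(i+2)n]$, which $\cC^{(3)}_{i,j}$ controls to within $RQ_n$ of its deterministic value $\mu\cdot(\text{length})$. Concretely: let $u_1=\gamma\cap\ell_{(i-1)n}$, $u_3=\gamma\cap\ell_{(i+2)n}$, and let $u_2$ be a point of $\gamma$ near $\Lambda_{i,j}$. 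Then $X_{\gamma}\ge X_{u_1 u_2}+X_{u_2 u_3}-2\Gamma_{10Mn}$, and by Pythagoras (using \eqref{eq:Pythag2} since the vertical drop of at least $2RW_n$ over horizontal distance at most $\frac32 n$ forces a transversal-to-longitudinal ratio that is not small) we get $|u_1-u_2|+|u_2-u_3|-|u_1-u_3|\gtrsim R^2 W_n^2/n = R^2 Q_n$ — or at least $\gtrsim R|u_1-u_3|$-type gain, comfortably exceeding $cR^2Q_n$ for the range of $R$ we fix. On the other hand $\cC^{(1)}_{i,j}$ and $\cC^{(2)}_{i,j}$ say that any competing path realizing the optimal side-to-side crossings near $s_1^\pm,s_2^\pm,a_1^\pm,a_2^\pm$ that would have to traverse near $\Lambda_{i,j}$ pays at least $Q_n$; combining with $\cC^{(3)}_{i,j}$ and the triangle inequality we can build an alternative path from $\origin$ to $(Mn,0)$ that avoids the detour and is shorter than $X_\gamma$ by at least $(cR^2 - CR - C)Q_n>0$ for $R$ large, contradicting optimality of $\gamma$. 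The ranges $\frac13 M\le i\le\frac23 M$, $|j|\le M^{3/5}$ ensure $\Lambda^+_{i,j}$ and $\Lambda^{++}_{i,j}$ stay inside the region $B_{3Mn}(\origin)$ where $\cB^{(1)c},\cB^{(3)c}$ and the $\cS$-type control apply, so all the ingredients are legitimately available.

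The main obstacle I anticipate is bookkeeping the geometry cleanly: one must be careful about which column $\gamma$ is in when it is near $\Lambda_{i,j}$ (it could be $\Lambda_{i,j-1}$ or $\Lambda_{i,j+1}$ rather than $\Lambda_{i,j}$, which is why $\cC^{(2)}_{i,j}$ ranges over $j'\in[j-R-1,j+R+1]$ and $k\in\{0,1\}$), and about whether the detour happens on the left side (between columns $i-1$ and $i$) or the right (between $i$ and $i+1$), since the two conclusions $j-2R<J_{i-1}<j+2R$ and $j-2R<J_{i+2}<j+2R$ require slightly different uses of $\cC^{(1)}$ versus $\cC^{(2)}$. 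The other delicate point is that $\gamma$ is undirected, so a priori it could backtrack; one rules this out exactly as in the proof of Lemma~\ref{l:cB.bound} (the argument showing $\cS^{(Mn)}_{n^\epsilon,\origin}\cap\cB^{(3)}$ is empty), which on $\cB^c$ forces $\gamma$ to be well-behaved on the scale of a few columns. Once the geometry is organized, each individual inequality is a routine application of the triangle inequality, \eqref{eq:Pythag2}, Lemma~\ref{l:growth34} (to bound $Q_{\Theta n}$, $Q_{3Mn}$ against $Q_n$), and the definitions of $\cC^{(1)}_{i,j},\cC^{(2)}_{i,j},\cC^{(3)}_{i,j}$.
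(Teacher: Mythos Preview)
Your proposal has two genuine gaps. First, your step-2 claim that $\cI_{i,j}$ and $(\cB^{(3)})^c$ alone force $J_{i+1}\in\{j-1,j,j+1\}$ is unjustified: $(\cB^{(3)})^c$ only constrains pairs of points on $\gamma$ that are \emph{both} within $\log^3 n$ of the same column line $x=i''n$; it says nothing about a mid-column point $w\in\gamma$ (say with $x$-coordinate $(i+\tfrac12)n$) relative to the first intersection of $\gamma$ with $x=(i+1)n$, so nothing you invoke rules out $\gamma$ entering the column high, dipping to touch $\Lambda_{i,j}$ in the interior, and exiting high. Second, and more seriously, your Euclidean length gain $|u_1-u_2|+|u_2-u_3|-|u_1-u_3|\gtrsim R^2 Q_n$ cannot be converted to a passage-time gap with the events at hand: $\cC^{(3)}_{i,j}$ only controls $|X_{vv'}-\mu|v-v'||$ when both $v,v'$ have height in $[(j-2R)W_n,(j+2R)W_n]$, and under your assumption $J_{i-1}\ge j+2R$ the point $u_1$ lies outside this window, so $X_{u_1 u_2}$ and $X_{u_1 u_3}$ are uncontrolled; the $\cS^{(Mn)}_{n^\epsilon,\origin}$-bound is at scale $(Mn)^\epsilon Q_{3Mn}\gg R^2 Q_n$ and is useless here.

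The paper's proof sidesteps both problems via a planar path-crossing argument (Lemmas~\ref{l:transversal.blocking} and~\ref{l:local.transversal.blocking}), which is the missing idea. Instead of comparing $X_\gamma$ to deterministic lengths, one introduces an auxiliary geodesic $\gamma'$ between specific points and shows $\gamma$ must cross $\gamma'$ at two points $d_1,d_2$; then $\cC^{(1)}_{i,j}$ or $\cC^{(2)}_{i,j}$ \emph{directly} gives $X^\gamma_{d_1 d_2}\ge X^{\gamma'}_{d_1 d_2}+Q_n$ (a comparison of two random passage times, never needing absolute control outside the $\cC^{(3)}$ window), so switching the $d_1$-to-$d_2$ segment yields a path shorter than $\gamma$. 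Concretely, Lemma~\ref{l:transversal.blocking} takes $\gamma'$ to be the geodesic between points near $s_1^+,s_2^+$ (constrained by $\cC^{(1)}_{i,j}$ to stay well above $\Lambda_{i,j}$) to rule out $J_i,J_{i+1}\ge j+R$ under $\cI_{i,j}$; Lemma~\ref{l:local.transversal.blocking} takes $\gamma'$ from $a_1^-$ to a point near $s_1^-$ (constrained at column $i-1$ by $\cC^{(2)}_{i,j}$, with the crossing forced because $(\cB^{(2)})^c$ places $\gamma$ near $a_1^-$ at column $i-\Theta$) to extend the conclusion to $J_{i-1},J_{i+2}$.
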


We will first need a couple of lemmas. 
\begin{lemma}\label{l:transversal.blocking}
On the event $\cB^c \cap \cC^{loc}_{i,j}$ if $J_i, J_{i+1}\geq j+R$ or if $J_i, J_{i+1}< j-R$ then $\cI_{i,j}^c$.
\end{lemma}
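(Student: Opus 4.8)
The statement asserts a blocking phenomenon: on the favourable event $\cB^c \cap \cC^{loc}_{i,j}$, if the geodesic $\gamma$ from $\origin$ to $(Mn,0)$ enters the columns $\ell_{in}$ and $\ell_{(i+1)n}$ both at normalized heights $\geq j+R$ (respectively both $< j-R$), then $\gamma$ cannot come within $\log^2 n$ of the block $\Lambda_{i,j}$. The plan is to argue by contradiction: suppose $\cI_{i,j}$ holds, so there is a point $w \in \gamma$ with $\|w - \Lambda_{i,j}\|_\infty \leq \log^2 n$; in particular $w$ lies in the slab $in \leq x \leq (i+1)n$ (up to $\log^2 n$) at height roughly between $jW_n$ and $(j+1)W_n$. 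Since by assumption $\gamma$ enters the slab at height $\geq (j+R)W_n$ at the line $x=in$ and exits at height $\geq (j+R)W_n$ at $x = (i+1)n$ (using that on $(\cB^{(1)})^c$ the heights $J_i$ are well-defined and bounded, and $\cB^{(3)}$ controls local oscillations), the portion of $\gamma$ from its first hit of $\ell_{in}$ to $w$ and then from $w$ to $\ell_{(i+1)n}$ is a path with large transversal fluctuation: it drops from height $\sim (j+R)W_n$ down to height $\sim jW_n$ and back up, a fluctuation of order $\tfrac12 R\, W_n$ relative to the chord.

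The core of the argument is then to apply the event $\cC^{(1)}_{i,j}$ (or rather its localization $\cC^{(1),loc}_{i,j}$, which on $\cB^{(4)c}$ coincides with $\cC^{(1)}_{i,j}$) to rule this out. Concretely, let $v_1 \in \ell_{in}$ and $v_2 \in \ell_{(i+1)n}$ be the first and last points where $\gamma$ crosses these lines near height $(j+R)W_n$ (the hypothesis $J_i, J_{i+1} \geq j+R$ together with $\cB^{(3)}$ places these at normalized height in the window $[j+R, j+R+\tfrac14]$, say — one should be slightly careful about whether the relevant crossing is at height exactly $j+R$ or one higher, and choose the window in $\cC^{(1)}_{i,j}$ accordingly; the definition uses $\ell_{in,(j-R)W_n,(j-R+1)W_n}$ for the downward case, so for $J_i,J_{i+1} < j-R$ one uses the first half of $\cC^{(1)}_{i,j}$ and for $J_i,J_{i+1} \geq j+R$ the second half). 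The subpath of $\gamma$ between $v_1$ and $v_2$ is a path from $v_1$ to $v_2$ that, because it passes within $\log^2 n$ of $\Lambda_{i,j}$, travels a distance at least roughly $\tfrac12 R\, W_n$ from the line through $v_1$ and $v_2$ — hence it lies in $\Upsilon_{n,v_1,v_2,\tfrac12 R, (in,(j\pm R)W_n)}$ (after checking the index bookkeeping matches the definition, possibly shrinking $\tfrac12 R$ to a smaller constant multiple of $R$ to absorb the $\log^2 n$ slack and the fact that $v_1,v_2$ need not be at the bottom of the window). By $\cC^{(1)}_{i,j}$, the passage time of this subpath exceeds $X_{v_1 v_2}$ by at least $Q_n$. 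But $\gamma$ is the geodesic, so replacing the subpath from $v_1$ to $v_2$ by the actual geodesic $\gamma_{v_1 v_2}$ (and using the concatenation/triangle inequality Assumption~\ref{as:tri}, paying at most $O(\Gamma_{10Mn}) \leq O(n^\epsilon) \ll Q_n$ via $\cB^{(5)c}$) produces a strictly shorter path from $\origin$ to $(Mn,0)$, contradicting optimality. This contradiction shows $\cI_{i,j}^c$, which is exactly the claim. The same reasoning with the roles of $+$ and $-$ swapped handles the case $J_i, J_{i+1} < j-R$, using the other half of $\cC^{(1)}_{i,j}$.

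\textbf{Main obstacle.} The delicate part is the geometric bookkeeping: translating ``$\gamma$ comes within $\log^2 n$ of $\Lambda_{i,j}$ while entering/exiting the slab at height $\geq (j+R)W_n$'' into the precise membership $\gamma|_{[v_1,v_2]} \in \Upsilon_{n,v_1,v_2,\rho R,(in,(j+ R') W_n)}$ for the correct constants $\rho$ and $R'$ matching the definition of $\cC^{(1)}_{i,j}$. One must verify that the line segment from $v_1$ to $v_2$ (both near height $(j+R)W_n$, within $\tfrac14 W_n$ of it by $\cB^{(3)}$) stays near height $(j+R)W_n$ across the slab, while the subpath dips to within $\log^2 n$ of $\Lambda_{i,j}$ at height $\leq (j+1)W_n$, giving a transversal fluctuation of at least $(R - 2)W_n \geq \tfrac12 R W_n$ for $R$ large, comfortably inside the $\tfrac12 R$ threshold; one also needs the base point of the $\Upsilon$-set (here $(in,(j\pm R)W_n)$ up to an integer shift of the height index) to be chosen consistently with where $v_1$ actually sits, which is why $\cC^{(1)}_{i,j}$ is stated as an intersection over the two relevant height windows. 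Everything else — that $\cB^{(4)c} \cap \cC^{loc}_{i,j} \subset \cC_{i,j}$, that the concatenation error is $O(n^\epsilon)$, that $|J_i| \le M^{3/5}$ keeps us in the regime where all the other $\cB^{(k)}$ events are defined — is routine given the earlier results.
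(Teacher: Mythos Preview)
Your overall strategy—argue by contradiction, exhibit a shorter competitor to $\gamma$ using $\cC^{(1)}_{i,j}$, and pay $O(\Gamma_{10Mn})\le n^\epsilon$ for concatenation via $(\cB^{(5)})^c$—is the right one, but the core step has a real gap. The event $\cC^{(1)}_{i,j}$ (second half, relevant when $J_i,J_{i+1}\geq j+R$) only quantifies over endpoints $v_1\in\ell_{in,(j+R-1)W_n,(j+R)W_n}$ and $v_2\in\ell_{(i+1)n,(j+R-1)W_n,(j+R)W_n}$, i.e.\ at normalized height in $[j+R-1,\,j+R]$. The hypothesis $J_i\geq j+R$ gives only a \emph{lower} bound on the height of $\gamma$'s first crossing of $x=in$; there is no upper bound, and $(\cB^{(3)})^c$ does not supply one (it says all crossings of $x=in$ by $\gamma$ lie within $\tfrac14 W_n$ of \emph{each other}, not of $(j+R)W_n$). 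So $\gamma$ may enter the slab at height $(j+100R)W_n$, dip down to $\Lambda_{i,j}$ entirely within the interior $in<x<(i+1)n$, and exit at height $(j+100R)W_n$, never touching the line $x=in$ inside the window $[(j+R-1)W_n,(j+R)W_n]$. In that scenario you have no subpath of $\gamma$ with endpoints to which $\cC^{(1)}_{i,j}$ applies.

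The paper handles this with a \emph{barrier} argument that decouples the height of $\gamma$'s entry from the height where $\cC^{(1)}_{i,j}$ is formulated. One introduces the geodesic $\gamma'$ between the fixed points $w_1=(in,(j+R-1)W_n)$ and $w_2=((i+1)n,(j+R-1)W_n)$, which \emph{do} lie in the window. By $\cC^{(1)}_{i,j}$, $\gamma'$ has transversal fluctuation $<\tfrac12 R\,W_n$, hence stays above $y=(j-1+\tfrac12 R)W_n\geq (j+2)W_n$, separating $\Lambda_{i,j}$ from the half-plane $\{y\geq (j+R)W_n\}$. Using $(\cB^{(3)})^c$ one sees that near $x=in$ and $x=(i+1)n$ the path $\gamma$ sits at height $\geq (j+R-\tfrac14)W_n$, above $\gamma'$; if $\gamma$ nonetheless comes within $\log^2 n$ of $\Lambda_{i,j}$ it must, by planarity, cross $\gamma'$ at two points $d_1,d_2$. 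The spliced path $\hat\gamma$ that follows $\gamma'$ from $w_1$ to $d_1$, then $\gamma$ from $d_1$ to $d_2$, then $\gamma'$ from $d_2$ to $w_2$ lies in $\Upsilon_{n,w_1,w_2,\tfrac12 R,\cdot}$, so $\cC^{(1)}_{i,j}$ gives $X_{\hat\gamma}\geq X_{\gamma'}+Q_n$, which unwinds to $X^{\gamma'}_{d_1 d_2}\leq X^{\gamma}_{d_1 d_2}+3\Gamma_{10Mn}-Q_n$. The competitor $\check\gamma$ that follows $\gamma$ except between $d_1$ and $d_2$ (where it follows $\gamma'$) then beats $\gamma$, a contradiction. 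The point is that the barrier $\gamma'$ is anchored at the correct height for $\cC^{(1)}_{i,j}$ regardless of how large $J_i,J_{i+1}$ actually are; your direct approach cannot arrange this.
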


\begin{proof}
Assume that $J_i, J_{i+1}\geq j+R$ in which case $\gamma$ passes above $s_1^+$ and $s_2^+$.  By property $(\cB^{(3)})^c$ the path $\gamma$ does not pass within distance $\log^2 n$ of the left and right and bottom side of $\Lambda_{ij}$.  If it passes within distance $\log^2 n$ it therefore enters it from its top side.  Suppose that this occurs.

Let $\gamma'$ be the path from $w_1=(i n , (j + R-1)W_n)$ to $w_2=((i+1) n , (j + R-1)W_n)$.  By $\cC^{(1)}_{i,j}$ we have that $\gamma'$ lies above the line $y=(j-1+\frac{R}{2})W_n \geq (j+2)W_n$ so if $\gamma$ is within distance $\log^2 n$ it must cross below $\gamma'$.  Events $(\cB^{(3)})^c$ means that the paths must cross at points we will call $d_1,d_2$ (illustrated in Figure~\ref{f:VarianceAbove}). The path $\hat \gamma$ that follows $\gamma'$ from $w_1$ to $d_1$, then $d_1$ to $d_2$ along $\gamma$ then from $d_2$ to $w_2$ along $\gamma'$ is in $\Upsilon_{n,w_1,w_2,\frac12 R,w_1}$ and so by the event $\cC^{(1)}_{i,j}$ we have

\begin{align*}
X^{\gamma'}_{w_1 d_1} + X^{\gamma'}_{d_1 d_2} + X^{\gamma'}_{d_2 w_2} -3 \Gamma_{10Mn} \leq X_{\gamma'} \leq X_{\hat\gamma} -Q_n\leq X^{\gamma'}_{w_1 d_1} + X^\gamma_{d_1,d_2} + X^{\gamma'}_{d_2 w_2} -Q_n
\end{align*}
and so $X^{\gamma'}_{d_1 d_2} \leq X^\gamma_{w_1 d_1} +3\Gamma_{10Mn}-Q_n$.  It follows that if $\check\gamma$ is the path following $\gamma$ from $(0,0)$ to $d_1$ then from $d_1$ to $d_2$ along $\gamma'$ then from $d_2$ to $(Mn,0)$ along $\gamma$ we have that
\[
X_\gamma\geq X^\gamma_{\origin,d_1} + X^\gamma_{d_1,d_2}+X^\gamma_{d_2,(Mn,0))} - 3\Gamma_{10Mn} 
\geq X^\gamma_{\origin,d_1} + X^{\gamma'}_{d_1,d_2}+X^\gamma_{d_2,(Mn,0))} + Q_n - 6\Gamma_{10Mn}  > X_{\check\gamma}
\]
which is a contradiction to $\gamma$ being the optimal path from $0$ to $(Mn,0)$.  Hence $d(\gamma, \Lambda_{i,j})> \log^2 n$.  The case of $J_i, J_{i+1}< i-R$ follows similarly.
\end{proof}

\begin{center}
\begin{figure}
\includegraphics[width=2in]{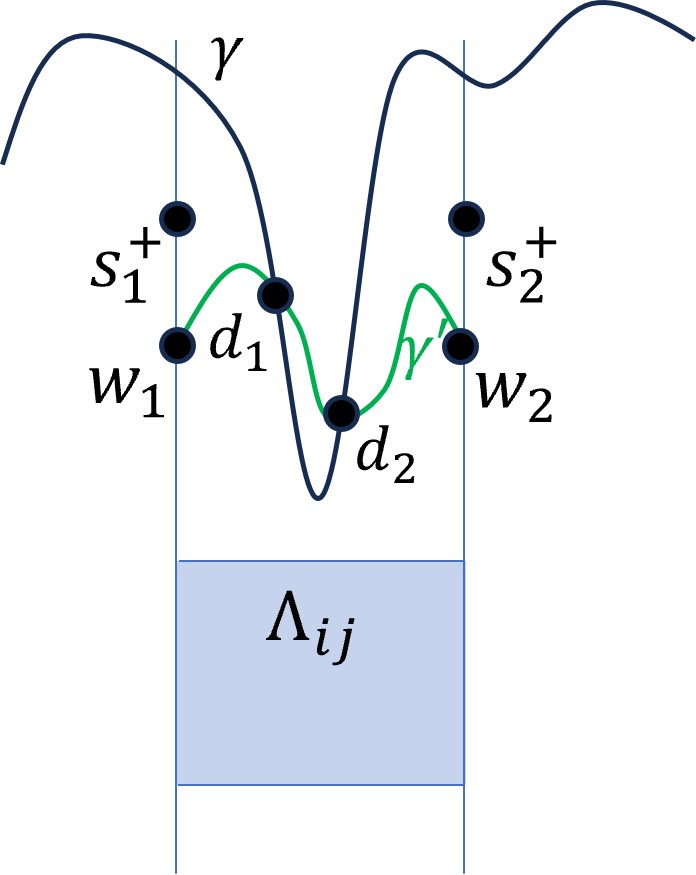}
\caption{Proof of Lemma \ref{l:transversal.blocking}: we show that on the favourable events it is extremely unlikely for the geodesic $\gamma$ from $\mathbf{0}$ to $(Mn,0)$ to come close to $\Lambda_{i,j}$ without being close to it on the left or the right boundary of the corresponding column. 
}
\label{f:VarianceAbove}
\end{figure}
\end{center}

\begin{lemma}\label{l:local.transversal.blocking}
On the event $\cB^c \cap \cC^{loc}_{i,j}$ if $J_i\geq j-R$ or $J_{i+1}\geq j-R$, then 
\[
J_{i-1}\geq j-\frac32 R, J_{i+2}\geq j-\frac32 R.
\]
Similarly if $J_i< j+R$ or $J_{i+1}< j+R$ 
\[
J_{i-1}\leq j+\frac32 R, J_{i+2}\leq j+\frac32 R.
\]
\end{lemma}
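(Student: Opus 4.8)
The plan is to prove Lemma~\ref{l:local.transversal.blocking} by combining the global-to-local transversal fluctuation control (Corollary~\ref{c:local.trans}) encoded in the event $\cC^{(2)}_{i,j}$ with a short-range surgery argument as in the proof of Lemma~\ref{l:transversal.blocking}. By symmetry between the ``$\geq$'' and ``$<$'' statements (reflect in the line $y=(j+\tfrac12)W_n$) and between $J_{i-1}$ and $J_{i+2}$ (reflect in $x=(i+\tfrac12)n$), it suffices to treat one case, say: on $\cB^c\cap\cC^{loc}_{i,j}$, if $J_i\geq j-R$ then $J_{i-1}\geq j-\tfrac32 R$. So suppose for contradiction that $J_i\geq j-R$ but $J_{i-1}< j-\tfrac32 R$. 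The geodesic $\gamma$ from $\mathbf 0$ to $(Mn,0)$ then enters the line $x=in$ at height $\geq (j-R)W_n$ but enters the line $x=(i-1)n$ at height $<(j-\tfrac32 R)W_n$; thus the portion of $\gamma$ between its first hits of $x=(i-1)n$ and $x=in$ travels at least $\tfrac12 R W_n$ in the $y$-direction over a horizontal span $n$, so (recall $k=0$, $2^{9k/10}=1$) it lies in a set of the form $\Xi^{(n),R}_{i,i-\Theta,j',0,R/10}$ for some $j'$ with $j-R-1\le j'\le j+R+1$ — or more precisely one uses the segment between the hits of $x=(i-1)n$ and some hit near $x=(i-\Theta)n$ to land in the class appearing in the second line of $\cC^{(2)}_{i,j}$ (the $\Xi^{(n),L}$ family).

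First I would set up the geometry carefully so that the sub-path of $\gamma$ in question is genuinely an element of one of the path classes appearing in $\cC^{(2)}_{i,j}$. Here the parameters match: the endpoints of the sub-path lie on the vertical lines $x=i'n$ (for $i'\in\{i,i+1\}$ after accounting for which line $\gamma$ first hits) and $x=(i-\Theta)n$, with $y$-coordinates within $\lceil\Theta^{89/100}\rceil W_n$ of $j'W_n$ by $(\cB^{(2)})^c$; the deviation at the intermediate column $x=(i-2^k)n=(i-1)n$ exceeds $\tfrac{R}{10}W_n$ precisely because $|J_i-J_{i-1}|>\tfrac12 R$, hence is at least $\tfrac12 R W_n\cdot(\text{const})$, which is $\ge\tfrac{R}{10}\cdot 2^{9\cdot0/10}W_n$. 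Then $\cC^{(2),loc}_{i,j}$ (and $\cB^c\subset(\cB^{(4)})^c$, so $\cC^{loc}_{i,j}$ upgrades to $\cC_{i,j}$, hence $\cC^{(2)}_{i,j}$) forces that any such sub-path $\gamma'$ satisfies $X_{\gamma'}-X_{\gamma'(0),\gamma'(1)}\geq Q_n$. Next, as in Lemma~\ref{l:transversal.blocking}, I would replace the expensive sub-path of $\gamma$ by the geodesic between its endpoints: the new path $\check\gamma$ agrees with $\gamma$ outside the window $[(i-\Theta)n,in]$ and follows $X_{\gamma'(0),\gamma'(1)}$ in between, so using the triangle-inequality concatenation bounds with error $O(\Gamma_{10Mn})\le O(n^\epsilon)$ (by $(\cB^{(5)})^c$) we get $X_\gamma\ge X_{\check\gamma}+Q_n-Cn^\epsilon>X_{\check\gamma}$ for large $n$, contradicting optimality of $\gamma$.

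The one genuine subtlety — and what I expect to be the main obstacle — is the bookkeeping needed to ensure the intermediate sub-path really falls in one of the finitely many classes $\Xi^{(n),L}_{i',i-\Theta,j',k,R/10}$ indexed in $\cC^{(2)}_{i,j}$, rather than in some class with parameters slightly outside the indexed range. Two issues arise: (a) $\gamma$ ``first hits'' $x=in$ at height $y_i$ with $J_i=\lfloor y_i/W_n\rfloor$, and the class $\Xi$ is defined with the first coordinate on a length-$W_n$ segment, so one must take $i'\in\{i,i+1\}$ and $j'\in\{J_i,J_i\pm1\}$ appropriately — this is why the intersections over $i'$ and over $j'$ ranging in $[j-R-1,j+R+1]$ appear in the definition of $\cC^{(2)}_{i,j}$; one checks $j-R\le J_i\le j+\tfrac12 M^{3/5}$ (using $(\cB^{(1)})^c$ and the hypothesis $J_i\ge j-R$, plus $|j|\le M^{3/5}$) lands in the allowed window for all large $M$; and (b) the endpoint on $x=(i-\Theta)n$ must lie within $\lceil\Theta^{89/100}\rceil W_n$ of the appropriate $j'W_n$, which is exactly guaranteed by $(\cB^{(2)})^c$ (that controls $|J_i-J_{i+\Theta}|$, and by the reflected/translated version $|J_i-J_{i-\Theta}|$). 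Once this matching is pinned down, the surgery estimate is identical in structure to Lemma~\ref{l:transversal.blocking}, so I would keep that part brief and refer to it. The conclusion $J_{i-1}\ge j-\tfrac32 R$ (rather than the stronger $J_{i-1}\ge j-R$) is what survives this argument because the deviation at column $x=(i-1)n$ only needs to exceed $\tfrac{R}{10}W_n\cdot(\text{const})$ to trigger $\cC^{(2)}_{i,j}$, and $|J_i-J_{i-1}|>\tfrac12 R$ comfortably suffices; the $\tfrac32 R$ threshold leaves the needed slack. The case $J_{i+1}\ge j-R$ (with $J_{i+2}$) is handled by applying the $\Xi^{(n),R}$ family over the window $[(i+1)n,(i+\Theta)n]$ in the same way, and the ``$<j+R$'' statements by the reflection noted above.
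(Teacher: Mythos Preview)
The surgery step and the symmetry reductions are correct, but there is a real gap in placing the sub-path of $\gamma$ into one of the $\Xi$-classes indexed by $\cC^{(2)}_{i,j}$. You take $j'\in\{J_i,J_i\pm1\}$, and for this to fall in the indexed range $[j-R-1,j+R+1]$ you need $J_i\le j+R$. But the lemma's hypothesis gives only the lower bound $J_i\ge j-R$; the upper bound you cite from $(\cB^{(1)})^c$ is merely $|J_i|\le\tfrac12 M^{3/5}$, which does not put $J_i$ in $[j-R,j+R]$, and nothing else in $\cB^c\cap\cC^{loc}_{i,j}$ controls $J_i$ from above. So when $J_i>j+R+1$ your direct sub-path argument does not apply: the portion of $\gamma$ you isolate simply is not in any $\Xi^{(n),L}_{i',i-\Theta,j',k,R/10}$ with $j'$ in the allowed window.

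The paper avoids this by not using a sub-path of $\gamma$ at all. Instead it constructs an auxiliary geodesic $\gamma'$ from $a_1^-$ to a fixed point $w_1^-$ with $y$-coordinate in $[(j-R-1)W_n,(j-R)W_n]$. Since $\gamma'$ is optimal it cannot lie in $\Xi^{(n),L}_{i,i-\Theta,j-R-1,0,R/10}$, so by $\cC^{(2)}_{i,j}$ it must hit the line $x=(i-1)n$ above $(j-\tfrac43 R)W_n$. Meanwhile $(\cB^{(2)})^c$ forces $\gamma$ above $a_1^-$ at $x=(i-\Theta)n$, while the contradiction hypothesis $J_{i-1}<j-\tfrac32 R$ puts $\gamma$ below $\gamma'$ at $x=(i-1)n$; planarity then gives two crossing points $d_1,d_2$. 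The hybrid path $\hat\gamma$ (following $\gamma'$ on $[a_1^-,d_1]$ and $[d_2,w_1^-]$, and $\gamma$ on $[d_1,d_2]$) has the \emph{fixed} endpoints $a_1^-,w_1^-$, hence lies in the single class $\Xi^{(n),L}_{i,i-\Theta,j-R-1,0,R/10}$ regardless of how large $J_i$ is. Now $\cC^{(2)}_{i,j}$ yields $X_{\hat\gamma}\ge X_{\gamma'}+Q_n$, so the $d_1$--$d_2$ segment along $\gamma$ exceeds that along $\gamma'$ by at least $Q_n-O(\Gamma_{10Mn})$, and swapping in the $\gamma'$ segment on $[d_1,d_2]$ gives the contradiction you anticipated.
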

\begin{proof}
Suppose that $J_i\geq j-R$ but assume for the sake of contradiction that $J_{i-1}< j-\frac32 R$.  Let $\gamma'$ be the geodesic from $a_1^-$ to $w_1^{-}=s_1^- -(0,-W_n)$.  By $\cC^{(2)}_{i,j}$ we have that $\gamma'$ intersects the line $x=(i-1)n$ above  $(j-\frac43 R)W_n$. By $(\cB^{(2)})^c$ we have that $\gamma$ passes above $a_1^-$.  So if $J_{i-1}< j-\frac32 R$ then $\gamma$ must intersect $\gamma'$ at $d_1,d_2$ in between the lines $x=(i-\Theta)n$ and $x=in$ (illustrated in Figure~\ref{f:VarianceLT}.  Let $\hat\gamma$ be the path that follows $\gamma'$ from $a_1^-$ to $d_1$, then $d_1$ to $d_2$ along $\gamma$ then $d_2$ to $w_1^{-}$ along $\gamma'$.  By $\cC^{(2)}_{i,j}$, since $\hat\gamma \in \Xi^{(n),L}_{i,i-\Theta,j-R-1,0,R/10}$
\[
X_{\hat\gamma} \geq X_{\gamma'} +Q_n
\]
and so
\[
X^{\gamma'}_{a_1^-,d_1} + X^{\gamma}_{d_1,d_2} + X^{\gamma'}_{d_2, w_1^-} \geq X^{\gamma'}_{a_1^-,d_1} + X^{\gamma'}_{d_1,d_2} + X^{\gamma'}_{d_2, w_1^-} -3 \Gamma_{10Mn} + Q_n,
\]
and so $X^{\gamma}_{d_1,d_2} \geq X^{\gamma'}_{d_1,d_2} -3 \Gamma_{10Mn} + Q_n$.  Let $\check \gamma$ be the path from $\origin$ to $d_1$ along $\gamma$ then $d_1$ to $d_2$ along $\gamma'$ and then $d_2$ to $(Mn,0)$ along $\gamma$.  Then
\begin{align*}
X_{\check\gamma}&\leq X_{\origin,d_1}^\gamma + X^{\gamma'}_{d_1,d_2} + X^{\gamma}_{d_2,(Mn,0)}\\
&\leq  X_{\origin,d_1}^\gamma + X^{\gamma}_{d_1,d_2} + X^{\gamma}_{d_2,(Mn,0)} -3 \Gamma_{10Mn}+Q_n \\
&\leq X_\gamma -6 \Gamma_{10Mn}+Q_n < X_\gamma 
\end{align*}
 which contradicts $\gamma$ being the optimal path.  The other conclusions all follow similarly.
\end{proof}
\begin{center}
\begin{figure}
\includegraphics[width=5in]{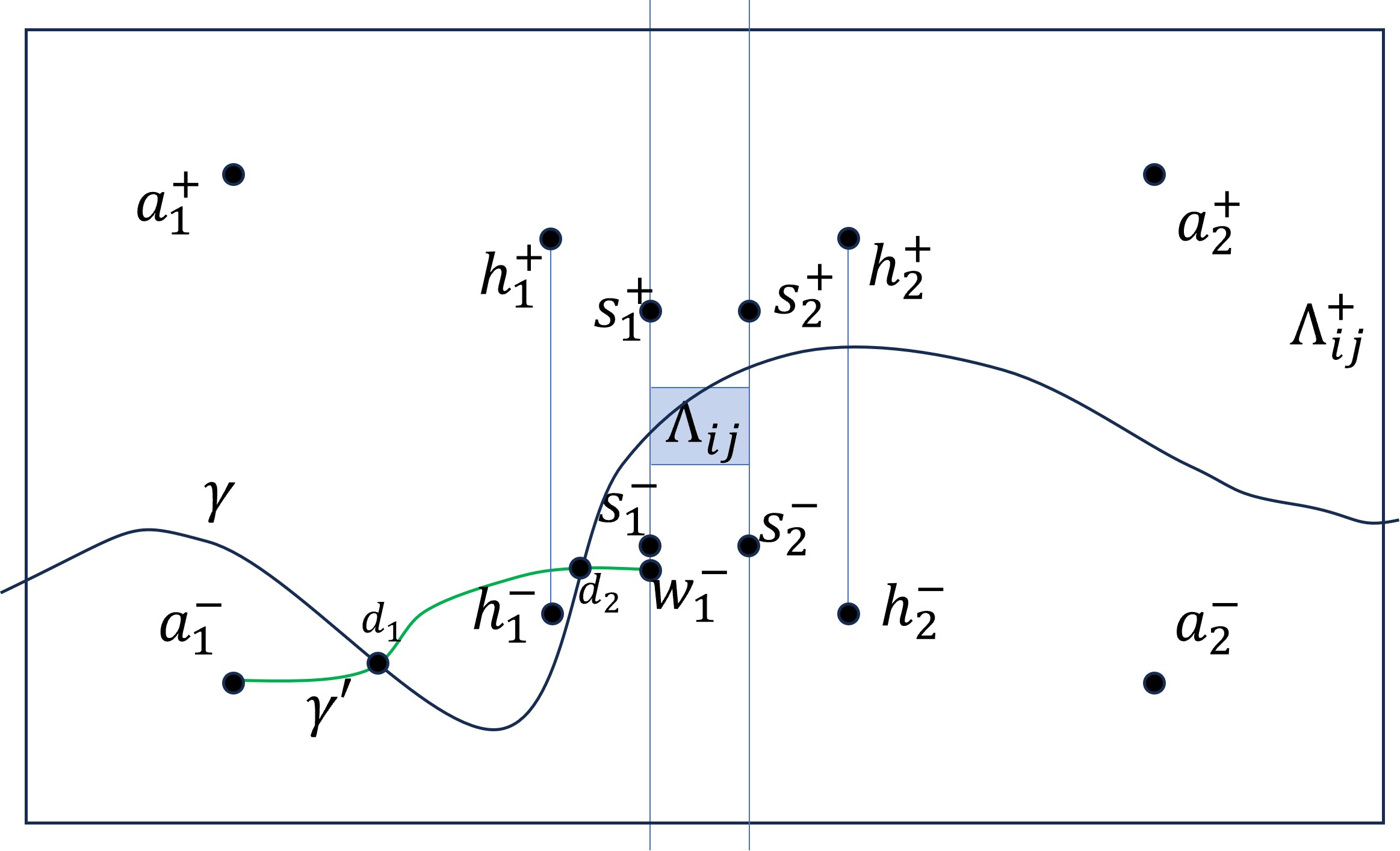}
\caption{Proof of Lemma \ref{l:local.transversal.blocking}: we show that if the geodesic $\gamma$ is close to $\Lambda_{i,j}$ either at its left or right boundary, then $\gamma$ cannot be too far away from the same height at the boundaries one column away to the left or right.}
\label{f:VarianceLT}
\end{figure}
\end{center}

\begin{proof}[Proof of Proposition~\ref{p:path.hit.H}]
Assume the event $\cB^c \cap \cC^{loc}_{i,j} \cap \cI_{i,j}$.
The path $\gamma$ must pass either between $s_1^-$ and $s_1^+$ or below $s_1^-$ or above $s_1^+$.  Similarly, it must pass either between $s_2^-$ and $s_2^+$ or below $s_2^-$ or above $s_2^+$.  By Lemma~\ref{l:transversal.blocking} it cannot pass above both $s_1^+$ and $s_2^+$ as then it would not pass within $\log^2 n$ of $\Lambda_{i,j}$.  Similarly it cannot pass below both $s_1^-$ and $s_2^-$.  

Hence we must have both the events $\{J_i\geq j-R\}\cup \{ J_{i+1}\geq j-R\}$ and $\{J_i< j+R \} \cup \{J_{i+1}< j+R\}$ and so the conclusion follows by Lemma~\ref{l:local.transversal.blocking}.
\end{proof}

\subsubsection{Resampling events:}
Let $\omega'$ be an independent environment and let $\omega^{ij}$ be the environment with 
\[
\omega^{ij}_{\Lambda_{ij}} = \omega'_{\Lambda_{ij}}, \quad \omega^{ij}_{\Lambda_{ij}^c} = \omega_{\Lambda_{ij}^c}
\]
so only $\Lambda_{ij}$ is resampled. We now define another collection of following events.  The first two events ask that passage times in columns away from $i$ do not change much
\begin{align*}
\cG^{(1)}_{i,j}&=\bigg\{\sup_{\substack{v_1\in\ell_{(i-1)n,(j- R) W_n,(j+R) W_n}\\v_2\in\ell_{in,(j- 2R) W_n,(j+2R) W_n}}} \Big|X_{v_1 v_2} - X_{v_1 v_2}^{\omega^{ij}} \Big| \leq n^\epsilon\bigg\}\\
&\qquad\bigcap\bigg\{\sup_{\substack{v_1\in\ell_{(i+1)n,(j- R) W_n,(j+R) W_n}\\v_2\in\ell_{(i+2)n,(j- 2R) W_n,(j+2R) W_n}}} \Big|X_{v_1 v_2} - X_{v_1 v_2}^{\omega^{ij}} \Big| \leq n^\epsilon\bigg\},
\end{align*}
\begin{align*}
\cG^{(2)}_{i,j}&=\bigg\{\sup_{v\in\ell_{(i-1)n,-M W_n,M W_n}} \Big|X_{\origin v} - X_{\origin v}^{\omega^{ij}} \Big| \leq n^\epsilon\bigg\}\\
&\qquad\bigcap\bigg\{\sup_{v\in\ell_{(i+2)n,-M W_n,M W_n}} \Big|X_{v,(Mn,0)} - X_{v,(Mn,0)}^{\omega^{ij}} \Big| \leq n^\epsilon\bigg\}.
\end{align*}
Define a slight reduction and enlargement of $\Lambda_{i,j}$ as follows
\begin{align*}
\Lambda^-_{i,j}&=[in+\log^2 n,(i+1)n-\log^2 n]\times [jW_n+\log^2 n,(j+1)W_n - \log^2 n], \\
\Lambda^\star_{i,j}&=[in-\log^2 n,(i+1)n+\log^2 n]\times [jW_n-\log^2 n,(j+1)W_n + \log^2 n]
\end{align*}
and define the resampling events 
\begin{align*}
\cG^{(3)}_{i,j}=\bigg\{ \sup_{\gamma' \subset \Lambda^-_{i,j}} |X_{\gamma'}^{\omega^{ij}} - X_{\gamma'}|=0 \bigg\}.
\end{align*}
and
\begin{align*}
\cG^{(4)}_{i,j}=\bigg\{ \sup_{\gamma' \subset (\Lambda^\star_{i,j})^c} |X_{\gamma'}^{\omega^{ij}} - X_{\gamma'}|=0 \bigg\}.
\end{align*}
Let
\begin{align*}
\cG^{(5)}_{i,j}&=\bigg\{ X_{(in, (j+\frac12)W_n),(in+n^{8/10}, (j+\frac12)W_n)}^{\omega^{ij}}\leq \mu n^{8/10} + Q_n\bigg\}\\
&\qquad \cap \bigg\{X_{((i+1)n-n^{8/10}, (j+\frac12)W_n),((i+1)n, (j+\frac12)W_n)}^{\omega^{ij}}\leq \mu n^{8/10} + Q_n \bigg\}
\end{align*}
and
\begin{align*}
\cG^{(6)}_{i,j}=\big\{ \Gamma_{Mn}^{\omega^{ij}} \leq n^\epsilon \big\}.
\end{align*}
Finally the intersection of all events is defined as
\[
\cG_{i,j} = \bigcap_{k=1}^6 \cF^{(k)}_{i,j}.
\]

\begin{lemma}\label{l:cG.bound}
We have that
\[
\P[\cG_{i,j}] \geq 1-M^{-90}.
\]
\end{lemma}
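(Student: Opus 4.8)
The plan is to bound the probability of the complement of each $\cG^{(k)}_{i,j}$ separately and then take a union bound, using that by translation invariance we may take $i=j=0$. Throughout we shall use that $\Theta$ is polylogarithmic in $M$, that $Q_n$ grows polynomially in $n$ (Assumption~\ref{as:varlb}), that $W_n = \sqrt{nQ_n}$, and that $n$ is large. The main tool for the events $\cG^{(1)}_{0,0}$ and $\cG^{(2)}_{0,0}$ is the second resampling hypothesis, Assumption~\ref{as:resamp2}, applied to the $n\times W_n$ rectangle $\Lambda_{0,0}$: since the lines $\ell_{(i\pm1)n,\ldots}$ and the endpoints $\origin,(Mn,0)$ all lie at distance at least $n \gg \log^2 n$ from $\Lambda_{0,0}$, with probability at least $1-n^{-10}\ge 1-M^{-100}$ every geodesic used to define those passage times avoids the enlargement $\Lambda^\star_{0,0}$ and hence is unchanged under the resampling; the $n^\epsilon$ slack is then more than enough (in fact the difference is $0$ on this event). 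For $\cG^{(3)}_{0,0}$ and $\cG^{(4)}_{0,0}$ we again invoke Assumption~\ref{as:resamp2}: $\Lambda^-_{0,0}=\Lambda_{0,0}^{-}$ in the notation there (paths at least $\log^2 n$ inside) are unaffected by resampling $\Lambda^c$, which is exactly what the hypothesis gives with probability $\ge 1-n^{-10}$; and $(\Lambda^\star_{0,0})^c$ contains $(\Lambda_{0,0}^{+})^c$, so paths there are unaffected by resampling $\Lambda_{0,0}$ with the same probability. Each contributes at most $M^{-100}$.

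For $\cG^{(5)}_{0,0}$ we use that $\omega^{ij}\stackrel{d}{=}\omega$, so $X^{\omega^{ij}}_{(in,(j+\frac12)W_n),(in+n^{8/10},(j+\frac12)W_n)} \stackrel{d}{=} X_{n^{8/10}}$, a passage time over a straight segment of length $n^{8/10}$; by the concentration estimate \eqref{eq:Q} (together with $A_{n^{8/10}}\le D_1 Q_{n^{8/10}} \le D_1 Q_n$ via Lemma~\ref{l:AnBound} and $Q_{n^{8/10}}\le Q_n$), the probability that this exceeds $\mu n^{8/10}+Q_n$ is at most $\exp(1-c(Q_n/Q_{n^{8/10}})^\theta)$ which for large $n$ is far smaller than $M^{-100}$; the other half of $\cG^{(5)}_{0,0}$ is identical by reflection. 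For $\cG^{(6)}_{0,0}$, since $\Gamma^{\omega^{ij}}_{Mn}\stackrel{d}{=}\Gamma_{Mn}$, Lemma~\ref{l:Gamma} gives $\P[(\cG^{(6)}_{0,0})^c]\le \exp(1-(n^\epsilon/(C\log^C(Mn)))^\kappa)\le M^{-100}$ for large $n$.

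Collecting these six bounds and taking a union bound yields $\P[(\cG_{0,0})^c]\le 6 M^{-100}\le M^{-90}$ for $M$ large, and by translation invariance the same holds for every $(i,j)$. Note there is a harmless notational clash in the statement, where $\cG_{i,j}$ is written as $\bigcap_{k=1}^6 \cF^{(k)}_{i,j}$; this should read $\bigcap_{k=1}^6 \cG^{(k)}_{i,j}$, and the proof treats it as such. The only mild subtlety is making sure the geodesics appearing in $\cG^{(1)}_{0,0}$ and $\cG^{(2)}_{0,0}$ genuinely stay at distance $\ge \log^2 n$ from $\Lambda^\star_{0,0}$ so that Assumption~\ref{as:resamp2} applies cleanly; but these are point-to-line (or origin-to-line) geodesics whose endpoints are an entire column of width $n$ away from $\Lambda_{0,0}$, so on the (probability-one under the resampling hypothesis) event that they are unaffected there is nothing further to check — the slack $n^\epsilon$ absorbs any lower-order discrepancy. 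I do not expect any step here to be a real obstacle; this lemma is of the routine bookkeeping type, the genuine work having been done in constructing the events and in Lemmas~\ref{l:cB.bound} and~\ref{l:CC.local.approx}.
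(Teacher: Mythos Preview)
Your treatment of $\cG^{(3)}_{i,j}$ through $\cG^{(6)}_{i,j}$ is fine and matches the paper. The gap is in your handling of $\cG^{(1)}_{i,j}$ and $\cG^{(2)}_{i,j}$.

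You claim the relevant endpoints lie at distance at least $n$ from $\Lambda_{i,j}$, but this is false for $\cG^{(1)}_{i,j}$: in its first half the point $v_2$ lies on the line $x=in$, which is the \emph{left side} of $\Lambda_{i,j}=[in,(i+1)n]\times[jW_n,(j+1)W_n]$; in its second half $v_1$ lies on $x=(i+1)n$, the right side. Thus $v_2$ (resp.\ $v_1$) can sit on the boundary of $\Lambda_{i,j}$ itself, hence inside the enlargement $\Lambda^\star_{i,j}$, and Assumption~\ref{as:resamp2} tells you nothing about such geodesics. Even for $\cG^{(2)}_{i,j}$, where the endpoints are a column away, nothing you have written prevents the geodesic from $\origin$ to $v\in\ell_{(i-1)n,\ldots}$ from overshooting past $x=in$ and entering $\Lambda_{i,j}$; the sentence ``on the event that they are unaffected there is nothing further to check'' is circular, since being unaffected is exactly what you are trying to establish.

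The paper instead uses Assumption~\ref{as:resamp1} (strip resampling) via an intermediate environment. For the first half of $\cG^{(1)}_{i,j}$ one introduces a third independent field $\omega^*$ and sets $\omega^{*i}$ equal to $\omega$ on the strip $\Lambda_i=[(i-1)n,in]\times\R$ and to $\omega^*$ outside. Since the box $\Lambda_{i,j}$ is contained in the next strip $[in,(i+1)n]\times\R$ and hence disjoint from $\Lambda_i$, the fields $\omega$ and $\omega^{ij}$ agree on $\Lambda_i$; therefore $\omega^{*i}$ is also obtained from $\omega^{ij}$ by resampling $\Lambda_i^c$ with $\omega^*$. Assumption~\ref{as:resamp1}, applied once with base environment $\omega$ and once with base environment $\omega^{ij}$ (both having the correct law), bounds $|X_{v_1v_2}-X^{\omega^{*i}}_{v_1v_2}|$ and $|X^{\omega^{ij}}_{v_1v_2}-X^{\omega^{*i}}_{v_1v_2}|$ each by $\tfrac12 n^\epsilon$ with probability at least $1-\exp\bigl(1-D_0(\tfrac12 n^\epsilon/\log^{1/\kappa}n)^\kappa\bigr)\ge 1-M^{-100}$; the triangle inequality then gives $\cG^{(1)}_{i,j}$. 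The same intermediate-environment trick (with the strip $[0,(i-1)n]\times\R$, respectively $[(i+2)n,Mn]\times\R$) handles $\cG^{(2)}_{i,j}$.
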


\begin{proof}
Recall that $\Lambda_i=[(i-1)n,in]\times\R$.
Let $\omega^*$ be a third independent environment and define $\omega^{*i}$ as 
\[
\omega^{*i}_{\Lambda_{i}} = \omega_{\Lambda_{i}}, \quad \omega^{*i}_{\Lambda_{i}^c} = \omega^*_{\Lambda_{i}^c},
\]
which corresponds to resampling $\Lambda_{i-1}^c$ with $\omega^*$.  It also corresponds to starting with $\omega^{ij}$ and  resampling $\Lambda_{i}^c$ with $\omega^*$.  Hence by Assumption~\ref{as:resamp1},
\begin{align*}
&\P\bigg[\sup_{\substack{v_1\in\ell_{(i-1)n,(j-2R) W_n,(j+2R) W_n}\\v_2\in\ell_{in,(j- 2R) W_n,(j+2R) W_n}}} \Big|X_{v_1 v_2} - X_{v_1 v_2}^{\omega^{ij}} \Big| > n^\epsilon\bigg] \\
&\quad\leq \P\bigg[\sup_{\substack{v_1\in\ell_{(i-1)n,(j- 2R) W_n,(j+2R) W_n}\\v_2\in\ell_{in,(j- 2R) W_n,(j+2R) W_n}}} \Big|X_{v_1 v_2} - X_{v_1 v_2}^{\omega^{*i}} \Big| > \frac12 n^\epsilon\bigg] \\
&\qquad + \P\bigg[\sup_{\substack{v_1\in\ell_{(i-1)n,(j- 2R) W_n,(j+2R) W_n}\\v_2\in\ell_{in,(j- 2R) W_n,(j+2R) W_n}}} \Big|X_{v_1 v_2}^{\omega^{*i}} - X_{v_1 v_2}^{\omega^{ij}} \Big| > \frac12 n^\epsilon\bigg] \\
&\quad \leq 2 \exp\Big(1-D (n^\epsilon/\log^{1/\kappa}n)^\kappa\Big) \leq M^{-100}.
\end{align*}
We can bound the second part of $\cG^{(1)}_{i,j}$ similarly so
\[
\P[(\cG^{(1)}_{i,j})^c] \leq 2M^{-100}.
\]
By essentially the same proof,
\[
\P[(\cG^{(2)}_{i,j})^c] \leq 2M^{-100}.
\]
By Assumption~\ref{as:resamp2},
\[
\P[(\cG^{(3)}_{i,j})^c] \leq n^{-10},\qquad \P[(\cG^{(4)}_{i,j})^c] \leq n^{-10}.
\]
Using \eqref{eq:Q} for concentration at the scale $Q_{n^{8/10}}$,
\[
\P[(\cG^{(5)}_{i,j})^c] \leq 2\exp\Big( 1 - (\frac{Q_n}{Q_{n^{8/10}}})^\theta\Big) \leq M^{-100}.
\]
By Lemma~\ref{l:Gamma}
\[
\P[(\cG^{(6)}_{i,j})^c] \leq \exp\Big( 1 - (\frac{n^\epsilon }{Q_{\log^C n}})^\kappa\Big) \leq M^{-100}.
\]
The lemma then holds by a union bound for large enough $n$.
\end{proof} 

We define one last event which is,
\begin{align*}
\cH_{i,j}&=\left\{\inf_{\substack{\gamma'\subset \Lambda^-_{i,j},\\ \gamma'(0)=(in+n^{8/10}, (j+\frac12)W_n),\\ \gamma'(1)=((i+1)n-n^{8/10}, (j+\frac12)W_n)}} X_{\gamma'}^{\omega'} \leq (n-2n^{8/10})\mu - R^{10} Q_n\right\}.
\end{align*}
which we note is independent of $\omega$.

\begin{lemma}\label{l:cH.bound}
There exists $\delta>0$ such that for all large enough $\alpha'$-record points $n$,
\[
\P[\cH_{i,j}] \geq \delta.
\]
\end{lemma}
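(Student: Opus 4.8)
The key is that $\cH_{i,j}$ is an event about the resampled environment $\omega'$ restricted to $\Lambda^-_{i,j}$, which concerns the passage time across a rectangle of dimensions roughly $n \times W_n$ between two boundary points near the left and right sides at height $(j+\tfrac12)W_n$. By translation invariance we may assume $i=j=0$. I would first reduce to a statement about $Y^-_{\cR_{n,W_n}}$ or about $X_{m}$ for $m$ close to $n$: the event requires a path inside $\Lambda^-_{0,0}$ from $(n^{8/10},\tfrac12 W_n)$ to $(n-n^{8/10},\tfrac12 W_n)$ whose passage time is at most $(n-2n^{8/10})\mu - R^{10}Q_n$, i.e., at least $\Theta(R^{10}Q_n)$ below the typical value. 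Since $n^{8/10} = o(W_n)$ and the endpoints are comfortably in the interior of the rectangle, any path realizing $Y^-$ of a slightly shorter rectangle will, with high probability, be confined to $\Lambda^-_{0,0}$ by the (global and local) transversal fluctuation estimates of Section~\ref{s:trans} (Theorem~\ref{t:trans.main}) and Section~\ref{s:ltf}. Concretely, I would let $\gamma^*$ be the optimal path achieving $Y^-$ for the rectangle $\cR_{n-2n^{8/10}, W_n/2}$ shifted to have left side at $x = n^{8/10}$, extend it by short horizontal segments of length $n^{8/10}$ at each end (which add at most $\mu n^{8/10} + Q_n$ each, as controlled in the spirit of $\cG^{(5)}_{i,j}$), and observe that the resulting path lies inside $\Lambda^-_{0,0}$ on a high-probability event.

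**Main step: the lower-tail probability bound.** The crux is showing that with probability bounded below by an absolute $\delta > 0$ (uniform in $n$), the relevant passage time across the rectangle is at least $R^{10}Q_n + O(\mu n^{8/10})$ below its mean. Here is where Proposition~\ref{p:left1} (the left-tail lower bound for quasi-record points) enters: since $n$ is an $\alpha'$-record point, it is in particular a $(1,\alpha')$-quasi record point, so Proposition~\ref{p:left1} gives that $\P(X_n < n\mu - L Q_n) \ge \delta(L)$ for any fixed $L$; taking $L = L(R)$ large enough (say $L = 2R^{10} + 10$) handles the point-to-point version. To pass from $X_n$ to the side-to-side passage time across the thin rectangle $\cR_{n, W_n/2}$, I would invoke the thin rectangle comparison Lemma~\ref{l:thin}, which shows $X_n - Y^{-}$ across a thin rectangle is small at scale $Q_n$ except with small probability; combined with Proposition~\ref{p:left1} this yields $\P(Y^{-}_{\cR_{n,W_n/2}} < n\mu - (2R^{10}+5)Q_n) \ge \delta'$ for some $\delta' > 0$. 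Then one absorbs the additive corrections: replacing $n$ by $n - 2n^{8/10}$ changes the linear term $n\mu$ by $2\mu n^{8/10}$, which is $o(Q_n)$ is false — actually $n^{8/10} \gg Q_n$ — so one must be careful and subtract $2\mu n^{8/10}$ on the correct side of the inequality; the point is that the "deterministic" shortening of length is built into the target $(n-2n^{8/10})\mu$, so no loss occurs there. The genuine deficit $R^{10}Q_n$ comes entirely from the fluctuation term, supplied by Proposition~\ref{p:left1}.

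**Confinement and assembly.** With the low-passage-time event in hand, I would intersect it with the high-probability event that the optimal path realizing $Y^{-}$ is confined to the interior $\Lambda^-_{0,0}$ (away from all four sides by $\log^2 n$): this uses Theorem~\ref{t:trans.main} for the transversal direction — a path of length $\asymp n$ has transversal fluctuation $O(W_n) \ll $ anything, but we need it to stay within $\tfrac12 W_n - \log^2 n$ of its line, which holds with high probability — and, for the small horizontal end-segments, the local transversal fluctuation bound Corollary~\ref{c:local.trans} or a direct $\cS$-type bound to ensure the path does not backtrack out of $[n^{8/10}, n-n^{8/10}]$ in the $x$-coordinate significantly. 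Since $\delta$ is a fixed positive constant and the confinement event has probability $\ge 1 - o(1) \ge 1 - \delta/2$ for $n$ large, the intersection has probability $\ge \delta/2$, giving the claim (after relabeling $\delta/2$ as $\delta$).

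**Expected main obstacle.** The delicate point is matching the exact additive constants: the target threshold $(n-2n^{8/10})\mu - R^{10}Q_n$ must be reached by a path constrained to the slightly-shrunk rectangle $\Lambda^-_{0,0}$, and I must verify that shrinking by $\log^2 n$ on each side, plus the forced horizontal detours of length $n^{8/10}$, together cost only $O(\mu n^{8/10} + Q_n)$ — comfortably absorbed since the deficit $R^{10}Q_n$ is linear in $Q_n$ with a large constant we are free to choose via Proposition~\ref{p:left1}, while we need the horizontal-detour cost $\mu n^{8/10}$ to be exactly cancelled by the $-2n^{8/10}\mu$ already subtracted in the definition of $\cH_{i,j}$. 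Keeping the bookkeeping of "which $Q$-scale" ($Q_n$ vs $Q_{n-2n^{8/10}}$ vs $Q_{n^{8/10}}$) straight, using Lemma~\ref{l:growth34} to compare them, is the routine-but-error-prone part; the conceptual content is entirely in Proposition~\ref{p:left1} plus the confinement estimates.
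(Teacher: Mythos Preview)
Your approach has a genuine gap in the confinement step. You claim that the optimal path realizing a side-to-side passage time of a rectangle of height $W_n/2$ (or $W_n$) stays within $\Lambda^-_{0,0}$ ``with high probability $\geq 1-o(1)$'', citing Theorem~\ref{t:trans.main}. But that theorem only bounds $\P[\trans_n \geq zW_n]$ usefully for \emph{large} $z$ (indeed Lemmas~\ref{l:trans.SOGam} and~\ref{l:trans.events} require $z\geq 4$ or $z\geq 10$); for $z=\tfrac12$ it gives nothing. In fact, Proposition~\ref{p:tflower} shows that $\trans_n$ is typically of order $W_n$, so $\P[\trans_n\leq \tfrac12 W_n]$ is bounded away from $1$. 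Since you have no independence between the left-tail event $\{X_n<n\mu-LQ_n\}$ and the confinement event, you cannot simply intersect them to retain a positive lower bound. Lemma~\ref{l:thin} does not help either: $Y^{\delta,-}_n$ is still the infimum over unconstrained paths, so knowing it is small does not produce a path confined to the thin rectangle.

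The paper's proof circumvents exactly this obstacle by dropping to a smaller scale $m$ with $n/m$ bounded (via Lemma~\ref{l:grmain}, using that $n$ is an $\alpha'$-record point to find a nearby $\alpha''$-record point). At scale $m$ the geodesic is confined to width $TW_m$ with probability $\geq \delta_\star/2$ for an absolute constant $T$, and one chooses $m$ so that $TW_m\leq \tfrac14 W_n$; this is what makes the confinement to $\Lambda^-_{0,0}$ work. The paper then concatenates $n/m$ independent copies of this scale-$m$ event (each contributing a deficit $Q_m$) inside disjoint sub-blocks $\Delta_i\subset\Lambda^-_{0,0}$, using the resampling assumption to get independence; the total deficit $(n/m)Q_m\geq 2R^{10}Q_n$ follows from the sublinear growth Lemma~\ref{l:growth34}. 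Thus the passage to a smaller scale is not a stylistic choice but the key idea that simultaneously delivers both confinement and the large deficit; your single-scale argument is missing this and cannot be repaired without it.
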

\begin{proof}
Without loss of generality, we take $i=j=0$.  For all large enough $n_\star$, by Proposition~\ref{p:left} if there is an $\alpha''\in(\alpha,\alpha')$ record point in $[\frac12 n_\star, 2n_\star]$ then
\[
\P\Big[X_{(n_\star^{9/10},0),(n_\star - n_\star^{9/10},0)} \leq \mu (n_\star -2n_\star^{9/10}) - Q_{n_\star}\Big] \geq \delta_\star>0.
\]
Let $\gamma_\star$ be the optimal path joining $(n_\star^{9/10},0)$ and $(n_\star - n_\star^{9/10},0)$. By Lemmas~\ref{l:trans.SOGam} and~\ref{l:trans.events} and the fact that $W_{n_\star}=o(n_\star^{9/10})$, we can choose an absolute constant $T$ large enough such that
\begin{equation}\label{eq:short.contained.path}
\P\Big[X_{(n_\star^{9/10},0),(n_\star - n_\star^{9/10},0)} \leq \mu (n_\star -2n_\star^{9/10}) - Q_{n_\star},\gamma_\star\subset [n_\star^{8/10},n_\star-n_\star^{8/10}] \times [-Tn_\star,Tn_\star] \Big] \geq \delta_\star/2.
\end{equation}
for all sufficiently large $n_\star$.

Choose $m$ to be the largest value such that $n/m$ is an integer,
\[
\frac{W_n}{W_m} \geq \sqrt{n/m} \geq 4T,\qquad  \frac{Q_m}{Q_n} \geq C (m/n)^{3/4} \geq 2R^{10}\frac{m}{n} 
\]
and there is an $\alpha''$ record point in $[\frac12 m,2m]$.  By Lemma~\ref{l:grmain}, we can find $m$ such that
\[
n/m \leq (T\vee R)^C,
\]
and so in particular $n/m$ is bounded above independently of $n$.

Let 
\[
\Delta_i=[(i-1)m+\log^2n,im-\log^2 n]\times[\log^2 n,W_m-\log^2 n]\subset \Lambda^-_{0,0}
\]
and let $\gamma_i$ be the geodesic joining $((i-1)m+m^{9/10},\frac{W_n}{2})$ to $(im-m^{9/10},\frac{W_n}{2})$ in the environment $(\omega')^{\Delta_i}$ recalling that this is defined as $\omega'$ resampled outside of $\Delta_i$.  The resampling is done independently for each $i$ so the environments $\omega^{\Delta_i}$ are independent since the $\Delta_i$ are disjoint.
Define the events
\[
\cW_i = \Big\{ X^{(\omega')^{\Delta_i}}_{\gamma_i} \leq \mu (m -2m^{9/10}) - Q_{m},\gamma_i\subset \Delta_i \Big\}
\]
Then by equation~\eqref{eq:short.contained.path} and independence
\[
\P\left[\bigcap_{i=1}^{n/m} \cW_i\right] \geq (\delta_\star/2)^{n/m}.
\]
Let $\cU_i$ be the event that $X^{\omega'}_{\gamma_i} \leq \mu (m -2m^{9/10}) - Q_{m}$.
By Assumption~\ref{as:resamp2},
\[
\P\left[\bigcap_{i=1}^{n/m} \cW_i, \ \bigcap_{i=1}^{n/m} \cU_i\right] \geq (\delta_\star/2)^{n/m}-o(1).
\]
For $1\leq i \leq n/m-1$ let $\hat{\gamma}_i$ be the geodesic in environment $\omega'$ from $(im-m^{9/10},\frac{W_n}{2})$ to $(im+m^{9/10},\frac{W_n}{2})$.  Let $\hat{\gamma}_0$ be the geodesic in environment $\omega'$ from $(n^{8/10},\frac{W_n}{2})$ to $(m^{9/10},\frac{W_n}{2})$ and let $\hat{\gamma}_{n/m}$ be the geodesic in environment $\omega'$ from $(n-m^{9/10},\frac{W_n}{2})$ to $(n-n^{8/10},\frac{W_n}{2})$.  Then for $0\leq i \leq n/m$ define the events
\[
\cV_i=\{\hat{\gamma}_i\subset \Lambda_{0,0}^-, X^{\omega'}_{\hat{\gamma}_i(0),\hat{\gamma}_i(1)} \leq 2\mu |\hat{\gamma}_i(0) - \hat{\gamma}_i(1)| + \frac12(n/m)^{-1}Q_n\}.
\]
By Lemmas~\ref{l:trans.SOGam} and~\ref{l:trans.events} and the fact that $Q_{2m^{9/10}} = o(Q_n)$ we have that
\[
\P[\cV_i]=1-o(1).
\]
\begin{center}
\begin{figure}
\includegraphics[width=5in]{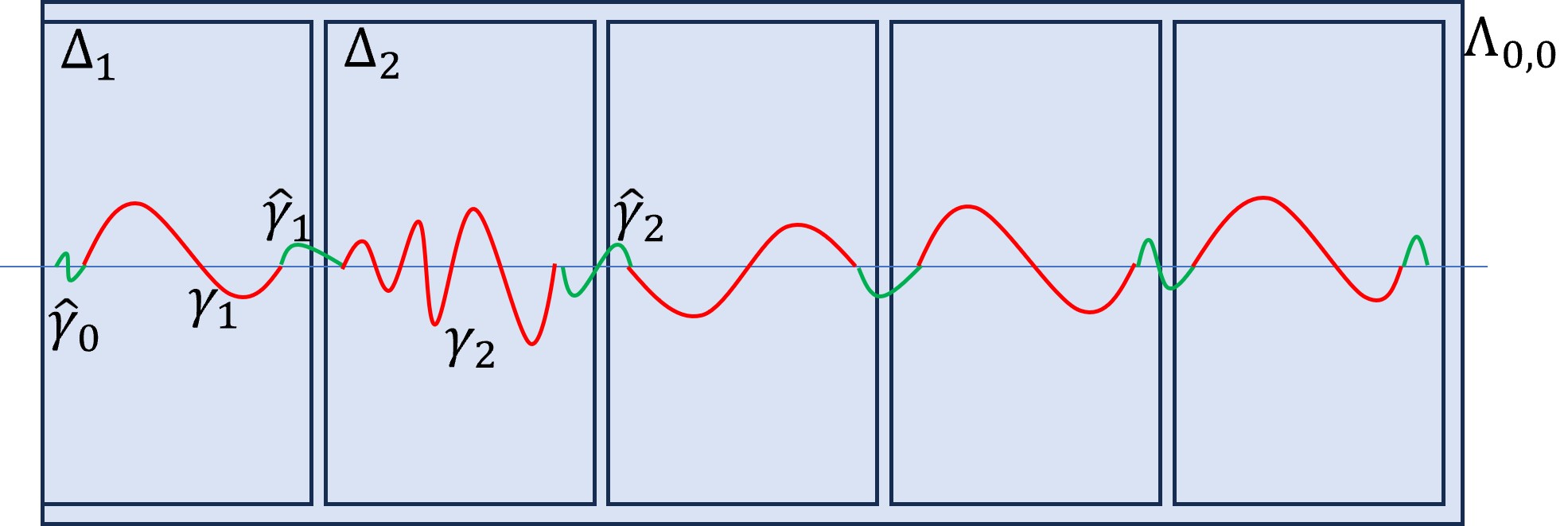}
\caption{Proof of Lemma \ref{l:cH.bound}: we use Proposition \ref{p:left} to show that with positive probability, paths restricted within an $n\times W_{n}$ rectangles can also have arbitrarily large deviation to the left at scale $Q_{n}$. }
\label{f:VarianceGoodold}
\end{figure}
\end{center}
Hence, using Lemma~\ref{l:Gamma},
\[
\P\left[\bigcap_{i=1}^{n/m} \cW_i, \ \bigcap_{i=1}^{n/m} \cU_i, \ \bigcap_{i=1}^{n/m} \cV_i, \ \Gamma_n^{\omega'} \leq n^\epsilon\right] \geq (\delta_\star/2)^{n/m}-o(1),
\]
where $\Gamma_n^{\omega'}$ is the event $\Gamma_n$ applied in the environment $\omega'$.  If we let $\gamma'$ be the concatenation of the paths $\hat{\gamma}_0, \gamma_1,\hat{\gamma}_1,\gamma_1,\ldots,\gamma_{n/m},\hat{\gamma}_{n/m}$ then $\gamma'$ is a path joining $(n^{8/10},\frac{W_n}{2})$ to $(n-n^{8/10},\frac{W_n}{2})$.  See Figure~\ref{f:VarianceGoodold} for illustration.  On the event $\{\bigcap_{i=1}^{n/m} \cW_i, \bigcap_{i=1}^{n/m} \cU_i, \bigcap_{i=1}^{n/m} \cV_i,\Gamma_n^{\omega'} \leq n^\epsilon\}$ we have that $\gamma'\subset \Lambda^-_{0,0}$ and
\begin{align*}
X_{\gamma'}^{\omega'} &\leq \sum_{i=1}^{n/m} X_{\gamma_i}^{\omega'} + \sum_{i=0}^{n/m} X_{\hat{\gamma}_i}^{\omega'} + (2\frac{n}{m}+1)\Gamma_n^{\omega'}\\
&\leq \mu (n-2n^{8/10}) - \frac{n}{m} Q_m + (\frac{n}{m}+1)\frac12(\frac{n}{m})^{-1}Q_n + (2\frac{n}{m}+1)n^\epsilon\\
&\leq \mu (n-2n^{8/10}) - 2R^{10} + Q_n + o(Q_n) \geq \mu (n-2n^{8/10}) - R^{10}
\end{align*}
and so  
\[
\left\{ \bigcap_{i=1}^{n/m} \cW_i, \bigcap_{i=1}^{n/m} \cU_i, \bigcap_{i=1}^{n/m} \cV_i,\Gamma_n^{\omega'} \leq n^\epsilon \right\} \subset \cH_{0,0},
\]
which completes the proof.
\end{proof}

We now come to the conclusion of this long series of events. 
\begin{lemma}\label{l:resample.decrease}
On the event $\cG_{i,j}\cap  \cI_{i,j}^c$ we have
\[
X_{\origin,(Mn,0)} - X_{\origin,(Mn,0)}^{\omega^{ij}} \geq 0.
\]
On the event $\cC^{loc}_{i,j}\cap \cH_{i,j}\cap \cG_{i,j}\cap \cB^c \cap \cI_{i,j}$ then
\[
X_{\origin,(Mn,0)} - X_{\origin,(Mn,0)}^{\omega^{ij}} \geq \frac12 R^{10} Q_n.
\]
\end{lemma}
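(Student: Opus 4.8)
The plan is to analyze the two parts separately, in both cases comparing the geodesic $\gamma = \gamma_{\origin,(Mn,0)}$ in $\omega$ with a competitor path in the resampled environment $\omega^{ij}$. For the first part, suppose $\cG_{i,j}\cap\cI_{i,j}^c$ holds. Since $\gamma$ stays at $\ell_\infty$-distance $>\log^2 n$ from $\Lambda_{i,j}$, it is a path contained in $(\Lambda^\star_{i,j})^c$, so by $\cG^{(4)}_{i,j}$ its passage time is unchanged: $X^{\omega^{ij}}_{\origin,(Mn,0)}\le X^{\omega^{ij}}_\gamma = X_\gamma = X_{\origin,(Mn,0)}$, which is the desired inequality. (One must first note that $\gamma\cap\Lambda^\star_{i,j}=\emptyset$ is implied by $\cI_{i,j}^c$, using that the events are defined with $\log^2 n$ buffers; this is where $(\cB^{(5)})^c$ or just the definition of $\cI$ is invoked to rule out the path dipping into the collar.)

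For the second, harder part, assume $\cC^{loc}_{i,j}\cap\cH_{i,j}\cap\cG_{i,j}\cap\cB^c\cap\cI_{i,j}$. First I would use $\cB^c\cap\cC^{loc}_{i,j}\subset\cC_{i,j}$ (by the definition of $\cB^{(4)}$) together with Proposition~\ref{p:path.hit.H} — whose hypotheses $\frac13 M\le i\le\frac23 M$ and $|j|\le M^{3/5}$ hold on $(\cB^{(1)})^c$ since $|J_\ell|\le\frac12 M^{3/5}$ there — to conclude that $\gamma$ crosses between $h_1^-$ and $h_1^+$ at the line $x=(i-1)n$ and between $h_2^-$ and $h_2^+$ at $x=(i+2)n$. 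Let $p_1 = ((i-1)n,\cdot)$ and $p_2=((i+2)n,\cdot)$ be these crossing points; then $X_\gamma = X^\gamma_{\origin,p_1} + X^\gamma_{p_1,p_2} + X^\gamma_{p_2,(Mn,0)} \ge X_{\origin,p_1}+X_{p_1,p_2}+X_{p_2,(Mn,0)} - 2\Gamma_{10Mn}^{}$. Now build a competitor $\tilde\gamma$ in $\omega^{ij}$: follow $\gamma$ from $\origin$ to $p_1$, then cross the three-column region $[(i-1)n,(i+2)n]\times[\cdot]$ using the cheap local path, then follow $\gamma$ from $p_2$ to $(Mn,0)$. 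By $\cG^{(2)}_{i,j}$ the two outer pieces change by at most $n^\epsilon$ each when passing to $\omega^{ij}$. The middle piece is constructed as: from $p_1$ to $s_1^\pm$-side using the $\cC^{(3)}_{i,j}$-controlled crossing of column $i-1$ (unchanged up to $n^\epsilon$ by $\cG^{(1)}_{i,j}$), then a short geodesic from $((in,(j+\tfrac12)W_n))$-type entry point into $\Lambda_{i,j}$ (length $\le \mu n^{8/10}+Q_n$ by $\cG^{(5)}_{i,j}$), then the cheap path across the interior of $\Lambda_{i,j}$ guaranteed by $\cH_{i,j}$ (length $\le (n-2n^{8/10})\mu - R^{10}Q_n$, and contained in $\Lambda^-_{i,j}$ so unchanged by $\cG^{(3)}_{i,j}$), then the symmetric short exit piece ($\cG^{(5)}_{i,j}$), then the crossing of column $i+2$ ($\cC^{(3)}_{i,j}$, $\cG^{(1)}_{i,j}$).

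Adding these up, the middle competitor piece costs at most $\mu\cdot(\text{length of three columns}) - R^{10}Q_n + O(Q_n) + O(R Q_n) + O(n^\epsilon)$, where the $O(RQ_n)$ comes from the $\cC^{(3)}_{i,j}$ fluctuation bounds and the $\mu n^{8/10}$ short-path terms (note $\mu n^{8/10}\ll Q_n$ since $Q_n\ge n^{1/12}$... actually $n^{8/10}\gg Q_n$, so one must be careful — the $\mu n^{8/10}$ terms are absorbed because the cheap interior path of $\cH_{i,j}$ spans length $n-2n^{8/10}$, so the $2\mu n^{8/10}$ from the two short pieces exactly complements it back to $\mu n$ up to the $2Q_n$ slack in $\cG^{(5)}$). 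Meanwhile $X_\gamma$, using $\cC^{(3)}_{i,j}$ again for the honest three-column crossing lower bound, is at least $X_{\origin,p_1} + \mu\cdot(\text{three columns}) - RQ_n + X_{p_2,(Mn,0)} - O(\Gamma)$. Subtracting and collecting all the $O(RQ_n)$, $O(n^\epsilon)$, $O(\Gamma_{10Mn})\le O(n^\epsilon)$ error terms — which by choice of $R$ large are dominated by $\tfrac12 R^{10}Q_n$ since they are only $O(RQ_n)$ — yields $X_{\origin,(Mn,0)} - X^{\omega^{ij}}_{\origin,(Mn,0)} \ge \tfrac12 R^{10}Q_n$.

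The main obstacle is the bookkeeping in the middle paragraph: one must patch together the competitor path so that its two endpoints $p_1,p_2$ agree with $\gamma$'s crossing points (this is exactly why Proposition~\ref{p:path.hit.H} pins $J_{i-1},J_{i+2}$ into the $\pm 2R$ window around $j$, so the $\cC^{(3)}_{i,j}$ crossing estimates apply to the relevant line segments), and one must verify that every sub-path of the competitor lies in the region where the corresponding resampling event ($\cG^{(1)}$, $\cG^{(2)}$, $\cG^{(3)}$) guarantees either exact equality or an $n^\epsilon$-change — in particular that the interior path of $\cH_{i,j}$ is genuinely inside $\Lambda^-_{i,j}$ and the outer pieces are genuinely outside $\Lambda^\star_{i,j}$. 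All of these are built into the definitions, so the argument is a careful but routine concatenation-and-union-of-events computation once the geometry from Proposition~\ref{p:path.hit.H} is in hand.
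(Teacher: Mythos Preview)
Your proposal is correct and follows essentially the same approach as the paper's proof: the first part uses $\cG^{(4)}_{i,j}$ exactly as you describe, and the second part invokes Proposition~\ref{p:path.hit.H} to localize $p_1,p_2$ (the paper's $u_1,u_2$), then builds the competitor $u_0\to u_1\to v_a\to v_b\to v_c\to v_d\to u_2\to u_3$ in $\omega^{ij}$ using $\cG^{(2)},\cC^{(3)}\cap\cG^{(1)},\cG^{(5)},\cH\cap\cG^{(3)}$ for the respective pieces, just as you outline. One small quantitative note: the error from the $u_1\to v_a$ and $v_d\to u_2$ legs is $O(R^2 Q_n)$ rather than $O(RQ_n)$, because the Pythagoras correction $|u_1-v_a|-n\le (2R W_n)^2/(2n)=O(R^2 Q_n)$; this of course is still negligible against $\tfrac12 R^{10}Q_n$.
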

\begin{proof}
For the first part of the lemma, by $\cG_{i,j}^{(4)}$, since $\gamma$ avoids $\Lambda_{i,j}^\star$ we have that
\[
X_{\origin,(Mn,0)} = X_{\gamma}^{\omega^{ij}} \geq X_{\origin,(Mn,0)}^{\omega^{ij}}.
\]
For the second part of the lemma, we assume $\cC^{loc}_{i,j}\cap \cH_{i,j}\cap \cG_{i,j}\cap \cB^c \cap \cI_{i,j}$.  Let   
\[
u_0=\origin, \quad u_1=((i-1)n,y_{i-1}), \quad u_2=((i+2)n,y_{i+2}), \quad u_3=(Mn,0).
\]
By Proposition~\ref{p:path.hit.H},
\[
y_{i-1}, y_{i+2} \in [(j-2R)W_n, (j+2R)W_n]
\]
and so by $\cC^{(3)}_{i,j}$
\[
X_{u_1,u_2} \geq \mu|u_1 - u_2| - R Q_n \geq 3n\mu - R Q_n.
\]
Now define the following points (see Figure~\ref{f:VarianceGood}),
\begin{align*}
v_a &= (in, (j+\frac12)W_n), \quad v_b = (in+n^{8/10}, (j+\frac12)W_n), \\ v_c &= ((i+1)n-n^{8/10}, (j+\frac12)W_n), \quad v_d = ((i+1)n, (j+\frac12)W_n).
\end{align*}
\begin{center}
\begin{figure}
\includegraphics[width=5in]{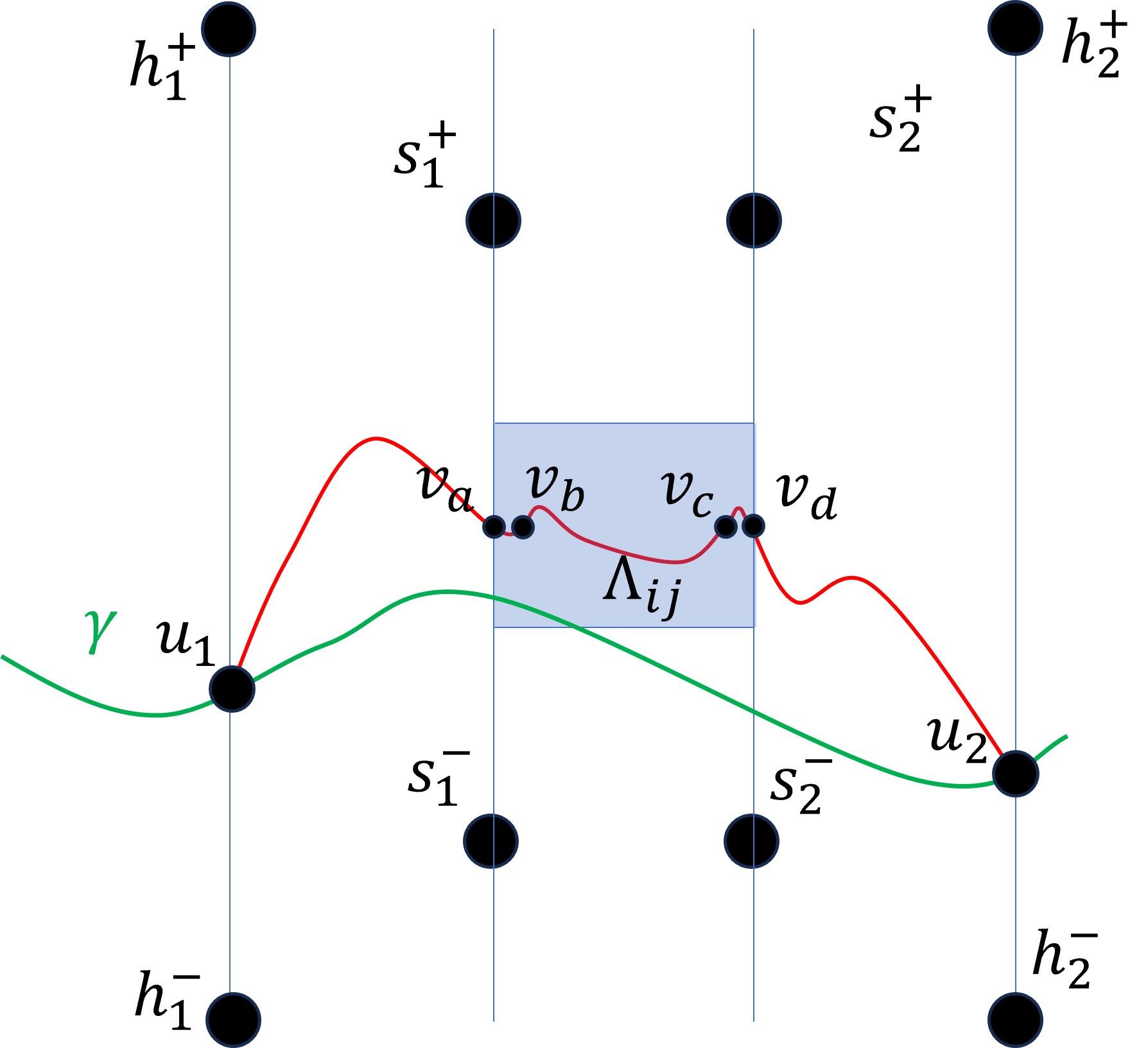}
\caption{Proof of Lemma \ref{l:cH.bound}: we use Proposition \ref{p:left} to show that if $n$ is a record point, with positive probability, paths restricted within an $n\times W_{n}$ rectangles can also have arbitrarily large deviation to the left at scale $Q_{n}$. Thus using a very good path from $v_b$ to $v_c$ in the resampled environment we can show that with positive probability the resampling of $\Lambda_{ij}$ leads to the passage time decreasing significantly at scale $Q_{n}$.}
\label{f:VarianceGood}
\end{figure}
\end{center}
By $\cG_{i,j}^{(5)}$,
\[
X_{v_a,v_b}^{\omega^{ij}}, X_{v_c,v_d}^{\omega^{ij}} \leq n^{8/10}\mu + Q_n
\]
while by $\cH_{i,j}$ and $\cC^{(3)}_{i,j}$,
\[
X_{v_b,v_c}^{\omega^{ij}} \leq (n-n^{8/10})\mu - R^{10} Q_n.
\]
By  $\cC^{(3)}_{i,j}$ and $\cG_{i,j}^{(1)}$,
\[
X_{u_1, v_a}^{\omega^{ij}} \leq \mu |u_1 - v_a| + R Q_n +n^\epsilon \leq \mu (n + 4R^2 Q_n) + R Q_n +n^\epsilon, 
\quad X_{v_d u_3}^{\omega^{ij}} \leq \mu (n + 4R^2 Q_n) + R Q_n +n^\epsilon,
\]
and by $\cG_{i,j}^{(2)}$
\[
X_{u_0, u_1}^{\omega^{ij}} \leq X_{u_0, u_1} - n^\epsilon, \quad X_{u_2, u_3}^{\omega^{ij}} \leq X_{u_2, u_3} - n^\epsilon.
\]
Finally by $\cB^{(5)}$,
\[
X_{\origin,(Mn,0)} \geq X_{u_0,u_1} + X_{u_1,u_2}  + X_{u_2,u_3} - 3n^\epsilon.
\]
Combining the estimates we have that
\begin{align*}
X_{\origin,(Mn,0)} - X_{\origin,(Mn,0)}^{\omega^{ij}} &\geq X_{u_0,u_1} + X_{u_1,u_2}  + X_{u_2,u_3} - 3n^\epsilon\\
&\quad - X_{u_0,u_1}^{\omega^{ij}} - X_{u_1,v_a}^{\omega^{ij}} -  X_{v_a v_b}^{\omega^{ij}} -  X_{v_b v_c}^{\omega^{ij}} -  X_{v_c v_d}^{\omega^{ij}} -  X_{v_d u_2}^{\omega^{ij}} - X_{u_2,u_3}\\
&\geq 3n\mu - R Q_n -2(\mu (n + 4R^2 Q_n) + R Q_n +n^\epsilon) - (n\mu - (R^{10}-2)Q_n) - 7n^\epsilon\\
&\geq \frac12 R^{10} Q_n
\end{align*}
provided $R$ is large enough.
\end{proof}

\subsection{Variance decomposition}
We will estimate the variance by revealing the blocks $\omega_{\Lambda_{i,j}}$ one by one.
\begin{proposition}\label{p:var.decomp}
For $\frac{M}{3} \leq i \leq \frac{2M}3$ and $-M^{3/5}\leq j \leq M^{3/5}$ and for $M,n$ large enough with $n$ an $\alpha'$-record point, if $Q_{Mn} \leq M^{1/10} Q_n$ then
\[
\E\Big[\big(\E[X_{\origin,(Mn,0)} \mid \omega_{\Lambda_{i,j}^{++}}] - \E[X_{\origin,(Mn,0)} \mid \omega_{\Lambda_{i,j}^{++}\setminus \Lambda_{i,j}}]\big)^2\Big] \geq \frac18 R^{20} Q_n^2 \bigg( \big(\delta \P[  I(\cC^{loc}_{i,j}, \cI_{i,j}] - 2M^{-40}\big)^+ \bigg)^2.
\]
\end{proposition}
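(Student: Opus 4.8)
Looking at Proposition~\ref{p:var.decomp}, the goal is to lower bound the variance contribution of a single block $\Lambda_{i,j}$, measured via the conditional expectation gap when $\omega_{\Lambda_{i,j}}$ is revealed. The key structural fact is that revealing (resampling) this block changes the passage time $X_{\origin,(Mn,0)}$ in a one-sided way whenever the geodesic stays away from $\Lambda_{i,j}^\star$, and causes a macroscopic (order $Q_n$) decrease with positive probability when the geodesic passes through the block. This is exactly the content of Lemma~\ref{l:resample.decrease}, and the plan is to convert that pathwise statement into an $L^2$ lower bound on the martingale increment.

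\textbf{Plan of proof.} First I would set $F = \E[X_{\origin,(Mn,0)} \mid \omega_{\Lambda_{i,j}^{++}}]$ and $G = \E[X_{\origin,(Mn,0)} \mid \omega_{\Lambda_{i,j}^{++}\setminus \Lambda_{i,j}}]$, and note $G = \E[F \mid \omega_{\Lambda_{i,j}^{++}\setminus \Lambda_{i,j}}]$, so that $F - G$ is a martingale increment and $\E[(F-G)^2] = \Var(F) - \Var(G)$; equivalently, using the resampling representation, I would write $F - G$ in terms of the difference $D := X_{\origin,(Mn,0)} - X_{\origin,(Mn,0)}^{\omega^{ij}}$ by conditioning. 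The standard trick (as in \cite{NP95}) is: let $\omega^{ij}$ be the independent resample of $\Lambda_{i,j}$; then $G = \E[\tfrac12(X + X^{\omega^{ij}}) \mid \omega_{\Lambda_{i,j}^{++}\setminus \Lambda_{i,j}}, \omega'_{\Lambda_{i,j}^{++}\setminus\Lambda_{i,j}}]$-type identities let one bound $\E[(F-G)^2]$ below by a constant times $\E[(D^+)^2]$ or more precisely by $\tfrac14\E[(\E[D \mid \cF])^2]$ for a suitable $\sigma$-algebra $\cF$ making $D\ge 0$ measurable-ish. The cleanest route: condition on $\omega_{\Lambda_{i,j}^c}$; then $F - \E[X^{\omega^{ij}} \mid \omega_{\Lambda_{i,j}^c}] $ has the same conditional law structure, and one shows $\E[(F-G)^2] \ge \tfrac14 \E\big[(\E[D \mid \omega_{\Lambda_{i,j}^c}])^2\big]$ where $D = X - X^{\omega^{ij}}$.

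\textbf{Key steps.} (1) By Lemma~\ref{l:resample.decrease}, on $\cG_{i,j}\cap\cI_{i,j}^c$ we have $D\ge 0$ always, and on $\cE_{i,j} := \cC^{loc}_{i,j}\cap\cH_{i,j}\cap\cG_{i,j}\cap\cB^c\cap\cI_{i,j}$ we have $D \ge \tfrac12 R^{10} Q_n$. (2) The events $\cH_{i,j}$ and $\cC^{loc}_{i,j}$ depend only on $\omega^{ij}_{\Lambda_{i,j}}=\omega'_{\Lambda_{i,j}}$ and $\omega_{\Lambda^{++}_{i,j}}$ respectively (in particular $\cH_{i,j}$ is measurable w.r.t.\ the resampled block and independent of $\omega$, while $\cC^{loc}_{i,j}$ is measurable w.r.t.\ $\omega_{\Lambda^{++}_{i,j}}$). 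Hence, conditioning on $\omega_{\Lambda_{i,j}^c}$ (which determines whether $\cI_{i,j}$, $\cI_{i,j}^c$, $\cB$, $\cC^{loc}_{i,j}$ etc.\ hold — more precisely $\cI_{i,j}$, $\cB^c$ are $\omega_{\Lambda_{i,j}^c}$-measurable up to the $\cG$ events), one gets $\E[D\mid \omega_{\Lambda_{i,j}^c}] \ge \tfrac12 R^{10} Q_n \cdot \P[\cH_{i,j}\cap \cG_{i,j} \mid \omega_{\Lambda_{i,j}^c}] - (\text{error from }\cG_{i,j}^c)$ on the event $\cC^{loc}_{i,j}\cap\cI_{i,j}\cap\cB^c$, while $\E[D\mid\omega_{\Lambda_{i,j}^c}]\ge -(\text{error})$ otherwise. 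Using independence of $\cH_{i,j}$ and Lemmas~\ref{l:cG.bound},~\ref{l:cH.bound} to get $\P[\cH_{i,j}\cap\cG_{i,j}\mid\omega_{\Lambda_{i,j}^c}] \ge \delta - M^{-90}$ and bounding the error terms (expected size of $|D|$ on $\cG_{i,j}^c$, handled by Cauchy--Schwarz together with stretched-exponential tails on $|X - X^{\omega^{ij}}|$ from Assumption~\ref{as:resamp1} applied to the strip containing $\Lambda_{i,j}$, times $\P[\cG_{i,j}^c]^{1/2}\le M^{-45}$), one obtains $\E[D\mid\omega_{\Lambda_{i,j}^c}] \ge \tfrac12 R^{10}Q_n(\delta - 2M^{-90}) \mathbf 1_{\cC^{loc}_{i,j}\cap\cI_{i,j}\cap\cB^c} - M^{-44}Q_n$ pointwise. (3) Square, take expectations, drop the negative contribution from the complement using that $\cC^{loc}_{i,j}\cap\cI_{i,j}$ on $\cB^c$ has probability at least $\P[\cC^{loc}_{i,j},\cI_{i,j}] - \P[\cB]\ge \P[\cC^{loc}_{i,j},\cI_{i,j}] - M^{-90}$; then $\E[(\E[D\mid\omega_{\Lambda_{i,j}^c}])^2] \ge \tfrac14 R^{20}Q_n^2 \big((\delta\P[\cC^{loc}_{i,j},\cI_{i,j}] - 2M^{-40})^+\big)^2$ after absorbing the additive $M^{-44}$ terms and the cross terms into the $2M^{-40}$ slack. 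Finally apply the martingale-projection inequality $\E[(F-G)^2]\ge \tfrac12\E[(\E[D\mid\omega_{\Lambda_{i,j}^c}])^2]$ (the factor coming from $F-G$ being the conditional expectation of $\tfrac12 D$ under the symmetrization, with a $\tfrac12$ loss), giving the stated $\tfrac18 R^{20}Q_n^2(\cdots)^2$.

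\textbf{Main obstacle.} The delicate point is the bookkeeping of which $\sigma$-algebra each event is measurable with respect to, so that the conditional expectation $\E[D\mid\omega_{\Lambda_{i,j}^c}]$ genuinely inherits the lower bound from Lemma~\ref{l:resample.decrease} — one must check that $\cI_{i,j}$, $\cB^c$, $\cC^{loc}_{i,j}$ and the ``bad'' part of $\cG_{i,j}$ split correctly between $\omega_{\Lambda_{i,j}^c}$ (equivalently $\omega_{\Lambda_{i,j}^{++}\setminus\Lambda_{i,j}}$ together with $\omega_{(\Lambda^{++})^c}$) and the resampled block $\omega'_{\Lambda_{i,j}}$, and that $\cH_{i,j}$'s independence from $\omega$ lets the factor $\delta$ come out cleanly. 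The second mildly technical point is controlling $\E[|D|^2 \mathbf 1_{\cG_{i,j}^c}]$; here one uses that $|X_{\origin,(Mn,0)} - X^{\omega^{ij}}_{\origin,(Mn,0)}|$ is stochastically dominated (via Assumption~\ref{as:resamp1} and triangle-inequality comparisons across the strip $\{(x,y): in\le x\le (i+1)n\}$) by something with $Q_{Mn}$-scale stretched-exponential tails, so that by Cauchy--Schwarz the contribution is $\le (\E|D|^4)^{1/4}\cdot\P[\cG_{i,j}^c]^{1/2}\cdot\ldots = o(M^{-40}Q_n)$ using $Q_{Mn}\le M^{1/10}Q_n$ and $\P[\cG_{i,j}^c]\le M^{-90}$. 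Everything else is routine once these measurability and tail bookkeeping items are in place.
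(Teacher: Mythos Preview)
Your plan identifies the right ingredients, but the conditioning step is wrong in a way that makes the argument fail. The fatal issue is your choice to condition $D = X - X^{\omega^{ij}}$ on $\omega_{\Lambda_{i,j}^c}$: given $\omega_{\Lambda_{i,j}^c}$, the fields $\omega_{\Lambda_{i,j}}$ and $\omega'_{\Lambda_{i,j}}$ are i.i.d., so $X_{\origin,(Mn,0)}$ and $X^{\omega^{ij}}_{\origin,(Mn,0)}$ have the \emph{same} conditional law, and hence $\E[D\mid\omega_{\Lambda_{i,j}^c}]\equiv 0$; the inequality you want is vacuous. Your related claim that $\cI_{i,j}$ and $\cB^c$ are ``$\omega_{\Lambda_{i,j}^c}$-measurable up to the $\cG$ events'' is also false: both depend on the geodesic $\gamma$, which depends on all of $\omega$ including $\omega_{\Lambda_{i,j}}$.

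The paper conditions instead on $\cF=\sigma(\omega_{\Lambda^{++}_{i,j}},\omega'_{\Lambda_{i,j}})$ (augmented by $\omega^{loc}$, which is harmless), for which the exact identity $\E[(F-G)^2]=\tfrac12\E[(\E[D\mid\cF])^2]$ holds. You correctly observe that $\cC^{loc}_{i,j}$ depends only on $\omega_{\Lambda^{++}_{i,j}}$ and $\cH_{i,j}$ only on $\omega'_{\Lambda_{i,j}}$, and this is precisely what the paper exploits: these are the \emph{only} two events among $\cC^{loc},\cH,\cG,\cB^c,\cI$ that are $\cF$-measurable, so their indicator pulls outside the conditional expectation. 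On the $\cF$-measurable set $\cC^{loc}_{i,j}\cap\cH_{i,j}$, Lemma~\ref{l:resample.decrease} gives $D\cdot I(\cG_{i,j},\cB^c)\ge 0$ pointwise (at least $\tfrac12 R^{10}Q_n$ on $\cI_{i,j}$, nonnegative on $\cI_{i,j}^c$), so $Z:=\E[D\cdot I(\cC^{loc},\cH,\cG,\cB^c)\mid\cF]\ge 0$ and Jensen yields $\E[Z^2]\ge(\E Z)^2\ge\tfrac14 R^{20}Q_n^2\big((\delta\P[\cC^{loc},\cI]-2M^{-90})^+\big)^2$; the events $\cI_{i,j},\cG_{i,j},\cB^c$ are never claimed to be $\cF$-measurable. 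The $\cG^c\cup\cB$ error term is controlled not via Assumption~\ref{as:resamp1} but simply by $\E[D^4]\le 16\,\E[(X-\E X)^4]\le C Q_{Mn}^4\le CM^3 Q_n^4$ together with $\P[\cG^c\cup\cB]\le 2M^{-90}$ and Cauchy--Schwarz, and the two pieces are recombined via $\E[(Z_1+Z_2)^2]\ge(\sqrt{\E Z_1^2}-\sqrt{\E Z_2^2})^2$.
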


\begin{proof}
Comparing the variance from revealing versus resampling $\omega_{\Lambda_{i,j}}$ we have that
\begin{align}\label{eq:variance.decompA}
& \E\Big[\big(\E[X_{\origin,(Mn,0)} \mid \omega_{\Lambda_{i,j}^{++}}] - \E[X_{\origin,(Mn,0)} \mid \omega_{\Lambda_{i,j}^{++}\setminus \Lambda_{i,j}}]\big)^2\Big] \nonumber\\
 &\qquad = \frac12\E\Big[\big(\E[X_{\origin,(Mn,0)} - X_{\origin,(Mn,0)}^{\omega^{ij}} \mid \omega_{\Lambda_{i,j}^{++}},\omega'_{\Lambda_{i,j}}]\big)^2\Big]   \nonumber\\
 &\qquad \geq \frac12\E\Big[\big(\E[(X_{\origin,(Mn,0)} - X_{\origin,(Mn,0)}^{\omega^{ij}})I(\cC^{loc}_{i,j}, \cH_{i,j}) \mid \omega_{\Lambda_{i,j}^{++}},\omega'_{\Lambda_{i,j}}]\big)^2\Big]  
\end{align}
where the last inequality follows since $\cC^{loc}_{i,j}$ and $\cH_{i,j}$ are measurable with respect to $\omega_{\Lambda_{i,j}^{++}}$ and $\omega'_{\Lambda_{i,j}}$.
By Jensen's Inequality and Lemma~\ref{l:resample.decrease}
\begin{align}\label{eq:variance.decompC}
&\E\Big[\Big(\E[ (X_{\origin,(Mn,0)} - X_{\origin,(Mn,0)}^{\omega^{ij}}) I(\cC^{loc}_{i,j}, \cH_{i,j},\cG_{i,j}, \cB^c) \mid \omega_{\Lambda_{i,j}^{++}},\omega'_{\Lambda_{i,j}}] \Big)^2\Big] \nonumber\\
&\qquad\geq \Big(\E[ \frac12 R^{10} Q_n I(\cC^{loc}_{i,j}, \cH_{i,j},\cG_{i,j}, \cB^c , \cI_{i,j}) ] \Big)^2 \nonumber\\
&\qquad\geq \frac14 R^{20} Q_n^2\Big(\big(\P[  \cC^{loc}_{i,j}, \cI_{i,j},\cH_{i,j}] - \P[\cG^c_{i,j}\cup \cB]\big)^+ \Big)^2 \nonumber\\
&\qquad\geq \frac14 R^{20} Q_n^2 \Big(\big(\delta \P[  \cC^{loc}_{i,j}, \cI_{i,j}] - 2M^{-90}\big)^+ \Big)^2
\end{align}
where the last inequality follows by Lemma~\ref{l:cB.bound}, Lemma~\ref{l:cG.bound}, Lemma~\ref{l:cH.bound} and the fact that $\cH_{i,j}$ depends only on $\omega'$ and so is independent of $\cC^{loc}_{i,j}\cap \cI_{i,j}$.
By conditional Jensen's Inequality and Cauchy-Schwartz,
\begin{align*}
&\E\Big[\Big(\E[(X_{\origin,(Mn,0)} - X_{\origin,(Mn,0)}^{\omega^{ij}})I(\cC^{loc}_{i,j}, \cH_{i,j})I(\cG^c_{i,j}\cup \cB) \mid \omega_{\Lambda_{i,j}^{++}},\omega'_{\Lambda_{i,j}}]\Big)^2\Big]\\
&\qquad \leq \E\Big[ (X_{\origin,(Mn,0)} - X_{\origin,(Mn,0)}^{\omega^{ij}})^2 I(\cG^c_{i,j}\cup \cB)  \Big]\\
&\qquad \leq \sqrt{\E\big[ (X_{\origin,(Mn,0)} - X_{\origin,(Mn,0)}^{\omega^{ij}})^4\big] \P\big[\cG^c_{i,j}\cup \cB \big]}
\end{align*}
Since $X_{\origin,(Mn,0)}$ and $X_{\origin,(Mn,0)}^{\omega^{ij}}$ are equal in distribution, by Jensen's Inequality,
\begin{align*}
\E\big[ (X_{\origin,(Mn,0)} - X_{\origin,(Mn,0)}^{\omega^{ij}})^4 \big]  
&\leq 8 \E\big[ (X_{\origin,(Mn,0)} - \E[X_{\origin,(Mn,0)}] )^4 \big] + 8 \E\big[ (X_{\origin,(Mn,0)}^{\omega^{ij}} - \E[X_{\origin,(Mn,0)}^{\omega^{ij}}] )^4 \big]\\
&=16 \int_0^\infty 4x^3 \P\big[ |X_{\origin,(Mn,0)} - \E[X_{\origin,(Mn,0)}] |>x \big]dx\\
&\leq 16 \int_0^\infty 4x^3 \exp(1-(x/Q_{Mn})^\theta) dx\\
&\leq C Q_{Mn}^4 \leq C' M^{3} Q_{n}^4
\end{align*}
where the second inequality follows by the definition of $Q_n$ and the last inequality by Lemma~\ref{l:growth34}.  By Lemma~\ref{l:cB.bound}, Lemma~\ref{l:cG.bound} we have that
\[
\P\big[\cG^c_{i,j}\cup \cB \big]\leq 2M^{-90}
\]
so for large enough $M$,
\begin{equation}\label{eq:variance.decompD}
  \E\Big[\Big(\E[(X_{\origin,(Mn,0)} - X_{\origin,(Mn,0)}^{\omega^{ij}})I(\cC^{loc}_{i,j}, \cH_{i,j})I(\cG^c_{i,j}\cup \cB) \mid \omega_{\Lambda_{i,j}^{++}},\omega'_{\Lambda_{i,j}}]\Big)^2\Big] \leq M^{-80} Q_n^2.  
\end{equation}
For two random variables $Z_1,Z_2$ by Cauchy-Schwartz,
\[
\E[(Z_1+Z_2)^2] \geq \E[Z_1^2] + \E[Z_2^2] - 2\sqrt{\E[Z_1^2]} \sqrt{\E[ Z_2^2]} = \bigg(\sqrt{\E[Z_1^2]} - \sqrt{\E[ Z_2^2]}\bigg)^2
\]
so by equations~\eqref{eq:variance.decompC} and~\eqref{eq:variance.decompD} we have that
\begin{align*}
&\E\Big[\big(\E[(X_{\origin,(Mn,0)} - X_{\origin,(Mn,0)}^{\omega^{ij}})I(\cC^{loc}_{i,j}, \cH_{i,j}) \mid \omega_{\Lambda_{i,j}^{++}},\omega'_{\Lambda_{i,j}}]\big)^2\Big]\\
&\geq \bigg( \bigg(\frac12 R^{10} Q_n \big(\delta \P[  \cC^{loc}_{i,j}, \cI_{i,j}] - 2M^{-90}\big)^+ - M^{-40} Q_n\bigg)^+\bigg)^2\\
&\geq \frac14 R^{20} Q_n^2 \bigg( \big(\delta \P[  \cC^{loc}_{i,j}, \cI_{i,j}] - 2M^{-40}\big)^+ \bigg)^2.
\end{align*}
Plugging this into equation~\eqref{eq:variance.decompA} completes the proof.
\end{proof}
We can now complete the main result of this section.
\begin{theorem}\label{t:var.increase}
If $n$ is an $\alpha'$-record point then,
\[
Q_{Mn}\geq M^{1/10} Q_n.
\]
\end{theorem}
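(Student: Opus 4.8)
The plan is to argue by contradiction. I would fix $M$ large (to be pinned down at the end), fix the integer $R$ large enough that Propositions~\ref{p:percevent} and~\ref{p:var.decomp} and Lemma~\ref{l:resample.decrease} apply, set $\delta=\delta(R)>0$ as in Lemma~\ref{l:cH.bound}, and suppose for contradiction that $n$ is a sufficiently large $\alpha'$-record point with $Q_{Mn}\le M^{1/10}Q_n$. Under this hypothesis every blockwise estimate of this section is available, and I would use them together with Corollary~\ref{c:path.c.loc.bound} to force
\[
\Var(X_{Mn})\ \ge\ \frac{c\,R^{20}\delta^{2}Q_n^{2}M^{2/5}}{\mathrm{polylog}(M)},
\]
which for $M$ large contradicts the trivial upper bound $\Var(X_{Mn})\le C\hQ_{Mn}^{2}\le C Q_{Mn}^{2}\le C M^{1/5}Q_n^{2}$ coming from~\eqref{eq:varUpperQn}; hence $Q_{Mn}>M^{1/10}Q_n$.

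The first and main step is a blockwise variance decomposition. The difficulty is that the enlargements $\Lambda^{++}_{i,j}$ appearing in Proposition~\ref{p:var.decomp} overlap for nearby blocks, so one cannot sum that proposition directly. I would colour the blocks $\{\Lambda_{i,j}:\tfrac M3\le i\le\tfrac{2M}3,\ |j|\le M^{3/5}\}$ with $O(\Theta^{2})$ colours so that same-coloured blocks have pairwise disjoint enlargements $\Lambda^{++}_{i,j}$ — possible since $\Lambda^{++}_{i,j}$ spans $O(\Theta)$ rows and columns and $\Theta=O(\log^{1000/\epsilon^{2}}M)$. For a fixed colour class $S$, reveal $\omega$ on $\big(\bigcup_{(i,j)\in S}\Lambda_{i,j}\big)^{c}$ and then the blocks $\{\omega_{\Lambda_{i,j}}\}_{(i,j)\in S}$ one at a time; the telescoping martingale identity gives $\Var(X_{Mn})\ge\sum_{(i,j)\in S}\E\big[(\E[X_{Mn}\mid\cF^{(i,j)}]-\E[X_{Mn}\mid\cF^{(i,j)}_-])^{2}\big]$, where $\cF^{(i,j)}_-,\cF^{(i,j)}$ are the $\sigma$-algebras just before and after revealing $\omega_{\Lambda_{i,j}}$. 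Since the blocks of $S$ are disjoint, $\cF^{(i,j)}_-\supseteq\sigma(\omega_{\Lambda^{++}_{i,j}\setminus\Lambda_{i,j}})$ and $\omega_{\Lambda_{i,j}}$ is independent of $\cF^{(i,j)}_-$; the elementary monotonicity inequality
\[
\E\big[\Var(\E[X\mid\cG\vee\sigma(\xi)]\mid\cG)\big]\ \ge\ \E\big[\Var(\E[X\mid\cH\vee\sigma(\xi)]\mid\cH)\big],\qquad\cG\supseteq\cH,\ \xi\perp\cG,
\]
which follows from conditional Jensen applied to $\E[X\mid\cG]-\E[X\mid\cH]=\E\big[\,\E[X\mid\cG\vee\sigma(\xi)]-\E[X\mid\cH\vee\sigma(\xi)]\mid\cG\,\big]$, then shows each increment dominates $\E\big[(\E[X_{Mn}\mid\omega_{\Lambda^{++}_{i,j}}]-\E[X_{Mn}\mid\omega_{\Lambda^{++}_{i,j}\setminus\Lambda_{i,j}}])^{2}\big]$, which Proposition~\ref{p:var.decomp} bounds below. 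Summing over the $O(\Theta^{2})$ colour classes yields
\[
\Theta^{2}\,\Var(X_{Mn})\ \ge\ \tfrac18R^{20}Q_n^{2}\!\!\sum_{\substack{M/3\le i\le 2M/3\\ |j|\le M^{3/5}}}\!\!\Big(\big(\delta\,\P[\cC^{loc}_{i,j}\cap\cI_{i,j}]-2M^{-40}\big)^{+}\Big)^{2}.
\]

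The second step is a counting and Cauchy--Schwarz estimate for $p_{i,j}:=\P[\cC^{loc}_{i,j}\cap\cI_{i,j}]$. With $(in,y_i)$ the first intersection of the geodesic from $\origin$ to $(Mn,0)$ with $\{x=in\}$ and $J_i=\lfloor y_i/W_n\rfloor$, one has $(in,y_i)\in\Lambda_{i,J_i}$, hence $\{J_i=j\}\subseteq\cI_{i,j}$ and $\sum_{|j|\le M^{3/5}}p_{i,j}\ge\P[\cC^{loc}_{i,J_i}]-\P[|J_i|>M^{3/5}]$; by Theorem~\ref{t:trans.main} and $W_{Mn}=\sqrt{Mn\,Q_{Mn}}\le M^{11/20}W_n$ (from the standing hypothesis), $\P[|J_i|>M^{3/5}]\le M^{-100}$. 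Corollary~\ref{c:path.c.loc.bound} gives $\E\big[\sum_{i=1}^{M}I(\cC^{loc}_{i,J_i})\big]\ge\tfrac9{10}M(1-M^{-9})$, so discarding the at most $\tfrac23M$ indices outside $[\tfrac M3,\tfrac{2M}3]$ leaves $\sum_{M/3\le i\le 2M/3}\P[\cC^{loc}_{i,J_i}]\ge\tfrac M5$, whence $\sum_{i,j}p_{i,j}\ge\tfrac M6$ for $M$ large. Since there are at most $M^{8/5}$ admissible pairs $(i,j)$ and each $p_{i,j}\le1$, Cauchy--Schwarz gives $\sum_{i,j}\big((\delta p_{i,j}-2M^{-40})^{+}\big)^{2}\ge M^{-8/5}\big(\delta\tfrac M6-2M^{-40}M^{8/5}\big)^{2}\ge c\,\delta^{2}M^{2/5}$. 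Plugging into the previous display, and using $\Theta^{2}=O(\log^{2000/\epsilon^{2}}M)$, produces the claimed lower bound on $\Var(X_{Mn})$, and comparing with $\Var(X_{Mn})\le CM^{1/5}Q_n^{2}$ gives the contradiction once $M$ is large enough (depending only on $R,\delta$ and the fixed exponent $2000/\epsilon^{2}$).

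The hard part is precisely the first step: transferring the global variance into a sum of honestly local contributions despite the overlapping enlargements $\Lambda^{++}_{i,j}$ (which are unavoidable because the favourable events $\cC^{loc}_{i,j}$ and the resampling comparison in Proposition~\ref{p:var.decomp} reach $\Theta$ columns away from $\Lambda_{i,j}$); the colouring costs the polylogarithmic factor $\Theta^{2}$ and one must check it is swamped by the algebraic gain $M^{2/5}$. Everything after that — the block-visiting count via Theorem~\ref{t:trans.main} and Corollary~\ref{c:path.c.loc.bound}, the Cauchy--Schwarz averaging, and the collection of union-bound errors from Lemmas~\ref{l:cB.bound},~\ref{l:cG.bound} and~\ref{l:cH.bound} — is routine bookkeeping, provided one keeps the quantifier order straight: $R$ is fixed first, then $\delta=\delta(R)$ is produced by Lemma~\ref{l:cH.bound}, then $M$ is taken large, and only then $n$ (the hypothesis that $n$ is an $\alpha'$-record point being inherited through Lemma~\ref{l:cH.bound} and Proposition~\ref{p:var.decomp}).
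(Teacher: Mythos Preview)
Your argument is correct and essentially the same as the paper's: both proceed by contradiction, colour the blocks $\Lambda_{i,j}$ so that the enlargements $\Lambda^{++}_{i,j}$ within a colour class are disjoint, combine the martingale decomposition with the monotonicity inequality to feed into Proposition~\ref{p:var.decomp}, and finish with Cauchy--Schwarz against the $M^{8/5}$ admissible pairs together with the $\sum p_{i,j}\gtrsim M$ bound from Corollary~\ref{c:path.c.loc.bound}. The only cosmetic difference is that the paper pigeonholes to a \emph{single} residue class $(a,b)\bmod 4\Theta$ carrying mass $\ge \tfrac{M}{80\Theta^2}$ and runs one filtration, whereas you sum the per-class variance bound over all $O(\Theta^2)$ classes; both routes lose only a polylogarithmic $\Theta^{O(1)}$ factor, which is swamped by $M^{2/5}$ versus $M^{1/5}$.
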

\begin{proof}
Suppose that $Q_{Mn} \leq M^{1/10} Q_n$.  By Corollary~\ref{c:path.c.loc.bound} and Lemma~\ref{l:cB.bound}, since $\{J_i=j\}\subset \cI_{i,j}$
\[
\sum_{i=\frac13 M}^{\frac23 M} \sum_{j=-M^{3/5}}^{M^{3/5}} \P[\cC^{loc}_{i,j}, \cI_{i,j}] \geq \sum_{i=\frac13 n}^{\frac23 n} \P[\cC^{loc}_{i,J_i}]-\P[\cB] \geq \frac15 M.
\]
By the Pigeonhole Principle, for some integers $a,b\in\{0,1,\ldots,4\Theta-1\}$ we have that
\begin{equation}\label{eq:Pigeonhole}
\sum_{i=\frac13 M}^{\frac23 M} \sum_{j=-M^{3/5}}^{M^{3/5}} \P[\cC^{loc}_{i,j}, \cI_{i,j}] (i\equiv a, j \equiv b \hbox{ mod }  4\Theta) \geq \frac{1}{80\Theta^2} M.
\end{equation}
Let $(i_1,j_1), (i_2,j_2),\ldots, (i_K,j_K)$ be an ordering of the pairs $(i,j)$ with $i\equiv a, j \equiv b \hbox{ mod }  4\Theta$ and let $\cF_k$ be the filtration
\begin{align*}
\cF_{2k} = \sigma\Big\{ \big(\omega_{\Lambda^{++}_{i_\ell,j_\ell}}\big)_{\ell \leq k} \Big \}, \qquad \cF_{2k+1} = \sigma\Big\{ \big(\omega_{\Lambda^{++}_{i_\ell,j_\ell}}\big)_{\ell \leq k}, \omega_{\Lambda^{++}_{i_{k+1},j_{k+1}}\setminus \Lambda_{i_{k+1},j_{k+1}}} \Big \}.
\end{align*}
Note that by the $4\Theta$ spacing of the $(i_\ell,i_\ell)$ that we have taken each $\Lambda_{i_\ell,i_\ell}$ is disjoint from the other $\Lambda^{++}_{i_{\ell'},j_{\ell'}}$.  Hence, by Proposition~\ref{p:var.decomp} we have that
\begin{align*}
\hbox{Var}(X_{Mn}) &\geq \sum_k \E\Big[\big(\E[X_{Mn}\mid \cF_{2k}] - \E[X_{Mn}\mid \cF_{2k-1}]\big)^2 \Big]\\
&\geq \sum_{k=1}^K \E\Big[\big(\E[X_{Mn}\mid \omega_{\Lambda^{++}_{i_{k},j_{k}}}] - \E[X_{Mn}\mid \omega_{\Lambda^{++}_{i_{k},j_{k}}\setminus \Lambda_{i_{k},j_{k}}}]\big)^2 \Big]\\
&\geq \sum_{k=1}^K  \frac18 R^{20} Q_n^2 \bigg( \big(\delta \P[  I(\cC^{loc}_{i_k,j_k}, \cI_{i_k,j_k}] - 2M^{-40}\big)^+ \bigg)^2
\end{align*}
By the power mean inequality and equation~\eqref{eq:Pigeonhole},
\begin{align*}
&\frac1{K} \sum_{k=1}^K  \frac18 R^{20} Q_n^2 \bigg( \big(\delta \P[  I(\cC^{loc}_{i_k,j_k}, \cI_{i_k,j_k}] - 2M^{-40}\big)^+ \bigg)^2\\
&\qquad\geq \frac18 R^{20} Q_n^2 \Bigg(\frac1{K}\sum_{k=1}^K \big(\delta \P[  I(\cC^{loc}_{i_k,j_k}, \cI_{i_k,j_k}] - 2M^{-40}\big)^+ \Bigg)^2\\
&\qquad\geq \frac18 R^{20} Q_n^2 \frac1{K^2}\Bigg(\frac{\delta}{80\Theta^2} M - 2KM^{-40} \Bigg)^2
\end{align*}
Since $K\leq M^{8/5}$, combining the last two equations we have that
\begin{align*}
\hbox{Var}(X_{Mn}) &\geq \frac18 R^{20} Q_n^2 \frac1{K}\Bigg(\frac{\delta}{80\Theta^2} M - 2KM^{-40} \Bigg)^2\\
&\geq C_1 Q_n^2 M^{2/5}.
\end{align*}
Hence by equation~\eqref{eq:varUpperQn} for large enough $M$,
\[
Q_{Mn}^2 \geq \frac1{C} \hbox{Var}(X_{Mn}) \geq C_2 M^{2/5} Q_n^2 > M^{2/10} Q_n^2.
\]
which completes the result.
\end{proof}

We are now ready show the equivalence of $Q_n$ and the standard deviation.
\begin{proposition}\label{p:qn.SD.equivalence}
There exists $C>1$ such that for all $n\geq 1$,
\[
C^{-1} Q_n \leq \mathrm{SD}(X_n) \leq C Q_n.
\]
\end{proposition}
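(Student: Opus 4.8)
The plan is to combine two ingredients: the easy direction $\SD(X_n)\leq C\hQ_n\leq CQ_n$ already recorded in~\eqref{eq:varUpperQn}--\eqref{eq:QnRootNBound}, and the reverse inequality $Q_n\leq C\,\SD(X_n)$, which is equivalent (given $\hQ_n\leq Q_n\leq C\hQ_n$ once we know it) to showing that every $n\geq 1$ is a $(C,\alpha')$-quasi record point for some fixed $C$ and some $\alpha'\in(\alpha,\alpha_\star)$. So the first step is to fix $\alpha''<\alpha'$ both in the range~\eqref{e:alpharange} and work with $Q_n$ defined using $\alpha'$ throughout. By Lemma~\ref{l:growth34} we have $Q_{n'}\leq D_5(n'/n)^{3/4}Q_n$ for large $n\leq n'$, i.e. $Q_n$ grows locally sublinearly; combined with Theorem~\ref{t:var.increase}, which says $Q_{Mn}\geq M^{1/10}Q_n$ whenever $n$ is an $\alpha'$-record point (for $M$ a fixed large constant), this lets us show that $\alpha'$-record points cannot be too sparse.

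The key step is then: \emph{there exists $M_0$ and $n_\star$ such that for every $n\geq n_\star$ there is an $\alpha'$-record point in $[n/M_0, n]$.} I would prove this by the standard ``gap-filling'' argument. Given a record point $n_i$, set $n_{i+1}$ to be the largest record point in $[2n_i, Mn_i]$ if one exists. If no record point lies in $(n_i, Mn_i]$, then since $Q_\cdot$ is strictly increasing, for all $m\in(n_i,Mn_i]$ we would have $Q_m=(m/n_i)^{\alpha'}Q_{n_i}\cdot(\text{something})\leq\ldots$, more precisely $Q_{Mn_i}\leq M^{\alpha'}Q_{n_i}$ by definition of record points (the sup defining $Q_{Mn_i}$ is attained at some $m\leq n_i$), whereas Theorem~\ref{t:var.increase} gives $Q_{Mn_i}\geq M^{1/10}Q_{n_i}$; since $\alpha'<\alpha_\star\leq 1/15<1/10$ this is a contradiction for $M$ large. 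Hence $n_{i+1}$ exists with $n_{i+1}/n_i\in[2,M]$, and iterating produces a sequence of record points hitting every scale up to a multiplicative factor $M$. (One also uses Lemma~\ref{l:Qgrowth} / Lemma~\ref{l:growth34} to handle the finitely many small $n$.)

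Once we have a record point $n\in[n'/M_0, n']$ for every large $n'$, we invoke Proposition~\ref{p:left} (with the fixed $M=M_0$): taking $L=1$ and $C$ the quasi-record constant (actually $n$ is a genuine $\alpha'$-record point so $C=1$ works), it gives $\delta>0$ with $\P(X_{n'}<n'\mu - Q_{n'})\geq\delta$ for all such $n'$ sufficiently large. Combined with $\E X_{n'}\geq n'\mu$ (subadditivity), this forces $\Var(X_{n'})\geq\delta Q_{n'}^2$, so $\SD(X_{n'})\geq\sqrt\delta\,Q_{n'}$. For the finitely many remaining small $n'$, Assumption~\ref{as:varlb} gives $\inf_{n\geq 1}\Var(X_n)>0$ and $Q_n$ is bounded on any bounded range, so the inequality holds there too after adjusting the constant. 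This establishes $\SD(X_n)\geq C^{-1}Q_n$ for all $n$, and together with~\eqref{eq:varUpperQn} completes the proof; it also yields $Q_n\asymp\hQ_n\asymp\SD(X_n)$, i.e.~\eqref{eq:QequivSD}.

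The main obstacle is not any single estimate — all the heavy lifting is in Theorem~\ref{t:var.increase} and Proposition~\ref{p:left}, both proved earlier — but rather the bookkeeping to make the ``every $n'$ has a nearby record point'' argument airtight: one must be careful that Proposition~\ref{p:left} is stated for $n'\in[\tfrac12 n, Mn]$ around a record point $n$, so the gap factor $M_0$ between consecutive record points must be taken no larger than the $M$ in Proposition~\ref{p:left}, and the threshold ``$n$ sufficiently large'' in that proposition must be reconciled with the threshold coming from Theorem~\ref{t:var.increase}; this just requires choosing the constants in the right order ($M$ large for Theorem~\ref{t:var.increase}, then $\delta$ from Proposition~\ref{p:left} with that $M$, then $n_\star$ large enough for both). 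I would also remark explicitly that since the conclusion is ``up to constants'', the passage from record points to all $n$ via Proposition~\ref{p:left} automatically upgrades every $n$ to a $(C,\alpha')$-quasi record point, closing the loop with the definition of $Q_n$.
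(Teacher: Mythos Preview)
Your proposal is correct and follows essentially the same route as the paper: the upper bound from~\eqref{eq:varUpperQn}, the gap-filling argument combining Theorem~\ref{t:var.increase} with the sublinear growth of $Q_n$ (Lemma~\ref{l:growth34}) to produce $\alpha'$-record points $n_i$ with $n_{i+1}/n_i\in[2,M]$, then Proposition~\ref{p:left} to get $\P(X_{n'}<n'\mu-Q_{n'})\geq\delta$ and hence $\Var(X_{n'})\geq\delta Q_{n'}^2$, with small $n'$ handled by Assumption~\ref{as:varlb}. Your bookkeeping remarks about the order of choosing $M$, $\delta$, and $n_\star$ are exactly the issues the paper's proof navigates.
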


\begin{proof}
The upper bound was given in equation~\eqref{eq:varUpperQn} so the lower bound remain.  We claim that we can find $n_0,n_1,\ldots,$ such that each $n_i$ is an $\alpha'$-record point and $\frac{n_{i+1}}{n_i} \in [2,M]$ for some large constant $M$.  We may assume that $n_0$ is large enough that Theorem~\ref{t:var.increase} applies.  Suppose that for some $n_i$ in our sequence there is no $\alpha'$-record point in $[2n_i,M n_i]$.  Then
\begin{equation}\label{eq:var.increase.contradiction.assump}
Q_{Mn_i} = \Big(\frac{M}{2}\Big)^{\alpha'} Q_{2n_i} \leq D_{5} 2^{3/4} \Big(\frac{M}{2}\Big)^{\alpha'} Q_{n_i}.
\end{equation}
where the first equation follows by the absence of a record point and the second by Lemma~\ref{l:growth34}.  By Theorem~\ref{t:var.increase} we have that
\[
Q_{Mn_i}\geq M^{1/10} Q_{n_i}.
\]
which contradicts~\eqref{eq:var.increase.contradiction.assump} provided $M$ is large enough.  It follows that for every $n'\geq n_0$ we can find an $\alpha'$ record point $n\in [\frac{n'}{M},n']$.  Then, by Proposition~\ref{p:left}, there exist a constant $\delta>0$ 
\[
\P[X_{n'} < n'\mu - Q_n]\geq \delta.
\]
But since $\E[X_{n'}] \geq n'\mu$,
\[
\hbox{Var}(X_{n'}) = \E[(X_{n'}-\E[X_{n'}])^2] \geq \delta Q_n^2  \geq \delta D_{5}^{-2} M^{-3/2} Q_{n'}^2 
\]
where the last inequality follows by Lemma~\ref{l:growth34}.  Hence
\[
\hbox{SD}(X_n) \geq \delta D_{5}^{-1} M^{-3/4} Q_{n}
\]
for all $n\geq n_0$ and we can pick $C$ large enough so that the proposition holds for $n\in[1,n_0]$ which completes the proof.
\end{proof}

\subsection{Proofs of main theorems}
We can now complete the proof of Theorem \ref{t:tightness} and the upper bounds in Theorems \ref{t:tf} and \ref{t:nr}. 

\begin{proof}[Proof of Theorem~\ref{t:tightness}]
The result follows from Proposition~\ref{p:qn.SD.equivalence} and the definition of $\hat{Q}_n$ since
\[
\P\left[\frac{|X_n - \E X_n|}{\SD(X_n)} > x\right] \leq \P\left[\frac{|X_n - \E X_n|}{\frac1{C}Q_n} > x\right] \leq \P\left[\frac{|X_n - \E X_n|}{\hat{Q}_n} > (x/C)\right] \leq \exp(1- (x/C)^\theta).
\]
\end{proof}

\begin{proof}[Proof of Theorem \ref{t:tf}, upper bound]
Observe that by definition of $W_n$, together with Proposition \ref{p:qn.SD.equivalence} we get that $W_{n}\le C\sqrt{n\mbox{SD}(X_{n})}$ for all $n\ge 1$. The upper bound in Theorem \ref{t:tf} is now immediate from Theorem \ref{t:trans.main}.
\end{proof}

\begin{proof}[Proof of Theorem \ref{t:nr}, upper bound]
This is immediate from Lemma \ref{l:AnBound} and Proposition \ref{p:qn.SD.equivalence}.      
\end{proof}

We have also established Propositions \ref{p:conc} and \ref{p:lowertail}.

\begin{proof}[Proof of Proposition \ref{p:conc}]
This follows from Lemmas \ref{l:YMinusBound} and \ref{l:YPlusBound} together with Proposition \ref{p:qn.SD.equivalence}.
\end{proof}
\begin{proof}[Proof of Proposition \ref{p:lowertail}]
   This follows from Proposition \ref{p:left1} and Proposition \ref{p:qn.SD.equivalence}.  
\end{proof}

\section{Lower bound on transversal fluctuations}
\label{s:lower}

We shall prove the lower bounds in Theorems \ref{t:tf} and \ref{t:nr} in this section.
First we show that large probability, the transversal fluctuation $\mathfrak{W}{n}$ of the geodesic $\gamma_{n}$ from $\mathbf{0}$ to $\mathbf{n}$ is at least of the order $W_{n}=\sqrt{nQ_{n}}$.  We shall prove the following result, which together with Proposition \ref{p:qn.SD.equivalence} clearly implies the lower bound in Theorem \ref{t:tf}. 

\begin{proposition}
    \label{p:tflower}
    Let $\varepsilon\in (0,1)$ be fixed. There exists $\delta>0$ such that for all $n$ sufficiently large we have 
    $$\P(\mathfrak{W}_{n}\le \delta W_{n})\le \varepsilon.$$
\end{proposition}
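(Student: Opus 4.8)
\textbf{Proof plan for Proposition \ref{p:tflower}.}
The plan is to argue by contradiction: suppose that for every $\delta>0$ there is a sequence of $n$'s going to infinity with $\P(\mathfrak{W}_n\le \delta W_n)\ge \varepsilon$, and show that this forces $X_n$ to be far too concentrated, contradicting Theorem \ref{t:tightness} (equivalently Lemma \ref{l:varbd} or Proposition \ref{p:qn.SD.equivalence}). The mechanism is exactly the one sketched in the outline in Section \ref{s:outline}: if the geodesic $\gamma_n$ has transversal fluctuation at most $\delta W_n$, then $X_n$ coincides with the optimal passage time among paths confined to an $n\times (2\delta W_n)$ tube around the segment $[\mathbf 0,\mathbf n]$; we will break this tube into $\sim n/n'$ consecutive sub-boxes of length $n'$, where $n'\le n$ is chosen so that $W_{n'}\approx \delta W_n$ (such an $n'$ exists and is comparable to $\delta^{c} n$ for an appropriate exponent $c$, using that $Q_{n'}$ grows polynomially and locally sublinearly, Lemmas \ref{l:growth34} and \ref{l:Qgrowth}), and show that the confined passage time is, up to small errors, a sum of $\sim n/n'$ approximately independent passage times each of which experiences a downward ``mean penalty'' of order $Q_{n'}$ relative to $n'\mu$.

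The key steps, in order, are as follows. First, I would make precise the confinement reduction: on the event $\{\mathfrak W_n\le \delta W_n\}$ one has $X_n \ge \inf\{X_{\gamma'}:\gamma' \text{ a path from }\mathbf 0\text{ to }\mathbf n\text{ inside the tube}\}$, and the latter quantity can be written (after inserting the lines $x=jn'$ and using Assumption \ref{as:intermediate}/Lemma \ref{l:Gamma} to control the concatenation losses) as at least $\sum_{j} Z^{-,\Lambda_j}_{\text{(thin parallelogram)}} - (\text{error})$, where the $\Lambda_j$ are the vertical strips and the summands are independent side-to-side passage times across thin parallelograms of dimensions $n'\times \delta W_n$ contained in the tube. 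Second, and this is the heart of the matter, I would establish a mean lower bound for each such thin-parallelogram passage time: the expectation of $Z^{-}$ across an $n'\times (\delta W_{n})$ parallelogram exceeds $n'\mu + c\,Q_{n'}$ for some $c=c(\delta)>0$. This is the ``penalty in mean of order $Q_{n'}$'' step; it should follow from Lemma \ref{l:meanpara} (the parallelogram-strengthened version of Lemma \ref{l:mean}), which is precisely the statement that resampling forces $Y^-$ across a canonical-type rectangle/parallelogram to exceed $n'\mu$ by $\Theta(Q_{n'})$ in mean, together with the thin-rectangle comparison Lemma \ref{l:thin} (to pass from height $W_{n'}$ to the smaller height $\delta W_n$, noting $\delta W_n\approx W_{n'}$ so the heights are comparable and Lemma \ref{l:thin} is not even strictly needed, only used to absorb the mild mismatch). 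Third, I would apply a concentration-of-measure / law-of-large-numbers argument to the independent sum $\sum_j Z^{-,\Lambda_j}$: since the number of summands is $K:=\lfloor n/n'\rfloor \asymp \delta^{-c}$ which is large, and each summand has stretched-exponential fluctuations at scale $Q_{n'}$ (by Lemma \ref{l:YMinusBound} together with the parallelogram bounds of Section \ref{s:rectpara}), the sum is within $O(\sqrt K\, Q_{n'})$ of its mean $\ge K n'\mu + cK Q_{n'}$ with probability $\ge 1-\varepsilon/2$, say. Combining, on an event of probability $\ge \varepsilon - \varepsilon/2 = \varepsilon/2$ we would get
\[
X_n \ge n\mu + cK Q_{n'} - O(\sqrt K\, Q_{n'}) - (\text{concatenation errors}).
\]
Fourth, I would compare $K Q_{n'}$ with $Q_n$: since $n'\asymp \delta^{c} n$, Lemma \ref{l:growth34} gives $Q_{n'}\ge D_5^{-1}(n'/n)^{3/4}Q_n$, hence $K Q_{n'} \ge c(\delta) \,\delta^{-c}\cdot \delta^{3c/4}\, Q_n = c'(\delta) Q_n$ where crucially $c'(\delta)\to\infty$ as $\delta\to 0$ (because the number of blocks grows like $\delta^{-c}$ while each individual penalty shrinks only polynomially and the growth exponent for $W$ is $<1$, so the exponents are arranged so that the product diverges); meanwhile $\sqrt K\,Q_{n'}\ll K Q_{n'}$ and the concatenation errors are $O(K\,\mathrm{polylog}(n)\cdot\Gamma)= o(Q_n\cdot (\log n)^{C})$-type terms that are negligible at scale $Q_n\ge C n^{1/12}$. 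Therefore $\P(X_n - n\mu \ge L Q_n)\ge \varepsilon/2$ for $L=L(\delta)$ arbitrarily large as $\delta\to 0$ (choosing $\delta$ small after fixing any desired $L$), which by Theorem \ref{t:tightness} / Proposition \ref{p:qn.SD.equivalence} (stretched-exponential concentration of $X_n$ at scale $Q_n\asymp \mathrm{SD}(X_n)$) is impossible for $L$ large, giving the contradiction.

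The main obstacle I anticipate is Step two, the mean lower bound for side-to-side passage times across a thin parallelogram: we need $\E Z^- \ge n'\mu + \Theta(Q_{n'})$, not merely a positive-probability lower tail. This is more than Lemma \ref{l:leftweak}/Proposition \ref{p:left1} directly give (those are about $X_{n'}-n'\mu$ being negative), and it genuinely requires the strengthened parallelogram version Lemma \ref{l:meanpara}; the delicate point is that the thin rectangle has height $\approx W_{n'}$ only up to a constant factor depending on how exactly $n'$ is selected, and Lemma \ref{l:thin} must be invoked to bridge the gap between $Y^-$ across the canonical rectangle $\mathcal R_{n'}$ and $Y^-$ across the thin one, while keeping track that the error term in Lemma \ref{l:thin} is $o(Q_{n'})$ for the relevant range of the height parameter. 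A secondary technical nuisance is bookkeeping the approximate independence and the concatenation/resampling errors uniformly over the $K$ blocks, for which Assumption \ref{as:resamp1} and Lemma \ref{l:Gamma} suffice but require a careful union bound since $K$, although bounded in $n$, depends on $\delta$ and we need all estimates to hold for $n$ large with $\delta$ fixed first.
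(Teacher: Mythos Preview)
Your overall strategy matches the paper's proof: confine the geodesic to a thin tube, cut the tube into blocks of length $n'$, use Lemma~\ref{l:meanpara} to get a mean penalty of order $Q_{n'}$ per block, apply concentration to the (resampled, independent) sum, and then use $K Q_{n'}\gg Q_n$ (via Lemma~\ref{l:growth34}) to force $X_n-n\mu$ to be implausibly large at scale $Q_n$, contradicting the concentration of $X_n$. The paper carries this out directly (not by contradiction), first fixing $C$ with $\P(X_n\ge n\mu+CQ_n)\le \varepsilon/2$ and then showing $\P(X_n\le n\mu+CQ_n,\ \mathfrak W_n\le\delta W_n)\le\varepsilon/2$; the substance is the same.

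There is, however, one genuine calibration error in your Step~2. You propose to choose $n'$ so that $W_{n'}\approx \delta W_n$, i.e.\ so that the tube height is comparable to the \emph{canonical} width at scale $n'$. But Lemma~\ref{l:meanpara} only gives $\E Y^-_{\mathcal R_{n',\,h}}\ge n'\mu+cQ_{n'}$ when the height $h$ is a \emph{small} fraction of $W_{n'}$ (namely $h<\delta_0 W_{n'}$ for a fixed $\delta_0$); at height $\approx W_{n'}$ the lemma does not apply, and indeed no such mean lower bound is available for the canonical rectangle (Lemma~\ref{l:YMinusBound} only gives $|\E Y^-_{n'}-n'\mu|\le D_2 Q_{n'}$, which goes both ways). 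Your invocation of Lemma~\ref{l:thin} does not help here: that lemma compares $Y^{\delta,-}$ and $Y^{\delta,+}$, it does not produce a positive mean gap. The fix is exactly what the paper does: reverse the order of quantifiers. First choose the block length $n'=\rho n$ with $\rho$ small (so that, by Lemma~\ref{l:growth34}, $\rho^{-1}Q_{\rho n}\ge D_5^{-1}\rho^{-1/4}Q_n\gg Q_n$), and only \emph{then} choose $\delta$ small enough, depending on $\rho$, so that $\delta W_n<\delta_0 W_{\rho n}$. With that order Lemma~\ref{l:meanpara} applies at scale $\rho n$ with a \emph{universal} constant $c$ (not $c(\delta)$), and your Steps~3--4 go through verbatim.
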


En route to the proof of Proposition~\ref{p:tflower}, we shall prove the lower bound in Theorem~\ref{t:nr} (see Lemma \ref{l:mean}). We shall first prove a weaker version of Proposition \ref{p:tflower}. 
 Let $\Psi_{n,\delta}$ denote the set of paths
    \[
\Psi_{n,\delta}=\{\gamma':\gamma'(0)=0,\gamma'(1)=n,\sup_t\inf_{x\in[0,n]} |\gamma'(t) - (x,0)|\leq \delta W_n\}
\]

\begin{lemma}
    \label{l:tflowerw}
    There exists $\delta>0$ such that for all large enough $n$,
\begin{equation}\label{eq:narrow.path.lower.bound}
\P[\inf_{\gamma'\in \Psi_{n,\delta}} X_{\gamma'} \geq X_n + \delta Q_n]> \delta
\end{equation}
and in particular,
\[
\P(\mathfrak{W}_n \ge \delta W_{n})\ge \delta.
\]
\end{lemma}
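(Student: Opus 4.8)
\textbf{Proof proposal for Lemma~\ref{l:tflowerw}.}

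The plan is to produce, with positive probability, a ``good'' path very close to the segment $[0,\mathbf{n}]$ which beats every path in $\Psi_{n,\delta}$ (the paths with transversal fluctuation at most $\delta W_n$) by an amount of order $Q_n$. Concretely, I would fix a record point $m$ comparable to $\delta^{C}n$ (using Lemma~\ref{l:grmain}; here I need $\delta$ small so that $m/n$ is a small fixed power and one can still invoke Proposition~\ref{p:left}/\ref{p:left1} at scale $m$). By Proposition~\ref{p:left} there is $\delta_\star>0$ such that the passage time between two points a distance roughly $m$ apart, restricted to a thin tube of width $\asymp W_m$ around the connecting segment, is at most $m\mu - LQ_m$ with probability at least $\delta_\star$, for $L$ as large as we like. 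I would place such a tube near one endpoint, say between $\mathbf{0}$ (shifted slightly) and a point at distance $\approx m$, oriented so that it sits at height $\asymp \delta W_n \gg W_m$ above the segment $[0,\mathbf{n}]$: this is the ``surgery near the endpoint'' alluded to in the outline (Figure~\ref{f:trans.lower} referenced there). Any path in $\Psi_{n,\delta}$ must stay within $\delta W_n$ of the segment and hence cannot use this detour, so it pays at least the side-to-side minimum across the region, which by Corollary~\ref{c:paraRect} together with Lemma~\ref{l:YMinusBound} (and Lemma~\ref{l:parastatement}/\eqref{eq:YwideMinus}) is $\geq m\mu - O(Q_m)$ with high probability; meanwhile the good path realizes $m\mu - LQ_m$. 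Choosing $L$ large makes the gap $\geq \frac12 L Q_m \gg Q_n$ (since $Q_m \gg (m/n)Q_n$, i.e. $Q$ grows locally sub-linearly by Lemma~\ref{l:growth34}), which is much more than $\delta Q_n$.

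In more detail, the steps I would carry out are: (1) choose $\delta$ small, let $m$ be a record point in $[\delta^{C_{11}}n,\delta^{-1}\cdot\delta^{C_{11}}n]$ or similar from Lemma~\ref{l:grmain}, and set $L$ a large constant; (2) define the detour path $\gamma^{good}$ as a concatenation: go from $\mathbf{0}$ up to a point $p_1$ near $(0,\delta W_n)$, then cross a width-$m$ parallelogram to $p_2$ near $(m,\delta W_n)$ using the cheap restricted geodesic guaranteed by Proposition~\ref{p:left}, then come back down to the segment and follow a typical geodesic from there to $\mathbf{n}$; (3) bound $X_{\gamma^{good}}$ from above using the triangle inequality / concatenation bound (Assumption~\ref{as:tri}), the event from Proposition~\ref{p:left} on the middle piece, Lemma~\ref{l:paraplusminus} on the two short vertical pieces near the endpoints, and \eqref{eq:Q} with Lemma~\ref{l:AnBound} on the final long geodesic piece — this gives $X_{\gamma^{good}} \le n\mu + O(Q_n) - LQ_m$ with probability $\ge \delta_\star/2$; (4) bound $\inf_{\gamma'\in\Psi_{n,\delta}} X_{\gamma'}$ from below: since such $\gamma'$ stays within the tube of width $\delta W_n$, restrict attention to its crossing of the column $\{0\le x\le m\}$ and use that this crossing has length $\ge Z^-_{\cdot}$ across a parallelogram of width $m$ and height $\asymp \delta W_n$, which by Corollary~\ref{c:paraRect} and Lemma~\ref{l:YMinusBound} (or directly Lemma~\ref{l:thin}, comparing narrow-rectangle crossings to $X_m$) is $\ge m\mu - O(Q_m)$ with probability $\ge 1 - \exp(-c(\cdot))$; combine with a lower bound $n\mu - O(Q_n)$ on the remaining part via the same tools and a union bound over a discretized set of endpoints; (5) intersect the good-path event with the complement of the bad events in step (4) — both have probability bounded below (the first by $\delta_\star/2$, the second by $1-o(1)$) — to conclude that on an event of probability $\ge \delta_\star/4 =: \delta$ we have $\inf_{\gamma'\in\Psi_{n,\delta}}X_{\gamma'} - X_{\gamma^{good}} \ge \frac12 LQ_m - O(Q_n) \ge \delta Q_n$ for $L$ large; since $X_n \le X_{\gamma^{good}}$ this is exactly \eqref{eq:narrow.path.lower.bound}.

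The second assertion is then immediate: on the event in \eqref{eq:narrow.path.lower.bound} the geodesic $\gamma_n$ cannot lie in $\Psi_{n,\delta}$ (it would then have $X_n = X_{\gamma_n} \ge X_n + \delta Q_n$, a contradiction), hence $\mathfrak{W}_n > \delta W_n$; so $\P(\mathfrak{W}_n \ge \delta W_n) \ge \delta$ (after relabeling $\delta$).

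The main obstacle I expect is step (4): controlling $\inf_{\gamma'\in\Psi_{n,\delta}} X_{\gamma'}$ uniformly over \emph{all} narrow paths and all pairs of near-endpoints, rather than for a fixed pair — this requires a discretization of the left and right faces and of the entry/exit heights, applying the parallelogram crossing estimates (Corollary~\ref{c:paraRect}, Lemma~\ref{l:paraplusminus}, Lemma~\ref{l:thin}) together with a union bound, and being careful that the thin tube of width $\delta W_n$ is genuinely much wider than $W_m$ (so that the geometry forbids the $\Psi_{n,\delta}$ paths from reaching the detour) yet narrow enough that the crossing cost across it is still $m\mu - O(Q_m)$ and not larger. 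A secondary technical point is ensuring the record point $m$ from Lemma~\ref{l:grmain} can be taken with $n/m$ a bounded power of $\delta^{-1}$ so that $Q_m/Q_n$ and $W_m/W_n$ are controlled by Lemmas~\ref{l:Qgrowth} and~\ref{l:growth34}, which is exactly what is needed for the gap $LQ_m$ to dominate $Q_n$.
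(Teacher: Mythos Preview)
Your approach is genuinely different from the paper's, and as written it has a real gap.

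The paper argues by \emph{contradiction}: assume \eqref{eq:narrow.path.lower.bound} fails, so with probability $\ge 1-\delta$ the best narrow path is within $\delta Q_n$ of $X_n$. It then places \emph{two} disjoint tubes $\Lambda_1,\Lambda_2$ at heights $2\delta W_n$ and $8\delta W_n$, and resamples outside each so that $X^{\Lambda_1}_{u_1v_1}$ and $X^{\Lambda_2}_{u_2v_2}$ are independent; the elementary fact that two i.i.d.\ copies of $X_n$ differ by $\epsilon'Q_n$ with probability $\ge\epsilon'$ gives the event $F$. The failure assumption, applied in each tube by translation invariance, forces the tube-restricted optimum to be within $\delta Q_n$ of the unrestricted one (events $B_i,E_i$). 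Surgery through small end-rectangles $\cR_{\zeta n}$ (event $G_\zeta$) then transplants the cheaper tube-2 path to the tube-1 endpoints, producing $X_{u_1v_1}<X_{u_1v_1}$. No small scale $m$, no Proposition~\ref{p:left}, and no comparison of $Q_m$ to $Q_n$ are needed; the ``gain'' is exactly the $\epsilon'Q_n$ coming from variance.

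Your direct construction breaks at step (3). You route $\gamma^{\text{good}}$ from $\mathbf{0}$ to ``$p_1$ near $(0,\delta W_n)$'' and invoke Lemma~\ref{l:paraplusminus} on this ``short vertical piece'', but any segment from $(0,0)$ to a point at height $\asymp\delta W_n$ with horizontal run $o(n)$ carries passage time $\asymp\mu\delta W_n$, and $\delta W_n/Q_n=\delta\sqrt{n/Q_n}\to\infty$; this swamps any gain $LQ_m\le LQ_n$. Lemma~\ref{l:paraplusminus} controls $X_{u_1u_2}-\mu|u_1-u_2|$, not $X_{u_1u_2}-\mu(\text{horizontal run})$, so the Pythagorean excess remains. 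Relatedly, your sentence ``$\tfrac12 LQ_m\gg Q_n$'' is false: $Q_m\le Q_n$, so you actually need $L\gtrsim Q_n/Q_m$, hence $L$ depending on $m/n$. There is a second gap you underweight: Proposition~\ref{p:left} yields an \emph{unrestricted} geodesic between $p_1$ and $p_2$. If that geodesic dips below height $\delta W_n$ (which it will unless $W_m\ll\delta W_n$), a narrow path can borrow its cheap portion and skip your costly ascent to $p_1$; then $\inf_{\Psi_{n,\delta}}X_{\gamma'}\le X_{\gamma^{\text{good}}}$ and the argument collapses. Fixing both issues simultaneously --- diagonal connectors over run $\asymp n$, a detour scale $m$ with $W_m\ll\delta W_n$, and $L$ large enough that $LQ_m$ beats the fixed $O(Q_n)$ errors --- is possible but is precisely the delicate balancing you have not done; the paper's contradiction-plus-resampling route sidesteps all of it.
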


\begin{center}
\begin{figure}
\includegraphics[width=5in]{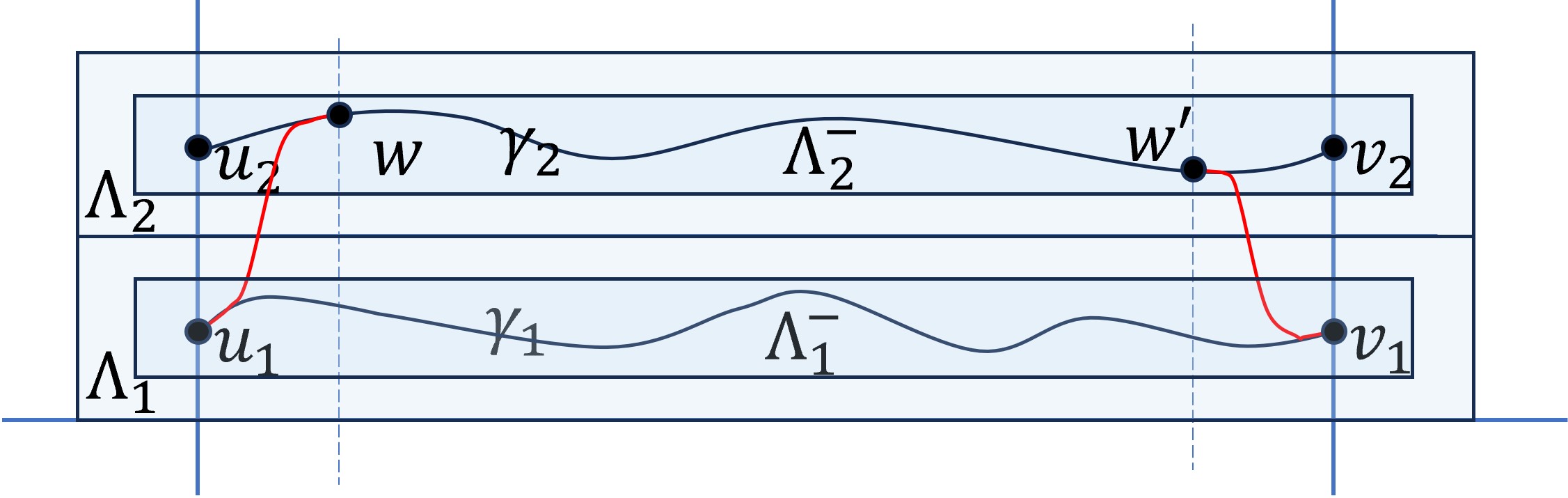}
\caption{Illustration of the construction in Lemma~\ref{l:tflowerw}.}
\label{f:trans.lower}
\end{figure}
\end{center}

\begin{proof}
We can find $\epsilon'$ independent of $n$ such that if $X_n,X_n'$ are two independent copies of $X_n$ then
\[
\P[X_n>X_n'+\epsilon' Q_n]>\epsilon'.
\]
Indeed, one can see this for example from Proposition \ref{p:left} (and Proposition \ref{p:qn.SD.equivalence}) and Lemma~\ref{l:AnBound}.

For $\zeta\in (0,\frac13)$ let $G_\zeta$ be the event
\[
G_{\zeta}=\{|Y_{\cR_{\zeta n}}^- -n\mu|\leq \frac1{10}\epsilon'Q_n\} \bigcap \{|Y_{\cR_{\zeta n}}^+ -n\mu|\leq \frac1{10}\epsilon'Q_n\}
\]
By Lemmas~\ref{l:YMinusBound} and~\ref{l:YPlusBound} we have that
\[
\P[G_{\zeta}^{c}] \leq 2\exp(1-C(\frac1{10}\epsilon'Q_n/Q_{\zeta n})^\theta) \leq 2\exp(1-C(\frac1{10}\epsilon' \zeta^{-\alpha})^\theta)
\]
We can pick $\zeta$, not depending on $n$, small enough such that such that $2\exp(1-C(\frac1{10}\epsilon' \zeta^{-\alpha})^\theta)<\frac1{10}\epsilon'$ so $\P[G_{\zeta}]<\frac1{10}\epsilon'$.  Let $\cR_{\zeta n}^*$ be $\cR_{\zeta n}$ flipped in the line $x=n/2$, that is the rectangle with corners, $(n,0),(n,W_{\zeta n}),((1-\zeta)n,0),((1-\zeta)n,W_{\zeta n})$ and let
\[
G^*_{\zeta}=\{|Y_{\cR^*_{\zeta n}}^- -n\mu|\leq \frac1{10}\epsilon'Q_n\} \bigcap \{|Y_{\cR^*_{\zeta n}}^+ -n\mu|\leq \frac1{10}\epsilon'Q_n\}
\]
so by symmetry $\P[(G^*_{\zeta})^{c}]<\frac1{10}\epsilon'$.  Pick $\delta>0$, independent of $n$, such that $\delta<\frac1{10}\epsilon'$ and $W_{\zeta n} > 10 \delta W_n$.  Suppose that equation~\eqref{eq:narrow.path.lower.bound} does not hold for this choice of $\delta$.

Let $u_1=(0,2\delta W_n),u_2=(0,8\delta W_n)$ and $v_1=(n,2\delta W_n),v_2=(n,8\delta W_n)$.
Let $\Lambda^-_i$ be the rectangle with corners $u_i+(-n,-\delta W_n), u_i+(-n,\delta W_n), v_i+(n,-\delta W_n)$ and $v_i+(n,\delta W_n)$ and let $\Lambda_i$ be the rectangle with corners $u_i+(-2n,-2\delta W_n), u_i+(-2n,2\delta W_n), v_i+(2n,-2\delta W_n)$ and $v_i+(2n,2\delta W_n)$.  Let $\gamma_i$ be a path from $u_i$ to $v_i$ in $\Lambda_i^-$ that minimizes the distance $X_{\gamma_i}$ among paths $\gamma_i\subset \Lambda_i^-$, see Figure~\ref{f:trans.lower} for an illustration.  Let $\gamma_i'$ be a path from $u_i$ to $v_i$ in $\Lambda_i^-$ that minimizes the resampled distance $X_{\gamma_i'}^{\Lambda_i}$ among paths $\gamma_i'\subset \Lambda_i^-$.

Let $A_i$ be the event that
\[
A_i=\{X_{\gamma_i} = X_{\gamma_i}^{\Lambda_i}, X_{\gamma_i'} = X_{\gamma_i'}^{\Lambda_i}\}
\]
which by Assumption~\ref{as:resamp2} satisfies
\[
\P[A_i] \geq 1 -n^{-10}. 
\]
On the event $A_i$ both $\gamma_i$ and $\gamma_i'$ must be optimal for $X$ and $X^{\Lambda_i}$ so we have that $X_{\gamma_i}=X_{\gamma_i'}^{\Lambda_i}$. 
Define that events
\[
B_i=\{X_{u_i v_i}+\delta Q_n \geq X_{\gamma_i}\},\quad E_i=\{X^{\Lambda_i}_{u_i v_i}+\delta Q_n \geq X^{\Lambda_i}_{\gamma'_i}\}.
\]

By our assumption we have that
\[
\P[B_i] \geq 1-\delta,\quad \P[C_i] \geq 1-\delta.
\]
Finally, let $F$ be the event
\[
F=\{X^{\Lambda_2}_{u_2 v_2} \leq X^{\Lambda_1}_{u_1 v_1} - \epsilon' Q_n\}.
\]
Since the $\Lambda_i$ are disjoint we have that $X^{\Lambda_1}$ and $X^{\Lambda_2}$ are independent so
\[
\P[F] \geq \epsilon'.
\]
Let $$H=A_1\cap A_2 \cap B_1\cap B_2 \cap E_1\cap E_2\cap F\cap G_{\zeta} \cap G_{\zeta}^*\cap \{\Gamma_{2n} \leq \frac13 \delta Q_n\}$$ and altogether we have that
\[
\P[H]\geq \P[F] - 2n^{-10}-4\delta - \frac2{10}\epsilon' -o(1) \geq \frac1{10}\epsilon' -o(1) > 0.
\]
So in particular $E$ has a positive probability which we will show is a contradiction.  Let $w$ and $w'$ be the first intersection of $\gamma_2$ with the lines $x=\zeta n$ and $x=(1-\zeta)n$ respectively which by construction are on the sides of $\cR_{\zeta n}$ and $\cR^*_{\zeta n}$.  Consider the length of the path $\hat{\gamma}$ we follows the optimal path from $u_1$ to $w$, follows $\gamma_2$ from $w$ to $w'$ and the optimal path from $w$ to $v_1$.  Then
\begin{align*}
X_{u_1 v_1} &\leq X_{u_1 w} + X_{w w'} + X_{w' v_1}\\
&\leq X_{u_1 w} + X_{w w'} + X_{w' v_1} +(X_{\gamma_2} - X_{u_2 w} - X_{ww'}-X_{w' v_2} +3\Gamma_n) \\
&\leq X_{\gamma_2} + \frac{5}{10} \epsilon' Q_n \\
&= X_{\gamma'_2}^{\Lambda_2} + \frac{5}{10} \epsilon' Q_n \\
&\leq  X_{u_2 v_2}^{\Lambda_2} + \frac{6}{10} \epsilon' Q_n \\
&\leq  X_{u_1 v_1}^{\Lambda_1} - \frac{4}{10} \epsilon' Q_n \\
&\leq  X_{\gamma_1'}^{\Lambda_1} - \frac{4}{10} \epsilon' Q_n \\
&=  X_{\gamma_1} - \frac{4}{10} \epsilon' Q_n \\
&\leq  X_{u_1 v_1} - \frac{3}{10} \epsilon' Q_n 
\end{align*}
where the first line follows by the triangle inequality assumption (Assumption \ref{as:tri}), the second by the definition of $\Gamma_n$, the third by the events $G_{\zeta}, G_{\zeta}^*, \{\Gamma_{2n}\leq \frac13 \delta Q_n\}$, the fourth by $A_2$, the fifth by $E_2$, the sixth by $F$, the seventh by the fact that $\gamma_1'$ joins $u_1$ to $v_1$, the eighth by $A_1$ and the ninth by $B_1$.  This gives a contradiction.
\end{proof}

Before proving Proposition \ref{p:tflower}, we use Lemma \ref{l:tflowerw} to prove the following result which (together with Proposition \ref{p:qn.SD.equivalence}) shows the lower bound in Theorem \ref{t:nr}.

\begin{lemma}
    \label{l:mean}
    There exists $D_4>0$ such that $\E X_{n}\ge n\mu+D_4Q_{n}$ for all $n\ge 1$. 
\end{lemma}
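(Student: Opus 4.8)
The plan is to exploit the superadditivity-flavoured structure $\E X_{2n}\le \E X_{\mathbf{0},\mathbf{n}}+\E X_{\mathbf{n},\mathbf{2n}} = 2\E X_n$ together with Lemma \ref{l:tflowerw} to produce a genuine \emph{gap}: with probability at least $\delta$, the geodesic from $\mathbf{0}$ to $\mathbf{2n}$ does strictly better than the concatenation of the two optimal halves by an amount of order $Q_{2n}$. First I would apply Lemma \ref{l:tflowerw} at scale $n$: with probability $\ge\delta$, every path from $\mathbf{0}$ to $\mathbf{n}$ staying within $\delta W_n$ of the segment $[\mathbf{0},\mathbf{n}]$ has passage time $\ge X_{\mathbf{0},\mathbf{n}}+\delta Q_n$. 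Call this event (in the slab $\Lambda_1=\{0<x<n\}$ say, using the resampled field $\omega^{\Lambda_1}$) $\cE_1$, and let $\cE_2$ be the analogous event for the slab $\Lambda_2=\{n<x<2n\}$ using $\omega^{\Lambda_2}$; these are independent, each of probability $\ge\delta$. On $\cE_1\cap\cE_2$, I claim $X_{2n}\le X^{\Lambda_1}_{\mathbf{0},\mathbf{n}}+X^{\Lambda_2}_{\mathbf{n},\mathbf{2n}}-\tfrac12\delta Q_n$ with high probability: indeed the optimal path $\gamma_{2n}$ either has small transversal fluctuation at the line $x=n$ — in which case, by Corollary \ref{c:local.trans}, near $x=n$ it is within $\delta W_n/2$ of the axis, and one can reroute it through $\mathbf{n}$ at negligible cost (via $\Gamma_{2n}$) to beat the concatenated path using $\cE_1,\cE_2$ — or it has large fluctuation there, which is itself a favourable event (it cannot then be the concatenation of the two axis-constrained halves).

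The key quantitative step is then: since $X^{\Lambda_i}\stackrel{d}{=}X$ and the resamplings are independent,
\[
\E X_{2n}\le \E\big[X^{\Lambda_1}_{\mathbf{0},\mathbf{n}}+X^{\Lambda_2}_{\mathbf{n},\mathbf{2n}}\big] - \tfrac12\delta Q_n\,\P[\cE_1\cap\cE_2\cap(\text{reroute works})] + (\text{error terms}),
\]
and the middle term is $\ge \tfrac12\delta\cdot\delta^2\cdot Q_{2n}\cdot c$ for some absolute $c>0$ once $n$ is large, after using $Q_n\asymp Q_{2n}$ (Lemma \ref{l:Qgrowth}). The error terms come from $|X^{\Lambda_i}_{\mathbf{0},\mathbf{n}}-X_{\mathbf{0},\mathbf{n}}|$ (Assumption \ref{as:resamp1}, $O(\log^{1/\kappa}n)\ll Q_n$ in expectation), from $\Gamma_{2n}$ (Lemma \ref{l:Gamma}), and from the probability that the local transversal fluctuation at $x=n$ is anomalously large (Corollary \ref{c:local.trans}); all are $o(Q_n)$ in the relevant expectations. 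This yields $\E X_{2n}\le 2\E X_n - c'\delta^3 Q_n$ for some $c'>0$ and all large $n$. Feeding this into the recursion $a_{2^k n_0}:=\E X_{2^k n_0}-2^k n_0\mu$, which by the above satisfies $a_{2^{k+1}n_0}\le 2a_{2^k n_0}-c'\delta^3 Q_{2^k n_0}$, and using $Q_{2^{k+1}n_0}\le D_6^{?}Q_{2^k n_0}$ from Lemma \ref{l:Qgrowth} together with $a_m\ge 0$ (subadditivity), one telescopes to get $a_{2^k n_0}\ge c'' Q_{2^k n_0}$ along the dyadic sequence, and then fills in general $n$ using monotonicity of $Q_n$ and the $\Gamma$-comparison between $X_n$ and $X_{n'}$ for $|n-n'|$ bounded, exactly as in the proof of \eqref{eq:lowerTailContBound2}.

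The main obstacle I anticipate is making the rerouting argument at the midpoint $\mathbf{n}$ genuinely clean: one must ensure that on $\cE_1\cap\cE_2$ the global geodesic $\gamma_{2n}$, when it does pass close to the axis at $x=n$, really can be compared to a path that is forced to lie in the $\delta W_n$-tubes of both slabs, and that the surgery near $\mathbf{n}$ (and near $\mathbf{0},\mathbf{2n}$) costs only $O(\Gamma_{2n})=o(Q_n)$. This requires invoking Corollary \ref{c:local.trans} at the correct scale (so that the transversal fluctuation at $x=n$ of $\gamma_{2n}$ is $\le \tfrac12\delta W_n$ except on an event of probability $\exp(1-cz_0^{\epsilon\theta/2})$ which can be made $<\tfrac14\delta^3$ by choosing $\delta$ small — note $z_0$ here plays against $\delta$, so some care in the order of quantifiers is needed), and it is essentially the undirected-surgery issue that already appears in Lemma \ref{l:tflowerw}; I would mirror that argument. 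Once the surgery lemma is in hand, the expectation bookkeeping and the dyadic telescoping are routine.
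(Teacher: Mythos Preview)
Your approach has a genuine directional gap. You apply Lemma \ref{l:tflowerw} at scale $n$ in each slab to get events $\cE_1,\cE_2$ saying that \emph{narrow} paths from $\mathbf{0}$ to $\mathbf{n}$ (resp.\ $\mathbf{n}$ to $\mathbf{2n}$) are at least $X_{\mathbf{0},\mathbf{n}}+\delta Q_n$ in cost. But this does not produce the inequality $X_{2n}\le X_{\mathbf{0},\mathbf{n}}+X_{\mathbf{n},\mathbf{2n}}-\tfrac12\delta Q_n$ you claim. After you reroute $\gamma_{2n}$ through $\mathbf{n}$ (via local transversal fluctuation control), the resulting half-paths $\tilde\gamma_1,\tilde\gamma_2$ are \emph{some} paths from $\mathbf{0}$ to $\mathbf{n}$ and $\mathbf{n}$ to $\mathbf{2n}$; if they happen to be narrow, $\cE_1,\cE_2$ tell you they are \emph{worse} than $X_{\mathbf{0},\mathbf{n}},X_{\mathbf{n},\mathbf{2n}}$ by $\delta Q_n$, which gives $X_{2n}\gtrsim X_{\mathbf{0},\mathbf{n}}+X_{\mathbf{n},\mathbf{2n}}+\delta Q_n$ --- the wrong sign. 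If they are not narrow (and generically they are not: $\gamma_{2n}$ has transversal fluctuation of order $W_{2n}\asymp W_n$, not $\delta W_n$), then $\cE_1,\cE_2$ say nothing. Either way the surgery does not connect $\cE_1\cap\cE_2$ to the gap you want.

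The paper's argument sidesteps all of this by applying Lemma \ref{l:tflowerw} at the \emph{larger} scale $Mn$ rather than at scale $n$. With probability $\ge\delta$, every path in $\Psi_{Mn,\delta}$ has passage time $\ge X_{Mn}+\delta Q_{Mn}$. The concatenation $\gamma$ of the $M$ individual geodesics $\gamma_{(i-1)\mathbf{n},i\mathbf{n}}$ has transversal fluctuation $O(W_n)$; since $W_n\le\delta W_{Mn}$ for $M$ large (as $W_{Mn}/W_n\ge M^{(1+\alpha)/2}$), this concatenation lies in $\Psi_{Mn,\delta}$ with high probability by Theorem \ref{t:trans.main} and a union bound. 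Hence $X_\gamma\ge X_{Mn}+\delta Q_{Mn}$ with probability $\ge\delta/2$, and since always $X_\gamma\ge X_{Mn}$, taking expectations gives $M\E X_n=\E X_\gamma\ge \E X_{Mn}+\tfrac{\delta^2}{2}Q_{Mn}\ge Mn\mu+\tfrac{\delta^2}{2}M^\alpha Q_n$. Dividing by $M$ finishes. No surgery at a midpoint, no dyadic telescoping, no resampling --- the scale separation $W_n\ll W_{Mn}$ does all the work.
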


\begin{proof}
Clearly it suffices to prove this for $n$ sufficiently large. Let $\delta>0$ be as in Lemma \ref{l:tflowerw} and let $n$ be sufficiently large so that the conclusion of that Lemma holds. Fix a large integer $M$ and for $i=1,2,\ldots, M$, let $\gamma_{i}$ denote a geodesic from $(i-1)\mathbf{n}$ to $i\mathbf{n}$, and let $\gamma$ denote the path from $0$ to $M\mathbf{n}$ obtained by the concatenation of $\gamma_{i}$s. Let $\gamma'$ denote the optimal path from $0$ to $M\mathbf{n}$ satisfying $\sup_{t}\inf_{x\in [0,Mn]}|\gamma'(t)-(x,0)|\le \delta W_{Mn}$. Since $\delta W_{Mn}\ge \delta M^{1/2+\alpha}W_{n}$ it follows from Theorem~\ref{t:trans.main} that for $M$ sufficiently large depending on $\delta$
\[
\P(\sup_{t}\inf_{x\in [0,Mn]}|\gamma'(t)-(x,0)|\le \delta W_{Mn}) \ge 1-M\exp(1-DM^{\theta(1/2+\alpha)})\ge 1-\delta/2
\]
and so
    $$\P(X_{\gamma}\ge X_{\gamma'})\ge 1-\delta/2.$$ This together with Lemma~\ref{l:tflowerw} (which states $\P(X_{\gamma'}\ge X_{Mn}+\delta Q_{Mn})\ge \delta$)
    implies that 
    $$\P(X_{\gamma}\ge X_{Mn}+\delta Q_{Mn})\ge \delta/2.$$ 
    Since $X_{\gamma}\ge X_{Mn}$ by definition it follows that 
    $\E X_{\gamma}\ge \E X_{Mn}+\delta^2Q_{Mn}/2$. Since $\E X_{\gamma}=M\E X_{n}$ we have that
    $$\E X_{n}\ge \frac1{M}(\E X_{Mn}+ \frac{\delta^2}{2}Q_{Mn})\ge n\mu+\frac{\delta^2}{2M^{1-\alpha}}Q_{n}$$
    completing the proof of the lemma. 
\end{proof}

We shall strengthen Lemma \ref{l:mean} to the following result which will be used in the proof of Proposition \ref{p:tflower}.

\begin{lemma}
    \label{l:meanpara}
    There exists $\delta_0>0$ and $c>0$ such that for all $\delta<\delta_0$ and all $n$ sufficiently large 
    $\E Y^{-}_{R_{n,\delta W_{n}}}\ge n\mu+cQ_{n}$. 
\end{lemma}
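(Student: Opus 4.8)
\textbf{Proof plan for Lemma~\ref{l:meanpara}.}

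The plan is to mimic the proof of Lemma~\ref{l:mean}, but at the level of thin parallelograms rather than point-to-point distances, so that the $M$-fold concatenation lives inside a thin rectangle of height $\delta W_n$. First I would fix $\delta<\delta_0$ (with $\delta_0$ to be chosen small, independent of $n$) and a large integer $M=M(\delta)$. Consider the thin rectangle $\cR_{Mn,\delta W_{Mn}}$, and observe via Lemma~\ref{l:growth34} that $\delta W_{Mn}\asymp \delta M^{1/2}\cdot\text{(polynomial)}\cdot W_n$, so that $\cR_{Mn,\delta W_{Mn}}$ can be tiled (up to boundary effects of size $O(W_n)$, which are negligible) by $M$ parallelograms $\cP_{i,k_{i-1},k_i,n,\delta W_{Mn}}$ whose left/right sides are vertical segments of length $\delta W_{Mn}$ sitting on the lines $x=(i-1)n$ and $x=in$. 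Taking $u\in L_{\cR_{Mn,\delta W_{Mn}}}$, $v\in R_{\cR_{Mn,\delta W_{Mn}}}$ achieving $Y^-_{\cR_{Mn,\delta W_{Mn}}}$, the geodesic $\gamma_{uv}$ decomposes into crossings of these parallelograms, and by the triangle inequality (Assumption~\ref{as:tri}) together with $\Gamma_{Mn}$ we get
\[
Y^-_{\cR_{Mn,\delta W_{Mn}}} \;\ge\; \min_{\uk}\sum_{i=1}^{M} Z^{-,\Lambda_i}_{i,k_{i-1},k_i,\delta W_{Mn}} \;-\; M\Gamma_{Mn},
\]
where I am using (a rescaled version of) the side-to-side passage times $Z^-$ across parallelograms and $\Lambda_i$ is the $i$-th width-$n$ strip. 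Since $\delta W_{Mn}\le n^{1-10\epsilon}$ for $\delta$ small and $n$ large, Corollary~\ref{c:percolation} (applied to the thresholded variables $\cZ_{i,k,k'} = (-(Z^-_{i,k,k'}-n\mu)/Q_n + k^2/\text{const})I(\cdot)$, which satisfy the required tail bound by Corollary~\ref{c:paraRect} and Lemma~\ref{l:YMinusBound}, exactly as in~\eqref{eq:ZtildeBound}) shows that with probability $\ge 1-\delta/4$ every such minimizing path has bounded vertical total variation and $\min_{\uk}\sum_i Z^{-,\Lambda_i}_{i,k_{i-1},k_i,\delta W_{Mn}}\ge Mn\mu - C_7 M\log^{C_7}(\cdot) Q_n$. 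In particular this matches $Y^-_{\cR_{Mn,\delta W_{Mn}}}\ge Mn\mu - C M Q_n$ with good probability, which is the thin-rectangle analogue of the trivial bound $X_{Mn}\ge Mn\mu$.

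Next I would run the surgery argument from Lemma~\ref{l:tflowerw}, \emph{applied with the ambient geodesic replaced by the $Y^-$-minimizing path of the thin rectangle}. The point of Lemma~\ref{l:tflowerw} — or more precisely of the version of it one gets by the same proof — is that with probability at least $\delta$, every path joining the left side to the right side of $\cR_{n,\delta W_n}$ that stays within transversal fluctuation $\delta W_n$ of the horizontal axis has passage time at least $Y^-_{\cR_{n,\delta W_n}} + \delta Q_n$ (the construction in Figure~\ref{f:trans.lower} never used that the competitor path went all the way from $\mathbf 0$ to $\mathbf n$; it only used crossings of $\cR_{\zeta n}$ and $\cR^*_{\zeta n}$ and a resampling in disjoint columns). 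Thus, letting $\gamma$ be the concatenation of the $M$ individual parallelogram-crossing geodesics and $\gamma'$ the $Y^-$-minimizer of $\cR_{Mn,\delta W_{Mn}}$, on the large-probability event that $\gamma'$ has transversal fluctuation $\le \delta W_{Mn}$ in each of the $M$ strips (which follows from Theorem~\ref{t:trans.main}/Corollary~\ref{c:local.trans} for $M$ large, since $\delta W_{Mn}\gg W_n$) and on the independence over the $M$ disjoint surgery columns, a Chernoff bound over the $M$ independent "+\,$\delta Q_n$" events gives that $X_\gamma$, hence $Y^-_{\cR_{Mn,\delta W_{Mn}}}$, beats $\sum_i Z^{-,\Lambda_i}_{i,\cdot,\cdot}$ by at least $c\delta M Q_n$ with probability $\ge 1-\delta/4$.

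Putting the two halves together: with probability $\ge \tfrac12$, $Y^-_{\cR_{Mn,\delta W_{Mn}}} \ge Mn\mu + c\delta M Q_n$ for a fixed $c>0$ (after also using Lemma~\ref{l:Gamma} to absorb the $M\Gamma_{Mn}$ error, which is $o(MQ_n)$), while deterministically $Y^-_{\cR_{Mn,\delta W_{Mn}}}\le X_{Mn}$ has only stretched-exponential upper fluctuations at scale $Q_{Mn}\asymp M^{\alpha}Q_n$ by Lemma~\ref{l:YPlusBound} and~\eqref{eq:varUpperQn}; hence $\E Y^-_{\cR_{Mn,\delta W_{Mn}}}\ge Mn\mu + \tfrac12 c\delta M Q_n$. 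Now I would exploit super-additivity of the thin-rectangle side-to-side distances: $\cR_{Mn,\delta W_{Mn}}$ contains $\cR_{Mn,\delta W_{Mn}}$ tiled by $M$ vertically-slimmer-but-same-height copies, and by translation invariance each tile has the same law as $\cR_{n,\delta W_{Mn}}$ (up to the usual height-matching, since $\delta W_{Mn}$ is the common height). By the triangle inequality $Y^-_{\cR_{Mn,\delta W_{Mn}}} \le \sum_{i=1}^M Y^+_{\cR_{n,\delta W_{Mn}}}$-type bounds are the wrong direction, so instead I would run the averaging as in Lemma~\ref{l:mean}: $\E Y^-_{\cR_{Mn,\delta W_{Mn}}} \le M\,\E Y^-_{\cR_{n, \delta W_{Mn}}} + O(M\,\E\Gamma_n)$ is false in general, but $\E X_{\gamma} = M\,\E(\text{single crossing})$ combined with $X_\gamma\ge Y^-_{\cR_{Mn,\delta W_{Mn}}}$ gives $\E(\text{single crossing of }\cR_{n,\delta W_{Mn}}) \ge \tfrac1M \E Y^-_{\cR_{Mn,\delta W_{Mn}}} \ge n\mu + \tfrac12 c\delta Q_n$; finally comparing the width-$n$ parallelogram/rectangle $\cR_{n,\delta W_{Mn}}$ with $\cR_{n,\delta W_n}$ via Corollary~\ref{c:paraRect} (both have height a constant multiple of $W_n$ once $\alpha$ is taken into account) transfers this lower bound to $\E Y^-_{\cR_{n,\delta W_n}}\ge n\mu + c' Q_n$. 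The main obstacle, as in Lemma~\ref{l:mean}, is bookkeeping: one must carefully track that the height $\delta W_{Mn}$ stays $\le n^{1-10\epsilon}$ (so the thin-rectangle estimates of Section~\ref{s:thin} and the surgery of Lemma~\ref{l:tflowerw} apply inside each strip), that the $M$ surgeries can be made in genuinely disjoint columns inside a \emph{thin} rectangle (which forces the surgery columns to be thin too — here one uses the freedom $\zeta$ in Lemma~\ref{l:tflowerw} together with $W_{\zeta n}\gg \delta W_n$), and that the final comparison of the two rectangle heights $\delta W_{Mn}$ vs.\ $\delta W_n$ does not cost more than a constant factor in $Q_n$, which is where Lemma~\ref{l:growth34} and Corollary~\ref{c:paraRect} are used.
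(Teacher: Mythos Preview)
Your approach is far more complicated than needed and has a genuine gap in the sub-additivity step. The paper's proof is a two-line argument: by Lemma~\ref{l:mean} we already know $\E X_n \ge n\mu + D_4 Q_n$, so it suffices to show $\E|Y^-_{\cR_{n,\delta W_n}} - X_n| \le \tfrac12 D_4 Q_n$ for small $\delta$. This follows immediately from Lemma~\ref{l:thin} (the thin-rectangle estimate, which bounds $X_n - Y^{\delta,-}_n$ in probability) combined with Cauchy--Schwarz and the uniform second-moment bound from Lemmas~\ref{l:YMinusBound}/\ref{l:YPlusBound}. You never invoke Lemma~\ref{l:thin}, which is precisely the tool designed for this purpose.

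The specific gap in your argument is the passage from a lower bound on $\E Y^-_{\cR_{Mn,\delta W_{Mn}}}$ to a lower bound on $\E Y^-_{\cR_{n,\delta W_n}}$. When you write ``$\E X_\gamma = M\,\E(\text{single crossing})$ combined with $X_\gamma \ge Y^-_{\cR_{Mn,\delta W_{Mn}}}$ gives $\E(\text{single crossing of }\cR_{n,\delta W_{Mn}}) \ge \tfrac1M \E Y^-_{\cR_{Mn,\delta W_{Mn}}}$'', the only concatenation $\gamma$ for which $\E X_\gamma$ factors as $M$ times a fixed expectation is the concatenation of point-to-point geodesics $((i-1)n,y)\to(in,y)$, whose pieces are distributed as $X_n$, not as $Y^-$ of any thin rectangle. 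So this step recovers Lemma~\ref{l:mean}, not Lemma~\ref{l:meanpara}. If instead $\gamma$ is built from the parallelogram crossings determined by where $\gamma'$ hits the lines $x=in$, those pieces are neither independent nor identically distributed, and $\E X_\gamma$ does not factor. Your final ``comparison of the two rectangle heights $\delta W_{Mn}$ vs.\ $\delta W_n$'' also goes the wrong way: $\delta W_{Mn} > \delta W_n$, so $Y^-_{\cR_{n,\delta W_{Mn}}} \le Y^-_{\cR_{n,\delta W_n}}$ is the trivial direction, and Corollary~\ref{c:paraRect} does not reverse it.
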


\begin{proof}
By Lemma~\ref{l:mean} it suffices to show that for small enough $\delta$,
\begin{equation}
\E|Y^{-}_{R_{n,\delta W_{n}}} - X_n| \leq \frac12 D_4 Q_n.
\end{equation}
Now
\begin{align*}
\E|Y^{-}_{R_{n,\delta W_{n}}} - X_n| &\leq \delta^{3\alpha/7}Q_n + \E\Big[|Y^{-}_{R_{n,\delta W_{n}}} - X_n|I(|Y^{-}_{R_{n,\delta W_{n}}} - X_n|\geq \delta^{3\alpha/7}Q_n)\Big]\\
&\leq \delta^{3\alpha/7}Q_n +  \P[|Y^{-}_{R_{n,\delta W_{n}}} - X_n|\geq \delta^{3\alpha/7}Q_n]^{1/2}\E\Big[|Y^{-}_{R_{n,\delta W_{n}}} - X_n|^2\Big]^{1/2}\\
&\leq \delta^{3\alpha/7}Q_n +C\left(\exp(-\delta^{-\epsilon\theta/4})+\delta^{-1/2}\exp(-cx^{\theta}\delta^{-3\alpha \theta/7})+\exp(-n^{\kappa/100})\right)^{1/2}Q_n\\
&\leq \frac12 D_4 Q_n
\end{align*}
where the second inequality is by Cauchy-Schwartz, the third is by Lemma~\ref{l:thin} and our concentration estimates for passage times across rectangles and the final inequality holds for $\delta>0$ small enough.
\end{proof}

We are now ready to prove Proposition \ref{p:tflower}. 

\begin{proof}[Proof of Proposition \ref{p:tflower}]
Let $\varepsilon>0$ be fixed. Let $C=C(\varepsilon)>0$ be such that 
$\P(X_{n}\ge n\mu+CQ_{n})\le \varepsilon/2$. It therefore suffices to prove that for $\delta$ sufficiently small, 
$$\P(X_{n}\le n\mu+CQ_{n}, \mathfrak{W}_{n}\le \delta W_{n})\le \varepsilon/2.$$ 
For notational convenience, let $B_{\delta}$ denote the event $\{\mathfrak{W}_{n}\le \delta W_{n}\}$. 
Let $\rho>0$ be sufficiently small to be chosen later appropriately such that $\rho^{-1}$ is an integer. For $i=1,2,\ldots, \rho^{-1}$ let $\Lambda_{i,\rho}$ denote the strip $\{(i-1)\rho n \le x \le i\rho n\}$. Define  $$Y^{-}_{i,\delta}:=\inf_{u\in L_{i-1},v\in L_{i}} X_{uv}$$
where $L_{i}$ is the line segment $\{i\rho n\}\times [-\delta W_{n}, \delta W_{n}]$. It follows from Lemma \ref{l:Gamma} that for $n$ sufficiently large,
$$\P\left(X_{n}\le n\mu+CQ_{n}, \sum_{i} Y^{-}_{i,\delta} \ge n\mu+2CQ_{n}, B_{\delta}\right)\le \varepsilon/4.$$
Notice, further that from Assumption \ref{as:resamp1}, it follows that for $n$ sufficiently large
$$\P\left(\sum_{i} Y^{-}_{i,\delta} \le n\mu+2CQ_{n}, \sum_{i} Y^{-,\Lambda_{i,\rho}}_{i,\delta} \ge n\mu+3CQ_{n}\right)\le \varepsilon/8$$
and hence it suffices to show that 
\begin{equation}
    \label{e:lowerred}
    \P\left(\sum_{i} Y^{-,\Lambda_{i,\rho}}_{i,\delta} \le n\mu+3CQ_{n}\right)\le \varepsilon/8.
\end{equation}
To prove \eqref{e:lowerred}, choose $\delta$ sufficiently small compared to $\rho$ such that $\E Y^{-,\Lambda_{i,\rho}}_{i,\delta}\ge \rho n\mu +c Q_{\rho n}$, by Lemma \ref{l:meanpara}. Observe also that $Y^{-,\Lambda_{i,\rho}}_{i,\delta}$ are i.i.d.\ and from Lemma~\ref{l:YMinusBound} $Q_{\rho n}^{-1}|Y^{-,\Lambda_{i,\rho}}_{i,\delta}-\rho n \mu|$ has stretched exponential tails uniformly in $n$. Also, by Lemma \ref{l:growth34}, 
$\rho^{-1}Q_{\rho n}\gg Q_{n}$ for $\rho$ sufficiently small, and hence 
$$ \E \sum_{i} Y^{-,\Lambda_{i,\rho}}_{i,\delta} \ge n\mu+ c\rho^{-1}Q_{\rho n}\geq n\mu+3CQ_{n}+ \frac12 c\rho^{-1}Q_{\rho n}.$$
Notice also, that by standard concentration results for sums of i.i.d. random variables with stretched exponential tails (see, e.g.\ \cite[Proposition 2.1]{GH23}), 
$$\P\bigg[\Big|\sum_{i} Y^{-,\Lambda_{i,\rho}}_{i,\delta}-\E \sum_{i} Y^{-,\Lambda_{i,\rho}}_{i,\delta}\Big| \geq \frac12 c\rho^{-1}Q_{\rho n} \bigg]\leq \varepsilon/8$$
for small enough $\rho$ which establishes~\eqref{e:lowerred} and completes the proof of the proposition.
\end{proof}

\section{Proof of Proposition \ref{p:percevent}}
\label{s:percevent}
We now prove Proposition \ref{p:percevent}. That local events which happen with large probability in typical environments also happen at most locations along the geodesic with large probability has been previously shown in exactly solvable setting~\cite{BSS14, BB23}. Our argument follows the same approach. The basic idea is to use a \emph{percolation argument}, to take a union bound over all possible blocks of paths (the control on the number of such blocks of paths is provided by Corollary \ref{c:percolation}), and control the probability of there being a significant fraction of bad locations along any given block path. The main difference between the argument here and the existing ones in the literature is that the events constituting $\cC_{i,j}$ are not all local, so a multi-scale argument is required specifically to deal with the events $\cC^{(2)}$.

\subsection{Percolation Events}
For convenience of implementing the percolation argument, we shall use proxies for the event $\cC_{i,j}$ which we define as follows.  Define
\begin{align*}
\cD^{(1)}_{i,j}&=\cS^{(n)}_{\frac12 R,(in,(j- R) W_n))}\cap \cO^{(n)}_{\frac12 R,(in,(j- R) W_n))}\cap \cS^{(n)}_{\frac12 R,(in,(j+ R-1) W_n))}\cap \cO^{(n)}_{\frac12 R,(in,(j+ R-1) W_n))},
\end{align*}
and
\begin{align*}
\cD^{(2)}_{i,j}&=\bigcap_{i'=i}^{i+1}\bigcap_{j'=j-R}^{j+R} \cL^{(n),R}_{i',(i+\Theta),j',R/10}\\
&\quad \cap \bigcap_{i'=i}^{i+1}\bigcap_{j'=j-R}^{j+R} \cL^{(n),L}_{i',(i-\Theta),j',R/10}
\end{align*}
We let $\cD^{(3)}_{i,j}=\cC^{(3)}_{i,j}$ and set $\cD_{i,j}=\cD^{(1)}_{i,j}\cap\cD^{(2)}_{i,j}\cap \cD^{(3)}_{i,j}$.  By the transversal fluctuation estimates Lemma \ref{l:trans.events} and Lemma \ref{l:local.trans.proof} $\cD_{i,j}\subset \cC_{i,j}$ for large enough $R$.

We shall prove the following proposition which shall imply Proposition \ref{p:percevent}. 

\begin{proposition}
    \label{p:perceventloc}
    There exists $R$ such that for $M$ sufficiently large and  $n$ sufficiently large (depending on $M$)
\[
\P\bigg[ \sum_{i=1}^{M}  I(\cD_{i,J_i}) \geq \frac{9}{10} M \bigg ] \geq 1-M^{-10}.
\]
\end{proposition}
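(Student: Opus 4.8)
The plan is to prove Proposition~\ref{p:perceventloc} by a percolation argument over the possible ``block paths'' of the geodesic $\gamma$ from $\origin$ to $(Mn,0)$, taking a union bound over all admissible sequences of crossing heights and controlling, for each fixed sequence, the probability of a large fraction of bad blocks. First I would set up the discrete skeleton exactly as in Section~\ref{s:percgen}: let $\ell_i$ be the line $x=in$, let $t_i$ be the first hitting time of $\ell_i$ by $\gamma$, and let $J_i=\lfloor\gamma_2(t_i)/W_n\rfloor$ so that $\uk^\gamma=(J_0,\dots,J_M)\in\mathfrak{K}_M$ with $J_0=J_M=0$. By Theorem~\ref{t:trans.main} (and using $W_{Mn}=O(M^{7/8}W_n)$ from Lemma~\ref{l:growth34}) one has $\P[\max_i|J_i|\ge M]\le M^{-100}$, so we may restrict to block paths with $\uk$ staying in $[-M,M]$; moreover Theorem~\ref{t:trans.main} gives that $\P[\tau_1(\uk^\gamma)\ge RM]$ is stretched-exponentially small in $R$, so we may also assume $\tau_1(\uk^\gamma)\le R_0 M$ for a suitable absolute constant $R_0$. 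The number of such block paths is then at most $(C(R_0))^M$ by a standard counting bound (or by invoking the counting implicit in Corollary~\ref{c:percolation}).

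The next step is the key probabilistic input: for a \emph{fixed} admissible sequence $\uk=(k_0,\dots,k_M)$, bound $\P[\#\{i: \cD_{i,k_i}\text{ fails}\}\ge \tfrac1{10}M]$. The events $\cD^{(1)}_{i,k_i}$ and $\cD^{(3)}_{i,k_i}$ are local — measurable with respect to $\omega$ restricted to a bounded neighborhood of $\Lambda_{i,k_i}$ of size $O(Rn\times RW_n)$ — so for indices $i,i'$ with $|i-i'|$ larger than an absolute constant $\kappa_0(R)$ the events $\cD^{(1)}_{i,k_i}\cap\cD^{(3)}_{i,k_i}$ and $\cD^{(1)}_{i',k_{i'}}\cap\cD^{(3)}_{i',k_{i'}}$ are independent; splitting the index set into $\kappa_0(R)$ arithmetic progressions and using that each individual event has failure probability $\le \exp(-cR^{c})$ (by Lemma~\ref{l:trans.SOGam} and Assumption~\ref{as:concentration}/Lemma~\ref{l:Gamma} for $\cC^{(3)}$), a Chernoff bound for sums of independent indicators gives that the number of failures among the local events exceeds $\tfrac1{20}M$ with probability at most $\exp(-c'M)$ for $R$ large. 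This beats the $(C(R_0))^M$ union bound once $R$ is chosen large enough. The remaining events $\cD^{(2)}_{i,k_i}$ (built from $\cL^{(n),R}$) are not local: each $\cL^{(n),R}_{i',i\pm\Theta,j',R/10}$ involves parallelogram/segment passage times over $\Theta$ columns. Here I would use the multiscale structure of $\cL^{(n),R}$ (the intersection over dyadic scales $j\le j_{\max}$ of $\cH$-events and $\cS$-events) together with Lemma~\ref{l:cL.bound}, which gives $\P[(\cL^{(n),R}_{x,x',y,z})^c]\le\exp(1-Dz^{\epsilon\theta/2})$. Because $\Theta\le \lceil\log^{1000/\epsilon^2}M\rceil$ is only polylogarithmic in $M$, a single $\cL$-event failure is already $M^{-\omega(1)}$; grouping the $\cL$-events by their ``coarsest scale'' (i.e. the column-block of width $\Theta n$ to which the endpoint $a_1^\pm$ or $a_2^\pm$ belongs), consecutive groups separated by more than $3\Theta$ columns are independent, and the same arithmetic-progression-plus-Chernoff argument applies, now with an even better per-event bound, so the number of $i$ for which $\cD^{(2)}_{i,k_i}$ fails exceeds $\tfrac1{20}M$ with probability at most $\exp(-M)$.

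Combining, for each fixed admissible $\uk$ the probability that $\cD_{i,k_i}$ fails for at least $\tfrac1{10}M$ indices $i$ is at most $2\exp(-cM)$, and a union bound over the at most $(C(R_0))^M$ admissible $\uk$ plus the $O(M^{-100})$ contributions from the transversal-fluctuation restrictions gives $\P[\sum_i I(\cD_{i,J_i})<\tfrac9{10}M]\le M^{-10}$ once $R$ and then $M$ are chosen large enough. Finally, Proposition~\ref{p:perceventloc} implies Proposition~\ref{p:percevent}: since $\cD_{i,j}\subset\cC_{i,j}$ for $R$ large (by Lemma~\ref{l:trans.events} applied to $\cD^{(1)}$, Lemma~\ref{l:local.trans.proof} applied to $\cD^{(2)}$, and the identity $\cD^{(3)}=\cC^{(3)}$), the event $\{\sum_i I(\cD_{i,J_i})\ge\tfrac9{10}M\}$ is contained in $\{\sum_i I(\cC_{i,J_i})\ge\tfrac9{10}M\}$, and the same probability bound transfers verbatim. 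The main obstacle I anticipate is handling the non-local events $\cD^{(2)}$: one must verify carefully that the ``range of dependence'' of each $\cL^{(n),R}$ event is at most $O(\Theta n)$ horizontally and $O(R\Theta W_n)$ vertically so that the independence-in-arithmetic-progressions reduction is legitimate, and one must track the interplay between the (large, fixed) constant $R$, the (polylog) parameter $\Theta=\Theta(M)$, and the (enormous, $M$-dependent) scale $n$ so that all the tail bounds from Lemmas~\ref{l:trans.SOGam} and~\ref{l:cL.bound} actually dominate the $(C(R_0))^M$ counting factor.
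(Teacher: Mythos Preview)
Your overall strategy---restrict to block paths $\uk$ with $\tau_1(\uk)\le R_0M$, union-bound over the at most $C^M$ such paths, and show for each fixed path that the fraction of bad indices is small with probability $\le e^{-cM}$---matches the paper's architecture (Proposition~\ref{p:percgen1} and Lemma~\ref{l:tauboundgeo}). Your handling of $\cD^{(1)}$ and $\cD^{(3)}$ is right in spirit, though these events are \emph{not} literally measurable with respect to $\omega$ on a bounded region (they involve passage times, which depend on the whole field); the paper first passes to resampled versions $\widetilde{\cD}^{(k)}$ built from $X^{\Lambda_i}$ and controls the discrepancy via Assumption~\ref{as:resamp1}. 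Also, the bound $\tau_1(\uk^\gamma)\le R_0M$ is not a consequence of Theorem~\ref{t:trans.main} (which only controls $\max_i|J_i|$); it requires the separate percolation argument of Lemma~\ref{l:tauboundgeo}.

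The real gap is in $\cD^{(2)}$. Your claim that ``a single $\cL$-event failure is already $M^{-\omega(1)}$'' is incorrect: in $\cD^{(2)}_{i,j}$ the events are $\cL^{(n),R}_{i',i\pm\Theta,j',R/10}$, so the parameter $z$ in Lemma~\ref{l:cL.bound} equals $R/10$, a \emph{fixed} constant, and the failure probability is $\exp(1-D(R/10)^{\epsilon\theta/2})$, independent of $M$. (You may be conflating this with the $\cS$-components of $\cL$, which do have failure probability $n^{-\omega(1)}$; but the $\cH$-components at the lowest dyadic scale $\ell=0$ have constant failure probability, and these dominate the bound in Lemma~\ref{l:cL.bound}.) With constant $p$ and dependence range $K=O(\Theta)$, your arithmetic-progression-plus-Chernoff bound gives at best $\exp\big(-cM\log(1/p)/\Theta\big)$ per fixed path, which cannot beat the $C^M$ path count once $\Theta=\Theta(M)$ grows, regardless of how large $R$ is.

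The paper's fix (Lemma~\ref{l:d21l}) is a genuine scale-by-scale analysis that actually uses the multiscale structure you mention but do not exploit: it decomposes $\cD^{(2)}$ into its constituent $\cH^{(2^\ell n)}$-events for each $\ell\in[0,\log_2\Theta]$ and handles the $\cS$-events separately. At scale $\ell$ the (resampled) $\cH$-event has horizontal range only $2^\ell n$, \emph{and} its failure probability is $\exp(-c(R\,2^{\ell/20})^\theta)$, which decreases in $\ell$. The Chernoff bound at scale $\ell$ then gives roughly $\exp\big(-c\,2^{-\ell}M\cdot R^{\theta/200}2^{\ell\theta/200}\big)$, while the number of block paths restricted to an arithmetic progression mod $2^\ell$ is at most $\exp\big(C\ell\,2^{-\ell}M\big)$; one needs $R^{\theta/200}2^{\ell\theta/200}>C'\ell$, which holds for small $\ell$ by taking $R$ large and for large $\ell$ automatically. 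Allotting a budget of $\ell^{-100}M/10000$ failures to scale $\ell$ and summing over $\ell$ keeps the total below $M/1000$. Without this dyadic decomposition the argument does not close.
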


Recall that $\cD_{i,j}$ is an intersection of three events. Proposition \ref{p:perceventloc} will follow from the following three lemmas which deal with these three events.

\begin{lemma}
    \label{l:d1}
     There exists $R$ such that for $M$ sufficiently large and  $n$ sufficiently large (depending on $M$)
\[
\P\bigg[ \sum_{i=1}^{M}  I(\cD^{(1)}_{i,J_i}) \geq \frac{99}{100} M \bigg ] \geq 1-\frac{1}{3}M^{-10}.
\]
\end{lemma}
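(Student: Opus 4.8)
\textbf{Proof plan for Lemma \ref{l:d1}.}

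The plan is to run the percolation/union-bound scheme of Corollary~\ref{c:percolation} over all possible ``block paths'' $\uk \in \mathfrak{K}_M$, ruling out for each fixed such path the event that more than $\frac{1}{100}M$ of the visited blocks fail $\cD^{(1)}$. First I would observe that $\cD^{(1)}_{i,j}$ is a \emph{local} event: it depends only on $\omega$ restricted to a bounded-radius neighbourhood of the block $\Lambda_{i,j}$ (a neighbourhood of size $O(Rn)\times O(RW_n)$, coming from the $\cS$ and $\cO$ events at scale $\frac12 R$), hence there is a fixed integer $\Theta' = \Theta'(R)$ such that the events $\{\cD^{(1)}_{i_1,j_1}, \dots, \cD^{(1)}_{i_K,j_K}\}$ are mutually independent whenever the indices are separated by at least $\Theta'$ in each coordinate. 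By Lemma~\ref{l:trans.SOGam}, for each fixed $(i,j)$ we have $\P[(\cD^{(1)}_{i,j})^c] \le 4\exp(1-D(\tfrac12 R)^\theta) =: p_R$, and by choosing $R$ large we can make $p_R$ as small as we like; in particular smaller than any fixed power of a small constant to be chosen below.

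Next I would set up the union bound. Fix $\uk=(k_0,\dots,k_M)\in\mathfrak{K}_M$ and note that the geodesic $\gamma$ from $\mathbf{0}$ to $(Mn,0)$ visits the blocks $\Lambda_{i,J_i}$, where $J_i = \lfloor y_i/W_n\rfloor$; on the event $\{J_i = k_i \ \forall i\}$ the visited block row in column $i$ is $k_i$. Partition the columns $\{1,\dots,M\}$ into $\Theta'$ residue classes mod $\Theta'$; within each class, along a fixed path the events $(\cD^{(1)}_{i,k_i})^c$ are independent (their neighbourhoods are disjoint provided the rows $k_i$ are also suitably controlled, which we guarantee by first intersecting with the event $\cJ = \{\max_i |J_i| \le \tfrac14 n/W_n\}$ or, more crudely, the event $\{\max_i|J_i|\le M\}$ from Theorem~\ref{t:trans.main}, both of which hold with probability $\ge 1-M^{-100}$). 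Hence for a fixed $\uk$, by a binomial tail bound (Chernoff), $\P[\sum_{i}I((\cD^{(1)}_{i,k_i})^c) \ge \tfrac{1}{100}M, \ J_i=k_i\ \forall i] \le \Theta'\cdot \P[\mathrm{Bin}(M/\Theta', p_R) \ge M/(100\Theta')] \le \exp(-cM)$ once $p_R$ is small enough depending on $\Theta'$. Now I would apply Corollary~\ref{c:percolation} (with the $\cZ$ variables the negatives of the side-to-side parallelogram passage times normalized by $Q_n$, exactly as in \eqref{eq:percApplication1}) to control the number of paths $\uk$ that the geodesic could possibly follow: the geodesic's $\uk^\gamma$ satisfies $\sum_{i=1}^M Z^{-,\Lambda_i}_{i,k^\gamma_{i-1},k^\gamma_i} - Mn\mu \le O(MQ_n)$ (since $X_{Mn}\le Mn\mu + O(MQ_n)$ by Lemma~\ref{l:AnBound} and concentration), and by Corollary~\ref{c:percolation} the number of $\uk$ with $\tau_1(\uk) \le \kappa_0 M$ is at most $e^{C\kappa_0 M}$ for a suitable absolute constant, while paths with larger $\tau_1$ are ruled out with probability $\ge 1 - e^{-cM}$. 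Taking the union bound over these $\le e^{C\kappa_0 M}$ paths against the per-path bound $e^{-cM}$, and choosing $R$ large enough that $c > C\kappa_0$, gives the claim with room to spare, i.e.\ probability $\ge 1 - \tfrac13 M^{-10}$ for large $M$ and then large $n$.

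The main obstacle is bookkeeping the interaction between the two sources of combinatorial explosion: the number of admissible block-paths $\uk$ (controlled by the $\tau_1$-weighted percolation estimate of Corollary~\ref{c:percolation}, which costs $e^{C\tau_1}$) and the per-path failure probability (which decays like $e^{-cM}$ only because each bad block is independent within a residue class and $p_R$ is tiny). One must choose $R$ \emph{after} $C$ is fixed so that the decay rate $c(R)$ beats the entropy rate $C$, and one must verify that restricting to $\tau_1(\uk) = O(M)$ loses nothing, which follows because a geodesic with $\tau_1(\uk^\gamma) \gg M$ would have length $\ge Mn\mu + \Omega(\tau_1 Q_n)$ by the quadratic gain $\tfrac{(k_{i-1}-k_i)^2}{32}$ in the $\cZ$ variables (cf.\ \eqref{eq:percApplication1}), contradicting $X_{Mn}\le Mn\mu + O(MQ_n)$. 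A minor additional point is that $\cD^{(1)}_{i,j}$'s neighbourhood radius depends on $R$ while $\Theta$ in the surrounding construction is chosen $\gg \log^{1000/\epsilon^2}M$; since $R$ is a fixed constant this is harmless, but one should note $\Theta' = \Theta'(R)$ is absolute and in particular $\le \Theta$ for large $M$, so the residue-class decomposition is consistent with the block spacing used elsewhere.
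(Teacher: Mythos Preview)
Your high-level strategy---control $\tau_1(\uk^\gamma)$ via the percolation estimate, then for each admissible block path use a Chernoff bound on the number of failed blocks, then union-bound over paths---is exactly the scheme the paper packages as Proposition~\ref{p:percgen1} and Lemma~\ref{l:tauboundgeo}. So the architecture is right.

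The genuine gap is your locality claim. The event $\cD^{(1)}_{i,j}$ is \emph{not} determined by $\omega$ restricted to any bounded neighbourhood of $\Lambda_{i,j}$. Both $\cS^{(n)}_{z,u}$ and $\cO^{(n)}_{z,u}$ are built out of point-to-point passage times $X_{vv'}$, and $X_{vv'}$ is an infimum over \emph{all} paths from $v$ to $v'$, which can leave any fixed region. Consequently there is no $\Theta'(R)$ such that $\cD^{(1)}_{i_1,j_1},\dots,\cD^{(1)}_{i_K,j_K}$ are independent for $\Theta'$-separated indices, and your Chernoff step has no foundation as written.

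The paper repairs this in two moves. First, it splits $\cD^{(1)}_{i,j}=\cD^{(1,1)}_{i,j}\cap\cD^{(1,2)}_{i,j}$ into the $\cS$-part and the $\cO$-part. For the $\cS$-part the failure probability is at most $\exp(1-D(\tfrac12 R\,n)^{\frac12\epsilon\theta})$, which decays in $n$; so for $n$ large depending on $M$ one simply union-bounds over all $(i,j)$ with $|j|\le M$ and gets that \emph{every} block satisfies $\cD^{(1,1)}$ (Lemma~\ref{l:d1s}). No independence is needed there. Second, for the $\cO$-part the failure probability is only $\exp(1-D(\tfrac12 R)^\theta)$, independent of $n$, so the percolation argument is genuinely required---but to obtain independence across columns the paper replaces each passage time $X_{u_1u_2}$ appearing in $\cH^{(n2^{-j})}_{x,\cdot,\cdot}$ by the resampled version $X^{\Lambda_{x+1}}_{u_1u_2}$ (environment kept on the strip $[xn2^{-j},(x+1)n2^{-j}]\times\R$, resampled independently outside). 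The resulting events $\widetilde{\cD}^{(1,2)}_{i,j}$ depend only on $\omega$ restricted to the column $[in,(i+1)n]\times\R$ together with independent extra randomness, hence are genuinely independent across $i$; now your Chernoff-plus-path-entropy step goes through (this is Lemma~\ref{l:d1oind2}). One then checks via Assumption~\ref{as:resamp1} that $\P[\widetilde{\cD}^{(1,2)}_{i,j}\cap(\cD^{(1,2)}_{i,j})^c]\le n^{-10}$ (Lemma~\ref{l:d1oerr}), so the switch costs nothing after a union bound over $O(M^2)$ blocks.

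In short: your outline becomes correct once ``$\cD^{(1)}_{i,j}$ is local'' is replaced by ``$\cD^{(1)}_{i,j}$ can be approximated by a column-resampled proxy that is independent across $i$, with negligible error.'' That resampling step is the missing idea.
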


\begin{lemma}
    \label{l:d2}
      There exists $R$ such that for $M$ sufficiently large and  $n$ sufficiently large (depending on $M$)
\[
\P\bigg[ \sum_{i=1}^{M}  I(\cD^{(2)}_{i,J_i}) \geq \frac{99}{100} M \bigg ] \geq 1-\frac{1}{3}M^{-10}.
\]
\end{lemma}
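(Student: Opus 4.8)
Looking at Lemma \ref{l:d2}, I need to show that the event $\cD^{(2)}_{i,j}$ (which controls local transversal fluctuations of geodesics from points near the block $\Lambda_{i,j}$ to points $\Theta$ columns away) holds at at least $\frac{99}{100}M$ of the locations visited by the geodesic $\gamma$ from $\mathbf{0}$ to $(Mn,0)$, with probability $1-\frac13 M^{-10}$.

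\begin{proof}[Proof sketch of Lemma \ref{l:d2}]
The plan is to run a percolation (union bound) argument over all admissible ``block paths'' of the geodesic, of the type already used in Section \ref{s:percgen} and in the companion exactly solvable works. First I would fix $R$ large enough that all the transversal fluctuation and local transversal fluctuation estimates of Sections \ref{s:trans} and \ref{s:ltf} are available with the desired exponents. Given the geodesic $\gamma$ from $\mathbf{0}$ to $(Mn,0)$, recall $(in,y_i)$ is its first crossing of the line $x=in$ and $J_i=\lfloor y_i/W_n\rfloor$; thus $\uk^\gamma=(J_0,\dots,J_M)\in\mathfrak{K}_M$ (after the obvious shift so that $k_0=0$), and $\tau_1(\uk^\gamma)=\sum_i|J_i-J_{i-1}|$. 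By Theorem \ref{t:trans.main} together with Lemma \ref{l:Gamma} and the concatenation bound \eqref{eq:NBDecomposition}, the event $\{\tau_1(\uk^\gamma)\le RM\}$ (say) has probability at least $1-\frac1{10}M^{-10}$ for $R$ large, since a path with $\tau_1(\uk^\gamma)> RM$ either has large total transversal fluctuation or is much longer than $X_{Mn}$; I would record this as a preliminary reduction. So it suffices to take a union bound over the at most (number controlled via the counting estimate implicit in Proposition \ref{p:perc1}/Corollary \ref{c:percolation}, i.e.\ $\exp(CM\log^C R)$ many) sequences $\uk\in\mathfrak{K}_M$ with $\tau_1(\uk)\le RM$.

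For a fixed such sequence $\uk$, I would bound the probability that $\sum_{i=1}^M I(\cD^{(2)}_{i,k_i})< \frac{99}{100}M$, i.e.\ that the ``bad'' set $B=\{i:(\cD^{(2)}_{i,k_i})^c\}$ has size $\ge \frac1{100}M$. The key point is that $\cD^{(2)}_{i,k_i}$ is \emph{not} local in $\omega$, since it constrains paths stretching $\Theta$ columns to the left and right; but it only depends on $\omega$ restricted to a window of $O(\Theta)$ columns around column $i$. Hence $I(\cD^{(2)}_{i,k_i})$ and $I(\cD^{(2)}_{i',k_{i'}})$ are independent whenever $|i-i'|\ge C\Theta$. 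I would therefore split $\{1,\dots,M\}$ into $O(\Theta)$ residue classes modulo $C\Theta$; on each class the indicators are i.i.d.\ Bernoulli, and by Lemma \ref{l:cL.bound} (and its reflection), together with Lemma \ref{l:trans.SOGam}, each has failure probability at most $\exp(1-DR^{\epsilon\theta/2})$, which is tiny once $R$ is large. A standard Chernoff bound on the number of failures within each residue class (a Binomial with very small success probability) then gives that $|B|\ge \frac1{100}M$ with probability at most $\exp(-cM/\Theta)$ for an absolute $c>0$ depending on $R$. Since $\Theta=O(\log^{1000/\epsilon^2}M)$ is only polylogarithmic in $M$, this beats the $\exp(CM\log^C R)$ entropy factor from the union bound, for $M$ (and then $n$, depending on $M$) large.

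Putting the two pieces together: outside the preliminary bad event (probability $\le \frac1{10}M^{-10}$), the geodesic's block path lies in the controlled family, and by the union bound the probability that for that block path more than $\frac1{100}M$ of the $\cD^{(2)}$ events fail is at most $\exp(CM\log^C R)\exp(-cM/\Theta)\le \frac15 M^{-10}$ for $M$ large. Adding these gives the claimed $1-\frac13 M^{-10}$. The main obstacle I anticipate is the bookkeeping in the union bound: one must make sure the entropy of block paths (which is $\exp(\Theta(M))$, polynomial in $M$ in the exponent but with a constant depending on $R$) is genuinely dominated by the per-path deviation bound $\exp(-cM/\Theta)$; this forces a careful choice of the order of quantifiers (first $R$ large enough that the single-block failure probability $\exp(1-DR^{\epsilon\theta/2})$ is small relative to $1/\Theta^{2}$ times the entropy constant, then $M$ large, then $n$ large), and one must verify that the ``almost independence at distance $C\Theta$'' claim is legitimate given the precise spatial supports of $\cL^{(n),R}_{\cdot}$, $\cL^{(n),L}_{\cdot}$ and $\cS^{(n)}_{\cdot}$ appearing in $\cD^{(2)}_{i,j}$. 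The rest is routine and parallels the argument for Lemma \ref{l:d1}.
\end{proof}
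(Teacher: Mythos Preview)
Your argument has a genuine gap at the entropy-vs-concentration step. You correctly note that the entropy of block paths $\uk\in\mathfrak{K}_M$ with $\tau_1(\uk)\le RM$ is $\exp(C(R)M)$, linear in $M$ in the exponent. After splitting into $O(\Theta)$ residue classes and applying Chernoff within each class, your per-path deviation bound is at best $\exp\bigl(-c(R)\,M/\Theta\bigr)$: the number of independent trials in a class is $M/\Theta$, and the $\log(1/p)$ factor from Chernoff is $\asymp D R^{\epsilon\theta/2}$, a constant in $R$ only. Since $\Theta=\Theta(M)=\log^{1000/\epsilon^2}M$ grows with $M$ while $R$ is fixed first, the inequality $c(R)/\Theta > C(R)$ that you need \emph{fails for all large $M$}. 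No choice of quantifier order fixes this: you cannot make the single-block failure probability ``small relative to $1/\Theta^2$'' when $\Theta$ depends on $M$ and $R$ does not. A secondary point: the events $\cD^{(2)}_{i,j}$ are \emph{not} $\sigma(\omega_\Lambda)$-measurable for any bounded $\Lambda$, since they are built from passage times $X_{u_1u_2}$; so ``independence at distance $C\Theta$'' is not literally true and requires an explicit localization (resampling) step, not just a verification.

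The paper's proof circumvents the first obstruction by a multiscale decomposition rather than treating $\cD^{(2)}$ as a single block event of range $\Theta$. Recalling that $\cL^{(n),R}_{i,i+\Theta,j,R/10}$ is an intersection over dyadic scales $\ell\in[0,\ell_{\max}]$ of events $\cH^{(2^\ell n)}_{\cdots}$ and $\cS^{(n2^\ell)}_{\cdots}$, the paper bounds separately, for each scale $\ell$, the number of $i$ at which the scale-$\ell$ piece fails, allowing a budget $\asymp (\ell\vee 1)^{-100}M$ that is summable in $\ell$. At scale $\ell$ the (resampled) events have dependence range $2^\ell$ and failure probability $\exp\bigl(-c(R2^{\ell/20})^{\theta}\bigr)$; crucially, one only needs to union-bound over the coarsened sequences $\mathsf{J}_s=(j_s,j_{2^\ell+s},\ldots)$, whose entropy is $\exp(C\ell\,2^{-\ell}M)$ rather than $\exp(CM)$. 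The Chernoff exponent $\asymp 2^{-\ell}M\cdot R^\theta 2^{\ell\theta/20}$ then beats this reduced entropy for every $\ell$: for large $\ell$ because $2^{\ell\theta/20}\gg \ell$, and for small $\ell$ by taking $R$ large. This scale-by-scale balancing of dependence range, failure probability, and entropy is the missing idea in your sketch.
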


\begin{lemma}
    \label{l:d3}
      There exists $R$ such that for $M$ sufficiently large and  $n$ sufficiently large (depending on $M$)
\[
\P\bigg[ \sum_{i=1}^{M}  I(\cD^{(3)}_{i,J_i}) \geq \frac{99}{100} M \bigg ] \geq 1-\frac{1}{3}M^{-10}.
\]
\end{lemma}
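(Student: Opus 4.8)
\textbf{Proof plan for Lemma~\ref{l:d3}.}

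The plan is to follow the standard percolation-over-block-paths strategy. The event $\cD^{(3)}_{i,j}=\cC^{(3)}_{i,j}$ depends only on the environment in a bounded number of columns near column $i$ (it asks only for $O(1)$-scale control, at scale $Q_n$, of passage times across at most three columns of width $n$ in a vertical band of height $O(R)W_n$ around row $j$). So unlike $\cD^{(2)}$ it is essentially local and no multi-scale argument is needed; this is the easiest of the three lemmas. First I would record, via Lemma~\ref{l:paraChange} together with Lemma~\ref{l:YMinusBound} and Lemma~\ref{l:YPlusBound} (or directly Lemma~\ref{l:paraplusminus}) and a union bound over the $O(R)$ vertical sub-intervals, that for a fixed pair $(i,j)$,
\[
\P[(\cC^{(3)}_{i,j})^c]\leq \exp(1-C R^{\theta})
\]
for an absolute constant $C>0$ (the supremum over a column of width $n$ and the over a band of width $3n$ are both controlled by that lemma since the diameters are $O(n)$ and $R$ is a fixed large constant). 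Thus by taking $R$ large we can make $p:=\P[(\cC^{(3)}_{i,j})^c]$ as small as we like, uniformly in $i,j$ and in $n$ (large).

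Next I would set up the percolation argument exactly as in the proof of Proposition~\ref{p:percevent}/the ``percolation and concentration'' scheme of Section~\ref{s:percgen}. Recall the geodesic $\gamma$ from $\origin$ to $(Mn,0)$ determines the heights $J_i=\lfloor y_i/W_n\rfloor$ where $(in,y_i)$ is its first crossing of $x=in$. As in equation~\eqref{eq:NBDecomposition}, $\gamma$ traverses the parallelograms $\cP_{i,J_{i-1},J_i}$, and by Theorem~\ref{t:trans.main} (transversal fluctuations) together with Lemma~\ref{l:Qgrowth}/Lemma~\ref{l:growth34} we have, outside an event of probability $\leq M^{-11}$, that $\max_i|J_i|\leq M$ and $\tau_1(\uk^\gamma)=\sum_i|J_i-J_{i-1}|\leq R_0 M$ for some constant $R_0$. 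On this event $\sum_i I((\cC^{(3)}_{i,J_i})^c) \geq \frac1{100}M$ forces the existence of some sequence $\uk\in\mathfrak{K}_M$ with $\tau_1(\uk)\leq R_0 M$ such that $\gamma$ crosses $\cP_{i,k_{i-1},k_i}$ for each $i$ and $(\cC^{(3)}_{i,k_i})^c$ holds for at least $\frac1{100}M$ indices $i$. Since $\gamma$ crossing $\cP_{i,k_{i-1},k_i}$ and $(\cC^{(3)}_{i,k_i})^c$ both hold on the environment, and $(\cC^{(3)}_{i,k_i})^c$ depends only on columns $i-1,i,i+1$, for fixed $\uk$ the events $\{(\cC^{(3)}_{i,k_i})^c\}$ over $i$ in any fixed residue class mod $3$ are independent; so for fixed $\uk$, $\P[\#\{i:(\cC^{(3)}_{i,k_i})^c\}\geq \frac1{100}M]\leq \binom{M}{M/100} (3p)^{M/300}\leq \exp(-C'M)$ provided $p$ (hence $R$) is small enough. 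Finally I would union-bound over the admissible $\uk$: by the same counting as in Corollary~\ref{c:percolation} (or directly, the number of $\uk\in\mathfrak{K}_M$ with $\tau_1(\uk)\leq R_0M$ is at most $e^{C_4 M\log(R_0+1)}$), so choosing $R$ large enough that $C'M$ dominates this entropy term, the total probability is $\leq \exp(-C''M)+M^{-11}\leq \frac13 M^{-10}$ for $M$ large, which is the claim.

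The only genuinely delicate point is making the independence/counting tradeoff work: we need $p=p(R)$ small enough that the Chernoff bound $(3p)^{M/300}$ beats the block-path entropy $e^{C_4 M\log(R_0+1)}$, where $R_0$ itself may grow with $R$ through the transversal-fluctuation/growth estimates. I expect this to be harmless because $\P[(\cC^{(3)}_{i,j})^c]\leq \exp(1-CR^{\theta})$ decays stretched-exponentially in $R$ while $\log R_0$ grows only polylogarithmically (or at worst polynomially) in $R$; one fixes $R$ at the end after all the other constants are determined. (Alternatively, and perhaps more cleanly, one can avoid controlling $\tau_1(\uk^\gamma)$ by hand and instead invoke Corollary~\ref{c:percolation} directly with the indicator-type variables $\cZ_{i,k,k'}=$ (a bounded multiple of) $I((\cC^{(3)}_{i,k})^c)$ thresholded appropriately, exactly as in the proof of Proposition~\ref{p:percevent}; this is the route I would actually take to keep the argument uniform with Lemmas~\ref{l:d1} and~\ref{l:d2}.) Since Lemmas~\ref{l:d1}, \ref{l:d2}, \ref{l:d3} together give $\sum_i I(\cD_{i,J_i})\geq M - 3\cdot\frac1{100}M > \frac9{10}M$ outside probability $M^{-10}$, and $\cD_{i,j}\subset\cC_{i,j}$ for large $R$, Proposition~\ref{p:perceventloc} and hence Proposition~\ref{p:percevent} follow.
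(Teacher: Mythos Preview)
Your overall strategy (percolation over block paths: bound $\tau_1(\uk^\gamma)$, then for each fixed $\uk$ with small $\tau_1$ use a Chernoff bound against the path entropy) is the same as the paper's, which packages it as Proposition~\ref{p:percgen1} together with Lemma~\ref{l:tauboundgeo}. However, there is a genuine gap in your independence claim.

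You assert that ``$(\cC^{(3)}_{i,k_i})^c$ depends only on columns $i-1,i,i+1$'' and hence the events are independent over $i$ in a fixed residue class mod~$3$. This is false as stated. The event $\cC^{(3)}_{i,j}$ is a condition on the \emph{full} passage times $X_{v_1 v_2}$, which are infima over all paths in $\R^2$ and are not measurable with respect to the environment in a bounded strip. In particular, the lower-tail part of $(\cC^{(3)}_{i,j})^c$, namely $X_{v_1 v_2}<\mu|v_1-v_2|-RQ_n$ for some $v_1,v_2$, can be caused by an anomalously cheap path that wanders arbitrarily far from columns $i-1,\ldots,i+2$. So for different $i$ these events are genuinely correlated through the shared environment everywhere, and your Chernoff step does not apply.

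The paper fixes exactly this point: it replaces $\cC^{(3)}_{i,j}$ by a resampled proxy $\widetilde{\cD}^{(3)}_{i,j}$ in which each passage time $X_{v_1 v_2}$ is replaced by $X^{\Lambda}_{v_1 v_2}$ computed in the environment $\omega^{\Lambda}$ (keeping $\omega$ on the relevant strip $\Lambda\in\{\Lambda_{i},\Lambda_{i+1},\Lambda_{i+2},\hat\Lambda_i\}$ and resampling outside). These proxies \emph{are} independent when $|i-i'|\geq 3$, so Proposition~\ref{p:percgen1} applies (with $K=3$). One then transfers back to $\cD^{(3)}_{i,j}$ via Assumption~\ref{as:resamp1}, which makes $\P[\widetilde{\cD}^{(3)}_{i,j}\setminus\cD^{(3)}_{i,j}]$ polynomially small in $n$; a union bound over the $O(M^2)$ pairs $(i,j)$ with $|j|\le M$ (together with the transversal fluctuation bound forcing $|J_i|\le M$) completes the proof. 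Your plan is salvageable precisely by inserting this resampling step; without it the argument does not go through.

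A minor remark: your concern that ``$R_0$ might grow with $R$'' is unfounded. The constant controlling $\tau_1(\uk^\gamma)$ is the $H_0$ of Lemma~\ref{l:tauboundgeo}, which comes from applying Corollary~\ref{c:percolation} to the parallelogram crossing times and has nothing to do with the parameter $R$ in $\cC^{(3)}$. So one fixes $H_0$ first and then chooses $R$ large enough (equivalently, $\varepsilon$ in Proposition~\ref{p:percgen1} small enough) to beat the entropy $C(H_0)^M$; there is no circularity.
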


Assuming these three lemmas, it is easy to complete the proof of Proposition \ref{p:perceventloc}. 

\begin{proof}[Proof of Proposition \ref{p:perceventloc}]
The proposition follows from Lemmas \ref{l:d1}, \ref{l:d2}, \ref{l:d3} together with a union bound.     
\end{proof}

Before starting with the proofs of Lemmas \ref{l:d1}, \ref{l:d2} and \ref{l:d3} we prove the following result controlling fluctuations of the geodesics which will be repeatedly used throughout this section. 

\begin{lemma}
    \label{l:tauboundgeo}
    There exists $H_0>0$ such that 
\begin{equation}
    \label{e:d3taubound}
     \P\left(\sum_{i}|J_{i}-J_{i-1}|\ge H_0M)\right)\le e^{-cM^{\theta/4}}
\end{equation}
for $M,n$ sufficiently large and some $c>0$. 
\end{lemma}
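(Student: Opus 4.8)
The claim is a tail bound on $\sum_i |J_i - J_{i-1}|$, the total variation of the heights at which the geodesic $\gamma$ from $\mathbf{0}$ to $(Mn,0)$ crosses the vertical lines $x = in$, measured in units of $W_n$. The natural approach is to couple this with the percolation estimate Corollary~\ref{c:percolation} applied to the side-to-side passage times $Z^{-,\Lambda_i}_{i,k,k'}$ across the parallelograms, exactly as was done in Section~\ref{s:percgen} (e.g.\ in the proof of Lemma~\ref{l:AnBound}) and in Lemma~\ref{l:growth34}.

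First I would recall the decomposition~\eqref{eq:NBDecomposition}: writing $t_i$ for the first hitting time of the line $x = in$ and $k_i^\gamma = \lfloor \gamma_2(t_i)/W_n \rfloor = J_i$, we have
\[
X_{Mn} \geq \sum_{i=1}^M Z^{-,\Lambda_i}_{i,k_{i-1}^\gamma,k_i^\gamma} - \sum_{i=1}^M |X^{\Lambda_i}_{\gamma(t_{i-1})\gamma(t_i)} - X_{\gamma(t_{i-1})\gamma(t_i)}| - M\Gamma_{Mn}.
\]
On the other hand, by Theorem~\ref{t:tightness}/Lemma~\ref{l:YPlusBound} (or simply by the triangle inequality applied to straight crossings) we have the a priori upper bound $X_{Mn} \leq Mn\mu + C M Q_n$ on an event of probability at least $1 - \exp(-cM^{\theta})$, using $Q_{Mn} \leq D_6^{\lceil \log_{3/2} M\rceil} Q_n$ from Lemma~\ref{l:Qgrowth} to control $Q_{Mn}$ by a polynomial factor of $Q_n$. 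Also, on the event $\cJ = \{\max_i |J_i| \leq \frac14 n/W_n\}$, which holds with probability $\geq 1 - \exp(-Cn^{\kappa/5})$ by Assumption~\ref{as:cars} (cf.~\eqref{eq:cJBound}), we are in the regime where Corollary~\ref{c:paraRect} and Lemma~\ref{l:parastatement} apply. Setting $\cZ_{i,k,k'} := \left(-(Z^{-,\Lambda_i}_{i,k,k'} - n\mu)/Q_n + \frac{(k-k')^2}{32} + (\Delta - 2)\right) I(|k|\vee|k'| \leq n/W_n)$ as in Section~\ref{s:percgen}, these satisfy the hypotheses of Corollary~\ref{c:percolation} with $\beta = \theta$ (using $\Delta \leq C'$ from the discussion around~\eqref{eq:AnDeltaBound} and Lemma~\ref{l:AnBound}).

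The key step: on the intersection of the above events, if $\tau_1(\uk^\gamma) = \sum_i |J_i - J_{i-1}| \geq H_0 M$, then from~\eqref{eq:NBDecomposition} and the definitions,
\[
\sum_{i=1}^M \cZ_{i,k_{i-1}^\gamma,k_i^\gamma} \;=\; \sum_{i=1}^M\Big(-(Z^{-,\Lambda_i}_{i,k_{i-1}^\gamma,k_i^\gamma} - n\mu)/Q_n + \tfrac{(k_{i-1}^\gamma - k_i^\gamma)^2}{32}\Big) + (\Delta-2)M \;\geq\; \tfrac{1}{32}\tau_1(\uk^\gamma) - CM \;\geq\; \big(\tfrac{H_0}{32} - C\big)M,
\]
where the first inequality uses $\sum_i Z^{-,\Lambda_i}_{i,k_{i-1}^\gamma,k_i^\gamma} \leq X_{Mn} + M\Gamma_{Mn} + \sum_i|X^{\Lambda_i} - X| \leq Mn\mu + CMQ_n$ on our good events, and $\sum_i (k_{i-1}^\gamma - k_i^\gamma)^2 \geq \sum_i |k_{i-1}^\gamma - k_i^\gamma| = \tau_1(\uk^\gamma)$. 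Now apply Corollary~\ref{c:percolation} with $\lambda = \frac{1}{32}$ to the variables $\frac1R \cZ_{i,k,k'} I(\cZ_{i,k,k'} \leq n)$ (which, as in Lemma~\ref{l:ImprovedYmin}, satisfy $\P[\cdot > z] \leq \exp(1 - C'z^{4\theta/3})$ after truncation, or simply keep $\beta = \theta$): choosing $H_0$ large enough that $\frac{H_0}{32} - C > C_7\log^{C_7}(35)$ gives
\[
\P\Big[\cJ,\; \tau_1(\uk^\gamma) \geq H_0 M\Big] \;\leq\; \P\Big[\max_{\uk \in \mathfrak{K}_M}\sum_{i=1}^M \cZ_{i,k_{i-1},k_i} - \tfrac{1}{32}\tau_1(\uk) \geq \big(\tfrac{H_0}{32} - C - C_7\log^{C_7}(35)\big)M\Big] \leq C_8 \exp(-C_9 M^{\theta}).
\]
Combining with the $\exp(-Cn^{\kappa/5})$, $\exp(-cM^\theta)$ and $\exp(1 - C(Mn/\log^C(Mn))^\kappa)$ (from Lemma~\ref{l:Gamma} for the $\Gamma_{Mn}$ and $\sum|X^{\Lambda_i} - X|$ terms, via Assumption~\ref{as:resamp1}) error terms and taking $M, n$ large gives~\eqref{e:d3taubound} with the stated rate $e^{-cM^{\theta/4}}$ (with room to spare; $M^\theta$ would do).

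\textbf{Main obstacle.} The only real subtlety is bookkeeping: one must be careful that the $Z^{-,\Lambda_i}$ appearing in the lower bound for $X_{Mn}$ are across parallelograms $\cP_{i,k_{i-1}^\gamma,k_i^\gamma}$ whose geometry (slope, hence the $(k-k')^2$ correction term) is exactly what produces the quadratic penalty driving the argument, and that the truncation $I(\cZ_{i,k,k'} \leq n)$ and the event $\cG = \{\max \cZ_{i,k,k'} \leq n\}$ are handled as in Lemma~\ref{l:ImprovedYmin} so that Corollary~\ref{c:percolation} genuinely applies. None of this is new — it is the same machinery already deployed three times in Sections~\ref{s:percgen} and~\ref{s:record} — so I expect the proof to be short, essentially a citation of~\eqref{eq:NBDecomposition} and Corollary~\ref{c:percolation} with the observation that a large $\tau_1(\uk^\gamma)$ forces the percolation sum to exceed its typical value linearly in $\tau_1$.
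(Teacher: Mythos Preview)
Your approach is the same as the paper's and correct in outline, but there is a slip in the application of Corollary~\ref{c:percolation}. From $\sum_i \cZ_{i,k_{i-1}^\gamma,k_i^\gamma} \geq \tfrac{1}{32}\tau_1(\uk^\gamma) - CM$ you \emph{cannot} deduce $\sum_i \cZ_{i,k_{i-1}^\gamma,k_i^\gamma} - \tfrac{1}{32}\tau_1(\uk^\gamma) \geq (\tfrac{H_0}{32} - C)M$; subtracting $\tfrac{1}{32}\tau_1$ leaves only $\geq -CM$, which is vacuous. The fix is trivial: apply the corollary with $\lambda = \tfrac{1}{64}$ (this is exactly what the paper does), so that
\[
\sum_i \cZ_{i,k_{i-1}^\gamma,k_i^\gamma} - \tfrac{1}{64}\tau_1(\uk^\gamma) \;\geq\; \tfrac{1}{64}\tau_1(\uk^\gamma) - CM \;\geq\; \big(\tfrac{H_0}{64} - C\big)M
\]
on the bad event, and then Corollary~\ref{c:percolation} gives the desired bound. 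The paper additionally decomposes $\{\tau_1 \geq H_0 M\}$ into dyadic shells $\tau_1 \in [2^\ell H_0 M, 2^{\ell+1}H_0 M]$ and uses Cauchy--Schwarz ($\sum (k_i - k_{i-1})^2 \geq \tau_1^2/M$) at each level; your simpler termwise bound $(k_i - k_{i-1})^2 \geq |k_i - k_{i-1}|$ already suffices once $\lambda$ is halved, so your route is in fact slightly more direct.

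A second minor point: the claimed rate $\exp(-cM^\theta)$ for the upper tail $\{X_{Mn} \leq Mn\mu + CMQ_n\}$ does not follow from Lemma~\ref{l:Qgrowth} alone, since that lemma only gives $Q_{Mn} \leq M^{C_0} Q_n$ with no control on $C_0$. You need Lemma~\ref{l:growth34} ($Q_{Mn} \leq D_5 M^{3/4}Q_n$), which via Lemma~\ref{l:YPlusFlexible} gives $\P(X_{Mn} - Mn\mu \geq MQ_n) \leq \exp(-cM^{\theta/4})$. This is the bottleneck that produces the exponent $\theta/4$ in the statement, so your remark that ``$M^\theta$ would do'' is not quite right.
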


\begin{proof}
    The proof is an application of Corollary \ref{c:percolation}. We shall use the same argument leading to~\eqref{eq:lowerTailContBound}. As in the argument following \eqref{eq:cJBound}, we define for some large constant $H$ 
    \[
\cZ_{i,k,k'}:=\left(-(Z^{-,\Lambda_i}_{i,k,k'} -n\mu)/Q_n + \frac{(k-k')^2}{32} +H \right)I(|k|\vee |k'|\leq n/W_n).
\]
As argued there, it follows that $\cZ_{i,k,k'}$ satisfy the hypothesis of Corollary \ref{c:percolation}. Observe now that by Lemma \ref{l:YPlusFlexible} and the fact that $Q_{Mn}\le M^{3/4}Q_{n}$ (by Lemma \ref{l:growth34}) we have that for $M$ large enough 
$$\P(X_{Mn}-Mn\mu\ge MQ_{n})\le \exp(-cM^{\theta/4}).$$
We also know, by Theorem \ref{t:trans.main}, that 
$$\P(\max_{i} |J_{i}|\ge n/W_{n})\le e^{-cM^{\theta}}$$
for $n$ sufficiently large. Using Assumption~\ref{as:resamp1} to control $Z^{-}_{i,k,k'}-Z^{-,\Lambda_i}_{i,k,k'}$ as in \eqref{eq:lowerTailContBound} it suffices to show that 
\[
\P\left[\max_{\uk\in \mathfrak{K}_{M} |k_i|\le \frac{n}{W_{n}},\tau_1(\uk)\ge H_0M} \sum_{i=1}^M Z^{-, \Lambda_i}_{i,k_{i-1}, k_{i}} - Mn\mu \le 2MQ_{n} \right] \leq e^{-cM^{\theta/4}}.
\]
Using the definition of $\cZ_{i,k,k'}$, and $M$ sufficiently large this reduces to showing
\[
\P\left[\max_{\uk\in \mathfrak{K}_{M},\tau_1(\uk)\ge H_0M} \sum_{i=1}^M \cZ^{-}_{i,k_{i-1}, k_{i}} -\frac{1}{32}\sum_{i}(k_{i}-k_{i-1})^{2} \ge -(2+H)M \right] \leq e^{-cM^{\theta/4}}.
\]
Observe now that for $\uk$ such that $2^{\ell}H_0 M\le \tau_1(\uk) \le 2^{\ell+1}M$ we have by the Cauchy-Schwarz inequality
$$\sum_{i}(k_{i}-k_{i-1})^{2} \ge  2^{2\ell}H_0^2 M.$$
Therefore, if $H_0$ is sufficiently large (depending on $H$) it follows using Corollary \ref{c:percolation} (for $\lambda=1/64$) that 
\[
\P\left[\max_{\uk\in \mathfrak{K}_{M},2^{\ell+1}H_0M\ge \tau_1(\uk)\ge 2^{\ell}H_0M} \sum_{i=1}^M \cZ^{-}_{i,k_{i-1}, k_{i}} -\frac{1}{32}\sum_{i}(k_{i}-k_{i-1})^{2} \ge -(2+H)M \right] \leq e^{-c(2^{2\ell}M)^{\theta/4}}.
\]
Summing over $\ell$ gives the desired result. 
\end{proof}

\subsection{Proof of Lemma \ref{l:d3}}
For this proof we shall work with the non-backtracking model. Recall  ${\Lambda}_{i}:=[(i-1)n,in]\times \R$ and set $\hat{\Lambda}_{i}:=[(i-1)n,(i+2)n]\times \R$. 
Let us define the event 
\begin{align*}
\widetilde{\cD}^{(3)}_{i,j}&=\bigcap_{i'=i-1}^{i+1}\bigg\{\sup_{\substack{v_1\in\ell_{i'n,(j-2R) W_n,(j+2R) W_n}\\v_2\in\ell_{(i'+1)n,(j- 2R) W_n,(j+2R) W_n}}} \Big|X^{{\Lambda}_{i'+1}}_{v_1 v_2} - \mu|v_1-v_2| \Big| \leq R Q_n/2\bigg\}\\
&\qquad\bigcap\bigg\{\sup_{\substack{v_1\in\ell_{(i-1)n,(j- 2R) W_n,(j+2R) W_n}\\v_2\in\ell_{(i+2)n,(j- 2R) W_n,(j+2R) W_n}}} \Big|X^{\hat{\Lambda}_{i}}_{v_1 v_2} - \mu|v_1-v_2| \Big| \leq R Q_n/2\bigg\}.
\end{align*}

We first need the following easy lemma. 

\begin{lemma}
    \label{l:d3bound}
    Let $\varepsilon>0$ be fixed. Then there exists $R=R(\varepsilon)>0$ such that 
    for all $i,j$, $\P(\widetilde{\cD}^{(3)}_{i,j})\ge (1-\varepsilon)$
    for all $n$ sufficiently large and $T$ sufficiently large. 
\end{lemma}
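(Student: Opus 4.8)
The plan is to prove Lemma~\ref{l:d3bound} by a direct union bound over the finitely many (roughly $O(R^2)$) pairs of endpoints defining the suprema in $\widetilde{\cD}^{(3)}_{i,j}$, combined with the estimates on passage times across parallelograms established in Section~\ref{s:percgen} (in particular Lemma~\ref{l:paraplusminus}) and the resampling hypothesis (Assumption~\ref{as:resamp1}). By translation invariance we may assume $i=j=0$, so the event concerns passage times across the three columns $\Lambda_0,\Lambda_1,\Lambda_2$ (width $n$ each) and across the combined column $\hat\Lambda_0$ (width $3n$), restricted to endpoints within height $2RW_n$ of the $x$-axis.

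First I would reduce the resampled passage times $X^{\Lambda_{i'+1}}_{v_1v_2}$ (resp.\ $X^{\hat\Lambda_0}_{v_1v_2}$) to the original passage times $X_{v_1v_2}$ by Assumption~\ref{as:resamp1}: for each fixed pair of $\Z^2$-lattice endpoints, with probability at least $1-\exp(1-D_0 x^\kappa)$ the difference $|X_{v_1v_2}-X^{\Lambda}_{v_1v_2}|$ is at most $x\log^{1/\kappa}n$, and taking $x$ a small power of $R$ and union bounding over the $O(R^2)$ endpoint pairs and over continuous endpoints via a $\Gamma_{Mn}$-type discretisation (Lemma~\ref{l:Gamma}), this costs at most $\varepsilon/3$ once $R$ (and then $n$) is large enough. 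Next I would control the unresampled quantities $\sup|X_{v_1v_2}-\mu|v_1-v_2||$ across each single column and across $\hat\Lambda_0$ directly by Lemma~\ref{l:paraplusminus}: the relevant parallelograms have base $n$ (resp.\ $3n$) and the endpoint heights differ by at most $4RW_n$, so $k\le 4R$ in that lemma, and hence $\P[\sup|X_{v_1v_2}-\mu|v_1-v_2||\ge (R/4)Q_n]\le\exp(1-c(R/4)^\theta)$ for each of the $O(1)$ parallelograms (after absorbing $k$-dependent corrections into the constant, using $R\ge$ some threshold). Since $Q_{3n}\le D_6 Q_n$ by Lemma~\ref{l:Qgrowth}, the $3n$-parallelogram estimate is of the same order. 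Summing these failure probabilities and choosing $R$ large makes this contribution $\le\varepsilon/3$ as well.

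Combining: $\P[(\widetilde{\cD}^{(3)}_{0,0})^c]$ is at most the sum of the resampling error, the discretisation error, and the parallelogram fluctuation errors, all of which are $\le\varepsilon/3$ for $R=R(\varepsilon)$ large and $n$ large. The only mild subtlety — the ``main obstacle'', though it is not serious — is bookkeeping the interplay between the two scales of $R$: $R$ appears both as the target fluctuation scale ($RQ_n/2$, which we want large so the parallelogram bound is cheap) and as the transversal width ($2RW_n$, which makes $k\le 4R$ and so inflates the parallelogram constant by a polynomial in $R$). One checks that $\exp(1-c(R/4)^\theta)$ still beats any fixed polynomial in $R$ for large $R$, so both demands are simultaneously satisfiable; this is exactly why the lemma only asserts existence of $R(\varepsilon)$ rather than a quantitative rate. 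I would also note that $\widetilde{\cD}^{(3)}_{i,j}$ is the ``localized'' version of $\cD^{(3)}_{i,j}=\cC^{(3)}_{i,j}$ obtained by resampling outside the column(s), which is why it will later be usable inside the percolation union bound in the proof of Lemma~\ref{l:d3}; the present lemma is just the one-block probability estimate feeding that argument.
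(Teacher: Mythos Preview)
Your approach is correct and matches the paper's: partition each $4RW_n$ side into $4R$ segments of length $W_n$, apply Lemma~\ref{l:paraplusminus} to each of the $O(R^2)$ resulting sub-parallelograms per event, and union-bound (the paper makes this $16R^2$ count explicit, whereas your phrase ``$O(1)$ parallelograms'' is loose---it is the four constituent events that are $O(1)$, not the sub-parallelograms). One simplification you are missing: your reduction of $X^{\Lambda}$ to $X$ via Assumption~\ref{as:resamp1} is an unnecessary detour, since $\omega^\Lambda\stackrel{d}{=}\omega$ by construction and hence Lemma~\ref{l:paraplusminus} applies verbatim to the resampled passage times---this is exactly what the paper does, bypassing your Step~1 entirely.
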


\begin{proof}
    We shall show that each of the four events whose intersection defines $\widetilde{\cD}^{(3)}_{i,j}$ has probability at least $1-\varepsilon/4$ provided $R$ is sufficiently large; clearly this implies the lemma by a union bound. 

    The proofs for each of the four bounds are almost identical; therefore we shall only provide details in one of the cases; let us show 
\begin{equation}
    \label{e:d3toshow}
    \P\left(\bigg\{\sup_{\substack{v_1\in\ell_{(i-1)n,(j- 2R) W_n,(j+2R) W_n}\\v_2\in\ell_{(i+2)n,(j-2R) W_n,(j+2R) W_n}}} \Big|X^{\hat{\Lambda}_{i}}_{v_1 v_2} - \mu|v_1-v_2| \Big| \leq R Q_n/2\bigg\}\right)\ge 1-\varepsilon/4
\end{equation}
if $R$ is sufficiently large. 

Divide the line segments $\ell_{(i-1)n,(j- 2R) W_n,(j+2R) W_n}$ and $\ell_{(i+2)n,(j- R) W_n,(j+R) W_n}$ into $4R$ line segments of length $W_{n}$ each, denote these line segments locally by $\ell_{1,a}$ and $\ell_{2,b}$ respectively where $a$ and $b$ vary from $1$ to $4R$. Let $A_{a,b}$ locally denote the event that $\Big|X^{\hat{\Lambda}_{i}}_{v_1 v_2} - \mu|v_1-v_2| \Big| \leq R Q_n/2$ for all $v_1\in \ell_{1,a}$ and $v_2\in \ell_{2,b}$. By Lemma \ref{l:paraplusminus} we have for large enough $R$ that $\P(A_{a,b})\ge 1-Ce^{-cR^{\theta}}$ for some $C,c>0$ and all $n$ sufficiently large. By taking a union bound over $a,b$ and choosing $R$ sufficiently large depending on $\varepsilon$, \eqref{e:d3toshow} follows. Lower bounding the probabilities of the other three events follows similarly and together with a union bound completes the proof of the lemma. 
\end{proof}

To complete the proof of Lemma \ref{l:d3} we shall need the following general result. Let $K>0$ be fixed. For $i=1,2,\ldots, T$ and $j\in \Z$ let us consider a family of events $G_{i,j}$ such that the events $G_{i_1,j_1}, G_{i_2,j_2},\ldots, G_{i_k,j_{k}}$ are independent whenever $\min_{a,a'} |i_{a}-i_{a'}|\ge K$. Then we have the following proposition.

\begin{proposition}
    \label{p:percgen1}
    Let $K\ge 1$ be fixed and let $G_{i,j}$ be as above. There exists $\varepsilon=\varepsilon(K)>0$ such that if $\P(G_{i,j})\ge 1-\varepsilon$ for all $i,j$, then there exists $c>0$ such that for all $M$ sufficiently large 
    \[
\P\bigg[ \sum_{i=1}^{M}  I(G_{i,J_i}) \geq \frac{99}{100} M \bigg ] \geq 1-e^{-cM^{\theta/4}}.
\]
\end{proposition}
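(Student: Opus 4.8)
\textbf{Plan for the proof of Proposition \ref{p:percgen1}.}
The strategy is the standard percolation/union-bound argument over block paths, controlled by Corollary \ref{c:percolation}. First I would restrict to the event that the geodesic $\gamma$ from $\mathbf{0}$ to $(Mn,0)$ does not fluctuate too wildly: by Lemma \ref{l:tauboundgeo} we have $\sum_i |J_i-J_{i-1}| \le H_0 M$ except on an event of probability $e^{-cM^{\theta/4}}$, and by Theorem \ref{t:trans.main} we have $\max_i |J_i| \le n/W_n$ with overwhelming probability. Thus it suffices to bound
\[
\P\bigg[ \exists \uk \in \mathfrak{K}_M,\ \tau_1(\uk) \le H_0 M,\ |k_i| \le n/W_n\ \forall i,\ \text{and}\ \sum_{i=1}^M I(G_{i,k_i}^c) \ge \tfrac1{100} M,\ \uk = \uk^\gamma \bigg].
\]
The point is that if $\gamma$ first crosses $\ell_i$ at height $J_i$ then $\{J_i = k_i \ \forall i\} \subseteq \{\uk^\gamma = \uk\}$, and on this event the indicator pattern $(I(G_{i,k_i}^c))_i$ is a \emph{deterministic} function of a fixed $\uk$, not of $\gamma$. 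So I would union-bound over all such $\uk$: the number of $\uk \in \mathfrak{K}_M$ with $\tau_1(\uk) \le H_0 M$ is at most $\binom{M + \lceil H_0 M\rceil}{M} 2^{\lceil H_0 M\rceil} \le e^{C(H_0) M}$ (distributing total variation $\le H_0 M$ among $M$ steps and choosing signs), and for each fixed such $\uk$ I need a bound on $\P[\sum_i I(G_{i,k_i}^c) \ge \tfrac1{100}M]$ that beats $e^{-C(H_0)M}$.

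For a fixed $\uk$, the second step is to control $S_{\uk} := \sum_{i=1}^M I(G_{i,k_i}^c)$. The events $G_{i,k_i}$ are not independent, but by hypothesis $G_{i_1,j_1}, \dots, G_{i_r,j_r}$ are independent whenever the $i_a$ are pairwise at distance $\ge K$. So I would split $\{1,\dots,M\}$ into $K$ residue classes mod $K$; within each class the events $\{G_{i,k_i}\}$ are independent Bernoulli-type variables with failure probability $\le \varepsilon$. For at least one residue class $\ell$, the number of failures is $\ge S_{\uk}/K \ge M/(100K)$. Hence
\[
\P[S_{\uk} \ge \tfrac1{100}M] \le \sum_{\ell=0}^{K-1} \P\Big[\sum_{i \equiv \ell} I(G_{i,k_i}^c) \ge \tfrac{M}{100K}\Big] \le K \binom{\lceil M/K\rceil}{\lceil M/(100K)\rceil} \varepsilon^{M/(100K)} \le K \big( e \cdot 100 \cdot \varepsilon^{1/100}\big)^{M/K}
\]
using the binomial tail bound $\binom{N}{m} p^m \le (eN p/m)^m$ with $N = M/K$, $m = M/(100K)$, $p = \varepsilon$. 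Now choose $\varepsilon = \varepsilon(K)$ small enough (e.g. $\varepsilon^{1/100} < e^{-1}100^{-1} e^{-2C(H_0)K}$, where $C(H_0)$ is the constant from the counting bound, noting $H_0$ is absolute) so that $\big(e \cdot 100 \cdot \varepsilon^{1/100}\big)^{1/K} \le e^{-2C(H_0)}$, giving $\P[S_{\uk} \ge \tfrac1{100}M] \le K e^{-2C(H_0)M}$ for each fixed $\uk$.

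The third step is to combine: by the union bound over the $\le e^{C(H_0)M}$ choices of $\uk$,
\[
\P\Big[\sum_i I(G_{i,J_i}^c) \ge \tfrac1{100}M,\ \tau_1(\uk^\gamma) \le H_0 M,\ \max_i|J_i|\le n/W_n\Big] \le e^{C(H_0)M} \cdot K e^{-2C(H_0)M} = K e^{-C(H_0)M},
\]
and adding back the probabilities $e^{-cM^{\theta/4}}$ (Lemma \ref{l:tauboundgeo}) and $e^{-cM^\theta}$ (Theorem \ref{t:trans.main}) of the discarded events gives $\P[\sum_i I(G_{i,J_i}) \ge \tfrac{99}{100}M] \ge 1 - e^{-cM^{\theta/4}}$ for $M$ large, as claimed. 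The main obstacle is purely bookkeeping: one must choose $\varepsilon$ \emph{after} fixing $K$ and \emph{after} pinning down the entropy constant $C(H_0)$ coming from the count of low-total-variation sequences $\uk$, so that the per-$\uk$ decay genuinely dominates the number of $\uk$'s; the residue-class decoupling is what converts the dependence structure into a clean independent binomial tail. Everything else (the fluctuation bounds, the binomial tail estimate) is quoted or elementary. I would then remark that Lemmas \ref{l:d1}, \ref{l:d2}, \ref{l:d3} follow by applying Proposition \ref{p:percgen1} with $K$ chosen as the appropriate horizontal range of dependence of the respective events $\cD^{(1)}, \cD^{(2)}, \cD^{(3)}$ (for $\cD^{(3)}$ using the non-backtracking localization $\widetilde{\cD}^{(3)}$ of Lemma \ref{l:d3bound} and Assumption \ref{as:resamp2} to pass back to $\cD^{(3)}$; for $\cD^{(2)}$ the multi-scale structure already makes $\cL^{(n),R}$ essentially local up to the $\Theta$-column range, so $K = O(\Theta)$), together with Lemma \ref{l:d3bound} and its analogues providing the hypothesis $\P(G_{i,j}) \ge 1-\varepsilon$ once $R$ is large.
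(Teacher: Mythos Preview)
Your proof of the proposition itself is correct and essentially identical to the paper's: restrict to $\tau_1(\uk^\gamma)\le H_0 M$ via Lemma~\ref{l:tauboundgeo}, union-bound over the $\le C^M$ low-variation sequences $\uk$, and for each fixed $\uk$ split into residue classes mod $K$ to get independence and a binomial/Chernoff tail that beats the entropy once $\varepsilon=\varepsilon(K)$ is small enough.

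One caveat about your closing remark on applications: Proposition~\ref{p:percgen1} cannot be applied to $\cD^{(2)}$ with ``$K=O(\Theta)$'' as you suggest, because $\Theta$ grows polylogarithmically in $M$ while the proposition requires $K$ fixed (and $\varepsilon$ depends on $K$). The paper handles $\cD^{(2)}$ by a separate multi-scale argument (Lemmas~\ref{l:d21l}--\ref{l:d213}) that decomposes $\cL^{(n),R}$ scale by scale and runs a version of the percolation argument at each dyadic scale $\ell$, with the tail gain $2^{\ell\theta/20}$ compensating for the growing range of dependence. Your remarks about $\cD^{(3)}$ (taking $K=3$ with the localization $\widetilde{\cD}^{(3)}$) are correct and match the paper.
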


\begin{proof}
   Let $H_0$ be as in \eqref{e:d3taubound}. Let us now consider the set $\mathcal{J}$ of all sequences 
    $\{j_0,j_1,\ldots, j_{M}\}$ satisfying $j_0=0$ and 
    $\sum_{i} |j_{i}-j_{i-1}|\le H_0M$. An easy counting argument gives that $|\mathcal{J}|\le C^{M}$ for some $C>0$ depending on $H_0$. Fix now a sequence $\{j_0,j_1,\ldots, j_{M}\}\in \mathcal{J}$. 
    By a union bound 
     \[
\P\bigg[ \sum_{i=1}^{M}  I(G_{i,j_i}) \geq \frac{99}{100} M \bigg ] \geq 1-\sum_{k=0}^{K-1} \P(A_{k})
\]
where $A_{k}$ is event 
$$\bigg\{ \sum_{i=1}^{\lfloor M/K \rfloor}  I(G^{c}_{iK+k,j_{iK+k}}) \geq \frac{1}{100K}M \bigg \}.$$
By our assumption on $G_{i,j}$s, the indicators in the event $A_{k}$ are independent for each fixed $K$ and the events all have probability bounded above by $\varepsilon$. Using a Chernoff bound it follows that 
$$\P(A_{k})\leq \exp\Big(-\frac{M}{k}(K\log C + K)\Big)\le C^M e^{-M}$$
for $\varepsilon$ sufficiently small. By a union bound it follows that 
\[
\P\bigg[ \sum_{i=1}^{M}  I(G_{i,j_i}) \geq \frac{99}{100} M \bigg] \geq 1-Ke^{-M}.
\]
It follows that one can choose $\varepsilon$ sufficiently small so that by taking a union bound over all elements of $\mathcal{J}$ we get 
\[
\P\bigg[ \inf_{\mathcal{J}}\sum_{i=1}^{M}  I(G_{i,j_i}) \geq \frac{99}{100} M\bigg ] \geq 1-e^{-M/2}.
\]
This together with \eqref{e:d3taubound} completes the proof of the lemma. 
\end{proof}

We can now complete the proof of Lemma \ref{l:d3}. 

\begin{proof}[Proof of Lemma \ref{l:d3}]
Observe first, by the definition of $\widetilde{\cD}^{(3)}_{i,j}$, $\widetilde{D}^{(3)}_{i,j}$ and $\widetilde{D}^{(3)}_{i',j'}$ are independent if $|i-i'|\ge 3$ and hence Proposition \ref{p:percgen1} applies with $K=3$. Using Proposition \ref{p:percgen1} and Lemma \ref{l:d3bound} it follows that 
\[
\P\bigg[ \sum_{i=1}^{M}  I(\widetilde{\cD}^{(3)}_{i,J_i}) \geq \frac{99}{100} M \bigg ] \geq 1-e^{-cM^{\theta/4}}.
\]
Let $\mathcal{A}$ denote the event that there exists $1\le i \le M$ and $j$ with $|j|\le M$ such that $\widetilde{\cD}^{(3)}_{i,j}\cap (\cD^{(3)}_{i,j})^{c}$ holds. Let $\mathcal{B}$ denote the event that there exists $1\le i \le M$ such that $|J_{i}|\ge M$. It is clear that on $\mathcal{A}^{c}\cap \mathcal{B}^{c}$ we have $\cD^{(3)}_{i,J_{i}}\supset \widetilde{\cD}^{(3)}_{i,J_i}$ for all $i$ and hence 
  \[
\P\bigg[ \sum_{i=1}^{M}  I({\cD}^{(3)}_{i,J_i}) \geq \frac{99}{100} M \bigg ] \geq 1-e^{-cM^{\theta/4}}-\P(\mathcal{A})-\P(\mathcal{B}).
\]  
Now by the first resampling hypothesis Assumption \ref{as:resamp1} and a union bound  we have $\P(\mathcal{A})\le M^{2}\exp(-cn^{\epsilon})\le M^{-100}$. By Theorem \ref{t:trans.main} (and the fact that $W_{Mn}\le M^{7/8}W_{n}$) it follows that $\P(\cB)\le M^{-100}$. The lemma follows. 
\end{proof}

\subsection{Proof of Lemma \ref{l:d1}}
This proof will be divided into two parts, one dealing with the $\mathcal{S}$ events in the definition of $\cD^{(1)}_{i,j}$, the others dealing with the $\mathcal{O}$ events.
For notational convenience, let us write 

$$\cD^{(1,1)}_{i,j}=\cS^{(n)}_{R/2,(in,(j- R) W_n))}\cap \cS^{(n)}_{R/2,(in,(j+ R-1) W_n))};$$

$$\cD^{(1,2)}_{i,j}=\cO^{(n)}_{R/2,(in,(j- R) W_n))}\cap \cO^{(n)}_{R/2,(in,(j+ R-1) W_n))}.$$

We have the following two lemmas 

\begin{lemma}
   \label{l:d1s}
   For each fixed $M$ sufficiently large, and $n$ sufficiently large depending on $M$,
   $\P(\exists 1\le i \le M : I(\cD^{(1,1)}_{i,J_{i}})=0)\le M^{-100}$.
\end{lemma}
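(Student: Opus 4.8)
\textbf{Proof plan for Lemma~\ref{l:d1s}.} The event $\cD^{(1,1)}_{i,j}$ is local: it is measurable with respect to the environment $\omega$ restricted to $B_{3n}((in,(j\pm R)W_n))\subset B_{4Rn}((in,jW_n))$, so for fixed $i$ the events $\cD^{(1,1)}_{i,j}$ and $\cD^{(1,1)}_{i',j'}$ are independent as soon as $|i-i'|$ is larger than an absolute constant (say $8R$). By Lemma~\ref{l:trans.SOGam} we have $\P[(\cD^{(1,1)}_{i,j})^c]\le 2\exp(1-D(\tfrac{R}{2}n)^{\frac12\epsilon\theta})$ uniformly in $i,j$, which for fixed $R$ is at most $n^{-100}$ for all large $n$. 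The plan is therefore simply to union-bound: since on the event $\{\max_i|J_i|\le M\}$ (which by Theorem~\ref{t:trans.main} and $W_{Mn}\le M^{7/8}W_n$ from Lemma~\ref{l:growth34} has probability at least $1-M^{-100}$) the relevant blocks are among the $O(M^2)$ pairs $(i,j)$ with $1\le i\le M$, $|j|\le M$, we get
\[
\P\big(\exists\, 1\le i\le M:\ I(\cD^{(1,1)}_{i,J_i})=0\big)\le M^{-100} + M^2\cdot 2\exp(1-D(\tfrac{R}{2}n)^{\frac12\epsilon\theta})\le M^{-100}
\]
for $n$ large enough depending on $M$.

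First I would record the measurability/locality observation precisely (which ball the event depends on) so that the union bound over $(i,j)$ with $1\le i\le M$, $|j|\le M$ is justified. Then I would invoke Lemma~\ref{l:trans.SOGam} for the per-$(i,j)$ bound. The only subtlety is restricting attention to $|j|\le M$: this is handled by Theorem~\ref{t:trans.main}, which controls $\trans_{Mn}$ at scale $W_{Mn}$, combined with $W_{Mn}\le D_5 M^{7/8}W_n$ (Lemma~\ref{l:growth34}), so that $\P[\max_i|J_i|\ge M]\le \P[\trans_{Mn}\ge M^{1/8}W_{Mn}/D_5]\le \exp(1-DM^{\epsilon\theta/16})\le M^{-100}$ for large $M$. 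Putting these together gives the claim.

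I do not expect a real obstacle here; this lemma is purely a first-moment union bound exploiting that $\cD^{(1,1)}$ is local and has superpolynomially small failure probability in $n$ (for fixed $R$), so there is no need for the percolation machinery of Corollary~\ref{c:percolation} that is required for the genuinely multi-scale event $\cD^{(2)}$. The one point to be a little careful about is the order of quantifiers: $R$ is fixed first, then $M$ large, then $n$ large depending on $M$; with that ordering every estimate above is valid.
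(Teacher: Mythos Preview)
Your proposal is correct and follows essentially the same approach as the paper: restrict to $|J_i|\le M$ via Theorem~\ref{t:trans.main} combined with $W_{Mn}\le D_5 M^{7/8}W_n$, then union-bound over the $O(M^2)$ pairs $(i,j)$ using the per-block bound from Lemma~\ref{l:trans.SOGam}. One small remark: the locality/independence discussion you include is unnecessary here (and in fact $\cS^{(n)}_{z,u}$ is not strictly local, since the passage times $X_{vv'}$ it involves could in principle depend on the whole environment); the union bound only needs the marginal probability estimate, which is exactly what the paper uses.
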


\begin{lemma}
    \label{l:d1o}
    For each fixed $M$ sufficiently large, and $n$ sufficiently large depending on $M$,
    \[
\P\bigg[ \sum_{i=1}^{M}  I(\cD^{(1,2)}_{i,J_i}) \geq \frac{99}{100}M \bigg ] \geq 1-M^{-100}.
\]
\end{lemma}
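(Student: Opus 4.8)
The statement to prove is Lemma~\ref{l:d1o}: with high probability, the event $\cD^{(1,2)}_{i,J_i}$ (which asks that $\cO^{(n)}$ holds at the two points $(in,(j\pm R)W_n)$ roughly) holds at all but a $\frac1{100}$ fraction of the scales $i$. Unlike the $\cS$ events handled in Lemma~\ref{l:d1s}, the $\cO^{(n)}_{z,u}$ events are \emph{not} localized to a bounded number of columns: by definition $\cO^{(n)}_{z,u}$ is an intersection over dyadic scales $j$ down to $3\epsilon\log_2 n$ of parallelogram crossing events $\cH^{(n2^{-j})}_{\cdot}$, and the finest-scale events live in tiny boxes while the coarsest live across the whole column of width $n$. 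However, crucially, $\cO^{(n)}_{z,u}$ only depends on the environment in a bounded neighbourhood (of order $Cn$) around $u$ — all the parallelograms involved are contained in $B_{Cn}(u)$ for an absolute constant $C$. So in fact $\cO^{(n)}_{z,(in,(j\pm R)W_n))}$ depends only on $\omega$ restricted to a strip of bounded width $[(i-C)n,(i+C)n]\times\R$, which makes $\cD^{(1,2)}_{i,j}$ and $\cD^{(1,2)}_{i',j'}$ independent once $|i-i'|\geq 2C+1$. The plan is therefore to invoke the generic percolation machinery of Proposition~\ref{p:percgen1} with this finite range of dependence $K=2C+1$, together with the single-event bound $\P[(\cD^{(1,2)}_{i,j})^c]\leq \exp(1-DR^\theta)$ coming from Lemma~\ref{l:trans.SOGam}, choosing $R$ large enough that this is below the threshold $\varepsilon(K)$ demanded by Proposition~\ref{p:percgen1}.

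\textbf{Key steps in order.} First I would verify the finite-range-of-dependence claim: unwinding the definition of $\cO^{(n)}_{z,u}$ and $\cH^{(m)}_{x,y,z}$, every constituent parallelogram crossing event at scale $n2^{-j}$ (for $0\le j\le 3\epsilon\log_2 n$) involves points whose $x$-coordinates lie within an $O(n)$ window of the $x$-coordinate of $u$, and the vertical range of the relevant intervals is $O(zW_n/W_{n2^{-j}})\cdot W_{n2^{-j}} = O(zW_n)$, which is $o(n)$. So $\cO^{(n)}_{z,(in,\cdot)}$ is measurable with respect to $\omega$ restricted to $[(i-C)n,(i+C)n]\times\R$ for an absolute constant $C$ (one can take, say, $C=3$ after checking the constants in Lemma~\ref{l:rectangleApprox}/Corollary~\ref{c:paraRect}). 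Consequently the events $\cD^{(1,2)}_{i,j}$ and $\cD^{(1,2)}_{i',j'}$ are independent whenever $|i-i'|\ge K:=2C+1$, uniformly in $j,j'$. Second, by Lemma~\ref{l:trans.SOGam} we have $\P[(\cO^{(n)}_{R/2,u})^c]\le \exp(1-D(R/2)^\theta)$ for any $u$, so by a union bound $\P[(\cD^{(1,2)}_{i,j})^c]\le 2\exp(1-D(R/2)^\theta)$; fix $R$ large enough (depending only on the absolute constant $K$) that this is at most the $\varepsilon(K)$ produced by Proposition~\ref{p:percgen1}. Third, apply Proposition~\ref{p:percgen1} with this family $\{\cD^{(1,2)}_{i,j}\}$ — note its hypotheses (finite range $K$ in the $i$-coordinate, uniform single-event probability bound) are exactly what we have arranged — to obtain
\[
\P\bigg[\sum_{i=1}^M I(\cD^{(1,2)}_{i,J_i})\ge \frac{99}{100}M\bigg]\ge 1-e^{-cM^{\theta/4}},
\]
and for $M$ large this is $\ge 1-M^{-100}$, which is the claim. (We also need, as in the proof of Lemma~\ref{l:d3}, to intersect with the event $\{\max_i|J_i|\le M\}$, which by Theorem~\ref{t:trans.main} and $W_{Mn}\le M^{7/8}W_n$ has probability $\ge 1-M^{-100}$, so that the transversal fluctuation of $\gamma$ stays in the range where Proposition~\ref{p:percgen1} was stated.)

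\textbf{Main obstacle.} The only genuinely non-routine point is the finite-range-of-dependence claim for $\cO^{(n)}_{z,u}$ — one must be careful that the dyadic chaining in the definition of $\cO^{(n)}$ really does stay inside a bounded-width strip, in particular that the vertical extent of the intervals $\ell_{\cdot}$ appearing at every scale is controlled by $zW_n$ (which is polynomially smaller than $n$ since $W_n\le n^{3/4+\epsilon}$, and $z=R/2$ is a constant here), and that the horizontal extent at the coarsest scale $j=0$ is just one column. Once this measurability/independence structure is confirmed, the rest is a direct and essentially mechanical application of Proposition~\ref{p:percgen1} exactly as in the proof of Lemma~\ref{l:d3}; I expect no difficulty there. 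The bookkeeping with the constant $K$ and the need to choose $R$ \emph{after} $K$ (but note $K$ is absolute, so $R$ is absolute) should be stated carefully to avoid circularity in the order of quantifiers.
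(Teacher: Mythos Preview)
Your approach has a genuine gap at the crucial first step. The events $\cD^{(1,2)}_{i,j}$ are \emph{not} measurable with respect to $\omega$ restricted to a strip $[(i-C)n,(i+C)n]\times\R$. You correctly observe that all the \emph{endpoints} $u_1,u_2$ appearing in the constituent events $\cH^{(n2^{-j})}_{\cdot}$ lie inside such a strip, but the quantity $X_{u_1u_2}$ is the first passage time, i.e.\ an infimum over \emph{all} admissible paths from $u_1$ to $u_2$, and those paths are not a priori confined to any bounded region. Hence $X_{u_1u_2}$ is a function of the whole field $\omega$, and the events $\cD^{(1,2)}_{i,j}$ and $\cD^{(1,2)}_{i',j'}$ are not independent for any finite separation $|i-i'|$. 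The hypothesis of Proposition~\ref{p:percgen1} is therefore not satisfied by the family $\{\cD^{(1,2)}_{i,j}\}$ directly.

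The paper's proof addresses exactly this issue by passing to proxy events $\widetilde{\cD}^{(1,2)}_{i,j}$ in which every passage time $X_{u_1u_2}$ occurring in $\cH^{(n2^{-j})}_{\cdot}$ is replaced by the resampled passage time $X^{\Lambda}_{u_1u_2}$, where $\Lambda$ is the column containing $u_1,u_2$. Since for disjoint $\Lambda$'s the environments $\omega^{\Lambda}$ are independent by construction, the events $\widetilde{\cD}^{(1,2)}_{i,j}$ are genuinely independent across $i$ (with $K=1$ in Proposition~\ref{p:percgen1}). The single-event bound for $\widetilde{\cD}^{(1,2)}_{i,j}$ follows from the same Lemma~\ref{l:trans.SOGam} argument you cite. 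One then applies Proposition~\ref{p:percgen1} to the $\widetilde{\cD}$'s, and finally uses the resampling hypothesis (Assumption~\ref{as:resamp1}) to show $\P[\widetilde{\cD}^{(1,2)}_{i,j}\cap(\cD^{(1,2)}_{i,j})^c]\le n^{-10}$ (Lemma~\ref{l:d1oerr}), so that a union bound over $i\in[1,M]$, $|j|\le M$ transfers the conclusion back to the original events. Your overall strategy (single-event bound plus Proposition~\ref{p:percgen1}) is the right one, but the resampling step is not optional bookkeeping---it is what creates the independence structure you assumed.
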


Postponing the proof of the above two lemmas momentarily, we complete the now straightforward proof of Lemma \ref{l:d1}.

\begin{proof}[Proof of Lemma \ref{l:d1}]
Since $\cD^{(1)}_{i,j}=\cD^{(1,1)}_{i,j}\cap \cD^{(1,2)}_{i,j}$, Lemma \ref{l:d1} follows from Lemmas \ref{l:d1s} and \ref{l:d1o} by a union bound. 
\end{proof}

We now prove Lemma \ref{l:d1s}. 

\begin{proof}[Proof of Lemma \ref{l:d1s}]
    Let us define the events 
    \[
     \cA =\bigcup_{i=1}^M\{I(\cD^{(1,1)}_{i,J_{i}})=0\},\quad \cB = \bigcap_{i=1}^M \bigcap_{|j|\leq M}\{I(\cD^{(1,1)}_{i,j})=1\},\quad \mathcal{C}=\bigcap_{i=1}^M\{|J_{i}|\le M \}.
    \]
Notice now that by Lemma \ref{l:trans.SOGam} we know that if $n$ is sufficiently large, then for all $i,j$ 
$$\P(\cD^{(1,1)}_{i,j})\ge 1-n^{-100}.$$
Therefore by taking a union bound over $i\in [1,M]$ $j\in [-M,M]$ it follows by choosing $n$ sufficiently large compared to $M$ that 
$$\P(\cB)\ge 1-M^{1000}.$$

Next, since $W_{Mn}\le M^{7/8}W_{n}$ it follows from Theorem \ref{t:trans.main} that for $M$ sufficiently large 
$$\P(\mathcal{C})\ge 1-M^{-1000}.$$
Since $\cA\subset \cB^{c}\cup \mathcal{C}^{c}$, the lemma follows by a union bound. 
\end{proof}

Finally, we give the proof of Lemma \ref{l:d1o}. This proof is similar to the proof of Lemma \ref{l:d3} using  Proposition \ref{p:percgen1}. We shall define events $G_{i,j}$ which satisfy the hypothesis of Proposition \ref{p:percgen1}. 

Recall the event $\cH^{(n)}_{x,y,z}$ from Section \ref{s:trans}. Define the event $\widetilde{\cH}^{(n)}_{x,y,z}$ where all passage times $X_{u_1,u_2}$ in the definition of $\cH^{(n)}_{x,y,z}$ are replaced by $X_{u_1,u_2}^{\Lambda_{x+1}}$ where $\Lambda_{x+1}$ denotes the strip $[xn,(x+1)n]\times \R$. 

Define 
$$\widetilde{\cO}^{(n)}_{\tfrac{R}{2},(in,tW_{n})}=\bigcap_{j=0}^{3\epsilon \log_2 n}\bigcap_{x=0}^{2^{j}-1}\bigcap_{y=-\tfrac{R}{2}W_{n}/W_{n2^{-j}}}^{\tfrac{R}{2}W_{n}/W_{n2^{-j}}} \widetilde{\cH}^{(n2^{-j})}_{i+x2^{-j},tW_{n}/W_{n2^{-j}}+y,\tfrac{R}{2}2^{\theta j}\theta^{2}/2000}.$$
and set 
$$\widetilde{\cD}^{(1,2)}_{i,j}= \widetilde{\cO}^{(n)}_{\tfrac{R}{2},(in,(j- R) W_n))}\cap \widetilde{\cO}^{(n)}_{\tfrac{R}{2},(in,(j+ R-1) W_n))}.$$

We have the following lemma. 

\begin{lemma}
    \label{l:d1oind}
    Given $\varepsilon>0$, there exists $R$ sufficiently large such that 
    $\P(\widetilde{\cD}^{(1,2)}_{i,j})\ge 1-\varepsilon$ for all $i,j$.  
\end{lemma}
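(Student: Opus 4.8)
\textbf{Proof plan for Lemma \ref{l:d1oind}.}

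The plan is to bound the probability of the complement $(\widetilde{\cD}^{(1,2)}_{i,j})^c$ by a sum over the scales $j$ (the dyadic index in the definition of $\widetilde{\cO}^{(n)}$), the horizontal shifts $x$, and the vertical shifts $y$, of the probabilities of the events $(\widetilde{\cH}^{(n2^{-j})}_{\cdots})^c$, exactly as in the proof of Lemma \ref{l:trans.SOGam}. The only new point compared with that lemma is that $\widetilde{\cH}$ is defined using the resampled passage times $X^{\Lambda_{x+1}}$ rather than $X$; however, by translation invariance the restricted field inside a single strip has the same distribution as the unrestricted field inside that strip, and the event $\widetilde{\cH}^{(n2^{-j})}_{x,y,z}$ involves only passage times between points on the two vertical boundary segments of that strip (so one should first observe, using Assumption~\ref{as:resamp1} or directly the independence structure of $\omega$, that $X^{\Lambda_{x+1}}_{u_1 u_2}$ for $u_1,u_2$ on the boundary lines $x=xn$, $x=(x+1)n$ has the same law as $X_{u_1 u_2}$). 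Hence $\P[(\widetilde{\cH}^{(n2^{-j})}_{x,y,z})^c] = \P[(\cH^{(n2^{-j})}_{x,y,z})^c]$, and the tail bound from Lemma \ref{l:paraBounds} applies verbatim, giving $\P[(\widetilde{\cH}^{(m)}_{x,y,z})^c]\leq \exp(1-D|z|^\theta)$.

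Next I would carry out the union bound. There are $O(\epsilon \log_2 n)$ scales $j$; at scale $j$ there are $2^j$ choices of $x$ and $O(R\, W_n/W_{n2^{-j}}) = O(R\, C^j)$ choices of $y$ (using $W_n/W_{n2^{-j}}\leq C^j$ from Lemma~\ref{l:Qgrowth}), and for each the parameter is $z = \tfrac{R}{2} 2^{\theta j}\theta^2/2000$, so
\[
\P[(\widetilde{\cO}^{(n)}_{R/2,(in,tW_n)})^c] \leq \sum_{j=0}^{3\epsilon\log_2 n} R\, C^{j+1} \exp\!\Big(1 - D\big(\tfrac{R}{2} 2^{\theta j}\theta^2/2000\big)^\theta\Big).
\]
For each fixed $j$ the term is at most $R\,C^{j+1}\exp(1-D' R^\theta 2^{\theta^2 j})$, and since $2^{\theta^2 j}\geq 1$ and the geometric growth $C^{j+1}$ is killed by the doubly-exponential-in-$j$ decay $\exp(-D'R^\theta 2^{\theta^2 j})$ for $R$ large, the whole sum is bounded by $C R \exp(-c R^\theta)$, which is $\leq \varepsilon/2$ once $R=R(\varepsilon)$ is large enough. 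Taking the union over the two events $\widetilde{\cO}^{(n)}_{R/2,(in,(j-R)W_n)}$ and $\widetilde{\cO}^{(n)}_{R/2,(in,(j+R-1)W_n)}$ that constitute $\widetilde{\cD}^{(1,2)}_{i,j}$ doubles the bound, so $\P[(\widetilde{\cD}^{(1,2)}_{i,j})^c]\leq \varepsilon$ uniformly in $i,j$ and $n$ (large), as required.

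I do not expect a serious obstacle here: the estimate is a routine repackaging of Lemma~\ref{l:trans.SOGam}, and the only thing that genuinely needs care is the identification of the law of the resampled strip-passage-times with that of the unrestricted ones, which is why I would state that reduction explicitly at the start. One minor bookkeeping point to watch is that, unlike in $\cO^{(n)}$, the $\widetilde{\cH}$ events at scale $j$ here are events about disjoint strips $\Lambda_{i+x2^{-j}+1}$ for different $x$ — this disjointness will be exactly what makes $\widetilde{\cD}^{(1,2)}_{i,j}$ and $\widetilde{\cD}^{(1,2)}_{i',j'}$ independent when $|i-i'|$ is large, which is needed downstream for Proposition~\ref{p:percgen1}; but for the present lemma only the marginal tail bound matters, so the union bound above suffices.
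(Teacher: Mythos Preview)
Your proposal is correct and follows essentially the same approach as the paper's own proof, which simply states that ``by the same argument as in the proof of Lemma~\ref{l:trans.SOGam}'' each $\widetilde{\cO}$ event has probability at least $1-\varepsilon/2$ for large $R$, and then concludes by a union bound over the two constituent events. Your write-up is just a more explicit version of this: the key observation that $\omega^{\Lambda}$ and $\omega$ are equal in distribution (so $\P[(\widetilde{\cH}^{(m)}_{x,y,z})^c]=\P[(\cH^{(m)}_{x,y,z})^c]$) lets you invoke Lemma~\ref{l:paraBounds} directly, after which the union bound over $j,x,y$ is identical to the one in Lemma~\ref{l:trans.SOGam}.
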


\begin{proof}
    By the same argument as in the proof of Lemma \ref{l:trans.SOGam} we get that if $R$ is sufficiently large that  
    $$\P(\widetilde{\cO}^{(n)}_{\tfrac{R}{2},(in,(j- R) W_n))})\ge 1-\varepsilon/2;~~\P(\widetilde{\cO}^{(n)}_{z,(in,(j- R) W_n))})\ge 1-\varepsilon/2$$
    and the lemma follows by a union bound. 
\end{proof}

Notice that by definition the events $\widetilde{D}^{(1,2)}_{i,j}$ are independent across $i$, therefore setting $G_{i,j}=\widetilde{D}^{(1,2)}_{i,j}$ satisfies the hypothesis of Proposition \ref{p:percgen1} with $K=1$. Using Proposition \ref{p:percgen1} we immediately have the following lemma. 

\begin{lemma}
    \label{l:d1oind2}
    If $R$ is sufficiently large, then 
    $$\P\left(\sum_{i=1}^{M} I(\widetilde{\cD}^{(1,2)}_{i,J_{i}})\ge \frac{99}{100}M\right)\ge 1- e^{-cM^{\theta/4}}$$
    for some $c>0$. 
\end{lemma}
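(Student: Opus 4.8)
The plan is to obtain Lemma~\ref{l:d1oind2} as an immediate consequence of the abstract percolation estimate Proposition~\ref{p:percgen1}, exactly as the sentence preceding the statement anticipates. The only two inputs required are: (a) that the family $\{\widetilde{\cD}^{(1,2)}_{i,j}\}$ satisfies the independence structure demanded by Proposition~\ref{p:percgen1} for some fixed $K$; and (b) a lower bound on $\P(\widetilde{\cD}^{(1,2)}_{i,j})$, uniform in $i,j$ (and in $n$), that can be pushed as close to $1$ as we like by enlarging $R$.

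For (a): by construction $\widetilde{\cO}^{(n)}_{R/2,(in,tW_n)}$ is a finite intersection of events $\widetilde{\cH}^{(n2^{-j})}_{\,\cdot\,}$, each of which, after the passage times $X_{u_1,u_2}$ in the definition of $\cH$ have been replaced by the strip-restricted passage times $X^{\Lambda_{x+1}}_{u_1,u_2}$, is measurable with respect to the restriction of $\omega$ to a sub-strip of the column $[in,(i+1)n]\times\R$. Consequently $\widetilde{\cD}^{(1,2)}_{i,j}=\widetilde{\cO}^{(n)}_{R/2,(in,(j-R)W_n)}\cap\widetilde{\cO}^{(n)}_{R/2,(in,(j+R-1)W_n)}$ is measurable with respect to $\omega$ restricted to $[in,(i+1)n]\times\R$; in particular the collection $\{\widetilde{\cD}^{(1,2)}_{i_\ell,j_\ell}\}_\ell$ is independent as soon as the indices $i_\ell$ are pairwise distinct, so the hypothesis of Proposition~\ref{p:percgen1} holds with $K=1$. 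For (b): fix the $\varepsilon=\varepsilon(1)>0$ furnished by Proposition~\ref{p:percgen1}, and then apply Lemma~\ref{l:d1oind} to choose $R$ large enough that $\P(\widetilde{\cD}^{(1,2)}_{i,j})\ge 1-\varepsilon$ for every $i,j$.

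With (a) and (b) in hand, Proposition~\ref{p:percgen1} applied to $G_{i,j}=\widetilde{\cD}^{(1,2)}_{i,j}$ produces a constant $c>0$ such that
\[
\P\bigg[\sum_{i=1}^{M} I(\widetilde{\cD}^{(1,2)}_{i,J_i})\ge \tfrac{99}{100}M\bigg]\ge 1-e^{-cM^{\theta/4}}
\]
for all $M$ sufficiently large, which is precisely the assertion. There is essentially no real obstacle here beyond bookkeeping; the one point deserving a moment's attention is the order of the quantifiers — $K$, and hence $\varepsilon(K)$, must be fixed \emph{before} $R$ is chosen — but since the value $K=1$ is dictated solely by the disjoint-column independence structure and is unaffected by $R$, no circularity arises. (If one wanted to be fully explicit, one would also record that the $y$-ranges in the definition of $\widetilde{\cO}^{(n)}_{R/2,\cdot}$ grow only polynomially in $R$, so the union bound inside the proof of Lemma~\ref{l:d1oind} does yield a probability tending to $1$ as $R\to\infty$, uniformly in $i,j$ and $n$.)
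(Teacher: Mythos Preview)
Your proposal is correct and follows exactly the approach of the paper: observe that the events $\widetilde{\cD}^{(1,2)}_{i,j}$ are independent across $i$ (so $K=1$), invoke Lemma~\ref{l:d1oind} to make the failure probability at most the $\varepsilon(1)$ of Proposition~\ref{p:percgen1} by taking $R$ large, and conclude. Your added remarks on measurability and the order of quantifiers are helpful elaborations but do not diverge from the paper's argument.
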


We also need the following lemma. 

\begin{lemma}
    \label{l:d1oerr}
    For each $i,j$, $\P(\widetilde{\cD}^{(1,2)}_{i,j}\cap (\cD^{(1,2)}_{i,j})^{c})\le n^{-10}$.
\end{lemma}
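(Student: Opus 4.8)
The plan is to use the first resampling hypothesis (Assumption~\ref{as:resamp1}) together with the design feature that the threshold parameter $\tfrac{R}{2}2^{\theta j}\theta^{2}/2000$ appearing in $\widetilde{\cO}^{(n)}$ is exactly \emph{half} of the corresponding parameter $\tfrac{R}{2}2^{\theta j}\theta^{2}/1000$ appearing in $\cO^{(n)}_{R/2,\cdot}$; this factor of two is precisely the room needed to absorb the difference between a side-to-side passage time $X_{u_1 u_2}$ of a strip and its resampled counterpart $X^{\Lambda}_{u_1 u_2}$.

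Fix $i,j$. Unwinding the definitions, $\cD^{(1,2)}_{i,j}$ and $\widetilde{\cD}^{(1,2)}_{i,j}$ are intersections over the \emph{same} index set (dyadic scales $j'\in\{0,\dots,3\epsilon\log_2 n\}$, strips $\Lambda$ of width $m=n2^{-j'}$ tiling the relevant columns, and height shifts) of, respectively, the $\cH^{(m)}$-events appearing in $\cO^{(n)}_{R/2,\cdot}$ and the $\widetilde{\cH}^{(m)}$-events appearing in $\widetilde{\cO}^{(n)}_{R/2,\cdot}$; the only difference is that the global passage times $X_{u_1u_2}$ entering a $\cH^{(m)}$-event are replaced, in the matching $\widetilde{\cH}^{(m)}$-event, by the strip-resampled passage times $X^{\Lambda}_{u_1u_2}$, where $\Lambda$ is the width-$m$ strip between the two lines involved. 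For each of the $O(n^{3\epsilon})$ such strips $\Lambda$, let
\[
\cE_{\Lambda}=\Big\{\sup\big|X_{u_1u_2}-X^{\Lambda}_{u_1u_2}\big|\le \tfrac{R}{2}2^{\theta j'}\theta^{2}Q_m/2000\Big\},
\]
the supremum over $u_1$ on the left side and $u_2$ on the right side of $\Lambda$, restricted to the height window that actually appears in the $\widetilde{\cH}^{(m)}$-events attached to $\Lambda$. Using Lemma~\ref{l:growth34} to bound $W_n/W_m$, the elementary fact that $m^{1-\epsilon}=o(m)$, and that $i,j$ are fixed while $n\to\infty$, one checks that this window has length $o(m)$ and sits inside an interval of length $m$; hence $\cE_{\Lambda}$ is implied by a translated rescaling of the event in Assumption~\ref{as:resamp1}. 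Set $\cE=\bigcap_{\Lambda}\cE_{\Lambda}$.

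The first step is the deterministic inclusion $\widetilde{\cD}^{(1,2)}_{i,j}\cap\cE\subseteq\cD^{(1,2)}_{i,j}$. Indeed, on $\cE_{\Lambda}$ the triangle inequality $|X_{u_1u_2}-\mu|u_1-u_2||\le|X^{\Lambda}_{u_1u_2}-\mu|u_1-u_2||+\tfrac{R}{2}2^{\theta j'}\theta^{2}Q_m/2000$ turns the bound defining the $\widetilde{\cH}^{(m)}$-event (whose additive threshold is $\tfrac{R}{2}2^{\theta j'}\theta^{2}/2000$ times $Q_m$, plus the common $\tfrac{\theta^2}{1000}|y-y'|Q_m$ term) into the bound defining the matching $\cH^{(m)}$-event (additive threshold $\tfrac{R}{2}2^{\theta j'}\theta^{2}/1000$ times $Q_m$). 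Intersecting over all scales and positions produces the two $\cO^{(n)}_{R/2,\cdot}$ events that constitute $\cD^{(1,2)}_{i,j}$, so $\widetilde{\cD}^{(1,2)}_{i,j}\cap(\cD^{(1,2)}_{i,j})^{c}\subseteq\cE^{c}$.

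Finally, for a strip $\Lambda$ of width $m=n2^{-j'}\ge n^{1-3\epsilon}$, Assumption~\ref{as:resamp1} (translated and rescaled) gives $\P[\cE_{\Lambda}^{c}]\le \exp\big(1-D_0(T_m/\log^{1/\kappa}m)^{\kappa}\big)$ with $T_m=\tfrac{R}{2}2^{\theta j'}\theta^{2}Q_m/2000\ge c_R Q_m\ge c_R m^{\alpha}$, the last inequality using that $Q$ grows at least polynomially (Assumption~\ref{as:varlb}); hence $\P[\cE_{\Lambda}^{c}]\le\exp(-n^{\epsilon})$ for all large $n$. As there are only $O(n^{3\epsilon})$ strips, a union bound gives $\P[\cE^{c}]\le n^{-10}$ for $n$ large, which is the claim. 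The one point requiring care is the bookkeeping in the inclusion step --- matching the exact scales, strip positions and height windows of $\cO^{(n)}_{R/2,\cdot}$ and $\widetilde{\cO}^{(n)}_{R/2,\cdot}$, and verifying that the ``$/2000$ versus $/1000$'' discrepancy is consumed \emph{exactly} (not merely approximately) by the resampling error; there is no genuine probabilistic obstacle.
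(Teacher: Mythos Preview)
Your argument is correct and follows essentially the same route as the paper: both exploit that the additive threshold $\tfrac{R}{2}2^{\theta j'}\theta^2/2000$ in $\widetilde{\cH}$ is exactly half the threshold $\tfrac{R}{2}2^{\theta j'}\theta^2/1000$ in $\cH$, and invoke Assumption~\ref{as:resamp1} to show that the resampling discrepancy $|X_{u_1u_2}-X^{\Lambda}_{u_1u_2}|$ is, with overwhelming probability, below the missing half; the paper states this directly as $\P(\cH^c\cap\widetilde{\cH})\le n^{-100}$ for each index triple and then union bounds, while you package the same estimate into events $\cE_\Lambda$ and a deterministic inclusion, which is only a cosmetic reorganisation. (Minor remarks: the polynomial growth $Q_m\ge c\,m^{\alpha}$ you need follows already from the definition $Q_m\ge (m/1)^{\alpha}\hQ_1$ without appealing to Assumption~\ref{as:varlb}; and your exponent ``$n^{\epsilon}$'' in the tail of $\cE_\Lambda^c$ should be read as ``$n^{c}$ for some $c>0$'', which is all that is used.)
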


\begin{proof}
    Let us fix $j\in [0,3\epsilon\log_2 n], x\in [0,2^{j}-1]$ and $y\in [-\tfrac{R}{2}W_{n}/W_{n2^{-j}},\tfrac{R}{2}W_{n}/W_{n2^{-j}}]$. It follows from Assumption~\ref{as:resamp1} that for $n$ sufficiently large we have 
    $$\P\left(({\cH}^{(n2^{-j})}_{i+x2^{-j},tW_{n}/W_{n2^{-j}}+y,\tfrac{R}{2}2^{\theta j}\theta^2/1000})^{c}\cap \widetilde{\cH}^{(n2^{-j})}_{i+x2^{-j},tW_{n}/W_{n2^{-j}}+y,\tfrac{R}{2}2^{\theta j}\theta^2/2000}\right)\le n^{-100}.$$
    Now taking a union bound over all $y$ (at most $2^{j}$ terms), all $x$ ($2^{j}$ terms) and finally over all $j$ between $0$ and $3\epsilon\log_2 n$ the result follows from the definitions of $\widetilde{\cD}^{(1,2)}_{i,j}$ and ${\cD}^{(1,2)}_{i,j}$.
\end{proof}

We can now complete the proof of Lemma \ref{l:d1o}. 

\begin{proof}[Proof of Lemma \ref{l:d1o}]
   Let $\mathcal{A}$ denote the event that for all $1\le i \le M$ and for all $1\le j \le M$ we have $I(\widetilde{\cD}^{(1,2)}_{i,j}\cap (\cD^{(1,2)}_{i,j})^{c})=0$. Further, let $\cB$ denote the event that $|J_{i}|\le M$ for each $i$. From Lemma \ref{l:d1oerr}, it follows that 
   $\P(\cA)\ge 1-M^{-1000}$ and by transversal fluctuations estimates as in the proof of Lemma \ref{l:d3}, $\P(\cB)\ge 1-M^{-1000}$ for $M$ sufficiently large. Since on $\cA\cap \cB$
   $$\sum_{i=1}^{M} I({\cD}^{(1,2)}_{i,J_{i}})\ge \sum_{i=1}^{M} I(\widetilde{\cD}^{(1,2)}_{i,J_{i}}),$$ 
   the result follows by a union bound from Lemma \ref{l:d1oind2} for $M$ sufficiently large. 
\end{proof}

\subsection{Proof of Lemma \ref{l:d2}}
This proof is similar to the previous two, but more complicated. We define the following events.

$$\cD^{(2,1)}_{i,j}=\bigcap_{j'=j-R}^{j+R} \cL^{(n),R}_{in,(i+\Theta)n,j'W_n,R/10};$$

$$\cD^{(2,2)}_{i,j}=\bigcap_{j'=j-R}^{j+R} \cL^{(n),R}_{(i+1)n,(i+\Theta)n,j'W_n,R/10};$$

$$\cD^{(2,3)}_{i,j}=\bigcap_{j'=j-R}^{j+R} \cL^{(n),L}_{in,(i-\Theta)n,j'W_n,R/10}$$

$$\cD^{(2,4)}_{i,j}=\bigcap_{j'=j-R}^{j+R} \cL^{(n),L}_{(i+1)n,(i-\Theta)n,j'W_n,R/10}.$$

Proof of Lemma \ref{l:d2} will follow from the following four lemmas together with a union bound.

\begin{lemma}
    \label{l:d21}
    There exists $R$ sufficiently large such that for all large $M$ we have 
    $$\P\left(\sum_{i=1}^{M} I({\cD}^{(2,1)}_{i,J_{i}})\ge \frac{999}{1000}M\right)\ge 1- M^{-100}$$
    for all $n$ sufficiently large.
\end{lemma}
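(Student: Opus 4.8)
The plan is to prove Lemma \ref{l:d21} by the same percolation strategy used for Lemmas \ref{l:d1} and \ref{l:d3}, but now dealing with the genuinely multi-scale events $\cL^{(n),R}_{in,(i+\Theta)n,j'W_n,R/10}$, which depend on the environment across $\Theta$ columns rather than $O(1)$ columns. First I would introduce a localized proxy $\widetilde{\cD}^{(2,1)}_{i,j}$ in which every passage time appearing in the definition of $\cL^{(n),R}_{in,(i+\Theta)n,\cdot}$ (which is an intersection of $\cH$-events and $\cS$-events over dyadic scales up to $2^{j_{\max}}\asymp \Theta$) is replaced by the corresponding passage time computed in the environment restricted to the strip $[in, (i+\Theta)n]\times\R$ (or its appropriate dyadic sub-columns). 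This makes $\widetilde{\cD}^{(2,1)}_{i,j}$ and $\widetilde{\cD}^{(2,1)}_{i',j'}$ independent whenever $|i-i'|\ge \Theta+1$, so Proposition \ref{p:percgen1} applies with $K=\Theta+1$ (recall $\Theta$ is a fixed constant once $M$ is fixed).

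Second I would verify the two inputs to Proposition \ref{p:percgen1}. The high-probability bound $\P(\widetilde{\cD}^{(2,1)}_{i,j})\ge 1-\varepsilon$ for large $R$ follows exactly as in Lemma \ref{l:cL.bound}: the $\cH$-events contribute $\exp(1-cR^\theta)$-type errors by Lemma \ref{l:paraBounds} (resampling to the strip only costs an extra Assumption \ref{as:resamp1} error, absorbed by enlarging the strip by $\log^2 n$ and using a union bound over the $O(\log n)$ dyadic scales and $O(R)$ vertical offsets), and the $\cS$-events contribute $\exp(1-c(Rn)^{\epsilon\theta/2})$ by Lemma \ref{l:trans.SOGam}; summing the geometric series in the dyadic index $j$ gives a bound $\exp(1-Dz^{\frac12\epsilon\theta})$ with $z=R/10$. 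Choosing $R=R(\Theta,\varepsilon)$ large makes this $\le \varepsilon$ where $\varepsilon$ is the threshold coming from Proposition \ref{p:percgen1} with $K=\Theta+1$; this forces $R$ to be large depending on $\Theta$, hence on $M$ — but that is fine since in Proposition \ref{p:percevent} we first fix $R$, then $M$; I would need to be slightly careful about the order of quantifiers here and, if necessary, note that $\Theta=\Theta(M)$ only grows polylogarithmically so one can instead fix $R$ first and then take $M$ (hence $\Theta$) large while keeping $\varepsilon=\varepsilon(\Theta)$ under control, or simply absorb a polylog-in-$M$ loss which is harmless against the $M^{-100}$ target. Then Proposition \ref{p:percgen1} gives $\P\big[\sum_{i=1}^M I(\widetilde{\cD}^{(2,1)}_{i,J_i})\ge \frac{999}{1000}M\big]\ge 1-e^{-cM^{\theta/4}}$ (the constant $\frac{999}{1000}$ is attainable by taking $\varepsilon$ small enough relative to $K=\Theta+1$, adjusting the $\frac{99}{100}$ in the statement of Proposition \ref{p:percgen1} to $\frac{999}{1000}$, which changes nothing in its proof).

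Third I would pass from $\widetilde{\cD}^{(2,1)}_{i,j}$ back to $\cD^{(2,1)}_{i,j}$ exactly as in Lemma \ref{l:d1o}: let $\cA$ be the event that $\widetilde{\cD}^{(2,1)}_{i,j}\cap(\cD^{(2,1)}_{i,j})^c$ holds for some $1\le i\le M$, $|j|\le M$, and let $\cB=\{\max_i|J_i|\le M\}$. By Assumption \ref{as:resamp1} applied to the $O(\log n)$ dyadic scales and $O(R)$ offsets inside each $\cL$-event, plus a union bound over $O(M^2)$ pairs $(i,j)$, one gets $\P(\cA)\le M^2\cdot\mathrm{poly}(\log n)\cdot\exp(-cn^\epsilon)\le M^{-100}$ for $n$ large; and $\P(\cB^c)\le M^{-100}$ by Theorem \ref{t:trans.main} together with $W_{Mn}\le M^{7/8}W_n$ (Lemma \ref{l:growth34}). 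On $\cA^c\cap\cB^c$ we have $\widetilde{\cD}^{(2,1)}_{i,J_i}\subseteq\cD^{(2,1)}_{i,J_i}$ for every $i$, so a final union bound yields the claim. The main obstacle — and the only place requiring genuine care rather than bookkeeping — is the quantifier interaction between $R$, $\Theta$, and $M$ in the second step: because $\cL^{(n),R}$ is genuinely multi-scale, the required smallness of $\varepsilon=\varepsilon(K)=\varepsilon(\Theta)$ in Proposition \ref{p:percgen1} depends on $\Theta=\Theta(M)$, so one must either arrange to fix $R$ after $M$ (consistent with how $R$ is used in Proposition \ref{p:percevent}, where $R$ is chosen first — this is the subtle point, and one resolves it by noting the chain of lemmas in fact permits choosing $R$ depending only on an absolute threshold and then $M$ large, since the error $\exp(1-Dz^{\frac12\epsilon\theta})$ with $z=R/10$ beats any fixed negative power of $M$ once $R$ is a large absolute constant and $\Theta$ is polylog in $M$), or explicitly track that $\Theta$ enters only through $K=\Theta+1$ in the Chernoff bound of Proposition \ref{p:percgen1}, whose exponent $-\frac{M}{k}(K\log C+K)$ still beats $C^M$ provided $\varepsilon\le e^{-K(\log C+1)}$, i.e. $\varepsilon$ exponentially small in $\Theta=O(\log^{1000/\epsilon^2}M)$, which is still achieved by polynomially large $R$ via the stretched-exponential tail — so no circularity arises.
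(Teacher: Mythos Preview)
Your overall strategy — localize, apply Proposition~\ref{p:percgen1}, then undo the localization via Assumption~\ref{as:resamp1} — is the right template, but the direct application of Proposition~\ref{p:percgen1} with $K=\Theta+1$ does not close. You correctly identify the requirement $\varepsilon\le e^{-cK}$ coming from the Chernoff-versus-entropy balance in that proposition. With $K=\Theta+1\asymp\log^{1000/\epsilon^2}M$ and the bound $\P\big((\widetilde{\cD}^{(2,1)}_{i,j})^c\big)\le\exp\big(1-D(R/10)^{\frac12\epsilon\theta}\big)$ from Lemma~\ref{l:cL.bound}, the condition becomes $R^{\frac12\epsilon\theta}\gtrsim\Theta$, i.e.\ $R\gtrsim\Theta^{2/(\epsilon\theta)}$. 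This is a power of $\log M$, so $R$ must grow with $M$. Your attempted resolution (``polynomially large $R$ \ldots so no circularity arises'') does not address this: polynomial in $\Theta$ is still $M$-dependent, and the quantifier order in the lemma (and in Proposition~\ref{p:percevent}, where $R$ is the absolute constant defining the events $\cC_{i,j}$ used throughout the variance argument) requires $R$ to be fixed before $M$.

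The paper avoids this by \emph{not} applying Proposition~\ref{p:percgen1} as a black box. It decomposes $\cD^{(2,1)}_{i,j}$ into its $\cS$-part (handled trivially as in Lemma~\ref{l:d1s}, since its failure probability is $n^{-\text{large}}$) and its $\cH$-part, and then further decomposes the $\cH$-part over the dyadic scales $\ell=0,\ldots,\ell_{\max}+1$ appearing in the definition of $\cL^{(n),R}$. At each scale $\ell$ three things change simultaneously: the range of dependence is only $2^\ell$ (not $\Theta$); the failure probability of the scale-$\ell$ piece carries the extra gain $\exp\big(-c(R\,2^{\ell/20})^\theta\big)$; and, crucially, the percolation argument is redone with the \emph{reduced} path-entropy $|\cJ_s|\le\exp(c\,\ell\,2^{-\ell}M)$ obtained by only recording every $2^\ell$-th column. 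The paper also allows a scale-dependent bad-fraction threshold $\ell^{-100}/10000$, summable in $\ell$. The resulting balance is $R^\theta 2^{\ell\theta/20}$ against $\ell^{101}$, which holds for \emph{all} $\ell$ once $R$ is a large absolute constant: for small $\ell$ the $R^\theta$ suffices, for large $\ell$ the factor $2^{\ell\theta/20}$ dominates. This scale-by-scale refinement is precisely the missing idea in your proposal; without it the entropy $C^M$ of full paths cannot be beaten by $M/\Theta$ independent events with an $R$-controlled $\varepsilon$.
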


\begin{lemma}
    \label{l:d22}
    There exists $R$ sufficiently large such that for all large $M$ we have 
    $$\P\left(\sum_{i=1}^{M} I({\cD}^{(2,2)}_{i,J_{i}})\ge \frac{999}{1000}M\right)\ge 1- M^{-100}$$
    for all $n$ sufficiently large. 
\end{lemma}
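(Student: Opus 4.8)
\textbf{Proof proposal for Lemma \ref{l:d22}.}

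The plan is to reduce Lemma \ref{l:d22} to the abstract percolation statement of Proposition \ref{p:percgen1}, exactly as was done for Lemma \ref{l:d1o} (and Lemma \ref{l:d3}), by constructing localized proxies of the events $\cD^{(2,2)}_{i,j}$ that become independent across $i$ once $i$ moves by more than a fixed constant. The events $\cL^{(n),R}_{(i+1)n,(i+\Theta)n,j'W_n,R/10}$ controlling local transversal fluctuations of geodesics from the line $x=(i+1)n$ out to the line $x=(i+\Theta)n$ depend (via the definition of $\cL^{(n),R}_{\cdot}$ in Section~\ref{s:ltf}) on the field in a slab of horizontal width $O(\Theta n)$; to localize I would resample outside the slab $[in,(i+\Theta+1)n]\times\R$ (or a slightly larger window, using the events $\cS^{(2^j n)}_{\cdot}$ already built into $\cL$ to confine the relevant geodesics). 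Call the resulting resampled event $\widetilde{\cD}^{(2,2)}_{i,j}$. Since $\Theta$ is a fixed (large, but $M$- and $n$-independent) power of $2$, the events $\widetilde{\cD}^{(2,2)}_{i,j}$ and $\widetilde{\cD}^{(2,2)}_{i',j'}$ are independent whenever $|i-i'|\ge K$ for $K=\Theta+2$, so the hypothesis of Proposition~\ref{p:percgen1} is met.

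The steps, in order, are: (i) prove a single-block estimate $\P[\widetilde{\cD}^{(2,2)}_{i,j}]\ge 1-\varepsilon$ for $R$ large, using Lemma~\ref{l:cL.bound} (which gives $\P[\cL^{(n),R}_{x,x',y,z}]\ge 1-\exp(1-Dz^{\frac12\epsilon\theta})$) together with a union bound over the $O(R)$ values of $j'$ between $j-R$ and $j+R$; here $z=R/10$, so the single-block failure probability is at most $C R\exp(-c R^{\frac12\epsilon\theta})$, which is $<\varepsilon$ once $R=R(\varepsilon)$ is large enough. (ii) Invoke Proposition~\ref{p:percgen1} with $K=\Theta+2$ and $\varepsilon=\varepsilon(K)$ sufficiently small (chosen after $\Theta$, hence after $R$, but this is fine since $\Theta$ is fixed independent of $M,n$) and then choose $R$ large enough to make (i) hold with that $\varepsilon$; this yields
\[
\P\bigg[\sum_{i=1}^M I(\widetilde{\cD}^{(2,2)}_{i,J_i})\ge \tfrac{999}{1000}M\bigg]\ge 1-e^{-cM^{\theta/4}}.
\]
(iii) Replace $\widetilde{\cD}^{(2,2)}_{i,j}$ by $\cD^{(2,2)}_{i,j}$ along the geodesic: exactly as in Lemma~\ref{l:d1oerr}, Assumption~\ref{as:resamp1} (the first resampling hypothesis) gives $\P[\widetilde{\cD}^{(2,2)}_{i,j}\cap(\cD^{(2,2)}_{i,j})^c]\le n^{-10}$ after a union bound over the $O(\log n)$ scales and $O(R)$ translates appearing in the definition of $\cL$; then, on the event $\{\max_i|J_i|\le M\}$ (which has probability $\ge 1-M^{-100}$ by Theorem~\ref{t:trans.main}, since $W_{Mn}\le M^{7/8}W_n$ by Lemma~\ref{l:growth34}) and on the event $\cA=\bigcap_{i\le M}\bigcap_{|j|\le M}\{\widetilde{\cD}^{(2,2)}_{i,j}\cap(\cD^{(2,2)}_{i,j})^c\text{ fails}\}$ (probability $\ge 1-M^2 n^{-10}\ge 1-M^{-100}$ for $n$ large compared to $M$), we have $\sum_i I(\cD^{(2,2)}_{i,J_i})\ge\sum_i I(\widetilde{\cD}^{(2,2)}_{i,J_i})$. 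A final union bound combining (ii) and (iii) gives the claim.

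The main obstacle I anticipate is step (i) and the precise design of the localization in $\widetilde{\cD}^{(2,2)}_{i,j}$: unlike the $\cH$-events used for $\cD^{(1,2)}$, the event $\cL^{(n),R}_{(i+1)n,(i+\Theta)n,\cdot}$ is itself an intersection over $O(\log(\Theta))$ dyadic scales of parallelogram-crossing and $\cS$-events whose geodesics range over a region of width $\Theta n$, so one must be careful that (a) the resampled window genuinely contains all geodesics relevant to $\cL$ on a large-probability event — this is precisely what the built-in $\cS^{(2^j n)}_{z,\cdot}$ and the transversal-fluctuation bound of Lemma~\ref{l:local.trans.proof} guarantee — and (b) the error from resampling is controlled scale by scale via Assumption~\ref{as:resamp1}, which is where the $n^{-10}$ in step (iii) comes from. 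Once the window is chosen as $[in,(i+\Theta+1)n]\times\R$ the $K=\Theta+2$ independence is immediate and the rest is bookkeeping. (Lemma~\ref{l:d22} is essentially identical to Lemma~\ref{l:d21} with the roles of the starting line $x=in$ and $x=(i+1)n$ interchanged, and to Lemmas for $\cD^{(2,3)},\cD^{(2,4)}$ after a left-right reflection; so the same scheme applies verbatim with obvious modifications.)
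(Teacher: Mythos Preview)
Your proposal contains a genuine error: you assert that ``$\Theta$ is a fixed (large, but $M$- and $n$-independent) power of $2$,'' but in the paper $\Theta$ is defined as the smallest power of $2$ exceeding $\lceil\log^{1000/\epsilon^2}M\rceil$, so $\Theta$ grows polylogarithmically with $M$. Consequently your localization window $[in,(i+\Theta+1)n]\times\R$ has range $K=\Theta+2$ that depends on $M$, and Proposition~\ref{p:percgen1} cannot be invoked with a fixed $K$. If you trace through the proof of Proposition~\ref{p:percgen1}, beating the entropy $C^M$ of paths via a Chernoff bound on $M/K$ independent trials forces $\varepsilon\lesssim e^{-cK}$; with $K\sim\log^{1000/\epsilon^2}M$ this would require the single-block failure probability $CR\exp(-cR^{\frac12\epsilon\theta})$ to be smaller than $\exp(-\Theta)$, which is impossible for a fixed $R$. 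This is exactly the obstruction the paper flags when it says that, unlike for $\cD^{(1)}$ and $\cD^{(3)}$, the events $\cD^{(2,k)}_{i,j}$ ``do not have finite range of dependence across $i$, and hence Proposition~\ref{p:percgen1} cannot directly be applied.''

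The paper's proof of Lemma~\ref{l:d21} (to which Lemma~\ref{l:d22} is said to be almost identical) instead exploits the internal multi-scale structure of the event $\cL^{(n),R}_{\cdot}$: it decomposes $\cD^{(2,1)}_{i,j}$ into the $\cS$-part $\cD^{(2,1,2)}_{i,j}$ (handled trivially by a union bound, as in Lemma~\ref{l:d1s}) and the $\cH$-parts $\cD^{(2,1,1)}_{i,j,\ell}$ for each dyadic scale $0\le\ell\le\ell_{\max}+1$. At scale $\ell$ the resampled proxy has dependence range $2^\ell$, but crucially its failure probability is at most $CR^2 2^\ell\exp(-c(R2^{\ell/20})^\theta)$ (Lemma~\ref{l:htildebound}), decaying in \emph{both} $R$ and $\ell$. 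One then splits the indices $i$ into $2^\ell$ residue classes, applies a Chernoff bound within each class, and checks that the extra decay in $\ell$ absorbs both the $2^\ell$-fold union bound and the entropy $|\cJ_s|\le\exp(C\ell 2^{-\ell}M)$ of paths restricted to that class; summing the resulting bounds over $\ell$ (with a summable budget $\ell^{-100}$ of allowed failures at scale $\ell$) gives the result. Your single-scale localization collapses this structure and loses the $2^{\ell\theta/20}$ gain that makes the argument close.
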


\begin{lemma}
    \label{l:d23}
    There exists $R$ sufficiently large such that for all large $M$ we have 
    $$\P\left(\sum_{i=1}^{M} I({\cD}^{(2,3)}_{i,J_{i}})\ge \frac{999}{1000}M\right)\ge 1- M^{-100}$$
    for all $n$ sufficiently large. 
\end{lemma}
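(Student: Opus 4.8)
The statement to prove is Lemma \ref{l:d23}, which asserts that with high probability the event $\cD^{(2,3)}_{i,J_i}$ (a local-transversal-fluctuation event for leftward geodesics from column $i$ to column $i-\Theta$) holds at at least a $\frac{999}{1000}$ fraction of the indices $i\in\{1,\dots,M\}$ along the geodesic from $\mathbf{0}$ to $(Mn,0)$. Here is how I would proceed.

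\begin{proof}[Proof of Lemma \ref{l:d23}]
The argument is essentially identical to that of Lemmas \ref{l:d21} and \ref{l:d22}, with the role of $\Xi^{(n),R}$ replaced by $\Xi^{(n),L}$ and Corollary \ref{c:local.trans} invoked in its reflected form (valid by reflection symmetry, as noted after Corollary \ref{c:local.trans}). The key obstacle is that $\cD^{(2,3)}_{i,j}$ depends on the environment in a window of width $\Theta n$ to the \emph{left} of column $i$, so the events $\cD^{(2,3)}_{i,j}$ are not independent for nearby $i$; the resolution, as in the previous lemmas, is to pass to a resampled (locally defined) proxy that is independent for $i$'s spaced by more than $O(\Theta)$, and then apply Proposition \ref{p:percgen1} with $K$ of order $\Theta$.

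The plan in order of steps. First, introduce a localized variant $\widetilde{\cD}^{(2,3)}_{i,j}$ in which every passage time $X_{\gamma'},X_{\gamma'(0),\gamma'(1)}$ occurring in the definition of $\cL^{(n),L}_{in,(i-\Theta)n,j'W_n,R/10}$ is replaced by its analogue computed in the environment $\omega$ resampled outside the strip $[(i-\Theta-1)n,(i+1)n]\times\R$ (a left-enlargement of the $\Theta$-column window). By construction $\widetilde{\cD}^{(2,3)}_{i,j}$ and $\widetilde{\cD}^{(2,3)}_{i',j'}$ are independent whenever $|i-i'|\ge \Theta+2$, so with $K=\Theta+2$ the family $\{\widetilde{\cD}^{(2,3)}_{i,j}\}$ fits the hypothesis of Proposition \ref{p:percgen1} (note $\Theta$ is an absolute-growth function of $M$, but once $R$ is chosen we may regard it as fixed relative to the $M\to\infty$ limit, and in any case the Chernoff bound in Proposition \ref{p:percgen1} tolerates $K$ polylogarithmic in $M$). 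Second, bound $\P(\widetilde{\cD}^{(2,3)}_{i,j})\ge 1-\varepsilon$ for $R=R(\varepsilon)$ large: this is immediate from Corollary \ref{c:local.trans} (reflected version), since $\cL^{(n),L}_{\cdot,\cdot,\cdot,R/10}$ is a high-probability event by Lemma \ref{l:cL.bound} (its reflected analogue), and the local-transversal-fluctuation control rules out the bad paths in $\Xi^{(n),L}$; taking a union bound over the $O(R)$ choices of $j'$ gives $\P((\widetilde{\cD}^{(2,3)}_{i,j})^c)\le C R e^{-cR^{\epsilon\theta/2}}$, which is $\le \varepsilon$ for $R$ large. Third, apply Proposition \ref{p:percgen1} to conclude $\P[\sum_{i=1}^M I(\widetilde{\cD}^{(2,3)}_{i,J_i})\ge \frac{9999}{10000}M]\ge 1-e^{-cM^{\theta/4}}$.

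Finally, transfer back from $\widetilde{\cD}^{(2,3)}$ to $\cD^{(2,3)}$. Let $\cA$ be the event that $\widetilde{\cD}^{(2,3)}_{i,j}\cap(\cD^{(2,3)}_{i,j})^c$ holds for some $1\le i\le M$, $|j|\le M$, and let $\cB=\{\max_i|J_i|\le M\}$. On $\cA^c\cap\cB^c$ we have $\cD^{(2,3)}_{i,J_i}\supseteq \widetilde{\cD}^{(2,3)}_{i,J_i}$ for every $i$, so $\sum_i I(\cD^{(2,3)}_{i,J_i})\ge \sum_i I(\widetilde{\cD}^{(2,3)}_{i,J_i})$. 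To bound $\P(\cA)$: by Assumption \ref{as:resamp1} applied to the strip $[(i-\Theta-1)n,(i+1)n]\times\R$ (or, more precisely, the version with $\log^2 n$-buffers as in the definition of the resampling), outside a set of probability at most $n^{-10}$ (say) all paths relevant to $\cL^{(n),L}_{in,(i-\Theta)n,j'W_n,R/10}$ — which by Lemma \ref{l:local.trans.proof} and the events $\cS,\cO$-type constraints built into $\cL^{(n),L}$ are confined to this strip — have equal passage times under $\omega$ and the resampled environment, so the two versions of the event agree; a union bound over $O(M^2)$ pairs $(i,j)$ and the $O(R)$ values of $j'$ gives $\P(\cA)\le M^2\cdot O(R)\cdot n^{-10}\le M^{-100}$ for $n$ large relative to $M$. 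And $\P(\cB)\le M^{-100}$ for $M$ large by Theorem \ref{t:trans.main} together with $W_{Mn}\le M^{7/8}W_n$ (Lemma \ref{l:growth34}), exactly as in the proof of Lemma \ref{l:d3}. Combining the three displayed bounds via a union bound yields $\P[\sum_{i=1}^M I(\cD^{(2,3)}_{i,J_i})\ge \frac{999}{1000}M]\ge 1-M^{-100}$ for all $M$ sufficiently large and $n$ sufficiently large depending on $M$, as required. The main obstacle is the same as in Lemmas \ref{l:d1o}, \ref{l:d21}: correctly setting up the localization window so that (i) the proxy events are independent at the required spacing and (ii) the proxy agrees with the true event with overwhelming probability; this is handled by the resampling hypotheses (Assumptions \ref{as:resamp1}, \ref{as:resamp2}) and the fact that the $\cL^{(n),L}$ events already force the relevant paths to stay in a bounded neighbourhood of the $\Theta$-column window.
\end{proof}
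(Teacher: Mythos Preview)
Your proposal has a genuine gap. The paper proves Lemma~\ref{l:d23} by reflection symmetry from Lemma~\ref{l:d21}, so the real content is the argument for $\cD^{(2,1)}$; your attempt to give that argument directly for $\cD^{(2,3)}$ breaks at the application of Proposition~\ref{p:percgen1}.

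The difficulty is the one you flag but then dismiss: you want to localize the whole event $\cD^{(2,3)}_{i,j}$ to a strip of width $(\Theta+2)n$ and invoke Proposition~\ref{p:percgen1} with $K=\Theta+2$. But in that proposition the threshold $\varepsilon=\varepsilon(K)$ must be \emph{exponentially small in $K$}: after restricting to a residue class mod $K$ you have $M/K$ independent indicators, the Chernoff bound yields $\exp\bigl(-c\tfrac{M}{K}\log\tfrac{1}{C\varepsilon}\bigr)$, and this has to beat the union bound over the $\exp(c'M\Theta^{-1}\log\Theta)$ admissible subsequences (cf.\ the entropy computation for $|\cJ_s|$ in the proof of Lemma~\ref{l:d21l}). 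That forces $\log\tfrac{1}{\varepsilon}\gtrsim \log\Theta$, i.e.\ $\varepsilon\le(\log M)^{-c}$. Your bound $\P\bigl((\widetilde{\cD}^{(2,3)}_{i,j})^c\bigr)\le CRe^{-cR^{\epsilon\theta/2}}$, however, depends only on the \emph{fixed} constant $R$ and does not shrink with $M$; since the lemma quantifies ``there exists $R$ such that for all large $M$'', you cannot let $R$ grow with $M$. So the Chernoff/entropy balance fails and the union bound does not close. The paper flags exactly this point before Lemma~\ref{l:d21}: ``the events $\cD^{(2,1)}_{i,j}$ do not have finite range of dependence across $i$, and hence Proposition~\ref{p:percgen1} cannot directly be applied.''

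The fix the paper uses, which your sketch is missing, is a scale-by-scale decomposition of $\cL^{(n),L}$. One writes $\cD^{(2,3)}_{i,j}$ as an intersection of the $\cS$-part (handled trivially, as in Lemma~\ref{l:d213}) and, for each dyadic level $\ell=0,\dots,\ell_{\max}+1$ with $\ell_{\max}=\lfloor\log_2\Theta-1\rfloor$, an $\cH^{(2^\ell n)}$-part $\cD^{(2,3,1)}_{i,j,\ell}$. At level $\ell$ the localized proxy lives in a strip of width $2^\ell n$, so the spacing needed for independence is only $2^\ell$, \emph{and} the failure probability is bounded by $CR^2 2^\ell\exp\bigl(-C'(R2^{\ell/20})^\theta\bigr)$ (Lemma~\ref{l:htildebound}), which \emph{decreases} super-polynomially in $2^\ell$. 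This extra decay is exactly what makes the Chernoff bound at level $\ell$ beat the entropy $\exp(c\ell 2^{-\ell}M)$ of the subsequences at spacing $2^\ell$, uniformly in $\ell$, once $R$ is fixed large (see \eqref{e:rlarge} and the lines following it). Summing the $\ell^{-100}$-weighted failure fractions over $\ell$ then gives the $\tfrac{1}{1000}M$ bound. Your single-window localization collapses all scales into one and loses the crucial improvement in the tail at large $\ell$.
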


\begin{lemma}
    \label{l:d24}
    There exists $R$ sufficiently large such that for all large $M$ we have 
    $$\P\left(\sum_{i=1}^{M} I({\cD}^{(2,4)}_{i,J_{i}})\ge \frac{999}{1000}M\right)\ge 1- M^{-100}$$
    for all $n$ sufficiently large. 
\end{lemma}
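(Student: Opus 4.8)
The plan is to prove Lemma \ref{l:d24} by the same template used for Lemmas \ref{l:d1o}, \ref{l:d21}, \ref{l:d22}, \ref{l:d23}: namely, introduce a \emph{resampled} (local) proxy for the event $\cD^{(2,4)}_{i,j}$ that is built only from the field restricted to a bounded number of columns near column $i$, so that the proxy events at positions $i$ and $i'$ are independent whenever $|i-i'|$ exceeds some fixed constant $K$ depending only on $\Theta$; then invoke Proposition \ref{p:percgen1} to conclude that the proxy event holds at a $\tfrac{999}{1000}$-fraction of the positions $(i,J_i)$ along the geodesic with probability at least $1-e^{-cM^{\theta/4}}$; and finally remove the resampling error by Assumption \ref{as:resamp1}, the transversal fluctuation bound Theorem \ref{t:trans.main} (to control $\max_i |J_i|$), and a union bound over $1\le i\le M$, $|j|\le M$. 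Concretely, recall that $\cL^{(n),L}_{(i+1)n,(i-\Theta)n,j'W_n,R/10}$ is an intersection of events of the form $\cH^{(2^\ell n)}_{\cdot,\cdot,\cdot}$ and $\cS^{(n2^\ell)}_{\cdot,\cdot}$ across dyadic scales $2^\ell n$ for $0\le \ell\le \lfloor\log_2(\Theta+1)-1\rfloor$; one defines $\widetilde{\cH}^{(2^\ell n)}$ (and $\widetilde{\cS}^{(2^\ell n)}$) exactly as in the proof of Lemma \ref{l:d1o} by replacing every passage time $X_{u_1 u_2}$ by $X^{\Lambda}_{u_1 u_2}$ where $\Lambda$ is the union of the $O(\Theta)$ columns of width $n$ between columns $i-\Theta$ and $i+1$, and sets $\widetilde{\cD}^{(2,4)}_{i,j}$ to be the corresponding intersection. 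Since $\Theta = O(\log^{1000/\epsilon^2} M)$ is fixed once $M$ is fixed, $\widetilde{\cD}^{(2,4)}_{i,j}$ depends only on columns $[i-\Theta,i+1]$, hence $\widetilde{\cD}^{(2,4)}_{i,j}$ and $\widetilde{\cD}^{(2,4)}_{i',j'}$ are independent when $|i-i'|\ge \Theta+2=:K$.

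The three estimates needed to run this are: (i) a probability lower bound $\P(\widetilde{\cD}^{(2,4)}_{i,j})\ge 1-\varepsilon$ for every $i,j$ once $R$ is large enough, which follows from Lemmas \ref{l:paraBounds} and \ref{l:trans.SOGam} (and their obvious resampled analogues, whose proofs are word-for-word the same using Assumption \ref{as:resamp1} to pass between $X$ and $X^\Lambda$ on a bounded region) together with a union bound over the $O(\mathrm{poly}(R))$ constituent events and dyadic scales — exactly as in Lemma \ref{l:d1oind}; (ii) the resampling error bound $\P(\widetilde{\cD}^{(2,4)}_{i,j}\cap(\cD^{(2,4)}_{i,j})^c)\le n^{-10}$, which follows from Assumption \ref{as:resamp1} applied to each constituent $\cH$-event and a union bound over scales and offsets, as in Lemma \ref{l:d1oerr}; and (iii) the output of Proposition \ref{p:percgen1}. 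Then, letting $\cA$ be the event that $I(\widetilde{\cD}^{(2,4)}_{i,j}\cap(\cD^{(2,4)}_{i,j})^c)=0$ for all $1\le i\le M,\ |j|\le M$ and $\cB$ the event $\{\max_i|J_i|\le M\}$, one has $\P(\cA)\ge 1-M^2 n^{-10}\ge 1-M^{-1000}$ for $n$ large depending on $M$, and $\P(\cB)\ge 1-M^{-1000}$ for $M$ large by Theorem \ref{t:trans.main} and $W_{Mn}\le M^{7/8}W_n$ (Lemma \ref{l:growth34}); on $\cA\cap\cB$ we have $\sum_i I(\cD^{(2,4)}_{i,J_i})\ge \sum_i I(\widetilde{\cD}^{(2,4)}_{i,J_i})$, so a final union bound with Proposition \ref{p:percgen1} (taking $\varepsilon=\varepsilon(K)$ small, which fixes $R$) gives $\P(\sum_i I(\cD^{(2,4)}_{i,J_i})\ge \tfrac{999}{1000}M)\ge 1-M^{-100}$ for $M$, hence $n$, large enough.

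One technical point deserves care: unlike the events in $\cD^{(1)}$, the events $\cL^{(n),L}_{(i+1)n,(i-\Theta)n,\cdot,\cdot}$ reach a full $\Theta$ columns to the left, so the localizing strip $\Lambda$ must be chosen to contain columns $i-\Theta$ through $i+1$ (not just a $3$-column window), and one must verify that the dyadic parallelogram events $\cH^{(2^\ell n)}$ appearing in $\cL^{(n),L}$ indeed only involve passage times between points whose $x$-coordinates lie in $[(i-\Theta)n,(i+1)n]$ — this is immediate from the definitions of $\cL^{(n),L}_{x,x',y,z}$ and $\cH^{(m)}_{x,y,y',z}$ in Sections \ref{s:trans} and \ref{s:ltf}. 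The geodesic-fluctuation input of Lemma \ref{l:tauboundgeo} is used (as in Proposition \ref{p:percgen1}) to restrict to the $C^M$-many sequences $\{j_i\}$ with $\sum_i|j_i-j_{i-1}|\le H_0 M$, after which the independence-across-$K$-spaced-$i$ structure lets a Chernoff bound over the $K$ residue classes modulo $K$ absorb the $C^M$ factor provided $\varepsilon=\varepsilon(K)$ is small.

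The main obstacle — really the only non-routine part — is checking that the localization radius $K=\Theta+2$ is genuinely $O(1)$ in the sense required by Proposition \ref{p:percgen1}: the proposition's constant $\varepsilon$ and the implied constants depend on $K$, so one must fix $M$ (hence $\Theta$, hence $K$) \emph{first}, then choose $R=R(\varepsilon(K))$ large, then choose $n$ large depending on $M$ and $R$; the order of quantifiers must be exactly this, and one should state it explicitly. Everything else is a transcription of the arguments already given for Lemmas \ref{l:d1o} and \ref{l:d21}--\ref{l:d23}, with $\cL^{(n),L}$ in place of $\cL^{(n),R}$ and reflection symmetry invoked where convenient; in particular Lemmas \ref{l:d21}, \ref{l:d22}, \ref{l:d23} are proved identically (with $\cL^{(n),R}$ in \ref{l:d21}, \ref{l:d22} and the other choice of left endpoint $in$ versus $(i+1)n$), and Lemma \ref{l:d2} then follows from Lemmas \ref{l:d21}--\ref{l:d24} by a union bound, noting $4\cdot M^{-100}\le \tfrac13 M^{-10}$ for $M$ large and that $\tfrac{999}{1000}\cdot 4 - 3 \ge \tfrac{99}{100}$ so a fraction $\tfrac{99}{100}$ of positions lies in the intersection $\cD^{(2)}_{i,J_i}=\bigcap_{r=1}^4 \cD^{(2,r)}_{i,J_i}$.
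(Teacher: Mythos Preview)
Your approach has a genuine gap at exactly the point you flag as ``the only non-routine part.'' You propose to localize the whole event $\cD^{(2,4)}_{i,j}$ to the strip of columns $[i-\Theta,i+1]$ and invoke Proposition~\ref{p:percgen1} with $K=\Theta+2$. But $\Theta$ is the smallest power of $2$ exceeding $\lceil\log^{1000/\epsilon^2}M\rceil$, so $K$ grows with $M$. In the proof of Proposition~\ref{p:percgen1}, the threshold $\varepsilon=\varepsilon(K)$ must be small enough that a Chernoff bound over $\lfloor M/K\rfloor$ i.i.d.\ Bernoulli$(\varepsilon)$ variables at deviation level $\tfrac{1}{100K}$ beats the path entropy $C^M$; this forces $\varepsilon\le e^{-cK^2}$. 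With $K$ polylogarithmic in $M$ this means $\varepsilon$ must be superpolynomially small in $M$, and to get $\P\big((\widetilde{\cD}^{(2,4)}_{i,j})^c\big)\le\varepsilon$ via (the resampled analogue of) Lemma~\ref{l:cL.bound} you would need $R$ itself to grow with $M$. Your proposed fix --- ``fix $M$ first, then choose $R=R(\varepsilon(K))$'' --- inverts the quantifiers in the lemma statement ($\exists R\ \forall M$), and this is not cosmetic: the same $R$ is fixed \emph{before} $M$ throughout Section~\ref{s:varlb} and enters Lemma~\ref{l:cH.bound} via $n/m\le (T\vee R)^C$, hence controls the constant $\delta$ there; if $R\to\infty$ with $M$ then $\delta\to 0$ superpolynomially and the variance lower bound in Theorem~\ref{t:var.increase} becomes vacuous. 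The paper says this explicitly right after stating Lemmas~\ref{l:d21}--\ref{l:d24}: ``the events $\cD^{(2,1)}_{i,j}$ does not have finite range of dependence across $i$, and hence Proposition~\ref{p:percgen1} cannot directly be applied.''

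The paper's proof of Lemma~\ref{l:d24} is simply ``by reflection symmetry from Lemma~\ref{l:d22}'', which in turn is proved identically to Lemma~\ref{l:d21}. The substantive argument (Lemma~\ref{l:d21}) avoids the obstruction by \emph{not} localizing the whole $\cL$-event at once. One decomposes $\cL^{(n),R}_{i,i+\Theta,j,R/10}$ into its dyadic constituents: the $\cS$-pieces, handled by a crude union bound since each fails with probability at most $n^{-100}$ (Lemma~\ref{l:d213}); and, for each scale $0\le\ell\le\ell_{\max}+1$, the $\cH^{(2^\ell n)}$-pieces (Lemma~\ref{l:d21l}). At scale $\ell$ one localizes only to a strip of width $2^\ell n$, giving independence among $i$'s spaced $2^\ell$ apart. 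The key is that by Lemma~\ref{l:paraBounds} the failure probability of the scale-$\ell$ piece is at most $CR^2 2^\ell\exp\big(-c(R2^{\ell/20})^\theta\big)$ (Lemma~\ref{l:htildebound}), which \emph{improves} with $\ell$ fast enough to beat both the growing dependence range $2^\ell$ and the entropy $\exp(C\ell\, 2^{-\ell}M)$ of paths coarsened to step $2^\ell$. One then runs a Chernoff bound at each scale with a bad-fraction budget of order $\ell^{-100}$ (summable over $\ell$), and $R$ enters only through the scale-uniform requirement that $R^{\theta/200}2^{\ell\theta/200}$ dominate $\ell$, which holds for all $\ell$ once $R$ exceeds an absolute constant. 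This multi-scale refinement is exactly what the paper's preamble to Section~\ref{s:percevent} refers to; your proposal is the direct application of Proposition~\ref{p:percgen1} that it is designed to replace.
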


By symmetry, Lemmas \ref{l:d21} and \ref{l:d22} will imply Lemmas \ref{l:d23} and \ref{l:d24}, so we shall only prove Lemmas \ref{l:d21} and \ref{l:d22}. {The proof of these two lemmas will be almost identical} so we shall need to prove only Lemma \ref{l:d21}.

The main difference between Lemma \ref{l:d21} and the previously proved Lemma \ref{l:d3} and Lemma \ref{l:d1o} is that the events ${\cD}^{(2,1)}_{i,j}$ does not have finite range of dependence across $i$, and hence Proposition \ref{p:percgen1} cannot directly be applied. 
Recall the definition of $\cL^{(n),R}_{i,(i+\Theta),j,R/10}$. Since $\Theta$ is an integer power of 2 it is easy to check that the definition simplifies to the following:

\begin{align*}
\cL^{(n),R}_{i,i+\Theta,j,R/10}&=\bigcap_{\ell=0}^{\ell_{\max}+1} \bigcap_{w=-(R/10)2^{\ell}W_n/W_{n2^{\ell}}}^{(R/10)2^{\ell}W_n/W_{n2^{\ell}}} \cH^{(2^{\ell} n)}_{i2^{-\ell},jW_n/W_{n2^{\ell}}+w,\frac{R2^{(\ell \wedge \ell_{\max})/20}}{100000} } \\
&\qquad \bigcap_{\ell=0}^{\ell_{\max}} \cS^{(n2^\ell)}_{R/10,(in,jW_n)}
\end{align*}
where $\ell_{\max} = \lfloor \log_2 \Theta -1\rfloor$.

As before, we shall divide the event $\cD_{i,j}^{(2,1)}$ into two parts. For $\ell\in [0,\ell_{\max}+1]$ we set 

$$\cD^{(2,1,1)}_{i,j,\ell}=\bigcap_{j'=j-R}^{j+R} \bigcap_{w=-(R/10)2^{\ell}W_n/W_{n2^{\ell}}}^{(R/10)2^{\ell}W_n/W_{n2^{\ell}}} \cH^{(2^{\ell} n)}_{i2^{-\ell},j'W_n/W_{n2^{\ell}}+w,\frac{R2^{(\ell\wedge \ell_{\max})/20}}{100000}}.$$

We also set 

$$\cD^{(2,1,2)}_{i,j}=\bigcap_{\ell=0}^{\ell_{\max}} \bigcap_{j'=j-R}^{j+R} \cS^{(n2^\ell)}_{R/10,(in,j'W_n)}.$$

Lemma \ref{l:d21} shall follow from the following two lemmas.

\begin{lemma}
    \label{l:d21l}
    There exists $R$ such that the following holds for all $M$ sufficiently large. For each $\ell\in [0,\ell_{\max}+1]$ 
 $$\P\left(\sum_{i=1}^{M} I(({\cD}^{(2,1,1)}_{i,J_{i},\ell})^{c})\ge \frac{(\ell \vee 1)^{-100}}{10000} M\right)\le M^{-1000}$$
 for all $n$ sufficiently large. 
\end{lemma}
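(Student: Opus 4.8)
\textbf{Proof proposal for Lemma \ref{l:d21l}.}

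The plan is to reduce this to an application of the abstract percolation estimate Proposition \ref{p:percgen1} at the scale $2^\ell n$. First I would fix $\ell\in[0,\ell_{\max}+1]$ and define, for each block index $i$ and height index $j$, the localized event $\widetilde{\cD}^{(2,1,1)}_{i,j,\ell}$ obtained from $\cD^{(2,1,1)}_{i,j,\ell}$ by replacing each passage time $X_{u_1u_2}$ appearing in the definition of $\cH^{(2^\ell n)}_{\cdot}$ by $X^{\Lambda}_{u_1u_2}$, where $\Lambda$ is the vertical strip of width $2^\ell n$ containing the parallelogram in question (as was done for $\widetilde{\cH}$ in the proof of Lemma \ref{l:d1o}). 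By construction the events $\widetilde{\cD}^{(2,1,1)}_{i,j,\ell}$ and $\widetilde{\cD}^{(2,1,1)}_{i',j',\ell}$ are independent whenever $|i-i'|\ge 2^{\ell+1}$, say, so setting $G_{i,j}=\widetilde{\cD}^{(2,1,1)}_{i,j,\ell}$ satisfies the hypothesis of Proposition \ref{p:percgen1} with $K=K(\ell)=2^{\ell+1}$.

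Next I would check the single-site probability bound. By Lemma \ref{l:paraBounds} (applied at scale $2^\ell n$, which controls $\cH^{(2^\ell n)}_{x,y,z}$ with a bound $\exp(1-D|z|^\theta)$) together with a union bound over the $O(R\cdot (R/10)2^\ell W_n/W_{n2^\ell})$ choices of $j'$ and $w$, one gets
\[
\P\big[(\cD^{(2,1,1)}_{i,j,\ell})^c\big] \le C R^2 2^{2\ell}\exp\Big(1-D\big(\tfrac{R2^{(\ell\wedge\ell_{\max})/20}}{100000}\big)^\theta\Big) \le \varepsilon(\ell)
\]
where $\varepsilon(\ell)$ can be made as small as we like — in particular smaller than $\tfrac12(\ell\vee1)^{-100}\cdot 10^{-4}$ and also smaller than the threshold $\varepsilon(K(\ell))$ required by Proposition \ref{p:percgen1} — provided $R$ is chosen large enough (note that the gain $2^{\ell/20\theta}$ in the exponent dominates the $2^{2\ell}$ loss, and for $\ell\ge\ell_{\max}$ the exponent saturates at $\ell_{\max}\approx\log_2\Theta$, which is at least $\log^{1000/\epsilon^2}M$, so the bound is trivially tiny). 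The same bound holds for $\widetilde{\cD}^{(2,1,1)}_{i,j,\ell}$. Then Proposition \ref{p:percgen1} gives
\[
\P\Big[\sum_{i=1}^M I(\widetilde{\cD}^{(2,1,1)}_{i,J_i,\ell})\ge (1-\tfrac{(\ell\vee1)^{-100}}{20000})M\Big]\ge 1-e^{-cM^{\theta/4}}.
\]
Actually one must be slightly careful: Proposition \ref{p:percgen1} as stated gives a $\tfrac{99}{100}M$-type bound; I would instead re-run its short proof (a union bound over $\le C^M$ candidate height-sequences in $\mathcal J$ from Lemma \ref{l:tauboundgeo}, plus a Chernoff bound on each of the $K$ independent sub-chains) keeping track of the fraction, which yields the stronger conclusion that the number of bad indices is at most $\tfrac{(\ell\vee1)^{-100}}{20000}M$ with probability $1-e^{-cM^{\theta/4}}$ once $\varepsilon(\ell)$ is small enough; the Chernoff bound is strong enough because $\varepsilon(\ell)$ is super-polynomially small in $(\ell\vee1)$ and exponentially small after comparison with $C^M$.

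Finally I would pass from the localized events back to $\cD^{(2,1,1)}_{i,j,\ell}$. Let $\cA$ be the event that $\widetilde{\cD}^{(2,1,1)}_{i,j,\ell}\cap(\cD^{(2,1,1)}_{i,j,\ell})^c$ fails to hold for every $1\le i\le M$ and every $|j|\le M$, and let $\cB=\{\max_i|J_i|\le M\}$. By Assumption \ref{as:resamp1} and a union bound over the $O(M^2\cdot 2^{2\ell})$ relevant $(i,j,j',w)$ (as in Lemma \ref{l:d1oerr}), $\P[\cA^c]\le M^{-1000}$ for $n$ large depending on $M$; by Theorem \ref{t:trans.main} and $W_{Mn}\le M^{7/8}W_n$ (Lemma \ref{l:growth34}), $\P[\cB^c]\le M^{-1000}$. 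On $\cA\cap\cB$ we have $\cD^{(2,1,1)}_{i,J_i,\ell}\supseteq\widetilde{\cD}^{(2,1,1)}_{i,J_i,\ell}$ for all $i$, so $\sum_i I((\cD^{(2,1,1)}_{i,J_i,\ell})^c)\le\sum_i I((\widetilde{\cD}^{(2,1,1)}_{i,J_i,\ell})^c)\le\tfrac{(\ell\vee1)^{-100}}{20000}M\le\tfrac{(\ell\vee1)^{-100}}{10000}M$; combining with the two error bounds and $e^{-cM^{\theta/4}}$ gives the claim. The main obstacle is bookkeeping rather than conceptual: one must verify that a \emph{single} choice of $R$ works simultaneously for all $\ell$ in the stated range — i.e. that the single-site failure probability can be forced below both $\tfrac12(\ell\vee1)^{-100}\cdot10^{-4}$ and the $\ell$-dependent Proposition \ref{p:percgen1} threshold $\varepsilon(2^{\ell+1})$ uniformly — and that the exponential gain $2^{(\ell\wedge\ell_{\max})\theta/20}$ genuinely beats the union-bound losses $R^2 2^{2\ell}$ and the $K(\ell)$-fold loss in the Chernoff step; this is where one uses that $\ell_{\max}\approx\log_2\Theta$ is enormous (polylogarithmic in $M$), so for large $\ell$ the relevant events are overwhelmingly likely with room to spare, while for small $\ell$ (bounded range of dependence) the estimate is the same as in Lemma \ref{l:d3}.
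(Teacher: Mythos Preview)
Your approach mirrors the paper's in its broad strokes (localize, decouple into sub-chains at stride $2^\ell$, Chernoff, then de-localize), but there is a genuine gap in the entropy accounting that makes the argument fail as written.

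You propose to take the union bound over the $\le C^M$ full height-sequences $\mathsf J\in\mathcal J$ (as in Proposition~\ref{p:percgen1}), and then on each of the $K=2^{\ell+1}$ sub-chains apply Chernoff. The Chernoff bound for a single sub-chain of length $\approx M/2^\ell$, with threshold a constant fraction of that length and single-site failure probability $\varepsilon(\ell)$, gives an exponent of order $\tfrac{M}{2^\ell}\,\ell^{-100}\log(1/\varepsilon(\ell))$. To beat the entropy $C^M$ you would need $\log(1/\varepsilon(\ell))\gtrsim 2^\ell\ell^{100}$. But Lemma~\ref{l:htildebound} only gives $\log(1/\varepsilon(\ell))\asymp R^\theta 2^{\ell\theta/20}$, which for any fixed $R$ is \emph{sub-linear} in $2^\ell$ (since $\theta/20<1$). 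With $2^{\ell_{\max}}\approx\Theta=(\log M)^{1000/\epsilon^2}$, the inequality $R^\theta 2^{\ell\theta/20}\gtrsim 2^\ell$ forces $R$ to grow polylogarithmically in $M$, contradicting the requirement that a single $R$ work for all $M$. So this is not bookkeeping: the $C^M$ entropy genuinely swamps the Chernoff bound for $\ell$ moderately large.

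The paper's fix is to union-bound not over full sequences but over the \emph{sub-sequences} $\mathsf J_s=(j_s,j_{2^\ell+s},\dots)$ directly. Because the full sequence has $\tau_1\le H_0M$, the sub-sequence of length $a\approx M/2^\ell$ has total variation $\le H_0M$ with steps typically of size $O(2^\ell)$, hence entropy only $(C2^\ell)^{a}\le\exp(C''\ell\,2^{-\ell}M)$ rather than $C^M$. Against this smaller entropy the Chernoff exponent $\sim\ell^{-100}2^{-\ell}M\cdot R^\theta 2^{\ell\theta/20}$ wins as soon as $R^\theta 2^{\ell\theta/20}\gtrsim \ell^{101}$, which does hold uniformly in $\ell$ for a single large $R$ (since $2^{\ell\theta/20}$ beats any polynomial in $\ell$). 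This refined counting of coarsened paths is the missing idea.
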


\begin{lemma}
\label{l:d213}
    There exists $R$ such that the following holds for all $M$ sufficiently large: for all $n$ large enough we have 
 $$\P\left(\sum_{i=1}^{M} I(({\cD}^{(2,1,2)}_{i,J_{i}})^{c})\ge \frac{1}{10000} M\right)\le M^{-1000}.$$ 
\end{lemma}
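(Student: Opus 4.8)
The plan is to reduce this to the same percolation framework used for Lemma~\ref{l:d3}, using the fact that $\cD^{(2,1,2)}_{i,j}$ is an intersection of the events $\cS^{(n2^\ell)}_{R/10,(in,j'W_n)}$ over $0\le \ell\le \ell_{\max}$ and $|j'-j|\le R$, each of which is essentially a local event with range of order $2^{\ell_{\max}}n = \Theta n$ (up to the polynomial factors in the definition of $\cS$). First I would replace each $\cS^{(n2^\ell)}$ by a localized version $\widetilde{\cS}^{(n2^\ell)}$ in which the passage times are computed in the environment restricted to a box of side a constant multiple of $n2^\ell$ around the relevant point; by Lemma~\ref{l:Gamma}, Assumption~\ref{as:resamp1} (or the argument behind Lemma~\ref{l:trans.SOGam}), and a union bound over the $O(M^2)$ relevant $(i,j)$ with $|j|\le M$, the event that any such localization fails along the geodesic has probability at most $M^{-1000}$ for $n$ large. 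The localized event $\widetilde{\cD}^{(2,1,2)}_{i,j}$ then has the property that $\widetilde{\cD}^{(2,1,2)}_{i_1,j_1},\dots,\widetilde{\cD}^{(2,1,2)}_{i_k,j_k}$ are independent whenever the $i_a$ are pairwise separated by at least $K$, where $K$ is a fixed constant multiple of $\Theta$ (here $\Theta$ is a fixed power of $2$, bounded in terms of $M$ — note $\Theta\le C\log^{1000/\epsilon^2}M$, so in fact $K$ depends on $M$; this is fine since Proposition~\ref{p:percgen1} allows $K=K(M)$ as long as the $\varepsilon=\varepsilon(K)$ there is respected, and we choose $R$ large last).

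Next I would check the single-site probability bound: by Lemma~\ref{l:trans.SOGam}, $\P[(\cS^{(n2^\ell)}_{R/10,u})^c]\le \exp(1-D(Rn2^\ell/2)^{\frac12\epsilon\theta})$, which for $R$ large (and all $\ell\ge 0$, $n\ge 1$) is at most any prescribed $\varepsilon$; taking a union bound over the $O(R\log n)$ choices of $\ell$ and $j'$ still leaves $\P[(\widetilde{\cD}^{(2,1,2)}_{i,j})^c]\le \varepsilon(K)$ once $R$ is chosen large enough depending on $K$ (hence on $M$). Then Proposition~\ref{p:percgen1}, applied with this $K$ and these $G_{i,j}=\widetilde{\cD}^{(2,1,2)}_{i,j}$, gives
\[
\P\Big[\sum_{i=1}^M I(\widetilde{\cD}^{(2,1,2)}_{i,J_i})\ge \tfrac{99}{100}M\Big]\ge 1-e^{-cM^{\theta/4}},
\]
which is far stronger than the $\frac{1}{10000}M$ slack we need. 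Combining this with the localization estimate and the transversal fluctuation bound $\P[\max_i|J_i|\ge M]\le M^{-1000}$ (from Theorem~\ref{t:trans.main}, using $W_{Mn}\le M^{7/8}W_n$ via Lemma~\ref{l:growth34}), on the complement of these bad events $\cD^{(2,1,2)}_{i,J_i}\supseteq\widetilde{\cD}^{(2,1,2)}_{i,J_i}$ for all $i$, so $\sum_i I((\cD^{(2,1,2)}_{i,J_i})^c)\le \frac{1}{10000}M$ with probability at least $1-M^{-1000}$ for $M$ large.

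The main obstacle is bookkeeping around the dependence of $K$ (and hence of the threshold $R$) on $M$ through $\Theta$: one must make sure the order of quantifiers is "fix $M$ large, then $\Theta$ and $K=K(M)$ are determined, then choose $R=R(M)$ large enough that the per-site failure probability beats $\varepsilon(K(M))$, then choose $n$ large." Since in Proposition~\ref{p:perceventloc} the constant $R$ must be chosen before $M$, I would instead note that the events $\cS^{(n2^\ell)}$ only need range $\Theta n$ which enters $K$, and that the single-site bound $\exp(1-D(Rn2^\ell/2)^{\frac12\epsilon\theta})$ decays in $R$ uniformly in $\ell,n$; so in fact one fixes $R$ first so that this bound is summably small, and the resulting $\varepsilon$ after the union bound over $\ell,j'$ is an absolute constant times (something decaying in $R$), which can be made smaller than the $\varepsilon(K)$ required by Proposition~\ref{p:percgen1} for all $K$ simultaneously by taking $R$ large — because $\varepsilon(K)$ in that proposition's proof is of the form $e^{-cK}$-type and we only need $\varepsilon(K)$ small enough that a Chernoff bound with parameter $\frac{1}{100K}$ works, i.e. $\varepsilon(K)\le c/K$ suffices, while our bound on $\P[(\widetilde{\cD}^{(2,1,2)}_{i,j})^c]$ is $\le (R\log n)\exp(1-DR^{\frac12\epsilon\theta})$ which for $R$ large is smaller than $c/K$ uniformly since $K\le C\Theta\le C\log^{1000/\epsilon^2}M$ and we may first absorb any such polylog-in-$M$ loss. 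This is the only delicate point; everything else is a routine union bound.
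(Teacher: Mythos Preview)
Your approach via localization and Proposition~\ref{p:percgen1} is more machinery than the problem needs, and the final step contains a genuine error. In the proof of Proposition~\ref{p:percgen1} one must beat the entropy $|\cJ|\le C^{M}$ by a Chernoff bound on $\mathrm{Bin}(M/K,\varepsilon)\ge M/(100K)$; working this out gives a requirement of the form $\varepsilon\le e^{-cK}$, not $\varepsilon\le c/K$ as you assert. Since $K$ is a constant multiple of $\Theta\asymp\log^{1000/\epsilon^2}M$ and $R$ must be fixed \emph{before} $M$, your bound $(R\log n)\exp(1-DR^{\frac12\epsilon\theta})$ --- which is just a constant in $M$ --- cannot be pushed below $e^{-cK}$ for all large $M$. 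So as written the quantifier order breaks.

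The fix is exactly the $n$-dependence you wrote down correctly at the start and then dropped: Lemma~\ref{l:trans.SOGam} gives $\P[(\cS^{(n2^\ell)}_{R/10,u})^c]\le \exp\big(1-D(Rn2^\ell/10)^{\frac12\epsilon\theta}\big)\le n^{-100}$ once $n$ is large (depending on $M$, which is allowed). After a union bound over the $(2R+1)(\ell_{\max}+1)=O(R\log\log M)$ constituent events one gets $\P[(\cD^{(2,1,2)}_{i,j})^c]\le n^{-99}$. At this point the percolation argument is superfluous: a crude union bound over the $O(M^2)$ pairs $(i,j)$ with $|j|\le M$, together with $\P[\max_i|J_i|>M]\le M^{-1000}$, already shows that with probability $\ge 1-M^{-1000}$ \emph{none} of the $\cD^{(2,1,2)}_{i,J_i}$ fail. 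This is precisely the route the paper takes (it is the same argument as for Lemma~\ref{l:d1s}), and it is why $\cD^{(2,1,2)}$, being built from $\cS$-events, is the easy part of $\cD^{(2)}$ --- in contrast to the $\cH$-based pieces handled in Lemma~\ref{l:d21l}, where a multi-scale percolation argument really is needed.
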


Postponing the proof of these lemmas let us first complete the proof of Lemma \ref{l:d21}, which  at this point is quite straightforward.

\begin{proof}[Proof of Lemma \ref{l:d21}]
    For $\ell\in [0,\ell_{\max}+1]$, let $A_{\ell}$ be the set of indices $i\in [1,M]$ such that $I(D^{(2,1,1)}_{i,J_i,\ell})=0$, and let $A=\cup_{\ell}A_\ell$. Since $1+\sum_{\ell=1}^{\infty} \ell^{-100}<5$ it follows that if $|A|\ge \frac{1}{2000}M$ then there must exist $\ell\in [0,\ell_{\max}+1]$ such that  
    $$\sum_{i=1}^{M} I(({\cD}^{(2,1,1)}_{i,J_{i},\ell})^{c})\ge \frac{(\ell \vee 1)^{-100}}{10000} M.$$
    Notice further that if 
    $$\sum_{i=1}^{M}I(({\cD}^{(2,1)}_{i,J_{i}})^{c})\ge \frac{1}{1000}M$$
    then by definition, either $|A|\ge \frac{1}{2000}M$ or 
$$\sum_{i=1}^{M} I(({\cD}^{(2,1,2)}_{i,J_{i}})^{c})\ge \frac{1}{10000}M.$$    By Lemmas \ref{l:d21l} and \ref{l:d213} together with a union bound it follows that 
 $$\P\left(\sum_{i=1}^{M}I(({\cD}^{(2,1)}_{i,J_{i}})^{c})\ge \frac{1}{1000}M\right)\le (\ell_{\max}+3)M^{-1000}.$$
    Since $\ell_{\max}\le  C\log \log M$ for some $C$ (depending on the model parameter $\epsilon$), this completes the proof by choosing $M$ sufficiently large. 
    \end{proof}

It remains to prove Lemmas \ref{l:d21l} and \ref{l:d213}. 

\begin{proof}[Proof of Lemma \ref{l:d213}]
    Notice first that by Lemma \ref{l:trans.SOGam} if $n$ is sufficiently large we know that for each $i,j'$ and for each $\ell\in [0,\ell_{\max}]$ we have
    $$\P(\cS^{(n2^\ell)}_{R/10,(in,j'W_n)})\ge 1-n^{-100}.$$
    Taking a union bound over $j'=j-R, \ldots, j+R$ and over $0\le \ell \le \ell_{\max}$ (and using $\ell_{\max}=O(\log \log M)$) it follows that for all $i,j$ we have 
    $$\P(({\cD}^{(2,1,2)}_{i,j})^{c})\le n^{-99}$$
    for all $n$ sufficiently large. The proof can now be completed by arguing exactly as in the proof of Lemma \ref{l:d1s}, taking a union bound over all $1\le i \le M$ and $j\in [-M,M]$ and using our control of the transversal fluctuations. We omit the details. 
\end{proof}

The final remaining piece of the argument is the proof of Lemma \ref{l:d21l}. To reduce notational overhead, we shall, without loss of generality, assume $1\le \ell \le \ell_{\max}$. The reader can easily check that the proofs for the cases $\ell=0$ and $\ell=\ell_{\max}+1$ will follow by the same argument with minor adjustments. Recall the definition of $\cD^{(2,1,1)}_{i,j,\ell}$ for $1\le \ell \le \ell_{\max}$: 

$$\cD^{(2,1,1)}_{i,j,\ell}=\bigcap_{j'=j-R}^{j+R}\bigcap_{w=-(R/10)2^{\ell}W_n/W_{n2^{\ell}}}^{(R/10)2^{\ell}W_n/W_{n2^{\ell}}} \cH^{(2^{\ell} n)}_{i2^{-\ell},j'W_n/W_{n2^{\ell}}+w,\frac{R2^{\ell/20}}{100000}}.$$

The first step of the proof is to replace the passage times by the resampled passage times so that the events will be independent for $i$s  which are well separated. Recall from the definition of $\cH^{(n)}_{(x,y,z)}$ that the event $\cD^{(2,1,1)}_{i,j,\ell}$ depends on passage times between pairs of points one of which is on the line $x=in$ and the other on $x=(i+2^{\ell})n$. Let us consider the event 
$\widetilde{\cH}^{(2^{\ell} n)}_{i2^{-\ell},j'W_n/W_{n2^{\ell}}+w,\frac{R2^{(\ell\wedge \ell_{\max})/20}}{200000}}$ which is defined identically to $\mathcal{H}$, except with the resampled passage times $X^{\Lambda_{i,\ell}}_{u_1,u_2}$ where $\Lambda_{i,\ell}$ denotes the strip $x\in [in,(i+2^{\ell})n]$. Let us define 
$$\widetilde{\cD}^{(2,1,1)}_{i,j,\ell}=\bigcap_{j'=j-R}^{j+R}\bigcap_{w=-(R/10)2^{\ell}W_n/W_{n2^{\ell}}}^{(R/10)2^{\ell}W_n/W_{n2^{\ell}}} \widetilde{\cH}^{(2^{\ell} n)}_{i2^{-\ell},j'W_n/W_{n2^{\ell}}+w,\frac{R2^{\ell/20}}{200000}}.$$

We have the following basic bound which is an easy consequence of Lemma \ref{l:paraBounds} and a union bound over $j'$ and $w$. 

\begin{lemma}
    \label{l:htildebound}
    There exists $C,C'>0$ such that for each $\ell$ and each $1\le i\le M,j\in \Z$, 
    $$\P((\widetilde{\cD}^{(2,1,1)}_{i,j,\ell})^{c})\le CR^22^{\ell}\exp(-C'(R2^{\ell/20})^{\theta})$$
    for some $C,C'>0$. 
\end{lemma}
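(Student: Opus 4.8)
The plan is to unpack the definition of $\widetilde{\cD}^{(2,1,1)}_{i,j,\ell}$ as an intersection of the events $\widetilde{\cH}^{(2^{\ell}n)}_{i2^{-\ell},j'W_n/W_{n2^{\ell}}+w,z}$ over the relevant index ranges (with $z = R2^{\ell/20}/200000$), and then bound the probability of the complement by a union bound, controlling each term via the resampled analogue of Lemma \ref{l:paraBounds}. First I would note that the index $j'$ ranges over the $2R+1$ integers from $j-R$ to $j+R$, while $w$ ranges over the roughly $(R/5)2^{\ell}W_n/W_{n2^{\ell}}$ integers around $0$; by Lemma \ref{l:growth34} (the locally sublinear growth of $Q_n$, hence of $W_n$) we have $W_n/W_{n2^{\ell}}\le C$ for an absolute constant $C$ (since $W_{n2^{\ell}}\ge \sqrt{2^{\ell}n\,Q_n}\cdot 1 = 2^{\ell/2}W_n$ up to constants, so in fact $W_n/W_{n2^{\ell}}\le 2^{-\ell/2}$, but crudely $\le 1$ suffices), so the number of pairs $(j',w)$ is at most $C R\cdot R 2^{\ell} = CR^2 2^{\ell}$.

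Next I would observe that each individual event $\big(\widetilde{\cH}^{(2^{\ell}n)}_{x,y,z}\big)^c$ has probability at most $\exp(1-D z^{\theta})$: this is exactly the content of the first displayed bound in Lemma \ref{l:paraBounds} (applied at length scale $2^{\ell}n$ in place of $n$), except that the passage times $X_{u_1u_2}$ appearing in the definition of $\cH$ are replaced by the resampled passage times $X^{\Lambda_{i,\ell}}_{u_1u_2}$. Since $\Lambda_{i,\ell}=[in,(i+2^{\ell})n]\times\R$ is precisely the strip containing the parallelogram underlying $\widetilde{\cH}^{(2^{\ell}n)}_{x,y,z}$, resampling the field outside this strip leaves $X^{\Lambda_{i,\ell}}$ equal in distribution to $X$ (indeed $\omega^{\Lambda_{i,\ell}}\stackrel{d}{=}\omega$ by the resampling conventions of Section \ref{s:outline}), so the proof of Lemma \ref{l:paraBounds} goes through verbatim with $X$ replaced by $X^{\Lambda_{i,\ell}}$, giving $\P\big[(\widetilde{\cH}^{(2^{\ell}n)}_{x,y,z})^c\big]\le \exp(1-Dz^{\theta})$ for the appropriate $D>0$. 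Plugging in $z=R2^{\ell/20}/200000$ and taking the union bound over the $\le CR^2 2^{\ell}$ pairs $(j',w)$ yields
\[
\P\big[(\widetilde{\cD}^{(2,1,1)}_{i,j,\ell})^c\big]\le C R^2 2^{\ell}\exp\!\big(1 - D(R2^{\ell/20}/200000)^{\theta}\big)\le C R^2 2^{\ell}\exp\!\big(-C'(R2^{\ell/20})^{\theta}\big)
\]
after absorbing the constant $200000^{-\theta}$ and the additive $1$ into $C'$ and adjusting $C$.

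The statement is essentially bookkeeping, so there is no serious obstacle; the only point requiring a line of care is the claim that Lemma \ref{l:paraBounds} applies unchanged to the resampled passage times. I would handle this by invoking the fact (stated in Section \ref{s:outline}) that $\omega^{\Lambda}$ has the same distribution as $\omega$, together with the observation that every passage time entering the definition of $\widetilde{\cH}^{(2^{\ell}n)}_{x,y,z}$ is between a point on $\ell_{(i\cdot)n,\cdot,\cdot}$ and a point on $\ell_{(i+2^{\ell})n,\cdot,\cdot}$, hence depends only on $\omega^{\Lambda_{i,\ell}}$ restricted to a neighbourhood of $\Lambda_{i,\ell}$ that lies well inside the slab --- though for the distributional bound we do not even need this localisation, only $\omega^{\Lambda_{i,\ell}}\stackrel{d}{=}\omega$. (The localisation will matter in the next lemma, where independence across well-separated $i$ is used, but not here.) This completes the plan.
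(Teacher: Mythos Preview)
Your proposal is correct and follows exactly the paper's approach: the paper states that the lemma ``is an easy consequence of Lemma~\ref{l:paraBounds} and a union bound over $j'$ and $w$,'' which is precisely what you carry out, together with the observation that $\omega^{\Lambda_{i,\ell}}\stackrel{d}{=}\omega$ so that Lemma~\ref{l:paraBounds} applies verbatim to the resampled events $\widetilde{\cH}$. One small remark: the bound $W_n/W_{n2^{\ell}}\le 1$ follows directly from the monotonicity of $Q_n$ (no appeal to Lemma~\ref{l:growth34} is needed), and in fact $W_n/W_{n2^{\ell}}\le 2^{-\ell/2}$, but your cruder bound already suffices for the stated conclusion.
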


We can now complete the proof of Lemma \ref{l:d21l}.

\begin{proof}[Proof of Lemma \ref{l:d21l}]
    The proof will consist of the following two steps. 

    \textbf{Step 1:} First we shall show that there exists $R$ large enough such that for $M$ sufficiently large and all $n$ sufficiently large we have 
$$\P\left(\sum_{i=1}^{M}I(\widetilde{\cD}^{(2,1,1)}_{i,J_{i},\ell})^{c}\ge \frac{\ell^{-100}}{10000}M \right)\le M^{-1001}.$$

\textbf{Step 2:}
Then we shall show that the probability that there exists $1\le i \le M$, $|j|\le M$ such that $\widetilde{\cD}^{(2,1,1)}_{i,j,\ell})^{c}$ holds but $\cD^{(2,1,2)}_{i,j,\ell}$ does not is at most $n^{-10}$. Step 2 is actually an immediate consequence of the first resampling hypothesis as has been argued a number of times already (e.g.\ proof of Lemma \ref{l:d1oerr}). 

Given Step 1 and Step 2, the proof can be completed as argued in the proof of Lemma \ref{l:d3}, we shall omit the details to avoid repetition. It remains to show Step 1. 

The proof of Step 1 will be similar to the proof of Proposition \ref{p:percgen1} except that we need to keep track of the range of the dependence. As in the proof of Proposition \ref{p:percgen1}, we now consider the set $\mathcal{J}$ of all sequences 
    $\{j_0,j_1,\ldots, j_{M}\}$ satisfying $j_0=0$ and 
    $\sum_{i} |j_{i}-j_{i-1}|\le H_0 M$ where $H_0$ is such that (as in \eqref{e:d3taubound}) 
    $$\P\left(\sum_{i=1}^{M} |J_{i}-J_{i-1}|\ge H_0M\right)\le \exp(-cM^{\theta}).$$

Let $\cA_{\mathcal{J}}$ denote the event that there exists $\mathsf{J}=\{j_0,\ldots, j_{M}\}\in \mathcal{J}$ such that 
$$\sum_{i=1}^{M}I(\widetilde{\cD}^{(2,1,1)}_{i,J_{i},\ell})^{c}\ge \frac{\ell^{-100}}{10000}M.$$
Let $\cB$ denote the event that $J=\{J_{0},\ldots, J_{M}\}\notin \cJ$. Clearly the required probability is upper bounded by $\P(\cA_{\cJ})+\P(\cB)$ and by our choice of $H_0$ it suffices to show (for $M$ sufficiently large) that 

$$\P\left(\max_{\mathsf{J}\in \cJ}\sum_{i=1}^{M}I(\widetilde{\cD}^{(2,1,1)}_{i,j_{i},\ell})^{c}\ge \frac{\ell^{-100}}{10000}M \right)\le M^{-1002}.$$

Recall that $2^{\ell}\le 2^{\ell_{\max}}$ and hence is bounded above by a polynomial of $\log M$. Let us set $a=\lceil 2^{-\ell} M \rceil -1$. 
For $s=1,2,, \ldots 2^{\ell}$, let $\cJ_{s}$ denote the set of all sequences 
$\{j_{s}, j_{2^{\ell}+s}, \ldots, j_{a2^{\ell}+s}\}$ (assuming, without loss of generality, that $a2^{\ell}+s\le M$, otherwise the final term of the sequence will be $j_{(a-1)2^{\ell}+s})$ such that 
$$|j_{s}|+\sum_{i=1}^{a} |j_{i2^{\ell}+s}-j_{(i-1)2^{\ell}+s}|\le H_0 M.$$
Clearly, by the triangle inequality, if $\mathsf{J}=\{j_0,\ldots, j_{M}\}\in \cJ$ then its restriction 
$\mathsf{J}_{s}:=\{j_{s}, j_{2^{\ell}+s}, \ldots, j_{a2^{\ell}+s}\}\in \cJ_{s}$. 
Let $\cA_{s}$ denote the event that 
$$\max_{\mathsf{J}_{s}\in \cJ_{s}}\sum_{i=0}^{a}I(\widetilde{\cD}^{(2,1,1)}_{i2^{\ell}_s,j_{i2^{\ell+s}},\ell})^{c}\ge \frac{\ell^{-100}}{10000}2^{-\ell}M.$$
Clearly, $\cA\subseteq \cup_{s}\cA_{s}$ and hence it suffices to show that for each $s$, $\P(\cA_{s})\le 2^{-\ell}M^{-1002}$. 

Fix $s$ and $\mathsf{J}_{s}\in \cJ_{s}$. Notice that, by definition, the events $\{\widetilde{\cD}^{(2,1,1)}_{i2^{\ell}_s,j_{i2^{\ell+s}},\ell}\}_{i=0}^{a}$ are mutually independent. Recall Chernoff's inequality: for independent $0-1$ valued random variables $X_{i}$ with $\E X_{i}\le \mu$, we have 
$$\P(\sum X_{i}\ge (1+\delta)\mu)\le \exp(-c\mu(1+\delta)\log (1+\delta))$$
for some $c>0$. Using these and Lemma \ref{l:htildebound}, for a fixed $\mathsf{J}_{s}\in \cJ_{s}$ we have
$$\P\left(\sum_{i=0}^{a}I(\widetilde{\cD}^{(2,1,1)}_{i2^{\ell}_s,j_{i2^{\ell+s}},\ell})^{c}\ge \frac{\ell^{-100}}{10000}2^{-\ell}M \right) \le \exp\left(-c\ell^{-100}2^{-\ell}M \log \biggl(c'\ell^{-100}R^{-2}2^{-\ell}\exp(c''R^{\theta}2^{\ell\theta/20})\biggr)\right)$$
for some constants $c,c',c''>0$. Now if $R$ (depending on $c',c''$) is chosen sufficiently large we get for all $1\le \ell \le \ell_{\max}$,
$$\log (c'\ell^{-100}R^{-2}2^{-\ell}\exp(c''R^{\theta}2^{\ell\theta/20})\ge R^{\theta/100}2^{\ell\theta/100}.$$
It follows that 
\begin{equation}
 \label{e:rlarge}
\P\left(\sum_{i=0}^{a}I(\widetilde{\cD}^{(2,1,1)}_{i2^{\ell}_s,j_{i2^{\ell+s}},\ell})^{c}\ge \frac{\ell^{-100}}{10000}2^{-\ell}M \right) \le \exp\left(-c2^{-\ell}M R^{\theta/200}2^{\ell\theta/200}\right)
\end{equation}
Now we need to take a union bound over all $\mathsf{J}_{s}\in \cJ_{s}$, and for this we need to bound $|\cJ_{s}|$. An easy counting argument as in the proof of Proposition \ref{p:percgen1} shows that there exists $C,C',C''>0$ such that for all $1\le\ell \le \ell_{\max}$ and all $1\le s\le 2^{\ell}$ $|\cJ_{s}|\le 2^{a+1} (C2^{\ell})^{a+1}\le C'\exp(C''\ell 2^{-\ell} M)$. 
Choosing $R$ sufficiently large (notice that $R$ only needs to be large to deal with the case when $\ell$ is small, for larger values of $\ell$ the term $2^{\ell\theta/200}$ on the RHS of \eqref{e:rlarge} is sufficient, and hence $R$ can be chosen large independently of $\ell$) and using \eqref{e:rlarge} then gives us for some $c>0$, all $\ell$ and all $s$, 
$$\P\left(\sum_{i=0}^{a}I(\widetilde{\cD}^{(2,1,1)}_{i2^{\ell}_s,j_{i2^{\ell+s}},\ell})^{c}\ge \frac{\ell^{-100}}{10000}2^{-\ell}M \right)\le \exp(-c2^{-\ell}M).$$
The proof is completed by noting that $\ell_{\max}=O(\log \log M)$ and choosing $M$ sufficiently large.  
\end{proof}

\section{Proof of Proposition \ref{p:perc1}}
\label{s:appa}

This section is devoted to the proof of the percolation estimate Proposition \ref{p:perc1}. Clearly, by replacing $\mathcal{Z}_{i,k_{i},k_{i}}$ by its positive part it suffices to only consider non-negative random variables. 
Also, by adjusting the constant $C_3$, it suffices to prove the result for $R$ sufficiently large. For the rest of the proof let $R$ sufficiently large be fixed. Let $C_4>0$ also be sufficiently large which will be fixed later independent of $R$. Let us also fix $z$ sufficiently large. 

\begin{center}
\begin{figure}
\includegraphics[width=5in]{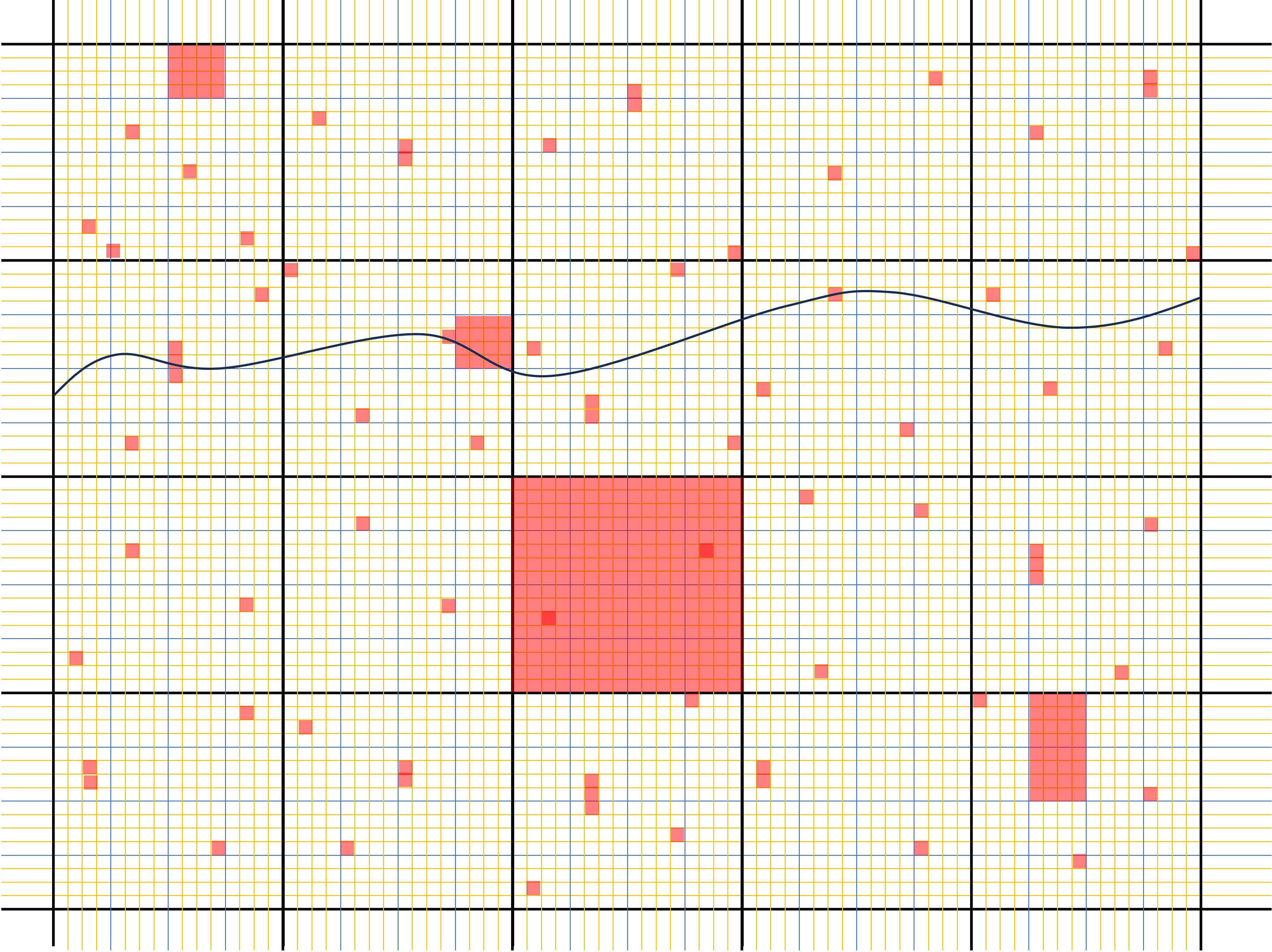}
\caption{In the proof of Proposition \ref{p:perc1} we divide the plane into boxes on a series of dyadic scales.  In this figure, the black, blue and orange lines denote progressively finer mesoscopic scales and bad boxes are marked in red.  Bad boxes are independent in different columns and their density decreases rapidly with the scale.  Our percolation argument shows that no path $\gamma$ with $\tau_1(\uk)\leq R M$ will spend a large fraction of its time in the red region.}
\label{f:meso}
\end{figure}
\end{center}

The first step is to  truncate $\mathcal{Z}_{i,k_{i-1},k_{i}}$ into dyadic intervals. For $j\ge 1$, set $\mathcal{Z}^{j}_{i,k,k'}=2^{j+1}I(\mathcal{Z}_{i,k,k'}\in [2^{j},2^{j+1}])$. Therefore we have,
\begin{equation}
    \label{e:zsum}
    \max_{\underline{k}}\sum_{i=1}^{M} \mathcal{Z}_{i,k_{i-1},k_i}\le M+\sum_{j\ge 1} \max_{\underline{k}}\sum_{i} \mathcal{Z}^{j}_{i,k_{i-1},k_i}.
\end{equation}
We shall bound the terms 
$$\max_{\underline{k}}\sum_{i} \mathcal{Z}^{j}_{i,k_{i-1},k_i}$$
separately for three different regimes of $j$. Let us set $j_{\max}=\lceil \log_2 ((\log^{C_4}(RM)+z)\wedge z_{\max})\rceil$ and $j_{\min}=\lfloor \log_2 (\log^{C_4/2}(R)) \rfloor$. 
Notice that, we have, deterministically, 
\begin{equation}
    \label{e:jmin}
    \sum_{j<j_{\min}} \max_{\underline{k}}\sum_{i} \mathcal{Z}^{j}_{i,k_{i-1},k_i}\le \log^{3C_4/4}(R)M
\end{equation}
for $R$ sufficiently large. 

For $j>j_{\max}$ we have the following lemma. 

\begin{lemma}
    \label{l:jmax}
    There exist $C_5,C_6>0$ depending only on $C_1$, $C_2$ and $\beta$ such that 
    $$\P\left(\sum_{j> j_{\max}} \max_{\underline{k}}\sum_{i} \mathcal{Z}^{j}_{i,k_{i-1},k_i}>0\right)\le C_5\exp(-C_6z^{\beta}-C_6(z/z_{\max})z_{\max}^{\beta}).$$
\end{lemma}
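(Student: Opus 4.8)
\textbf{Proof plan for Lemma~\ref{l:jmax}.}
The idea is that for $j > j_{\max}$ the truncated variables $\mathcal{Z}^{j}_{i,k,k'}$ are nonzero only on a very sparse random set of boxes, so sparse that with the required probability \emph{no} admissible path $\uk$ ever touches one. First I would fix $j > j_{\max}$ and say a pair $(i,k,k')$ is \emph{$j$-bad} if $\mathcal{Z}^{j}_{i,k,k'} \neq 0$, i.e. $\mathcal{Z}_{i,k,k'} \in [2^{j}, 2^{j+1}]$; by the tail hypothesis $\P[(i,k,k') \text{ is } j\text{-bad}] \le C_1 \exp(-C_2 2^{j\beta})$. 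Note that for $j > j_{\max}$ we have $2^{j} > z_{\max}$ unless $2^{j} \le \log^{C_4}(RM) + z$, but also since $2^{j_{\max}} \ge (\log^{C_4}(RM)+z)\wedge z_{\max}$, for $j>j_{\max}$ either $2^{j\beta} > c(\log^{C_4}(RM)+z)^{\beta}$ or $\mathcal{Z}_{i,k,k'}$ can never reach $2^j$ (since it is bounded by $z_{\max}$ a.s., forcing $\mathcal{Z}^{j}_{i,k,k'}\equiv 0$). So the only relevant $j$ satisfy $2^{j\beta}$ comfortably larger than both $z^{\beta}$ and, accounting for the $z_{\max}$ truncation, $(z/z_{\max})z_{\max}^{\beta}$ — this is where the two terms in the exponent come from.

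Next I would set up the union bound over paths. For a fixed $j$, the event that some $\uk \in \mathfrak{K}_M$ makes $\sum_i \mathcal{Z}^{j}_{i,k_{i-1},k_i} > 0$ is exactly the event that there is a choice of heights $(k_0=0, k_1,\dots,k_M)$ with $(i,k_{i-1},k_i)$ $j$-bad for at least one $i$. Rather than bounding over all paths at once (which would be infinite, as $k_i$ ranges over $\Z$), I would argue column by column: for each $i$, the probability that there exists \emph{any} $(k,k') \in \Z^2$ with $(i,k,k')$ $j$-bad is still potentially infinite, so instead I would first restrict attention using the a.s. bound $\mathcal{Z}_{i,k,k'} \le z_{\max}$ — but this alone does not cap the number of boxes. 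The correct move is to observe that any path $\uk$ with positive contribution at level $j$ must, reading left to right, have a first column $i$ and a first height pair that is $j$-bad; since $k_0 = 0$ and the path visits a bounded number of distinct heights controlled by $\tau_1(\uk)$, I would actually not need the $\tau_1$ constraint here at all — rather I take a union bound over $i \in \{1,\dots,M\}$ and over the at most (number of heights reachable) pairs. Cleanest: bound $\P[\exists \uk: \sum_i \mathcal{Z}^{j}_{i,k_{i-1},k_i}>0] \le \sum_{i=1}^M \P[\exists (k,k'): (i,k,k') \text{ is } j\text{-bad}]$, and handle the inner probability by noting $\mathcal{Z}_{i,k,k'}$ has the same tail for every $(k,k')$ and summing the geometric-type series $\sum_{k,k'} C_1\exp(-C_2 2^{j\beta})$ — which diverges, so I must instead use that the \emph{collection} $\mathfrak{Z}_i$ is a single random object and bound $\P[\exists(k,k'):\mathcal{Z}_{i,k,k'}\ge 2^j] \le \P[\sup_{k,k'}\mathcal{Z}_{i,k,k'} \ge 2^j]$; but $\sup$ over infinitely many variables need not have good tails.

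The resolution — and the step I expect to be the main obstacle — is that one never actually needs heights $|k|$ larger than $O(M)$ or so in the ambient application, but since the proposition is stated abstractly, I would instead exploit the a.s. upper bound $z_{\max}$ more carefully: if $2^j > z_{\max}$ then $\mathcal{Z}^{j}_{i,k,k'} \equiv 0$ deterministically, so only finitely many $j$ (those with $2^j \le z_{\max}$, hence $j \le \log_2 z_{\max}$, and simultaneously $j > j_{\max} \ge \log_2 z$) contribute, and for each such $j$ the bound $2^{j\beta} \ge \max(z^\beta, \text{stuff})$ holds; moreover for these $j$ one has $2^j \le z_{\max}$, and I would bound the number of $j$-bad boxes in column $i$ via a first-moment estimate restricted to the relevant height range — here I would invoke that in the setting of Proposition~\ref{p:perc1} as applied, heights are effectively bounded, OR, staying fully abstract, I would add the observation that $\P[\exists(k,k'): \mathcal Z_{i,k,k'}\ge 2^j]$ can be controlled because the left side of the target inequality already restricts to $\uk\in\mathfrak K_M$ with $\tau_1(\uk)\le RM$, so only $O(RM)$ distinct height-pairs ever appear along any such path, giving $\sum_i \sum_{\text{relevant }(k,k')} C_1\exp(-C_2 2^{j\beta}) \le C_1 R M^2 \exp(-C_2 2^{j\beta})$. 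Summing over $j_{\max} < j \le \log_2 z_{\max}$ and using $2^{j_{\max}\beta} \gtrsim (\log^{C_4}(RM) + z)^\beta$ together with $C_4$ large enough to kill the polynomial $RM^2$ factor, and separately tracking the contribution near $j = \log_2 z_{\max}$ to produce the $(z/z_{\max})z_{\max}^\beta$ term, yields
\[
\P\Big[\sum_{j>j_{\max}}\max_{\uk}\sum_i \mathcal{Z}^{j}_{i,k_{i-1},k_i} > 0\Big] \le \sum_{j>j_{\max}} C_1 R M^2 \exp(-C_2 2^{j\beta}) \le C_5 \exp(-C_6 z^\beta - C_6 (z/z_{\max}) z_{\max}^\beta),
\]
which is the claim. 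The delicate bookkeeping is precisely in splitting the exponent into the two advertised pieces while absorbing the $RM^2$ prefactor using the $\log^{C_4}$ in $j_{\max}$ — that is the part I would be most careful with.
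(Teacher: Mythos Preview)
Your final approach---after the false starts---is essentially the paper's: use the constraint $\tau_1(\uk)\le RM$ (which forces $|k_i|\le RM$ since $k_0=0$) to bound the set of admissible triples $(i,k_{i-1},k_i)$ by a polynomial in $R,M$, union-bound the tail $\P[\mathcal{Z}_{i,k,k'}\ge 2^j]\le C_1\exp(-C_2 2^{j\beta})$ over that set, and absorb the polynomial prefactor using $2^{j}\ge \log^{C_4}(RM)$. Two small corrections: (i) your triple count $O(RM^2)$ is an undercount---for each $i$ there are $O(RM)$ choices for $k_{i-1}$ and $O(RM)$ for $k_i$, giving $O(R^2M^3)$ triples total (the paper writes $4RM^3$); this is harmless since any polynomial is absorbed. (ii) The term $(z/z_{\max})z_{\max}^\beta$ in the exponent does not come from ``tracking contributions near $j=\log_2 z_{\max}$''; rather, for every relevant $j$ one has $2^j\ge z$ while $z\le z_{\max}$, and since $\beta\le 1$ this gives $z^\beta \ge (z/z_{\max})z_{\max}^\beta$, so $2^{j\beta}\ge z^\beta$ already dominates both terms at once.
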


\begin{proof}
    Since $k_0=0$, it is easy to see that there exists a deterministic set $\mathsf{A}=\mathsf{A}_{R,M}$ of triples $(i,k_{i-1},k_{i-1})$ of size at most $4RM^3$ such that for any $\underline{k}\in \mathfrak{k}_{M}$ with $\tau_1(\underline{k})\le RM$ we have $(i,k_{i-1},k_{i})\in \mathsf{A}$. Now clearly, for $j>j_{\max}$,
    $$\P\left(\max_{\underline{k}}\sum_{i} \mathcal{Z}^{j}_{i,k_{i-1},k_i}>0\right)\le \P(\max_{(i,k_{i-1},k_{i})\in \mathsf{A}} \mathcal{Z}_{i,k_{i-1},k_i}>2^{j})\le 4RM^{3}C_1\exp(-C_22^{j\beta}).$$
    Since $j>j_{\max}$, either $2^{j}>z_{\max}$ or $z_{\max}\ge 2^{j}\ge \log^{C_4}(RM)+z$. In the former case, the above upper bound can be improved to $0$ (since $\mathcal{Z}_{i,k_{i-1},k-i}\le z_{\max}$) and in the latter case we get $2^{j\beta}\ge z^{\beta} \ge (z/z_{\max})z_{\max}^{\beta}$ where the final inequality is true since $z_{\max}\ge z$ and $\beta\le 1$. Therefore in this case,
    $$4RM^{3}C_1\exp(-C_22^{j\beta})\le C\exp(-c2^{j\beta}-c(z/z_{\max})z^{\beta}_{\max})$$
    for some $C,c>0$ depending only on $C_1,C_2$ and $\beta$. Notice that the $RM^3$ term can be absorbed in the exponent since $2^{j}\ge \log^{C_4}(RM)$ and $C_4$ is sufficiently large. 
    Summing this probability bound for $j> j_{\max}$ completes the proof of the lemma. 
\end{proof}

The next lemma deals with the case $j_{\min} \le j \le j_{\max}$.

\begin{lemma}
    \label{l:minmax}
    There exist constant $C_3,C_4,C_5,C_6>0$ depending only on $C_1,C_2$ and $\beta$ such that 
    $$\P\left( \sum_{j=j_{\min}}^{j_{\max}} \max_{\underline{k}} \sum_i \mathcal{Z}^{j}_{i,k_{i-1},k_{i}} \ge (C_3+\frac{1}{2}\log^{C_4}(R))M+z \right)\le C_5\exp(-C_6z^{\beta}-C_6(z/z_{\max})z_{\max}^{\beta}).$$
\end{lemma}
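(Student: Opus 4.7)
The plan is to handle each dyadic scale $j\in[j_{\min},j_{\max}]$ separately by a mesoscopic percolation argument, then sum the resulting bounds. Writing $\mathsf{X}^j_{i,k,k'}:=I(\mathcal{Z}_{i,k,k'}\ge 2^j)$, we have $\mathcal{Z}^j_{i,k,k'}\le 2^{j+1}\mathsf{X}^j_{i,k,k'}$ with $\P(\mathsf{X}^j_{i,k,k'}=1)\le p_j:=C_1\exp(-C_2 2^{j\beta})$, and the $\mathsf{X}^j_{i,k,k'}$ remain independent across distinct columns $i$. Hence the task reduces, at each scale, to controlling the maximum number of ``bad'' sites visited by an admissible $\underline{k}\in\mathfrak{K}_M$ with $\tau_1(\underline{k})\le RM$.

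For each $j$, I would introduce mesoscopic blocks of dimensions $L_j\times W_j$ in the $i\times k$ variables, declaring a block bad at scale $j$ if it contains any triple $(i,k,k')$ with $\mathsf{X}^j=1$. Tuning so that $L_j W_j^2 p_j\le \exp(-C_2 2^{j\beta}/2)$ keeps bad-block probabilities tiny while preserving independence of bad-block indicators across column-groups. Any admissible $\underline{k}$ induces a coarse block-path of length $\lceil M/L_j\rceil$ with coarse variation bounded by $RM/W_j+M/L_j$, so a stars-and-bars count gives at most $(CR)^{M/L_j}$ distinct coarse paths. For each fixed coarse path, the number of bad blocks visited is a sum of independent Bernoulli variables and admits a Chernoff bound; a union bound over coarse paths then yields a stretched-exponential tail for the scale-$j$ contribution.

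Summing the scale-$j$ bounds over $j\in[j_{\min},j_{\max}]$, the deterministic (expected-value) pieces collapse, under the balancing above, to $O(M)+\tfrac12 M\log^{C_4}(R)$ for $C_4$ large enough relative to $\beta,C_1,C_2$. The deviation tails combine, by splitting $z=\sum_j z_j$ optimally and using $\sum_j\exp(-cz_j^\beta)\le C\exp(-c'z^\beta)$, into the factor $\exp(-C_6 z^\beta)$. The separate $\exp(-C_6(z/z_{\max})z_{\max}^\beta)$ term is extracted at scale $j_{\max}$, where the truncation $\mathcal{Z}_{i,k,k'}\le z_{\max}$ forces deviations above the sub-Gaussian/Bernstein regime to be assembled from individual jumps of size at most $z_{\max}$; a standard Bennett/Bernstein inequality for bounded Bernoullis yields exactly this form.

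The main obstacle is the careful balancing at each scale: because $\beta<1$, the naive sum $\sum_j 2^{j(1-\beta)}$ is dominated by $j=j_{\max}$ and threatens to produce a bound of order $(\log^{C_4}(RM)+z)^{1-\beta}$ rather than the required $\log^{C_4}(R) M+z$. The resolution is to choose $W_j^2$ just large enough that the coarse entropy $(CR)^{M/L_j}$ is absorbed at every scale by the per-block cost $\exp(-c 2^{j\beta})$; the remaining sum over $j$ then collapses to a polylogarithmic prefactor in $R$, and the choice $j_{\min}=\lfloor\log_2\log^{C_4/2}(R)\rfloor$ ensures the implicit constant multiplying $M$ does not exceed $\tfrac12\log^{C_4}(R)$.
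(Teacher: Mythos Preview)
Your broad strategy matches the paper's (coarsen $\underline{k}$ at each dyadic scale $j$, Chernoff plus union bound over coarsenings, then sum over $j$), but two of the steps as written would not go through.

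First, the coarse-path entropy. A block path of the kind you describe records only the row-block at the boundaries of each column-block; within a column-block the path can contain indices $i$ with $|k_i-k_{i-1}|$ large, and for those the triple $(i,k_{i-1},k_i)$ is \emph{not} confined to any single $L_j\times W_j$ block, so your ``bad block'' indicators miss them and the claimed stochastic domination by independent Bernoullis does not follow. The paper fixes this by augmenting the coarsening with the locations and exact endpoint values of the large jumps ($\underline{k}_{\rm large},\underline{k}^1_{\rm large}$ in Lemma~\ref{l:mesocount}); this extra data costs only $\exp(cRM\log w/w)$ in entropy, and once it is fixed every pair $(k_{i-1},k_i)$ lies in a deterministic set of size $O(w^3)$ (Lemma~\ref{l:fixedj}), giving an honest $\mathrm{Bin}(M,8w^3 p_j)$ domination per coarsening.

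Second, the per-scale tail is $\exp\bigl(-cz_j\,2^{j(\beta-1)}\bigr)$ (Chernoff on $\mathrm{Bin}(M,\exp(-c2^{j\beta}))$ with threshold $\asymp z_j/2^j$), not $\exp(-cz_j^\beta)$; since the exponent depends on $j$, a uniform split $z_j=z/N$ does not yield $\sum_j\exp(-cz_j^\beta)\le C\exp(-c'z^\beta)$. The paper's allocation is $z_j=2^{-j}M+\tilde z\,2^{\varepsilon(j-j_{\max})}$ with $\tilde z=z+\log^{C_4}(RM)$ and any $0<\varepsilon<1-\beta$: then $\sum_j z_j=O(M+\tilde z)$, while the tail terms become $\exp\bigl(-c\tilde z\,2^{(\beta+\varepsilon-1)(j-j_{\max})}/2^{(1-\beta)j_{\max}}\bigr)$, summable with the dominant contribution at $j=j_{\max}$ equal to $\exp\bigl(-c\tilde z/2^{(1-\beta)j_{\max}}\bigr)$. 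Because $2^{j_{\max}}\asymp z_{\max}\wedge\tilde z$, this single expression simultaneously delivers both the $\exp(-C_6 z^\beta)$ and the $\exp(-C_6(z/z_{\max})z_{\max}^\beta)$ factors, with no separate Bennett/Bernstein extraction needed.
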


Postponing the proof of Lemma \ref{l:minmax} momentarily, let us complete the proof of Proposition \ref{p:perc1}. 

\begin{proof}[Proof of Proposition \ref{p:perc1}]
The proof follows from \eqref{e:zsum}, together with \eqref{e:jmin}, Lemma \ref{l:jmax} and Lemma \ref{l:minmax} by taking $R$ sufficiently large. 
\end{proof}

\subsection{Proof of Lemma \ref{l:minmax}}
The proof of this lemma is somewhat long so we shall divide it into a number of steps. An illustration for this argument is given in Figure \ref{f:meso}. 

\noindent
\textbf{Step 1: Mesoscopic coarsening of $\underline{k}$:}
To reduce the entropy of the the number of sequences $\underline{k}$ we shall consider the following discretization, which we call mesoscopic coarsening (or $j$-mesoscopic coarsening).  
Fix $j_{\min}\leq j\leq j_{\max}$. Let $w=w_j$ be a fixed and large integer, to be chosen appropriately later. 

For $\underline{k}\in \mathfrak{K}_{M}$ define 
$\underline{k}^{w}$ by $k^{w}_{i}= \lfloor \frac{k_{iw}}{w^2}\rfloor$ for $i=1,2,\ldots \lfloor\frac{M}{w}\rfloor$. Define $\underline{k}_{\rm large}$ by 
$$\{i: 1\le i \le M: |k_{i}-k_{i-1}|\ge w\},$$
i.e., the set of locations where $\underline{k}$ has a large jump. Finally, let 
$$\underline{k}^1_{\rm large}:=\{k_{i}:i\in \underline{k}_{\rm large}~\text{or}~(i+1)\in \underline{k}_{\rm large}\}.$$ 
The mesoscopic coarsening of $\underline{k}$ is given by the triple 
$$\underline{k}_{\rm M}^{w}=(\underline{k}^w,\underline{k}_{\rm large},\underline{k}^1_{\rm large}).$$ 

We need the following estimate to count the number of distinct $\underline{k}_{\rm M}^{w}$ as $\underline{k}$ varies over all $\underline{k}\in \mathfrak{K}_{M}$ with $\tau_1(\underline{k})\le RM$. 

\begin{lemma}
\label{l:mesocount}
Let $\mathsf{M}^{w}_{R}$ denote the set of all $\underline{k}_{\rm M}^{w}$
for $\underline{k}\in \mathfrak{K}_{M}$ with $\tau_1(\underline{k})\le RM$. Then there exists a constant $c>0$ such that 
$$|\mathsf{M}^w_{R}|\leq  \exp \left(cRM\log w/w\right).$$
\end{lemma}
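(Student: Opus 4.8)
\textbf{Plan for the proof of Lemma~\ref{l:mesocount}.} The goal is to bound the number of possible values of the triple $\underline{k}_{\rm M}^{w}=(\underline{k}^w,\underline{k}_{\rm large},\underline{k}^1_{\rm large})$ as $\underline{k}$ ranges over all sequences in $\mathfrak{K}_{M}$ with $\tau_1(\underline{k})\leq RM$. I would bound each of the three components separately and multiply.

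First I would count the possibilities for $\underline{k}^w$. This is a sequence of length $\lfloor M/w\rfloor$ of integers $k^w_i=\lfloor k_{iw}/w^2\rfloor$, with $k^w_0=0$ (since $k_0=0$). The key observation is that consecutive entries cannot differ too much on average: $\sum_i |k_{iw}-k_{(i-1)w}|\leq \tau_1(\underline{k})\leq RM$, hence $\sum_i |k^w_i-k^w_{i-1}|\leq RM/w^2 + \lfloor M/w\rfloor \leq 2RM/w^2$ (absorbing the $+1$ per step into a constant times $RM/w^2$ using $R\geq 1$). So I need to count sequences of length $L:=\lfloor M/w\rfloor$ starting at $0$ with total variation at most $V:=2RM/w^2$. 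A standard stars-and-bars / entropy bound gives that the number of such sequences is at most $\binom{V+2L}{2L}2^{V}$ or similar; more cleanly, the number of integer sequences of length $L$ with $\sum|k^w_i - k^w_{i-1}| \le V$ is at most $\binom{L+V}{L}\cdot 2^{V}$ — I would just quote the elementary bound that this is $\exp(O((L+V)\log(2+ V/L)))$. Since $L = M/w$ and $V = 2RM/w^2 \le 2RM/w \le 2RL$, we get $L+V \le (1+2R)L \le 3RL = 3RM/w$ and $\log(2+V/L)\le \log(2+2R) \le C\log R$, so this count is at most $\exp(C R M \log R/w)$. Since $R$ is a fixed large constant, $\log R$ is absorbed: this is $\exp(cRM/w)$, and certainly $\leq \exp(cRM\log w/w)$.

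Next, the set $\underline{k}_{\rm large}\subseteq\{1,\dots,M\}$ of locations where $|k_i-k_{i-1}|\geq w$. Since $\tau_1(\underline{k})=\sum_i|k_i-k_{i-1}|\leq RM$, the number of such locations is at most $RM/w$. Hence the number of possible subsets is at most $\binom{M}{\leq RM/w}\leq \exp(C (RM/w)\log(w/R))\le \exp(cRM\log w/w)$ by the standard bound $\binom{M}{t}\le (eM/t)^t$ with $t = RM/w$. Finally, for $\underline{k}^1_{\rm large}$: these are the values $k_i$ at indices $i$ adjacent to a large jump; there are at most $2RM/w$ such indices, and at each the value $k_i$ is an integer. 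Starting from $k_0=0$ and following the sequence, each $k_i$ with $|k_i|$ arbitrary is reachable only through jumps, but the total variation bound $\tau_1(\underline k)\le RM$ means every coordinate $k_i$ satisfies $|k_i|\le RM$; so each of the $\le 2RM/w$ values lies in $\{-RM,\dots,RM\}$, giving at most $(2RM+1)^{2RM/w}\le \exp(C(RM/w)\log(RM))$ choices. This last factor has an unwanted $\log(RM)$ rather than $\log w$; the fix is that I would instead record $\underline{k}^1_{\rm large}$ as the \emph{increments} of $\underline k$ restricted to and between the large-jump sites, which form a sequence with total variation $\le RM$ and length $\le 2RM/w$, so by the same stars-and-bars bound as in Step~1 the count is $\exp(O((RM/w)\log(2 + \text{variation}/\text{length})))\le \exp(cRM\log w/w)$ — here I am using that the statement of the lemma only needs the triple up to bijection, so recording increments is equivalent to recording values.

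Multiplying the three bounds gives $|\mathsf{M}^w_R|\le \exp(c_1 RM/w)\cdot\exp(c_2 RM\log w/w)\cdot\exp(c_3 RM\log w/w)\le \exp(cRM\log w/w)$ for a suitable absolute constant $c$, as claimed. The main obstacle — really the only subtle point — is making sure that the component $\underline k^1_{\rm large}$ (which a priori records unbounded integer \emph{values} rather than bounded increments) does not contribute a $\log M$ instead of a $\log w$ to the exponent; this is handled by recording increments rather than absolute positions, which is legitimate since we only need to count the coarsenings up to the information they carry. All other steps are routine entropy estimates of the stars-and-bars / binomial-coefficient type, using only $k_0 = 0$, $\tau_1(\underline k)\le RM$, and $R\ge 1$.
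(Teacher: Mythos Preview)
Your overall strategy---bound the three components of the triple separately and multiply---is natural and close to the paper's, but there is a real gap in your Step~3.

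The claim ``recording increments is equivalent to recording values'' is not correct: the map from a value-sequence to its sequence of consecutive differences is not injective, so bounding the number of increment sequences does not by itself bound the number of value sequences, and hence not $|\mathsf{M}^w_R|$. To pass from increments back to values you need an anchor for the first value, and you do not supply one. There are two easy fixes. One is to anchor at $k_0=0$: the first element of $\underline{k}^1_{\rm large}$ then lies in $[-RM,RM]$, costing an additive $\log(2RM{+}1)$ in the exponent; since $\underline{k}_{\rm large}\neq\emptyset$ forces $w\le RM$, monotonicity of $x\mapsto(\log x)/x$ gives $\log(RM)\le RM\log w/w$ and the bound survives. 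The other fix is to anchor using $\underline{k}^w$: within each $w$-block the small jumps total at most $w\cdot w=w^2$, so once the large-jump \emph{sizes} and the coarse value $k^w_{i_0}$ are fixed, every large-jump value in that block is pinned down to $O(w^2)$ choices. Either repair completes your argument.

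The paper takes the second route but organises it as a single joint count rather than three separate ones: it fixes $s=|\underline{k}_{\rm large}|$, then the set $\underline{k}_{\rm large}$ ($\binom{M}{s}$ choices), then the large-jump \emph{sizes} ($2^s\binom{RM+s}{s}$ choices), and finally determines $\underline{k}^w$ and the values $k_{i-1}$ at large-jump sites \emph{together} by a left-to-right block traversal, at cost $3^{M/w}(4w^2)^s$. The traversal is precisely what provides the anchors, and it yields the $\log w$ (rather than $\log M$ or $\log(RM)$) directly.

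A small slip elsewhere: your bound $V\le 2RM/w^2$ for the variation of $\underline{k}^w$ is wrong---the floor contributes $+1$ per step, giving $V\le RM/w^2+M/w$, and the second term dominates when $w>R$. This does not affect your final estimate for that piece, which is still $\exp(O(RM\log w/w))$ after the correction.
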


\begin{proof}
    First, let us fix the size of $\underline{k}_{\rm large}$ to be $s$; denote the corresponding subset of $\mathsf{M}^{w}_{R}$ by $\mathsf{M}^{w}_{R}(s)$. Clearly, $s\le RM/w$ since $\tau_1(\underline{k})\le RM$.

    To bound $|\mathsf{M}^w_{R}(s)|$ for $s\le RM/w$, first fix the elements of $\underline{k}_{\rm large}$; clearly there are $\binom{M}{s}$ many choices. Then we fix the sizes of big jumps, i.e., ${k}_i-k_{i-1}$ for all $i\in \underline{k}_{\rm large}$. Since the sum total of the absolute values of these jumps can be at most $RM$, by a standard counting argument, the number of choices here is at most $2^s \binom {RM +s}{s}$ (the factor $2^{s}$ comes from the fact that each jump can be either positive or negative). We still need to determine $k_{i-1}$s for $i\in \underline{k}_{\rm large}$ and $\underline{k}^w$; we determine the number of choices for them together. Observe that if $k^{w}_{i_0}$ is fixed for some $i$ then traversing the block $[i_0w,(i_0+1)w]$ from left to right, we can determine $k_{i-1}$ for for every $i\in i\in \underline{k}_{\rm large}$, such that $i-1\in [i_0w,(i_0+1)w]$ up to an error of $\pm 2w^2$. Further, $k^{w}_{i_0+1}$ is also determined (using the information from the value of the large jumps fixed earlier) up to an error of $\pm 1$. Therefore, the total number of choices we have for this is at most $3^{M/w}(4w^2)^s$.   

    Putting all these together, we get 
\begin{eqnarray*}
|\mathsf{M}^{w}_{R}(s)|  &\leq & 8^s3^{M/w}\binom{M}{s} \binom{RM+s}{s} w^{2s}\\
& \leq & \exp \biggl( c(s+M/w +s\log (M/s)+s\log (RM/s +1)+2s\log w)\biggr)\\
& \leq & \exp \left(cRM \log w/w\right)
\end{eqnarray*}
for some constant $c>0$ where in the last inequality we have used the fact that $s\le RM/w$. Summing over $s$ from $0$ to $RM/w$ we get the required result. 
\end{proof}

\noindent
\textbf{Step 2: Bounding the tails for a fixed $j\in [j_{\min},j_{\max}]$:}
The usefulness of defining the mesoscopic coarsening is shown in the following lemma. 

\begin{lemma}
\label{l:fixedj}
Let $j\in [j_{\min}, j_{\max}]$ be fixed. Fix $\underline{k}^*\in \mathsf{M}^w_{R}$. Then, 
$$\max_{\underline{k}:\underline{k}_{\rm M}^{w}=\underline{k}^*} \sum_{i} \mathcal{Z}^j_{i,k_{i-1},k_{i}}$$ 
is stochastically dominated by $2^{j+1}\mathrm{Bin}(M,8w^3C_1e^{-C_2(2^{j\beta})})$. 
\end{lemma}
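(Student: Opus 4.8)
The plan is to prove that once we fix the mesoscopic coarsening $\underline{k}^* = \underline{k}^w_{\rm M}$, the choices available to $\underline{k}$ are constrained enough that the sum $\sum_i \mathcal{Z}^j_{i,k_{i-1},k_i}$ is controlled by a binomial random variable. The key observation is that $\mathcal{Z}^j_{i,k,k'}$ takes only the two values $0$ and $2^{j+1}$, so the sum is $2^{j+1}$ times the number of indices $i$ with $\mathcal{Z}^j_{i,k_{i-1},k_i} = 2^{j+1}$, i.e.\ with $\mathcal{Z}_{i,k_{i-1},k_i} \in [2^j, 2^{j+1}]$. So it suffices to show that, uniformly over $\underline{k}$ compatible with $\underline{k}^*$, the number of such ``heavy'' indices is stochastically dominated by $\mathrm{Bin}(M, 8w^3 C_1 e^{-C_2 2^{j\beta}})$.

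The main step is to argue that, for each $i$, the pair $(k_{i-1}, k_i)$ is restricted to a set of size at most $8w^3$ once $\underline{k}^*$ is fixed, and that these restrictions can be organized so that for distinct $i$ in the same column $\Lambda_i$ the relevant variables $\mathcal{Z}_{i,\cdot,\cdot}$ are drawn from the independent collection $\mathfrak{Z}_i$. First I would establish the counting bound: if $i \in \underline{k}_{\rm large}$ then $k_{i-1}$ and $k_i$ are both recorded (in $\underline{k}^1_{\rm large}$), so there is no freedom; if $i \notin \underline{k}_{\rm large}$ then $|k_i - k_{i-1}| < w$, and both $k_{i-1}, k_i$ lie within $\pm 2w^2$ of $w^2 k^w_{\lfloor i/w \rfloor}$ as determined by $\underline{k}^w$, giving at most $(4w^2)\cdot(2w) = 8w^3$ possibilities for the pair. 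Thus in all cases the pair ranges over a set $S_i$ with $|S_i| \le 8w^3$. Then, for a fixed realization of $\underline{k}$ compatible with $\underline{k}^*$, the event that index $i$ is heavy is contained in the event $\bigcup_{(k,k')\in S_i}\{\mathcal{Z}_{i,k,k'}\in[2^j,2^{j+1}]\}$, which by the union bound and the tail hypothesis $\P[\mathcal{Z}_{i,k,k'}\ge 2^j]\le C_1 e^{-C_2 2^{j\beta}}$ has probability at most $8w^3 C_1 e^{-C_2 2^{j\beta}} =: p_j$. To turn ``for each fixed $\underline{k}$'' into a statement about the maximum over compatible $\underline{k}$, I would define, for each $i$, the indicator $B_i := I\big(\exists (k,k')\in S_i : \mathcal{Z}_{i,k,k'}\in[2^j,2^{j+1}]\big)$; then for every compatible $\underline{k}$ we have (number of heavy indices) $\le \sum_i B_i$, so $\max_{\underline{k}}\sum_i \mathcal{Z}^j_{i,k_{i-1},k_i} \le 2^{j+1}\sum_i B_i$. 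The variables $\{B_i\}$ depend on $\mathfrak{Z}_i$ for different $i$ (with the caveat that $S_i \subset \Z^2$ uses column $i$'s variables only — which holds by construction since $\mathcal{Z}^j_{i,k_{i-1},k_i}$ is a function of $\mathfrak{Z}_i$) hence are independent, and each $B_i$ is Bernoulli with success probability $\le p_j$. Therefore $\sum_i B_i$ is stochastically dominated by $\mathrm{Bin}(M, p_j)$, which gives the claim.

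The one genuine subtlety to handle carefully is the indexing: $\underline{k}^w$ is defined only for $i$ up to $\lfloor M/w \rfloor$ and coarsens by blocks of size $w$ while dividing heights by $w^2$, so I must check that knowing $k^w_{\lfloor i/w\rfloor}$ really does pin $k_{i-1}$ and $k_i$ to within $\pm 2w^2$ when $i$ is not in a large-jump location. This follows because within a block of length $w$ with no jump exceeding $w$ (in absolute value, away from large-jump locations; and large-jump locations contribute a bounded amount that is itself recorded), the total variation of $\underline{k}$ across the block is at most $w^2$, so every $k_\ell$ in the block is within $w^2$ of $w^2 k^w$ plus the recorded large jumps, and absorbing constants gives the $\pm 2w^2$ bound; this is exactly the argument already used in the proof of Lemma~\ref{l:mesocount}. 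I expect this bookkeeping — rather than any probabilistic difficulty — to be the main obstacle, since the stochastic domination itself is a routine union bound plus independence across columns; I would phrase it cleanly by first fixing $\underline{k}^*$, then defining $S_i$ explicitly, then the $B_i$, and invoking independence of $\mathfrak{Z}_i$.

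\begin{proof}
Fix $j \in [j_{\min}, j_{\max}]$ and fix $\underline{k}^* = (\underline{k}^w, \underline{k}_{\rm large}, \underline{k}^1_{\rm large}) \in \mathsf{M}^w_R$. Since $\mathcal{Z}^j_{i,k,k'} \in \{0, 2^{j+1}\}$, for any $\underline{k}$ we have
\[
\sum_i \mathcal{Z}^j_{i,k_{i-1},k_i} = 2^{j+1} \cdot \#\{i : \mathcal{Z}_{i,k_{i-1},k_i} \in [2^j, 2^{j+1}]\}.
\]
For each $1 \le i \le M$, let $S_i \subset \Z^2$ denote the set of all pairs $(k_{i-1}, k_i)$ that can occur for some $\underline{k} \in \mathfrak{K}_M$ with $\tau_1(\underline{k}) \le RM$ and $\underline{k}^w_{\rm M} = \underline{k}^*$. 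We claim $|S_i| \le 8w^3$. Indeed, if $i \in \underline{k}_{\rm large}$ or $i+1 \in \underline{k}_{\rm large}$ then both $k_{i-1}$ and $k_i$ are recorded in $\underline{k}^1_{\rm large}$, so $|S_i| = 1$. Otherwise $|k_i - k_{i-1}| < w$, and arguing exactly as in the proof of Lemma~\ref{l:mesocount} (traversing the block of length $w$ containing $i$, along which the total variation of $\underline{k}$ is at most $w^2$ away from the recorded large jumps), both $k_{i-1}$ and $k_i$ lie within distance $2w^2$ of the quantity determined by $\underline{k}^w$ and $\underline{k}_{\rm large}$; hence $k_{i-1}$ has at most $4w^2$ possible values and, given $k_{i-1}$, the value $k_i$ has at most $2w$ possible values, so $|S_i| \le 8w^3$.

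Now define, for each $i$, the indicator random variable
\[
B_i := I\Big( \exists (k,k') \in S_i : \mathcal{Z}_{i,k,k'} \in [2^j, 2^{j+1}] \Big).
\]
For any $\underline{k}$ compatible with $\underline{k}^*$ and any $i$ with $\mathcal{Z}_{i,k_{i-1},k_i} \in [2^j, 2^{j+1}]$ we have $(k_{i-1},k_i) \in S_i$ by definition of $S_i$, hence $B_i = 1$. Consequently
\[
\max_{\underline{k} : \underline{k}^w_{\rm M} = \underline{k}^*} \sum_i \mathcal{Z}^j_{i,k_{i-1},k_i} \le 2^{j+1} \sum_{i=1}^M B_i.
\]
Each $B_i$ is a function of the collection $\mathfrak{Z}_i = \{\mathcal{Z}_{i,k,k'}\}_{k,k' \in \Z}$ only, so the random variables $B_1, \ldots, B_M$ are independent. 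By the union bound and the tail hypothesis $\P[\mathcal{Z}_{i,k,k'} \ge 2^j] \le C_1 \exp(-C_2 2^{j\beta})$,
\[
\P[B_i = 1] \le \sum_{(k,k') \in S_i} \P[\mathcal{Z}_{i,k,k'} \ge 2^j] \le 8 w^3 C_1 \exp(-C_2 2^{j\beta}).
\]
Therefore $\sum_{i=1}^M B_i$ is stochastically dominated by $\mathrm{Bin}\big(M, 8w^3 C_1 \exp(-C_2 2^{j\beta})\big)$, and the statement follows.
\end{proof}
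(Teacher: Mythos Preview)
Your proof is correct and follows essentially the same approach as the paper: reduce to counting heavy indices, bound the set of admissible pairs $(k_{i-1},k_i)$ by $8w^3$ via the mesoscopic data, and use independence across $i$ plus a union bound to obtain the binomial domination. The only minor imprecision is the claim that $|S_i|=1$ when $i+1\in\underline{k}_{\rm large}$ but $i\notin\underline{k}_{\rm large}$ (then $k_i$ is recorded but $k_{i-1}$ need not be), though this is harmless since $|k_i-k_{i-1}|<w$ still yields $|S_i|\le 2w\le 8w^3$.
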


\begin{proof}
Clearly, it suffices to show that for $i=1,2,\ldots, M$, 
$\max_{\underline{k}:\underline{k}_{\rm M}^{w}=\underline{k}^*}I(\mathcal{Z}^j_{i,k_{i-1},k_{i}}\ne 0)$ are stochastically dominated by independent $\mbox{Ber}(8w^3C_1e^{-C_2(2^j\beta)})$ variables.

Once $\underline{k}_{\rm M}^{w}=\underline{k}^*$, is fixed, for each $i$, the choice of $(k_{i-1},k_{i})$ is fixed in a deterministic set of size at most $8w^3$. To see this, notice that if $i\in \underline{k}_{\rm large}$ then both $k_{i-1}$ and $k_{i}$ are determined and hence the claim follows. If $i\notin \underline{k}_{\rm large}$, then given $\underline{k}^{w}$, $\underline{k}_{\rm large}$ and 
    $\underline{k}^1_{\rm large}$, $k_{i-1}$ is determined up to an error of $4w^2$, and fixing this $k_{i}$ can take one of the $2w$ possible values since $|k_{i}-k_{i-1}|\le w$. 

Therefore, using the tail hypothesis on $\mathcal{Z}_{i,k_{i-1},i}$ for each $i$, 
    $$\P(\max_{\underline{k}:\underline{k}_{\rm M}^{w}=\underline{k}^*}\mathcal{Z}^j_{i,k_{i-1},k_{i}}>0)\le 8w^3C_1e^{-C_22^{j\beta}}.$$
    Since $\mathcal{Z}_{i,k_{i-1},i}$ are independent across $i$, the claim and hence the lemma follows. 
 \end{proof}

 The next lemma provides a tail bound for the above sum for a fixed $j$. At this point let us fix $w=w_{j}=\exp(c_02^{j\beta})$ for some $c_0>0$ sufficiently small. In particular, we choose $w$ so that $8w^{3}C_1e^{-C_22^{j\beta}}\le w^{-10}$. Also, let $\varepsilon>0$ be fixed  such that $\beta+\varepsilon<1$. For $z>0$, let us set $z_j=2^{-j}M+(z+\log^{C_4}(RM))2^{\varepsilon(j-j_{\max})}$. For notational convenience, we shall denote the term $(z+\log^{C_4}(RM))$ by $\tilde{z}$.

\begin{lemma}
    \label{l:minmaxj}
    There exists $c>0$, such that for $j\in [j_{\min},j_{\max}]$,
    $$\P\left(\max_{\underline{k}:\tau_1(\underline{k})\le RM} \sum_{i} \mathcal{Z}^j_{i,k_{i-1},k_{i}} \ge z_j\right)\le \exp\left(-\frac{c\tilde{z}2^{(\beta+\varepsilon-1)(j-j_{\max})}}{2^{(1-\beta)j_{\max}}}\right).$$ 
\end{lemma}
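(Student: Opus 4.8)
The plan is to combine the mesoscopic coarsening estimate (Lemma~\ref{l:mesocount}) with the conditional stochastic domination (Lemma~\ref{l:fixedj}) and a Chernoff bound, tracking the dependence on $j$ carefully so that, after summing over $j\in[j_{\min},j_{\max}]$, the bound telescopes to something of the claimed form. First I would fix $j\in[j_{\min},j_{\max}]$ and recall the choice $w=w_j=\exp(c_0 2^{j\beta})$ with $c_0>0$ small enough that $8w^3 C_1 e^{-C_2 2^{j\beta}}\le w^{-10}$; call this common value $p=p_j\le w^{-10}$. By Lemma~\ref{l:fixedj}, for each fixed coarsened profile $\underline{k}^*\in\mathsf{M}^w_R$ the random variable $\max_{\underline{k}:\underline{k}^w_{\rm M}=\underline{k}^*}\sum_i \cZ^j_{i,k_{i-1},k_i}$ is dominated by $2^{j+1}\,\mathrm{Bin}(M,p)$. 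A standard Chernoff estimate for a $\mathrm{Bin}(M,p)$ variable gives, for any threshold $t\ge e^2 Mp$,
\[
\P\big(\mathrm{Bin}(M,p)\ge t\big)\le \exp\big(-\tfrac{t}{2}\log(t/(Mp))\big),
\]
so with $t=z_j/2^{j+1}$ we get a bound on $\P(\max_{\underline{k}^w_{\rm M}=\underline{k}^*}\sum_i\cZ^j_{i,k_{i-1},k_i}\ge z_j)$.

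The second step is the union bound over $\underline{k}^*\in\mathsf{M}^w_R$: by Lemma~\ref{l:mesocount}, $|\mathsf{M}^w_R|\le\exp(cRM\log w/w)$, and since $\log w/w = c_0 2^{j\beta}\exp(-c_0 2^{j\beta})$ decays (super-)exponentially in $2^{j\beta}$, this entropy term is small. Concretely, I would show that with $t = z_j/2^{j+1}$ and $z_j = 2^{-j}M+\tilde z\,2^{\varepsilon(j-j_{\max})}$ where $\tilde z = z+\log^{C_4}(RM)$, the ``main term'' $t\ge Mp\cdot(\text{something large})$, in fact $t\gtrsim 2^{-j-1}M$ just from the first summand of $z_j$, while $Mp\le M w^{-10}$ is much smaller; hence $\log(t/(Mp))\gtrsim 10\log w \gtrsim 2^{j\beta}$ up to constants. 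This makes the Chernoff exponent of order $(z_j/2^{j})\cdot 2^{j\beta}$, which comfortably dominates $cRM\log w/w$ from the entropy count (here is where one uses $C_4$ large: the $\log^{C_4}(RM)$ piece of $\tilde z$ absorbs any polynomial-in-$RM$ losses and the $RM$ in the entropy). The net effect is
\[
\P\Big(\max_{\underline{k}:\tau_1(\underline{k})\le RM}\sum_i\cZ^j_{i,k_{i-1},k_i}\ge z_j\Big)\le \exp\big(-c'\,(z_j/2^{j})\,2^{j\beta}\big),
\]
and substituting the second summand of $z_j$ gives $(z_j/2^j)2^{j\beta}\ge \tilde z\,2^{(\beta+\varepsilon-1)j}\,2^{-\varepsilon j_{\max}} = \tilde z\,2^{(\beta+\varepsilon-1)(j-j_{\max})}\,2^{(\beta-1)j_{\max}}$, which rearranges to the exponent $\tilde z\,2^{(\beta+\varepsilon-1)(j-j_{\max})}/2^{(1-\beta)j_{\max}}$ claimed in the statement. (I would also need to verify the threshold condition $t\ge e^2 Mp$ holds for all $j$ in the range, which follows since $t\ge 2^{-j-1}M$ and $Mp\le Mw^{-10}$ with $w$ large.)

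The main obstacle — and the place requiring the most care — is bookkeeping the interplay of the three ``sizes'' $2^j$, $w_j=\exp(c_0 2^{j\beta})$, and $M$, and in particular checking that the Chernoff exponent genuinely beats the entropy $\exp(cRM\log w/w)$ \emph{uniformly} over $j\in[j_{\min},j_{\max}]$. At the small-$j$ end ($j\approx j_{\min}$, where $w$ is only polylog in $R$) the entropy is largest, of order $RM/\mathrm{polylog}(R)$, so one needs $\tilde z\gtrsim RM/\mathrm{polylog}(R)$ worth of exponent there — but that is exactly why $z_j$ contains the $2^{-j}M$ term and why $\tilde z\ge \log^{C_4}(RM)$; the factor $(z_j/2^j)2^{j\beta}\ge 2^{-j(1-\beta)}M$ with $j$ small is of order $M$ times a small power, which dominates $RM\log w/w$ once $w$ (hence $R$, since $j\ge j_{\min}$ forces $w\ge\log^{C_4/2}R$) is large. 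At the large-$j$ end the entropy is negligible and the second summand of $z_j$ carries everything. Once Lemma~\ref{l:minmaxj} is established, the proof of Lemma~\ref{l:minmax} is completed by a union bound over $j\in[j_{\min},j_{\max}]$: one checks that $\sum_{j=j_{\min}}^{j_{\max}} z_j \le (C_3 + \tfrac12\log^{C_4}(R))M + z$ — the $\sum_j 2^{-j}M$ part sums to $O(M)\le C_3 M$ after also noting that for $j$ near $j_{\min}$, $z_j$ contributes at most $\mathrm{polylog}(R)\cdot M$ which is where the $\tfrac12\log^{C_4}(R)M$ term comes from, using $j_{\min}=\lfloor\log_2\log^{C_4/2}(R)\rfloor$ so $2^{-j_{\min}}\approx \log^{-C_4/2}(R)$ and $\tilde z\, 2^{\varepsilon(j_{\min}-j_{\max})}$ is small; and the geometric series $\sum_j \tilde z\,2^{\varepsilon(j-j_{\max})}$ is $O(\tilde z) = O(z + \log^{C_4}(RM))$ — while the sum of the tail probabilities is dominated by its largest term (at $j=j_{\max}$, giving $\exp(-c\tilde z/2^{(1-\beta)j_{\max}})$) times the number of scales $j_{\max}-j_{\min}=O(\log\log\text{-ish})$, and one converts $\exp(-c\tilde z/2^{(1-\beta)j_{\max}})$ into the desired $\exp(-C_6 z^\beta - C_6(z/z_{\max})z_{\max}^\beta)$ by recalling $2^{j_{\max}}\asymp (\log^{C_4}(RM)+z)\wedge z_{\max}$, so that $\tilde z/2^{(1-\beta)j_{\max}} \ge \tilde z^\beta \ge z^\beta$ when $z\le z_{\max}$ is the binding constraint, and $\ge (z/z_{\max})z_{\max}^\beta$ when $z_{\max}$ is; the $\log^{C_4}(RM)$ inside $\tilde z$ absorbs the $\log(j_{\max}-j_{\min})$ and any remaining polynomial factors.
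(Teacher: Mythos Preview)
Your proposal is correct and follows essentially the same approach as the paper: union bound over the mesoscopic coarsenings via Lemma~\ref{l:mesocount}, stochastic domination by a binomial via Lemma~\ref{l:fixedj}, a Chernoff bound with threshold $t=z_j/2^{j+1}$, and then using the $2^{-j}M$ summand of $z_j$ to absorb the entropy cost while the $\tilde z\,2^{\varepsilon(j-j_{\max})}$ summand yields the claimed exponent after the simplification $\log(t/(Mp))\gtrsim \log w\asymp 2^{j\beta}$. Your additional discussion of how the lemma feeds into Lemma~\ref{l:minmax} is also accurate, though strictly beyond what is asked here.
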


Postponing the proof of Lemma \ref{l:minmaxj} for now, we first complete the proof of Lemma \ref{l:minmax}. 

\noindent
\textbf{Step 3: Completing the proof of Lemma \ref{l:minmax}:}
First note the it suffices to prove the lemma for $z,R$ and $M$ sufficiently large. Notice that for $R$ and $M$ sufficiently large
$$\sum_{j=j_{\min}}^{j_{\max}} z_{j} \le M+C_7(z+\log^{C_4}(RM))\le M+\frac{1}{2}\log^{C_4}(R)M+C_7z$$
for some $C_7\in (0,\infty)$ since $\varepsilon>0$. 
Therefore, it suffices to upper bound 
$$\sum_{j=j_{\min}}^{j_{\max}} \P\left(\max_{\underline{k}:\tau_1(\underline{k})\le RM} \sum_{i} \mathcal{Z}^j_{i,k_{i-1},k_{i}} \ge z_j\right)$$
at which point we use Lemma \ref{l:minmaxj}. Notice that, since $\beta+\varepsilon<1$, it follows that the while summing the upper bound in Lemma \ref{l:minmaxj} from $j=j_{\min}$ to $j_{\max}$ the dominating term is the term corresponding to $j=j_{\max}$. Therefore we get, 
 $$\P\left( \sum_{j=j_{\min}}^{j_{\max}} \max_{\underline{k}} \sum_i \mathcal{Z}^{j}_{i,k_{i-1},k_{i}} \ge (1+\frac{1}{2}\log^{C_4}(R))M+C_7z \right)\le C\exp \left(-\frac{c\tilde{z}}{2^{j_{\max}(1-\beta)}}\right).$$
 Now recall that $2^{j_{\max}}=z_{\max}\wedge \tilde{z}$. If $2^{j_{\max}}=z_{\max}\le \tilde{z}$, we have 
  $$ \frac{\tilde{z}}{2^{j_{\max}(1-\beta)}} \ge (\tilde{z}/z_{\max})z_{\max}^{\beta}\ge \tilde{z}^{\beta}.$$
  On the other hand if $2^{j_{\max}}=\tilde{z}\le z_{\max}$, we have 
  
   $$ \frac{\tilde{z}}{2^{j_{\max}(1-\beta)}}\ge \tilde{z}^{\beta}\ge (\tilde{z}/z_{\max})z_{\max}^{\beta}.$$
   The proof of the lemma is completed by observing $\tilde{z}\ge z$. 
   \qed

We finish by proving Lemma \ref{l:minmaxj}.

\noindent 
\textbf{Step 4: Proof of Lemma \ref{l:minmaxj}:}
Using Lemma \ref{l:fixedj} and the hypothesis on $w$, taking a union bound over all choices of $\underline{k}^{w}_{\rm M}$, and using Lemma \ref{l:mesocount}, it follows that 
$$\P\left(\max_{\underline{k}:\tau_1(\underline{k})\le RM} \sum_{i} \mathcal{Z}^j_{i,k_{i-1},k_{i}} \ge z_j\right)\le \exp(cRM\log w/w)\P\left(\mbox{Bin}(M,w^{-10})\ge 2^{-(j+1)}z_j\right).$$

By our choice of $w$, again, since $j\ge j_{\min}$ is sufficiently large, we also have $2^{-(j+1)}z_j\ge 2Mw^{-10}$. Therefore, a Chernoff bound gives

\begin{align*}
 &~ \exp(cRM\log w/w)\P\left(\mbox{Bin}(M,w^{-10})\ge 2^{-(j+1)}z_j\right)\\ & \le  \exp(cRM\log w/w)\exp\left(-c'2^{-(j+1)}z_j \log (w^{10}M^{-1}2^{-(j+1)}z_{j})\right)\\
  &\le  \exp\left(\frac{cRM\log w}{w}- \frac{c'M\log(w^{10}2^{-(2j+1)})}{2^{2j+1}}-\frac{c'\tilde{z}2^{\varepsilon(j-j_{\max})}\log(w^{10}2^{-(2j+1)})}{2^{j+1}} \right)
\end{align*}

Since $j\ge j_{\min}$, $2^{j\beta}\ge \log^{C_4\beta/2}(R)$. By choosing $C_4$ sufficiently large depending on $\beta$ and $c_0$ this guarantees that 
$w=e^{c_02^{j\beta}}$, $\sqrt{w}\ge R$. Using this together with the fact that $w$ grows doubly exponentially in $j$ and $j$ is sufficiently large (since $j\ge j_{\min}$) and $R$ is sufficiently large it follows that 
$$\frac{c'M\log(w^{10}2^{-(2j+1)})}{2^{2j+1}} \ge \frac{cRM\log w}{w}$$
and also $w^{10}2^{-(2j+1)}\ge w$.
Hence the bound above can be further upper bounded by 

$$\exp\left(-\frac{c'c_0\tilde{z}2^{\varepsilon(j-j_{\max})}2^{j\beta}}{2^{j+1}}\right)\le \exp\left(-\frac{c\Tilde{z}2^{(j-j_{\max})(\beta+\varepsilon-1)}}{2^{j_{\max}(1-\beta)}}\right).$$
This completes the proof of the lemma. 
\qed

\bibliography{fpp}

\begin{thebibliography}{10}

\bibitem{AH16}
D.~Ahlberg and C.~Hoffman.
\newblock Random coalescing geodesics in first-passage percolation.
\newblock Preprint, arXiv:1609.02447.

\bibitem{Ale11}
Kenneth~S. Alexander.
\newblock Sub{G}aussian rates of convergence of means in directed first passage
  percolation.
\newblock Preprint arXiv 1101.1549.

\bibitem{Ale93}
Kenneth~S. Alexander.
\newblock {A Note on Some Rates of Convergence in First-Passage Percolation}.
\newblock {\em The Annals of Applied Probability}, 3(1):81 -- 90, 1993.

\bibitem{Ale97}
Kenneth~S. Alexander.
\newblock {Approximation of subadditive functions and convergence rates in
  limiting-shape results}.
\newblock {\em The Annals of Probability}, 25(1):30 -- 55, 1997.

\bibitem{Ale20}
Kenneth.~S. Alexander.
\newblock Geodesics, bigeodesics, and coalescence in first passage percolation
  in general dimension.
\newblock {\em arXiv preprint arXiv:2001.08736}, 2020.

\bibitem{Ale21}
Kenneth.~S. Alexander.
\newblock Uniform fluctuation and wandering bounds in first passage
  percolation.
\newblock {\em arXiv preprint arXiv:2011.07223}, 2020.

\bibitem{ADH13}
Antonio Auffinger, Michael Damron, and Jack Hanson.
\newblock Rate of convergence of the mean for sub-additive ergodic sequences.
\newblock {\em Advances in Mathematics}, 285:138--181, 2015.

\bibitem{ADH15}
Antonio Auffinger, Michael Damron, and Jack Hanson.
\newblock {\em 50 years of first-passage percolation}, volume~68.
\newblock American Mathematical Soc., 2017.

\bibitem{BBB23}
M\'arton Bal\'azs, Riddhipratim Basu, and Sudeshna Bhattacharjee.
\newblock Geodesic trees in last passage percolation and some related problems.
\newblock {\em arXiv preprint arXiv:2308.07312}, 2023.

\bibitem{BB21}
Riddhipratim Basu and Manan Bhatia.
\newblock Small deviation estimates and small ball probabilities for geodesics
  in last passage percolation.
\newblock {\em arXiv preprint arXiv:2101.01717}, 2021.

\bibitem{BB23}
Riddhipratim Basu and Manan Bhatia.
\newblock A peano curve from mated geodesic trees in the directed landscape.
\newblock {\em arXiv preprint arXiv:2304.03269}, 2023.

\bibitem{BBF22}
Riddhipratim Basu, Ofer Busani, and Patrik~L. Ferrari.
\newblock On the exponent governing the correlation decay of the
  $\text{Airy}_1$ process.
\newblock {\em Communications in Mathematical Physics}, pages 1171--1211, 2023.

\bibitem{BG18}
Riddhipratim Basu and Shirshendu Ganguly.
\newblock Time correlation exponents in last passage percolation.
\newblock In M.E. Vares, R.~Fern\'andez, L.R. Fontes, and C.M. Newman, editors,
  {\em In and Out of Equilibrium 3: Celebrating Vladas Sidoravicius}, volume~77
  of {\em Progress in Probability}. Birkh{\"a}user, 2021.

\bibitem{BGHH22}
Riddhipratim Basu, Shirshendu Ganguly, Alan Hammond, and Milind Hegde.
\newblock Interlacing and scaling exponents for the geodesic watermelon in last
  passage percolation.
\newblock {\em Comm. Math. Phys.}, 393(3):1241--1309, 2022.

\bibitem{BGZ21}
Riddhipratim Basu, Shirshendu Ganguly, and Lingfu Zhang.
\newblock Temporal correlation in last passage percolation with flat initial
  condition via brownian comparison.
\newblock {\em Communications in Mathematical Physics}, 383, 2021.

\bibitem{BHS18}
Riddhipratim Basu, Christopher Hoffman, and Allan Sly.
\newblock Nonexistence of bigeodesics in planar exponential last passage
  percolation.
\newblock {\em Communications in Mathematical Physics}, 389, 2022.

\bibitem{BSS19}
Riddhipratim Basu, Sourav Sarkar, and Allan Sly.
\newblock Coalescence of geodesics in exactly solvable models of last passage
  percolation.
\newblock {\em Journal of Mathematical Physics}, 60(9):093301, 2019.

\bibitem{BSS14}
Riddhipratim Basu, Vladas Sidoravicius, and Allan Sly.
\newblock Last passage percolation with a defect line and the solution of the
  {S}low {B}ond {P}roblem.
\newblock Preprint arXiv 1408.3464.

\bibitem{BSS3}
Riddhipratim Basu, Vladas Sidoravicius, and Allan Sly.
\newblock Rotationally invariant first passage percolation: Breaking the
  $n/\log n$ variance barrier.
\newblock In preparation.

\bibitem{BSS2}
Riddhipratim Basu, Vladas Sidoravicius, and Allan Sly.
\newblock Rotationally invariant first passage percolation: Models and
  examples.
\newblock In preparation.

\bibitem{BR08}
Michel Bena{\"\i}m and Rapha{\"e}l Rossignol.
\newblock Exponential concentration for first passage percolation through
  modified poincar{\'e}inequalities.
\newblock {\em Annales de l'Institut Henri Poincar{\'e}, Probabilit{\'e}s et
  Statistiques}, 44(3):544--573, 6 2008.

\bibitem{BKS04}
Itai Benjamini, Gil Kalai, and Oded Schramm.
\newblock First passage percolation has sublinear distance variance.
\newblock {\em Ann. Probab.}, 31(4):1970--1978, 10 2003.

\bibitem{Cha08}
Sourav Chatterjee.
\newblock Chaos, concentration, and multiple valleys.
\newblock Arxiv 0810.4221, 2008.

\bibitem{Cha11}
Sourav Chatterjee.
\newblock The universal relation between scaling exponents in first-passage
  percolation.
\newblock {\em Ann. Math. (2)}, 177(2):663--697, 2013.

\bibitem{CD81}
J.~Theodore Cox and Richard Durrett.
\newblock {Some Limit Theorems for Percolation Processes with Necessary and
  Sufficient Conditions}.
\newblock {\em The Annals of Probability}, 9(4):583 -- 603, 1981.

\bibitem{DHS14}
Michael Damron, Jack Hanson, and Philippe Sosoe.
\newblock {Subdiffusive concentration in first passage percolation}.
\newblock {\em Electronic Journal of Probability}, 19:1 -- 27, 2014.

\bibitem{DHS13}
Michael Damron, Jack Hanson, and Philippe Sosoe.
\newblock Sublinear variance in first-passage percolation for general
  distributions.
\newblock {\em Probability Theory and Related Fields}, 163(1):223--258, 2015.

\bibitem{DW16}
Michael Damron and Xuan Wang.
\newblock {Entropy reduction in Euclidean first-passage percolation}.
\newblock {\em Electronic Journal of Probability}, 21:1 -- 23, 2016.

\bibitem{DEP21}
Barbara Dembin, Dor Elboim, and Ron Peled.
\newblock Coalescence of geodesics and the {BKS} midpoint problem in planar
  first-passage percolation.
\newblock Preprint, arXiv:2204.02332.

\bibitem{GH23}
Shirshendu Ganguly and Milind Hegde.
\newblock Optimal tail exponents in general last passage percolation via
  bootstrapping \& geodesic geometry.
\newblock {\em Probability Theory and Related Fields}, 186(1):221--284, 2023.

\bibitem{GZ22}
Shirshendu Ganguly and Lingfu Zhang.
\newblock Cdiscrete geodesic local time converges under {KPZ} scaling.
\newblock {\em arXiv preprint arXiv:2212.09707}, 2021.

\bibitem{HW65}
J.~M. Hammersley and D.~J.~A. Welsh.
\newblock First-passage percolation, subadditive processes, stochastic
  networks, and generalized renewal theory.
\newblock In Jerzy Neyman and Lucien~M. Le~Cam, editors, {\em Bernoulli 1713
  Bayes 1763 Laplace 1813: Anniversary Volume}, pages 61--110. 1965.

\bibitem{HS18}
Alan Hammond and Sourav Sarkar.
\newblock Modulus of continuity for polymer fluctuations and weight profiles in
  poissonian last passage percolation.
\newblock {\em Electron. J. Probab.}, 25:38 pp., 2020.

\bibitem{HNGS15}
Christian Hirsch, D~Neuh{\"a}user, C~Gloaguen, and V~Schmidt.
\newblock First passage percolation on random geometric graphs and an
  application to shortest-path trees.
\newblock {\em Advances in Applied Probability}, 47(2):328--354, 2015.

\bibitem{H00}
C~Douglas Howard.
\newblock Lower bounds for point-to-point wandering exponents in euclidean
  first-passage percolation.
\newblock {\em Journal of applied probability}, 37(4):1061--1073, 2000.

\bibitem{H01}
C~Douglas Howard.
\newblock Differentiability and monotonicity of expected passage time in
  euclidean first-passage percolation.
\newblock {\em Journal of applied probability}, 38(4):815--827, 2001.

\bibitem{HN97}
C.~Douglas Howard and Charles~M. Newman.
\newblock Euclidean models of first-passage percolation.
\newblock {\em Probability Theory and Related Fields}, 108(2):153--170, 1997.

\bibitem{HN98}
C.~Douglas Howard and Charles~M. Newman.
\newblock From greedy lattice animals to euclidean first-passage percolation.
\newblock In Maury Bramson and Rick Durrett, editors, {\em Perplexing Problems
  in Probability: Festschrift in Honor of Harry Kesten}, pages 107--119.
  Birkh{\"a}user Boston, Boston, MA, 1999.

\bibitem{HN01}
C.~Douglas Howard and Charles~M. Newman.
\newblock {Special Invited Paper: Geodesics And Spanning Tees For Euclidean
  First-Passage Percolaton}.
\newblock {\em The Annals of Probability}, 29(2):577 -- 623, 2001.

\bibitem{J00}
Kurt Johansson.
\newblock Transversal fluctuations for increasing subsequences on the plane.
\newblock {\em Probability theory and related fields}, 116(4):445--456, 2000.

\bibitem{KPZ86}
Mehran Kardar, Giorgio Parisi, and Yi-Cheng Zhang.
\newblock Dynamic scaling of growing interfaces.
\newblock {\em Phys. Rev. Lett.}, 56:889--892, 1986.

\bibitem{Kes93}
Harry Kesten.
\newblock On the speed of convergence in first-passage percolation.
\newblock {\em Ann. Appl. Probab.}, 3(2):296--338, 1993.

\bibitem{LW10}
T.~LaGatta and J.~Wehr.
\newblock A shape theorem for riemannian first-passage percolation.
\newblock {\em Journal of Mathematical Physics}, 51(5), 2010.

\bibitem{LR10}
Michel Ledoux and Brian Rider.
\newblock Small deviations for beta ensembles.
\newblock {\em Electron. J. Probab.}, 15:1319--1343, 2010.

\bibitem{LNP96}
C.~Licea, C.~M. Newman, and M.~S.~T. Piza.
\newblock Superdiffusivity in first-passage percolation.
\newblock {\em Probability Theory and Related Fields}, 106(4):559--591, 1996.

\bibitem{MSZ21}
James Martin, Allan Sly, and Lingfu Zhang.
\newblock Convergence of the environment seen from geodesics in exponential
  last-passage percolation.
\newblock {\em arXiv preprint arXiv:2106.05242}, 2021.

\bibitem{MR96}
Ronald Meester and Rahul Roy.
\newblock {\em Continuum Percolation}.
\newblock Cambridge Tracts in Mathematics. Cambridge University Press, 1996.

\bibitem{New95}
Charles~M. Newman.
\newblock A surface view of first-passage percolation.
\newblock In S.~D. Chatterji, editor, {\em Proceedings of the International
  Congress of Mathematicians}, pages 1017--1023, Basel, 1995. Birkh{\"a}user
  Basel.

\bibitem{NP95}
Charles~M. Newman and Marcelo S.~T. Piza.
\newblock {Divergence of Shape Fluctuations in Two Dimensions}.
\newblock {\em The Annals of Probability}, 23(3):977 -- 1005, 1995.

\bibitem{Pen03}
Mathew Penrose.
\newblock {\em Random Geometric Graphs}.
\newblock Oxford University Press, 2003.

\bibitem{Pim11}
Leandro~PR Pimentel.
\newblock Asymptotics for first-passage times on delaunay triangulations.
\newblock {\em Combinatorics, Probability and Computing}, 20(3):435--453, 2011.

\bibitem{SSZ21}
Sourav Sarkar, Allan Sly, and Lingfu Zhang.
\newblock Infinite order phase transition in the slow bond {TASEP}.
\newblock {\em arXiv preprint arXiv:2109.04563}, 2021.

\bibitem{Ser97}
H.C. Serfani.
\newblock First passage percolation on the {D}elaunay graph of a
  $d$-dimensional poisson process, 1997.
\newblock Ph.D. Thesis, New York University.

\bibitem{Tal95}
Michel Talagrand.
\newblock Concentration of measure and isoperimetric inequalities in product
  spaces.
\newblock {\em Publications Math{\'e}matiques de l'Institut des Hautes
  {\'E}tudes Scientifiques}, 81(1):73--205, 1995.

\bibitem{VW90}
Mohammad~Q Vahidi-Asl and John~C Wierman.
\newblock First-passage percolation on the voronoi tessellation and delaunay
  triangulation.
\newblock In {\em Random graphs}, volume~87, pages 341--359, 1990.

\bibitem{VW92}
Mohammad~Q Vahidi-Asl and John~C Wierman.
\newblock A shape result for first-passage percolation on the voronoi
  tessellation and delaunay triangulation.
\newblock In {\em Random graphs}, volume~2, pages 247--262, 1992.

\bibitem{VW93}
MQ~Vahidi-Asl and JC~Wierman.
\newblock Upper and lower bounds for the route length of first-passage
  percolation in voronoi tessellations.
\newblock {\em Bull. Iranian Math. Soc}, 19(1):15--28, 1993.

\bibitem{Zha19}
Lingfu Zhang.
\newblock Optimal exponent for coalescence of finite geodesics in exponential
  last passage percolation.
\newblock {\em Electronic Communications in Probability}, 25(none):1 -- 14,
  2020.

\end{thebibliography}
\bibliographystyle{plain}

\end{document}